\definecolor{yellow1}{rgb}{1,0.8,0.2} 
      \theoremstyle{plain}
     \newtheorem{thm}{Theorem}[section]
\newtheorem{lem}[thm]{Lemma}
\newtheorem{cor}[thm]{Corollary}
\newtheorem{pro}[thm]{Proposition}
\newtheorem{ft}[thm]{Fact}
\newtheorem{rmk}[thm]{Remark}
\newtheorem{defi}[thm]{Definition}
\newcommand{\bess}{\begin{eqnarray*}}
\newcommand{\eess}{\end{eqnarray*}}
\newcommand{\C}{\mathbb{\widehat{C}}}
\newcommand{\N}{\mathbf{N}}
\newcommand{\U}{\mathbf{U}}
\def\EEE{{\cal E}}
\def\FFF{{\cal F}}
\def\NNN{{\cal N}}
\def\PPP{{\cal P}}
\def\QQQ{{\cal Q}}
\def\RRR{{\cal R}}
\def\De{\Delta}
\def\de{\delta}
\def\ov{\overline}
\def\H{\mbox{$\mathbb H$}}
\def\D{\mathbb D}
\def\Z{\mbox{$\mathbb Z$}}
\def\lv{ \left(\begin{matrix} }
	\def\rv{\end{matrix}\right)}
\def\t{\tilde}
\def\cal{\mathcal}
\def\dw{{\dw}}
   \title{Boundaries  of the bounded  hyperbolic components of polynomials}
   \author{Yan Gao}
\address{Yan Gao, School of Mathematical Sciences, Shenzhen University, Shenzhen 518061, China}
\email{gyan@szu.edu.cn}
\author{Xiaoguang Wang}
\address{Xiaoguang Wang, School of Mathematical Sciences, Zhejiang University, Hangzhou, China}
\email{wxg688@163.com}
\author{Yueyang Wang}
\address{Yueyang Wang, School of Mathematics, Shanghai University of Finance and Economics, Shanghai, 200433, China}
\email{yueyangwang@icloud.com}
\begin{document}
   
      \begin{abstract}  In this paper, we study the local connectivity and Hausdorff dimension for the  boundaries of the bounded hyperbolic components in the space $\mathcal P_d$ of polynomials of degree $d\geq 3$.    It is shown that for  any non disjoint-type bounded hyperbolic component $\mathcal H\subset \mathcal P_d$,  the locally connected part of $\partial\mathcal H$, along each regular boundary 
   	strata, has full Hausdorff dimension $2d-2$.

   	An essential innovation in our argument involves analyzing how the canonical parameterization of the hyperbolic component--realized via Blaschke products  over a mapping scheme--extends to the boundary. 	This framework allows us to study three key aspects of  $\partial \mathcal H$: 
   	the local connectivity structure, the perturbation behavior, and the
   	local Hausdorff dimensions.
   	

   	
   \end{abstract}

  \subjclass[2020]{Primary 37F46; Secondary 37F10, 37F15, 37F44}

   \keywords{boundary, hyperbolic component, local connectivity, Hausdorff dimension}



   \date{\today}


   \maketitle



 \section{Introduction}
 
 Let $\mathcal P_d$ be the space  of degree $d\geq 3$ monic and centered polynomials.
 The  connectedness locus  $\mathcal C_d$, consisting of all maps $f\in \mathcal P_d$ with connected Julia sets, is known to be connected and cellular  \cite{BH,DH, DP, La}. 
 Moreover, a hyperbolic component within $\mathcal P_d$
 is bounded in $\mathcal P_d$ if and only if it is contained in $\mathcal C_d$. 
 
 For a hyperbolic component $\mathcal{H}\subset \mathcal C_d$,  let $m_{\mathcal H}$ denote the number of  attracting cycles for a representative map in $\mathcal H$.  We call $\mathcal H$  of \emph{disjoint-type}  if $m_{\mathcal H}=d-1$. In this case, each attracting cycle for a representative map in $\mathcal H$ attracts exactly one critical orbit.
 
 Milnor \cite{Mil-boundary} shows that the boundaries of disjoint-type hyperbolic components are semi-algebraic. Hence they are locally connected and have  Hausdorff dimension  $2d-3$.   
However, the situation differs fundamentally for non disjoint-type hyperbolic components: 
 
 \begin{thm}\label{boundary-hd}  Let $\mathcal H\subset \mathcal C_d$ be a non  disjoint-type  hyperbolic component, then 
 	$${\rm H.dim}(\partial \mathcal H)=2d-2.$$
 \end{thm}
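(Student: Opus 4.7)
Since $\mathcal P_d$ has real dimension $2d-2$, the bound ${\rm H.dim}(\partial \mathcal H)\le 2d-2$ is automatic; only the matching lower bound requires proof. The key tool will be the canonical Blaschke product parameterization $\Phi: \mathbb D^{d-1} \to \mathcal H$ associated to the mapping scheme of $\mathcal H$, in which each complex coordinate is either (i) the multiplier $\lambda_i$ of an attracting cycle, or (ii) the position $c_j$ of a ``free'' critical point inside a Fatou component carrying more than one critical orbit. The non disjoint-type hypothesis $m_{\mathcal H}<d-1$ guarantees that some attracting cycle $\mathcal O$ attracts at least two critical orbits, so at least one $c_j$-coordinate is attached to $\mathcal O$.

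My plan is to combine a Shishikura-type parabolic bifurcation in a two-real-dimensional transverse slice with persistence across the remaining $2d-4$ real parameters. First, I pick a point $f_0 \in \partial\mathcal H$ at which $\Phi$ extends continuously, with the multiplier coordinate $\lambda_0$ of $\mathcal O$ reaching a root of unity $e^{2\pi i p/q}$ (so $f_0$ acquires a parabolic cycle of combinatorial rotation $p/q$), while all other Blaschke coordinates stay strictly interior. The locus of such degenerations forms a regular boundary stratum $\mathcal S \subset \partial \mathcal H$ of real dimension $2(d-2)=2d-4$, along which the parabolic bifurcation persists. Transverse to $\mathcal S$ I take the one-complex-parameter slice $\Sigma$ obtained by varying $\lambda_0$ alone; here $\partial \mathcal H\cap\Sigma$ contains the arc $|\lambda_0|=1$ together with Shishikura-type bifurcations induced by the free critical orbit in the basin $B(\mathcal O)$. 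A Shishikura-style full-dimension estimate in the slice then gives ${\rm H.dim}(\partial \mathcal H\cap \Sigma)=2$.

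A product / slicing argument delivers the full lower bound. Using the Blaschke parameterization as local coordinates near $f_0$, $\partial \mathcal H$ contains a bi-Lipschitz image of the product $(\partial \mathcal H\cap \Sigma)\times \mathcal S$, and Marstrand's inequality ${\rm H.dim}(A\times B)\geq {\rm H.dim}(A)+{\rm H.dim}(B)$ yields
\[
{\rm H.dim}(\partial \mathcal H)\ \geq\ 2+(2d-4)\ =\ 2d-2,
\]
as required.

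The hardest step will be verifying Shishikura's transversality condition in the slice $\Sigma$: as $\lambda_0$ crosses the unit circle one must check that the associated parabolic implosion is genuinely non-degenerate, equivalently that the free critical orbit of $\mathcal O$ probes the parabolic petals with non-trivial dependence on $\lambda_0$. This is where the non disjoint-type hypothesis enters decisively: the free critical orbit in $B(\mathcal O)$ supplies exactly the Shishikura condition that fails in the disjoint-type case (whose boundary is only semi-algebraic of dimension $2d-3$). Carrying this out requires translating between the Blaschke coordinate $\lambda_0$ and the natural analytic parameter near the parabolic locus, an analysis performed on the boundary of $\mathbb D^{d-1}$ where $\Phi$ is accessible only through its continuous extension --- which is precisely the boundary analysis of the Blaschke parameterization announced in the abstract.
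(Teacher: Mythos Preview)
Your proposal has a genuine gap at the key step, and the overall architecture differs substantially from the paper's.

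The central problem is the product/slicing step. You assert that near $f_0$, in Blaschke coordinates, $\partial\mathcal H$ contains a bi-Lipschitz image of $(\partial\mathcal H\cap\Sigma)\times\mathcal S$. This is not justified and is unlikely to be true. The paper explicitly notes that $\partial\mathcal H$ exhibits self-bumps and $\overline{\mathcal H}$ is not a topological manifold with boundary; a large portion of the paper (Sections~5--9, culminating in Theorem~\ref{thm:parameterization}) is devoted to showing that $\Phi$ extends even as a \emph{homeomorphism} to a carefully chosen dense but non-open subset of the boundary (the $\mathcal H$-admissible locus), and the extension fails globally. There is no reason to expect local bi-Lipschitz product structure, and Marstrand's inequality for products buys you nothing without it.

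There is a second, related gap in the slice estimate ${\rm H.dim}(\partial\mathcal H\cap\Sigma)=2$. Shishikura's theorem gives Hausdorff dimension $2$ for the \emph{bifurcation locus} near a suitable parabolic parameter, not for the boundary of a single hyperbolic component. In your slice $\Sigma$ (which, moreover, is only defined via $\Phi$ inside $\mathcal H$, so its meaning outside $\overline{\mathcal H}$ is unclear), you would need to show that $\partial\mathcal H\cap\Sigma$ actually captures a full-dimension piece of the bifurcation locus, not merely the circle $|\lambda_0|=1$. This is exactly the kind of statement the paper proves, but it requires the perturbation theory on $\partial\mathcal H$ (Theorem~\ref{local-perturbation0}) and is far from automatic.

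The paper's route is quite different from a ``$2+(2d-4)$'' decomposition. It first constructs $\mathcal H$-admissible Misiurewicz maps $f\in\partial\mathcal H$ whose \emph{dynamical} Fatou boundaries $\partial U_{f,v}$ carry hyperbolic sets of dimension close to $2$ (this is where parabolic implosion enters, Theorem~\ref{thm:boundary-dimension0}, applied in the dynamical plane rather than in parameter space). It then transfers this dynamical dimension to $\partial\mathcal H$ via a weak-transversality/holomorphic-motion argument (Theorem~\ref{local-hausdorff-dim}, proved using Proposition~\ref{HD-trans}), yielding a lower bound of the form $\sum_v n_v\cdot{\rm H.dim}(\partial U_{f,v})+2(d-1-|\mathbf n|)$, which approaches $2d-2$ as the Fatou-boundary dimensions approach $2$. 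The scaffolding that makes this work is the $\mathcal H$-admissible divisor machinery, which replaces your hoped-for product structure with a genuine parameterization of a controlled piece of $\partial\mathcal H$.
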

 
 Theorem \ref{boundary-hd} exhibits the analytic complexity of the boundary $\partial \mathcal H$, and it confirms a stronger version of a conjecture of Milnor \cite[Conjecture 2b]{Mil-boundary} which states that the Hausdorff dimension of $\partial \mathcal H$ is larger than its topological dimension, in our setting.

Theorem \ref{boundary-hd}  is an immediate consequence of the more general result stated below.


Recall that a rational map $f$ is $k$-Misiurewicz if its Julia set $J(f)$ contains exactly $k$ critical points counting multiplicity, if $f$ has no indifferent cycles and if the $\omega$-limit set of any critical point $c\in J(f)$ does not meet the critical set. Let  $\mathfrak M_k$ be the set of all $k$-Misiurewicz polynomials in $\mathcal P_d$.

Let $X$  be a set  in the Euclidean space $\mathbb R^n$ or $\mathbb C^n$.
Recall that $X$ is locally connected at $x\in X$, if there exists a collection $\{U_k\}_{k\geq 1}$ of open and connected neighborhoods of $x$ in  $X$ such that $\lim_k{\rm diam}(U_k)=0$. 
 Define the locally connected part of $\partial \mathcal H$ by 
  $$\partial_{\rm LC}\mathcal H=\big\{f\in \partial\mathcal H;  \partial \mathcal H \text{ is locally connected at } f\big\}.$$
 
We now state the main result of the paper:

\begin{thm}\label{boundary-hd-lc}  Let $\mathcal H\subset \mathcal C_d$ be a  non disjoint-type  hyperbolic component.  For any $1\leq l\leq d-1- m_{\mathcal H}$,   we have 
		\begin{equation} \label{lc-hd-main}
	{\rm H. dim}(\partial_{\rm LC}\mathcal H\cap \mathfrak M_l)=2d-2.
	\end{equation}
\end{thm}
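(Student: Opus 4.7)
The strategy is to use the extended canonical Blaschke-product parameterization of $\mathcal{H}$ to describe $\partial\mathcal{H}$ near a regular boundary stratum as a local product of ``attracting data'' and ``escaping critical-value data,'' then to extract full Hausdorff dimension from a Shishikura-type hyperbolic subset of the Julia set. First I would fix a regular stratum $\mathcal{S}_l\subset\partial\mathcal{H}$ along which exactly $l$ critical values have escaped their attracting basins into $J(f)$ while each of the $m_{\mathcal{H}}$ attracting cycles is still fed by some critical point; the hypothesis $l\leq d-1-m_{\mathcal{H}}$ ensures such a stratum is nonempty. Over $\mathcal{S}_l$, the canonical parameterization of $\mathcal{H}$ (Blaschke products over the mapping scheme) should extend continuously to local coordinates of the form (Blaschke data)$\times$(positions of the $l$ escaping critical values in the moving Julia set), with the Blaschke factor varying in an open set of real dimension $2d-2-2l$.

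For local connectivity, note that $\partial\mathcal{H}\subset\mathcal{C}_d$, so every Julia set along the stratum is connected. The critical-value coordinates take values in an $l$-fold product of connected sets, and together with the open Blaschke factor this produces connected local neighborhoods in $\partial\mathcal{H}$ at every point where the extended parameterization is a local homeomorphism. The broad Misiurewicz assumption enters precisely here: nonrecurrence of the critical orbits to the critical set prevents the natural coordinates from collapsing, so that off a lower-dimensional exceptional locus, points of $\mathcal{S}_l\cap\mathfrak{M}_l$ belong to $\partial_{\rm LC}\mathcal{H}$.

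For the Hausdorff dimension lower bound I would produce, at a chosen $f_0\in\mathcal{S}_l\cap\mathfrak{M}_l$, a hyperbolic Cantor-like subset $K\subset J(f_0)$ of Hausdorff dimension arbitrarily close to $2$, disjoint from the grand orbit of the critical set (such $K$ exists by a Shishikura-type construction applied to the Julia set of the non disjoint-type map $f_0$). The set $K$ admits a holomorphic motion $h_\lambda$ over a small neighborhood of $f_0$, and forcing each of the $l$ escaping critical values to lie in $h_\lambda(K)$ carves out a subset of $\mathcal{S}_l\cap\mathfrak{M}_l$ whose local product structure (Blaschke data)$\times h_\lambda(K)^l$ yields, via the product inequality for Hausdorff dimension, a lower bound of $(2d-2-2l)+l\dim_H K$. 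Letting $\dim_H K\to 2$ gives the required value $2d-2$, and since this subset is already contained in $\partial_{\rm LC}\mathcal{H}\cap\mathfrak{M}_l$ by the previous paragraph, the theorem follows (the upper bound being automatic from $\dim_{\mathbb{R}}\mathcal{P}_d=2d-2$).

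The hardest step will be establishing the local-homeomorphism claim on a full-dimensional subset of $\mathcal{S}_l$. On the attracting side the extended Blaschke parameterization is real-analytic and well-behaved, but on the escaping side one must control the degeneration of the coordinates at parameters where critical orbits recur or where the holomorphic motion $h_\lambda$ fails to extend, and one must verify that the extended parameterization remains injective and bicontinuous when the critical values explore the fractal set $h_\lambda(K)$. Showing that these defects form a set of strictly smaller Hausdorff dimension, and that the broad Misiurewicz condition propagates stably through the holomorphic motion while still permitting $\dim_H K$ to approach $2$, is where the bulk of the technical work will concentrate.
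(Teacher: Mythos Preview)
Your overall architecture matches the paper's: extend the Blaschke parameterization to a boundary stratum, prove local connectivity there, then obtain the dimension lower bound by forcing the $l$ free critical orbits into a holomorphically moving hyperbolic set of dimension close to $2$ and applying a product/transfer inequality. Two of your steps, however, contain genuine gaps.

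First, the sentence ``such $K$ exists by a Shishikura-type construction applied to the Julia set of the non disjoint-type map $f_0$'' is not justified. Shishikura's large hyperbolic sets arise from parabolic implosion: they exist for maps \emph{near a parabolic bifurcation}, not for an arbitrarily chosen Misiurewicz map on the stratum. A generic $f_0\in\mathcal S_l\cap\mathfrak M_l$ has no reason to carry a hyperbolic subset of dimension close to $2$; indeed the hyperbolic dimension of $J(f_0)$ can be far from $2$. The paper resolves this by first constructing (via QC deformation and Lemma~\ref{lem:separate}) a geometrically finite $g_0\in\partial\mathcal H$ with $m_{\mathcal H}$ parabolic cycles, and then using parabolic implosion (Theorem~\ref{thm:boundary-dimension0}) to produce $\mathcal H$-admissible maps $f_n\to g_0$ whose Fatou boundaries carry hyperbolic sets of dimension $>2-\epsilon$. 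You cannot skip this: the choice of base point is forced, not free.

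Second, and relatedly, it is not enough that $K\subset J(f_0)$. For the perturbation to remain on $\partial\mathcal H$ (and in particular for the resulting maps to be $\mathcal H$-admissible, hence in $\partial_{\rm LC}\mathcal H$), the paper needs $K\subset\bigcup_{v\in V_{\rm p}}\partial_0 U_{f,v}$, i.e.\ on the boundary of the marked attracting Fatou components and away from cut-points of $J(f)$ (see Theorem~\ref{local-perturbation0}). This is a nontrivial refinement of Shishikura's construction (Lemma~\ref{lem:on-boundary}), requiring control of the orbits of points in the hyperbolic set relative to the immediate basin. Without it, your sentence ``forcing each of the $l$ escaping critical values to lie in $h_\lambda(K)$ carves out a subset of $\mathcal S_l\cap\mathfrak M_l$'' does not follow: you have no mechanism ensuring those parameters lie on $\partial\mathcal H$ at all, let alone that the divisor stays $\mathcal H$-admissible so that local connectivity holds there.
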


 \vspace{5pt}
\noindent{\bf Remark 1.2.} \emph{When $ d-1-m_{\mathcal H}<l\leq d-1$,  we have $\partial\mathcal H\cap \mathfrak{M}_l=\emptyset$. If this were not the case, any map in $\partial\mathcal H\cap \mathfrak{M}_l$ would have at least one indifferent cycle,  contradicting the  Misiurewicz property.    Therefore the equality  \eqref{lc-hd-main} does not  hold   any more. } 
  \vspace{5pt}

We actually have a   more precise form of Theorem \ref{boundary-hd-lc}  concerning local Hausdorff dimensions.  For a set $X$ in the Euclidean space, and a point $x$ in the closure of $X$, the local Hausdorff dimension of $X$ at $x$ is defined as 
$${\rm H. dim}(X, x)=\lim_{r\rightarrow 0^+} {\rm H.dim}(X\cap B(x,r)),$$
where $B(x,r)$ is the Euclidean ball centered  at $x$ with radius $r$. 

\begin{thm}\label{lchd-gf-section1}  	Let $\mathcal H\subset \mathcal C_d$ be a  non disjoint-type  hyperbolic component. 
There is a geometrically finite map $g_0\in\partial\mathcal H$ with $m_{\mathcal H}$ parabolic cycles, 
such that for any $1\leq l\leq d-1-m_{\mathcal H}$, 
	\begin{equation} \label{local-hd-g0}
	{\rm H. dim}( \partial_{\rm LC}\mathcal H\cap \mathfrak{M}_l, g_0)=2d-2.
	\end{equation}
\end{thm}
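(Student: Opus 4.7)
The plan is to construct a canonical $g_0\in\partial\mathcal{H}$ via the extended Blaschke-product parameterization of $\mathcal{H}$ (the innovation announced in the abstract) and, by a Shishikura-type dimension-transfer argument, to produce near $g_0$ a subset of $\partial_{\rm LC}\mathcal{H}\cap\mathfrak{M}_l$ of Hausdorff dimension $2d-2$.

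First, I construct $g_0$. The hyperbolic component $\mathcal{H}$ is parameterized by a product of Blaschke-product spaces over its mapping scheme, with the multiplier of each of the $m_{\mathcal{H}}$ attracting cycles serving as a complex coordinate on the corresponding factor. Extending this parameterization to the boundary and setting every multiplier equal to $1$ produces a unique geometrically finite $g_0\in\partial\mathcal{H}$ with exactly $m_{\mathcal{H}}$ simple parabolic cycles. By Shishikura's theorem $\mathrm{H.dim}(J(g_0))=2$, and for every $\epsilon>0$ there is a hyperbolic invariant Cantor set $K_\epsilon\subset J(g_0)$ with $\mathrm{H.dim}(K_\epsilon)>2-\epsilon$, disjoint from the parabolic cycles and from the critical orbits.

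Second, I describe the relevant stratum of $\partial\mathcal{H}$ near $g_0$. Since the Misiurewicz property forbids indifferent cycles, any point of $\partial\mathcal{H}\cap\mathfrak{M}_l$ near $g_0$ must have all $m_{\mathcal{H}}$ persistent cycles with multipliers in the open disk; it lies on $\partial\mathcal{H}$ only because $l$ of the $d-1-m_{\mathcal{H}}$ non-basin-indexing critical orbits have landed in the Julia set. Using the boundary extension of the Blaschke parameterization I identify this stratum locally, verify (via the paper's general local-connectivity criterion at geometrically finite boundary parameters) that it lies in $\partial_{\rm LC}\mathcal{H}$, and establish a holomorphic motion of $K_\epsilon$ over a parameter neighborhood of $g_0$ intersected with this stratum.

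Third, I execute the dimension transfer. Three ingredients combine: (i) a multi-parabolic generalization of Shishikura's dimension-$2$ theorem at parabolic parameters, yielding Hausdorff dim contribution $2m_{\mathcal{H}}$ from the $m_{\mathcal{H}}$ parabolic-multiplier directions; (ii) Tsujii/Buff--Epstein-style transversality of the Misi equations on $l$ of the free critical points, contributing $2l$ real dimensions through the ``rigid'' Misi locus; (iii) transverse pullback of the condition ``orbit of $c_j$ meets $K_\epsilon$'' on each of the remaining $d-1-m_{\mathcal{H}}-l$ free critical points, contributing Hausdorff dim at least $2-\epsilon$ per direction. The sum $2m_{\mathcal{H}}+2l+(d-1-m_{\mathcal{H}}-l)(2-\epsilon)$ tends to $2(d-1)=2d-2$ as $\epsilon\to 0$, establishing \eqref{local-hd-g0}.

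The principal obstacle is the parameter-space transversality required to make these three contributions combine additively. Both the multi-parabolic Shishikura dimension theorem (simultaneous full dim-$2$ contribution from $m_{\mathcal{H}}$ parabolic directions) and the Buff--Epstein-type transversality of the Misi and Cantor-hitting conditions in the presence of $m_{\mathcal{H}}$ parabolic cycles are delicate, and depend on the canonical Blaschke parameterization to decouple the ``parabolic'' and ``free-critical'' motions at $g_0$. A secondary issue is verifying local connectivity at each constructed parameter, which should follow from the paper's geometric-finiteness criterion for $\partial_{\rm LC}\mathcal{H}$.
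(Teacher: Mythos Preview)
Your dimension bookkeeping in (ii)--(iii) is inverted, and this is fatal for every $l<d-1-m_{\mathcal H}$. In (iii) you require the orbits of the $d-1-m_{\mathcal H}-l$ ``remaining'' critical points to meet a hyperbolic set $K_\epsilon\subset J(g_0)$; but a critical point whose forward orbit enters the Julia set is itself a Julia critical point, so the maps you produce carry $d-1-m_{\mathcal H}$ Julia critical points and lie in $\mathfrak M_{d-1-m_{\mathcal H}}$, not in $\mathfrak M_l$. Conversely, in (ii) you assert that the $l$ Misiurewicz equations ``contribute $2l$ real dimensions''; those are constraints, not freedoms, and by themselves cut dimension rather than add it.

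The paper reverses these roles and does not work at $g_0$ directly. One first passes to a nearby $\mathcal H$-admissible map $f$ (Misiurewicz, with $m_{\mathcal H}$ \emph{attracting} cycles) whose attracting-basin boundaries $\partial U_{f,v}$ carry hyperbolic sets of dimension $>2-\epsilon$; this is precisely what parabolic implosion provides (Theorem~\ref{thm:boundary-dimension0}), and it is the only place $g_0$ enters. At such an $f$, the $l$ Julia critical points are made to land, after finitely many iterates, on the holomorphic motion of these boundary hyperbolic sets, and a transfer inequality (Theorem~\ref{local-perturbation0} together with Proposition~\ref{HD-trans}) turns this into a contribution $l(2-\epsilon)$. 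The remaining $d-1-l$ critical points, including the $m_{\mathcal H}$ basin ones, sit in the Fatou set and move freely, contributing $2(d-1-l)$. No ``multi-parabolic Shishikura in parameter space'' is invoked: since maps in $\mathfrak M_l$ have no indifferent cycles, the multipliers are simply attracting and are absorbed into the Fatou-side count. Two further points: the boundary extension of $\Phi$ (Theorem~\ref{thm:parameterization}) reaches only $\mathcal H$-admissible divisors and does not cover the locus ``all multipliers equal $1$'', so $g_0$ is obtained not that way but as a limit of a specific quasiconformal deformation (Lemma~\ref{lem:separate}); and local connectivity (Theorem~\ref{boundary-lc}) is proved only at $\mathcal H$-admissible maps, which is why the argument has to run through the nearby $f$ rather than at $g_0$ itself.
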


It's clear that Theorem \ref{lchd-gf-section1}   implies Theorem \ref{boundary-hd-lc}.

 \vspace{5pt}
\noindent{\bf Remark 1.3.}  
{\it  
	There are infinitely many geometrically finite maps $g_0$ satisfying \eqref{local-hd-g0} (see Section \ref{proof-main}).
	  These maps are not dense on $\partial \mathcal H$.
	  
	  Moreover, unlike  $\partial \mathcal C_d$, the   boundary $\partial \mathcal H$ is not \emph{homogeneous} in the sense that ${\rm H. dim}( \partial \mathcal H\cap U)=2d-2$ for all open sets $U$ with $U\cap \partial \mathcal H\neq \emptyset$. }
   \vspace{5pt}



In order to prove Theorem \ref{lchd-gf-section1}, we develop a new framework, called  {\it Blaschke divisors over a mapping scheme}, to analyze the boundaries of higher-dimensional hyperbolic components.
The strategy proceeds as follows. 

According to Milnor \cite{M2}, any hyperbolic  component $\mathcal H\subset \mathcal C_d$\footnote{For convenience, we   actually use the finite (possibly branched) covering space of $\mathcal H$, without changing the notation,    see  Section \ref{dmhc}.} can be modeled by the space $\mathcal B^S$ of the (normalized) Blaschke products over a mapping scheme $S$, yielding a natural parameterization 
\begin{equation}\label{eq:homeo}
	\Phi: \mathcal B^S\rightarrow \mathcal H.
\end{equation}

  To study $\partial \mathcal H$ or $\overline{\mathcal H}$,  the   Blaschke products are allowed to  degenerate  in a reasonable way. 
  This  leads to the 
concept of {\it Blaschke divisor}. It consists of two parts: the \emph{non-degenerate part}, which consists of Blaschke products with possibly  lower degrees, and  the \emph{degenerate part}, which records the positions of the degeneration.
Note that  
Blaschke products
corresponds to Blaschke divisors without degenerations.
By coupling  these divisors with the mapping scheme $S$ of $\mathcal H$,  we obtain the space ${\rm Div}{(\overline{\mathbb D})}^S$ of 
Blaschke divisors over $S$, which serves as a  model for $\overline{\mathcal H}$.   The space  $\mathcal B^S$ can be  identified as the interior ${\rm Div}{({\mathbb D})}^S$ of ${\rm Div}{(\overline{\mathbb D})}^S$. 


In general, the homeomorphism $\Phi: \mathcal B^S \rightarrow \mathcal H$ can not extend continuously to the whole boundary $\partial\mathcal B^S=\partial{\rm Div}{({\mathbb D})}^S$. To address this, we  define the \emph{impression} of a divisor $ D\in \partial{\rm Div}{({\mathbb D})}^S$ as
$$
I_{\Phi}(D)=\Big\{f\in \partial \mathcal H; \text{ there exists } \{f_n\}_{n\geq 1} \subset \mathcal H \text{ with }  f_n\rightarrow f,  \Phi^{-1}(f_n)\rightarrow  D\Big\}.$$
By definition, the impression $I_{\Phi}(D)$ is  compact, and $\partial\mathcal H$ decomposes as 
\begin{equation}\label{eq:decomposition}
	\partial\mathcal H=\bigcup_{ E\in \partial {\rm Div}{(\mathbb D)}^S}I_{\Phi}(E).
\end{equation}
This  motivates us to explore  $\partial \mathcal H$ through the impressions of divisors.
Then some fundamental questions arise: what can we say about the maps in $I_{\Phi}(D)$? Under what conditions the set  $I_{\Phi}(D)$ is a singleton?

The novelty of this paper is to identify a class of divisors, called  {\it $\mathcal H$-admissible divisors} (see Definition \ref{adm-divisor}), denoted by $\mathcal A$, 
which are dense in $\partial \mathcal B^S=\partial{\rm Div}{({\mathbb D})}^S$ and whose impressions are singletons, such that 
$\Phi$ extends continuously to them. 
By exploring the self-intersections of $\partial \mathcal H$,   we establish the following boundary extension theorem:
\begin{thm}\label{thm:parameterization}
	The map $\Phi:\mathcal B^S\to \mathcal H$ defined in \eqref{eq:homeo}   extends   to  a homeomorphism 
	$$\overline{\Phi}:\mathcal B^S\cup \mathcal A\to \mathcal H\cup \partial_\mathcal{A}\mathcal H,$$
	where $\partial_\mathcal{A}\mathcal H:=\overline{\Phi}(\mathcal A)$, and $\overline{\Phi}(D)$ is defined to be the unique map in  $I_{\Phi}(D)$ for $D\in \mathcal A$.
\end{thm}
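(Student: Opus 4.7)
The plan is to verify four properties of the proposed extension $\overline{\Phi}$: well-definedness on $\mathcal A$, continuity, injectivity, and continuity of the inverse. Well-definedness is immediate from the design of $\mathcal A$: since $I_{\Phi}(D)$ is a singleton for every $D\in\mathcal A$, setting $\overline{\Phi}(D)$ to be its unique element is unambiguous; consistency on $\mathcal B^S\cup\mathcal A$ is automatic because $\mathcal A\subset\partial\mathcal B^S$ is disjoint from $\mathcal B^S$, and $\Phi(\mathcal B^S)=\mathcal H$ while $\overline{\Phi}(\mathcal A)\subset\partial\mathcal H$.

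For continuity at $D\in\mathcal A$, I would take any sequence $D_n\in\mathcal B^S\cup\mathcal A$ with $D_n\to D$. Since $\mathcal H\subset\mathcal C_d$ is bounded, pass to a subsequence along which $\overline{\Phi}(D_n)\to f\in\overline{\mathcal H}$. For each $n$ with $D_n\in\mathcal A$, the definition of the impression furnishes $\widetilde D_n\in\mathcal B^S$ with ${\rm dist}(\widetilde D_n,D_n)<1/n$ and ${\rm dist}(\Phi(\widetilde D_n),\overline{\Phi}(D_n))<1/n$; set $\widetilde D_n=D_n$ when $D_n\in\mathcal B^S$. Then $\widetilde D_n\in\mathcal B^S$, $\widetilde D_n\to D$, and $\Phi(\widetilde D_n)\to f$, hence $f\in I_{\Phi}(D)=\{\overline{\Phi}(D)\}$. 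Since every subsequential limit equals $\overline{\Phi}(D)$, the full sequence converges.

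The main obstacle is injectivity on $\mathcal A$, which is precisely the \emph{self-intersection} question for $\partial\mathcal H$: could two distinct admissible divisors $D\neq D'$ yield the same polynomial $f=\overline{\Phi}(D)=\overline{\Phi}(D')$? The mixed case in which at least one of $D,D'$ lies in $\mathcal B^S$ is ruled out by $\Phi(\mathcal B^S)=\mathcal H$, $\overline{\Phi}(\mathcal A)\subset\partial\mathcal H$, and the injectivity of $\Phi$. For the hard case, with both $D,D'\in\mathcal A$, I would reconstruct the divisor intrinsically from the dynamics of $f$: the non-degenerate part of $D$ is recovered from the conformal conjugacy classes of the restrictions of $f$ to its surviving attracting and parabolic Fatou components (together with their iterated preimages prescribed by the scheme $S$), while the degenerate part records the combinatorial position of the collapsed components. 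The content of Definition \ref{adm-divisor} of $\mathcal H$-admissibility must be engineered precisely so that both pieces of data are readable off the polynomial dynamics of $f$, forcing $D=D'$.

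Finally, to upgrade the continuous bijection $\overline{\Phi}$ to a homeomorphism, I would verify continuity of the inverse. Given $f_n\to f$ in $\mathcal H\cup\partial_\mathcal{A}\mathcal H$, set $D_n=\overline{\Phi}^{-1}(f_n)$; compactness of ${\rm Div}(\overline{\mathbb D})^S$ produces a subsequence with $D_n\to D^*$. A diagonal approximation by interior divisors combined with the decomposition \eqref{eq:decomposition} places $f\in I_{\Phi}(D^*)$. If $D^*\in\mathcal B^S\cup\mathcal A$, the injectivity argument above gives $D^*=\overline{\Phi}^{-1}(f)$. If instead $D^*\in\partial{\rm Div}(\mathbb D)^S\setminus\mathcal A$, then $D^*$ would be a non-admissible divisor whose impression meets $\overline{\Phi}(\mathcal A)$; excluding this is the same self-intersection phenomenon as in the injectivity step, and the admissibility condition is designed exactly to rule it out by the same dynamical extraction. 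Every subsequential limit therefore equals $\overline{\Phi}^{-1}(f)$, so the inverse is continuous and $\overline{\Phi}$ is a homeomorphism.
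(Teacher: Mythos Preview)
Your overall architecture matches the paper's: well-definedness via Proposition~\ref{divisor-singleton}, continuity by a diagonal argument, injectivity by reconstructing the divisor from the dynamics of $f$, and inverse continuity from a local version of the same reconstruction. The paper packages the last two steps as Proposition~\ref{map-imp} (for each $g$ near an admissible $f$, the set ${\rm Div}[g]$ is a singleton) and Lemma~\ref{neigh-continuous} (the map $g\mapsto E_g$ is continuous).

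There is, however, a genuine gap in your injectivity sketch. Your reconstruction argument presupposes that any competing divisor $E$ with $f\in I_\Phi(E)$ lies in $\partial_0^*{\rm Div}(\mathbb D)^S$, i.e.\ that $1\notin{\rm supp}(E_v^\partial)$ for every $v$. Without this, none of the combinatorial machinery of Sections~\ref{imp-divisor}--\ref{comb-pro-imp} (the maps $\phi_{f,v}$, the divisor $D_v^\partial(\mathcal H)$, the angle functions $\theta^\pm_{D,v}$) is available, and there is nothing to ``read off''. The paper addresses this in Step~2 of Proposition~\ref{map-imp} by an extremal-length argument: one bounds the modulus of a quadrilateral in $U_{g,v}$ uniformly from below, transfers this via the Riemann map $\psi_{g,v}$ and a reflection to an annulus in $\overline{\mathbb D}$, and then applies Teichm\"uller's modulus inequality to force the near-degenerate zeros of the Blaschke factor $B_{g,v}$ to stay a definite distance from $1$. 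This step is not a formality and is not captured by your description. (Incidentally, an $\mathcal H$-admissible map is Misiurewicz by Proposition~\ref{h-admissible-characterization}, so it has no parabolic Fatou components; your reconstruction should refer only to the attracting components and the critical points on their boundaries.)

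The same issue recurs in your inverse-continuity argument: ruling out a subsequential limit $D^*\in\partial{\rm Div}(\mathbb D)^S\setminus\mathcal A$ requires knowing ${\rm Div}[f]=\{D\}$, not merely injectivity on $\mathcal A$. The paper obtains this as part of Proposition~\ref{map-imp}, and in fact proves the stronger local statement that ${\rm Div}[g]$ is a singleton for \emph{every} $g\in\mathcal N\cap\partial\mathcal H$; this uniformity is what makes Lemma~\ref{neigh-continuous} go through.
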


Theorem \ref{thm:parameterization} provides a natural parameterization of $\partial_\mathcal A\mathcal H$ by $\mathcal A$. 
It is worth noting that, while the set $\mathcal A$ of $\mathcal H$-admissible divisors is dense in $\partial {\rm Div}(\mathbb{D})^S$,   its  counterpart   $\partial_\mathcal A\mathcal H$ is not dense in $\partial \mathcal H$ (this implies that  $\Phi$ can not extend to the whole boundary $\partial\mathcal B^S$).

The maps in $ \partial_\mathcal{A}\mathcal H$  are called   {\it $\mathcal H$-admissible maps}. 
These maps exhibit two key features: first, they have controlled critical orbits, which allows us to establish the local connectivity of $\partial \mathcal H$ at the maps; second, there are abundance of such maps on  $\partial \mathcal H$, which enables us to estimate the local Hausdorff dimensions  of  $\partial \mathcal H$  (in fact, we have ${\rm H.dim}(\partial_\mathcal A\mathcal H)=2d-2$ by Theorem \ref{lchd-gf}).  These properties are indispensable for the proof of Theorem \ref{lchd-gf-section1}. 






Firstly, we establish  the local connectivity theorem: 


\begin{thm} [Local connectivity]  \label{boundary-lc}  For any  $f\in \partial_\mathcal A\mathcal H$,  the boundary    $\partial\mathcal H$ is locally connected at   $f$.  That is,  $\partial_\mathcal{A} \mathcal H\subset \partial_{\rm LC} \mathcal H$.   
\end{thm}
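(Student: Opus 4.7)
The plan is to pull the local topology of $\partial\mathcal H$ near $f$ back to the divisor model via the extended parameterization of Theorem \ref{thm:parameterization}. Set $D:=\overline\Phi^{-1}(f)\in\mathcal A$, and fix a nested basis $\{V_n\}_{n\geq 1}$ of open connected neighborhoods of $D$ in ${\rm Div}(\overline{\mathbb D})^S$ (for instance, small metric balls) with ${\rm diam}(V_n)\to 0$. The candidate neighborhood basis of $f$ in $\partial\mathcal H$ is
$$\Omega_n := \overline{\Phi(V_n\cap\mathcal B^S)}\cap\partial\mathcal H,$$
the closure being taken in $\mathcal P_d$. I claim that each $\Omega_n$ is a connected neighborhood of $f$ in $\partial\mathcal H$ and that ${\rm diam}(\Omega_n)\to 0$.

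The first technical ingredient I would establish is an upper semi-continuity property of the impression at $D$: if $E_k\to D$ in ${\rm Div}(\overline{\mathbb D})^S$ and $g_k\in I_\Phi(E_k)$, then every accumulation point of $(g_k)$ is $f$. This follows from the definition of $I_\Phi$, compactness, and the singleton property $I_\Phi(D)=\{f\}$, via a diagonal extraction in $\mathcal P_d\times{\rm Div}(\overline{\mathbb D})^S$. Applied along $\overline{V_n}$ it yields the smallness ${\rm diam}(\Omega_n)\to 0$. Applied contrapositively — given $g\in\partial\mathcal H$ close to $f$ and a divisor $E_g$ with $g\in I_\Phi(E_g)$ from \eqref{eq:decomposition}, one deduces $E_g\to D$ as $g\to f$ — it shows that every $g\in\partial\mathcal H$ sufficiently close to $f$ satisfies $E_g\in V_n$, hence $g\in\overline{\Phi(V_n\cap\mathcal B^S)}$, so that $\Omega_n$ is indeed a neighborhood of $f$ in $\partial\mathcal H$.

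The main obstacle is the connectedness of $\Omega_n$. Since $V_n\cap\mathcal B^S$ is open connected and every point of $V_n\cap\mathcal A$ is a limit of interior points of the open set $V_n$ (by density of $\mathcal A$ in $\partial\mathcal B^S$), the set $V_n\cap(\mathcal B^S\cup\mathcal A)$ is connected. By Theorem \ref{thm:parameterization} its image $\Phi(V_n\cap\mathcal B^S)\cup\overline\Phi(V_n\cap\mathcal A)\subset\mathcal H\cup\partial_\mathcal A\mathcal H$ is connected, and its closure in $\mathcal P_d$ equals $\overline{\Phi(V_n\cap\mathcal B^S)}$, which is therefore connected. The upper semi-continuity step shows that the admissible portion $\overline\Phi(V_n\cap\mathcal A)\subset\Omega_n$ is dense in $\Omega_n$. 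Any clopen splitting $\Omega_n=A\sqcup B$ would then restrict, via the homeomorphism $\overline\Phi^{-1}$ on the admissible part, to a clopen splitting of a dense subset of $V_n\cap\partial\mathcal B^S$; combined once more with upper semi-continuity to track the non-admissible impressions on $\Omega_n$, this extends to an actual clopen splitting of $V_n\cap\partial\mathcal B^S$ itself, a contradiction provided $V_n\cap\partial\mathcal B^S$ is connected.

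The delicate remaining step, which I expect to be the main technical burden, is the local connectedness of $\partial\mathcal B^S$ at $D$: for $V_n$ small, $V_n\cap\partial\mathcal B^S$ must be connected. This is where the admissibility hypothesis is essential, placing $D$ in a smooth stratum of ${\rm Div}(\overline{\mathbb D})^S$ whose small neighborhoods are connected manifold pieces, as should follow from Definition \ref{adm-divisor} together with the stratified description of the divisor space outlined in the introduction.
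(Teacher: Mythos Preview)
Your overall strategy — pull back to the divisor model and use a nested family of neighborhoods there — is the same as the paper's, but the connectedness argument has a real gap.

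The problematic step is the chain ``density of $\overline\Phi(V_n\cap\mathcal A)$ in $\Omega_n$'' $\Rightarrow$ ``clopen splitting of $\Omega_n$ restricts to $V_n\cap\mathcal A$'' $\Rightarrow$ ``extends to a clopen splitting of $V_n\cap\partial\mathcal B^S$''. First, the density claim does not follow from upper semi-continuity at $D$: for a non-admissible $E$ near $D$, the impression $I_\Phi(E)$ may be a nontrivial continuum, and nothing you have said forces \emph{every} point of that continuum to be approximable by $\overline\Phi(E_k)$ with $E_k$ admissible (the paper explicitly notes $\partial_{\mathcal A}\mathcal H$ is not dense in $\partial\mathcal H$). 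Second, even granting density, a clopen splitting of a dense subset does not in general extend to one of the ambient space (think of $\mathbb Q\cap(0,1)$ split at an irrational). Your phrase ``combined once more with upper semi-continuity to track the non-admissible impressions'' is where the real work lies, and it is not carried out. What one actually needs is: each impression $I_\Phi(E)$ is connected (so lies entirely in $A$ or in $B$), and the side assignment $E\mapsto\{A,B\}$ is locally constant — the latter requires upper semi-continuity of $E\mapsto I_\Phi(E)$ at \emph{every} $E$ near $D$, not just at $D$ itself. Once you prove that, the detour through admissible divisors and density becomes unnecessary.

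The paper bypasses this entirely. It works with $\mathcal V_\rho(f):=\pi_D^{-1}(\U_\rho(D)\cap\partial_0^*{\rm Div}(\mathbb D)^S)=\bigcup_{E}I_\Phi(E)$ and proves, for any compact connected $\mathbf K\subset\partial_0^*{\rm Div}(\mathbb D)^S$ near $D$, that
\[
\pi_D^{-1}(\mathbf K)=\bigcap_{\epsilon>0}\overline{\Phi\Big(\bigcup_{E\in\mathbf K}\N_\epsilon(E)\Big)},
\]
a nested intersection of connected compacta (closures of images of connected open sets under $\Phi$). This gives connectedness of $\pi_D^{-1}(\mathbf K)$ immediately, with no appeal to admissible divisors, density, or clopen splittings. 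Openness of $\mathcal V_\rho(f)$ in $\partial\mathcal H$ then follows from continuity of $\pi_D$ (your $\Omega_n$, by contrast, is closed). The ``delicate remaining step'' you flag — local connectedness of $\partial\mathcal B^S$ at $D$ — is handled in the paper simply by observing $\U_\rho(D)\cap\partial_0^*{\rm Div}(\mathbb D)^S$ is path connected, and is not the crux.
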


To show the local   connectivity of $\partial \mathcal H$ at  $f$, we require a basis of open and connected neighborhoods of $f$ on $\partial \mathcal H$, with diameters  shrinking to $0$. However, since $\partial \mathcal H$ is a higher dimensional fractal set\footnote{In general, self-bumps occur on $\partial \mathcal H$, and  $\overline{\mathcal H}$ is not  a topological manifold with boundary \cite{Luo}.}, visualizing such neighborhoods is challenging. 
We resolve this difficulty by modeling  neighborhoods of $f$ through  {\it exotic fibre bundles over $(2d-3)$-cells}.  Specifically, each model $\mathcal M(\mathbf x, r)\subset \partial \mathcal H$ 
can be described as 
$$\mathcal M(\mathbf x, r):=\bigsqcup_{\mathbf y\in \mathbb B(\mathbf x, r)}\mathcal F(\mathbf y),$$
where 
\begin{itemize}
	\item the base point $\mathbf x$ is an $\mathcal H$-admissible divisor with $I_{\Phi}(\mathbf x)=\{f\}$;
	\item the base space  $\mathbb B(\mathbf x, r)$ is an open ball in $\partial {\rm Div}(\mathbb D)^S$,  with real dimension $(2d-3)$;
	
	\item for each $\mathbf y\in \mathbb B(\mathbf x, r)$, the fiber $\mathcal F(\mathbf y)$  corresponds to the impression $I_\Phi(\mathbf y)$.
	
\end{itemize}

The model $\mathcal M(\mathbf x, r)$  is exotic for two reasons: firstly,  the fibres are not homogeneous; secondly, the non-trivial fibres are compact fractals rather than regular manifolds. In our proof of Theorem \ref{boundary-lc} (in Section \ref{lc}), a basis of neighborhoods of $f$ on $\partial \mathcal H$ is given by a collection of models $\{\mathcal M(\mathbf x, r_n)\}_{n\geq 1}$, where $r_n\to 0$ as $n\to\infty$.\vspace{3pt}

Next, we are concerned with the local strata structure of  $\partial_\mathcal{A}\mathcal H$. This requires the notation of \emph{marked Fatou components}. \vspace{3pt}

Let ${\rm Crit}(g)$ denote the critical set of a polynomial $g$ in $\mathbb C$.   Let $f_0$ be the unique postcritically finite map in $\mathcal H$, and
let $V=\bigcup_{l\geq 0}f_0^l({\rm Crit}(f_0))$ be the finite set endowed with a self map $\sigma: V\rightarrow V, \sigma(v)=f_0(v)$. The   Fatou component of $f_0$ containing $v\in V$, denoted by $U_{f_0, v}$, deforms continuously when the base map  $f_0$  moves throughout $\mathcal H$. This naturally  induces a finite collection of  marked Fatou components $(U_{g, v})_{v\in V}$ for each $g\in \mathcal H$.  
Roughly speaking, an $\mathcal H$-admissible map $f$ is a Misiurewicz map on  $\partial \mathcal H$ with exactly $m_{\mathcal H}$ attracting cycles,  so that for any sequence $\{f_k\}_k\subset \mathcal H$ approaching $f$, the  marked Fatou components $(U_{f_k, v})_{v\in V}$  pinch off to form  a finite collection of the  marked Fatou components $(U_{f, v})_{v\in V}$ of $f$, with the following properties 

\begin{itemize}
	\item  $f(U_{f,v})=U_{f, \sigma(v)}$ for all $v\in V$;
	
	\item  ${\rm Crit}(f)\subset \bigcup_{v\in V}\overline{U_{f,v}}$;
	
	\item  each critical point   $c\in \bigcup_{v\in V}\partial{U_{f,v}}$ is simple, 
	disconnecting the Julia set $J(f)$ into two parts, 
	and the  forward orbit of $c$ avoids the boundary marked points and other critical points.  
\end{itemize}

We now estimate  the local Hausdorff dimension for 
each boundary strata of $\partial_\mathcal{A} \mathcal H $  near an  $\mathcal H$-admissible map.  For this end,
for each $\mathcal H$-admissible map $f$, we  associate a vector $\mathbf n(f)=(n_v(f))_{v\in V}$, where $n_v(f)= \#(\partial U_{f,v}\cap {\rm Crit}(f))$ is the number of the $f$-critical points  on $\partial U_{f,v}$. 
Let $\mathcal U$ be a small neighborhood of $f$ so that  the Fatou components   $(U_{g, v})_{v\in V}$  for $g\in \mathcal U$ can also be marked in the way that their centers  move continuously.  Note that $\mathcal U\ni g\mapsto\partial U_{g, v}$ is not Hausdorff continuous.

\begin{thm} [Local Hausdorff dimension] 
	\label{local-hausdorff-dim} 
	Let $f\in \partial\mathcal H$ be an $\mathcal H$-admissible map.
	For any  $ \mathbf n\neq \mathbf 0$ and $\mathbf{0}\leq  \mathbf n:=(n_v)_{v\in V}\leq \mathbf n(f)$, define the $ \mathbf n$-boundary stratum
	$$(\partial_\mathcal{A} \mathcal H)^{\mathbf n}_{\mathcal U}:=\big\{g\in \mathcal U\cap \partial_\mathcal{A} \mathcal H;  \     \mathbf n(g)=\mathbf n
	\big\}.$$
	Then  we have 
	$${\rm H.dim}((\partial_\mathcal{A} \mathcal H)^{\mathbf n}_{\mathcal U}, f)\geq \sum_{v\in V} n_v\cdot {\rm H.dim}(\partial U_{f,v})+2(d-1-|\mathbf n|), \ |\mathbf n|=\sum_{v\in V} n_v.$$
\end{thm}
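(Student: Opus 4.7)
The plan is to exhibit, near $f$, a product family of $\mathcal{H}$-admissible divisors whose image under $\overline{\Phi}$ realizes the required Hausdorff dimension, and then use the regularity of $\overline{\Phi}$ from Theorem \ref{thm:parameterization} to transfer the bound to $(\partial_\mathcal{A}\mathcal H)^{\mathbf n}_{\mathcal U}$. Write $\mathbf{x}=\overline\Phi^{-1}(f)\in\mathcal A$; by the structural description of $\mathcal H$-admissible maps, this divisor encodes, at each $v\in V$, a (possibly degenerate) Blaschke product modeling the marked component $U_{f,v}$, with $n_v(f)$ simple boundary critical points on $\partial U_{f,v}$ and a total of $d-1-|\mathbf n(f)|$ interior critical points distributed among the open disks.

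Fix $\mathbf n\leq \mathbf n(f)$. I would construct a parameter space
\[
\mathbf{T}\;=\;\Omega\;\times\;\prod_{v\in V}T_v,
\]
where $\Omega$ is a small polydisk in $\mathbb{C}^{d-1-|\mathbf n|}$ parametrizing free displacements of $d-1-|\mathbf n|$ interior critical points, and $T_v\subset(\partial U_{f,v})^{n_v}$ is a compact set of \emph{admissible} boundary $n_v$-tuples, i.e.\ tuples whose forward $f$-orbits avoid the grand orbits of all other critical points, the postcritical set, and the marked set $V$. For each $(\boldsymbol\omega,(\mathbf p_v)_v)\in\mathbf T$ I would build a new admissible divisor $\mathbf x(\boldsymbol\omega,(\mathbf p_v))$ close to $\mathbf x$ by partially resolving the degeneration at indices $v$ with $n_v<n_v(f)$, letting $n_v(f)-n_v$ boundary critical points of $f$ re-enter the open disk at positions recorded by the corresponding coordinates of $\boldsymbol\omega$, while pinning the remaining $n_v$ critical points of the perturbed polynomial to $\mathbf p_v\in T_v$ via a holomorphic-motion/quasiconformal surgery along $\partial U_{f,v}$. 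Combined with Theorem \ref{thm:parameterization}, the resulting assignment $\mathbf T\to (\partial_\mathcal A\mathcal H)^{\mathbf n}_{\mathcal U}$ is continuous and, on suitable coordinate charts, bi-Lipschitz.

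For the dimension count, admissible boundary points form a residual, full-Hausdorff-dimension subset of $\partial U_{f,v}$ (only a countable union of preimages of finitely many ``bad'' orbits is removed), and $T_v$ can be chosen as a conformal Cantor subset of $(\partial U_{f,v})^{n_v}$ generated by the expanding dynamics of $f|_{\partial U_{f,v}}$ having equal Hausdorff and lower box dimensions equal to $n_v\cdot {\rm H.dim}(\partial U_{f,v})$. Marstrand's product inequality, together with ${\rm H.dim}(\Omega)=2(d-1-|\mathbf n|)$, then yields
\[
{\rm H.dim}(\mathbf T)\;\geq\;\sum_{v\in V} n_v\cdot {\rm H.dim}(\partial U_{f,v})\;+\;2(d-1-|\mathbf n|),
\]
which transfers to $(\partial_\mathcal A\mathcal H)^{\mathbf n}_{\mathcal U}$ via the bi-Lipschitz parametrization; shrinking $\Omega$ and the diameters of the $T_v$ around $f$ produces the local Hausdorff dimension bound at $f$. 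The main obstacle will be verifying the bi-Lipschitz regularity of the parametrization along the boundary (fractal) directions and certifying that the constructed divisors remain in $\mathcal A$ with $\mathbf n(g)=\mathbf n$ exactly: one must ensure that no unintended degenerations appear, that the partially resolved interior critical points do not recombine with boundary ones, and that the admissibility forward-orbit conditions survive the surgery. This demands a careful local analysis of how Blaschke degenerations interact with prescribed critical points on the pinched Julia components, and is where the bulk of the technical work will lie.
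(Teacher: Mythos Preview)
Your overall architecture---parametrize nearby admissible maps by a product of interior moduli and boundary positions, then transfer dimension---is intuitively correct, but the mechanism you propose for the transfer has a genuine gap: the bi-Lipschitz regularity of your map $\mathbf T\to(\partial_\mathcal A\mathcal H)^{\mathbf n}_{\mathcal U}$ is not established and is almost certainly false. The boundary extension $\overline\Phi$ of Theorem~\ref{thm:parameterization} is only a homeomorphism, and the passage from divisor coordinates on $\partial\mathbb D$ to points on the fractal boundary $\partial U_{f,v}$ goes through the Carath\'eodory extension of a Riemann map onto a Jordan domain with fractal boundary, which is not Lipschitz. Without bi-Lipschitz control, Hausdorff dimension does not transfer, and Marstrand's product inequality gives you nothing about the image.

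The paper's proof (Section~\ref{lchd-via-p}) circumvents this regularity obstacle entirely by a different mechanism. Rather than prescribing the boundary critical points themselves, it prescribes where a fixed \emph{iterate} of each critical point lands inside a hyperbolic set $X_v\subset\partial_0 U_{f,v}$: one first applies the perturbation theorem (Theorem~\ref{local-perturbation0}) to find $g_0$ with $g_0^{m_j}(c_j(g_0))=h(g_0,x_j)$ and a neighborhood $\mathcal E(g_0,\rho_0)=\{g: g^{m_j}(c_j(g))\in h(g,X)\}\subset(\partial_\mathcal A\mathcal H)^{\mathcal J}_{\mathcal U}$. The dimension of $\mathcal E(g_0,\rho_0)$ is then bounded below via a \emph{transfer inequality} (Proposition~\ref{HD-trans}), which needs only that the map $g\mapsto(g^{m_j}(c_j(g))-h(g,x_j))_j$ has zero set of codimension exactly $\#\mathcal J$; this weak transversality is supplied by Gauthier's result (Lemma~\ref{w-transversality}). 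The transfer inequality exploits the H\"older regularity of holomorphic motions (not Lipschitz), and lets the H\"older exponent tend to $1$ by shrinking the parameter ball. The hyperbolic set $X_v$ is chosen (Lemma~\ref{hyp-dim-b}) to be homogeneous with ${\rm H.dim}(X_v)\geq{\rm H.dim}(\partial U_{f,v})-\varepsilon$, and letting $\varepsilon\to0$ finishes. Thus the paper never needs any Lipschitz control between divisor space and polynomial space; all the dimension estimates happen directly in $\mathcal P_d$, through the holomorphically-varying condition $g^{m_j}(c_j(g))\in h(g,X)$ and transversality.
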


One consequence of Theorem \ref{local-hausdorff-dim}  is, if    $\min_{v\in V}{\rm H.dim}(\partial U_{f,v})$ is sufficiently close to $2$ (see Proposition \ref{sup-hb-Fatou} for the existence of such $f$), then the boundary  stratum  $(\partial_\mathcal{A} \mathcal H)^{\mathbf n}_{\mathcal U}$ is
substantial, as its Hausdorff dimension approaches  $2d-2$. 


The proof  of Theorem \ref{local-hausdorff-dim}   is based on a perturbation theory on  the boundary $\partial \mathcal H$.  To formulate the theorem, we quickly introduce some notations. Let $f$ be an $\mathcal H$-admissible map,  whose critical points on  Julia set  are marked by $c_1, \cdots, c_n$ (they are different since each one is simple), so that $c_{j}\in \partial U_{f, v_j}$ for each $j$. Shrink $\mathcal U$ if necessary, there is a continuous map $c_j:   \mathcal U\rightarrow \mathbb C$ so that $c_j(g)$ is a $g$-critical point for  all $g\in  \mathcal U$ and all $1\leq j\leq n$.

For each $v\in V$, define 
$$\partial_0 U_{f, v}=\{x\in \partial U_{f,v};  J(f)\setminus\{x\}  \text{ is  connected}\}.$$
An equivalent definition  of $\partial_0 U_{f, v}$ in terms of {\it limb} is  given in \eqref{fatou-boundary-0}. 
Let $V_{\rm p}$ consist of all $\sigma$-periodic indices $v\in V$.

With these notations, the perturbation theorem reads:




\begin{thm}  [Perturbation on  $\partial \mathcal H$] \label{local-perturbation0} 
	Let $f\in \partial\mathcal H$ be an $\mathcal H$-admissible map, whose  critical points on Julia set are marked by  $c_1, \cdots, c_n$.  Suppose  the $f$-orbit of $c_j$ meets $\partial U_{f, u_j}$ for some  $u_j\in V_{\rm p}$.   Given  an $f$-hyperbolic set $X\subset \bigcup_{v\in V_{\rm p}}\partial_0 U_{f, v}$,  a holomorphic motion $h: \mathcal U\times X\rightarrow \mathbb C$  of $X$  based at $f$ (shrink $\mathcal U$ if necessary), an  index set $\emptyset\neq\mathcal J\subset \{1, \cdots, n\}$.

	Given  any number $\varepsilon>0$, and any 
	multipoint $ \mathbf x=(x_{j})_{j\in \mathcal{J}}\in  \prod_{j\in \mathcal J}(X\cap \partial U_{f, u_j})$.
	
	(1).  There exist  
	$g_0\in   \partial_\mathcal{A} \mathcal H\cap  \mathcal N_{\varepsilon}(f)$\footnote{Here $\mathcal N_{\varepsilon}(f)$ is the $\varepsilon$-neighborhood of $f$ in $\mathcal P_d$.},  positive integers $(m_{j})_{j\in  \mathcal{J}}$, 
	so that 
	$$g_0^{m_{j}}(c_{j}(g_0))=h(g_0,  x_{j}) , \forall j\in \mathcal J; \ \ c_{j}(g_0)\in U_{g_0, v_j},  \forall j\in\{1, \cdots, n\}\setminus \mathcal{J}.$$

	(2). Moreover, there is $\rho_0\in (0, \varepsilon)$ so that 
	$$\mathcal E(g_0, \rho_0):=\big\{g\in \mathcal N_{\rho_0}(g_0); \ g^{m_j}(c_j(g))\in h(g, X),  \  \forall j\in \mathcal J\big\}\subset \partial_\mathcal{A} \mathcal H\cap \mathcal  N_{\varepsilon}(f). $$
\end{thm}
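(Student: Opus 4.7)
The approach is a ``target-and-lock'' argument: force the marked critical orbits of a suitable perturbation $g_0$ of $f$ to land on preassigned points of the holomorphically moved hyperbolic set $h(g_0,X)$, then invoke Theorem \ref{thm:parameterization} to identify $g_0$ as a point of $\partial_{\mathcal{A}}\mathcal{H}$. Since $f$ is $\mathcal{H}$-admissible, each critical point $c_j$ on $J(f)$ is simple and extends to a holomorphic function $c_j:\mathcal{U}\to\mathbb{C}$; moreover, for $j\in\mathcal{J}$ the $f$-orbit of $c_j$ eventually enters the periodic boundary $\partial U_{f,u_j}$ and accumulates on $X\cap\partial_0 U_{f,u_j}$, along which $f^{p_j}$ is uniformly expanding (here $p_j$ denotes the $\sigma$-period of $u_j$).

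For Part (1), choose positive integers $m_j$ large enough that $f^{m_j}(c_j)$ lies in a tiny neighborhood of $x_j$ inside $\partial U_{f,u_j}$, and define the holomorphic map
\begin{equation*}
\Psi(g)=\bigl(g^{m_j}(c_j(g))-h(g,x_j)\bigr)_{j\in\mathcal{J}}\in\mathbb{C}^{|\mathcal{J}|}
\end{equation*}
on a small parameter neighborhood of $f$ in $\mathcal{P}_d$. By the chain rule and uniform expansion of $f$ along $X$, the differential $D_g\bigl[g^{m_j}(c_j(g))\bigr]\big|_{g=f}$ grows like $\kappa^{m_j}$ for some $\kappa>1$, while $D_g h(\cdot,x_j)$ stays bounded. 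Since $|\mathcal{J}|\leq n\leq d-1-m_{\mathcal{H}}$, there are enough free holomorphic directions in $\mathcal{P}_d$ to arrange, by well-chosen $m_j$, that $D\Psi(f)$ is surjective onto $\mathbb{C}^{|\mathcal{J}|}$; this is the Misiurewicz-type transversality adapted to the boundary of $\mathcal{H}$. The implicit function theorem then yields $g_0\in\mathcal{N}_\varepsilon(f)$ with $\Psi(g_0)=0$. For $j\notin\mathcal{J}$, continuity and the absence of a competing constraint absorb $c_j(g_0)$ into the Fatou component $U_{g_0,v_j}$. The three bullet conditions in the definition of $\mathcal{H}$-admissibility (simplicity of each $c_j(g_0)$ for $j\in\mathcal{J}$, disconnection of $J(g_0)$ by each such critical point, and avoidance of marked points and other critical points along the forward orbit) are then verified directly, using that each target $x_j\in X$ is generic in $\partial_0 U_{f,u_j}$ and that $g_0$ is close enough to $f$ that the $m_{\mathcal{H}}$ attracting cycles persist. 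This places $g_0\in\partial_{\mathcal{A}}\mathcal{H}$.

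For Part (2), extend $h$ to a neighborhood of $X$ via the Bers--Royden $\lambda$-lemma; structural stability of hyperbolic sets implies that $h(g,X)$ remains a hyperbolic set for $g$ in a sufficiently small $\rho_0$-ball around $g_0$. The conditions $g^{m_j}(c_j(g))\in h(g,X)$, $j\in\mathcal{J}$, then cut out a closed subset of $\mathcal{N}_{\rho_0}(g_0)$, and any $g$ satisfying them inherits from $g_0$ the full combinatorial data (mapping scheme, attracting-cycle structure, and critical-orbit landing pattern), whence $g\in\partial_{\mathcal{A}}\mathcal{H}\cap\mathcal{N}_\varepsilon(f)$. The principal obstacle is the transversality assertion in Part (1): when several marked critical orbits accumulate on overlapping periodic boundaries $\partial U_{f,u_j}$, the chain-rule expansions of the rows of $D\Psi(f)$ become coupled, and one must show they remain linearly independent. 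This is handled by an independent choice of the arrival times $m_j$ (exploiting the expanding dynamics on $X$) combined with the mapping-scheme structure underlying the Blaschke-divisor parametrization of Theorem \ref{thm:parameterization}, which separates the $|\mathcal{J}|$ landing directions inside the tangent space $T_f\mathcal{P}_d$.
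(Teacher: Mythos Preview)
Your proposal has a fundamental gap in Part (1). You write ``choose positive integers $m_j$ large enough that $f^{m_j}(c_j)$ lies in a tiny neighborhood of $x_j$,'' and that the $f$-orbit of $c_j$ ``accumulates on $X\cap\partial_0 U_{f,u_j}$.'' But $f$ is $\mathcal H$-admissible, hence Misiurewicz: every critical point on $J(f)$ is strictly pre-periodic, so $\{f^m(c_j):m\ge 0\}$ is a \emph{finite} set. There is no reason for this orbit to come near a prescribed $x_j\in X$, and in general it will not. Consequently $\Psi(f)$ is bounded away from $0$, and neither the implicit function theorem nor a growth estimate on $D\Psi(f)$ produces a zero of $\Psi$ in $\mathcal N_\varepsilon(f)$.

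The paper avoids this obstruction by working in the Blaschke-divisor model rather than directly in $\mathcal P_d$. Writing $D=\big((B_u^0,D_u^\partial)\big)_{u\in V}$ for the $\mathcal H$-admissible divisor with $I_\Phi(D)=\{f\}$ and $\lambda_{u,k}=\psi_{f,v(u)}(x_{u,k})$, one exploits the density of the \emph{backward} orbit $\bigcup_{l\ge0}(\widehat B^0_{v(u)})^{-l}(\lambda_{u,k})$ in $\partial\mathbb D$ (since ${\rm deg}\,\widehat B^0_{v(u)}\ge 2$) to pick points $\zeta_{u,k}$ close to $q_{u,k}=\psi_{f,u}(c_{u,k}(f))$ with $R^{D^*}_{u,k}(\zeta_{u,k})=\lambda_{u,k}$ for suitable compositions $R^{D^*}_{u,k}$. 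A delicate combinatorial argument (lower bounds on the depth of $\zeta_{u,k}$ in the inverse-orbit tree) guarantees that the new divisor $D_0$ is again $\mathcal H$-admissible; the single map $g_0\in I_\Phi(D_0)$ is then the desired perturbation. Your ``independent choice of arrival times'' does not substitute for this, because the coupling problem is not linear-algebraic but combinatorial: one must prevent the forward orbit of one $\zeta_{u,k}$ from hitting another.

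Part (2) also has a gap: you assert that any $g\in\mathcal E(g_0,\rho_0)$ ``inherits from $g_0$ the full combinatorial data\ldots whence $g\in\partial_{\mathcal A}\mathcal H$,'' but membership in $\partial\mathcal H$ is not automatic from a critical-orbit constraint. The paper shows this by (i) a critical-portrait criterion (Proposition \ref{pro:criterion}) identifying which hyperbolic maps near $f$ lie in $\mathcal H$, (ii) a claim that if $g^{m_j}(c_j(g))\in U_{g,\sigma^{m_j}(v_j)}$ for all $j\in\mathcal J$ then $c_j(g)\in U_{g,v_j}$, and (iii) a topological degree argument (not the implicit function theorem) producing, for each $g_*\in\mathcal E(g_0,\rho_0)$, a sequence $g_k\to g_*$ with $g_k^{m_j}(c_j(g_k))$ on an internal arc inside $U_{g_k,\sigma^{m_j}(v_j)}$, hence $g_k\in\mathcal H$ by (i)--(ii). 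Structural stability of $h(g,X)$ alone does not supply this.
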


The last ingredient  in the proof of  Theorem \ref{lchd-gf-section1} is the theory of parabolic implosion.  With the help of this technique, Shishikura \cite{S} construct \emph{large}\footnote{Here,  a hyperbolic set in $\mathbb C$ is `large', if its Hausdorff dimension is close to $2$}  hyperbolic sets within the Julia sets of rational maps (see  Theorem \ref{thm:shishikura}). In this work, we refine his approach by controlling the orbits of the points in these hyperbolic sets. This improvement formulates a condition on which the large hyperbolic sets lie on the boundaries of attracting Fatou components (see Lemma \ref{lem:on-boundary}). Applying this result to polynomials, we obtain the following:

\begin{thm}\label{thm:boundary-dimension0}
	Let $f\in\mathcal C_d$ be a subhyperbolic\footnote{A rational map $f$ is called  {\it subhyperbolic} if each of its critical orbits either is preperiodic or tends to an attracting cycle .} map. Let $O_1(f),\ldots,O_m(f)$ be a collection of marked geometrically-attracting cycles\footnote{It means that the multiplier of this cycle has modulus  $\in(0,1)$.}, each of which attracts only one critical point. Then there exists a geometrically finite map $g_0\in\mathcal C_d$ so that the following property holds:
	
	For any $\epsilon,\rho>0$, there exists a polynomial $\t f\in\mathcal N_\rho(g_0)$ quasiconformally conjugate to $f$ such that, for every $k\in\{1,\ldots,m\}$,
	\begin{itemize}
		\item the boundary of the immediate attracting basin of $O_k(\t f)$ contains a hyperbolic set with Hausdorff dimension larger than $2-\epsilon$, and\vspace{2pt}
		\item this hyperbolic set is disjoint from the cut-points\footnote{We say $a\in K$ is a cut point of a  compact continuum $K$ if $K\setminus\{a\}$ is disconnected.} of $J(\t f)$.
	\end{itemize}
\end{thm}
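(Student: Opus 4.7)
The plan is to choose $g_0$ as a boundary point of the hyperbolic component $\mathcal{H}$ containing $f$ at which each marked cycle $O_k(f)$ has been degenerated into a parabolic cycle of multiplier $1$, and then to invoke Shishikura's parabolic implosion (Theorem \ref{thm:shishikura}) together with Lemma \ref{lem:on-boundary} to produce the perturbation $\t f$ and the required large hyperbolic sets on the boundaries of the immediate attracting basins.

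\emph{Step 1: construction of $g_0$.} Each marked cycle $O_k(f)$ captures exactly one critical orbit, so the multipliers of $O_1(f),\ldots,O_m(f)$ provide local holomorphic coordinates on $\mathcal{H}$. I would move these $m$ multipliers simultaneously to $1$ along a smooth path in parameter space, while holding the remaining attracting multipliers fixed. The limit $g_0\in\partial\mathcal{H}$ then carries $m$ parabolic cycles of rotation number $0$, each absorbing exactly one critical orbit, while all the other critical orbits continue to behave as in $f$. In particular $g_0$ is geometrically finite, and any small perturbation of $g_0$ from the $\mathcal{H}$-side is qc-conjugate to $f$.

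\emph{Step 2: Shishikura implosion onto the correct basin boundary.} Given $\epsilon,\rho>0$, I would apply Theorem \ref{thm:shishikura} to each parabolic cycle of $g_0$: for all sufficiently small perturbations $g$ of $g_0$ taken in an appropriate implosion direction, there exist hyperbolic sets $X_k(g)\subset J(g)$ of Hausdorff dimension larger than $2-\epsilon$, supported in a preassigned small neighborhood of $O_k(g_0)$. Choosing the implosion direction inside $\mathcal{H}$ simultaneously re-bifurcates each $O_k(g_0)$ into a geometrically-attracting cycle $O_k(\t f)$ capturing a single critical point, so that $\t f:=g\in\mathcal{N}_\rho(g_0)$ is qc-conjugate to $f$. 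I would then invoke Lemma \ref{lem:on-boundary} at each parabolic cycle to control the symbolic itineraries of the points produced by Shishikura's construction, thereby placing $X_k(\t f)$ on the boundary of the immediate basin $U_k(\t f)$ of $O_k(\t f)$.

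\emph{Step 3: avoiding cut-points and main obstacle.} Since $\t f$ is qc-conjugate to the subhyperbolic map $f$, the cut-points of $J(\t f)$ are landing points of finitely many periodic ray-pairs and their iterated preimages, hence form a countable set; because each $X_k(\t f)$ has Hausdorff dimension close to $2$ it cannot be contained in such a set, and by refining the symbolic coding underlying Shishikura's Cantor-like construction one arranges $X_k(\t f)$ to be disjoint from the cut-points while preserving the dimension estimate. The main obstacle is Step 2: running parabolic implosion simultaneously at all $m$ parabolic cycles while both keeping the perturbation inside $\mathcal{H}$ \emph{and} ensuring that every resulting hyperbolic set lands on the designated immediate-basin boundary. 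This is handled using the dynamical independence of the $m$ parabolic cycles of $g_0$, so that Lemma \ref{lem:on-boundary} can be applied one cycle at a time, together with the fact that the implosion parameters for different cycles can be tuned independently in the parameter-space coordinates given by the $m$ multipliers.
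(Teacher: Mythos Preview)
Your overall strategy---degenerate each $O_k(f)$ to a parabolic cycle and then combine Theorem~\ref{thm:shishikura} with Lemma~\ref{lem:on-boundary}---matches the paper's. The gap is in Step~1, and it propagates through Steps~2 and~3.

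You send each multiplier to $1$, i.e.\ you take $q_k=1$. The paper instead sends the multiplier of $O_k$ to $e^{2\pi i/q_k}$ with \emph{large, pairwise distinct} integers $q_k$ satisfying $\ell_kq_k>N(f)$, where $N(f)$ bounds the periods of all periodic Fatou components, periodic postcritical points, and periodic cut-points on attracting boundaries (Lemma~\ref{lem:separate}). This choice is what makes Lemma~\ref{lem:on-boundary} applicable: its hypothesis~\eqref{eq:keyproperty1} asks for a domain $\De\supset\ov{\Omega_{g_0}^*}$ disjoint from $g_n^{-1}(A_n^*)\setminus A_n^*$, and the paper builds $\De$ by cutting along external ray-pairs through the finitely many cut-points $x_i\in\partial A^*$. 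For this one needs that (i) the limits $x_i(g_0)$ remain pre-repelling and (ii) they lie off $\ov{\Omega_{g_0}^*}$; both follow because every immediate parabolic basin of $g_0$ has exact period $\ell_kq_k>N(f)$ while each $x_i$ has period $\le N(f)$ (Lemma~\ref{lem:separate}(3),(4)). With your $q_k=1$ the parabolic basins have period $\ell_k\le N(f)$: cut-points of period $\ell_k$ can merge with the parabolic cycle, critical points can land on $\partial\Omega_{g_0}^*$ (violating the standing assumption preceding Lemma~\ref{lem:orbit}), and the separating ray-pairs need not persist at $g_0$. So you cannot, in general, verify~\eqref{eq:keyproperty1}, and your Step~2 does not go through.

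Your Step~3 argument is also incorrect as written: countability of the cut-point set together with ${\rm H.dim}(X_k)>2-\epsilon$ shows only $X_k\not\subset\{\text{cut-points}\}$, not disjointness. The paper's mechanism is the period bound in Lemma~\ref{lem:on-boundary}: every periodic point of $X_{\t f}$ has period $>M$, and taking $M\ge N(f)$ together with Fact~\ref{pro:separation} (every cut-point on an attracting boundary is iterated to a critical point or a periodic point of period $\le N(f)$) and forward invariance of the hyperbolic set $X_{\t f}$ forces $X_{\t f}\cap\{\text{cut-points}\}=\emptyset$. This, too, relies on the large-$q_k$ choice via Lemma~\ref{lem:separate}.
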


The panorama of the proof and implications of the theorems are  summarized as follows: 

\begin{figure}[h]
	\begin{center}
		\includegraphics[height=7cm]{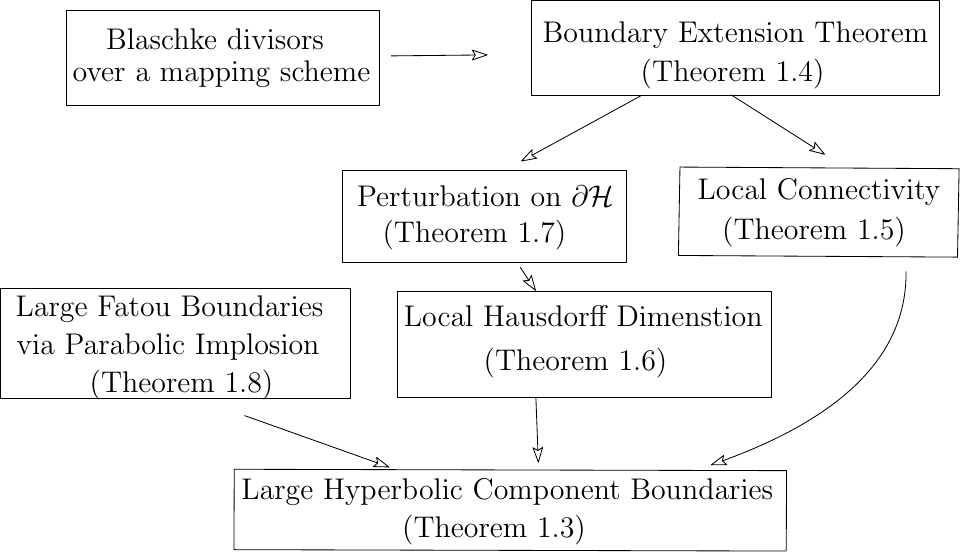}
	\end{center}
\end{figure}

\noindent\textbf{Structure of the paper.}  In Section \ref{perturbation-hyp-set}, we  examine  some perturbation properties  of hyperbolic sets for rational maps.

The theory of Blaschke divisors over a mapping scheme is developed 
 from Section \ref{Blaschke-divisors-over-S} through Section \ref{impression-singleton}. Precisely, Blaschke divisors over a mapping scheme are introduced in Section \ref{Blaschke-divisors-over-S}, and they are related to the hyperbolic components via Milnor's parameterization   in Section \ref{p-d-h-c}.  In Section \ref{imp-divisor},
the impression $I_{\Phi}(D)$ of a divisor $D$ is introduced and the divisor equality for the maps in $I_{\Phi}(D)$ is established.  It is worth noting that the combinatorial information of the hyperbolic component comes into play.    The combinatorial properties for the maps in 
$I_{\Phi}(D)$ are given in Section \ref{comb-pro-imp}. In Section \ref{h-adm-div}, we define and study  {\it $\mathcal H$-admissible divisors}. Especially,  a  criterion for $\mathcal H$-admissibility is given therein.   In Section \ref{impression-singleton}, we shall show that the impression $I_{\Phi}(D)$ of  an $\mathcal H$-admissible divisor $D$ is a singleton. This key property enables the study of the boundary extension of $\Phi$, the local connectivity of $\partial \mathcal H$,  and the perturbation along   $\partial \mathcal H$.


In Section \ref{bet}, we prove the boundary extension theorem--Theorem \ref{thm:parameterization}, based on a  non local self-intersection property of $\partial \mathcal H$.
In Section \ref{lc},  we prove Theorem \ref{boundary-lc}. In Section \ref{pb-boundary},  by  some perturbation properties of hyperbolic sets given  in Section \ref{perturbation-hyp-set},  we prove a slightly general form (Proposition \ref{pertubation-boundary-strata}) of Theorem \ref{local-perturbation0} (1), and  Theorem \ref{local-perturbation0} (2) is proved in  Section \ref{lchd-via-p}.
Based on Theorem \ref{local-perturbation0},  a more precise form of 
Theorem \ref{local-hausdorff-dim}  is proven in Section \ref{lchd-via-p}. 

The theory of parabolic implosion is reviewed in Sections \ref{sec:parabolic-implosion}.  In Section \ref{hd-attracting},  a new property of the large hyperbolic set (under certain condition, see Lemma \ref{lem:on-boundary})  is formulated, and this allows us to construct    Fatou   boundary  with arbitrarily large Hausdorff dimension (Theorem \ref{thm:boundary-dimension0}).

We prove Theorem \ref{lchd-gf-section1}   in Section \ref{proof-main}.






\vspace{5pt}
\noindent\textbf{Notes and references.}   
The Hausdorff dimension of  the bifurcation locus in the families of polynomials or rational maps has been extensively studied  in \cite{S,Tan-dimension, Mc00, G}.

The   hyperbolic components in the connectedness  locus $\mathcal C_d\subset \mathcal P_d$ are shown to be topological cells \cite{M2}.  Their boundaries are complicated \cite{Mc94b, Mil-boundary}. Previous works are focus on settings of the main hyperbolic component $\mathcal H_d$ or some lower dimensional subspaces. 

An analytic coordinate of $\mathcal H_3$ with extension to the regular part $\partial_{\rm reg}\mathcal H_3$   is given in \cite{PT}. 
 The boundary $\partial \mathcal H_3$ is explored from the combinatorial viewpoints by \cite{BOPT14, BOPT18}. 
  A combinatorial classification of geometrically finite polynomials on $\partial \mathcal H_d$ is established in \cite{Luo}, based on the dynamics of  Blaschke products \cite{Mc3, Mc4, Mc2}.
 
 In some typical subspaces of $\mathcal P_d$, the topology of the hyperbolic component boundaries is known \cite{F, R, W21a, CWY}. A rational case is given in \cite{Mil-boundary}.


\vspace{5pt}
  
  \noindent\textbf{Acknowledgments.}  
  The research is supported by the National Key R$\&$D Program of China (Grant No. 2021YFA1003200, 2021YFA1003203), NSFC (Grant No. 12322104), NSFGD (Grant No. 2023A1515010058), and the Fundamental Research Funds for the Central Universities.

\section{Perturbation of hyperbolic sets}\label{perturbation-hyp-set}

Let $\Lambda$ be a  connected topological space (resp. complex manifold).  
A {\it  continuous (resp. holomorphic) motion } of a set $X\subset \C$ with basepoint $\lambda_0\in \Lambda$,  is a continuous mapping 
$h: \Lambda \times X\rightarrow \C$
satisfying that

\begin{itemize}
	\item for any  $\lambda\in \Lambda$, $x\mapsto h(\lambda, x)$ is injective in $X$;
	
	\item  for any  $x\in X$, $\lambda\mapsto h(\lambda, x)$ is continuous (resp. holomorphic);
	
	\item $h(\lambda_0, \cdot)={\rm id}$.
\end{itemize}

Let $\Omega\subset \C$ be a domain, and $f: \Omega \rightarrow \C$ be a holomorphic map. 
A compact set  $X\subset \Omega$  is called a {\it hyperbolic set } for $f$,  if $f(X)\subset X$ and there exist $c>0$ and $\mu>1$ such that 
$\|(f^k)'(x)\|\geq c\mu^k$ for all $x\in X, k\geq 1$, where $\|\cdot \|$ denotes the norm of
the derivative with respective to the spherical metric of $\C$.  The spherical metric can be replaced by any comparable conformal metric  on $X$. 
It is known that if $f$ is a rational map, then $X\subset J(f)$.

The following fact implies that a hyperbolic set admits a locally unique continuous motion in a continuous family of holomorphic maps. This result is well known in the setting of holomorphic family of rational maps (see \cite{MS} Theorem 2.3 page 225 or \cite{S} Section 2).

\begin{pro} \label{hyp-set}  Let $\mathcal{F} $ be a continuous family of holomorphic maps defined on a domain $\Omega\subset \C$, and let $X\subset \Omega$ be a hyperbolic set of $f_0\in \mathcal F$.   
	
	1. (Existence)  There is a neighborhood $\mathcal N\subset\mathcal{F}$ of $f_0$, and a continuous motion $h:\mathcal{N}\times X\rightarrow \C$ of $X$ with basepoint $f_0$, satisfying that 
	\begin{equation}\label{conj-motion}
		f(h(f,x))=h(f, f_0(x)), \ \forall(f, x)\in \mathcal{N}\times X.
	\end{equation}
	Further, if $\mathcal F$ is a holomorphic family, then $h$  is a holomorphic motion.

	
	2. (Local uniqueness, for continuous motions) If  $h_1, h_2:\mathcal{N}\times X\rightarrow \C$ are two continuous motions satisfying (\ref{conj-motion}),  then there is a neighborhood $\mathcal{V}\subset   \mathcal{N}$ of $f_0$ such that 
	$$h_1|_{\mathcal{V}\times X}=h_2|_{\mathcal{V}\times X}.$$
	
	3. (Uniqueness, for holomorphic motions) If $h_1, h_2:\mathcal{N}\times X\rightarrow \C$  are two
	holomorphic motions satisfying   (\ref{conj-motion}),  then $h_1 = h_2$.

	
\end{pro}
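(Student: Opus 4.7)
The plan is to realize the motion $h$ as the unique fixed point of a uniform contraction built from local inverse branches of $f$, and then derive both uniqueness statements from this fixed-point characterization. To set up, pick a conformal metric $\rho$ on a neighborhood $U\supset X$, comparable to the spherical metric, in which $\|f_0'(x)\|_\rho\geq\mu>1$ for every $x\in X$. Choose $\epsilon>0$ small enough that for each $x\in X$, $f_0$ is univalent on the ball $B_\rho(x,\epsilon)\subset U$ with image covering $B_\rho(f_0(x),\mu\epsilon)$. Continuity of $\mathcal F$ combined with Cauchy's estimates (which upgrade continuous to $C^1$ convergence for holomorphic maps on a neighborhood of $X$) yields a neighborhood $\mathcal N\subset\mathcal F$ of $f_0$ such that every $f\in\mathcal N$ admits a univalent inverse branch $f_x^{-1}:B_\rho(f_0(x),\epsilon)\to B_\rho(x,\epsilon)$, contracting by a factor $\mu_1^{-1}$ for some uniform $\mu_1\in(1,\mu)$.

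For existence, let $\mathcal S_\epsilon$ denote the closed ball of radius $\epsilon$ around the inclusion $\iota:X\hookrightarrow\C$ in the Banach space of bounded continuous maps $X\to\C$ equipped with the supremum $\rho$-distance. For $f\in\mathcal N$ define
\[
T_f:\mathcal S_\epsilon\to\mathcal S_\epsilon,\qquad (T_f s)(x)=f_x^{-1}\bigl(s(f_0(x))\bigr).
\]
This operator is well-posed (since $s(f_0(x))\in B_\rho(f_0(x),\epsilon)$) and $\mu_1^{-1}$-contracting uniformly in $f\in\mathcal N$, so Banach's fixed-point theorem gives a unique fixed point $s_f\in\mathcal S_\epsilon$; I would set $h(f,x):=s_f(x)$. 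The fixed-point equation $T_f s_f=s_f$ is precisely the equivariance \eqref{conj-motion}, and $s_{f_0}=\iota$ by inspection. Continuous dependence of $T_f$ on $f$ in operator norm yields continuity of $f\mapsto s_f$, and hence of $h$, on $\mathcal N\times X$; in the holomorphic-family case the branches $f_x^{-1}$ depend holomorphically on $f$, so $T_f$ is a holomorphic family of contractions and the fixed point $s_f$ varies holomorphically in $f$, producing a holomorphic motion. Injectivity of $h(f,\cdot)$ follows by shadowing: distinct $x,y\in X$ have $f_0$-orbits eventually separated by more than $2\epsilon$, forcing $s_f(x)\neq s_f(y)$.

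For local uniqueness (part 2), given any other continuous motion $\tilde h$ satisfying \eqref{conj-motion}, the equivariance is equivalent to $T_f\tilde h(f,\cdot)=\tilde h(f,\cdot)$; since $\tilde h(f_0,\cdot)=\iota\in\mathcal S_\epsilon$, continuity supplies a neighborhood $\mathcal V\subset\mathcal N$ of $f_0$ on which $\tilde h(f,\cdot)\in\mathcal S_\epsilon$, and uniqueness of the Banach fixed point forces $\tilde h=h$ on $\mathcal V\times X$. For global uniqueness (part 3), if $h_1,h_2$ are holomorphic motions on the connected complex manifold $\Lambda$, part 2 makes them agree on an open neighborhood of $\lambda_0$; since each $\lambda\mapsto h_i(\lambda,x)$ is holomorphic on $\Lambda$, the identity principle applied for each $x\in X$ gives $h_1=h_2$ everywhere on $\Lambda\times X$.

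The main obstacle is the uniform setup of the inverse branches: one must verify that $B_\rho(f_0(x),\epsilon)$ lies in the image of a well-defined univalent branch of $f^{-1}$ near $x$ uniformly in $x\in X$ and $f\in\mathcal N$, and that the contraction factor $\mu_1^{-1}$ is uniform. This rests on the compactness of $X$ together with the $C^1$-control on $f$ provided by Cauchy's estimates; once this quantitative step is in place, the remainder of the argument reduces to routine Banach fixed-point theory and the identity theorem.
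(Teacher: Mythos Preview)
Your proof is correct and follows essentially the same shadowing strategy as the paper: both build the motion from uniformly contracting inverse branches in an adapted expanding metric, derive local uniqueness from the fact that any equivariant motion staying $\epsilon$-close to the identity must coincide with the unique fixed point of this contraction, and obtain global holomorphic uniqueness via the identity theorem. The only difference is packaging---you invoke Banach's fixed-point theorem on the space of bounded sections $X\to\C$, whereas the paper works pointwise, realizing $h(f,x)$ as the singleton $\bigcap_{n\geq 0} f^{-n}\bigl(\overline{D_\rho(f_0^n(x),r)}\bigr)$ and exhibiting it as the uniform limit of the explicit pullbacks $H_n(f,x)=f|_{D_\rho(x,r)}^{-1}\circ\cdots\circ f|_{D_\rho(f_0^n(x),r)}^{-1}(f_0^{n+1}(x))$, which are precisely your Picard iterates $T_f^{n+1}\iota$.
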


\begin{rmk}   The local uniqueness does not imply  uniqueness. 
	The reason is that the motion $h(f, X)$ of the $f_0$-hyperbolic set $X$ might not be a hyperbolic set for a certain $f\in \mathcal N$.  Here is an example. Let 
	$f_t(z)=z^2+t, t\in \mathcal{N}: =(-1,1)$.   Consider two continuous motions of  $X=\{1\}$:
	$$h_1(f_t, 1) =\begin{cases}\frac{1+\sqrt{1-4t}}{2},  & t\in (-1, 1/4]\\
		\frac{1+i\sqrt{4t-1}}{2},  & t\in (1/4, 1)
	\end{cases}, \  h_2(f_t, 1) =\begin{cases}\frac{1+\sqrt{1-4t}}{2},  & t\in (-1, 1/4]\\
		\frac{1-i\sqrt{4t-1}}{2},  & t\in (1/4, 1)
	\end{cases}.$$
	It is clear that $h_k(f_t, 1)$ is a fixed point of $f_t$ for all $t\in \mathcal{N}$. Note that
	$h_1\neq h_2$, but $h_1(f_t, 1)=h_2(f_t,1)$ for $t\in (-1, 1/4)$.
\end{rmk}

\begin{proof}   The argument   combines   the proof of   \cite[Theorem 2.3]{MS}  and  \cite[Section 2]{S}. 
	Without loss of generality, assume $X\subset \mathbb{C}$.
	Since $X$ is a hyperbolic set of $f_0$, there is an integer $m\geq 1$ such that 
	$|(f_0^m)'(x)|>1$ for all $x\in X$. 
 
	Define a conformal metric $\rho=\rho(z)|dz|$ in a neighborhood $U$ of $X$, where
	$$\rho(z)=\sum_{k=0}^{m-1}|(f_0^k)'(z)|, \ z\in U.$$ 
	One may verify that for all $x\in X$,  the $\rho$-derivative of $f_0$
	$$|f_0'(x)|_\rho:=\frac{\rho(f_0(x))}{\rho(x)}|f_0'(x)|=1+\frac{|(f_0^m)'(x)|-1}{\rho(x)}>1.$$ 
	
	Let $D_\rho(x, r)$ be the $\rho$-disk of $x\in X$.    By the  uniform continuity of the map   $(\mathcal F\times U)\ni (f,z)\mapsto |f'(z)|_\rho$ on compact sets,  there exist $r>0, \kappa>1$ and a neighborhood $\mathcal N$ of $f_0$ such that
	$X^r:=\bigcup_{x\in X} D_\rho(x, r)\subset U$, and
	\bess	&|f'(z)|_\rho:=\frac{\rho(f(z))}{\rho(z)}|f'(z)|\geq\kappa, \   \forall (f, z)\in \mathcal{N}\times X^r,&\\
	&	D_{\rho}(f_0(x), r)\Subset f(D_{\rho}(x, r)) , \  \forall (f,x)\in \mathcal{N}\times X.&
	\eess


	
	
	For any $(f,x)\in \mathcal{N}\times X$, define
	$$E^f_n(x)=\bigcap_{0\leq k\leq n}f^{-k}(\overline{D_{\rho}(f_0^k(x), r)}), \  n\geq 0.$$
	By the second property above, we have $E^f_0(x)\supset E^f_1(x)\supset\cdots$. Since  $f^n:  E^f_n(x)\rightarrow D_{\rho}(f_0^n(x), r)$ is conformal  and $|(f^n)'(z)|_\rho\geq \kappa^n$ for all $z\in E^f_n(x)$, we have
	${\rm diam}_{\rho}(E^f_n(x))\leq 2r\kappa^{-n}.$
	Therefore $E^f_\infty(x):=\bigcap E^f_n(x)$ is a singleton, denoted by $h(f,x)$.
	
Note that $h(f_0, \cdot)={\rm  id}$, $h(f, \cdot)$ is injective  and equality (\ref{conj-motion}) holds. To see that $h$ is continuous,    define a sequence of maps 
	$H_n: \mathcal{N}\times X\rightarrow \C$ by
	\begin{equation} \label{inverse-lim}
		H_n(f,x):=f|_{D_{\rho}(x, r)}^{-1}\circ\cdots \circ f|_{D_{\rho}(f_0^n(x), r)}^{-1}(f_0^{n+1}(x)), \ (f,x)\in \mathcal{N}\times X,  \ n\geq 1.
	\end{equation}
	One may verify  that $H_n$ is continuous, and  $H_n(f,x)\in E^f_{n+1}(x)$.
	The fact 
	$$d_\rho(H_n(f,x), h(f,x)) \leq {\rm diam}_{\rho}(E^f_{n+1}(x))\leq 2r\kappa^{-n-1}$$ 
	implies that
	$h$ is the uniform limit of $H_n$'s, hence $h$ is continuous.

	If $\mathcal{F}$ is a holomorphic family,  by equation (\ref{inverse-lim}),  $H_n(f,x)$ is holomorphic in $f$. 
	By Weierstrass Theorem, $h$ is a holomorphic motion.

	2.  Since  $h_1, h_2:\mathcal{N}\times X\rightarrow \C$ is continuous, by the uniform continuity in compact sets, for the $r>0$ given  above, there is a smaller neighborhood $\mathcal{V}\Subset \mathcal{N}$ of $f_0$ such that for any $(f,x)\in \mathcal{V}\times X$,
	\begin{equation} \label{uniform-continuity}
		d_{\rho}(h_j(f, x), h_j(f_0, x))= d_{\rho}(h_j(f, x), x)< r, \ \ j=1,2.
	\end{equation}
	Since $h_1, h_2$ satisfy equation (\ref{conj-motion}),  for all $x\in X$ and $n\geq 1$,  by (\ref{uniform-continuity}) we have
	$f^n(h_j(f,x))= h_j(f, f_0^n(x))\in D_{\rho}(f_0^n(x), r)$. It follows that 
	$h_j(f,x)\in E^f_\infty(x)$.  As is known that  $E^f_\infty(x)$ is a singleton, we have $h_1(f,x)=h_2(f,x)$.
	
	3.  By 2,  
	we have the  local uniqueness:
	$h_1(f,x)=h_2(f,x)$  whenever    $(f,x)\in \mathcal{V}\times X$. Since $f\mapsto h_j(f, x)$ is holomorphic, the Identity Theorem of holomorphic maps
	gives $h_1(f, x)=h_2(f, x)$ for  $(f,x)\in \mathcal{N}\times X$.
\end{proof}

  Let $\mathcal{F} $ be a continuous family of rational maps of degree $d\geq 2$.
  Let $f_0\in \mathcal{F}$ have an attracting periodic point $\alpha_0$, with period $p\geq 1$. 
  Let $U_{f_0}(\alpha_0)$ be the Fatou component containing 
   $\alpha_0$.  
Then there is a neighborhood $\mathcal U$ of $f_0$ and a continuous map $\alpha: \mathcal U\rightarrow \mathbb C$ so that $\alpha(f)$ is an attracting point of $f\in \mathcal U$, with $\alpha(f_0)=\alpha_0$.  
Let $U_f(\alpha(f))$ be the Fatou component containing $\alpha(f)$.

\begin{pro} \label{holo-hyperbolic-set} Let $X\subset \partial U_{f_0}(\alpha_0)$ be a hyperbolic set of $f_0^p$.  Suppose that $\partial U_{f_0}(\alpha_0)$  is connected and locally connected.  Then there exist $\mathcal V\subset \mathcal U$,   and a continuous motion $h: \mathcal V\times X\rightarrow \mathbb C$,   such that  
	$h(f,  X)\subset  \partial U_{f}(\alpha(f))$ and
		$f^p( h(f, z))=h(f, f^p_0(z))$ for any $ (f,z)\in \mathcal V\times X.$
\end{pro}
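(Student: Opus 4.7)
\textbf{Step 1 (continuous motion from Proposition \ref{hyp-set}).} Applying Proposition \ref{hyp-set} to $X$ as an $f_0^p$-hyperbolic set in the continuous family $\{f^p:f\in\mathcal U\}$ produces a neighborhood $\mathcal V\subset\mathcal U$ of $f_0$ and a continuous motion $h:\mathcal V\times X\to\mathbb C$ with $f^p\circ h(f,\cdot)=h(f,\cdot)\circ f_0^p$. The whole content of the proposition is thus the inclusion $h(f,X)\subset\partial U_f(\alpha(f))$; local connectivity will be used only here. My plan is to produce a second candidate $h'$ whose image lies in $\partial U_f(\alpha(f))$ by construction, and to identify $h'=h$ via the local uniqueness of Proposition \ref{hyp-set}(2).

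\textbf{Step 2 (Blaschke model and motion on $\partial\mathbb D$).} For $f\in\mathcal V$ let $\psi_f:(\mathbb D,0)\to(U_f(\alpha(f)),\alpha(f))$ be the Riemann map normalized by $\psi_f'(0)>0$; by Carathéodory kernel convergence $\psi_f\to\psi_0$ uniformly on compact subsets of $\mathbb D$. The conjugate $B_f:=\psi_f^{-1}\circ f^p\circ\psi_f$ is a finite Blaschke product fixing $0$ with $B_f'(0)$ equal to the multiplier of the cycle, of constant degree $n\geq 2$ (degree $1$ would force $B_0$ to be a rotation with $|B_0'(0)|=1$, contradicting $|B_0'(0)|<1$). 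By Schwarz reflection $B_f$ extends to a rational map of $\mathbb C$ depending continuously on $f$, and the classical identity $|B_0'(z)|=\sum_{a}(1-|a|^2)/|z-a|^2$ on $\partial\mathbb D$ (sum over the zeros $a$ of $B_0$) yields $|B_0'|>1$ uniformly on $\partial\mathbb D$, so $\partial\mathbb D$ is a hyperbolic invariant set for $B_0$. Local connectivity of $\partial U_{f_0}(\alpha_0)$ gives a continuous extension $\bar\psi_0:\overline{\mathbb D}\to\overline{U_{f_0}(\alpha_0)}$, and $\tilde X:=\bar\psi_0^{-1}(X)\subset\partial\mathbb D$ is a closed $B_0$-invariant subset of a hyperbolic set, hence itself hyperbolic for $B_0$. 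Apply Proposition \ref{hyp-set} to $\tilde X$ in the family $\{B_f\}$ to obtain a continuous motion $\tilde h:\mathcal V'\times\tilde X\to\mathbb C$ conjugating $B_0$ to $B_f$. The reflection $\tau(z)=1/\bar z$ commutes with every Blaschke product and restricts to the identity on $\partial\mathbb D$, so $\tau\circ\tilde h$ is another continuous motion of $\tilde X$ satisfying the same conjugacy; by Proposition \ref{hyp-set}(2) we have $\tilde h=\tau\circ\tilde h$ near $f_0$, which forces $\tilde h(f,\tilde X)\subset\partial\mathbb D$.

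\textbf{Step 3 (candidate $h'$ and identification).} For each $x\in X$ fix a preimage $\theta_x\in\bar\psi_0^{-1}(x)\subset\tilde X$ and set
\[
h'(f,x):=\lim_{r\nearrow 1}\psi_f\bigl(r\,\tilde h(f,\theta_x)\bigr).
\]
I claim the limit exists in $\partial U_f(\alpha(f))$. Uniform expansion of $B_f$ on a fixed neighborhood of $\partial\mathbb D$ for $f\in\mathcal V'$ (obtained by perturbing $|B_0'|>1$ on the compact circle using $B_f\to B_0$ in $C^1$), combined with the equivariance $\psi_f\circ B_f=f^p\circ\psi_f$, realises the radial curve as a decreasing intersection of nested connected compact subsets of $U_f(\alpha(f))$ whose Euclidean diameters tend to zero, via the Koebe distortion theorem applied to $\psi_f$ on consecutive $B_f^{-1}$-sectors over $\tilde h(f,\theta_x)$. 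The same estimate yields continuity of $(f,x)\mapsto h'(f,x)$ and the equivariance $f^p\circ h'=h'\circ f_0^p$ descends from that of $\tilde h$. At $f=f_0$ one has $h'(f_0,x)=\bar\psi_0(\theta_x)=x$, so $h'$ is a continuous map $\mathcal V'\times X\to\mathbb C$ satisfying \eqref{conj-motion} (for $f^p$) and agreeing with the identity at $f_0$. The proof of Proposition \ref{hyp-set}(2) uses only continuity, the conjugacy, and the value at $f_0$ (not injectivity), so it applies to $h'$ and forces $h'=h$ on a smaller neighborhood of $f_0$. Thus $h(f,X)=h'(f,X)\subset\partial U_f(\alpha(f))$, as required.

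\textbf{Expected main obstacle.} The analytically delicate step is the shrinking-sector landing estimate in Step 3, carried out without the a priori assumption that $\partial U_f(\alpha(f))$ is locally connected for $f\neq f_0$. The two ingredients required are (i) uniform hyperbolicity of $B_f$ on a fixed neighborhood of $\partial\mathbb D$ for every $f$ near $f_0$, inherited from $|B_0'|>1$ on the compact circle via continuity of $f\mapsto B_f$, and (ii) Koebe-type distortion bounds for $\psi_f$ on thin sectors attached to the angles $\tilde h(f,\theta_x)\in\partial\mathbb D$, transferring exponential contraction of pulled-back sectors in $\mathbb D$ under $B_f^{-1}$ into Euclidean contraction of their $\psi_f$-images inside $U_f(\alpha(f))$. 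Once landing at hyperbolic angles is secured, the identification $h=h'$ and hence $h(f,X)\subset\partial U_f(\alpha(f))$ is a formal consequence of the inverse-limit characterisation underlying Proposition \ref{hyp-set}(2).
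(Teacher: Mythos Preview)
Your Steps 1 and 2 are correct, and invoking the local-uniqueness clause of Proposition~\ref{hyp-set}(2) to identify $h'$ with $h$ is a clean device (that proof indeed uses only continuity, the conjugacy, and the basepoint condition). The gap lies in the landing argument of Step 3. Koebe distortion on $\psi_f$ together with the model-plane expansion of $B_f$ does not force the Euclidean diameters of the images $\psi_f(S_k)$ to shrink. The $B_f^{-1}$-sectors $S_k\subset\mathbb D$ have \emph{bounded} hyperbolic diameter (both $\mathrm{diam}(S_k)$ and $\mathrm{dist}(S_k,\partial\mathbb D)$ decay like $\kappa^{-k}$), so $\psi_f(S_k)$ has bounded hyperbolic diameter in $U_f(\alpha(f))$; this says only that its Euclidean diameter is comparable to its distance to $\partial U_f(\alpha(f))$, and you have no independent control on the latter. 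There is also a connectivity issue hidden in your nesting claim: even if one produced shrinking pieces by pulling back under $(f^p)^{-1}$-branches, one must ensure each pullback lands in $U_f(\alpha(f))$ rather than in a different preimage component, and this requires a chaining structure (consecutive arcs sharing an endpoint) which the bare radial curve under a general Blaschke product does not provide, since $B_f$ does not preserve radial segments.

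The paper supplies exactly these two missing ingredients. It uses the expanding metric $\rho$ and the disks $D_\rho(x,r)$ from the proof of Proposition~\ref{hyp-set}---hyperbolicity of $X$ in the \emph{dynamical} plane, not the model---to get $\rho$-contraction of successive $f|_{D_\rho(\cdot,r)}^{-1}$-pullbacks; and it manufactures a clean chaining structure by first performing a quasiconformal surgery converting the attracting cycle into a superattracting one, so that the model becomes $z\mapsto z^m$ and B\"ottcher ray segments $[T,T^{1/m}]e^{2\pi i t}$ chain automatically under the dynamics. Your Blaschke route can in fact be repaired without the surgery: for each $\theta\in\tilde X$ choose a short arc $I_\theta\subset\mathbb D$ with one endpoint $\rho_0\theta$ and the other the appropriate $B_f^{-1}$-branch preimage of $\rho_0\,B_f(\theta)$; then $L^0_{f,\theta}:=\psi_f(I_{\tilde h(f,\theta)})$ lies in $U_f(\alpha(f))\cap D_\rho(x,r)$ for $\rho_0$ close to $1$ (here local connectivity at $f_0$ and Carath\'eodory convergence are used), consecutive $f^p|_{D_\rho}^{-1}$-pullbacks chain by construction and hence stay in $U_f(\alpha(f))$, and their $\rho$-lengths decay geometrically, yielding a path in $U_f(\alpha(f))$ converging to $h(f,x)$. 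So avoiding the surgery is a genuine simplification, but the shrinking estimate must come from the $\rho$-metric on $X$, not from Koebe applied to $\psi_f$.
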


\begin{proof} Replacing $\mathcal F$ with $\mathcal F^p:=\{f^p;f\in \mathcal F\}$,  it suffices to assume $p=1$. 
	Let $h: \mathcal V\times X\rightarrow \mathbb C$ be the continuous motion of $X$ given by Proposition \ref{hyp-set}. 
	 We first reduce the problem to a special case by quasi-conformal surgery. 	\vspace{5pt}

	 {\it Claim. There exist continuous families of rational maps $\{g_f\}_{f\in \mathcal V}$ 
	 		 and quasiconformal maps 
	 		 $\{\eta_f\}_{f\in\mathcal V}$ on the Riemann sphere, such that $\eta_f(U_f(\alpha(f)))$ has a fixed point of $f$ with local degree $m:={\rm deg}(f_0|_{U_{f_0}(\alpha_{0})})$ and  $g_f\circ\eta_f= \eta_f\circ f$ on $J(f)$, for every $f \in \mathcal{V}$. }\vspace{3pt}
	 
\begin{proof}[Proof of the claim]	
 Take three Jordan disks $U\Subset V\Subset W\subset U_{f_0}(\alpha_0)$, with smooth boundary, so that $f_0: W\rightarrow V$ and  $f_0:V \rightarrow U$ are proper of degree $m$.  	By shrinking $\mathcal{V}$ if necessary, we get   continuous families  of Jordan disks $\{W_f\}_{f\in \mathcal V}$, $\{V_f\}_{f\in \mathcal V}$ such that $U\Subset V_f\Subset W_f$ for all $f\in \mathcal V$, and   degree $m$ proper maps  $\{f: W_f\rightarrow V_f\}_{f\in \mathcal V}$ and  $\{f:V_f \rightarrow U\}_{f\in \mathcal V}$.
	   
	   There are Riemann mappings $\phi_{f}: (V_f, \alpha(f))\rightarrow (\mathbb D, 0)$, $\varphi_{f}: (U, \alpha(f))\rightarrow (\mathbb D, 0)$,  continuous in $f \in \mathcal{V}$  with $\phi_{f}'(\alpha(f))>0, \varphi_{f}'(\alpha(f))>0$.
  Define 
	\begin{equation*}
		\tilde{f}(z)=\begin{cases} \varphi_f^{-1} (\phi_f(z)^m ),  &z \in V_f,\\
				\xi_f (z), &z\in W_f\setminus V_f,\\
				f(z), & \text{elsewhere}.
		\end{cases}
	\end{equation*}
	where $\xi_f:  W_f\setminus V_f\rightarrow V_f\setminus U$ is the  quasiregular interpolation, continuous with respect to $f\in \mathcal V$.
	Let $\sigma_0$ be the standard complex structure.
	For $f\in \mathcal V$, define an $\tilde{f}$-invariant complex structure as follows:
	\begin{equation*}
		\sigma_{f}=
		\begin{cases} 	(\tilde{f}^{k})^{*}(\sigma_{0}),  &\text{ in } \tilde{f}^{-k} (W_f\setminus V_f),  k\geq 1,\\
		\sigma_{0}, & \text{elsewhere}.
		\end{cases}
	\end{equation*}
	The complex structure $\sigma_{f}$ is continuous with respect to $f \in \mathcal{V}$, and the Beltrami coefficient $\mu_f$ of $\sigma_{f}$  satisfies $\|\mu_f\|_\infty<1$.
	
	According to Measurable Riemann Mapping Theorem, there is a  quasiconformal map  $\eta_f:  \C \to \C$ with $0,1,\infty$ fixed so that $\eta_f^*\sigma_0=\sigma_f$. 
	Then $g_f=\eta_f \circ \tilde{f}\circ\eta_f^{-1}$ is a rational map. By the parameter dependence and equicontinuity of quasiconformal maps, the map $(f,z)\mapsto \eta_f(z)$ is continuous on $\mathcal{V}\times \C$. 
	Since $\tilde{f}|_{J(f)}=f|_{J(f)}$, we have $g_f(\eta_f(z))= \eta_f(f(z)), \ \forall z \in J(f)$.
	\end{proof}
	
	 For every $f\in\mathcal V$, set $\beta_f:=\eta_f(\alpha(f))$.  Let $\psi_{g_f}$ be the B\"ottcher coordinate of $g_f$  near $\beta_f$, so that $\psi_{g_f}$ is continuous with respect to $f$. Since $\partial U_{f_0}(\alpha_0)$ is connected, $\psi_{g_{f_0}}$ extends to a conformal map on  $U_{g_{f_0}}(\beta_{f_0})$.
By assumption, $\partial U_{g_{f_0}}(\beta_{f_0})=\eta_{f_0}(\partial U_{f_0}(\alpha_0))$
		is locally connected.  
	Hence for all 
		$t\in \mathbb R/\mathbb Z$, the internal ray $ \psi_{g_{f_0}}^{-1}([0,1)e^{2\pi it})$ lands at a point on  $\partial U_{g_{f_0}}(\beta_{f_0})$.
		
		For each $x\in X$, let 
		$$\Theta(x)=\{t\in \mathbb R/\mathbb Z; \text{ the internal ray } \psi_{g_{f_0}}^{-1}((0,1)e^{2\pi it}) \text{ lands at }\eta_{f_0}(x)\}.$$
		Note that $\Theta(x)$ might be not a singleton.
		The set $\Theta(X):=\bigcup_{x\in X}\Theta(x)$ is compact in $\mathbb R/\mathbb Z$ and satisfies $m \Theta(X) \subset \Theta(X)$.
		Since $X$ is $f_0$-hyperbolic,   it admits an expanding metric $\rho$ and  a family of disks $\{D_{\rho}(x,r)\}_{x\in X}$, as given by Proposition \ref{hyp-set}. Since $\psi_{g_{f_0}}^{-1}: \overline{\mathbb D}\rightarrow \overline{U_{g_{f_0}}(\beta_{f_0})}$ is uniformly continuous, we may choose $T\in(0,1	)$ sufficiently close to $1$ so that $\psi_{g_{f_0}}(\overline{W})\subset \mathbb D(0, T)$ and
		$$\psi_{g_{f_0}}^{-1}([T, T^{1/m}]e^{2\pi it})\subset \eta_{f_0}(D_{\rho}(x,r)), \ \forall t\in \Theta(x), \ \forall x\in X.$$
		Shrink $\mathcal V$ so that $\psi_{g_f}$ can be defined over $W_f$ with $\psi_{g_f}(\overline{W_f})\subset  \mathbb D(0, T)$,  and $\psi_{g_f}^{-1}$ can be defined in $\overline{\mathbb D(0, T^{1/m})}$,  for all $f\in \mathcal V$.  By  the continuity of  	  
$$\mathcal V\times  \overline{\mathbb D(0, T^{1/m})}\ni (f, w)\mapsto \psi_{g_f}^{-1}(w), \ \mathcal V\times \C\ni (f,z)\mapsto \eta_f(z),$$
and shrinking $\mathcal V$ further if necessary, we have 
			$$\psi_{g_f}^{-1}([T, T^{1/m}]e^{2\pi it})\subset \eta_{f}(D_{\rho}(x,r)), \ \forall\, t\in \Theta(x), \ \forall\, (f,x)\in (\mathcal V,X).$$
			For each $f\in \mathcal V$, $x\in X$ and  $t\in \Theta(x)$,  let $L_{f,t}^0=\eta_f^{-1}(\psi_{g_f}^{-1}([T, T^{1/m}]e^{2\pi it}))$, 
			$$L_{f, t}^k=f|_{D_{\rho}(x, r)}^{-1}\circ\cdots \circ f|_{D_{\rho}(f_0^{k-1}(x), r)}^{-1}(L_{f, m^kt}^0), \ k\geq 1.$$
Note that $\tilde{f}=f$ near $J(f)$, hence we have the conjugacy $g_f^k\circ \eta_f= \eta_f\circ f^k$ in a small neighborhood of $J(f)$ (depending on $k$). By the fact $L_{f,\theta}^0\subset U_{f}(\alpha(f))$ for all $\theta\in \Theta(X)$ and induction,  we conclude that $L_{f, t}^k$ and $L_{f, t}^{k+1}$ have one common endpoint, and $L_{f, t}^k\subset D_\rho(x,r)\cap U_{f}(\alpha(f))$, for all $k\geq 0$. Hence 
\[\gamma_{f,t}:=\bigcup_{k\geq 0}L_{f, t}^k\subset D_\rho(x,r)\cap U_{f}(\alpha(f))\]
is a path.
    By the shrinking property shown in the proof of Proposition \ref{hyp-set},  this path has finite $\rho$-length and 
converges to $h(f, x)$.  Since $h(f, x)\in J(f)$, we get  $h(f, x)\in \partial U_{f}(\alpha(f))$.
\end{proof}

\begin{rmk}\label{rem:2.4} 1.  The non-triviality  of Proposition \ref{holo-hyperbolic-set} is that $ \partial U_{f}(\alpha(f))$ might be discontinuous in Hausdorff topology with respect to $f\in \mathcal{V}$.  

2. The assumption that  $\partial U_{f_0}(\alpha_0)$  is connected and locally connected seems unnecessary.  However, we will not discuss the general case, since it suffices for our purpose.

3. The proof of Proposition \ref{holo-hyperbolic-set} implies that the continuous motion  $h: \mathcal V\times X\rightarrow \mathbb C$ has an extension to a continuous motion of $ \bigcup_{t\in\Theta(X)}\overline{\gamma_{f_0,t}}\supset X$:
$$h: \mathcal V\times \bigcup_{t\in\Theta(X)}\overline{\gamma_{f_0,t}}\rightarrow \mathbb C$$
such that $h(f, \bigcup_{t\in\Theta(X)}\overline{\gamma_{f_0,t}})=\bigcup_{t\in\Theta(X)}\overline{\gamma_{f,t}}$ with $ \gamma_{f,t}\subset U_{f}(\alpha(f))$, and that  the motion is compatible with dynamics:
$$h(f, f_0^p(z))=f^p(h(f, z)), \ z\in \bigcup_{t\in\Theta(X)}\overline{\bigcup_{k\geq 1}L_{f_0, t}^k}.$$ 
\end{rmk}


\section{Blaschke divisors over a mapping scheme} \label{Blaschke-divisors-over-S}


In this section, we shall introduce the notion of mapping scheme,  and the space of  Blaschke divisors over it.
This space will serve as a model space for the closure of the  hyperbolic component in the connectedness locus of polynomials (see the next two sections).

\subsection{Divisors}  We recall some basics for divisors.
A {\it divisor} $D$ on a set $\Omega$ is a formal sum
$$D=\sum_{q\in \Omega} \nu(q) \cdot q,$$
where $\nu: \Omega\rightarrow \mathbb Z$ is a map, $\nu(q)\neq 0$ for only finitely many $q\in \Omega$.
 The {\it support}  ${\rm supp}(D)$ of $D$ is the finite set $\{q\in \Omega; \nu(q)\neq 0\}$.  
The divisor $D$ is called {\it integral}, if $\nu\geq 0$. The {\it degree} of an integral divisor $D$ is defined by 
${\rm deg}(D)=\sum_{q\in \Omega} \nu(q)$.

Let $D$ be an integral  divisor  on $\Omega$, and let  $f: \Omega_0\rightarrow \Omega$ be a finite-to-one holomorphic map. The pull-back divisor $f^*D$ on $\Omega_0$ is defined by
$$f^*D=\sum_{p\in \Omega_0}{\rm deg}(f, p)\nu(f(p)) \cdot p,$$
where ${\rm deg}(f, p)$ is the local degree of $f$ at $p$.
In particular, if $f$ is a proper map, then ${\rm deg}(f^*D)={\rm deg}(f)\cdot {\rm deg}(D)$,   
where ${\rm deg}(f)$ denotes the cardinality of  $f^{-1}(\zeta)$ counted with multiplicities for $\zeta\in \Omega$.

Given two integral divisors $D_k=\sum_{q\in \Omega} \nu_k(q) \cdot q, k\in\{1,2\}$, we say $D_1\geq D_2$ or $D_2\leq D_1$, if $\nu_1\geq \nu_2$.  Clearly   $D_2\leq D_1$ implies ${\rm supp}(D_2)\subset {\rm supp}(D_1)$.

In the following, we assume that $\Omega$ is a planar set, so it inherits a natural topology from $\mathbb C$.
For any integer $e\geq 1$,  let ${\rm Div}_e(\Omega)$ be the set of all degree-$e$ integral divisors on $\Omega$. 
There is a natural quotient map from $\Omega^e= {\Omega\times \cdots \times \Omega}$ to ${\rm Div}_e(\Omega)$,   $(z_1, \cdots, z_e)\mapsto \sum_{k=1}^e 1\cdot z_k$.
Therefore,  ${\rm Div}_e(\Omega)$ carries a natural quotient topology.
A  divisor $D\in {\rm Div}_e(\Omega)$ is usually written in two ways:
$D=\sum_{k=1}^{e} 1\cdot q_k$
(here the points $q_k$'s may  coincide) or  $D=\sum_{q\in {\rm supp}(D)}\nu(q)\cdot q$.

For any planar domain $\Omega\subset\mathbb C$,
one may verify that 
$\overline{{\rm Div}_e({\Omega})}={\rm Div}_e(\overline{\Omega})$.
Moreover, we have the following set-theoretic decompositions

$${\rm Div}_e(\overline{\Omega})=\bigsqcup_{d_1+d_2=e; \ d_1,d_2\geq 0}\Big({\rm Div}_{d_1}(\Omega)+
{\rm Div}_{d_2}(\partial\Omega)\Big),$$

$$\partial {\rm Div}_e(\overline{\Omega})=\bigsqcup_{d_1+d_2=e; \ d_1\geq 0,  d_2\geq 1}\Big({\rm Div}_{d_1}({\Omega})+
{\rm Div}_{d_2}(\partial\Omega)\Big).$$

If $\Omega=\mathbb D$, for $D=\sum_{k=1}^{e} 1\cdot q_k\in  {\rm Div}_e(\overline{\mathbb D})$, let $\N^0_{\epsilon}(D)\subset {\rm Div}_e(\mathbb D)$,
$\U^0_{\epsilon}(D)\subset {\rm Div}_e{(\overline{\mathbb D})}$ be defined as follows:
\bess &\N^0_\epsilon(D)=\Big\{E= \sum_{k=1}^{e} 1\cdot {a}_{k};  \
{a}_{k}\in \mathbb{D}(q_{k}, \epsilon)\cap \mathbb D, \  1\leq k\leq e\Big\},&\\
&\U^0_\epsilon(D)=\Big\{E= \sum_{k=1}^{e} 1\cdot {a}_{k};  \
{a}_{k}\in \mathbb{D}(q_{k}, \epsilon)\cap \overline{\mathbb D}, \  1\leq k\leq e\Big\}.&
\eess

 Let $\mathcal B_{\rm fc}^{e+1}$ denote the space of Blaschke products of degree $e+1$ that fix both  $0$ and $1$. Then 
each element $\beta\in\mathcal B_{\rm fc}^{e+1}$ can be written as
$$\beta(z)=e^{i\theta}z\prod_{k=1}^{e}\frac{z-a_k}{1-\overline{a_k}z},$$ 
where $\theta$ is chosen so that $\beta(1)=1$. 
Clearly  $\beta$ is uniquely determined by its {\it free zero divisor}  
${\rm div}(\beta):=\sum_{k=1}^{e} 1\cdot a_k\in {\rm Div}_{e}(\mathbb D)$. 
Conversely, each divisor  $D=\sum_{q\in {\rm supp}(D)}\nu(q)\cdot q\in {\rm Div}_{e}(\mathbb D)$ induces a Blaschke product $\beta=\beta(D)\in \mathcal  B_{\rm fc}^{e+1}$ whose free zero divisor is $D$:  $$\beta(z)=z\prod_{q\in {\rm supp}(D)}\Bigg(\frac{1-\bar{q}}{1-q}\cdot \frac{z-q}{1-\bar{q}z}\Bigg)^{\nu(q)}.$$
This establishes that the mapping   $\beta\mapsto {\rm div}(\beta)$  induces a homeomorphism between $\mathcal B_{\rm fc}^{e+1}$ and  ${\rm Div}_{e}(\mathbb D)$. Thus, we obtain a natural identification  $\mathcal B_{\rm fc}^{e+1}\cong {\rm Div}_{e}(\mathbb D)$ as topological spaces.


 The critical set of $\beta$ induces the {\it ramification divisor}: 
$$R_{\beta}:=\sum_{q\in \mathbb D}({\rm deg}(\beta,q)-1)\cdot q \in {\rm Div}_{e}(\mathbb D).$$ 
The composition of the two maps $D\mapsto \beta=\beta(D)$ and $\beta\mapsto R_{\beta}$ gives a self-mapping $\Psi_{e}$ of ${\rm Div}_{e}(\mathbb D)$: $D\mapsto R_{\beta(D)}$. 

\begin{thm} [Heins \cite{H}, Zakeri \cite{Z}]  \label{z-p}  For any integer $e\geq 1$, the map $$\Psi_{e}:
	\begin{cases} {\rm Div}_e(\mathbb D)\rightarrow {\rm Div}_e(\mathbb D)\\
		D\mapsto R_{\beta(D)}
		\end{cases}$$
	is a homeomorphism.
\end{thm}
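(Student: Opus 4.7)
The plan is to establish that $\Psi_e$ is a homeomorphism by verifying three properties: continuity, properness, and local injectivity. Since ${\rm Div}_e(\mathbb D)$ is locally compact Hausdorff (it is identified with an open subset of $\mathbb C^e$ via the elementary symmetric functions of $a_1,\ldots,a_e$) and simply connected (as the $e$-fold symmetric power of the contractible disk $\mathbb D$), a proper continuous local homeomorphism from ${\rm Div}_e(\mathbb D)$ to itself is automatically a homeomorphism, so these three checks suffice.

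Continuity is immediate from the construction: the coefficients of $\beta(D)$ depend continuously on $D$, hence so do those of $\beta(D)'$, and therefore the zero divisor of $\beta(D)'$ inside $\mathbb D$ (which has total mass exactly $e$, since a Blaschke product of degree $e+1$ extends to a degree $e+1$ rational map whose $2e$ critical points pair up by Schwarz reflection across $\partial\mathbb D$) varies continuously with $D$. For properness I would argue by contradiction: if $D_n\to D_\infty\in\partial{\rm Div}_e(\mathbb D)$, then some zero of $\beta_n:=\beta(D_n)$ approaches $\partial\mathbb D$, so after passing to a subsequence $\beta_n$ converges locally uniformly on $\mathbb D$ to a holomorphic map of strictly smaller degree (or to a unimodular constant); Hurwitz's theorem applied to $\beta_n'$ then shows the $e$ critical points of $\beta_n$ cannot all remain in any compact subset of $\mathbb D$, forcing $\Psi_e(D_n)$ to escape to $\partial{\rm Div}_e(\mathbb D)$ as well.

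The main obstacle is local injectivity, which I would approach by differentiating the logarithmic critical-point equation
\[
\frac{\beta'(z)}{\beta(z)} = \frac{1}{z}+\sum_{k=1}^e\frac{1}{z-a_k}+\sum_{k=1}^e\frac{\bar a_k}{1-\bar a_k z}=0
\]
with respect to the parameters $(a_k)$ via the implicit function theorem; non-singularity of the resulting Jacobian reduces to a Cauchy/Vandermonde-type non-degeneracy statement. A conceptually cleaner alternative, closer to Heins' original continuity method, is to show that any infinitesimal variation of $\beta$ preserving the ramification divisor as well as the two normalization conditions $\beta(0)=0$ and $\beta(1)=1$ must be trivial: the deformation converts to a holomorphic self-map of $\mathbb D$ carrying so many constraints that the Schwarz--Pick lemma forces it to vanish. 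Either route reduces global bijectivity to a local rigidity statement for Blaschke products. Once local injectivity is in hand, continuity and properness upgrade $\Psi_e$ to a proper covering of the simply connected ${\rm Div}_e(\mathbb D)$ onto itself, which must have a single sheet, yielding the desired homeomorphism.
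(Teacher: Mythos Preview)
The paper does not give a proof of this theorem: it is simply attributed to Heins and Zakeri and quoted as a black box, so there is no ``paper's own proof'' to compare against. Your outline is essentially the standard continuity-method argument and is correct in spirit.

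One remark on the local injectivity step, which you flag as the main obstacle. In Zakeri's treatment this is actually a \emph{global} injectivity statement, proved directly rather than via an implicit-function-theorem computation: if two normalized Blaschke products $\beta_1,\beta_2$ of degree $e+1$ share the same ramification divisor in $\mathbb D$, then $\beta_1\circ\beta_2^{-1}$ (suitably interpreted on a simply connected lift) is a holomorphic self-map of $\mathbb D$ with derivative of modulus one everywhere, hence a disk automorphism by Schwarz--Pick; the normalizations $\beta_i(0)=0$, $\beta_i(1)=1$ then pin it down to the identity. This bypasses the Jacobian computation you sketch and also makes the ``properness + simply connected covering'' packaging unnecessary, since with global injectivity in hand one only needs surjectivity, which follows from properness and invariance of domain. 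Your approach would work too, but the Jacobian non-degeneracy is not entirely trivial to verify (the matrix has entries $\partial c_j/\partial a_k$ coming from differentiating $\beta'(c_j)=0$, and one must rule out kernel vectors), so the direct rigidity argument is both shorter and more robust.
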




The second named author and Yongcheng Yin further studied the boundary extension of the map $\Psi_{e}$:

\begin{thm} [X. Wang and Y. Yin, \cite{W}] \label{zakeri-exten}
	The map $\Psi_{e}$   extends to a homeomorphism 
	$$\overline{\Psi}_{e}: {\rm Div}_e(\overline{\mathbb D})\rightarrow {\rm Div}_e(\overline{\mathbb D}).$$
	The extension is given as follows: for any $D_1\in {\rm Div}_{d_1}(\mathbb D), D_2\in {\rm Div}_{d_2}(\partial\mathbb D)$ such that $d_2\geq 1$ and 
	$d_1+d_2=e$,   
	$$\overline{\Psi}_{e}(D_{1}+D_2)=\Psi_{d_1}(D_1)+D_2,$$
	where $\Psi_{d_1}$ is the map given by Theorem \ref{z-p}.
\end{thm}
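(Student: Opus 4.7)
I plan to prove Theorem \ref{zakeri-exten} in four steps: checking well-definedness of the formula, establishing stratum-wise bijectivity, proving continuity across strata (the main technical step), and upgrading to a homeomorphism by compactness.

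First I would note that each $D \in {\rm Div}_e(\overline{\mathbb D})$ has a unique decomposition $D = D_1 + D_2$ with $D_1 \in {\rm Div}_{d_1}(\mathbb D)$ and $D_2 \in {\rm Div}_{d_2}(\partial \mathbb D)$, obtained by sorting the points of $D$ according to whether they lie in $\mathbb D$ or on $\partial \mathbb D$. The formula therefore defines $\overline{\Psi}_e$ unambiguously and respects the stratification. On each stratum ${\rm Div}_{d_1}(\mathbb D) + {\rm Div}_{d_2}(\partial \mathbb D)$ it acts as $\Psi_{d_1}\times {\rm id}$, which is a bijection by Theorem \ref{z-p}; since the strata are pairwise disjoint, $\overline{\Psi}_e$ is a bijection globally.

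The heart of the proof is continuity across strata. Fix a sequence $D^{(n)} \to D^{(\infty)} = D_1 + D_2$ with $d_2 \geq 1$, and write $D^{(n)} = \sum_{k=1}^{e} 1 \cdot a_{k,n}$ with $a_{k,n} \to a_{k,\infty}$. Let $q_n(z) = \prod_{k}(z - a_{k,n})$ and $q_n^{*}(z) = \prod_{k}(1 - \overline{a_{k,n}}\, z)$. A short computation using $\beta_n := \beta(D^{(n)}) = ({\rm const}) \cdot z\, q_n / q_n^{*}$ shows that the critical points of $\beta_n$ in $\mathbb D$, counted with multiplicity, agree with the zeros in $\mathbb D$ of the Wronskian-type polynomial
\[
P_n(z) = (z\, q_n(z))'\, q_n^{*}(z) - z\, q_n(z)\, (q_n^{*})'(z).
\]
This polynomial has degree $2e$ and its coefficients depend continuously on $(a_{k,n})$, so $P_n \to P_{\infty}$ coefficient-wise and the $2e$ roots of $P_n$ vary continuously in $\widehat{\mathbb C}$. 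The key observation concerns the multiplicity structure of $P_{\infty}$: for each $\zeta \in {\rm supp}(D_2)$ with $m_\zeta := D_2(\zeta)$, both $q_\infty$ and $q_\infty^{*}$ vanish at $\zeta$ to order $m_\zeta$, because $1/\overline{\zeta} = \zeta$ when $|\zeta|=1$. The identity
\[
W\bigl((z-\zeta)^m \tilde f,\,(z-\zeta)^m \tilde g\bigr) = (z-\zeta)^{2m}\, W(\tilde f,\tilde g)
\]
then forces $P_{\infty}$ to vanish at $\zeta$ to order at least $2m_\zeta$, accounting for $2d_2$ roots; by matching degrees, the remaining $2d_1$ roots of $P_{\infty}$ are precisely the critical points of the limit Blaschke product $\beta(D_1) \in \mathcal B_{\rm fc}^{d_1+1}$, which split as $d_1$ inside and $d_1$ outside $\overline{\mathbb D}$ by Schwarz reflection.

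To conclude, I would invoke the standard maximum-principle fact that a finite Blaschke product has no critical points on $\partial \mathbb D$ (the restriction $\beta_n|_{\partial \mathbb D}$ is a local diffeomorphism onto $\partial \mathbb D$), so no $P_n$ has zeros on $\partial \mathbb D$. Combined with the reflection symmetry of $P_n$ under $z \mapsto 1/\overline{z}$, the $2e$ roots of $P_n$ always split evenly as $e$ inside and $e$ outside the unit circle. Applying this locally in small neighborhoods $U_\zeta$ of the points $\zeta \in {\rm supp}(D_2)$, the $2m_\zeta$-cluster of roots of $P_n$ approaching $\zeta$ must contribute exactly $m_\zeta$ roots inside $\mathbb D$ and $m_\zeta$ outside. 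This yields $\Psi_e(D^{(n)}) = R_{\beta_n} \to \Psi_{d_1}(D_1) + D_2$ in ${\rm Div}_e(\overline{\mathbb D})$, proving continuity. Since $\overline{\Psi}_e$ is then a continuous bijection of the compact Hausdorff space ${\rm Div}_e(\overline{\mathbb D})$ onto itself, it is automatically a homeomorphism. The main obstacle is the reflection-symmetric splitting at the boundary: one must rule out asymmetric accumulation of the $2m_\zeta$-cluster, which is exactly where the absence of Blaschke critical points on $\partial \mathbb D$ enters decisively.
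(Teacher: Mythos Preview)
The paper does not contain a proof of this theorem; it is stated and attributed to the manuscript \cite{W}, so there is no ``paper's own proof'' to compare against. Your argument is sound and would constitute a complete proof once two small points are addressed.

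First, your continuity step treats only sequences $D^{(n)}\in{\rm Div}_e(\mathbb D)$. For a general sequence $D^{(n)}\to D_1+D_2$ in ${\rm Div}_e(\overline{\mathbb D})$, pass to a subsequence lying in a fixed stratum ${\rm Div}_{d_1'}(\mathbb D)+{\rm Div}_{d_2'}(\partial\mathbb D)$; writing $D^{(n)}=D_1^{(n)}+D_2^{(n)}$, one has $D_2^{(n)}\to D_2'\le D_2$ and $D_1^{(n)}\to D_1+(D_2-D_2')$. Your interior argument applied to the degree-$d_1'$ sequence $D_1^{(n)}$ gives $\Psi_{d_1'}(D_1^{(n)})\to\Psi_{d_1}(D_1)+(D_2-D_2')$, and adding $D_2^{(n)}\to D_2'$ yields the claim.

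Second, the Wronskian polynomial $P_n$ has degree strictly less than $2e$ when $0\in{\rm supp}(D^{(n)})$, since then $q_n^*$ drops in degree. This is harmless: the missing roots correspond to the critical point of $\beta_n$ at $\infty$ (reflected from the critical point at $0$), and one can either work with the critical divisor of $\beta_n$ in $\widehat{\mathbb C}$ directly, or simply note that your factorization $P_\infty=(\text{const})\prod_\zeta(z-\zeta)^{2m_\zeta}\cdot\tilde P$ and the even-splitting argument go through verbatim since boundary points $\zeta$ have $|\zeta|=1\neq 0$. With these adjustments the proof is complete.
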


\subsection{Blaschke divisors over a mapping scheme}

Following Milnor \cite{M2}, a \emph{mapping scheme} $S$ is a triple $(V,\sigma,\delta)$, where $V$ is a finite set,
$\sigma:V\rightarrow V$ is a self-mapping, and $\delta:V\rightarrow \mathbb N$ is a map 
such that for any periodic point $v\in V$ of $\sigma$, we have
$\prod_{k=0}^{\ell_v-1}\delta(\sigma^{k}(v))\geq2$, 
where $\ell_v$ is the $\sigma$-period of $v$.

The set $V$ has a decomposition $V=V_{\rm p}\sqcup V_{\rm np}$, where $V_{\rm p}$ consists of all $\sigma$-periodic points $v\in V$, and  $V_{\rm np}$ is the set of all aperiodic points in $V$.

Let $S=(V, \sigma, \delta)$ be a mapping scheme, and let $\Omega$ be a planar set.  The space of divisors ${\rm Div}{(\Omega)}^S$ over the mapping scheme $S$,  consists of divisors $D=(D_v)_{v\in V}$ such that for each $v\in V$, the factor $D_v\in {\rm Div}_{\delta(v)-1}(\Omega)$. 

If $\Omega$ is a planar domain, the boundary $\partial {\rm Div}{(\Omega)}^S$ is given by
$$\partial {\rm Div}{(\Omega)}^S=\Big\{D\in {\rm Div}{(\overline{\Omega})}^S; \text{ there is } v\in V \text{ such that } D_v\in \partial {\rm Div}_{\delta(v)-1}(\Omega) \Big\}.$$

This paper focuses on  the case $\Omega=\mathbb D$. For each $D=(D_v)_{v\in V}\in \partial {\rm Div}{(\mathbb D)}^S$, write
$$D_v=\sum_{k=1}^{d_{v,1}} 1\cdot a_{v,k}+\sum_{j=1}^{d_{v,2}} 1\cdot b_{v, j}:=D_v^0+D_v^\partial \in
{\rm Div}_{d_{v,1}}({\mathbb D})+{\rm Div}_{d_{v,2}}(\partial{\mathbb D}),$$ 
where 
$$d_{v,1}+d_{v,2}=\delta(v)-1, d_{v,1},d_{v,2}\geq 0; \ a_{v,1}, \cdots, a_{v,d_{v,1}}\in \mathbb D; \ b_{v,1}, \cdots, b_{v, d_{v,2}}\in \partial\mathbb D,$$ 
the points  $\{a_{v,k}\}_k$ (respectively $\{b_{v,j}\}_j$) need not be distinct. It is possible that $d_{v,2}=0$ for some $v\in V$. 
However, the assumption $D\in\partial{\rm Div}{(\mathbb D)}^S$ requires implicitly that
$\sum_{v\in V} d_{v,2}\geq 1$.
If there is no confusion, we may also write each factor $D_v$ as the pair $(D_v^0, D_v^\partial)\in {\rm Div}_{d_{v,1}}({\mathbb D})\times
{\rm Div}_{d_{v,2}}(\partial{\mathbb D})$, or  $(B_v^0, D_v^\partial)\in \mathcal B^{d_{v,1}+1}_{\rm fc}\times {\rm Div}_{d_{v,2}}(\partial{\mathbb D})$, where $B_v^0$ is the Blaschke product whose free zero divisor  ${\rm div}(B_v^0)=D_v^0$.


Let $D=(D_v)_{v\in V}\in {\rm Div}{(\overline{\mathbb D})}^S$. Write $D_v=\sum_{k=1}^{\delta(v)-1}1\cdot q_{v,k}$ for each $v\in V$. For any $\epsilon>0$, we introduce  two sets of divisors $\N_{\epsilon}(D)\subset {\rm Div}(\mathbb D)^S$,
$\U_{\epsilon}(D)\subset {\rm Div}{(\overline{\mathbb D})}^S$, as follows:
\bess &\N_\epsilon(D)=\Big\{E=(E_v)_{v\in V}; \
E_v \in \N^0_\epsilon(D_v), \  \forall v \in V
\Big\},&\\
&\U_\epsilon(D)=\Big\{E=(E_v)_{v\in V}; \
E_v\in \U^0_\epsilon(D_v), \   \forall v \in V
\Big\}.&
\eess
It's clear that $\N_{\epsilon}(D)\subset \U_{\epsilon}(D)$,  $\N_{\epsilon}(D)$ is an open subset of ${\rm Div}(\mathbb D)^S$,
and $\U_{\epsilon}(D)$ is an open subset of ${\rm Div}{(\overline{\mathbb D})}^S$.
Note that  if $D\in  {\rm Div}{({\mathbb D})}^S$, then $D\in \N_{\epsilon}(D)\subset \U_{\epsilon}(D)$;
 if $D\in   \partial {\rm Div}{({\mathbb D})}^S$, then   $ D\in \U_{\epsilon}(D)\setminus \N_{\epsilon}(D)$.

The boundary $\partial {\rm Div}{(\mathbb D)}^S$ has a distinguished subset $\partial_0 {\rm Div}{(\mathbb D)}^S$:
$$\partial_0 {\rm Div}{(\mathbb D)}^S=\Big\{D=(D_v)_{v\in V}\in \partial {\rm Div}{(\mathbb D)}^S; \sum_{k=0}^{\ell_v-1} d_{\sigma^k(v), 1}\geq 1 ,  \forall \  v\in V_{\rm p}\Big\},$$
where $\ell_v$ is the $\sigma$-period of $v$.  It is easy to verify that $\partial_0 {\rm Div}{(\mathbb D)}^S=\emptyset$ if and only if $V=V_{\rm p}$, and  for each $v\in V_{\rm p}$, we have
$\prod_{k=0}^{\ell_v-1}\delta(\sigma^{k}(v))=2$.

\begin{ft} \label{subset-boundary}  If  $\partial_0 {\rm Div}{(\mathbb D)}^S\neq \emptyset$, then 
 $\partial_0 {\rm Div}{(\mathbb D)}^S$ is an open subset of  $\partial {\rm Div}{(\mathbb D)}^S$.
\end{ft}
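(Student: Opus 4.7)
The plan is to show that the condition defining $\partial_0{\rm Div}(\mathbb{D})^S$ is stable under sufficiently small perturbations, using only the elementary observation that a point in the open disk $\mathbb{D}$ admits a full Euclidean neighborhood inside $\mathbb{D}$. The openness of the ambient space $\partial{\rm Div}(\mathbb{D})^S$ inside ${\rm Div}(\overline{\mathbb{D}})^S$ near $D$ can then be combined with this observation via the basic neighborhood $\U_\epsilon(D)$.

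First I would pick an arbitrary $D=(D_v)_{v\in V}\in\partial_0{\rm Div}(\mathbb{D})^S$ and write each factor as $D_v=\sum_{k=1}^{\delta(v)-1}1\cdot q_{v,k}$, where by assumption $d_{v,1}=\#\{k:q_{v,k}\in\mathbb{D}\}$ and $d_{v,2}=\#\{k:q_{v,k}\in\partial\mathbb{D}\}$ satisfy $\sum_{k=0}^{\ell_v-1}d_{\sigma^k(v),1}\geq 1$ for every $v\in V_{\rm p}$. Then I would choose $\epsilon>0$ small enough that for every $v\in V$ and every index $k$ with $q_{v,k}\in\mathbb{D}$, the Euclidean disk $\mathbb{D}(q_{v,k},\epsilon)$ is entirely contained in $\mathbb{D}$; this is possible since $V$ is finite and $\mathbb{D}$ is open.

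Next I would take any $E=(E_v)_{v\in V}\in\U_\epsilon(D)\cap\partial{\rm Div}(\mathbb{D})^S$ and write $E_v=\sum_{k=1}^{\delta(v)-1}1\cdot a_{v,k}$ with $a_{v,k}\in\mathbb{D}(q_{v,k},\epsilon)\cap\overline{\mathbb{D}}$. By the choice of $\epsilon$, whenever $q_{v,k}\in\mathbb{D}$ the corresponding perturbed point $a_{v,k}$ must also lie in $\mathbb{D}$. Consequently the interior count $e_{v,1}:=\#\{k:a_{v,k}\in\mathbb{D}\}$ satisfies $e_{v,1}\geq d_{v,1}$ for every $v\in V$. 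Summing along each $\sigma$-cycle gives
\[
\sum_{k=0}^{\ell_v-1}e_{\sigma^k(v),1}\;\geq\;\sum_{k=0}^{\ell_v-1}d_{\sigma^k(v),1}\;\geq\;1,\qquad\forall\,v\in V_{\rm p},
\]
so $E\in\partial_0{\rm Div}(\mathbb{D})^S$. Since $\U_\epsilon(D)$ is an open neighborhood of $D$ in ${\rm Div}(\overline{\mathbb{D}})^S$, the intersection $\U_\epsilon(D)\cap\partial{\rm Div}(\mathbb{D})^S$ is an open neighborhood of $D$ in $\partial{\rm Div}(\mathbb{D})^S$, entirely contained in $\partial_0{\rm Div}(\mathbb{D})^S$.

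There is no substantive obstacle here: the only care needed is in extracting a single $\epsilon>0$ that works simultaneously for the finitely many interior zeros across all $v\in V$, and in noting that the inequality $e_{v,1}\geq d_{v,1}$ is one-sided (boundary zeros of $D_v$ may perturb inward, which only helps). Once these points are observed, the neighborhood $\U_\epsilon(D)$ is the natural witness to openness.
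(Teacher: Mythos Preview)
Your proof is correct and follows essentially the same approach as the paper: both pick $\epsilon$ small enough that the open disks around the interior points $q_{v,k}\in\mathbb{D}$ stay inside $\mathbb{D}$ (the paper takes $\epsilon=\min_{v,k}\{1-|a_{v,k}|\}$ explicitly), observe that this forces $e_{v,1}\geq d_{v,1}$ for every $v$, and sum along each $\sigma$-cycle to conclude.
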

\begin{proof} Fix a divisor $D=(D_v)_{v\in V}\in \partial_0 {\rm Div}{(\mathbb D)}^S$. For any $v\in V$,
 write
$$D_v=\sum_{k=1}^{d_{v,1}} 1\cdot a_{v,k}+\sum_{j=1}^{d_{v,2}} 1\cdot b_{v, j}:=D_v^0+D_v^\partial \in
{\rm Div}_{d_{v,1}}({\mathbb D})+{\rm Div}_{d_{v,2}}(\partial{\mathbb D}).$$ 
Take $\epsilon=\min_{v,k}\{1-|a_{v,k}|\}$. Then for any $E=((E_v^0, E_v^\partial))_{v\in V}\in \U_\epsilon(D)$, we have ${\rm deg}(E_v^0)\geq {\rm deg}(D_v^0)=d_{v,1}$. It follows that 
$\sum_{k=0}^{\ell_v-1}{\rm deg}(E_{\sigma^k(v)}^0)\geq \sum_{k=0}^{\ell_v-1} d_{\sigma^k(v),1}\geq 1.$
This implies that  $\U_\epsilon(D)\cap \partial {\rm Div}{(\mathbb D)}^S\subset \partial_0 {\rm Div}{(\mathbb D)}^S$.
\end{proof}

\subsection{Sequence of Blaschke divisors} As noted before,  a divisor $D\in {\rm Div}_{d}(\overline{\mathbb D})$ can be written as $(B, D^\partial)$, where $B\in   \mathcal B^{d_1+1}_{\rm fc} \cong {\rm Div}_{d_1}(\mathbb D)$, $D^\partial\in  {\rm Div}_{d_2}(\partial \mathbb D)$, $d_1+d_2=d$.


Given a sequence of divisors 
$\{(B_n, D_n^\partial)\}\subset  {\rm Div}_{d}(\overline{\mathbb D})$ such that 
$(B_n, D_n^\partial)\rightarrow (B, D^{\partial})\in {\rm Div}_{d}(\overline{\mathbb D})$,  one can   find an 
   integer $l\geq 0$ and a subsequence  $\{(B_{n_k}, D_{n_k}^\partial)\}\subset 
 {\rm Div}_{d-l}({\mathbb D})\times {\rm Div}_{l}({\partial\mathbb D})$ so that  $\lim_{k\rightarrow \infty}D_{n_k}^\partial$ exist. Let $D^\partial_\infty$ be the limit. We may check that   $D^{\partial}\geq D_\infty^{\partial}$ and $B_{n_k}\rightarrow (B, D^{\partial}-D_\infty^{\partial})$.


The following   is a variant of  \cite[Proposition 3.1]{Mc2}, see \cite[Lemma 4.2]{DeM} for a general statement for rational maps.

\begin{lem} \label{degenerate-0}  Let $0\leq l<d$ be integers.  Let $D_n=(B_n, D_n^\partial)\in {\rm Div}_{d-l}({\mathbb D})\times {\rm Div}_l(\partial {\mathbb D}) \subset {\rm Div}_d(\overline{\mathbb D})$ be a sequence of divisors  so that 
	\begin{itemize}
		
		\item $D_n\rightarrow D=(B^0, D^\partial)\in {\rm Div}_d(\overline{\mathbb D})$, 
		
		\item  $D_n^\partial\rightarrow D^\partial_\infty\in {\rm Div}_l(\partial {\mathbb D})$.
			\end{itemize}
	

  If $1\notin {\rm  supp}(D^\partial-D^\partial_\infty)$, then the sequence of Blaschke products $B_n$ converges locally and 
uniformly to $B^0$ in
$\C\setminus {\rm  supp}(D^\partial-D^\partial_\infty)$.
\end{lem}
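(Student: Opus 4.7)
The plan is to use the explicit Blaschke factorisation and analyse, factor by factor, what happens as zeros of $B_n$ escape to $\partial\mathbb D$. After passing to a subsequence, I would list the zeros of $B_n$ as $a_{n,1},\dots,a_{n,d-l}\in\mathbb D$, so that
$$B_n(z)=z\prod_{k=1}^{d-l}\frac{1-\overline{a_{n,k}}}{1-a_{n,k}}\cdot\frac{z-a_{n,k}}{1-\overline{a_{n,k}}z},$$
and assume $a_{n,k}\to q_k\in\overline{\mathbb D}$ for every $k$. Partitioning $\{1,\dots,d-l\}=I\sqcup J$ according to whether $q_k\in\mathbb D$ or $q_k\in\partial\mathbb D$, the hypotheses $D_n\to D$ and $D_n^\partial\to D^\partial_\infty$ force ${\rm div}(B^0)=\sum_{k\in I}1\cdot q_k$ and $D^\partial-D^\partial_\infty=\sum_{k\in J}1\cdot q_k$. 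It then suffices to analyse each factor in the product separately.

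For $k\in I$, the factor converges locally uniformly on $\C\setminus\{1/\overline{q_k}\}$ to the corresponding Blaschke factor of $B^0$. For $k\in J$, the limit $q_k\in\partial\mathbb D$ satisfies $q_k\neq 1$ by the hypothesis of the lemma, so the coefficient $\frac{1-\overline{a_{n,k}}}{1-a_{n,k}}$ converges to $\frac{1-\overline{q_k}}{1-q_k}$, and the M\"obius factor $\frac{z-a_{n,k}}{1-\overline{a_{n,k}}z}$ converges to $\frac{z-q_k}{1-\overline{q_k}z}$ locally uniformly on $\C\setminus\{q_k\}$ (note that $1/\overline{q_k}=q_k$ since $|q_k|=1$). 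Using the identity $1-\overline{q_k}z=-\overline{q_k}(z-q_k)$, valid on all of $\C$ since $|q_k|=1$, the limit factor simplifies to
$$\frac{1-\overline{q_k}}{1-q_k}\cdot\frac{z-q_k}{-\overline{q_k}(z-q_k)}=\frac{-(1-\overline{q_k})}{\overline{q_k}(1-q_k)}=\frac{\overline{q_k}-1}{\overline{q_k}-1}=1,$$
where the last step uses $\overline{q_k}(1-q_k)=\overline{q_k}-|q_k|^2=\overline{q_k}-1$. Multiplying all factor limits yields $B_n\to B^0$ locally uniformly on $\C\setminus{\rm supp}(D^\partial-D^\partial_\infty)$, and a routine diagonal argument upgrades subsequential convergence to convergence of the full sequence.

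The decisive point---and the reason behind the hypothesis $1\notin{\rm supp}(D^\partial-D^\partial_\infty)$---is the behaviour of the unimodular coefficient $\frac{1-\overline{a_{n,k}}}{1-a_{n,k}}$ when $a_{n,k}\to 1$: its limit depends on the direction of approach (for instance $a_{n,k}=1-1/n$ gives limit $1$, while $a_{n,k}=1-i/n$ gives limit $-1$), so the corresponding Blaschke factor admits no well-defined limit on $\C\setminus\{1\}$. Excluding $1$ from ${\rm supp}(D^\partial-D^\partial_\infty)$ is precisely what prevents this obstruction and forces each escaping factor to tend to the constant $1$; this is the only essentially delicate step in the argument, with everything else reducing to continuity of the Blaschke factors in their parameters.
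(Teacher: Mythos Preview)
Your proof is correct and follows essentially the same approach as the paper's: both argue factor by factor, showing that each Blaschke factor whose zero escapes to a point $q\in\partial\mathbb D\setminus\{1\}$ tends to the constant $1$ locally uniformly on $\widehat{\mathbb C}\setminus\{q\}$, so that the product converges to $B^0$. The paper states this more tersely (attributing the key observation to McMullen, using the alternative normalization $\beta_b(z)=\tfrac{|b|}{b}\cdot\tfrac{b-z}{1-\bar bz}$ together with the identity $\beta_b(z)-1=\tfrac{(|b|-1)(b+z|b|)}{b(1-\bar bz)}$ and the product inequality $|z_1\cdots z_m-w_1\cdots w_m|\le\sum|z_k-w_k|$), whereas you carry out the computation directly in the $\mathcal B_{\rm fc}$ normalization; your final ``diagonal argument'' is really the standard sub--subsequence argument, but the content is the same.
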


\begin{proof}  As pointed out by McMullen, the Lemma follows by  two key facts:
	\begin{itemize}
		\item   For any $q\in \partial\mathbb D\setminus\{1\}$, the  Möbius transformation
		$$\beta_b(z)=\frac{|b|}{b}\cdot\frac{b-z}{1-\overline{b}z}, \ b\in \mathbb D\setminus\{1\}$$
		converges to the constant function $1$  uniformly on compact subsets of  $\C\setminus \{ q\}$ as $b\rightarrow q$.  This follows from the  equality
		$$\beta_{b}(z)-1
		=\frac{(|b|-1)(b+z|b|)}{b(1-\overline{b}z)}.$$
		\item The  inequality $|z_1\cdots z_m-w_1\cdots w_m|\leq \sum_{k=1}^m|z_k-w_k|,$
		whenever $|z_k|\leq 1, |w_k|\leq 1$.
		\end{itemize}
\end{proof} 

\begin{lem} \label{composition-law} Given  integers $d\geq d_0\geq 0,e\geq e_0\geq  0$ and two sequences of divisors 
$$\{(F_n, D_n^\partial)\}_{n\geq 1}\subset   {\rm Div}_{d_0}({\mathbb D})\times  {\rm Div}_{d-d_0}({\partial \mathbb D}),$$
$$  \{(G_n, E_n^\partial)\}_{n\geq 1}\subset    {\rm Div}_{e_0}({\mathbb D})\times  {\rm Div}_{e-e_0}({\partial \mathbb D})$$ such that 
\begin{itemize}

\item $(F_n, D_n^\partial)\rightarrow (F, D_F^{\partial})\in {\rm Div}_{d}(\overline{\mathbb D}), \ (G_n, E_n^\partial)\rightarrow (G, E_G^{\partial})\in {\rm Div}_{e}(\overline{\mathbb D})$, 

\item $D_n^\partial\rightarrow D_\infty^\partial$ and $E_n^\partial\rightarrow E_\infty^\partial$, 

\item  $1\notin {\rm supp}(D_F^{\partial}-D_\infty^\partial)$, $1\notin {\rm supp}(E_G^{\partial}-
 E_\infty^\partial)$.
\end{itemize}
 Then the composition $F_n\circ G_n$ converges locally and uniformly to 
$F\circ G$ in $$\C\setminus (G^{-1}({\rm supp}(D_F^{\partial}-D_\infty^\partial))\cup {\rm supp}(E_G^{\partial}-E_\infty^\partial)).$$
\end{lem}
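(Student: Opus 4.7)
The plan is to deduce the composition law directly from Lemma \ref{degenerate-0} applied separately to the two sequences $\{(F_n, D_n^\partial)\}$ and $\{(G_n, E_n^\partial)\}$, and then to splice the two uniform convergences together on any fixed compact set via a triangle-inequality / uniform-continuity argument. Throughout I would work with the spherical metric on $\C = \widehat{\mathbb{C}}$, so that the Blaschke products, being genuine rational maps, are continuous self-maps of the Riemann sphere and Lemma \ref{degenerate-0} applies as written.

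Concretely, fix a compact set $K \subset \C \setminus (G^{-1}(S_F) \cup S_G)$, where I abbreviate $S_F := {\rm supp}(D_F^\partial - D_\infty^\partial)$ and $S_G := {\rm supp}(E_G^\partial - E_\infty^\partial)$; by hypothesis $1 \notin S_F \cup S_G$. The first step is to apply Lemma \ref{degenerate-0} to $\{(G_n, E_n^\partial)\}$: since $K \cap S_G = \emptyset$, it gives $G_n \to G$ uniformly on $K$. The second step observes that $G(K)$ is compact and disjoint from $S_F$ by the assumption on $K$, so one can choose an open neighborhood $U$ of $G(K)$ whose closure $\overline{U}$ is compact and still disjoint from $S_F$; Lemma \ref{degenerate-0} applied to $\{(F_n, D_n^\partial)\}$ then yields $F_n \to F$ uniformly on $\overline{U}$. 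The third step is the triangle inequality
\[
d\bigl(F_n(G_n(z)),\, F(G(z))\bigr) \leq d\bigl(F_n(G_n(z)),\, F(G_n(z))\bigr) + d\bigl(F(G_n(z)),\, F(G(z))\bigr),
\]
where for $n$ sufficiently large $G_n(K) \subset U$ (by the first step), so the first term is bounded by $\sup_{\overline{U}} d(F_n, F) \to 0$, and the second term is controlled by the modulus of continuity of $F$ on the compact set $\overline{U}$ evaluated at $\sup_{z \in K} d(G_n(z), G(z)) \to 0$. This yields the required uniform convergence on $K$.

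I do not anticipate a genuine obstacle here: the argument is essentially a chain-rule style assembly, and the only step that demands some bookkeeping is explaining why the exceptional set $G^{-1}(S_F) \cup S_G$ is precisely what must be removed. We need $S_G$ out so that Lemma \ref{degenerate-0} delivers $G_n \to G$ uniformly on $K$, and we need $G^{-1}(S_F)$ out so that $G(K)$ can be thickened to a compact neighborhood on which Lemma \ref{degenerate-0} in turn delivers $F_n \to F$ uniformly. Once these two convergences are available, the conclusion follows by uniform continuity of $F$ on that neighborhood.
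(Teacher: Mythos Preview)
Your proof is correct and takes a genuinely different, cleaner route than the paper. The paper decomposes each $F_n$ and $G_n$ multiplicatively as $F_n = R_n \cdot S_n$, $G_n = r_n \cdot s_n$, where the first factors have the limiting degrees and converge to $F$, $G$ respectively, while the second factors converge to the constant $1$ away from the degeneration loci; it then expands $F_n \circ G_n = R_n(r_n s_n)\cdot S_n(r_n s_n)$ and tracks the pieces through four cases according to whether $d_0 = \deg(F)$ and $e_0 = \deg(G)$. You bypass all of this by invoking Lemma~\ref{degenerate-0} twice as a black box and composing the two uniform limits via the triangle inequality and the uniform continuity of $F$ on a compact neighborhood of $G(K)$. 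Your argument is shorter and more conceptual, and it makes transparent exactly why the exceptional set must be $G^{-1}(S_F)\cup S_G$; the paper's multiplicative decomposition is more hands-on with the Blaschke structure but buys nothing extra for this particular lemma.
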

\begin{proof} 
There are four cases:

(1). ${\rm deg}(F)<d_0\leq d, {\rm deg}(G)<e_0\leq e$;

(2). $d_0={\rm deg}(F), {\rm deg}(G)<e_0\leq e$;

(3). ${\rm deg}(F)<d_0\leq d, e_0={\rm deg}(G)$;

(4).  $d_0={\rm deg}(F), e_0={\rm deg}(G)$.

We deal with these cases as follows:

 (1). Write $F_n=R_n\cdot S_n$ and $G_n=r_n\cdot s_n$, such that
$${\rm deg}(R_n)={\rm deg}(F), \ R_n(1)=S_n(1)=1, R_n\rightarrow F, S_n\rightarrow D_F^\partial-D_\infty^\partial,$$
$${\rm deg}(r_n)={\rm deg}(G), \ r_n(1)=s_n(1)=1, \ r_n\rightarrow G, \ s_n\rightarrow E_G^\partial-E_\infty^\partial.$$  Then 
$F_n\circ G_n=R_n(r_n\cdot s_n)\cdot S_n(r_n\cdot s_n)$.
By Lemma \ref{degenerate-0},  $S_n$ converges locally and uniformly in $\C\setminus {\rm supp}(D_F^\partial-D_\infty^\partial)$ to the constant $1$, and 
$s_n$ converges locally and uniformly in $\C\setminus {\rm supp}(E_G^\partial-E_\infty^\partial)$ to the constant $1$.
It follows  that $R_n(r_n\cdot s_n)$ converges locally and uniformly in $\C\setminus {\rm supp}(E_G^\partial-E_\infty^\partial)$ to the map $F\circ G$, and  $S_n(r_n\cdot s_n)$ converges locally and uniformly in
 $\C\setminus (G^{-1}({\rm supp}(D_F^\partial-D_\infty^\partial))\cup {\rm supp}(E_G^\partial-E_\infty^\partial))$
to the constant 1. Therefore,  $F_n\circ G_n$ converges locally and uniformly to 
$F\circ G$ in $\C\setminus (G^{-1}({\rm supp}(D_F^\partial-D_\infty^\partial))\cup {\rm supp}(E_G^\partial-E_\infty^\partial))$.

(2). The assumption $d_0={\rm deg}(F)$ implies that $F_n$ converges uniformly to $F$ on $\C$.
Write
$G_n=r_n\cdot s_n$ with  ${\rm deg}(r_n)={\rm deg}(G), r_n(1)=s_n(1)=1, r_n\rightarrow G$ and $s_n\rightarrow E_G^\partial-E_\infty^\partial$. 
By Lemma \ref{degenerate-0}, 
$s_n$ converges locally and uniformly in $\C\setminus {\rm supp}(E_G^\partial-E_\infty^\partial)$ to the constant $1$.
 It follows that 
$F_n\circ G_n=F_n(r_n\cdot s_n)$ converges locally and uniformly in $\C\setminus {\rm supp}(E_G^\partial-E_\infty^\partial)$ to the map $F\circ G$.

(3).  The assumption $e_0={\rm deg}(G)$ implies $G_n$ converges uniformly to $G$ in $\C$. Similarly as above,  $F_n\circ G_n$ converges locally and uniformly to 
$F\circ G$ in $\C\setminus G^{-1}({\rm supp}(D_F^\partial-D_\infty^\partial))$.

(4). In this case, $F_n\circ G_n$ converges uniformly to 
$F\circ G$ in $\C$.

The lemma follows  immediately.
\end{proof}

	\begin{pro} \label{convergence}  
	Let $D_n=((B_{n,u}, D_{n,u}^\partial))_{u\in V}\in{\rm Div}(\overline{\mathbb D})^S$ be a sequence of divisors converging to $D=\big((B^0_u, D_u^{\partial})\big)_{u\in V}\in {\rm Div}(\overline{\mathbb D})^S$.  Assume
	 that for all $u\in V$,
	
	\begin{itemize}
		\item $(B_{n,u}, D_{n,u}^\partial)\in  {\rm Div}_{\delta(u)-1-l_u}({\mathbb D})\times  {\rm Div}_{l_u}({\partial \mathbb D})$ for some integer $l_u\geq 0$ and for all $n$;
		
		\item $ D_{n,u}^\partial\rightarrow D_{\infty,u}^\partial$ as $n\rightarrow \infty$;
		
		\item $1\notin {\rm  supp}(D_u^\partial-D_{\infty,u}^\partial)$.
		\end{itemize}
%

Then we have the following statements:	
	
	(1). 	For any $v\in V$ and any integer $k\geq 1$,  the sequence $B_{n,{\sigma^{k-1}(v)}}\circ \cdots \circ B_{n,{\sigma(v)}}\circ B_{n,v}$ converges   locally and uniformly  to $B^0_{\sigma^{k-1}(v)}\circ \cdots \circ B^0_{\sigma(v)}\circ B^0_v$ in  $\C\setminus W_v^k$, where
	$$W_v^k=\begin{cases} {\rm supp}(D_v^{\partial}-D_{\infty,v}^\partial) , &  k=1, \\
		\bigcup_{m=1}^{k-1}\big(B^0_{\sigma^{m-1}(v)}\circ \cdots \circ B^0_v\big)^{-1}
		\big({\rm supp}(D_{\sigma^{m}(v)}^{\partial}-D_{\infty,\sigma^{m}(v)}^\partial)\big)\\
		\  \ \bigcup 	{\rm supp}(D_v^{\partial}-D_{\infty,v}^\partial), &  k\geq2.			
	\end{cases}$$
	
	In  particular, if $D_u^\partial=D_{\infty,u}^\partial$ for all $u\in \{\sigma^j(v), j\geq 0\}$, then    $B_{n,{\sigma^{k-1}(v)}}\circ \cdots \circ B_{n,{\sigma(v)}}\circ B_{n,v}$ converges     uniformly  to $B^0_{\sigma^{k-1}(v)}\circ \cdots \circ B^0_{\sigma(v)}\circ B^0_v$ in  $\C$.
	
	(2). Assume $v\in V_{\rm p}$.   Let $\ell\geq 1$ be the $\sigma$-period of $v$, and let
	$$\widehat{B}^0_{v}=B^0_{\sigma^{\ell-1}(v)}\circ \cdots \circ B^0_{\sigma(v)}\circ B^0_v, \ \widehat{B}_{n,v}=B_{n,{\sigma^{\ell-1}(v)}}\circ \cdots \circ B_{n,{\sigma(v)}}\circ B_{n,v}.$$

	Then for any integer $k\geq 1$, the $k$-th iterate
	$\widehat{B}_{n,v}^{k}$ converges locally and uniformly  to  the $k$-th iterate $(\widehat{B}^0_{v})^k$ in
	$\C\setminus \bigcup_{0\leq j\leq k-1} (\widehat{B}^0_v)^{-j}(W_v^\ell)$.
\end{pro}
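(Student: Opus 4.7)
The plan is to prove part (1) by induction on $k$, and then derive part (2) by the same scheme applied to iterates. The base case $k=1$ is immediate: the hypotheses on $B_{n,v}$ match those of Lemma \ref{degenerate-0} verbatim, yielding $B_{n,v}\to B_v^0$ locally and uniformly on $\C\setminus W_v^1$.

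For the inductive step, write $H_n^k:=B_{n,\sigma^{k-1}(v)}\circ G_n$, where $G_n:=B_{n,\sigma^{k-2}(v)}\circ\cdots\circ B_{n,v}$ is the composition already controlled by the inductive hypothesis. Each $G_n$ is a Blaschke product in $\mathcal{B}_{\rm fc}^e$ with $e=\prod_{j=0}^{k-2}(\delta(\sigma^j(v))-l_{\sigma^j(v)})$ independent of $n$. By compactness of ${\rm Div}_{e-1}(\overline{\D})$, pass to a subsequence so that $(G_n,0)\to(G^0,E_G^\partial)$ as divisors; Lemma \ref{degenerate-0} then gives $G_n\to G^0$ locally uniformly on $\C\setminus{\rm supp}(E_G^\partial)$. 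Comparing with the inductive hypothesis on $\C\setminus W_v^{k-1}$ forces $G^0=B_{\sigma^{k-2}(v)}^0\circ\cdots\circ B_v^0$ and ${\rm supp}(E_G^\partial)\subset W_v^{k-1}$, since at a boundary point $q\notin W_v^{k-1}$ a sequence of zeros $z_{i,n}$ accumulating at $q$ would force $0=\lim G_n(z_{i,n})=G^0(q)$, contradicting $|G^0(q)|=1$. Then Lemma \ref{composition-law} applied to $F_n=B_{n,\sigma^{k-1}(v)}$ and this $G_n$ yields convergence of $H_n^k$ on the complement of $(B_{\sigma^{k-2}(v)}^0\circ\cdots\circ B_v^0)^{-1}({\rm supp}(D_{\sigma^{k-1}(v)}^\partial-D_{\infty,\sigma^{k-1}(v)}^\partial))\cup{\rm supp}(E_G^\partial)$, a set contained in $W_v^k$ by the recursive definition of the latter.

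The main obstacle is verifying the two avoidance-of-$1$ hypotheses $1\notin{\rm supp}(D_{\sigma^{k-1}(v)}^\partial-D_{\infty,\sigma^{k-1}(v)}^\partial)$ and $1\notin{\rm supp}(E_G^\partial)$ required by Lemma \ref{composition-law}. The former is the standing assumption of the proposition. For the latter I will strengthen the induction with the auxiliary claim $1\notin W_v^{k-1}$: this is direct from the standing assumption when $k=2$, and in the step uses that every normalized Blaschke product $B_u^0\in\mathcal{B}_{\rm fc}$ fixes $1$, so $(B_{\sigma^{m-1}(v)}^0\circ\cdots\circ B_v^0)(1)=1$ lies outside ${\rm supp}(D_{\sigma^m(v)}^\partial-D_{\infty,\sigma^m(v)}^\partial)$ for every $m\leq k-1$, while the preimages preserve $\partial\D$ because each $B_u^0$ has positive degree. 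Combined with ${\rm supp}(E_G^\partial)\subset W_v^{k-1}$, this gives $1\notin{\rm supp}(E_G^\partial)$. Uniqueness of the divisor limit upgrades the subsequential convergence to convergence of the full sequence. The ``in particular'' clause follows by tracing definitions: when $D_u^\partial=D_{\infty,u}^\partial$ along the whole forward orbit, every $W_v^k$ is empty and the convergence is global and uniform.

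For part (2), apply part (1) with $k=\ell$ to obtain $\widehat B_{n,v}\to\widehat B_v^0$ locally uniformly on $\C\setminus W_v^\ell$, together with the auxiliary fact $1\notin W_v^\ell$. Then induct on $k$ with exactly the scheme of part (1), with $\widehat B_{n,v}$ playing the role of the outer factor and $\widehat B_{n,v}^{k-1}$ playing the role of $G_n$. The avoidance-of-$1$ assertion propagates through iterates because $\widehat B_v^0$ fixes $1$, so $1$ belongs to none of the preimages $(\widehat B_v^0)^{-j}(W_v^\ell)$, and an identical subsequential-divisor-limit argument delivers the target set $\bigcup_{0\leq j\leq k-1}(\widehat B_v^0)^{-j}(W_v^\ell)$.
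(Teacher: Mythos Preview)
Your argument is correct and matches the paper's terse proof (``It follows from Lemma~\ref{composition-law} and iteration''): you make explicit the inductive application of Lemma~\ref{composition-law}, supplying the divisor limit of the inner composition $G_n$ and the auxiliary fact $1\notin W_v^{k-1}$ at each step. One small reordering would clean up the logic: your zero-accumulation argument for ${\rm supp}(E_G^\partial)\subset W_v^{k-1}$ uses only the inductive hypothesis (the limit there is $B_{\sigma^{k-2}(v)}^0\circ\cdots\circ B_v^0$, not the as-yet-unidentified $G^0$), so it should precede the appeal to Lemma~\ref{degenerate-0}, whose hypothesis $1\notin{\rm supp}(E_G^\partial)$ is precisely what that inclusion combined with $1\notin W_v^{k-1}$ delivers.
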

\begin{proof}  It follows from  Lemma \ref{composition-law} and iteration.
\end{proof}

\subsection{Perturbation of Blaschke divisors} \label{pbp}

For  the convenience of discussion,  sometimes we  need to distinguish the domains and ranges   for the maps $(B_u)_{u\in V}$  of the divisor  $((B_u, D_u^{\partial}))_{u\in V}\in  {\rm Div}(\overline{\mathbb D})^S$.    For each $u\in V$,  we shall view  $B_u$ as a map $B_u:\C_u\rightarrow \C_{\sigma(u)}$, and denote a set $X\subset \C_u$ by $X_u$.

\begin{pro} \label{extension-repelling}
	Let 
	$D=((B_w^0, D_w^{\partial}))_{w\in V}\in \partial_0 {\rm Div}(\overline{\mathbb D})^S$.
	Let  $u\in V$ so that  $v:=\sigma^s(u)\in V_{\rm p}$ for some $s\geq 0$.
	Let   $\ell\geq 1$ be the $\sigma$-period of $v$.  Write $\widehat{B}^0_{v}=B^0_{\sigma^{\ell-1}(v)}\circ \cdots \circ B^0_{\sigma(v)}\circ B^0_v$, and 
	$$Z_v=\begin{cases}
		{\rm supp}(D_v^{\partial}), & \text{ if }  \ell=1,\\
		{\rm supp}(D_v^{\partial})\bigcup 
		\bigcup_{m=1}^{\ell-1}\big(B^0_{\sigma^{m-1}(v)}\circ \cdots \circ B^0_v\big)^{-1}
		\big({\rm supp}(D_{\sigma^{m}(v)}^{\partial})\big)  , & \text{ if }  \ell\geq 2.
	\end{cases}$$
 
	Assume $b\in \partial \mathbb D_u$ is $D$-pre-periodic, in the sense that there are integers 
	$l\geq 0$ and $n\geq 1$, such that  $$\begin{cases}
		(\widehat{B}^0_v)^{l+n}(b)=(\widehat{B}^0_v)^{l}(b), & \text{ if }  s=0,\\
		(\widehat{B}^0_v)^{l+n}(B^0_{\sigma^{s-1}(u)}\circ \cdots \circ   B^0_u(b))=(\widehat{B}^0_v)^{l}(B^0_{\sigma^{s-1}(u)}\circ \cdots \circ   B^0_u(b)), & \text{ if }  s\geq 1.
	\end{cases}$$ 
	Assume further that  $1\notin {\rm  supp}(D_{\sigma^k(u)}^\partial), \ \forall k\geq 0$, and $b\in  \partial \mathbb D_u\setminus X_{u,s}$, where 
	$$X_{u,s}=\begin{cases}
		  \bigcup_{j\geq 0} (\widehat{B}^0_v)^{-j}(Z_v), & \text{ if }  s=0,\\
	 Y_{u,s}  \bigcup(B^0_{\sigma^{s-1}(u)}\circ \cdots \circ B^0_u\big)^{-1}( \bigcup_{j\geq 0} (\widehat{B}^0_v)^{-j}(Z_v))  , & \text{ if }  s\geq 1,
	\end{cases}$$
	here, $Y_{u,s}= {\rm supp}(D_u^{\partial})\bigcup \bigcup_{1\leq m<s}\big(B^0_{\sigma^{m-1}(u)}\circ \cdots \circ B^0_u\big)^{-1}
	\big({\rm supp}(D_{\sigma^{m}(u)}^{\partial})\big)$.

 Then 
		 there exist $\tau=\tau(D,b)>0$ with ${\U_{\tau}(D)}\cap \partial {\rm Div}(\overline{\mathbb D})^S\subset \partial_0 {\rm Div}(\overline{\mathbb D})^S$ and a unique continuous map 
		 \begin{equation}
		 	\label{rvb}
		 	r_{u,b}:   {\rm Div}({\mathbb D})^S\cup {\U_{\tau}(D)}\rightarrow \partial \mathbb D_u,
		 \end{equation}
	satisfying that  $r_{u,b}(D)=b$, and for any  $E=\big((B_w, E_w^{\partial})\big)_{w\in V}\in   {\rm Div}({\mathbb D})^S\cup \U_{\tau}(D)$,  the point
	$r_{u,b}(E)$ is pre-periodic under $E$, in the sense that
	$$\begin{cases}
		\widehat{B}_v^{l+n}(r_{u,b}(E))=\widehat{B}_v^{l}(r_{u,b}(E)), & \text{ if }  s=0,\\
		\widehat{B}_v^{l+n}(B_{\sigma^{s-1}(u)}\circ \cdots \circ   B_u(r_{u,b}(E)))=\widehat{B}_v^{l}(B_{\sigma^{s-1}(u)}\circ \cdots \circ   B_u(r_{u,b}(E))), & \text{ if }  s\geq 1,
	\end{cases}$$ 
	where $\widehat{B}_v=B_{\sigma^{\ell-1}(v)}\circ \cdots \circ B_{\sigma(v)}\circ B_v$.
\end{pro}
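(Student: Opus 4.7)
The plan is to reduce the problem to perturbing a single repelling periodic point of $\widehat{B}_v^0$ on $\partial\mathbb D_v$ and then to pull back through the chain of Blaschke products using chosen inverse branches, relying throughout on Proposition \ref{convergence} to control the boundary degenerations. Since $D\in\partial_0{\rm Div}(\mathbb D)^S$, the composition $\widehat{B}_v^0$ has degree $\geq 2$, hence is a Blaschke product whose Julia set is all of $\partial\mathbb D_v$ and on which every periodic point is repelling. Set
$$c=(\widehat{B}_v^0)^{l}\bigl(B^0_{\sigma^{s-1}(u)}\circ\cdots\circ B^0_u(b)\bigr),$$
with the inner chain interpreted as the identity when $s=0$. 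The hypothesis gives $(\widehat{B}_v^0)^n(c)=c$, so $c$ is a repelling periodic point of $\widehat{B}_v^0$. The assumption $b\notin X_{u,s}$ says precisely that the finite forward orbit $b\mapsto\cdots\mapsto c$ misses $X_{u,s}$, and in particular $c\notin W_v^\ell$ (in the notation of Proposition \ref{convergence}).

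First I shall construct $r_{u,b}$ on a neighborhood $\U_\tau(D)$. Pick a small closed disk $\overline{\mathbb D(c,\eta)}$ disjoint from $W_v^\ell$ and small enough that the local inverse branch $g_0$ of $(\widehat{B}_v^0)^n$ fixing $c$ is defined on it and strictly contracting. By Proposition \ref{convergence}(2), for $\tau$ sufficiently small, and with $\U_\tau(D)\cap\partial{\rm Div}(\overline{\mathbb D})^S\subset\partial_0{\rm Div}(\overline{\mathbb D})^S$ (possible by Fact \ref{subset-boundary}), the iterates $(\widehat{B}_v)^n$ converge uniformly on $\overline{\mathbb D(c,\eta)}$ to $(\widehat{B}_v^0)^n$ as $E$ varies in $\U_\tau(D)$. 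Hence for such $E$ a unique inverse branch $g_E$ of $(\widehat{B}_v)^n$ close to $g_0$ exists on $\overline{\mathbb D(c,\eta)}$, still contracting with a uniform rate, and Banach's fixed-point theorem provides a unique fixed point $c(E)\in\overline{\mathbb D(c,\eta)}$ of $g_E$, continuous in $E$ with $c(D)=c$. Because each $\widehat{B}_v$ preserves $\partial\mathbb D_v$, the branch $g_E$ preserves $\partial\mathbb D_v\cap\overline{\mathbb D(c,\eta)}$, so $c(E)\in\partial\mathbb D_v$.

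To recover $r_{u,b}(E)$, pull $c(E)$ back to $\partial\mathbb D_u$ along the orbit of $b$: first through $l$ inverse branches of $\widehat{B}_v$, and then, when $s\geq 1$, through inverse branches of $B_{\sigma^{s-1}(u)},\ldots,B_u$. At each step the required branch is selected by continuity at $E=D$, matching the branches that realise the pre-orbit $b\mapsto\cdots\mapsto c$ under $D$; it is single-valued and continuous in $E$ by Proposition \ref{convergence}(1) together with the avoidance condition $b\notin X_{u,s}$, which guarantees that the entire relevant pre-orbit lies outside the set carrying the boundary degenerations; the outputs remain on $\partial\mathbb D_u$ because each $B_w$ preserves the boundary circle. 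This yields a continuous $r_{u,b}:\U_\tau(D)\to\partial\mathbb D_u$ satisfying $r_{u,b}(D)=b$ and the prescribed pre-periodicity identity, and uniqueness on $\U_\tau(D)$ follows from the local uniqueness of the Banach fixed point and of the chosen branches.

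It remains to extend $r_{u,b}$ over ${\rm Div}(\mathbb D)^S$ and merge the two definitions. For any $E\in{\rm Div}(\mathbb D)^S$, $\widehat{B}_v$ is a genuine hyperbolic Blaschke product of full degree with $\partial\mathbb D_v$ as its Julia set, so the repelling periodic point $c$ and all its pre-images perturb holomorphically; since ${\rm Div}(\mathbb D)^S$ is a product of symmetric products of $\mathbb D$ and hence simply connected, the local perturbation extends globally to a single-valued continuous (in fact holomorphic) map on ${\rm Div}(\mathbb D)^S$ which agrees with the neighborhood construction on ${\rm Div}(\mathbb D)^S\cap\U_\tau(D)$ by local uniqueness. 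The main obstacle is the uniformity of the perturbation argument across the degree jump at the boundary: the family $\widehat{B}_v$ does not converge to $\widehat{B}_v^0$ globally uniformly but only locally uniformly away from the bad set $W_v^\ell$. Controlling this is the technical core of the argument, and it is exactly what the hypotheses $b\notin X_{u,s}$ and $1\notin{\rm supp}(D^\partial_{\sigma^k(u)})$ are designed to enable, by keeping the entire relevant forward and backward orbit inside the domain of uniform convergence.
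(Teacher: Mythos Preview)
Your argument is correct and complete in its essentials, but it differs in structure from the paper's proof. The paper works directly at the point $b$: it observes that $b$ is a \emph{simple} zero of the holomorphic function $z\mapsto(\widehat{B}_v^0)^{l+n}(z)-(\widehat{B}_v^0)^l(z)$ (simplicity comes from the chain rule and the fact that $(\widehat{B}_v^0)^l(b)$ is repelling), and then applies Rouch\'e's theorem on a small disk $\mathbb D(b,r)$ disjoint from the bad set $Z_v^m=\bigcup_{0\leq j<m}(\widehat{B}_v^0)^{-j}(Z_v)$. Proposition~\ref{convergence} gives the uniform estimates needed for Rouch\'e, and an integral formula yields continuity of $r_{u,b}$ on $\U_\tau(D)$. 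The extension to ${\rm Div}(\mathbb D)^S$ is by the Implicit Function Theorem, and global uniqueness follows from the observation that any candidate map takes values in the discrete set $\{z:\widehat{B}_v^{l+n}(z)=\widehat{B}_v^l(z)\}$ and agrees with $r_{u,b}$ at $D$.

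Your route---Banach fixed point at the periodic point $c$, then pullback along the finite orbit $b\to\cdots\to c$ via continuously chosen inverse branches---is equally valid and perhaps more transparent dynamically, but it requires tracking more pieces (one fixed-point argument plus $l$ applications of branch selection for $\widehat{B}_v$, plus $s$ more for the chain $B^0_{\sigma^{s-1}(u)}\circ\cdots\circ B^0_u$). The paper's Rouch\'e argument collapses all of this into a single step at $b$, which is why the paper reduces explicitly to $s=0$ and says the general case ``follows by taking preimages.'' Your simple-connectivity argument for the global extension on ${\rm Div}(\mathbb D)^S$ is fine, though the paper's discreteness argument for uniqueness is more direct and avoids any topology of the parameter space beyond connectedness.
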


\begin{proof}  We only  treat the case $s=0$, the same argument  works for $s\geq 1$. In this case, $u=v$.
	Let $m=l+n$.  Set $Z_v^m=\bigcup_{0\leq j\leq m-1} (\widehat{B}_{v}^0)^{-j}(Z_v)$.
  Take small $r>0$ so that
	$b$ is the only zero of  $(\widehat{B}_{v}^0)^{l+n}-(\widehat{B}_{v}^0)^{l}$  in $\overline{\mathbb{D}(b, r)}$, and 
	$$\overline{\mathbb{D}(b,r)}\subset \C\setminus \bigcup_{q\in Z_v^m} \mathbb{D}(q,r).$$
	Observe that $b$ is the simple zero  of  $(\widehat{B}_{v}^0)^{l+n}-(\widehat{B}_{v}^0)^{l}$, because $(\widehat{B}_{v}^0)^{l}(b)$ is $(\widehat{B}_{v}^0)^{n}$-repelling (implying that ($(\widehat{B}_{v}^0)^{n})'((\widehat{B}_{v}^0)^{l}(b))\neq 1$), and 
	$$\frac{d}{dz}[(\widehat{B}_{v}^0)^{l+n}-(\widehat{B}_{v}^0)^{l}]|_{z=b}=[((\widehat{B}_{v}^0)^{n})'((\widehat{B}_{v}^0)^{l}(b))-1]((\widehat{B}_{v}^0)^{l})'(b)\neq 0.$$
	Take $\epsilon=\min_{|z-b|=r}|(\widehat{B}_{v}^0)^m(z)-(\widehat{B}_{v}^0)^l(z)|$. By Proposition \ref{convergence}, there is 
	$\tau>0$ with  ${\U_{\tau}(D)}\cap \partial{\rm Div}(\overline{\mathbb D})^S\subset \partial_0 {\rm Div}(\overline{\mathbb D})^S$ such that for any 
	$E=((B_w, E_w^\partial))_{w\in V}\in  {\U_{\tau}(D)}$ and any $z\in \overline{\mathbb{D}(b, r)} \subset \C\setminus \bigcup_{q\in Z_v^m} \mathbb{D}(q,r)$, 
	$$|\widehat{B}_{v}^m(z)-(\widehat{B}_{v}^0)^m(z)|<\epsilon/2, \ |\widehat{B}_{v}^l(z)-(\widehat{B}_{v}^0)^l(z)|<\epsilon/2.$$
	 It follows that 
	\bess &&\max_{|z-b|=r}|(\widehat{B}_{v}^m(z)-\widehat{B}_{v}^l(z))-((\widehat{B}_{v}^0)^m(z)-(\widehat{B}_{v}^0)^l(z))|\\
	&\leq& \max_{|z-b|=r}(|\widehat{B}_{v}^m(z)-(\widehat{B}_{v}^0)^m(z)|+ |\widehat{B}_{v}^l(z)-(\widehat{B}_{v}^0)^l(z)|)\\
	&<&\epsilon=\min_{|z-b|=r}|(\widehat{B}_{v}^0)^m(z)-(\widehat{B}_{v}^0)^l(z)|.
	\eess
	By Rouch\'e's  Theorem, the equation $\widehat{B}_{v}^m(z)-\widehat{B}_{v}^l(z)=0$ has exactly one solution   in $\mathbb{D}(b, r)$,  denoted by $r_{v,b}(E)$.
	The inclusion  ${\U_{\tau}(D)}\cap \partial{\rm Div}(\overline{\mathbb D})^S\subset \partial_0 {\rm Div}(\overline{\mathbb D})^S$ implies that $0$ is  attracting for $\widehat{B}_{v}$, hence  $r_{v,b}(E)$ is  pre-repelling.
	Clearly  $r_{v,b}(D)=b$. The continuity of  $r_{v,b}$ follows by the formula 
	$$r_{v,b}(E)=\frac{1}{2\pi i}\int_{|\zeta-b|=r}\zeta\frac{(\widehat{B}_{v}^m(\zeta)-\widehat{B}_{v}^l(\zeta))'}{\widehat{B}_{v}^m(\zeta)-\widehat{B}_{v}^l(\zeta)} d\zeta, \ \ E\in    {\U_{\tau}(D)}.$$ 
	
	By the Implicit Function Theorem,   $r_{v,b}|_{\N_{\tau}(D)}$ extends continuously to $  {\rm Div}({\mathbb D})^S$ so that
	$r_{v,b}(E)$ is a pre-repelling   point  of  $\widehat{B}_v$, for  
	 $E=(B_w)_{w\in V}\in  {\rm Div}({\mathbb D})^S$. 
	This gives a required map $r_{v,b}:   {\rm Div}({\mathbb D})^S\cup {\U_{\tau}(D)}\rightarrow \partial \mathbb D$.

	Finally, we prove the uniqueness of $r_{v,b}$. 
	Shrink $\tau>0$ if necessary,  let $R_{v,b}:  {\rm Div}({\mathbb D})^S\cup {\U_{\tau}(D)}\rightarrow \partial \mathbb D$ be the continuous map with the same property as $r_{v,b}$.  Then  for each $E=((B_w, E_w^{\partial}))_{w\in V}\in   {\rm Div}({\mathbb D})^S\cup \U_{\tau}(D)$,
	$$r_{v,b}(E), R_{v,b}(E)\in \big\{z\in \partial \mathbb D;  \ \widehat{B}_{v}^m(z)-\widehat{B}_{v}^l(z)=0\big\}.$$
	Hence $r_{v,b}$ and $R_{v,b}$ take discrete values. By the continuity and the fact $r_{v,b}(D)=R_{v,b}(D)=b$, we have $r_{v,b}\equiv R_{v,b}$.
	%
\end{proof}

\begin{rmk} \label{fixed-1} In Proposition \ref{extension-repelling}, the assumption
	$1\notin {\rm  supp}(D_{\sigma^k(u)}^\partial), \ \forall k\geq 0$  
  implies that $1\in  \partial \mathbb D_u\setminus X_{u,s}$.
So if we take $b=1$, then $r_{u,1}:   {\rm Div}({\mathbb D})^S\cup {\U_{\tau}(D)}\rightarrow \partial \mathbb D_u$ satisfies that $r_{u,1}(D)=1$, and  for   $E=((B_w, E_w^{\partial}))_{w\in V}\in   {\rm Div}({\mathbb D})^S\cup \U_{\tau}(D)$, $r_{u,1}(E)$ is $E$-pre-periodic.
   Hence $r_{u,1}\equiv 1$.
\end{rmk}

\section{Polynomial dynamics and hyperbolic components} \label{p-d-h-c}

\subsection{Basics  of polynomial dynamics}

Let $f\in \mathcal P_d$. 
The \emph{filled Julia set} of $f$ is $K(f):=\{z\in\mathbb C; \{f^n(z); n\geq 0\}  \text{ is bounded in } \mathbb C\}$,  and its  complement set $\Omega(f):=\mathbb C\setminus K(f)$ is called  the \emph{basin of infinity} of $f$. It is well-known that the Julia set  $J(f)=\partial K(f)$.  For any point $z$ in the Fatou set $F(f):=\mathbb C\setminus J(f)$,  the Fatou component $U_f(z)$  is  the component of $F(f)$ containing $z$.

 For any $f\in \mathcal C_d$, 
 there is a unique conformal map $\phi_f: \Omega_f\rightarrow \mathbb C\setminus \overline{\mathbb D}$, 
such that $	\phi_f\circ f=\phi_f^d$ in $\Omega_f$,  with normalization  $\lim_{z\rightarrow \infty}\phi_f(z)/z=1$.
This $\phi_f$ is called the \emph{B\"{o}ttcher map} of $f$. 
For any  $\theta\in \mathbb R/\mathbb Z$, the {\it external ray  with angle $\theta$}  is defined as
$R_f(\theta):=\phi_f^{-1}((1,\infty)e^{2\pi i\theta})$.
Clearly, $f(R_f(\theta))=R_f(d\theta)$.
An external ray \emph{lands} at $z$ if its accumulation set on $ J(f)$ is $\{z\}$. By \cite[\S 18]{M},  every periodic external ray  lands at a parabolic or repelling  point; conversely, every parabolic or repelling  point is  landed by finitely many periodic external rays with the same period. 

If $f\in \mathcal P_d\setminus \mathcal C_d$,  the  {B\"{o}ttcher map} $\phi_f$ of $f$ can   be defined similarly near $\infty$. It  has   an analytic continuation to a maximal subdomain of  $\Omega(f)$,  in the way that $	\phi_f\circ f=\phi_f^d$. 
So suitable external rays can also be defined.


We collect some known results on polynomial dynamics  used in the paper.

\begin{lem}
	[Stability of external rays]
	\label{stability-e-r}
	Let $f\in\mathcal{C}_d$ and $\theta\in \mathbb Q/\mathbb Z$.
	Suppose that $R_f(\theta)$ lands at a pre-repelling point $q$. 
	
	1. 	Then there is a neighborhood $\mathcal{N}\subset\mathcal P_d$ of $f$ such that $$\gamma_\theta: \begin{cases}(\mathcal{N}\cap\mathcal{C}_d)\times[1,\infty)\rightarrow\mathbb{C},\\
	(g, r)\mapsto \phi_g^{-1}(re^{2\pi i\theta}) \end{cases}$$ is continuous.  

2. If we further require that the $f$-orbit of $q$ meets no critical point, then there is a neighborhood $\mathcal{N}\subset\mathcal P_d$ of $f$ such that $$\gamma_\theta: \begin{cases}\mathcal{N}\times[1,\infty)\rightarrow\mathbb{C},\\
	(g, r)\mapsto \phi_g^{-1}(re^{2\pi i\theta}) \end{cases}$$ is continuous, here 
$\phi_g$ is   defined in a  maximal subdomain of  $\Omega(g)$.
\end{lem}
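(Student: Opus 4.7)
The plan is to prove continuity of $\gamma_\theta(g,r)=\phi_g^{-1}(re^{2\pi i\theta})$ on $[1,\infty)$ by splitting into three regimes: large $r$, intermediate $r\in(1,R]$, and the boundary value $r=1$ where $\gamma_\theta(g,1)$ equals the landing point of $R_g(\theta)$.

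For $r\geq R$ with $R$ sufficiently large, I would use the classical construction $\phi_g(z)=\lim_n (g^n(z))^{1/d^n}$, which converges uniformly and holomorphically in $g$ on $\{|z|\geq R\}$, so $\gamma_\theta$ is continuous on $\mathcal P_d\times[R,\infty)$. For intermediate $r\in(1,R]$, I would exploit the functional equation $\phi_g\circ g=\phi_g^d$ to write
\[
\gamma_\theta(g,r)=(g^N)^{-1}\!\bigl(\gamma_{d^N\theta}(g,r^{d^N})\bigr),
\]
choosing $N$ with $r^{d^N}\geq R$ and taking the inverse branch of $g^N$ along the ray. For Part 1 this is immediate because $g\in\mathcal C_d$ implies $\Omega(g)$ is free of finite critical points, so the branch exists and depends continuously on $g$. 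For Part 2, even though $g$ may have critical points in $\Omega(g)$, the hypothesis that the $f$-orbit of $q$ avoids ${\rm Crit}(f)$ ensures, after shrinking $\mathcal N$, that the pullback track through $g^N$ stays clear of ${\rm Crit}(g)$ for $g\in\mathcal N$.

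The crux is continuity at $r=1$, which I would handle via local linearization near the periodic part of the orbit of $q$. Since $\theta\in\mathbb Q/\mathbb Z$, there exist minimal integers $p\geq 0$, $n\geq 1$ with $d^{p+n}\theta\equiv d^p\theta\pmod 1$; the point $q'=f^p(q)$ is then a repelling periodic point of period dividing $n$. The implicit function theorem provides a holomorphic continuation $q'(g)$, and a Koenigs linearization $\psi_g$ of $g^n$ near $q'(g)$ depending holomorphically on $g$. In the $\psi_g$-coordinate, the tail of the periodic external ray with angle $d^p\theta$ becomes a segment of a straight ray from $0$ whose direction varies continuously in $g$; this yields continuity of $\gamma_{d^p\theta}$ up to the landing point $q'(g)$. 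Pulling back $p$ times along the local inverse branches of $f^p$ at $q$ -- which exist and depend continuously on $g$ by Proposition 2.1 applied to the finite hyperbolic set $\{q,f(q),\dots,f^{p+n-1}(q)\}$ -- transports the continuity from $q'(g)$ to the landing point $q(g)$ of $R_g(\theta)$.

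The main obstacle is the organization at $r=1$, together with ensuring in Part 2 that no perturbed critical orbit intercepts the ray $R_g(\theta)$. Both difficulties are resolved by the fact that the pre-repelling cycle containing $q$ is a hyperbolic set avoiding ${\rm Crit}(f)$ in a neighborhood, so it admits a continuous (in fact holomorphic) motion under parameter perturbation.
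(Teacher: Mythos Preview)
Your outline matches the standard approach (which is essentially what the cited references [DH, Proposition 8.5] and [CWY, Lemma 2.2] carry out), and the paper itself gives no argument beyond those citations. The three-regime decomposition and the Koenigs linearization at the periodic part are exactly right, and your treatment of Part~2 is sound.

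There is, however, a genuine gap in your handling of Part~1 at $r=1$. You invoke Proposition~2.1 for the ``finite hyperbolic set $\{q,f(q),\dots,f^{p+n-1}(q)\}$'' to produce local inverse branches of $f^p$ at $q$. But Part~1 does \emph{not} assume the $f$-orbit of $q$ avoids critical points; if some $f^j(q)$ with $0\le j<p$ is critical, then $(f^k)'(q)=0$ for $k>j$, the set fails to be hyperbolic, and the inverse branches do not exist. This case genuinely occurs (e.g.\ Misiurewicz parameters where a critical point is strictly pre-repelling), and the lemma as stated must cover it.

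The repair uses the restriction to $\mathcal C_d$ in a different way. For $g\in\mathcal C_d$ the basin $\Omega(g)$ contains no finite critical points, so the ray $R_g(\theta)\subset\Omega(g)$ is an unambiguous lift of $R_g(d^p\theta)$ under $g^p$, selected not by an inverse branch at the landing point but by the ray itself. Concretely: given $\epsilon'>0$ small, the periodic case gives $\gamma_{d^p\theta}(g,s)\in\mathbb D(q',\epsilon')$ for $(g,s)$ near $(f,1)$; the component $W_g$ of $(g^p)^{-1}(\mathbb D(q',\epsilon'))$ containing a fixed point $\gamma_\theta(g,r_0)$ (with $r_0>1$ close to $1$) has small diameter and varies continuously, and connectedness of the ray forces $\gamma_\theta(g,[1,r_0])\subset W_g$. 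This is what ``taking preimages under the restriction $\mathcal N\cap\mathcal C_d$'' means in the paper's one-line justification.
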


Lemma \ref{stability-e-r} (2) is \cite[Proposition 8.5]{DH}.  Lemma \ref{stability-e-r} (1)  follows by  taking preimages,  under the restriction $\mathcal{N}\cap\mathcal{C}_d$, see \cite[Lemma 2.2]{CWY}. 

The following result is from \cite{RY}.

\begin{pro}\label{RY}  Let $U$ be a parabolic  or bounded attracting  Fatou component of  $f\in \mathcal P_d$. Then  $\partial U$ is a Jordan curve.  
	
	Assume further $f\in \mathcal C_d$, we have the following properties.
	
	1. If $x\in \partial U$ is preperiodic, then $x$ is either pre-repelling or pre-parabolic.
	
	2. If  $x\in \partial U$ is wandering, then $J(f)$ is locally connected at $y$, and there is at least one external ray landing at $x$.
	
	3.  For any  $x\in \partial U$, there is a  continuum $L_{U, x}\subset K(f)\setminus U$, called a limb, such that $L_{U, x}\cap \partial U=\{x\}$, and 
	$$K(f)=U\bigsqcup \bigsqcup_{x\in \partial U} L_{U, x}.$$
	
	4. If $L_{U, x}\neq \{x\}$, then  $L_{U, x}\setminus\{x\}$ has finitely many connected components.
	Moreover, there are two external rays $R_{f}(\alpha),R_{f}(\beta)$ landing at $x$, such that  $L_{U, x}\setminus \{x\}$ is contained in one connected component of
	$\mathbb C\setminus(\overline{R_{f}(\alpha)}\cup \overline{R_{f}(\beta)})$.  

	5. $L_{U, x}\neq \{x\}$ if and only if there is an integer $n\geq 0$ such that $L_{f^n(U), f^n(x)}$ contains a critical point.
	
\end{pro}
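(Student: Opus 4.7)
\textbf{Proof proposal for Proposition \ref{RY}.}

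The backbone of the argument is to show $\partial U$ is a Jordan curve; properties (1)--(5) are then downstream of this structural fact plus external-ray combinatorics. After replacing $f$ by an iterate, assume $U$ is fixed with $f|_U$ either attracting or parabolic. I would choose a fundamental annulus $A_0 \subset U$ from a linearizing/B\"ottcher coordinate (or Fatou coordinates in the parabolic case), so that $f: A_0 \to f(A_0)$ is a proper cover and the backward orbit $\{A_n\}$ tessellates $U$ down to the attractor. Anchor this to an external graph $\Gamma_0$ assembled from periodic external rays landing at repelling preperiodic points on $\partial U$; pulling $\Gamma_0 \cup \partial A_0$ back under $f$ produces nested puzzle pieces $P_n(x)$ around each $x \in \partial U$. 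The core analytic step is $\mathrm{diam}\, P_n(x) \to 0$, via the Gr\"otzsch/Shrinking Lemma applied to the multiply-connected regions $U \setminus \overline{A_n}$, exploiting that the only post-critical accumulation in $\overline{U}$ coming from the interior is at the attractor. This gives local connectivity of $\partial U$; distinct landing angles of rays in $\Gamma_0$ rule out pinches of the prime-end map, yielding a Jordan curve.

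For (1), any preperiodic $x \in \partial U \subset J(f)$ eventually lands on a periodic $y \in J(f)$, and $y$ must be repelling or parabolic since Siegel/Cremer periodic points cannot lie on $\partial U$ (the Jordan-curve structure combined with Sullivan's classification and the fact that $f\in\mathcal C_d$ excludes them). For (2), if $x \in \partial U$ is wandering, the shrinking puzzle pieces pass to iterated preimages and provide connected neighborhoods $P_n(x) \cap J(f) \to \{x\}$, giving local connectivity of $J(f)$ at $x$; the landing of at least one external ray then follows from the Lindel\"of/Douady principle applied along equipotentials approaching $x$.

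For (3)--(5), with $\partial U$ Jordan, each connected component $C$ of $K(f) \setminus U$ accumulates on $\partial U$ at a unique point $x(C)$: two distinct accumulation points would, together with the Jordan sub-arc of $\partial U$ joining them, bound a region that, by connectedness of $K(f)$ and non-separation of $U$, forces a separation incompatible with the dynamics. Declaring $L_{U,x} = \{x\} \cup \bigcup\{\overline{C} : x(C) = x\}$ gives (3). For (4), a non-trivial $L_{U,x}$ makes $x$ a cut point of $J(f)$; by the Douady--S\o{}rensen/Kiwi theorem, only finitely many external rays (necessarily rational) land at a cut point of a polynomial Julia set, and they partition $\mathbb{C} \setminus \overline{U}$ into finitely many sectors, one of which contains $L_{U,x} \setminus \{x\}$---yielding the two rays $R_f(\alpha), R_f(\beta)$ and the finiteness of components. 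For (5), if the forward orbit of a non-trivial $L_{U,x}$ met no critical point, the Shrinking Lemma along $\{f^n(L_{U,x})\}$ would give $\mathrm{diam}(f^n(L_{U,x})) \to 0$, contradicting the recurrence of $f^n(L_{U,x})$ through the finitely many limbs at the periodic Fatou component $f^n(U)$; conversely, a critical point in an iterated limb pulls back through the Jordan-extended Riemann map parametrization of $\partial U$ to produce non-triviality of $L_{U,x}$.

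The principal obstacle is the Jordan-curve conclusion, specifically the uniform shrinking of puzzle pieces when the post-critical behavior on $\partial U$ is not \emph{a priori} controlled---this is where the Gr\"otzsch/Shrinking Lemma and a careful choice of $\Gamma_0$ (ensuring critical separation and bounded moduli of the annular tessellation) do the real work; the parabolic case is the most delicate since expansion is only polynomial near the parabolic cycle. The remaining items (1)--(5) are then combinatorial consequences in the standard Branner--Hubbard--Yoccoz style.
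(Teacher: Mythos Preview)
The paper does not prove this proposition. Immediately before the statement it says ``The following result is from \cite{RY}'', i.e., the Roesch--Yin paper \emph{Bounded critical Fatou components are Jordan domains for polynomials}, and the proposition is then stated without proof as a black-box input. So there is no ``paper's own proof'' to compare your proposal against.

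Your sketch is a reasonable high-level outline of the Roesch--Yin puzzle strategy for the Jordan-curve part, and your derivations of (1)--(5) from that are in the right spirit. A few points where your proposal is thin or slightly off: the shrinking of puzzle pieces in the Roesch--Yin argument does not follow from a single application of the Shrinking Lemma as you suggest, but requires a more delicate analysis (KSS nests, tableau combinatorics) precisely because critical points may lie on $\partial U$ and recur there; your claim that ``the only post-critical accumulation in $\overline{U}$ coming from the interior is at the attractor'' sidesteps exactly this difficulty. For (5), your forward direction is not quite right: $\mathrm{diam}(f^n(L_{U,x}))\to 0$ under the Shrinking Lemma requires a univalent pullback hypothesis, not a forward non-critical hypothesis, and the contradiction you invoke (``recurrence through finitely many limbs'') is not clearly formulated---the actual argument is that without a critical point the angular width of the bounding ray-pair would grow without bound under iteration.

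Since the paper treats the whole proposition as an external citation, for the purposes of this paper you should do the same rather than attempt an inline proof.
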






\begin{rmk} \label{limb-wandering}   The limbs satisfy that $L_{f(U), f(x)}\subset f(L_{U, x})$.
	If $x\in \partial U$ is wandering, then $L_{f^{n}(U),f^n(x)}=\{f^n(x)\}$ for large $n$.
\end{rmk}
\begin{proof} The  property $L_{f(U), f(x)}\subset f(L_{U, x})$ is immediate. Suppose that $x$ is wandering. Replacing $(U, f)$ by  $(f^m(U), f^m)$ for some integer $m\geq 0$ if necessary, we assume  that $f(U)=U$. If $L_{f^n(U), f^n(x)}\neq \{f^n(x)\}$ for all $n\geq 1$, then when $n$ is large, the limb $L_{f^n(U), f^n(x)}$ contains no critical points of $f$, hence
	the angular difference $\Delta_n$ of the external rays landing at $f^n(x)$  grows exponentially in $n$. This is a contradiction.
\end{proof}

\begin{cor} \label{intersection-Fatou-components} Let $f\in \mathcal C_d$ have two different  bounded periodic Fatou components $U$ and $V$, either attracting or parabolic. If $\partial U\cap \partial V\neq \emptyset$, then $\partial U\cap \partial V$ consists of a parabolic or repelling  periodic point.
\end{cor}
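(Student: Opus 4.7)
The plan is to exploit the limb decomposition of Proposition \ref{RY}(3) to show that $\partial U\cap\partial V$ is forced to be a single point, and then use the periodicities of $U$ and $V$ to force that point to be periodic; the type (repelling or parabolic) will then follow from Proposition \ref{RY}(1).

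First, I would observe that since $V\neq U$ are distinct Fatou components, $V\cap U=\emptyset$, and since $\partial V\subset J(f)$ is disjoint from the open set $U$, we have $\overline V\cap U=\emptyset$. Thus $\overline V$ is a continuum contained in $K(f)\setminus U$. Applying Proposition \ref{RY}(3) to $U$, we decompose
\[
K(f)=U\sqcup\bigsqcup_{x\in\partial U}L_{U,x},
\]
where the limbs are disjoint continua with $L_{U,x}\cap\partial U=\{x\}$. Since $V$ is connected and contained in the union of the (pairwise disjoint) limbs, I would argue that $V$, and hence $\overline V$, is contained in a single limb $L_{U,x_0}$; then
\[
\partial V\cap\partial U\subset\overline V\cap\partial U\subset L_{U,x_0}\cap\partial U=\{x_0\},
\]
so, since $\partial U\cap\partial V$ is assumed nonempty, it equals the singleton $\{x_0\}$.

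Next, let $p,q\geq 1$ be the periods of $U$ and $V$ respectively, and set $N=pq$. Applying $f^n$ to the situation above, one sees that $\partial f^n(U)\cap\partial f^n(V)$ is again a single point for every $n\geq 0$ (running the same argument with $U,V$ replaced by $f^n(U),f^n(V)$, which are again distinct bounded periodic attracting or parabolic Fatou components). In particular, with $n=N$ we have $f^N(U)=U$ and $f^N(V)=V$, forcing $f^N(x_0)\in\partial U\cap\partial V=\{x_0\}$, i.e.\ $x_0$ is $f^N$-fixed, hence periodic. Finally, $x_0\in\partial U$ is periodic, so by Proposition \ref{RY}(1) it is pre-repelling or pre-parabolic, and being periodic this means $x_0$ is itself repelling or parabolic.

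The only step that requires real care is the first one: verifying that the open connected set $V\subset K(f)\setminus U$ must lie inside a single limb $L_{U,x_0}$. I expect to handle this using that the decomposition in Proposition \ref{RY}(3) has the limbs as the connected components of $K(f)\setminus U$ (with landing point $x$ attached), so that any connected subset of $K(f)\setminus U$ lies in one limb; if this is not transparent from the cited statement, I would instead argue directly by taking a point $y\in V\cap L_{U,x_0}$ (which exists since $\partial V\cap\partial U\ni x_0$ lies in $L_{U,x_0}$ and $V$ accumulates on $x_0$) and noting that a path in $V$ to any other point of $V$ cannot cross $\partial U$, hence stays inside $L_{U,x_0}$ by the disjointness of limbs. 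The remaining steps are essentially bookkeeping with iteration and an application of Proposition \ref{RY}(1).
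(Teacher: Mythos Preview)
Your proposal is correct and follows essentially the same approach as the paper: use the limb decomposition from Proposition~\ref{RY}(3) to see that $\partial U\cap\partial V$ is a singleton, use the periodicity of $U$ and $V$ (the paper takes $m=\mathrm{lcm}(n_U,n_V)$ rather than the product, but either works) to see the point is periodic, and conclude via Proposition~\ref{RY}(1). Your detour through $f^n(U),f^n(V)$ is unnecessary---once you know $\partial U\cap\partial V=\{x_0\}$, the fact that $f^N(\partial U)=\partial U$ and $f^N(\partial V)=\partial V$ immediately gives $f^N(x_0)\in\partial U\cap\partial V=\{x_0\}$.
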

\begin{proof}      By Proposition \ref{RY}, $\partial U\cap \partial V$  consists of a singleton $q$.  Let $n_U, n_V$ be the $f$-periods of $U,V$ respectively, and let $m$ be the least common multiple of $n_U$ and $n_V$.  The facts $f^m(U)=U$ and $f^m(V)=V$ imply that $f^m(q)=q$.   
	Hence  $q\in J(f)$ is periodic.  By Proposition \ref{RY}(1),  $q$ is either repelling or parabolic. 
\end{proof}

\subsection{Carath\'eodory kernel convergence}\label{s-ckc}

Let $w_0\in \mathbb C$ be given and let $W_n$ be domains with $w_0\in  W_n\subset \mathbb C$.  We say that 
$W_n\rightarrow W$ as $n\rightarrow \infty$ with respect to $w_0$, in the sense of {\it kernel convergence}, if

\begin{itemize}
	
	\item  either $W=\{w_0\}$, or $W$ is a domain  $\neq \mathbb C$, with $w_0\in W$ such that some
	neighborhood of every $w\in W$ lies in $W_n$ for large $n$.
	
	\item for $w\in \partial W$, there exist $w_n\in \partial W_n$ such that $w_n\rightarrow w$ as $n\rightarrow +\infty$.
\end{itemize}

We use  $(W_n, w_0)\rightarrow (W, w_0)$ to denote the kernel convergence.

It is known that the limit $W$ is uniquely determined and it depends on the choice of the base point $w_0$. Further, if all $W_n$'s are simply connected and  $W\neq \{w_0\}$, then $W$ is also simply connected. 






Let $f\in \mathcal P_d$ have an attracting periodic point $z_f\in \mathbb C$. By the Implicit Function Theorem,
there is a neighborhood $\mathcal N$ of $f$ and a continuous map $g\mapsto z_g$ defined in $\mathcal N$, such that
$z_g$ is  attracting for $g\in \mathcal N$.  By shrinking $\mathcal N$, we assume $z_f\in U_g(z_g)$ for all $g\in \mathcal N$, hence $U_g(z_f)=U_g(z_g)$.

\begin{lem} \label{carathodory-convergence} Let $\{f_n\}\subset \mathcal N$ be a sequence of polynomials such that 
	$f_n\rightarrow f$.  Then we have the kernel convergence:
	$$(U_{f_n}(z_{f_n}), z_f)\rightarrow (U_f(z_f), z_f).$$
\end{lem}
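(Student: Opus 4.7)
Write $p$ for the $f$-period of $z_f$ and set $U_n:=U_{f_n}(z_{f_n})$. Kernel convergence $(U_n,z_f)\to(U_f(z_f),z_f)$ amounts to verifying two properties: (A) every $w\in U_f(z_f)$ has a neighborhood contained in $U_n$ for all large $n$; and (B) every $w\in\partial U_f(z_f)$ is a limit of points $w_n\in\partial U_n$.

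For (A), I would prove the stronger absorption property that every compact $K\subset U_f(z_f)$ satisfies $K\subset U_n$ for all large $n$. Since $|(f^p)'(z_f)|<1$, pick a closed disk $\overline{D}\ni z_f$ with $f^p(\overline{D})\subset D$; uniform convergence $f_n^p\to f^p$ on $\overline{D}$ yields $f_n^p(\overline{D})\subset D$ and hence $\overline{D}\subset U_n$ for large $n$. For arbitrary compact $K\subset U_f(z_f)$, join $K$ to $z_f$ by a path inside $U_f(z_f)$ and denote the resulting compact connected set by $K'$. Some iterate satisfies $f^{pk}(K')\subset D$, and local uniform convergence then gives $f_n^{pk}(K')\subset D\subset U_n$ for large $n$; thus $K'\subset f_n^{-pk}(U_n)\subset F(f_n)$ is a connected subset of the Fatou set passing through $z_f\in U_n$ (invoking the standing hypothesis $z_f\in U_g(z_g)$ for $g\in\mathcal{N}$). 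Consequently $K'\subset U_n$, giving (A).

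For (B), suppose for contradiction that some $w\in\partial U_f(z_f)$, $r>0$, and subsequence $(n_k)$ satisfy $B(w,r)\cap\partial U_{n_k}=\emptyset$. Since $B(w,r)$ is connected, it must lie entirely in $U_{n_k}$ or be disjoint from $\overline{U_{n_k}}$; picking $z_0\in U_f(z_f)\cap B(w,r/2)$ and applying absorption to the compact singleton $\{z_0\}$ forces the first alternative, so $B(w,r)\subset U_{n_k}$ for large $k$. The filled Julia sets $K(f_n)$ lie in a common disk $B(0,R)$ (coefficients of $f_n$ being bounded), so $\{f_{n_k}^j|_{B(w,r)}\}_{k,j}$ is uniformly bounded; passing to the limit $f_n\to f$ gives $\{f^j|_{B(w,r)}\}_j$ uniformly bounded, hence normal, so $B(w,r)\subset F(f)$. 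Connectedness of $B(w,r)$ together with $z_0\in U_f(z_f)\cap B(w,r)$ then forces $B(w,r)\subset U_f(z_f)$, contradicting $w\in\partial U_f(z_f)$. Applying this with $r=1/k$ for each $k\geq 1$ and extracting $w_n\in B(w,1/k)\cap\partial U_n$ for $n$ sufficiently large yields a sequence $w_n\to w$, completing (B).

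The principal subtlety lies in step (B): one must leverage the connectedness of $B(w,r)$ to upgrade mere boundedness of iterates (a statement about the Fatou set of $f$) to membership in the specific Fatou component $U_f(z_f)$. The standing hypothesis $z_f\in U_g(z_g)$ for $g\in\mathcal{N}$ is what anchors both steps and prevents $B(w,r)$ from wandering into a different component of $F(f)$.
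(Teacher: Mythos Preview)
Your proof is correct. Part (A) follows the same absorption idea as the paper, though the paper uses a pair of Jordan disks $V\Subset W$ with $f^p:W\to V$ proper of full degree and perturbs the inverse branch, whereas you argue via forward iteration into an attracting disk; these are equivalent standard arguments.

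Part (B) is where your route genuinely diverges from the paper's. The paper invokes the joint continuity of the Green function $(g,z)\mapsto G_g(z)$ (citing \cite[Proposition 8.1]{DH}): since $w\in\partial U_f(z_f)\subset J(f)=\partial K(f)$, there are escaping points arbitrarily close to $w$, and Green-function continuity makes these points escape for nearby $g$ as well, so $\mathbb{D}(w,1/n)\setminus U_{f_k}(z_f)\neq\emptyset$; combined with (A) this yields boundary points of $U_{f_k}$ near $w$. Your argument instead runs by contradiction through normality: if $B(w,r)\subset U_{n_k}$ along a subsequence, then uniform boundedness of $\{f_{n_k}^j\}$ on $B(w,r)$ passes to the limit to give $\{f^j\}$ bounded there, hence $B(w,r)\subset F(f)$, and connectedness forces $B(w,r)\subset U_f(z_f)$. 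Your approach is more self-contained---it avoids importing the Green-function continuity result---while the paper's approach is shorter once that external fact is in hand. Both are clean; yours has the minor advantage of transporting verbatim to settings (e.g.\ rational maps) where the polynomial Green function is unavailable.
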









\begin{proof} Let $p\geq 1$ be the $f$-period of $z_f$. For any compact set $K\subset U_f(z_f)$ containing $z_f$, let $V,W$ be a pair of Jordan disks such that $K\subset V\Subset W\Subset  U_f(z_f)$, and $f^p:W\rightarrow V$ be a proper map of degree ${\rm deg}(f^p|_{U_f(z_f)})$.

	There is a neighborhood $\mathcal N_0\subset \mathcal N$ of $f$ such that for all 
	$g\in \mathcal N_0$, 
	\begin{itemize}
		\item the boundary $\partial V$ contains no critical value of $g^p$;
		
		\item the component $W_g$ of  $g^{-p}(V)$ containing $z_g$ is a Jordan disk with $ V\Subset W_g\Subset U_g(z_f)$. 
	\end{itemize}
	
	In particular, for large $n$ with
	$f_n\in \mathcal N_0$, we have  $K\subset U_{f_n}(z_f)$.

	
	On the other hand, take any $w\in \partial U_f(z_f)$.
	By the continuity of the Green function 
	$(g,z)\mapsto G_g(z)$ (see \cite[Proposition 8.1]{DH}), for any $n\geq 1$, there is a neighborhood $\mathcal U_n\subset  \mathcal N_0$ of $f$ such that $\mathbb{D}(w, 1/n)\cap (\mathbb C\setminus K(g))\neq \emptyset$ for all $g\in \mathcal U_n$.
	Combining with the previous paragraph, there is sequence of integers $k_1<k_2<\cdots$, such that 
	$$\mathbb{D}(w, 1/n)\setminus U_{f_k}(z_f)\neq \emptyset, \  \mathbb{D}(w, 1/n)\cap U_{f_k}(z_f)\neq \emptyset, \ \forall k\geq k_n.$$
	This implies that $\mathbb{D}(w, 1/n)\cap \partial U_{f_k}(z_f)\neq \emptyset$ for $k\geq k_n$. Take $w_k\in \mathbb{D}(w, 1/n)\cap \partial U_{f_k}(z_f)$ for $k_n\leq k<k_{n+1}$. Then the sequence $\{w_k\}$ satisfies that
	$w_k\in \partial U_{f_k}(z_f)$ and $w_k\rightarrow w$ as $k\rightarrow \infty$.
\end{proof}

\begin{rmk}  \label{pre-c-t}  Lemma \ref{carathodory-convergence} also holds when $z_f$ is pre-attracting  (that is, $f^{m}(z_f)$ is attracting for some $m\geq 1$) and $U_f(z_f)$ is pre-periodic. 
\end{rmk}

\begin{rmk} By contrast, the sequence of the compact sets  $\overline{U_{f_n}(z_f)}$ might not converge to $\overline{U_f(z_f)}$   in Hausdorff topology.
	
	However, if $\partial{U_f(z_f)}$ contains neither critical point nor parabolic point, then $\overline{U_{f_n}(z_f)}$ converges to $\overline{U_f(z_f)}$ as $n\rightarrow \infty$ in Hausdorff topology.
\end{rmk}

\subsection{Parameterization of hyperbolic components}\label{phc}
In this part,   we   review Milnor's parameterization of a hyperbolic component $\mathcal H\subset \mathcal C_d$ via Blaschke products  over a mapping scheme \cite{M2}. There is a slight difference here: we use the original polynomial space rather than the generalized polynomial space associated with a mapping scheme.

First, by an unpublished theorem of McMullen, there is a unique postcritically finite map $f_0 \in \mathcal H$. Let ${\rm Crit}(f_0)$ be the critical set of $f_0$ in $\mathbb C$. 
  The mapping scheme $S=(V,\sigma,\delta)$ associated with $\mathcal H$ is a triple given by 
$$V=\bigcup_{n\geq 0}f_0^n({\rm Crit}(f_0)); \ \sigma(v)=f_0(v), \delta(v)={\rm deg}(f_0|_{U_{f_0, v}}), \ v\in V,$$
where $U_{f_0, v}$ is the Fatou component of $f_0$ containing $v\in V$.  


 By Milnor \cite{M2}, $\mathcal H$ is a topological cell.  This topology property allows us to get a well-defined  holomorphic motion 
$$h: \mathcal H\times J(f_0)\rightarrow \mathbb C$$
satisfying that $h(f_0, \cdot)={\rm id}$, $h(f, J(f_0))=J(f)$ and 
$f\circ h(f,z)=h(f, f_0(z))$ for all $(f,z)\in \mathcal H\times J(f_0)$.
For all $v\in V$ and all maps $f\in \mathcal H$,  the marked Fatou component $U_{f, v}$ is  the Fatou component of $f$ bounded by $h(f, \partial U_{f_0, v})$.


For each $g\in \mathcal H$, a {\it boundary marking} of $g$ means a function $\nu_g: V\rightarrow \mathbb C$  which assigns to each  $v\in V$ a  point $\nu_g(v)\in\partial U_{g,v}$, satisfying that 
$$ g(\nu_g(v))=\nu_g(\sigma(v)), \  \forall\ v\in V.$$


The boundary marking $\nu_g$ is not unique, but there are only finitely many choices of $\nu_g$ for a given map $g$.
The number of choices is 
$$n_{\mathcal H}:=\prod_{v\in V_{\rm np}} \delta(v) \prod_{\mathcal O\subset V_{\rm p}} (\Delta(\mathcal O)-1),$$
  where  $\mathcal O$ ranges over all $\sigma$-periodic orbits in $V_{\rm p}$, and $\Delta(\mathcal O)=\prod_{v\in \mathcal O} \delta(v)$. Here recall that $V_{\rm p}$ consists of all $\sigma$-periodic points $v\in V$, and $V_{\rm np}$ is the residual part.
  
The pair $(g, \nu_g)$ is called a {\it marked map}.
Let 
$$\widetilde{\mathcal H}=\Big\{(g, \nu_g); g\in \mathcal H, \nu_g \text{ is a boundary marking of } g\Big\}.$$
be the collection of all marked maps. The set $\widetilde{\mathcal H}$ has a natural topology so that each map of $\mathcal H$ has a
neighborhood which is evenly covered  under the projection  $(g, \nu_{g})\mapsto g$.  
 As a topology space,  $\widetilde{\mathcal H}$ might be  disconnected (we will see in Corollary \ref{marking1-1}  that $\widetilde{\mathcal H}$  has exactly $n_{\mathcal H}$ connected components).  A  connected component of $\widetilde{\mathcal H}$
is called a {\it marked hyperbolic component}. 

  Let $\widehat{\mathcal H}$ be a marked hyperbolic component.
For  
$(f, \nu_f)\in \widehat{\mathcal{H}}$ and $v\in V$,  there is a  Riemann mapping 
$h_{f,v}: U_{f,v}\rightarrow \mathbb D$ with normalization:

\begin{itemize}
	\item if $v\in V_{\rm p}$,  we   require  that $h_{f,v}(v(f))=0$ and  $h_{f,v}(\nu_f(v))=1$, where  $v(f)$  is  the attracting periodic point  in $U_{f,v}$.
	

	\item if $v\in V_{\rm np}$, assume $h_{f, \sigma(v)}$ is given, 
	then $h_{f, v}$  is normalized  so that
	$$\sum_{q\in f|_{U_{f,v}}^{-1}(h_{f, \sigma(v)}^{-1}(0))} h_{f,v}(q)=0, \ h_{f,v}(\nu_f(v))=1,$$
	where the summation is taken counting multiplicity.  Such $h_{f, v}$  exists by  Douady and Earle \cite[\S 2]{DE}.
\end{itemize}
The   map $h_{f,v}$ is  unique, and extends to a 
 homeomorphism  $h_{f,v}: \overline{U_{f,v}}\rightarrow \overline{\mathbb D}$.

Now $B_{f,v}=h_{f,\sigma(v)}\circ f\circ h_{f,v}^{-1}$ is a Blaschke product of degree $\delta(v)={\rm deg}(f|_{U_{f,v}})$.
Clearly, if $v\in V_{\rm p}$, then $B_{f,v}(0)=0$ and $B_{f,v}(1)=1$, and we call such $B_{f,v}$ {\it fixed point centered}; if  
 $v\in V_{\rm np}$, then $\sum_{q\in B_{f,v}^{-1}(0)} q=0$ and $B_{f,v}(1)=1$, and we call such  $B_{f,v}$ {\it zeros centered}.
 
 Recall that  $\mathcal B_{\rm fc}^\delta$ is the space
 of fixed point  
 centered  Blaschke products of degree $\delta$. Similarly, we define  $\mathcal B_{\rm zc}^\delta$ to be the space of zeros  centered  Blaschke products of degree $\delta$.  

Let $S=(V, \sigma, \delta)$ be the mapping scheme associated with $\mathcal H$, the model space $\mathcal B^S$ consists of all maps $B: V\times \mathbb D\rightarrow V\times \mathbb D$, defined by
$B(v, z)=(\sigma(v), B_{v}(z))$, where $B_v\in \mathcal B_{\rm fc}^{\delta(v)}$ for
$v\in V_{\rm p}$, and 
 $B_v\in \mathcal B_{\rm zc}^{\delta(v)}$ for $v\in V_{\rm np}$. 
For simplicity, we also write $B$ as $(B_v)_{v\in V}$.

\begin{thm}  [Milnor \cite{M2}] \label{homeoH}  Let $\widehat{\mathcal  H}$ be a  marked hyperbolic component, then  the  map 
$$ \Psi:  
\begin{cases} \widehat{\mathcal  H} \rightarrow \mathcal B^S, \\
(f, \nu_f)\mapsto B_f=(B_{f,v})_{v\in V}
\end{cases}$$
is a homeomorphism.
\end{thm}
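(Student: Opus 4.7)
The plan is to establish $\Psi$ as a continuous bijection with continuous inverse by treating each property in turn, leveraging the holomorphic motion on $\widehat{\mathcal H}$, Carathéodory kernel convergence of Fatou components, and a quasiconformal surgery at the postcritically finite center $f_0\in\mathcal H$.

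For continuity of $\Psi$, I would first argue that as $g\to f$ in $\widehat{\mathcal H}$, the marked periodic Fatou components $U_{g,v}$ converge in the Carathéodory sense with respect to their attracting centers (Lemma \ref{carathodory-convergence}), and the preperiodic marked components converge as well via univalent pull-back (Remark \ref{pre-c-t}). The boundary markings $\nu_g(v)$ move continuously because they are constrained by $g(\nu_g(v))=\nu_g(\sigma(v))$ and lie on a continuously moving family of periodic orbits on $\partial U_{g,v}$. Continuity of the normalized Riemann maps $h_{g,v}$ on these varying domains (with fixed normalization by the attracting fixed point, the marking, and Douady--Earle's conformal barycenter in the aperiodic case) is a standard consequence of Carathéodory convergence, so $B_{g,v}=h_{g,\sigma(v)}\circ g\circ h_{g,v}^{-1}$ converges to $B_{f,v}$.

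For injectivity, suppose $\Psi(f,\nu_f)=\Psi(g,\nu_g)=(B_v)_{v\in V}$. Then $\varphi_v:=h_{g,v}^{-1}\circ h_{f,v}$ gives a conformal conjugacy $\overline{U_{f,v}}\to \overline{U_{g,v}}$ intertwining $f$ and $g$ on the Fatou set; combining these with the holomorphic motion on $J(f_0)$ transported to $J(f)$ and $J(g)$ yields a topological conjugacy $\Phi:\widehat{\mathbb C}\to\widehat{\mathbb C}$ between $f$ and $g$ that is conformal off $J(f)$. Because $f$ is hyperbolic, $J(f)$ carries no $f$-invariant line field (Sullivan/McMullen rigidity), so the Bers--Royden/Slodkowski extension of the motion upgrades $\Phi$ to a quasiconformal, then conformal, conjugacy. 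The monic-centered normalization of $\mathcal P_d$ forces $\Phi=\mathrm{id}$, hence $f=g$ and $\nu_f=\nu_g$. For surjectivity, given $(B_v)_{v\in V}\in\mathcal B^S$, I would perform quasiconformal surgery on $f_0$: replace the internal dynamics of $f_0$ on each $U_{f_0,v}$ by $h_{f_0,\sigma(v)}^{-1}\circ B_v\circ h_{f_0,v}$, interpolate quasiregularly in a neighborhood to glue with $f_0$ outside, build an invariant Beltrami form that is trivial on $J(f_0)$ and is obtained by pulling back $\sigma_0$ through the surgery coordinates on the grand orbit of Fatou components, and straighten by the Measurable Riemann Mapping Theorem. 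The resulting map $f$, after conjugation to monic-centered form, lies in $\mathcal H$, has the prescribed Blaschke model, and inherits a boundary marking in the component $\widehat{\mathcal H}$.

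Continuity of $\Psi^{-1}$ follows either from parameter dependence of the MRMT and of the quasiregular interpolation (both continuous in $(B_v)_{v\in V}$), or, more cleanly, from invariance of domain: $\widehat{\mathcal H}$ and $\mathcal B^S$ are both connected manifolds of real dimension $2d-2=\sum_{v\in V}(2\delta(v)-2)$, so a continuous bijection between them is automatically a homeomorphism. The step I expect to be the main obstacle is injectivity, specifically verifying that the topological conjugacy manufactured from the Riemann maps and the holomorphic motion is genuinely globally quasiconformal and that the induced Beltrami form vanishes on $J(f)$; this is precisely where hyperbolicity (via the absence of invariant line fields on the expanding Julia set) is indispensable, and it is the point at which the hypothesis that $\mathcal H$ is a hyperbolic component, rather than merely a component of stability, plays an essential role.
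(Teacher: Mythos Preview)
The paper does not prove this theorem; it is quoted directly from Milnor \cite{M2} and used as a black box. Your overall architecture---continuity via Carath\'eodory convergence, surjectivity via quasiconformal surgery on the center $f_0$, and openness via invariance of domain---is Milnor's, so at that level there is nothing to compare.

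Your injectivity argument, however, has a genuine gap. You define $\varphi_v=h_{g,v}^{-1}\circ h_{f,v}$ only on the marked components $U_{f,v}$, $v\in V$; you still need to define $\Phi$ on the basin of infinity (via the B\"ottcher coordinates) and on all strictly preperiodic bounded components (by univalent pull-back). More importantly, you give no reason why the boundary extensions of the $\varphi_v$'s coincide with the Julia-set conjugacy coming from the holomorphic motion of $J(f_0)$: these are two a priori different homeomorphisms on $\partial U_{f,v}$, and the gluing must be justified, for instance by checking agreement on a dense orbit of pre-repelling points using the shared boundary marking $\nu_f(v)\mapsto\nu_g(v)$ (compare the matching argument in Step~2 of the proof of Proposition~\ref{divisor-singleton}). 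Finally, the passage from ``topological conjugacy conformal off $J(f)$'' to ``conformal'' via the no-invariant-line-field theorem is miswired: that theorem takes a \emph{quasiconformal} conjugacy as input, which you have not established, and Slodkowski's extension of the motion does not help since the extended map need not conjugate $f$ to $g$ off $J$. The clean route is that a hyperbolic Julia set has zero area (indeed Hausdorff dimension $<2$) and is conformally removable, so once $\Phi$ is a homeomorphism conformal on the full Fatou set it is conformal everywhere; no line-field rigidity is needed.
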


\begin{cor} \label{marking1-1}  Let $\widehat{\mathcal  H}$ be a  marked hyperbolic component,  
then the projection 
\begin{equation}
\label{bm-H}
\boldsymbol{p}:  
\begin{cases} \widehat{\mathcal  H} \rightarrow \mathcal  H, \\
(f, \nu_f)\mapsto f
\end{cases}
\end{equation}
is a homeomorphism.   Moreover,   $\widetilde{\mathcal{H}}$ has 
$n_\mathcal{H}$ 
 connected components.
\end{cor}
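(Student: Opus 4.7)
The plan is to combine Milnor's parametrization (Theorem \ref{homeoH}) with the covering-space structure of the projection $\boldsymbol{p}$. By the very definition of the topology on $\widetilde{\mathcal H}$ recalled just above the statement, every $f\in\mathcal H$ has an evenly covered neighborhood, so $\boldsymbol{p}:\widetilde{\mathcal H}\to\mathcal H$ is a covering map, and its fiber over each $f$ has cardinality $n_{\mathcal H}$, the number of boundary markings of $f$.

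Next, I would observe that the marked component $\widehat{\mathcal{H}}$ is connected by definition, and in fact Theorem \ref{homeoH} identifies it with $\mathcal{B}^S$, which is a product of the contractible spaces $\mathcal{B}_{\rm fc}^{\delta(v)}\cong{\rm Div}_{\delta(v)-1}(\mathbb D)\cong\mathbb D^{\delta(v)-1}$ (with the analogous identification for the zeros-centered factors via Douady--Earle, as recalled in Section \ref{phc}); in particular $\widehat{\mathcal{H}}$ is contractible. Restricting the covering to $\widehat{\mathcal{H}}$, the image $\boldsymbol{p}(\widehat{\mathcal{H}})$ is open, closed, and nonempty in the connected space $\mathcal H$, so $\boldsymbol{p}(\widehat{\mathcal{H}})=\mathcal H$, and therefore $\boldsymbol{p}|_{\widehat{\mathcal{H}}}:\widehat{\mathcal H}\to\mathcal H$ is itself a covering map. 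Since $\mathcal H$ is simply connected (being a topological cell by Milnor \cite{M2}) and $\widehat{\mathcal H}$ is connected, this covering is necessarily trivial with a single sheet; hence $\boldsymbol{p}|_{\widehat{\mathcal H}}$ is a homeomorphism, which is the first assertion.

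For the component count, I would use that each marked component of $\widetilde{\mathcal H}$ projects homeomorphically onto $\mathcal H$ by the first part, so it meets each fiber $\boldsymbol{p}^{-1}(f)$ in exactly one point. Since the fiber has cardinality $n_{\mathcal H}$, the number of connected components of $\widetilde{\mathcal H}$ must equal $n_{\mathcal H}$, as claimed. The only step requiring any care is the contractibility (and hence simple connectivity) of $\mathcal{B}^S$, but this is routine given the explicit coordinates already set up in Section \ref{Blaschke-divisors-over-S}, so I do not expect any genuine obstacle here; the proof is essentially a packaging of Milnor's theorem with the trivialization of a covering of a simply connected base.
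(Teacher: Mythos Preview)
Your proof is correct, but it takes a different route from the paper. You argue topologically: $\mathcal H$ is a cell, hence simply connected, so the restricted covering $\boldsymbol{p}|_{\widehat{\mathcal H}}$ from a connected total space must be one-sheeted; the component count then follows from the fiber cardinality. The paper instead exploits a specific dynamical fact: all $n_{\mathcal H}$ marked maps $(f_0,\nu)$ over the postcritically finite center $f_0$ are sent by $\Psi$ to the \emph{same} Blaschke model $(z^{\delta(v)})_{v\in V}$, so the injectivity of $\Psi$ on each component (Theorem~\ref{homeoH}) forces at most one $(f_0,\nu)$ per component; this gives the count directly, and the homeomorphism is then implicit from matching the number of components with the fiber cardinality. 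Your argument is the standard covering-space packaging and works in any situation where the base is simply connected; the paper's argument is shorter and avoids invoking simple connectivity of $\mathcal H$ or contractibility of $\mathcal B^S$, needing only that $\Psi$ is injective on components and that the center has a canonical model. (Incidentally, your detour through contractibility of $\widehat{\mathcal H}$ is unnecessary: connectedness of $\widehat{\mathcal H}$, which holds by definition, is all you use.)
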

\begin{proof}    The set  $\widetilde{\mathcal{H}}$ contains exactly $n_\mathcal{H}$  marked maps of the form $(f_0,\nu)$, and all of them are mapped by  $\Psi$ to the same element   $(B_v(z)=z^{\delta(v)})_{v\in V}\in \mathcal B^S.$
 By Theorem \ref{homeoH}, each connected component of  $\widetilde{\mathcal{H}}$ contains only one of  $(f_0,\nu)$'s. 
\end{proof}

Let $\mathcal H$ be a hyperbolic component of disjoint-type. Then each map $f\in \mathcal H$ has
$d-1$ attracting cycles.
The $k$-th attracting cycle is denoted by 
$$\mathcal O_k: \  z_k\mapsto f(z_k)\mapsto \cdots \mapsto f^{p_k-1}(z_k)\mapsto f^{p_k}(z_k)=z_k,$$
with multiplier  $\rho_k(f)=(f^{p_k})'(z_k)\in \mathbb D$, where $p_k\geq 1$ is the period of $z_k$.  
\begin{cor} \label{disjoint-type} If ${\mathcal  H}$ is of disjoint-type,
then the multiplier map 
$$\rho:  
\begin{cases} {\mathcal  H} \rightarrow \mathbb D^{d-1}, \\
f\mapsto (\rho_1(f), \cdots, \rho_{d-1}(f)),
\end{cases}$$
is biholomorphic.
\end{cor}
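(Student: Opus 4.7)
The plan is to deduce the corollary from Milnor's parameterization (Theorem \ref{homeoH}): in the disjoint-type case the model $\mathcal B^S$ is naturally $\mathbb D^{d-1}$ and $\rho$ factors as a homeomorphism followed by the holomorphic multiplier coordinates, so biholomorphicity will follow from the Osgood-type rigidity of bijective holomorphic maps.

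First I would unpack the mapping scheme $S=(V,\sigma,\delta)$ associated with $\mathcal H$. Since each of the $m_{\mathcal H}=d-1$ attracting cycles absorbs exactly one critical orbit, while a degree-$d$ polynomial has exactly $d-1$ critical points counted with multiplicity, every critical point of $f_0$ is simple and strictly periodic. Hence $V=V_{\rm p}$; each $\sigma$-cycle $\mathcal O\subset V$ contains a unique vertex $v_{\mathcal O}$ with $\delta(v_{\mathcal O})=2$ and satisfies $\delta\equiv 1$ on all other vertices; thus $\Delta(\mathcal O)=2$ for every cycle, $V_{\rm np}=\emptyset$, and Corollary \ref{marking1-1} yields $n_{\mathcal H}=1$. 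Therefore $\widetilde{\mathcal H}=\widehat{\mathcal H}=\mathcal H$, and Theorem \ref{homeoH} produces a homeomorphism $\Psi:\mathcal H\to\mathcal B^S$. Because $\mathcal B^1_{\rm fc}$ is the singleton $\{\mathrm{id}\}$ and $\mathcal B^2_{\rm fc}\cong\mathbb D$ via the free zero, we obtain $\mathcal B^S\cong\mathbb D^{d-1}$, one factor per attracting cycle.

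Next I would identify $\rho$ in terms of $\Psi$. Since the non-critical Blaschke factors around each cycle are identity maps, the first-return product $\widehat B_{v_{\mathcal O}}$ reduces to $B_{f,v_{\mathcal O}}$ itself, so $\rho_{\mathcal O}(f)=B_{f,v_{\mathcal O}}'(0)$. Writing the general fixed-point centered degree-$2$ Blaschke product as $B_a(z)=\frac{1-\bar a}{1-a}\, z\,\frac{z-a}{1-\bar a z}$ with parameter $a\in\mathbb D$, one computes $B_a'(0)=-a(1-\bar a)/(1-a)$. Its modulus equals $|a|$, and its argument depends monotonically on $\arg a$ along each circle $|a|=r$ (the angular derivative is the positive quantity $(1-r^2)/|1-a|^2$), so $a\mapsto B_a'(0)$ is a homeomorphism of $\mathbb D$ onto itself. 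Taking the product over cycles, the multiplier-evaluation $\mu:\mathcal B^S\to\mathbb D^{d-1}$ is a homeomorphism, and consequently $\rho=\mu\circ\Psi:\mathcal H\to\mathbb D^{d-1}$ is a homeomorphism.

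To upgrade this to a biholomorphism, I would note that each $\rho_k$ is holomorphic on $\mathcal H$: the Implicit Function Theorem applied to $f^{p_k}(z)-z=0$ at the attracting periodic orbit produces a holomorphic local section $f\mapsto z_k(f)$, along which $\rho_k(f)=(f^{p_k})'(z_k(f))$ depends holomorphically on $f$. Thus $\rho$ is a holomorphic bijection between equidimensional complex manifolds, and the classical fact that an injective holomorphic map on a domain in $\mathbb C^n$ has nowhere-vanishing Jacobian (and hence a holomorphic inverse) completes the argument. The only step needing genuine care is the homeomorphism assertion for the single-factor map $a\mapsto B_a'(0)$, as this map is not holomorphic in $a$; all remaining ingredients are immediate from Theorem \ref{homeoH}, Corollary \ref{marking1-1}, or standard several-complex-variables.
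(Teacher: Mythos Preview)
Your proof is correct and follows essentially the same route as the paper: reduce to the Milnor parameterization $\Psi:\mathcal H\to\mathcal B^S$, observe that in the disjoint-type case $\mathcal B^S\cong\mathbb D^{d-1}$ with exactly one degree-$2$ factor per cycle, relate the Blaschke parameter to the multiplier, and conclude that the holomorphic bijection $\rho$ is biholomorphic. The only cosmetic difference is that the paper verifies bijectivity of the single-factor map $\lambda\mapsto \lambda(1+\bar\lambda)/(1+\lambda)$ by writing down the explicit inverse $\rho\mapsto \rho(1-\bar\rho)/(1-\rho)$, whereas you argue via $|B_a'(0)|=|a|$ and your angular-derivative computation; both are valid and equally short.
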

\begin{proof}  By Theorem \ref{homeoH} and Corollary \ref{marking1-1}, the map $\Psi\circ \boldsymbol{p}^{-1}: \mathcal H\rightarrow \mathcal B^S$ is a homeomorphism.  Write
	$\Psi\circ \boldsymbol{p}^{-1}(f)=(B_{f,v})_{v\in V}$.
	
	Since $\mathcal H$ is of disjoint-type, we have $V=V_{\rm p}$. For each $f\in \mathcal H$ and $v\in V$,
$$\delta(v)=2, \text{ if }  {\rm Crit}(f)\cap U_{f,v}\neq\emptyset; \ 
  \delta(v)=1, \text{ if }  {\rm Crit}(f)\cap U_{f,v}=\emptyset.$$ 
Moreover, for any $v\in V$ with $\sigma$-period say $\ell\geq 1$,  there is only one $u\in \{v, \sigma(v), \cdots, \sigma^{\ell-1}(v)\}$ such that $\delta(u)=2$.
It follows that
$$B_{f,\tilde{v}}(z)= 
\begin{cases} z, & \text{ if }  \tilde{v}\in  \{v, \cdots, \sigma^{\ell-1}(v)\}\setminus\{u\} ,\\
z(z+\lambda)(1+\bar{\lambda})/[(1+\bar{\lambda}z)(1+\lambda)], &\text{ if }   \tilde{v}=u. 
\end{cases}$$
Hence the model maps $B_{f,\sigma^k(v)}, 0\leq k<\ell$ are uniquely determined by  $\lambda\in \mathbb D$,  which is related to the multiplier $\rho_v=\rho_v(f)$ of the attracting cycle by
$$\rho_v=\lambda \frac{1+\bar{\lambda}}{1+\lambda} \Longleftrightarrow \lambda=\rho_v \frac{1-\bar{\rho_v}}{1-\rho_v}.$$
It follows that the multiplier map $\rho$ is a homeomorphism. Since the multipliers change holomorphically in $f$, the map $\rho$ is actually biholomorphic.
\end{proof}

\subsection{A doubly marked hyperbolic component}\label{dmhc}

For the convenience of
  further discussion, we need a refinement of the marked hyperbolic component. In this part, we 
  shall introduce the {\it internal marking}  and the {\it doubly marked hyperbolic component}.
  
  For each $g\in \mathcal H$, an {\it internal marking} of $g$ means an injection ${\boldsymbol a}: V\rightarrow \mathbb C$  which assigns to each  $v\in V$ a  point ${\boldsymbol a}(v)\in  U_{g,v}$, satisfying that 
  $$ g({\boldsymbol a}(v))={\boldsymbol a}(\sigma(v)), \  \forall\ v\in V.$$
  It clear that  ${\boldsymbol a}(v)=v(g)$ for $v\in V_{\rm p}$.  
  If $V_{\rm np}\neq\emptyset$,  the internal marking ${\boldsymbol a}$ is not unique in general, but there are only finitely many choices of ${\boldsymbol a}$ for a given map $g$.
  The number of choices for a generic map (i.e. map without critical relations)  $g$ is exactly
  $\prod_{v\in V_{\rm np}} \delta(v)$.
  
  The triple  $(g, {\boldsymbol a}, \nu_g)$ is called a {\it doubly marked map}.
  Let 
  $$\widetilde{\mathcal H}_*=\left\{(g,  {\boldsymbol a},  \nu_g); g\in \mathcal H,
 \begin{cases} 
  {\boldsymbol a} \text{ is an internal marking of } g \\
   \nu_g \text{ is a boundary marking of } g
\end{cases}\right\}$$
  be the collection of all doubly  marked maps. The set $\widetilde{\mathcal H}_*$ has a natural topology so that each marked map
 of $\widetilde{\mathcal H}$ has a
  neighborhood which is  finitely branched covered \footnote{In topology, a map is a branched covering if it is a covering map everywhere except for a nowhere-dense set known as the branched set.}  under the projection  $\boldsymbol{p}_*: (g, {\boldsymbol a}, \nu_{g})\mapsto (g,  \nu_{g})$.  
Note that $\widetilde{\mathcal H}_*$ might be  disconnected.
  A  connected component   $\widehat{\mathcal H}_*$ of $\widetilde{\mathcal H}_*$
  is called a {\it doubly marked hyperbolic component}.

Let
  $(f, {\boldsymbol a}, \nu_f)\in   \widehat{\mathcal H}_*$. For any $v\in V$,  there is a  unique Riemann mapping 
  $\psi_{f,v}: U_{f,v}\rightarrow \mathbb D$ with normalizations:
   $\psi_{f,v}({\boldsymbol a}(v))=0$ and  $\psi_{f,v}(\nu_f(v))=1$. 
   Then $B_{f,v}=\psi_{f,\sigma(v)}\circ f\circ \psi_{f,v}^{-1}$ is a Blaschke product of degree $\delta(v)$, with $B_{f,v}(0)=0$ and $B_{f,v}(1)=1$.   Hence $B_{f,v}\in \mathcal B_{\rm fc}^{\delta(v)}$ for all
   $v\in V$.
   
  Recall that $S=(V, \sigma, \delta)$ is the mapping scheme associated with $\mathcal H$. 
 Let $\mathcal B^S_*$ be the space of  $B=(B_v)_{v\in V}$ such that $B_v\in \mathcal B_{\rm fc}^{\delta(v)}$ for all
   $v\in V $.   
   
   We get a well-defined map
   $$ \Psi_*:  
   \begin{cases} \widehat{\mathcal H}_* \rightarrow \mathcal B_*^S, \\
   	(f, {\boldsymbol a}, \nu_f)\mapsto B_f=(B_{f,v})_{v\in V}
   \end{cases}$$
   
   
 \begin{ft} \label{t-map} 
 	 There is a  map
 	 $\mathcal T: \mathcal B^S_*\rightarrow \mathcal B^S$ 
 	 so that the following diagram commutes.
 	  $$\xymatrix{ 
 	 	\widehat{\mathcal H}_*  \ar[r]^{ \boldsymbol{p}_* } \ar[d]_{\Psi_*}&   	\widehat{\mathcal H}  \ar[d]^{\Psi} \\ 
 	 	\mathcal B_*^S \ar[r]_{\mathcal T}   &  \mathcal B^S  &\\	
 	 }$$
 	  
 	\end{ft}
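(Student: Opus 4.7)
The plan is to construct $\mathcal T$ by re-normalizing each factor $B_v^*$ via a M\"obius automorphism of $\mathbb D$ fixing $1$, so that the resulting Blaschke product $B_v$ has zeros averaging to zero when $v\in V_{\rm np}$, while leaving it unchanged when $v\in V_{\rm p}$. Since $\mathcal B^S_*$ and $\mathcal B^S$ already impose the same normalization on $V_{\rm p}$, the issue is purely to pass from the fixed-point centered form to the zeros centered form on $V_{\rm np}$.

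Given $B^*=(B^*_v)_{v\in V}\in\mathcal B^S_*$, I will define M\"obius automorphisms $M_v\in {\rm Aut}(\mathbb D)$ with $M_v(1)=1$ by induction on the finite orbit length $d(v):=\min\{k\geq 0;\sigma^k(v)\in V_{\rm p}\}$. Set $M_v={\rm id}$ whenever $v\in V_{\rm p}$. For $v\in V_{\rm np}$, assume $M_{\sigma(v)}$ has already been constructed, and let $M_v$ be the unique element of ${\rm Aut}(\mathbb D)$ fixing $1$ such that
\[
\sum_{q\in (M_{\sigma(v)}\circ B^*_v\circ M_v^{-1})^{-1}(0)} q \;=\; 0,
\]
with the sum counted with multiplicity; equivalently, $M_v$ is the unique automorphism fixing $1$ that moves the Douady--Earle conformal barycenter of the $\delta(v)$-point set $B_v^{*-1}(M_{\sigma(v)}^{-1}(0))\subset\mathbb D$ to $0$. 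The existence and uniqueness of such an $M_v$ is precisely the Douady--Earle fact invoked by Milnor in the definition of $\Psi$, and it is the only nontrivial ingredient of the proof. I then put $B_v:=M_{\sigma(v)}\circ B^*_v\circ M_v^{-1}$ and define $\mathcal T(B^*):=(B_v)_{v\in V}\in\mathcal B^S$.

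For the commutativity, pick $(f,{\boldsymbol a},\nu_f)\in\widehat{\mathcal H}_*$ with image $(f,\nu_f)$ under $\boldsymbol{p}_*$, and compare the Riemann maps $\psi_{f,v}$ and $h_{f,v}$ used to define $\Psi_*$ and $\Psi$ respectively. Both send $\nu_f(v)$ to $1$, so $M_v^f:=h_{f,v}\circ\psi_{f,v}^{-1}$ is an automorphism of $\mathbb D$ fixing $1$. For $v\in V_{\rm p}$ we have ${\boldsymbol a}(v)=v(f)$, and so $\psi_{f,v}({\boldsymbol a}(v))=0=h_{f,v}(v(f))$ forces $M_v^f={\rm id}$. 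For $v\in V_{\rm np}$, the normalization of $h_{f,v}$ in terms of $h_{f,\sigma(v)}$ coincides verbatim with the inductive condition defining $M_v$ above, so $M_v^f=M_v$ by uniqueness. Therefore
\[
h_{f,\sigma(v)}\circ f\circ h_{f,v}^{-1}\;=\;M_{\sigma(v)}\circ\bigl(\psi_{f,\sigma(v)}\circ f\circ\psi_{f,v}^{-1}\bigr)\circ M_v^{-1}\;=\;M_{\sigma(v)}\circ B^*_{f,v}\circ M_v^{-1},
\]
which is the $v$-th coordinate of $\mathcal T(\Psi_*(f,{\boldsymbol a},\nu_f))$; this yields $\Psi\circ \boldsymbol{p}_*=\mathcal T\circ\Psi_*$ as required. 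The main obstacle is the well-posedness of the inductive step defining $M_v$; once Douady--Earle is granted, $\mathcal T$ is canonically determined by the data $(B^*_v)_{v\in V}$ alone, independently of any representative $f$.
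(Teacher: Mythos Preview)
Your proof is correct and follows essentially the same approach as the paper: both construct $\mathcal T$ by inductively choosing M\"obius automorphisms of $\mathbb D$ fixing $1$ (the paper calls them $h_v$, you call them $M_v$) via the Douady--Earle barycenter condition, setting them to the identity on $V_{\rm p}$ and defining them level-by-level on $V_{\rm np}$. You additionally spell out the commutativity verification in detail, whereas the paper simply writes ``One may verify that $\mathcal T\circ\Psi_*=\Psi\circ\boldsymbol{p}_*$''; your argument via $M_v^f:=h_{f,v}\circ\psi_{f,v}^{-1}$ satisfying the same inductive uniqueness is exactly the check one would perform.
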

 \begin{proof}
For each $B=(B_v)_{v\in V}\in  \mathcal B^S_*$,  set $\beta_v=B_v$ for $v\in V_{\rm p}$. We decompose   $V_{\rm np}$ as $V_{\rm np}=\bigsqcup_{l\geq 1}V_{\rm np}^{(l)}$, where 
\begin{equation} \label{level-vnp}
   V_{\rm np}^{(l)}=\{v\in  V_{\rm np}; l \text{ is the smallest integer so that } \sigma^l(v)\in V_{\rm p}\}. 
   \end{equation}
   For each $v\in  V_{\rm np}^{(1)}$, let  $h_v: \mathbb D\rightarrow \mathbb D$ be  the unique conformal map so that 
   $$h_v(1)=1, \ \sum_{q\in B_v^{-1}(0)} h_{v}(q)=0,$$
   	where the summation is taken counting multiplicity. 
   	Set $\beta_v=B_v\circ h_v^{-1}$. Clearly, $\beta_v\in \mathcal B_{\rm zc}^{\delta(v)}$.
   	Assume by induction that for all $v\in  V_{\rm np}^{(1)}\sqcup \cdots \sqcup  V_{\rm np}^{(k)}$ ($k\geq 1$), the conformal map $h_v: \mathbb D\rightarrow \mathbb D$ with $h_v(1)=1$, and the Blaschke product $\beta_v\in \mathcal B_{\rm zc}^{\delta(v)}$ are defined. Then for $v\in  V_{\rm np}^{(k+1)}$, note that $h_{\sigma(v)}$ is defined,  let  $h_v: \mathbb D\rightarrow \mathbb D$ be  the unique conformal map so that $h_v(1)=1$ and  
   	$$\sum_{q\in (h_{\sigma(v)}\circ B_v)^{-1}(0)} h_{v}(q)=0,$$
   	then $\beta_v= h_{\sigma(v)}\circ B_v\circ h_v^{-1} \in \mathcal B_{\rm zc}^{\delta(v)}$.
   	After finitely many steps, for each $v\in V_{\rm np}$, we get  a zero centered  Blaschke product $\beta_v\in \mathcal B_{\rm zc}^{\delta(v)}$.

   	Set $\mathcal T(B)=(\beta_v)_{v\in V}$. One may verify that  $\mathcal T\circ \Psi_*=\Psi\circ \boldsymbol{p}_*$.
   	\end{proof}

   
   The virtue of the notations $\widehat{\mathcal H}_*, \mathcal B_*^S$ is illustrated by the following proposition, which is a modified version of Theorem \ref{homeoH}.

  \begin{pro}  \label{branched-model}  Let   $\widehat{\mathcal H}_*$ be  a  doubly  marked hyperbolic component. The  map $ \Psi_*:   \widehat{\mathcal H}_* \rightarrow \mathcal B_*^S$  	is a homeomorphism.
  \end{pro}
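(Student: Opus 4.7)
The approach is to produce an explicit continuous two-sided inverse $\Phi\colon\mathcal B_*^S\to\widetilde{\mathcal H}_*$ of $\Psi_*$ by combining Fact \ref{t-map} with Milnor's Theorem \ref{homeoH}, and then to confirm that $\Phi$ lands in the prescribed component $\widehat{\mathcal H}_*$ by a connectedness argument. The first step is continuity of $\Psi_*$: each Fatou component $U_{f,v}$ of $f\in\mathcal H$ is a Jordan disk by Proposition \ref{RY}, and its boundary moves continuously in the Hausdorff topology as $f$ varies in $\mathcal H$ (through the holomorphic motion of $J(f_0)$ along $\mathcal H$); since the marking points $\boldsymbol{a}(v),\nu_f(v)$ also vary continuously by the very definition of the topology on $\widetilde{\mathcal H}_*$, the Riemann map $\psi_{f,v}$ normalized by $\psi_{f,v}(\boldsymbol{a}(v))=0$ and $\psi_{f,v}(\nu_f(v))=1$ extends to a continuously varying homeomorphism of closures, whence $B_{f,v}=\psi_{f,\sigma(v)}\circ f\circ\psi_{f,v}^{-1}$ depends continuously on $(f,\boldsymbol{a},\nu_f)$.

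To construct $\Phi$, I would, given $B=(B_v)_{v\in V}\in\mathcal B_*^S$, run the recipe defining $\mathcal T$ in Fact \ref{t-map} to produce M\"obius maps $h_v\in\mathrm{Aut}(\mathbb D)$ with $h_v(1)=1$ for $v\in V_{\rm np}$, all continuous in $B$. Set $(f,\nu_f):=\Psi^{-1}(\mathcal T(B))$, which by Theorem \ref{homeoH} lies in $\widehat{\mathcal H}:=\boldsymbol{p}_*(\widehat{\mathcal H}_*)$, and denote by $h_{f,v}$ the associated Milnor Riemann maps. Define the internal marking by $\boldsymbol{a}(v):=v(f)$ for $v\in V_{\rm p}$ and $\boldsymbol{a}(v):=h_{f,v}^{-1}(h_v(0))$ for $v\in V_{\rm np}$. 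Unwinding the defining relation $\beta_v=h_{\sigma(v)}\circ B_v\circ h_v^{-1}$, with the convention $h_{\sigma(v)}=\mathrm{id}$ when $\sigma(v)\in V_{\rm p}$, yields $f(\boldsymbol{a}(v))=\boldsymbol{a}(\sigma(v))$, so $\boldsymbol{a}$ is a bona fide internal marking and $\Phi(B):=(f,\boldsymbol{a},\nu_f)$ lies in $\widetilde{\mathcal H}_*$; continuity of $\Phi$ is automatic from the continuity of each ingredient.

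Verifying $\Psi_*\circ\Phi=\mathrm{id}_{\mathcal B_*^S}$ comes down to the observation that $\psi_{f,v}=h_v^{-1}\circ h_{f,v}$ meets both normalizations of $\Psi_*$ and yields $\psi_{f,\sigma(v)}\circ f\circ\psi_{f,v}^{-1}=h_{\sigma(v)}^{-1}\circ\beta_v\circ h_v=B_v$ by the very definition of $h_v$; the identity $\Phi\circ\Psi_*=\mathrm{id}$ follows from the uniqueness of Riemann maps with prescribed normalizations. The main obstacle is ensuring that $\Phi$ maps into the \emph{single} component $\widehat{\mathcal H}_*$ rather than into some other component of $\widetilde{\mathcal H}_*$; for this I would fix a basepoint $(f_0,\boldsymbol{a}_0,\nu_{f_0})\in\widehat{\mathcal H}_*$, directly verify $\Phi(\Psi_*(f_0,\boldsymbol{a}_0,\nu_{f_0}))=(f_0,\boldsymbol{a}_0,\nu_{f_0})$, and then invoke the connectedness of $\mathcal B_*^S=\prod_{v\in V}\mathcal B_{\rm fc}^{\delta(v)}$ (each factor being a cell by Heins--Zakeri, Theorem \ref{z-p}) together with the continuity of $\Phi$ to conclude $\Phi(\mathcal B_*^S)\subset\widehat{\mathcal H}_*$, whence $\Phi$ inverts $\Psi_*|_{\widehat{\mathcal H}_*}$ and $\Psi_*$ is the desired homeomorphism.
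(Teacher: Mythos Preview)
Your proof is correct and takes a genuinely different route from the paper's. The paper argues in two separate pieces: surjectivity and the local covering property of $\Psi_*$ are obtained by redoing Milnor's quasiconformal surgery argument \cite[\S5]{M2} in the doubly marked setting, while injectivity is proved directly by showing that if $\Psi_*(f,\boldsymbol a_1,\nu_f)=\Psi_*(f,\boldsymbol a_2,\nu_f)$ then the deck automorphism $\psi_{f,v}^{\boldsymbol a_1}\circ(\psi_{f,v}^{\boldsymbol a_2})^{-1}$ of $B_{f,v}$ fixes $1$ and is therefore the identity (inducting on the level of $V_{\rm np}^{(l)}$).

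Your approach instead inverts the commutative diagram of Fact~\ref{t-map} explicitly: given $B$, apply Milnor's $\Psi^{-1}$ to $\mathcal T(B)$ to recover $(f,\nu_f)$, and then read off the internal marking from the M\"obius maps $h_v$ built into $\mathcal T$. This yields bijectivity and continuity of the inverse simultaneously, and surjectivity becomes transparent without reopening the surgery argument. The verification of $\Phi\circ\Psi_*=\mathrm{id}$, which you phrase as ``uniqueness of Riemann maps,'' amounts to checking inductively that $h_{f,v}\circ\psi_{f,v}^{-1}$ satisfies both defining normalizations of $h_v$ (using the Douady--Earle uniqueness from \cite{DE}); this is essentially equivalent in content to the paper's injectivity step, so the real payoff of your route is on the surjectivity side. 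Your connectedness argument to confine $\Phi(\mathcal B_*^S)$ to the single component $\widehat{\mathcal H}_*$ is necessary and correctly handled; note only that the connectedness of $\mathcal B_*^S=\prod_v{\rm Div}_{\delta(v)-1}(\mathbb D)$ follows already from each factor being a continuous image of a polydisk, independently of Theorem~\ref{z-p}.
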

\begin{proof}  The proof of covering property and surjectivity of  $\Psi_*$ is the same as that of  \cite[Section 5]{M2}, using quasiconformal surgery.  We omit the details. 
	
	To see the injectivity of $\Psi_*$, assume
 $\Psi_*((f_1, {\boldsymbol a}_{1}, \nu_{f_1}))=\Psi_*((f_2, {\boldsymbol a}_{2}, \nu_{f_2}))$.
	It follows that $\mathcal T\circ \Psi_*((f_1, {\boldsymbol a}_{1}, \nu_{f_1}))=\mathcal T\circ \Psi_*((f_2, {\boldsymbol a}_{2}, \nu_{f_2}))\in  \mathcal B^S$.
	By the commutative  diagram of Fact \ref{t-map} and Theorem \ref{homeoH}, we have $(f_1, \nu_{f_1})=(f_2,  \nu_{f_2})$.  Write $(f_1, \nu_{f_1})$ as $(f, \nu)$.  To finish, we need to prove ${\boldsymbol a}_{1}={\boldsymbol a}_2$.  It suffices to show ${\boldsymbol a}_{1}(v)={\boldsymbol a}_2(v)$ for $v\in V_{\rm np}$.
	
	For each $v\in V$ and $k\in\{1,2\}$, let $\psi^{{\boldsymbol a}_{k}}_{f,v}: U_{f,v}\rightarrow \mathbb D$ be the conformal map with
	$\psi^{{\boldsymbol a}_{k}}_{f,v}({\boldsymbol a}_k(v))=0, \psi^{{\boldsymbol a}_{k}}_{f,v}(\nu_f(v))=1$.
	Clearly,  if $v\in V_{\rm p}$, then $\psi^{{\boldsymbol a}_{1}}_{f,v}=\psi^{{\boldsymbol a}_{2}}_{f,v}$. If $v\in V_{\rm np}^{(1)}$ (given by \eqref{level-vnp}), then $\psi^{{\boldsymbol a}_{k}}_{f,\sigma(v)} \circ f|_{U_{f,v}}=B_{f,v}\circ \psi^{{\boldsymbol a}_{k}}_{f,v}$. Hence 
	$$B_{f,v}\circ \psi^{{\boldsymbol a}_{1}}_{f,v}=B_{f,v}\circ \psi^{{\boldsymbol a}_{2}}_{f,v}\Longleftrightarrow B_{f,v}\circ \psi^{{\boldsymbol a}_{1}}_{f,v}\circ (\psi^{{\boldsymbol a}_{2}}_{f,v})^{-1}= B_{f,v}.$$
The normalization    $ \psi^{{\boldsymbol a}_{1}}_{f,v}\circ (\psi^{{\boldsymbol a}_{2}}_{f,v})^{-1}(1)=1$   implies  that  $ \psi^{{\boldsymbol a}_{1}}_{f,v}\circ (\psi^{{\boldsymbol a}_{2}}_{f,v})^{-1}\equiv {\rm id}$   by the  above functional equation. 
Hence  $ \psi^{{\boldsymbol a}_{1}}_{f,v}\equiv \psi^{{\boldsymbol a}_{2}}_{f,v}$, and ${\boldsymbol a}_{1}(v)=(\psi^{{\boldsymbol a}_{1}}_{f,v})^{-1}(0)=(\psi^{{\boldsymbol a}_{2}}_{f,v})^{-1}(0)={\boldsymbol a}_{2}(v)$.  
By induction on the level $l$ of $ V_{\rm np}^{(l)}$, we have    ${\boldsymbol a}_{1}(v)={\boldsymbol a}_2(v)$ for all $v\in V_{\rm np}$.
	\end{proof}

 A  doubly  marked hyperbolic component  $\widehat{\mathcal H}_*$  induces an {\it internal marked hyperbolic component} ${\mathcal H}_*$.  It is the image of   $\widehat{\mathcal H}_*$ under the projection  
 $$  
{\rm proj}:  \begin{cases} \widehat{\mathcal H}_* \rightarrow {\mathcal H}_*, \\
 	(f, {\boldsymbol a}, \nu_f)\mapsto 	(f, {\boldsymbol a}).
 \end{cases}$$ 
The internal marked hyperbolic component ${\mathcal H}_*$  
can be viewed as $\widehat{\mathcal H}_*$ by the following (analogous  to Corollary \ref{marking1-1})

 \begin{ft}  \label{proj-homeo} The projection ${\rm proj}$ is a homeomorphism.
\end{ft}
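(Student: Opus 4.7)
} The map ${\rm proj}$ is surjective by the very definition $\mathcal H_*:={\rm proj}(\widehat{\mathcal H}_*)$, and it is continuous in the natural topology on $\widetilde{\mathcal H}_*$. The substance lies in establishing injectivity and openness. The plan is to exploit the complementary projection $\boldsymbol{p}_*$ on one side, and standard stability of repelling periodic points on the other.

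For injectivity, I would push any two putative preimages forward via $\boldsymbol{p}_*$ and invoke Corollary \ref{marking1-1}. Suppose $(f,\boldsymbol{a},\nu_1),(f,\boldsymbol{a},\nu_2)\in\widehat{\mathcal H}_*$. Since $\boldsymbol{p}_*$ is continuous and $\widehat{\mathcal H}_*$ is connected, $\boldsymbol{p}_*(\widehat{\mathcal H}_*)\subset\widetilde{\mathcal H}$ is connected and therefore lies in a single marked hyperbolic component $\widehat{\mathcal H}$. Both $\boldsymbol{p}_*((f,\boldsymbol{a},\nu_j))=(f,\nu_j)$ then belong to $\widehat{\mathcal H}$ and satisfy $\boldsymbol{p}(f,\nu_1)=f=\boldsymbol{p}(f,\nu_2)$. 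Corollary \ref{marking1-1} says $\boldsymbol{p}|_{\widehat{\mathcal H}}$ is a homeomorphism (in particular injective), forcing $\nu_1=\nu_2$.

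For openness, I would show that ${\rm proj}$ is a local homeomorphism by holomorphically continuing both markings. Fix $(f_0,\boldsymbol{a}_0,\nu_{f_0})\in\widehat{\mathcal H}_*$. Each $\nu_{f_0}(v)\in\partial U_{f_0,v}$ is eventually periodic; since $f_0$ is hyperbolic, Proposition \ref{RY}(1) ensures $\nu_{f_0}(v)$ is (eventually) repelling. The implicit function theorem then yields a unique holomorphic continuation $g\mapsto\nu_g(v)$ on a small neighborhood $\mathcal U\subset\mathcal H$ of $f_0$, and this continuation stays on $\partial U_{g,v}$ because the holomorphic motion of $J(f_0)$ carries $\partial U_{f_0,v}$ onto $\partial U_{g,v}$. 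Combining with the analogous continuation of $\boldsymbol{a}(v)$ (an attracting cycle point for $v\in V_{\rm p}$, or a suitably chosen preimage branch for $v\in V_{\rm np}$) produces a continuous local section of ${\rm proj}$ on a neighborhood of $(f_0,\boldsymbol{a}_0)\in\mathcal H_*$, hence ${\rm proj}$ is open.

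The main obstacle is compatibility at the postcritically finite map $f_0\in\mathcal H$, where critical orbits collide and both the internal and boundary markings can branch. However, the topology on $\widetilde{\mathcal H}_*$ is defined precisely so that $\boldsymbol{p}_*$ is a finite branched covering; moreover, by Proposition \ref{branched-model}, $\Psi_*$ identifies $\widehat{\mathcal H}_*$ with the cell $\mathcal B^S_*$. Thus the branching on the $\boldsymbol{a}$-coordinate seen through ${\rm proj}$ is exactly accounted for in the topology of $\widehat{\mathcal H}_*$, and the local section described above interacts correctly with this branching, so that ${\rm proj}$ is a bijective continuous open map, hence a homeomorphism.
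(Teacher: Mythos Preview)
Your argument is correct and takes a different route from the paper. The paper simply reruns the proof of Corollary~\ref{marking1-1}: the center $f_0$ has a unique internal marking $\boldsymbol{a}_0$, so there are exactly $n_{\mathcal H}$ triples $(f_0,\boldsymbol{a}_0,\nu)$ in $\widetilde{\mathcal H}_*$, all mapped by $\Psi_*$ to the same power-map tuple; Proposition~\ref{branched-model} then forces each component $\widehat{\mathcal H}_*$ to contain exactly one such triple, and since boundary markings continue uniquely (being repelling pre-periodic points), ${\rm proj}$ is a degree-one covering. You instead push forward through $\boldsymbol{p}_*$ and invoke Corollary~\ref{marking1-1} as a black box for injectivity, then construct an explicit local section for openness. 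The paper's approach is terser and structurally parallel to the earlier proof; yours is more modular in that it reuses Corollary~\ref{marking1-1} rather than reproving it. One small remark: your ``main obstacle'' paragraph is unnecessary---the boundary marking $\nu$ never branches (repelling periodic points continue uniquely for hyperbolic maps), and since you are building a section over $\mathcal H_*$, the internal marking $\boldsymbol{a}$ is already part of the data and need not be continued, so there is no genuine compatibility issue at the center.
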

\begin{proof}   The proof is the same as that of Corollary \ref{marking1-1}.  First note that $f_0$ has only one internal marking, say ${\boldsymbol a}_0$. The set  $\widetilde{\mathcal H}_* $   contains exactly $n_\mathcal{H}$  marked maps of the form $(f_0, {\boldsymbol a}_0, \nu)$, all of which are mapped by   $ \Psi_*$  to the same element   $(B_v(z)=z^{\delta(v)})_{v\in V}\in \mathcal B_*^S$. 
	By Proposition \ref{branched-model}, each component $ \widehat{\mathcal H}_*$ of  $\widetilde{\mathcal H}_* $ contains only one of  $(f_0, \boldsymbol a_0, \nu)$'s. 
\end{proof}

In the rest of the paper, we prefer to study the internally marked hyperbolic component $\mathcal{H}_*$ rather than $\mathcal{H}$ itself, which offers two key advantages:

\begin{itemize}
	\item $\mathcal{H}_*$ is homeomorphic to $\mathcal{B}^S_*$ via $\Psi_* \circ \mathrm{proj}^{-1}$ (Proposition \ref{branched-model} and Fact \ref{proj-homeo}),  and $\mathcal{B}^S_*$ corresponds to $\mathrm{Div}(D)^S$. This identification simplifies the analysis of degenerations.
	
	\item Replacing $\mathcal{H}$ with $\mathcal{H}_*$ preserves all main results, because these results depend solely on analyses near $\mathcal{H}$-admissible maps $f \in \partial\mathcal{H}$. When examining maps in the fiber of $f$ on $\partial\mathcal{H}_*$, we actually use a suitable biholomorphic coordinate chart.
\end{itemize}

	

For simplicity, we hereafter denote $\mathcal{H}_*$ by $\mathcal{H}$ (noting that $\mathcal{H}_* = \mathcal{H}$ when $V_{\mathrm{np}} = \emptyset$). The internal marking ${\boldsymbol a}$ associated with $f \in \mathcal{H}$ is uniformly represented as
${\boldsymbol v}(f)=(v(f))_{v\in V}$, 
a notation that simultaneously encodes the index $v \in V$ and the mapping $f \in \mathcal{H}$.


\section{Impression associated with divisors} \label{imp-divisor} 
By  Proposition \ref{branched-model} and Fact \ref{proj-homeo}, 
 $\Psi_*\circ {\rm proj}^{-1}:  {\mathcal H}_*\rightarrow \mathcal B^S_*$ is a homeomorphism. 
Identifying $\mathcal B^S_*$ with ${\rm Div}(\mathbb D)^S$ by the map $\boldsymbol{i}: \mathcal B^S_*\rightarrow {\rm Div}(\mathbb D)^S$, we get a parameterization 
\begin{equation}
	\label{para-H}
	\Phi={\rm proj}\circ \Psi_*^{-1}\circ \boldsymbol{i}^{-1}: {\rm Div}(\mathbb D)^S\rightarrow   {\mathcal H}_*. 
\end{equation}

The boundary $\partial \mathcal H_*$
consists of the pairs $(f, {\boldsymbol v}(f))$  with $f\in \partial\mathcal H$ and ${\boldsymbol v}(f):=(v(f))_{v\in V}$, so that there is a sequence $\{f_n\}_n\subset \mathcal H$ with the property  
$$f_n\rightarrow f, \  {\boldsymbol v}(f_n)\rightarrow {\boldsymbol v}(f), \ \text{ as } n\rightarrow \infty,$$ 
where the notation  ${\boldsymbol v}(f_n)\rightarrow {\boldsymbol v}(f)$ means that $v(f_n)\rightarrow v(f)$ for all $v\in V$.   We call ${\boldsymbol v}(f)$ the {\it internal marking } for $f$.

We establish the following notational conventions:
\begin{itemize}
	\item $\mathcal{H}$ refers to $\mathcal{H}_*$ unless stated otherwise.
	\item The boundary $\partial\mathcal{H}_*$ is denoted by $\partial\mathcal{H}$.
	\item Elements $(f, \boldsymbol{v}(f)) \in \partial\mathcal{H}$ are abbreviated as $f$.
	\item Convergence $f_n \to f$ implies $(f_n, \boldsymbol{v}(f_n)) \to (f, \boldsymbol{v}(f))$ in parameter space.
\end{itemize}


Let $D=\big((B_v^0, D_v^{\partial})\big)_{v\in V}\in \partial {\rm Div}(\mathbb D)^S$ and let $\epsilon_0>0$ be a small number, 
we define the {\it impression} of $D$ under the map $\Phi$ by
$$I_{\Phi}(D)=\bigcap_{0<\epsilon<\epsilon_0}\overline{\Phi(\N_{\epsilon}(D))}.$$
 As a shrinking sequence of connected compact sets, $I_{\Phi}(D)$ is a connected   compact subset of $\partial \mathcal H$. 
Here is   another explanation of $I_{\Phi}(D)$:

\begin{ft} \label{fact-def} Let $D=\big((B_v^0, D_v^{\partial})\big)_{v\in V}\in \partial {\rm Div}{(\mathbb D)}^S$.  Then $I_{\Phi}(D)$ consists of all maps $f\in \partial \mathcal H$ so that there is a sequence 
$\{f_n\}_{n\geq 1}\subset \mathcal H$ satisfying that 
$$f_n\rightarrow f \ \text{ and } \ \Phi^{-1}(f_n)\rightarrow D  \ \text{ as }  n\rightarrow \infty.$$
\end{ft}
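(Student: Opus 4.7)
The plan is to prove Fact \ref{fact-def} by directly unpacking the definition $I_\Phi(D)=\bigcap_{0<\epsilon<\epsilon_0}\overline{\Phi(\mathcal N_\epsilon(D))}$ and observing that the neighborhoods $\mathcal N_\epsilon(D)\subset {\rm Div}(\mathbb D)^S$ form a shrinking basis around $D$: any sequence $E_n\in\mathcal N_{\epsilon_n}(D)$ with $\epsilon_n\to 0$ must satisfy $E_n\to D$ in ${\rm Div}(\overline{\mathbb D})^S$, since by definition each coordinate divisor $E_{n,v}$ has points lying in $\epsilon_n$-disks around the corresponding points of $D_v$. So the two directions reduce to a diagonal extraction on the parameter side.

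For the \emph{forward direction}, suppose $f\in I_\Phi(D)$. Then for each $n\geq 1$ with $1/n<\epsilon_0$, the inclusion $f\in\overline{\Phi(\mathcal N_{1/n}(D))}$ yields some $E_n\in\mathcal N_{1/n}(D)$ with $\|\Phi(E_n)-f\|<1/n$ (using any fixed norm on $\mathcal P_d$). Setting $f_n:=\Phi(E_n)\in\mathcal H$, I get $f_n\to f$, and since $E_n\in\mathcal N_{1/n}(D)$ forces $E_n\to D$, I also have $\Phi^{-1}(f_n)=E_n\to D$.

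For the \emph{reverse direction}, suppose $\{f_n\}\subset\mathcal H$ with $f_n\to f$ and $\Phi^{-1}(f_n)\to D$. Fix any $\epsilon\in(0,\epsilon_0)$. By the shrinking-basis property, there is $N=N(\epsilon)$ such that $\Phi^{-1}(f_n)\in\mathcal N_\epsilon(D)$ for all $n\geq N$; equivalently, $f_n\in\Phi(\mathcal N_\epsilon(D))$ eventually. Since $f_n\to f$, I conclude $f\in\overline{\Phi(\mathcal N_\epsilon(D))}$. Taking the intersection over $\epsilon\in(0,\epsilon_0)$ gives $f\in I_\Phi(D)$, and $f\in\partial\mathcal H$ because each $\Phi(\mathcal N_\epsilon(D))\subset\mathcal H$ while $\mathcal N_\epsilon(D)$ cannot accumulate on an interior point of ${\rm Div}(\mathbb D)^S$ (as $D\in\partial{\rm Div}(\mathbb D)^S$).

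There is no substantive obstacle here; the only thing to be slightly careful about is verifying that $\mathcal N_\epsilon(D)$ really is a shrinking neighborhood basis of $D$ inside ${\rm Div}(\overline{\mathbb D})^S$, which follows from the coordinate-wise definition of $\mathcal N^0_\epsilon(D_v)$ and the quotient topology on ${\rm Div}_e(\overline{\mathbb D})$ induced from $\overline{\mathbb D}{}^e$. Everything else is a formal consequence of $\Phi$ being a homeomorphism from ${\rm Div}(\mathbb D)^S$ onto $\mathcal H$.
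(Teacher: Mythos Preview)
Your proof is correct and follows essentially the same route as the paper's: both directions are handled by the same diagonal extraction, picking $f_n\in\Phi(\N_{1/n}(D))$ within distance $1/n$ of $f$ for the forward direction, and using that $\Phi^{-1}(f_n)\to D$ forces $\Phi^{-1}(f_n)\in\N_\epsilon(D)$ eventually for the reverse. Your final remark justifying $f\in\partial\mathcal H$ is a small extra (the paper simply assumes $f\in\partial\mathcal H$ as part of the hypothesis in the converse), but it is harmless and the reasoning is sound since $\Phi^{-1}$ is continuous on $\mathcal H$.
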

\begin{proof} If $f\in I_{\Phi}(D)$, then $f\in \overline{\Phi(\N_{\epsilon}(D))}$ for all 
$\epsilon\in(0,\epsilon_0)$. Take a decreasing sequence $\epsilon_n\rightarrow0^+$. For each $\epsilon_n$, there is $f_n\in \Phi(\N_{\epsilon_n}(D))\cap \mathcal N_{\epsilon_n}(f)$, where $ \mathcal N_{\epsilon_n}(f)$ is the $\epsilon_n$-neighborhood of $f$ in $\mathcal P_d$. Hence  $f_n\rightarrow f$  and $\Phi^{-1}(f_n)\rightarrow D$.

Conversely, suppose $\{f_n\}_{n\geq 1}\subset \mathcal H$ satisfies  that   $f_n\rightarrow f \in \partial \mathcal H$  and $\Phi^{-1}(f_n)\rightarrow D$. By choosing a subsequence, there is a
  decreasing sequence $\epsilon_n\rightarrow0^+$
  such that $\Phi^{-1}(f_k)\in \N_{\epsilon_n}(D)$ (equivalently $f_k\in \Phi(\N_{\epsilon_n}(D))$) for all $k\geq n$.
  The assumption $f_k\rightarrow f$ implies that $f\in \overline{\Phi(\N_{\epsilon_n}(D))}$. Since $n$ is arbitrary, we have $f\in \bigcap_n \overline{\Phi(\N_{\epsilon_n}(D))}=I_{\Phi}(D)$.
\end{proof}


\begin{rmk} By the compactness of $ {\rm Div}{(\overline{\mathbb D})}^S$, we have
 \begin{equation}\label{decomposition-boundary}
 	\partial\mathcal H=\bigcup_{ E\in \partial {\rm Div}{(\mathbb D)}^S}I_{\Phi}(E), \  \partial_{\rm reg}\mathcal H=\bigcup_{ E\in \partial_0{\rm Div}{(\mathbb D)}^S}I_{\Phi}(E),
 	\end{equation}
where
 \begin{equation}\label{reg-boundary}
  \partial_{\rm reg}\mathcal H=\{f\in \partial \mathcal H; f \text{ has exactly } m_\mathcal{H} \text{ attracting cycles }  \}.
\end{equation}
 To see this,  for any $g\in \partial \mathcal H$, there is a sequence $\{g_n\}\subset \mathcal H$ with $g_n\rightarrow g$.  Since  $ {\rm Div}{(\overline{\mathbb D})}^S$ is compact,  by Heine-Borel, the sequence  $\{\Phi^{-1}(g_n)\}_{n\geq 1}$ has an accumulation divisor $E\in \partial {\rm Div}{(\mathbb D)}^S$.  By Fact \ref{fact-def},  $g\in I_{\Phi}(E)$. This implies the left equality. The right   equality follows from the same reasoning.
 
 By \eqref{decomposition-boundary}, to understand $\partial\mathcal H$ or
 $ \partial_{\rm reg}\mathcal H$, we need to study the structure of $I_{\Phi}(D)$.
\end{rmk}

\begin{defi} \label{subset-boundary}  Let us define an open and dense  subset of $\partial_0 {\rm Div}{(\mathbb D)}^S$:
$$\partial_0^* {\rm Div}{(\mathbb D)}^S=\Big\{D=((B_v^0, D_v^\partial))_{v\in V}\in \partial_0 {\rm Div}{(\mathbb D)}^S; 1\notin {\rm supp}(D_v^\partial) ,  \forall \  v\in V\Big\}.$$
\end{defi}

Let $D=((B_v^0, D_v^\partial))_{v\in V}\in \partial_0^* {\rm Div}{(\mathbb D)}^S$ and $f\in I_{\Phi}(D)$. By Fact \ref{fact-def}, there is a sequence
$\{f_n\}\subset \mathcal H$ satisfying that $f_n\rightarrow f \ \text{ and } \ \Phi^{-1}(f_n)\rightarrow D$. 
Recall that  $f_n\rightarrow f$ means $(f_n,  {\boldsymbol v}(f_n))\rightarrow (f,  {\boldsymbol v}(f))$.
Let $\nu_n=(\nu_n(v))_{v\in V}$ be the boundary marking  of $f_n$ so that 
 $\{(f_n, {\boldsymbol v}(f_n), \nu_n)\}_{n}$ are in the same doubly marked hyperbolic component. 

 For each $v\in V$, since $U_{f_n, v}$ is a Jordan disk (Proposition \ref{RY}),  there is a unique Riemann mapping $\psi_{f_n, v}: U_{f_n,v}\rightarrow \mathbb D$ with  $\psi_{f_n, v}(v(f_n))=0$ and $\psi_{f_n, v}(\nu_n(v))=1$. Let $\phi_{f_n, v}=\psi_{f_n, v}^{-1}$.
Since $f\in \partial_{\rm reg}\mathcal H$,  the point $v(f)=\lim_n v(f_n)$ is   $f$-(pre-)attracting.
By  Lemma \ref{carathodory-convergence} and Carath\'eodory's Kernel Convergence Theorem,  we assume the sequence
$\phi_{f_n, v}$ converges locally and uniformly in $\mathbb D$ to a conformal map $\phi_{f,v}: (\mathbb D, 0)\rightarrow (U_{f,v}, v(f))$. 

\begin{lem} \label{model-map} Let $D=((B_v^0, D_v^\partial))_{v\in V}\in \partial_0^* {\rm Div}{(\mathbb D)}^S$  and
 $\{f_n\}_{n\geq 1}$ be a sequence of maps in $\mathcal H$ such that 
\begin{itemize}
\item  $f_n\rightarrow f\in I_{\Phi}(D)$, $\Phi^{-1}(f_n)\rightarrow D$, and

\item  $\phi_{f_n,v}$ converges locally and uniformly on $\mathbb D$ to  $\phi_{f,v}$, \ $\forall v\in V$.
\end{itemize} 
Then we have ${B}^0_v=\phi^{-1}_{f,\sigma(v)}\circ f\circ \phi_{f,v}, \ \forall v\in V$.
\end{lem}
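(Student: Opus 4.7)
The plan is to pass to the limit in the functional equation for the Milnor parametrization. For each $n$ and each $v\in V$, the doubly marked hyperbolic component $\mathcal H$ (identified via $\Phi$) satisfies the defining identity
\[
B_{f_n, v} \;=\; \psi_{f_n,\sigma(v)} \circ f_n \circ \phi_{f_n, v} \quad \text{on } \mathbb D,
\]
where $\psi_{f_n, v} = \phi_{f_n, v}^{-1}: U_{f_n, v}\rightarrow \mathbb D$. I plan to show that each of the three factors on the right converges locally uniformly on the appropriate set, and then combine them.

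First I would handle the left-hand side. The hypothesis $\Phi^{-1}(f_n)\to D$ means that the free zero divisors ${\rm div}(B_{f_n,v})$ converge to $D_v={\rm div}(B_v^0)+D_v^\partial$ in ${\rm Div}_{\delta(v)-1}(\overline{\mathbb D})$. Since $D\in \partial_0^*{\rm Div}(\mathbb D)^S$, one has $1\notin {\rm supp}(D_v^\partial)$ for every $v\in V$, so Lemma \ref{degenerate-0} applies and yields $B_{f_n,v}\to B_v^0$ locally uniformly on $\mathbb C\setminus {\rm supp}(D_v^\partial)$, in particular on $\mathbb D$.

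Next, I would handle the right-hand side. The second hypothesis gives $\phi_{f_n,v}\to\phi_{f,v}$ locally uniformly on $\mathbb D$, with $\phi_{f,v}$ conformal (univalent) from $\mathbb D$ onto $U_{f,v}$. By standard complex-analytic arguments (a parameter-dependent version of the Carathéodory kernel argument used in Lemma \ref{carathodory-convergence} and Section \ref{s-ckc}), the inverses $\psi_{f_n,v}=\phi_{f_n,v}^{-1}$ converge to $\psi_{f,v}=\phi_{f,v}^{-1}$ locally uniformly on $U_{f,v}$. Now fix $z\in\mathbb D$. Since $\phi_{f_n,v}(z)\to\phi_{f,v}(z)$ and $f_n\to f$ uniformly on compact subsets of $\mathbb C$, we get $f_n(\phi_{f_n,v}(z))\to f(\phi_{f,v}(z))$; moreover this limit value lies in $f(U_{f,v})\subset U_{f,\sigma(v)}=\phi_{f,\sigma(v)}(\mathbb D)$, so applying the uniformly convergent $\psi_{f_n,\sigma(v)}\to\psi_{f,\sigma(v)}$ yields
\[
B_{f_n,v}(z)\;=\;\psi_{f_n,\sigma(v)}\bigl(f_n(\phi_{f_n,v}(z))\bigr)\;\longrightarrow\;\psi_{f,\sigma(v)}\bigl(f(\phi_{f,v}(z))\bigr).
\]
Combining with $B_{f_n,v}(z)\to B_v^0(z)$ gives the claimed identity $B_v^0=\phi_{f,\sigma(v)}^{-1}\circ f\circ \phi_{f,v}$.

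The main obstacle I anticipate is the convergence of the inverse Riemann maps $\psi_{f_n,\sigma(v)}$ on a target whose shape $U_{f_n,\sigma(v)}$ is itself varying and generally not Hausdorff-continuous at $f$. I would address this by working on compact subsets of the kernel $U_{f,\sigma(v)}$: since $\phi_{f_n,\sigma(v)}$ converges locally uniformly on $\mathbb D$ to the univalent $\phi_{f,\sigma(v)}$, for any compact $K\subset U_{f,\sigma(v)}=\phi_{f,\sigma(v)}(\mathbb D)$ the preimages $\psi_{f_n,\sigma(v)}(K)$ are eventually contained in a fixed compact subset of $\mathbb D$, and the convergence $\psi_{f_n,\sigma(v)}\to\psi_{f,\sigma(v)}$ on $K$ follows from uniform continuity of $\phi_{f,\sigma(v)}^{-1}$ and the locally uniform convergence of $\phi_{f_n,\sigma(v)}$ on that compact subset of $\mathbb D$. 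This is exactly the reason the assumption $D\in\partial_0^*$, which rules out $1\in{\rm supp}(D_v^\partial)$, is needed to keep the boundary marking $\nu_n(v)=\phi_{f_n,v}(1)$ away from a degenerate direction and keep $\phi_{f,v}$ genuinely conformal.
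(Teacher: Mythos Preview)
Your proposal is correct and follows essentially the same approach as the paper: use $D\in\partial_0^*{\rm Div}(\mathbb D)^S$ together with Lemma~\ref{degenerate-0} to get $B_{f_n,v}\to B_v^0$ locally uniformly on $\mathbb D$, then pass to the limit in the conjugacy $B_{f_n,v}=\phi_{f_n,\sigma(v)}^{-1}\circ f_n\circ\phi_{f_n,v}$ and conclude by uniqueness of limits. The paper's proof is terser---it simply asserts that the local uniform convergence $\phi_{f_n,v}\to\phi_{f,v}$ forces the right-hand side to converge---whereas you spell out the convergence of the inverse Riemann maps on compacta of the kernel, which is a welcome clarification; your final remark attributing the conformality of $\phi_{f,v}$ to the $\partial_0^*$ hypothesis slightly overstates things (the conformality of the limit is already part of the setup via the kernel theorem, and the role of $1\notin{\rm supp}(D_v^\partial)$ here is precisely to make Lemma~\ref{degenerate-0} applicable), but this does not affect the argument.
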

\begin{proof} The assumption $D=((B_v^0, D_v^\partial))_{v\in V}\in \partial_0^* {\rm Div}{(\mathbb D)}^S$ implies that $1\notin {\rm supp}(D_v^\partial),  \forall  v\in V$.
 By  Lemma \ref{degenerate-0} and  the assumption   
$$\Phi^{-1}(f_n)=(B_{n,v})_{v\in V}\rightarrow D=((B_v^0, D_v^\partial))_{v\in V},$$  the Blaschke products ${B}_{n,v}$ converge locally and uniformly to $B_v^0$ in $\mathbb D$.

The local and uniform convergence $\phi_{f_n,v}\rightarrow \phi_{f,v}$  implies that 
${B}_{n,v}=\phi^{-1}_{f_{n,\sigma(v)}}\circ f_n\circ \phi_{f_{n,v}}$ converges  locally and uniformly  to $\phi^{-1}_{f,\sigma(v)}\circ f\circ \phi_{f,v}$ in 
$\mathbb D$. By the Identity Theorem,  ${B}^0_v=\phi^{-1}_{f,\sigma(v)}\circ f\circ \phi_{f,v}$ for all $v\in V$.
\end{proof}

In the following, we shall introduce some notations (for example, the sets $\partial_0 U_{f,v}, \partial_v^f \mathbb D$, the divisor $D_{v}^\partial(f)$, etc) based on $\phi_{f,v}$ (or its inverse $\psi_{f,v}$).  It seems that   an ambiguity  might arise, because a priori different sequences of $f_n$'s  might result in  different   maps $(\phi_{f,v})_{v\in V}$.    The readers will be relieved to know  that we shall show in the next section (see Proposition \ref{combinatorial-property0}) that different sequence of $f_n$  will give the same   maps $(\phi_{f,v})_{v\in V}$. 
So for the moment, it is  safe  even if  we  keep in mind that the notations  depends on $(\phi_{f,v})_{v\in V}$.

By Carath\'eodory's Boundary Extension Theorem, the map $\phi_{f,v}$ extends to a homeomorphism $\phi_{f,v}: \overline{\mathbb D}\rightarrow \overline{U_{f,v}}$. Hence $\phi_{f,v}(1)$ is well defined and is a preperiodic point of $f$ on $\partial U_{f,v}$.
Let $\psi_{f,v}=\phi_{f,v}^{-1}: \overline{U_{f,v}}\rightarrow \overline{\mathbb D}$, and define
\begin{equation}
\label{fatou-boundary-0}\partial_0 U_{f,v}=\big\{q\in \partial U_{f,v}; L_{U_{f,v}, q}=\{q\}\big\}, \  \partial_v^f \mathbb D= \psi_{f,v}(\partial_0 U_{f,v}).
\end{equation}
\begin{ft} \label{fact-sub-boundary}  The sets $\partial_0 U_{f,v}, \partial_v^f \mathbb D$ satisfy the following properties: 
	$$f(\partial_0 U_{f,v})\subset \partial_0 U_{f,\sigma(v)}, \ B^0_v(\partial_v^f \mathbb D)
	\subset \partial_{\sigma(v)}^f \mathbb D.$$
	Moreover,  the residual set 
	$\partial\mathbb D\setminus  \partial_v^f \mathbb D$ is countable. 
\end{ft}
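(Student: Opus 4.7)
\smallskip
\noindent\textbf{Proof proposal.}  My plan is to invoke the limb decomposition of $K(f)$ from Proposition~\ref{RY} together with the transport under $f$ recorded in Remark~\ref{limb-wandering}, and to push everything to $\partial\mathbb{D}$ via the boundary extension of the conjugacy $B^0_v=\psi_{f,\sigma(v)}\circ f\circ\phi_{f,v}$ supplied by Lemma~\ref{model-map}.  A preliminary observation is that each $U_{f,v}$ is a bounded attracting (or pre-attracting) Fatou component of $f$: it arises as the Carath\'eodory kernel of the attracting components $U_{f_n,v(f_n)}$ by Lemma~\ref{carathodory-convergence}, and the identity $f(U_{f,v})=U_{f,\sigma(v)}$ passes to the limit from $f_n(U_{f_n,v})=U_{f_n,\sigma(v)}$.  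Consequently Proposition~\ref{RY} applies, $\partial U_{f,v}$ is a Jordan curve, $\phi_{f,v}$ extends to a homeomorphism $\overline{\mathbb{D}}\to\overline{U_{f,v}}$, and the disjoint limb decomposition is available.

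For (1), if $L_{U_{f,v},q}=\{q\}$ then Remark~\ref{limb-wandering} gives $L_{U_{f,\sigma(v)},f(q)}\subset f(L_{U_{f,v},q})=\{f(q)\}$; since $f(q)$ always belongs to its own limb, the inclusion is an equality, so $f(q)\in\partial_0 U_{f,\sigma(v)}$.  For (2), the identity $B^0_v=\psi_{f,\sigma(v)}\circ f\circ\phi_{f,v}$ of Lemma~\ref{model-map}, valid on $\mathbb{D}$, extends continuously to $\overline{\mathbb{D}}$ because both conformal maps do.  Thus for any $\zeta=\psi_{f,v}(q)\in\partial_v^f\mathbb{D}$ with $q\in\partial_0 U_{f,v}$, we have $B^0_v(\zeta)=\psi_{f,\sigma(v)}(f(q))$, which lies in $\partial_{\sigma(v)}^f\mathbb{D}$ by (1).

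The main content is (3).  Via the homeomorphism $\phi_{f,v}:\partial\mathbb{D}\to\partial U_{f,v}$ it suffices to prove that $E_v:=\partial U_{f,v}\setminus\partial_0 U_{f,v}$ is countable.  By Proposition~\ref{RY}(5), every $q\in E_v$ admits an integer $n\geq 0$ and a point $c\in\mathrm{Crit}(f)$ with $c\in L_{U_{f,\sigma^n(v)},f^n(q)}$.  For fixed $(n,c)$ the disjointness of limbs in Proposition~\ref{RY}(3) forces at most one $y\in\partial U_{f,\sigma^n(v)}$ to satisfy $c\in L_{U_{f,\sigma^n(v)},y}$, and the finite-to-one continuous surjection $f^n\colon\partial U_{f,v}\to\partial U_{f,\sigma^n(v)}$ (of degree $\prod_{k=0}^{n-1}\delta(\sigma^k(v))$) has only finitely many preimages of that $y$.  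Taking the union over the countable index set $(n,c)\in\mathbb{N}\times\mathrm{Crit}(f)$, with $\mathrm{Crit}(f)$ finite, exhibits $E_v$ as a countable union of finite sets.  The step I anticipate as the main obstacle is (3): one must carefully organize the enumeration by $(n,c)$ and justify that the fibers of $f^n|_{\partial U_{f,v}}$ are controlled in size; items (1) and (2) are then formal consequences of Remark~\ref{limb-wandering} and the boundary extension of the Lemma~\ref{model-map} conjugacy.
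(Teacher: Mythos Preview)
Your proposal is correct and follows essentially the same approach as the paper: both deduce the inclusions from the limb transport $L_{f(U),f(x)}\subset f(L_{U,x})$ and the boundary conjugacy, and both obtain countability from Proposition~\ref{RY}(5) by observing that nontrivial limbs are parametrized by pairs $(n,c)$ with $c\in{\rm Crit}(f)$, each contributing only finitely many boundary points. One minor inaccuracy: the degree of $f^n|_{\partial U_{f,v}}$ is $\prod_{k=0}^{n-1}{\rm deg}(B^0_{\sigma^k(v)})$, not $\prod_{k=0}^{n-1}\delta(\sigma^k(v))$ (these differ when some $D^\partial_{\sigma^k(v)}$ is nonzero), but this does not affect the argument since you only need finite-to-one.
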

\begin{proof} The inclusion property is immediate.  By Proposition \ref{RY},  $z\notin \partial_0 U_{f,v}$ if and only if there is a $k\geq 0$ such that the limb 
	$L_{U_{f, \sigma^k(v)}, f^k(z)}$ is non-trivial and contains a critical point.  Since there are  finitely many critical points, the residual set $\partial U_{f,v}\setminus \partial_0 U_{f,v}$(hence
	$\partial\mathbb D\setminus  \partial_v^f \mathbb D$) is countable. 
	\end{proof}

\begin{pro} \label{convergence-repelling} Let $D=\big((B^0_w, D_w^{\partial})\big)_{w\in V}\in \partial_0^* {\rm Div}{(\mathbb D)}^S$, and
 $\{f_n\}_{n\geq 1}$ be a sequence of maps in $\mathcal H$ such that 
\begin{itemize}
\item  $f_n\rightarrow f\in I_{\Phi}(D)$, $\Phi^{-1}(f_n)\rightarrow D$, and

\item  $\phi_{f_n,w}$ converges locally and uniformly on $\mathbb D$ to  $\phi_{f,w}$, \ $\forall w\in V$.
\end{itemize} 

	Let  $u\in V$ so that  $v:=\sigma^s(u)\in V_{\rm p}$ for some $s\geq 0$.  Let   $\ell\geq 1$ be the $\sigma$-period of $v$. Let $\widehat{B}^0_{v}, Z_v, X_{u,s}$  be defined in Proposition \ref{extension-repelling}.
	 
Let $s_0\in \partial \mathbb D_u\setminus X_{u,s}$ be   $D$-pre-periodic,
so that $\phi_{f,u}(s_0)$ is $f$-pre-repelling. Suppose that  the external ray $R_f(\theta)$ lands at $\phi_{f,u}(s_0)$. 
 Then

\begin{itemize}
\item  $\lim_n\phi_{f_n,u}(r_{u,s_0}(\Phi^{-1}(f_n))= \phi_{f,u}(s_0)$, here $r_{u,s_0}$ is given by (\ref{rvb});

\item  for any $g\in \mathcal H$, the external ray $R_g(\theta)$  lands at $\phi_{g,u}(r_{u,s_0}(\Phi^{-1}(g)))$.
\end{itemize}
\end{pro}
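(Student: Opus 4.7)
The plan is to establish (1) first via continuity of $r_{u,s_0}$ combined with dynamical stability, and then deduce (2) by an open-closed argument on the connected set $\mathcal H$.

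For (1), Proposition \ref{extension-repelling} gives continuity of $r_{u,s_0}$ on $\mathrm{Div}(\mathbb D)^S \cup \mathcal U_\tau(D)$, so $a_n := r_{u,s_0}(\Phi^{-1}(f_n))$ converges to $r_{u,s_0}(D) = s_0$ in $\partial\mathbb D_u$. Each $a_n$ is a pre-repelling periodic point of the Blaschke model $\widehat B_{f_n,v}$ of the same combinatorial type as $s_0$, so by Lemma \ref{model-map} the point $p_n := \phi_{f_n,u}(a_n) \in \partial U_{f_n,u}$ is a pre-repelling periodic point of $f_n$ of the same combinatorial type as $q := \phi_{f,u}(s_0)$. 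Since $s_0 \notin X_{u,s}$, the point $q$ is a simple root of the appropriate periodicity equation for $f$, so the implicit function theorem yields a unique holomorphic persistence $z\colon \mathcal N \to \mathbb C$ on a $\mathcal P_d$-neighborhood $\mathcal N$ of $f$ with $z(f) = q$. To identify $p_n = z(f_n)$ for large $n$, I would first show $p_n \to q$ by interior approximation: for fixed $t \in (0,1)$ close to $1$, local uniform convergence in $\mathbb D$ gives $\phi_{f_n,u}(ta_n) \to \phi_{f,u}(ts_0)$; letting $t \to 1^-$ and invoking the Carath\'eodory boundary extension of $\phi_{f,u}$ yields $p_n \to q$. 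Once $p_n$ lies near $q$, uniqueness of $z(f_n)$ among pre-repelling periodic points of $f_n$ of the prescribed combinatorial type forces $p_n = z(f_n)$, and the claim follows.

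For (2), set
\[
\mathcal E := \bigl\{g \in \mathcal H : R_g(\theta) \text{ lands at } \phi_{g,u}(r_{u,s_0}(\Phi^{-1}(g)))\bigr\}.
\]
Since $\mathcal H$ is connected (a topological cell by Milnor), it suffices to show $\mathcal E$ is nonempty, open, and closed. For nonemptiness, Lemma \ref{stability-e-r} applied at $f$ shows the landing point of $R_{f_n}(\theta)$ tends to $q$. Combined with part (1), for large $n$ both candidate landing points of $R_{f_n}(\theta)$ are pre-repelling periodic points of $f_n$ of the same combinatorial type lying close to $q$; by uniqueness of the persistence $z(f_n)$, they coincide, putting $f_n \in \mathcal E$ eventually. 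For openness and closedness, both $g \mapsto (\text{landing point of } R_g(\theta))$ and $g \mapsto \phi_{g,u}(r_{u,s_0}(\Phi^{-1}(g)))$ are continuous on $\mathcal H$ with values in the discrete finite set of pre-repelling periodic points of $g$ of the prescribed combinatorial type; these move holomorphically without collision as $g$ varies in $\mathcal H$, so the locus of agreement is both open and closed.

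The main obstacle is the boundary-to-boundary convergence step in (1): the Carath\'eodory kernel convergence $\phi_{f_n,u} \to \phi_{f,u}$ provides no a priori control on boundary values, so a direct function-theoretic argument does not close the gap. My approach bypasses this by combining interior approximation (via the radial point $ta_n$ with $t \to 1^-$, exploiting local uniform convergence on compacts of $\mathbb D$) with dynamical stability (holomorphic persistence of the hyperbolic periodic point $q$ via the implicit function theorem). Once $p_n$ is shown to accumulate only at $q$, combinatorial rigidity of pre-repelling periodic points pins $p_n = z(f_n)$ exactly, and this identification is the crucial bridge used in both conclusions.
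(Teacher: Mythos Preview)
Your argument for part (2) is essentially the paper's: once (1) gives nonemptiness of $\mathcal E$, the open-closed argument via continuity and discreteness goes through. The problem is in (1), and it is exactly the obstacle you yourself flag.

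Your interior approximation step does not close the gap; it only restates the double-limit problem. For fixed $t<1$ you get $\phi_{f_n,u}(ta_n)\to\phi_{f,u}(ts_0)$, and separately $\phi_{f,u}(ts_0)\to q$ as $t\to1^-$. Concluding $p_n=\phi_{f_n,u}(a_n)\to q$ from these two facts requires uniform control of $|\phi_{f_n,u}(a_n)-\phi_{f_n,u}(ta_n)|$ in $n$ as $t\to1^-$, i.e.\ equicontinuity of the boundary extensions of $\phi_{f_n,u}$, which kernel convergence does not supply. Your subsequent appeal to the implicit function theorem is then circular: you need $p_n$ close to $q$ to invoke uniqueness of the persistent root $z(f_n)$ in a neighborhood of $q$, but $p_n$ close to $q$ is precisely what you are trying to prove. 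A priori $p_n$ could accumulate at some other root of the periodicity equation for $f$.

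The paper breaks this circularity by a path argument rather than a limit argument. Using K\"onigs linearization at the repelling point $s_0$ for $\widehat B^0_v$, one builds an invariant arc $\gamma_{v,s_0}(D)\subset\mathbb D$ landing at $s_0$, and a continuously varying family $\gamma_{v,s_0}(B)$ landing at $r_{v,s_0}(B)$ for $B$ near $D$. Choosing the seed point of the arc close enough to $\partial\mathbb D$ that its $\phi_{g,v}$-image lies in a linearization neighborhood of the persistent repelling point $\zeta(g)$, the conjugacy $\phi_{g,v}\circ\widehat B^p_{g,v}=g^{\ell p}\circ\phi_{g,v}$ forces $\phi_{g,v}(\gamma_{v,s_0}(D_g))$ to coincide with the $g^{\ell p}$-invariant arc in the linearization neighborhood of $\zeta(g)$. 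Hence $\phi_{g,v}(r_{v,s_0}(D_g))=\zeta(g)$ holds \emph{pointwise for each} $g\in\{f_n\}\cup\{f\}$, and (1) follows from continuity of $\zeta$. The key is that this identification is established for each map individually via dynamics, bypassing any need for boundary equicontinuity.
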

\begin{rmk} 
	The existence of $s_0$ in   Proposition \ref{convergence-repelling} is guaranteed by  the density of the  $\widehat{B}^0_{v}$-periodic  points   in $\partial \mathbb D\setminus \bigcup_{j\geq 0}
	(\widehat{B}^0_v)^{-j}(Z_v)$.
	
	 In  Proposition \ref{combinatorial-property0}, we shall show that for any  $D$-pre-periodic point $s_0\in \partial_u^f \mathbb D$ (defined by (\ref{fatou-boundary-0})),
the point $\phi_{f,u}(s_0)$ is $f$-pre-repelling.
\end{rmk}

\begin{proof}  We focus on the case $u=v$(that is, $s=0$), and $s_0$ is $\widehat{B}^0_{v}$-periodic. In this case,  $\phi_{f,u}(s_0)$ is $f$-repelling. The argument also works for general case, by taking preimages.

	Let $p>0$ be the $\widehat{B}^0_{v}$-period of $s_0$. By Koenig's linearization theorem, there exist a neighborhood  $V(s_0)\Subset \mathbb C \setminus \bigcup_{0\leq j<p} (\widehat{B}^0_v)^{-j}(Z_v)$ of $s_0$,  a univalent map $\kappa: (V(s_0), s_0)\rightarrow (\mathbb C, 0)$, such that 
$\kappa \circ (\widehat{B}^0_{v})^{p}=[(\widehat{B}^0_{v})^{p}]'(s_0)\cdot\kappa$ in $V(s_0)$.

 Let $a\in V(s_0) \cap \mathbb D$ and  $\alpha_v(D)$ be an  arc connecting $a$ and  $[(\widehat{B}^0_{v})^{p}]_{V(s_0)}^{-1}(a)$. 
 The arc  $\alpha_v(D)$ generates a path starting from $a$ and converging to $s_0$:
  $$\gamma_{v, s_0}(D)=\bigcup_{j\geq 0} 
[(\widehat{B}^0_{v})^{p}]_{V(s_0)}^{-j}(\alpha_v(D)).$$

By shrinking $V(s_0)$ if necessary, we may further require that 

\begin{itemize}

\item $\phi_{f,v}(V(s_0)\cap \overline{\mathbb D})$ is contained in a linearization neighborhood $W(f)$ of the $f$-repelling periodic point $\phi_{f,v}(s_0)$;

\item there is a number $\tau>0$  such that for any ${B}=(B_w)_{w\in V}\in \N_{\tau}(D)\subset{\rm Div}{(\mathbb D)}^S$,  the set $V(s_0)$ is   in a linearization neighborhood of the $\widehat{B}_v$-periodic  point $r_{v,s_0}(B)$ (see Proposition  \ref{extension-repelling}).
\end{itemize}

For any ${B}=(B_w)_{w\in V}\in \N_{\tau}(D)$, 
let $\alpha_v(B)$ be an arc connecting $a$ and  $(\widehat{B}_v^{p}|_{V(s_0)})^{-1}(a)$, satisfying that
\begin{itemize}

\item  $\gamma_{v,s_0}(B)=\bigcup_{j\geq 0} (\widehat{B}_v^{p}|_{V(s_0)})^{-j}(\alpha_v(B))$ is a path converging to $r_{v,s_0}(B)$;

\item $\gamma_{v,s_0}(B)$ is continuous(in Hausdorff topology) in  $B\in \N_{\tau}(D)\cup \{D\}$.
\end{itemize}

Now we turn to the dynamical plane of $f$: the set $\phi_{f,v}(\gamma_{v,s_0}(D))$ is a path converging to the $f$-repelling point $\phi_{f,v}(s_0)$. By the Implicit Function Theorem, there exist a neighborhood 
$\mathcal U$ of $f$, and a continuous map $\zeta: \mathcal U\rightarrow \mathbb C$, such that $\zeta(g)=g^{\ell p}(\zeta(g))$ is  $g$-repelling, for $g\in \mathcal U$, and $\zeta(f)=\phi_{f,v}(s_0)$.


\begin{figure}[h]
	\begin{center}
		\includegraphics[height=5.3cm]{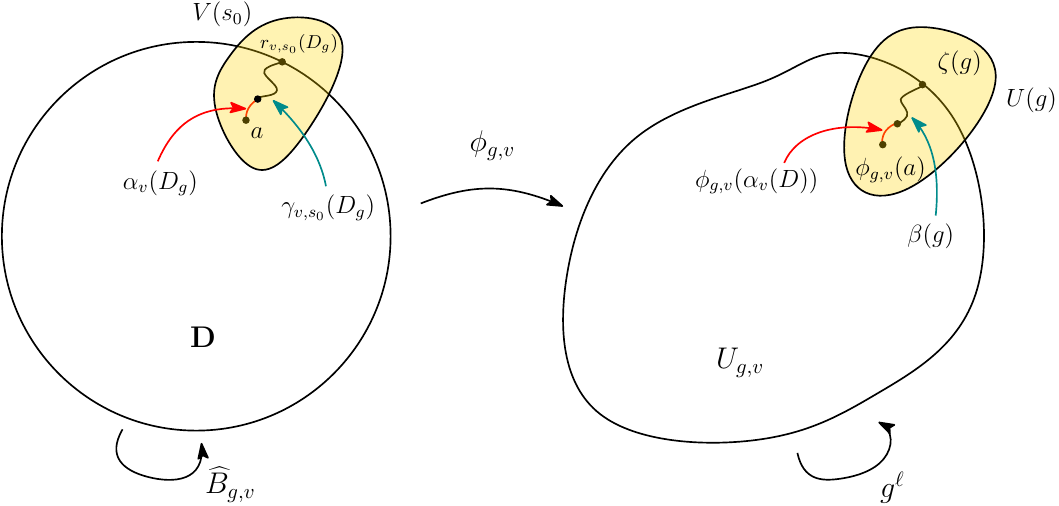}
	\end{center}
	\caption{The model plane and the dynamical plane}
\end{figure}

The assumption $f_n\rightarrow f$ and $\Phi^{-1}(f_n)\rightarrow D$ implies that $f_n\in \Phi(\N_{\tau}(D))\cap \mathcal U$ for large $n$. Without loss of generality,  we assume $\{f_n\}_{n\geq 1}\subset \Phi(\N_{\tau}(D))\cap \mathcal U$.
Let $\mathcal F=\{f_n;n\geq 1\}\cup \{f\}$.
For any $\rho\in(0, 1)$,  the map
$$  
\begin{cases}  \mathcal F \times \overline{\mathbb{D}(0, \rho)}\rightarrow \mathbb C, \\
 (g,z) \mapsto \phi_{g,v}(z),
\end{cases}$$
is continuous.
We  further assume that $|a|$ is chosen sufficiently close to $1$ so that the arc $\phi_{g,v}(\alpha_v(D_g))$ (here $D_f=D$ and $D_{f_n}=\Phi^{-1}(f_n)$) is contained in a linearization neighborhood $W(g)$ of the $g$-repelling point $\zeta(g)$.
Observe that $\phi_{g,v}(\widehat{B}_{g,v}^{p}|_{V(s_0)}^{-1}(a))=g^{\ell p}|_{W(g)}^{-1}(\phi_{g,v}(a))$, hence $\phi_{g,v}(\alpha_v(D_g))$  is a path connecting $\phi_{g,v}(a)$ and $g^{\ell p}|_{W(g)}^{-1}(\phi_{g,v}(a))$. 
It follows that 
$$\beta(g)=\bigcup_{j\geq 0} g^{\ell p}|_{W(g)}^{-j}(\phi_{g,v}(\alpha_v(D_g)))$$ is a path connecting $\phi_{g,v}(a)$ and $\zeta(g)$. 
The relation $\phi_{g,v}\circ \widehat{B}_{g,v}^{p}=g^{\ell p}\circ \phi_{g,v}$
implies that $\beta(g)=\phi_{g,v}(\gamma_{v,s_0}(D_g))$, and $\beta(g)$ converges to   $\phi_{g,v}(r_{v, s_0}(D_g))$. Hence $\phi_{g,v}(r_{v,s_0}(D_g))=\zeta(g)$ and $\lim_{n\rightarrow \infty}\phi_{f_n,v}(r_{v,s_0}(\Phi^{-1}(f_n))))= \phi_{f,v}(s_0)$.



By  the stability of external rays (see Lemma \ref{stability-e-r}),  the landing point $x_g(\theta)$ of   $R_{g}(\theta)$ satisfies $x_g(\theta)=\zeta(g)$ for $g\in \mathcal F$. 
Since the two maps 
$$g\mapsto x_g(\theta)   \text{ and  } g\mapsto \phi_{g,v}(r_{v,s_0}(\Phi^{-1}(g)))$$
 are continuous in  $g\in\mathcal H$, taking values in the finite set $\{w; g^{\ell p}(w)=w\}$, identical when $g=f_n$, we have $x_g(\theta)=\phi_{g,v}(r_{v,s_0}(\Phi^{-1}(g)))$ for all $g\in \mathcal H$. 
\end{proof}


\begin{defi} [The induced divisor $ D_{v}^\partial(g)$] \label{an induced divisor}   
For  $g\in \mathcal H$  and $v\in V$, let
 \begin{equation}
	\label{in-divisor}
 \displaystyle D_{v}^\partial(g)=\sum_{s\in \partial\mathbb D}\Big(\sum_{c\in C_{s,v}}({\rm deg}(g,c)-1)\Big)\cdot s \in {\rm Div}_{d-\delta(v)}(\partial \mathbb D),
\end{equation}
where $d={\rm deg}(g)$ and $C_{s,v}={\rm Crit}(g)\cap L_{U_{g,v}, \phi_{g,v}(s)}$.

The divisor $D_{v}^\partial(\cdot )$ can also be defined for $f\in I_{\Phi}(D)$ with $D\in \partial_0^* {\rm Div}{(\mathbb D)}^S$ in following way:  let 
$\{f_n\}_{n\geq 1}\subset \mathcal H$ be a sequence with
\begin{itemize}
	\item  $f_n\rightarrow f$, $\Phi^{-1}(f_n)\rightarrow D$, and
	
	\item  $\phi_{f_n,v}$ converges locally and uniformly on $\mathbb D$ to  $\phi_{f,v}$, \ $\forall v\in V$.
\end{itemize} 
Then $D_{v}^\partial(f)$ can be defined using (\ref{in-divisor}), and $D_{v}^\partial(f) \in {\rm Div}_{d-{\rm deg}(f|_{U_{f,v}})}(\partial \mathbb D)$, where $U_{f,v}$ is the Fatou component of $f$ containing $v(f)$.
\end{defi}

\begin{pro} \label{divisor-correspondence} Let
$D=\big((B^0_u, D_u^{\partial})\big)_{u\in V}\in \partial_0^* {\rm Div}{(\mathbb D)}^S$ and  $f\in I_{\Phi}(D)$.
Let  $\{f_n\}_{n\geq 1}$ be a sequence of maps in $\mathcal H$ such that 
\begin{itemize}
\item  $f_n\rightarrow f$, $\Phi^{-1}(f_n)\rightarrow D$,  and

\item  $\phi_{f_n,u}$ converges locally and uniformly on $\mathbb D$ to  $\phi_{f,u}$,  \ $\forall u\in V$.
\end{itemize} 
 
Then for any $v\in V$,   the divisor sequence $\{D_{v}^{\partial}(f_n)\}_{n\geq 1}$ converges to a limit   $E_{v}^\partial(f)\in {\rm Div}_{d-\delta(v)}(\partial \mathbb D)$, which satisfies the equality
$$E_{v}^\partial(f)+D_v^\partial=D_{v}^\partial(f).$$
\end{pro}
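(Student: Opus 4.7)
The plan is to recognize, for each $v \in V$, a single divisor of total mass $d-1$ on $\overline{\mathbb D}$ that encodes all critical data of the map after transport through the Riemann map and the limb projection. For $f_n \in \mathcal H$ this divisor is
$$A_n := R_{B_{f_n,v}} + D_v^\partial(f_n) \in {\rm Div}_{d-1}(\overline{\mathbb D}),$$
and for $f \in I_\Phi(D)$ it is
$$A := \Psi_{d_{v,1}}(D_v^0) + D_v^\partial(f) \in {\rm Div}_{d-1}(\overline{\mathbb D}).$$
Once $A_n \to A$ is established, separating the $\mathbb D$-part of the limit (which by Theorem \ref{zakeri-exten} equals $\Psi_{d_{v,1}}(D_v^0) + D_v^\partial$) from the $\partial\mathbb D$-part simultaneously yields convergence of $D_v^\partial(f_n)$ and the identity $E_v^\partial(f) + D_v^\partial = D_v^\partial(f)$.

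Concretely, the steps I would carry out are: (i) by compactness of ${\rm Div}_{d-\delta(v)}(\partial\mathbb D)$, pass to a subsequence so that $D_v^\partial(f_n) \to E_v^\partial(f)$; (ii) apply Theorem \ref{zakeri-exten} to $\Phi^{-1}(f_n)_v \to D_v^0 + D_v^\partial$ to obtain the convergence of the ramification divisors $R_{B_{f_n,v}}$; (iii) upgrade the locally uniform convergence $\phi_{f_n,v} \to \phi_{f,v}$ on $\mathbb D$ to uniform convergence on $\overline{\mathbb D}$, using that $\partial U_{f,v}$ is a Jordan curve (Proposition \ref{RY}), the kernel convergence $U_{f_n,v} \to U_{f,v}$ (Lemma \ref{carathodory-convergence}), and Carath\'eodory's theorem for Jordan targets; (iv) combine the continuity ${\rm Crit}(f_n) \to {\rm Crit}(f)$ in ${\rm Div}_{d-1}(\mathbb C)$ with a local analysis at each $f$-critical point to conclude $A_n \to A$; (v) observe that the identity forces $E_v^\partial(f) = D_v^\partial(f) - D_v^\partial$, so every subsequential limit agrees and the full sequence converges.

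The main obstacle is step (iv). For critical points of $f_n$ inside $U_{f_n,v}$, the uniform convergence of $\phi_{f_n,v}$ on $\overline{\mathbb D}$ together with step (ii) already matches them either to critical points of $f$ in $U_{f,v}$ (the interior mass $\Psi_{d_{v,1}}(D_v^0)$) or to critical points of $f$ on $\partial U_{f,v}$ at the points $\phi_{f,v}({\rm supp}(D_v^\partial))$ (the escaping mass $D_v^\partial$). The delicate piece concerns critical points of $f_n$ lying strictly in limbs of $\partial U_{f_n,v}$: if $c_n \in L_{U_{f_n,v},x_n}$ and $c_n \to c \in {\rm Crit}(f)$, one must show that the anchor $x_n$ converges to the anchor in $\partial U_{f,v}$ of the limb of $f$ containing $c$, or to $c$ itself when $c \in \partial U_{f,v}$. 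To establish this I would use Proposition \ref{RY}(4), enclosing every nontrivial limb in a sector bounded by two external rays landing at its anchor, and the stability of (pre-)periodic external rays (Lemma \ref{stability-e-r}) to track these sectors under perturbation; density of (pre-)periodic anchors in the set of anchors of critical-carrying limbs handles the remaining cases. Once this local analysis is in place, the divisor-level convergence $A_n \to A$ follows and the required identity drops out.
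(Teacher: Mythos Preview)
Your step (iii) has a genuine gap. Kernel convergence $(U_{f_n,v},v(f_n))\to(U_{f,v},v(f))$ together with each target being a Jordan domain does \emph{not} imply uniform convergence of $\phi_{f_n,v}$ on $\overline{\mathbb D}$; for that you would need Hausdorff convergence $\partial U_{f_n,v}\to\partial U_{f,v}$ (equivalently, equicontinuity of the boundary extensions), and the paper explicitly notes after Lemma~\ref{carathodory-convergence} that this can fail. The failure is exactly the generic picture here: when a critical point of $f$ sits on $\partial U_{f,v}$, the Fatou component $U_{f_n,v}$ pinches off as $n\to\infty$, and boundary points of $U_{f_n,v}$ on the far side of the pinch are carried by $\psi_{f_n,v}$ to an arc of $\partial\mathbb D$ that collapses to a single point, so $\phi_{f_n,v}|_{\partial\mathbb D}$ cannot converge uniformly to $\phi_{f,v}|_{\partial\mathbb D}$. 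Without step (iii), your passage from ${\rm Crit}(f_n)\to{\rm Crit}(f)$ in $\mathbb C$ to convergence of the model-side divisors $A_n\to A$ loses its bridge.

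Your fallback in step (iv) also has a problem: for a general $D\in\partial_0^*{\rm Div}(\mathbb D)^S$ the anchor $\phi_{f,v}(q)$ of a nontrivial limb can be \emph{parabolic} (this is exactly Proposition~\ref{para-dym}(2), proved later), so Lemma~\ref{stability-e-r} does not apply at $q$ itself, and the ``density'' remark is unclear since there are only finitely many anchors of critical-carrying limbs. The paper's proof is designed precisely to avoid both obstacles: for each $q\in{\rm supp}(D_v^\partial(f))$ it picks nearby auxiliary $\widehat B_v^0$-periodic points $q^\pm\in\partial_v^f\mathbb D$ whose $\phi_{f,v}$-images are \emph{repelling}, builds internal arcs landing at them (Proposition~\ref{convergence-repelling}) and joins them to external rays to form a graph $\Gamma_v(g)$ that varies continuously in Hausdorff topology. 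This graph uses only interior convergence of $\phi_{f_n,v}$ on compacta plus convergence at the specific pre-repelling boundary points $q^\pm$, never uniform boundary convergence. The sectors $X_{v,q}(g)$ cut out by $\Gamma_v(g)$ then trap the correct number of critical points of $f_n$ for large $n$, and matching this count against the behavior of ${\rm Crit}(B_{f_n,v})$ via Theorem~\ref{zakeri-exten} gives $D_v^\partial(f_n)+D_v^\partial\in\N^0_{4\epsilon}(D_v^\partial(f))$.
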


\begin{rmk} \label{divisor-relation}
 Proposition \ref{divisor-correspondence} implies that $ D_{v}^\partial(f) \geq D_{v}^{\partial}$, hence ${\rm supp}(D_{v}^{\partial})\subset {\rm supp}(D_{v}^\partial(f))$.  
 It is possible that $D_{v}^{\partial}$ is the zero divisor for some $v\in V$.
\end{rmk}


\begin{proof} We focus on the case $v\in V_{\rm p}$. The argument works for $v\in V_{\rm np}$, by taking preimages.
	
Fix  any small   $\epsilon>0$. It suffices to show that for large $n$, 
 $$D_{v}^{\partial}(f_n)+D_v^{\partial}\in \N^0_{4\epsilon}(D_{v}^\partial(f)).$$ 

Assume   $\epsilon$  is small so that the closed disks $\{\overline{\mathbb{D}(q,\epsilon)}; q\in 
{\rm supp}(D_{v}^\partial(f))\}$ are disjoint.
For any $q\in {\rm supp}(D_{v}^\partial(f))$, there are two $\widehat{B}^0_{v}$-repelling periodic points  
$q^+, q^-\in (\mathbb{D}(q,\epsilon)\setminus \overline{\mathbb{D}(q, \epsilon/2)})\cap \partial \mathbb D$, satisfying that
 
\begin{itemize}
\item  $q^+, q^-\in \partial_v^f \mathbb D\setminus \bigcup_{j\geq 0} (\widehat{B}_v^0)^{-j}(Z_v)$ \footnote{For the moment, we have not proven that $\partial_v^f \mathbb D\bigcap \bigcup_{j\geq 0}   (\widehat{B}_v^0)^{-j}(Z_v)=\emptyset$. After we have proven Proposition \ref{divisor-correspondence}, we will see that  $\partial_v^f \mathbb D\bigcap \bigcup_{j\geq 0}  (\widehat{B}_v^0)^{-j}(Z_v)=\emptyset$}, where $Z_v$  is given by Proposition \ref{extension-repelling};

\item $q^+, q, q^-$ are in counter clockwise order;

\item  $\phi_{f,v}(q^+), \phi_{f,v}(q^-)$ are two repelling periodic points of $f$. 
\end{itemize} 

For any $s_0\in \{q^{\pm}; q\in {\rm supp}(D_{v}^\partial(f))\}$, let $p=p(s_0)\geq 1$ be the $\widehat{B}^0_{v}$-period of $s_0$.
With the same notations as those in the proof of Proposition \ref{convergence-repelling}, there exist a  neighborhood 
$V(s_0)$ of $s_0$,  
 an  arc
  $\gamma_{v, s_0}(D)\subset V(s_0)$ connecting $a(s_0)\in V(s_0) \cap \mathbb D$ with  $s_0$,   with $(\widehat{B}_{v}^{0})^{p}|_{V(s_0)}^{-1}(\gamma_{v, s_0}(D))\subset \gamma_{v, s_0}(D)$.

We assume $|a(q^+)|=|a(q^-)|:=r_0\in (1-\epsilon, 1)$ for all $q\in  {\rm supp}(D_{v}^\partial(f))$,
here $r_0$ is independent of $q$, and is chosen so that $B_v^0$ has exactly ${\rm deg}(B_v^0)-1$ critical points in $\mathbb D(0, r_0)$.
 By the proof of Proposition  \ref{convergence-repelling},  there is $\tau>0$ such that for   $s_0\in \{q^{\pm}; q\in {\rm supp}(D_v^{\partial}(f))\}$ and  ${B}=(B_u)_{u\in V}\in \N_{\tau}(D)$, 

\begin{itemize}
 
\item  $V(s_0)$ is    in a linearization neighborhood of the $\widehat{B}_v$-periodic  point $r_{v,s_0}(B)$;

\item $\gamma_{v, s_0}(B)$ is a path
starting from $a(s_0)$ and converging to $r_{v,s_0}(B)$, and moves continuously with respect to 
${B}\in \N_{\tau}(D)\cup\{D\}$.

\end{itemize}
 Let $C_{v,q}\subset \{|z|=r_0\}$ be the shorter  arc connecting $a(q^+)$ and $a(q^-)$,   and let $C_{v,q}(B)\subset \partial \mathbb D$ be  the shorter  arc connecting $r_{v, q^+}(B)$ and $r_{v, q^-}(B)$. The  curve 
$C_{v,q}(B)\cup C_{v,q}\cup 
\gamma_{v, q^-}(B)\cup  \gamma_{v,q^+}(B)$  bounds a disk $\Omega_{v,q}(B)\subset \mathbb D$, for ${B}\in \N_{\tau}(D)\cup\{D\}$. By a careful choice of $r_0$ and $\tau$, we assume
$$\max_{q\in {\rm supp}(D_{v}^\partial(f))} {\rm diam } (\Omega_{v,q}(B))\leq 3\epsilon, \ \forall {B}\in \N_{\tau}(D)\cup\{D\}.$$


\begin{figure}[h]
	\begin{center}
		\includegraphics[height=5cm]{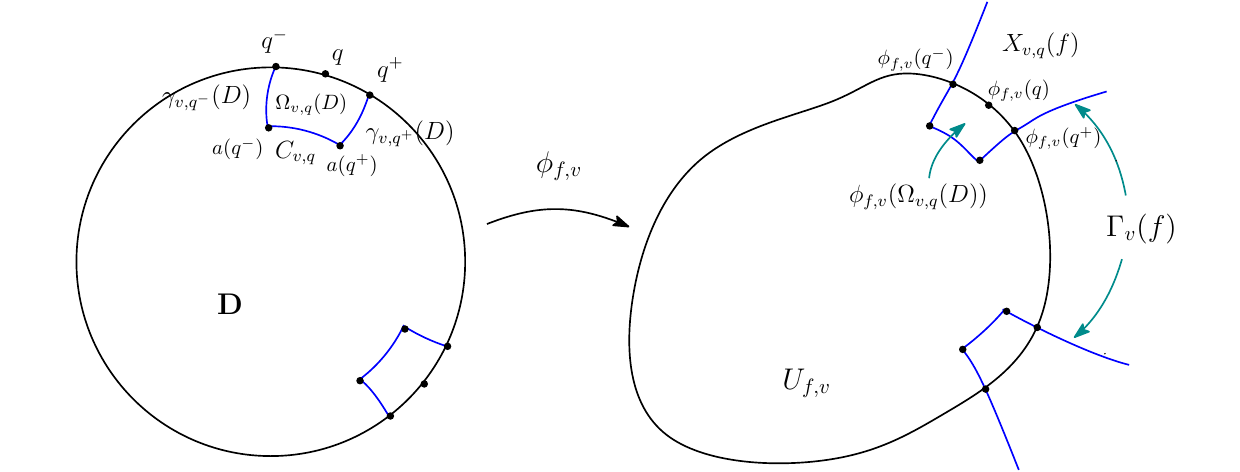}
	\end{center}
	\caption{Graphs in model plane and dynamical plane, for $f$}
\end{figure}

In the dynamical plane of $f$, we may assume $\phi_{f,v}(V(s_0)\cap \mathbb D)$ is contained in a
linearization neighborhood   of $\phi_{f,v}(s_0)$.
By the assumption $s_0\in \{q^{\pm}; q\in {\rm supp}(D_{v}^{\partial}(f))\}\subset \partial_v^f \mathbb D$
 (that is $L_{U_{f,v}, \phi_{f,v}(s_0)}=\{\phi_{f,v}(s_0)\}$),   there is a unique external ray $R_f(\theta_{v,s_0})$ landing at $\phi_{f,v}(s_0)$.

\begin{figure}[h]
	\begin{center}
		\includegraphics[height=5cm]{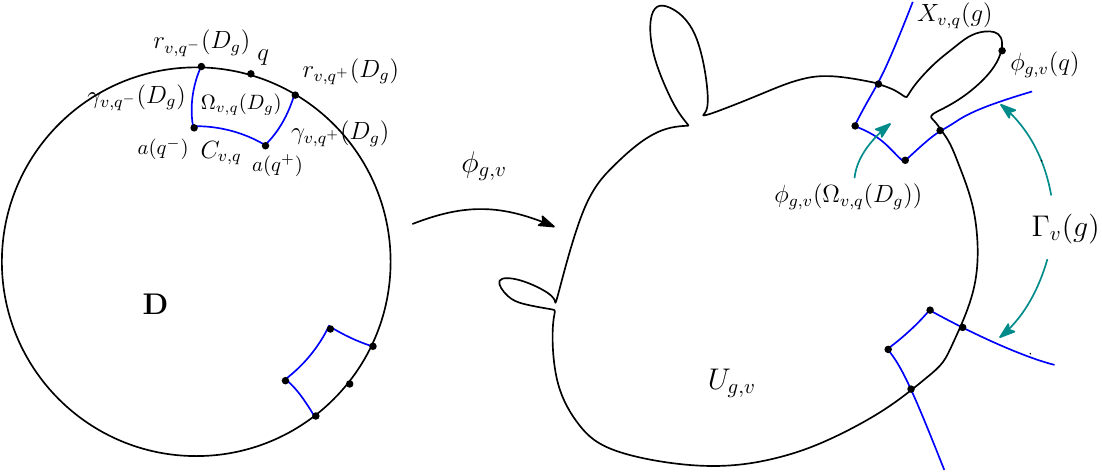}
	\end{center}
	\caption{Graphs in model plane and   dynamical plane, for $g$. Note that $\Omega_{v,q}(D_g)$ is  continuous in $g\in \mathcal F$,  but  $\phi_{g,v}(\Omega_{v,q}(D_g))$    is not continuous in $g\in \mathcal F$ in general.}
\end{figure}

By Proposition \ref{convergence-repelling} and  Lemma \ref{stability-e-r},
 there exist a neighborhood $\mathcal U$ of $f$,   an integer $n_0\geq 1$ such that $  \{f_n; n\geq n_0\} \subset \mathcal U\cap \Phi(\N_{\tau}(D))$, and  
  for  
$g\in  \mathcal F:= \{f_n; n\geq n_0\} \cup\{f\}$, 
\begin{itemize}
\item  the external ray  $R_g(\theta_{v,s_0})$ lands at the $g$-repelling point  $\phi_{g,v}(r_{v,s_0}(D_g))$  (here $D_f=D$ and $D_{f_n}=\Phi^{-1}(f_n)$);

\item the  arc $\phi_{g,v}(\gamma_{v, s_0}(D_g))$ is contained in a  linearization neighborhood of $\phi_{g,v}(r_{v,s_0}(D_g))$;
 
\item the  points  $r_{v,q^+}(D_g),   r_{v,q^-}(D_g)\in \mathbb{D}(q,\epsilon)\setminus\{q\}$;

\item the  graph 
$$\Gamma_v(g)=\bigcup_{q\in {\rm supp}(D_v^{\partial}(f))}\Big[  \phi_{g,v}\big(C_{v,q}\big)\bigcup \bigcup_{s_0=q^{\pm}}\Big(\overline{R_{g}(\theta_{v,s_0})}\cup \phi_{g,v}\big(\gamma_{v,s_0}(D_g)\big)\Big)\Big]$$
 moves continuously in Hausdorff topology with respect to 
$g\in \mathcal F $. 
\end{itemize}

 It follows that
$$\Gamma_v(f_n)\rightarrow \Gamma_v(f), \   \phi_{f_n,v}(r_{v,s_0}(\Phi^{-1}(f_n)))\rightarrow 
\phi_{f,v}(s_0), \ \ \text{ as } n\rightarrow +\infty.$$
For each $q\in {\rm supp}(D_{v}^\partial(f))$ and $g\in  \mathcal F$, 
let $X_{v,q}(g)$ be the component of  $\mathbb C\setminus \Gamma_v(g)$ containing $\phi_{g,v}(q)$.
Then $\phi_{g,v}(\Omega_{v,q}(D_g))= X_{v,q}(g) \cap U_{g,v}.$
Write   
$$D_v^\partial=\sum_{y\in {\rm supp}(D_v^\partial)} \nu(y)\cdot y, \
D_{v}^\partial(f)=\sum_{q\in {\rm supp}(D_{v}^\partial(f))} \mu(q)\cdot q.$$

Note that $\Gamma_v(f)$ avoids the critical set of $f$,  and  $X_{v,q}(f)$ contains exactly $\mu(q)\geq 1$ critical points of $f$. By the continuity of the graphs, for large $n$, 
\begin{itemize}
	\item   (A1) the graph $\Gamma_v(f_n)$ contains no critical points of $f_n$, hence  $X_{v,q}(f_n)$ contains exactly $\mu(q)$ critical points of $f_n$;
	
	\item  (A2) $f_n$ has exactly $d-1- {\rm deg}(D_{v}^\partial(f))={\rm deg}(B_v^0)-1$ critical points in 
	$$\mathbb C\setminus \bigsqcup_{q\in  {\rm supp}(D_{v}^\partial(f))} \overline{X_{v,q}(f_n)}.$$ 
	\end{itemize}
	



Note that $\Omega_{v,q}(B)$ moves continuously in ${B}\in \N_{\tau}(D)\cup\{D\}$.  By shrinking $\tau$,  there is an $\epsilon_*\in (0, \epsilon]$ so that the closed Jordan disks 
$\big\{\overline{ \Omega_{v,q}(B)}\big\}_{q\in  {\rm supp}(D_{v}^\partial(f))}$ are  $\epsilon_*$-apart, for all  ${B}\in \N_{\tau}(D)\cup\{D\}$.

By the assumption $\Phi^{-1}(f_n)=(B_{f_n, u})_{u\in V}\rightarrow D$ and      the behavior of the critical points
	of Blaschke products (see Theorem \ref{zakeri-exten}), when $n$ is large, 
\begin{itemize}
	\item (A3)    ${B}_{f_n,v}$ has exactly  ${\rm deg}(B_v^0)-1$ critical points in
	$\mathbb D(0, r_0)$;
	\item (A4)   ${B}_{f_n,v}$ has exactly $\nu(y)$ critical points  in $\mathbb{D}(y, \epsilon_*/2)\cap \mathbb D$, for   $y\in {\rm supp}(D_v^\partial)$. 
\end{itemize}

By (A1)(A2) and (A3),  $f_n$ has exactly ${\rm deg}(B_v^0)-1$ critical points in 
$$U_{f_n ,v}\cap (\mathbb C\setminus \bigsqcup_{q\in  {\rm supp}(D_{v}^\partial(f))}\overline{X_{v,q}(f_n))},$$ and the critical points of   ${B}_{f_n,v}$  in
$\mathbb D\setminus \mathbb D(0, r_0)$ are actually in $\bigcup_{q\in {\rm supp}(D_{v}^\partial(f))}\Omega_{v,q}(\Phi^{-1}(f_n))$. 
By (A4) and the choice of $\epsilon_*$, for each $y\in {\rm supp}(D_v^\partial)$,  there is a unique $q(y)\in {\rm supp}(D_{v}^\partial(f))$ such that $\mathbb{D}(y, \epsilon_*/2)\cap  \Omega_{v,q(y)}(\Phi^{-1}(f_n))\neq \emptyset$ for all large $n$.  So  the $\nu(y)$ critical points of  ${B}_{f_n,v}$  in  $\mathbb{D}(y, \epsilon_*/2)$ are actually in 
$\Omega_{v,q(y)}(\Phi^{-1}(f_n)	)$.

\begin{figure}[h]
	\begin{center}
		\includegraphics[height=4.8cm]{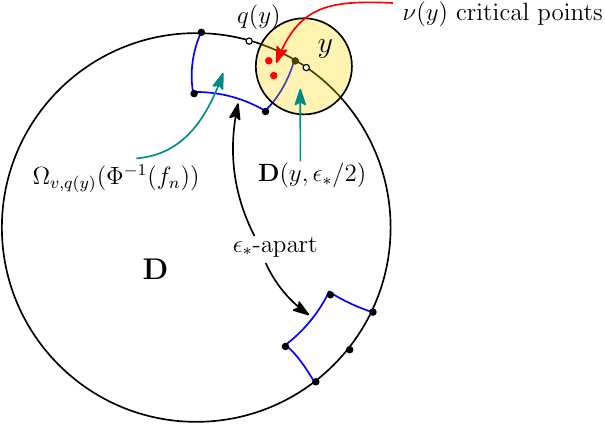}
	\end{center}
\caption{Some near degenerate critical points  in model plane.}
\end{figure}

 This implies  that $D_{v}^{\partial}(f_n)+D_v^{\partial}\in \N^0_{\epsilon_n}(D_{v}^\partial(f))$ for large $n$, where 
$$\epsilon_n={\rm diam }(\mathbb{D}(y, \epsilon_*/2))+\max_{q\in {\rm supp}(D_{v}^\partial(f))} {\rm diam } (\Omega_{v,q}(\Phi^{-1}(f_n)))\leq 4\epsilon.$$ 
 
 Since $\epsilon>0$ is arbitrary, we have 
$\lim_{n\rightarrow +\infty}D_{v}^{\partial}(f_n)+D_{v}^\partial=D_{v}^\partial(f).$
\end{proof}

\section{Combinatorial properties for maps in $I_{\Phi}(D)$} \label{comb-pro-imp}

In this section, we shall study some combinatorial properties of the maps in   $I_{\Phi}(D)$, with $D\in \partial_0^* {\rm Div}{(\mathbb D)}^S$.     These properties enhance our comprehension of  the regular part 
  $\partial_{\rm reg} \mathcal H$ on the boundary of   $\mathcal H$.

Let
$$D=\big((B^0_v, D_v^{\partial})\big)_{v\in V}\in \partial_0^* {\rm Div}{(\mathbb D)}^S  \text{ and } f\in I_{\Phi}(D).$$

We shall establish the following two properties step by step.

\begin{itemize}
\item  (P1).  For any sequence  $\{f_n\}_{n\geq 1}\subset\mathcal H$ satisfying that  $f_n\rightarrow f$ and $\Phi^{-1}(f_n)\rightarrow D$, and any $v\in V$, the  conformal maps 
 $\phi_{f_n,v}: \mathbb D\rightarrow U_{f_n, v}$ converge locally and uniformly on $\mathbb D$ to  a limit map, denoted by $\phi_{f,v}$. This implies that $\{\phi_{f,v}\}_{v\in V}$ are uniquely determined by the pair $(f,D)$ \footnote{A priori, it might happen that  $f\in I_{\Phi}(D)$ is also contained in another impression $I_{\Phi}(D')$ (in this case, by Fact \ref{fact-def}, there is a  sequence $\{g_n\}_{n\geq 1}\subset\mathcal H$ satisfying that  $g_n\rightarrow f$, $\Phi^{-1}(g_n)\rightarrow D'$,  and the conformal maps 
 	$\phi_{g_n,v}: \mathbb D\rightarrow U_{g_n, v}$   have a limit  $\phi_{f,v}^{D'}$. That's
 	the reason  why  the maps  $\{\phi_{f,v}\}_{v\in V}$ are   determined by the pair $(f,D)$ rather than a single map $f$).  In Proposition \ref{map-imp}, we shall show that $f\in I_{\Phi}(D)$ is contained in only one impression, under certain conditions on $D$. }, see Proposition \ref{combinatorial-property0}.

\item (P2.a).    For any $v\in V$, the set $\partial_v^{f} \mathbb D$ (see (\ref{fatou-boundary-0})) and the divisor     
$D_{v}^\partial(f)$ (see (\ref{in-divisor})) depend on $D$, and are independent of $f\in I_{\Phi}(D)$, see Proposition \ref{combinatorial-property-2}. Because of this, we denote $\partial_v^{f} \mathbb D$  by $\partial_v^{D} \mathbb D$,   and  denote $D_v^\partial (f)$ by $D_v^\partial (\mathcal H)$ (to emphasize the dependence of $D_v^\partial (f)$
on $\mathcal H$).

\item (P2.b).  For any $v\in V$ and any $s\in \partial \mathbb D$, the angles $\theta_{f,v}^+(s)$ and $\theta_{f,v}^-(s)$  for which the external rays $R_{f}(\theta^+_{f,v}(s))$ and $R_{f}(\theta^-_{f,v}(s))$ separate the limb $L_{U_{f,v}, \phi_{f,v}(s)}$ and the Fatou component $U_{f, v}$,  depend on $D$ and are independent  of $f\in I_{\Phi}(D)$ (see Proposition \ref{combinatorial-property-2}).
Because of this, we denote  $\theta_{f,v}^{\pm}(s)$ by $\theta_{D,v}^{\pm}(s)$.
 

\end{itemize}

\subsection{Combinatorial property (P1)}

Let $g$ be a polynomial with connected Julia set $J(g)$, and let $q\in J(g)$ be a  (pre-)repelling or  (pre-)parabolic point of $g$.
 Let $\mu_g(q)$ be the number of external rays landing at $q$.
We define an integer $M_g\geq 0$ as follows: if  $g$ has no parabolic cycles, we set $M_g=0$; if $g$ has parabolic cycles,  let  $M_g$ be the maximum of all the periods of the angles for which the external rays landing at parabolic points.

\begin{lem} \label{rays-number} Let $f$ be a polynomial with connected Julia set. Suppose that
\begin{itemize}
\item $q\in J(f)$ is a pre-repelling point of $f$, whose orbit avoids critical points, and meets a repelling cycle with period $l>M_f$;

\item $\mathcal U$ is a neighborhood of $f$ so that
 for all $g\in \mathcal U$, $q(g)$ is a pre-repelling point of $g$, moving continuously with respect to $g\in \mathcal U$, and $q(f)=q$.
\end{itemize}
Then there is a neighborhood $\mathcal N\subset \mathcal U$ of $f$, such that
$$\mu_g(q(g))=\mu_f(q), \ \forall g\in \mathcal N\cap \mathcal C_d.$$
\end{lem}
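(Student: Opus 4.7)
The plan is to prove the two inequalities $\mu_g(q(g)) \geq \mu_f(q)$ and $\mu_g(q(g)) \leq \mu_f(q)$ separately, for $g$ in a small neighborhood $\mathcal{N} \subset \mathcal{U}$ intersected with $\mathcal{C}_d$.

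The lower bound is a direct consequence of Lemma \ref{stability-e-r}(1). Let $\theta_1, \dots, \theta_m$ be the angles of the $m := \mu_f(q)$ external rays of $f$ landing at $q$. Applying Lemma \ref{stability-e-r}(1) to each $\theta_i$ yields a neighborhood $\mathcal{N}_i \subset \mathcal{U}$ on which $(g,r) \mapsto \phi_g^{-1}(re^{2\pi i\theta_i})$ is continuous on $(\mathcal{N}_i \cap \mathcal{C}_d) \times [1,\infty)$. Evaluating at $r=1$, the landing point of $R_g(\theta_i)$ depends continuously on $g \in \mathcal{N}_i \cap \mathcal{C}_d$ and equals $q$ at $g = f$. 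Combined with the continuous motion $g \mapsto q(g)$ and the fact that the pre-repelling points of $g$ near $q$ of a given period/pre-period form a discrete set (by the implicit function theorem), shrinking $\mathcal{N}_0 := \bigcap_i \mathcal{N}_i$ if necessary forces $R_g(\theta_i)$ to land exactly at $q(g)$ for $g \in \mathcal{N}_0 \cap \mathcal{C}_d$. Hence $\mu_g(q(g)) \geq m$.

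For the upper bound I argue by contradiction. Assume there is a sequence $g_n \to f$ in $\mathcal{C}_d$ with $\mu_{g_n}(q(g_n)) \geq m+1$, and for each $n$ pick an ``extra'' angle $\alpha_n$, i.e.\ $R_{g_n}(\alpha_n)$ lands at $q(g_n)$ but $\alpha_n \notin \{\theta_1, \dots, \theta_m\}$. Writing $q = f^{-k}(p)$ with $p$ on a repelling cycle of period $l$, the implicit function theorem yields a holomorphic motion of this repelling cycle, so $g_n^k(q(g_n))$ lies on a repelling cycle of the same period $l$. Consequently $\tau_d^k(\alpha_n)$ is periodic under $\tau_d \colon \theta \mapsto d\theta$ with some period $l \cdot s_n$, $s_n \geq 1$. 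The crucial and most delicate point, which I expect to be the main obstacle, is to bound $s_n$ uniformly in $n$; equivalently, to bound $\mu_{g_n}$ at the periodic point on the repelling cycle. I would derive this from Goldberg--Milnor orbit-portrait theory: the portrait of rays landing on a repelling cycle is a combinatorial invariant, locally constant within $\mathcal{C}_d$ away from parameters where a parabolic implosion along that cycle occurs; after shrinking $\mathcal{N}$, such instability is excluded by the hypothesis $l > M_f$, which separates the combinatorics of the cycle of period $l$ from that of the parabolic cycles of $f$.

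Granting the uniform period bound, the angles $\alpha_n$ all lie in a fixed finite set of rationals; passing to a subsequence, $\alpha_n = \alpha^*$ is constant, and $R_{g_n}(\alpha^*)$ lands at $q(g_n) \to q$. Since the eventual $\tau_d$-period of $\alpha^*$ is at least $l > M_f$, the ray $R_f(\alpha^*)$ cannot land at a pre-parabolic point and therefore lands at some pre-repelling point $y$ of $f$. Applying Lemma \ref{stability-e-r}(1) to $\alpha^*$ at $f$, the landing point of $R_{g_n}(\alpha^*)$ depends continuously on $g_n \in \mathcal{C}_d$ near $f$ and converges to $y$; combined with $R_{g_n}(\alpha^*)$ landing at $q(g_n) \to q$, this forces $y = q$, hence $\alpha^* \in \{\theta_1, \dots, \theta_m\}$, contradicting the choice of $\alpha_n$. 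The hypothesis $l > M_f$ thus plays a dual role: it ensures that $R_f(\alpha^*)$ lands at a pre-repelling (not pre-parabolic) point so that Lemma \ref{stability-e-r}(1) is applicable, and it provides the separation from parabolic orbit-portrait instabilities that underwrites the uniform period bound in the previous paragraph.
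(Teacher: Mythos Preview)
Your approach is correct, and the step you flag as ``the main obstacle'' is in fact not an obstacle at all. Once you have established the lower bound, you already know that $R_{g_n}(\theta_1), \dots, R_{g_n}(\theta_m)$ land at $q(g_n)$; passing to the periodic image $p(g_n) = g_n^k(q(g_n))$, the rays of angles $d^k\theta_i$ land there and have the fixed period $\ell$ (the common period of the $\theta_i$ for $f$). By \cite[Lemma~18.12]{M}, every ray landing at the repelling periodic point $p(g_n)$ has the \emph{same} period, so $d^k\alpha_n$ also has period exactly $\ell$, and $\alpha_n$ lies in the finite set of angles with preperiod $k$ and period $\ell$. Your appeal to orbit-portrait stability is therefore unnecessary; moreover the standard proof of that stability is precisely the counting argument described below, so invoking it here would amount to using a stronger result to establish a weaker one.

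The paper takes a different and slightly cleaner route: instead of arguing by contradiction on one extra angle, it considers the full finite set $\Theta$ of angles of period $\ell$ and all their $f$-landing points $q_0=q, q_1, \dots, q_r$. The hypothesis $\ell \geq l > M_f$ guarantees each such ray lands at a repelling point, so Lemma~\ref{stability-e-r} gives $\mu_g(q_j(g)) \geq \mu_f(q_j)$ for every $j$. Summing over $j$ and combining with $\sum_j \mu_g(q_j(g)) \leq \#\Theta$ (since, again by \cite[Lemma~18.12]{M}, every ray at $q_j(g)$ has period $\ell$) forces equality termwise. This global pigeonhole avoids subsequences and limits; both arguments are short once the period is pinned down, but the counting version makes the role of $l > M_f$ more transparent.
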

We remark that $ \mathcal N\cap \mathcal C_d$ can be replaced by $\mathcal N$, see the proof.
\begin{proof}  By taking preimages, it suffices to treat the repelling case.
	
	 Let 
$\Theta_q=\{\theta\in \mathbb R/\mathbb Z; R_f(\theta) \text{ lands at }q \}$.  
By \cite[Lemma 18.12]{M},  all angles in $\Theta_q$ have the same period, say $\ell>0$, which is a multiple  of $l$.

Let $\Theta$ be the set of all angles with period $\ell$. Clearly $\Theta_q\subset \Theta$. By  assumption $\ell\geq l>M_f$,  the external ray  $R_f(\theta)$ lands at a repelling 
point for  $\theta\in \Theta$.
Let $q_0=q, q_1, \cdots, q_m$ be  all landing points of the external rays $R_f(\theta), \ \theta\in \Theta$.  By the Implicit Function Theorem and shrink $\mathcal U$ if necessary, for each $g\in \mathcal U$, there is a repelling point $q_k(g)$ of $g$, moving continuously with respect to $g\in \mathcal U$ and 
$q_k(f)=q_k$.

By  Lemma \ref{stability-e-r}, there is a neighborhood $\mathcal N\subset \mathcal U$ of $f$ such that for any $\theta\in \Theta$,
$$R_f(\theta) \text{ lands at }q_k \Longrightarrow R_g(\theta) \text{ lands at } q_k(g), \ \forall \ g\in \mathcal N\cap \mathcal C_d.$$
 It follows that 
$$\mu_g(q_k(g))\geq \mu_f(q_k), \ 0\leq k\leq m.$$

On the other hand, again by \cite[Lemma 18.12]{M}, for each $0\leq k\leq m$, 
 the angle $\theta$ for which  $R_g(\theta)$ lands at $q_k(g)$   has period $\ell$. Hence 
$$\#\Theta= \sum_{k=0}^m \mu_f(q_k)\leq \sum_{k=0}^m \mu_g(q_k(g))\leq \#\Theta.$$
This implies that $\mu_g(q_k(g))=\mu_f(q_k)$ for all $0\leq k\leq m$ and $g\in \mathcal N\cap \mathcal C(d)$.
\end{proof}



\begin{pro} \label{combinatorial-property0} 
 Let $D=\big((B_u^0, D_u^{\partial})\big)_{u\in V}\in \partial_0^* {\rm Div}{(\mathbb D)}^S$ and  $f\in I_{\Phi}(D)$. 
 Then there exist    unique conformal mappings $\{\phi_{f,v}: (\mathbb D, 0)\rightarrow (U_{f,v}, v(f))\}_{v\in V}$  
  with the property that for any $v\in V$ and any sequence $\{f_n\}_{n\geq 1}\subset \mathcal H$ with
$$f_n\rightarrow f \ \text{ and } \ \Phi^{-1}(f_n)\rightarrow D,$$
the sequence  
$\{\phi_{f_n, v}\}_{n\geq 1}$ converges locally and uniformly in $\mathbb D$ to $\phi_{f,v}$.

 Further,  let  $s_0\in \partial_v^f \mathbb D$
 be a  $D$-pre-periodic point, then
  $\phi_{f,v}(s_0)$ is a pre-repelling  point of $f$.
\end{pro}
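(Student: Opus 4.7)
The plan is to combine Carath\'eodory's kernel convergence with tracking of the repelling boundary fixed point $1$ of the iterated Blaschke product $\widehat B_v^0$ for $v \in V_{\rm p}$, then propagate uniqueness to $v \in V_{\rm np}$ by downward induction along $\sigma$. The key observation is that since $\widehat B_v^0$ fixes $0$ with $0$ attracting and has degree $\ge 2$, the boundary fixed point $1$ is necessarily repelling with real multiplier $>1$, so the Implicit Function Theorem pins down a canonical boundary value $\phi_{f,v}(1)$ intrinsic to $(f, D)$.

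For existence, start with any sequence $\{f_n\} \subset \mathcal H$ satisfying $f_n \to f$ and $\Phi^{-1}(f_n) \to D$. Lemma \ref{carathodory-convergence} gives the kernel convergence $(U_{f_n, v}, v(f)) \to (U_{f,v}, v(f))$, so the normalized Riemann maps $\phi_{f_n, v}: (\mathbb D, 0) \to (U_{f_n, v}, v(f_n))$ form a normal family; after extracting a subsequence we obtain a locally uniform limit $\phi_{f,v}: (\mathbb D, 0) \to (U_{f,v}, v(f))$. Lemma \ref{model-map} then furnishes the functional equation $\phi_{f,\sigma(v)} \circ B_v^0 = f \circ \phi_{f,v}$, and since $\partial U_{f,v}$ is a Jordan curve (Proposition \ref{RY}), $\phi_{f,v}$ extends to a homeomorphism $\overline{\mathbb D} \to \overline{U_{f,v}}$.

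For uniqueness I focus on the boundary value $\phi_{f,v}(1)$ when $v \in V_{\rm p}$ has $\sigma$-period $\ell$. The semiconjugacy forces $\phi_{f_n, v}(1) = \nu_n(v)$ to be a fixed point of $f_n^\ell$ whose multiplier equals $\widehat B_{f_n, v}^{\prime}(1)$; by Proposition \ref{convergence}(1) the latter converges to $\widehat B_v^{0\prime}(1) > 1$. The Implicit Function Theorem thus produces a continuous family $g \mapsto \nu(g)$ of repelling $g^\ell$-fixed points on a neighborhood of $f$ with $\nu(f_n) = \nu_n(v)$ for large $n$, and its value $\nu_\infty(v) := \nu(f)$ depends only on $(f, D)$. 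A boundary-continuity argument identifies $\phi_{f,v}(1) = \nu_\infty(v)$: along the radius $[0, 1) \subset \mathbb D$, locally uniform convergence on $\mathbb D$ combined with Koenigs linearizations of $\widehat B_v^0$ at $1$ and $f^\ell$ at $\nu_\infty(v)$ (connected by $\phi_{f,v}$) pins down the radial limit. Since a conformal map $\phi: (\mathbb D, 0) \to (U_{f,v}, v(f))$ is determined by one additional boundary value, two subsequential limits must coincide, hence the full sequence $\{\phi_{f_n, v}\}$ converges. For $v \in V_{\rm np}$, the functional equation together with Proposition \ref{extension-repelling} (applied at suitable pre-periodic points of $\widehat B_{\sigma^s(v)}^0$) transports the uniqueness from $V_{\rm p}$ inductively through $\sigma^{-1}$.

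For the pre-repelling assertion at a $D$-pre-periodic $s_0 \in \partial_v^f \mathbb D$, the extended semiconjugacy makes $\phi_{f,v}(s_0)$ eventually periodic under $f$ with $\phi_{f,v}(s_0) \in \partial_0 U_{f,v}$ (by definition of $\partial_v^f \mathbb D$). By Proposition \ref{RY}(1) this point is pre-repelling or pre-parabolic; pre-parabolicity is excluded because the multiplier of the corresponding $\widehat B_v^0$-periodic orbit on $\partial \mathbb D$ is strictly greater than $1$ in modulus --- a standard expansion estimate for proper Blaschke self-maps of $\mathbb D$ whose interior fixed point is attracting with degree $\ge 2$. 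The main obstacle is this \emph{multiplier transfer}: one must rigorously equate the multiplier of the $f^?$-periodic orbit $\phi_{f,v}(s_0)$ with that of the $\widehat B_v^0$-orbit at $s_0$, even though $\phi_{f,v}$ is only a homeomorphism on $\partial \mathbb D$. The resolution is to place Koenigs coordinates on both sides of $\phi_{f,v}$ at the respective periodic points and observe that the coordinate change provided by $\phi_{f,v}$ (holomorphic on $\mathbb D$ and continuous up to the boundary) forces the linear parts to match, yielding equal multipliers and hence repulsion.
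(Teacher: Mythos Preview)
Your argument has a genuine gap at the \emph{multiplier transfer} step, which you use both for uniqueness and for the pre-repelling claim. You assert that $(f_n^\ell)'(\nu_n(v)) = \widehat B_{f_n,v}'(1)$, and more generally that the multiplier of $\phi_{f,v}(s_0)$ under $f$ equals that of $s_0$ under $\widehat B_v^0$. This is false. The Riemann map $\phi_{f,v}$ is conformal on $\mathbb D$ but merely a homeomorphism on $\partial\mathbb D$; it need not be differentiable at boundary points, so the chain rule does not apply. Concretely, for $f_c(z)=z^2+c$ with small $c\neq 0$, the Blaschke model on the attracting basin has boundary fixed-point multiplier depending only on the attracting multiplier at $0$, while the repelling $\beta$-fixed point of $f_c$ has multiplier $1+\sqrt{1-4c}$; these do not coincide. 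Your proposed fix via Koenigs coordinates fails for the same reason: a one-sided topological conjugacy between $z\mapsto\lambda z$ and $z\mapsto\mu z$ on a half-disk exists for any $\lambda,\mu>1$ (take a power map on a sector), so matching linear parts cannot be inferred.

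The paper avoids multipliers entirely. For the pre-repelling assertion it argues by contradiction: if $\phi_{f,v}(s_0)$ were parabolic, its forward orbit would meet the boundary of a parabolic Fatou component containing a critical point, forcing a non-trivial limb at $\phi_{f,v}(s_0)$ and hence $s_0\notin\partial_v^f\mathbb D$. For uniqueness the paper compares two subsequential limits $\phi_{f,v}$ and $\phi_{f,v}^*$ by choosing a $\widehat B_v^0$-periodic point $s_0\in\partial_v^f\mathbb D\cap\partial_v^*\mathbb D$ of period large enough that its image avoids all parabolic cycles; then $\phi_{f,v}(s_0)$ and $\phi_{f,v}^*(s_0)$ are each landed by a \emph{unique} external ray, and Proposition~\ref{convergence-repelling} shows both rays land at the same perturbed point $\phi_{g,v}(r_{v,s_0}(\Phi^{-1}(g)))$ for $g\in\mathcal H$. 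Lemma~\ref{rays-number} (stability of ray-counts) forces the two angles to coincide, hence $\phi_{f,v}(s_0)=\phi_{f,v}^*(s_0)$, which pins down the Riemann map. The key external-ray machinery is what replaces your multiplier comparison.
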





The following remark gives a better understanding of  Proposition \ref{combinatorial-property0} . 
\begin{rmk} 
	1. Suppose $v\in V_{\rm p}$. If  $s_0\in \partial \mathbb D\setminus \partial_v^f \mathbb D$ is a  $\widehat{B}^0_{v}$-periodic point,  then $\phi_{f,v}(s_0)$ is a parabolic periodic point of $f$, see  Proposition \ref{para-dym}(2).

2. Suppose $v\in V_{\rm p}$.  A priori, different subsequences of  $\{\phi_{f_n, v}\}_{n\geq 1}$ might have different limit maps. Take two of them $\phi_{f,v}$ and $\phi^*_{f,v}$.  By Lemma \ref{model-map},  
$$\widehat{B}^0_v=\phi^{-1}_{f,v}\circ f^{\ell}\circ \phi_{f,v}=(\phi_{f,v}^*)^{-1}\circ f^{\ell}\circ \phi^*_{f,v},$$
where $\ell$ is the $\sigma$-period of $v$.
 It follows  that 
$$\phi_{f,v}^{-1}\circ \phi_{f,v}^*\in {\rm Aut}(\widehat{B}^0_v):=\Big\{h:\mathbb D\rightarrow \mathbb D \text{ is conformal and } h\circ \widehat{B}^0_v=\widehat{B}^0_v\circ h\Big\}.$$ 
Since ${\rm Aut}(\widehat{B}^0_v)$ is    finite, there are only finitely many possible limits for  $\{\phi_{f_n, v}\}_{n\geq 1}$.
 Proposition \ref{combinatorial-property0} implies a slightly stronger fact: there is exactly one  limit map for  $\{\phi_{f_n, v}\}_{n\geq 1}$ even if ${\rm Aut}(\widehat{B}^0_v)$ is non-trivial.
\end{rmk}

\begin{proof}[Proof of Proposition \ref{combinatorial-property0}.]
First, as discussed  at the beginning of Section \ref{imp-divisor}, there exist a sequence   $\{f_n\}_{n\geq 1}\subset\mathcal H$ and  conformal maps $\big\{\phi_{f,u}: (\mathbb D, 0)\rightarrow (U_{f,u}, u(f))\big\}_{u\in V}$,  such that
\begin{itemize}
\item  $f_n\rightarrow f\in I_{\Phi}(D)$, $\Phi^{-1}(f_n)\rightarrow D$, and

\item  $\phi_{f_n,u}$ converges locally and uniformly on $\mathbb D$ to  $\phi_{f,u}$, \ $\forall u\in V$.
\end{itemize}

It suffices to prove the following two
 statements:

1. For $v\in V_{\rm p}$,  if $s_0\in \partial_v^f \mathbb D$ is  $\widehat{B}^0_{v}$-periodic, then
 $\phi_{f,v}(s_0)$ is $f$-repelling. 

2. For any  $v\in V$ and any other sequence $\{g_n\}_{n\geq 1}\subset \mathcal H$ with $g_n\rightarrow f\in I_{\Phi}(D)$ and $\Phi^{-1}(g_n)\rightarrow D$, the maps  $\phi_{g_n,v}$  converge locally and uniformly in $\mathbb D$ to  $\phi_{f,v}$.

\vspace{5pt}

 1. Note that $\phi_{f,v}(s_0)\in \partial U_{f,v}$ is $f$-periodic. By Proposition \ref{RY},  $\phi_{f,v}(s_0)$ is either repelling or parabolic.
 If  $\phi_{f,v}(s_0)$  is  $f$-parabolic,  then there is an integer $l\geq 0$ and a parabolic critical Fatou component $V$,  such that $f^l(\phi_{f,v}(s_0))\in \partial U_{f, \sigma^l(v)}\cap \partial V$.  
It follows that $s_0\in {\rm supp}(D_{v}^{\partial}(f))$ (if $l=0$)
or $B^0_{\sigma^{l-1}(v)}\circ \cdots \circ B^0_v(s_0)\in  {\rm supp}(D_{\sigma^l(v)}^{\partial}(f))$ (if $l\geq 1$).
In either case, the limb $L_{U_{f,v}, \phi_{f,v}(s_0)}$ is not trivial, implying that $s_0\notin \partial_v^f \mathbb D$.
This contradicts the assumption.


2.  Given another  sequence 
$\{g_n\}_{n\geq 1}\subset \mathcal H$ 
 with  $g_n\rightarrow f\in I_{\Phi}(D)$ and $\Phi^{-1}(g_n)\rightarrow D$. By choosing a subsequence, we assume $\phi_{g_n,u}$ converges locally and uniformly in $\mathbb D$ to  $\phi^*_{f,u} : (\mathbb D, 0)\rightarrow (U_{f, u}, u(f))$ for all $u\in V$. 
In the following, we shall show that for all $u\in V$, $\phi^*_{f,u}=\phi_{f,u} $.

 By Lemma \ref{model-map},
 ${B}^0_v=(\phi^{*}_{f,\sigma(v)})^{-1}\circ f\circ \phi^*_{f,v}=\phi^{-1}_{f,\sigma(v)}\circ f\circ \phi_{f,v}, \ \forall v\in V$.
Recall that $\partial_v^f \mathbb D= \phi^{-1}_{f,v}(\partial_0 U_{f,v})$. Let
$\partial_v^* \mathbb D= (\phi^*_{f,v})^{-1}(\partial_0 U_{f,v})$.  

Let
$s_0\in \partial_v^f \mathbb D\cap \partial_v^* \mathbb D$
be  $D$-pre-periodic. By the choice of $s_0$,  only one external ray lands at
$\phi_{f,v}(s_0)$ or $\phi^*_{f,v}(s_0)$. By  1,    both 
$\phi_{f,v}(s_0)$ and $\phi^*_{f,v}(s_0)$ are $f$-pre-repelling.
 We may further assume $s_0$ is chosen so  that  the $f$-orbits of $\phi_{f,v}(s_0)$ and $\phi^*_{f,v}(s_0)$ eventually meet repelling cycles with periods $>M_f$ (here $M_{f}$ is defined before
 Lemma \ref{rays-number}).  
 Take $\theta, \theta^*\in \mathbb R/ \mathbb Z$ so that  $R_f(\theta)$ lands at 
$\phi_{f,v}(s_0)$, $R_f(\theta^*)$ lands at 
$\phi^*_{f,v}(s_0)$. By Proposition \ref{convergence-repelling}, for $g\in \mathcal H$, the two external rays $R_g(\theta), R_g(\theta^*)$   land at $\phi_{g,v}(r_{v,s_0}(\Phi^{-1}(g)))$.
By Lemma \ref{rays-number},  
 $\phi_{g,v}(r_{v,s_0}(\Phi^{-1}(g)))$ is the landing point of exactly one external ray,  when $g\in \mathcal H$ is sufficiently close to $f$. Hence $\theta=\theta^*$, implying that $\phi_{f,v}(s_0)=\phi^*_{f,v}(s_0)$. 
 It follows that the conformal map
 $\phi^{-1}_{f,v}\circ \phi^*_{f,v}:\mathbb D\rightarrow \mathbb D$  fixes $0$ and $s_0\in \partial \mathbb D$, we have $\phi_{f,v}=\phi_{f,v}^*$. 
\end{proof}


%

\subsection{Combinatorial property (P2)} \label{cp2}
 
Let
$D=\big((B_v^0, D_v^{\partial})\big)_{v\in V}\in \partial_0^* {\rm Div}{(\mathbb D)}^S$.
Let $f\in I_{\Phi}(D)$ and $ v\in V$. 
Taking any point $q\in \partial \mathbb D$, there are two cases:

\textbf{Case 1: $q\in \partial_v^f \mathbb D$.}  In this case, the limb   $L_{U_{f,v}, \phi_{f,v}(q)}=\{\phi_{f,v}(q)\}$.  By Proposition \ref{RY}, there is a unique  $\theta\in \mathbb R/\mathbb Z $ so that $R_f(\theta)$ lands at $\phi_{f, v}(q)$. We set $\theta^+_{f,v}(q)=\theta^-_{f,v}(q)=\theta$.

\textbf{Case 2: $q\in  \partial \mathbb D\setminus \partial_v^f \mathbb D$.}  In this case, the limb $L_{U_{f,v}, \phi_{f,v}(q)}\neq \{\phi_{f,v}(q)\}$. 
By Proposition \ref{RY},  there are  
 $\alpha, \beta\in \mathbb R\setminus \mathbb Z$ satisfying that
\begin{itemize}

\item  
$R_f(\alpha)$ and $R_f(\beta)$ land at $\phi_{f,v}(q)$;

\item $\overline{R_f(\alpha)}$, $L_{U_{f,v}, \phi_{f,v}(q)}$, $\overline{R_{f}(\beta)}$ attach  $\phi_{f,v}(q)$    in     counter-clockwise order.
\end{itemize}
We set $\theta^+_{f,v}(q)=\beta, \theta^-_{f,v}(q)=\alpha$.

\begin{figure}[h]
	\begin{center}
		\includegraphics[height=5cm]{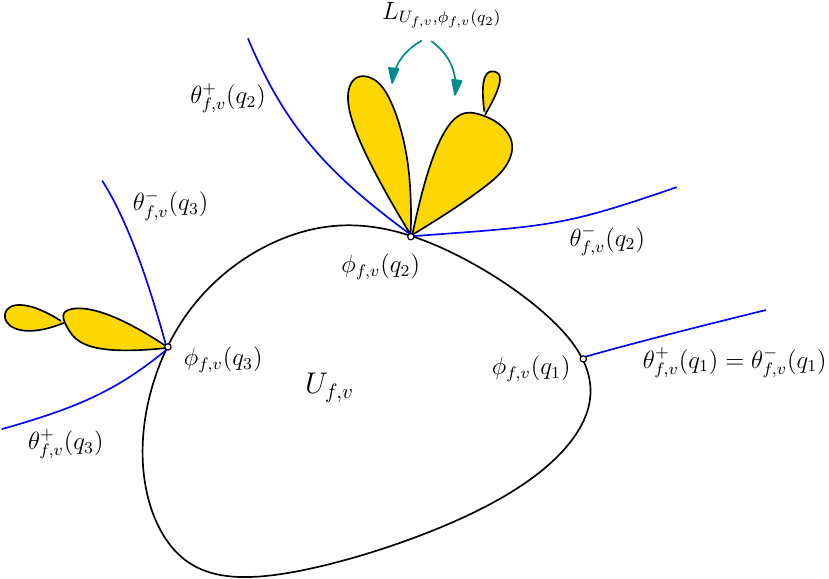}
	\end{center}
	\caption{Limbs and angles $\theta_{f,v}^{\pm}$}
\end{figure}

In this way, for all $q\in \partial \mathbb D$, the angles $\theta^-_{f,v}(q), \theta^+_{f,v}(q)$ are well-defined.   Let $\Delta_{f, v}(q)=\theta^+_{f,v}(q)-\theta^-_{f,v}(q)$,  and let $\nu_{f, v}(q)$ be the number of the critical points in $L_{U_{f,v}, \phi_{f,v}(q)}$ counting multiplicity.

\begin{lem} \label{limb-angles} Let
$D=\big((B_v^0, D_v^{\partial})\big)_{v\in V}\in \partial_0^* {\rm Div}{(\mathbb D)}^S$, $f\in I_{\Phi}(D),  v\in V$.

1.  The two functions $\theta^+_{f,v}, \theta^-_{f,v}: \partial\mathbb D \rightarrow \mathbb R/\mathbb Z$ satisfy
$$\lim_{q\rightarrow q_0^+}\theta^+_{f,v}(q)=\lim_{q\rightarrow q_0^+}\theta^-_{f,v}(q)=\theta^+_{f,v}(q_0), \ \lim_{q\rightarrow q_0^-}\theta^+_{f,v}(q)=\lim_{q\rightarrow q_0^-}\theta^-_{f,v}(q)=\theta^-_{f,v}(q_0),$$
where $q\rightarrow q_0^+$ (resp.  $q\rightarrow q_0^-$) means that $q$ approaches $q_0$  along $\partial \mathbb D$  in clockwise  (resp.   counter-clockwise) order.

2. For any $q\in \partial \mathbb D$,  we have 
$\nu_{f, v}(q)=d \Delta_{f, v}(q)- \Delta_{f, \sigma(v)}(B_v^0(q))$.
\end{lem}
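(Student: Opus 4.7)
The plan is to prove Part 1 via a monotonicity/topological argument based on the Jordan curve structure of $\partial U_{f,v}$ and the disjointness of limbs, and to prove Part 2 via a Riemann--Hurwitz calculation on the sector cut off by the limb.

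For Part 1, fix $q_0\in\partial\mathbb D$ and a sequence $q_n\to q_0^+$. Since $\partial U_{f,v}$ is a Jordan curve and $\phi_{f,v}$ extends to a homeomorphism $\overline{\mathbb D}\to\overline{U_{f,v}}$ (Proposition \ref{RY}), $\phi_{f,v}(q_n)\to\phi_{f,v}(q_0)$ along $\partial U_{f,v}$ from the clockwise side. The continuum $\overline{R_f(\theta^+_{f,v}(q_0))}\cup L_{U_{f,v},\phi_{f,v}(q_0)}\cup\overline{R_f(\theta^-_{f,v}(q_0))}$ separates $\mathbb C\setminus\overline{U_{f,v}}$ into two open regions, with $\overline{R_f(\theta^+_{f,v}(q_0))}$ as the clockwise-side boundary ray. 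Since distinct limbs lie in disjoint components of $K(f)\setminus\overline{U_{f,v}}$ (Proposition \ref{RY}), for $q_n$ close enough to $q_0$ the rays $\overline{R_f(\theta^\pm_{f,v}(q_n))}$ together with the limb $L_{U_{f,v},\phi_{f,v}(q_n)}$ are trapped in the clockwise-side open region of $\overline{R_f(\theta^+_{f,v}(q_0))}$. Consequently $\theta^\pm_{f,v}(q_n)$ lie in a shrinking arc clockwise-adjacent to $\theta^+_{f,v}(q_0)$; any subsequential limit angle $\theta^*$ corresponds to a ray whose closure accumulates on $\phi_{f,v}(q_0)$ and thus lands there, and the side constraint forces $\theta^*=\theta^+_{f,v}(q_0)$. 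The case $q\to q_0^-$ is symmetric.

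For Part 2, let $S_q$ be the closed region of $\mathbb C$ cut off by $\overline{R_f(\theta^-_{f,v}(q))}\cup\overline{R_f(\theta^+_{f,v}(q))}\cup L_{U_{f,v},\phi_{f,v}(q)}$ on the side containing the limb; truncating at a high equipotential makes $S_q$ a topological closed disk. Because $f$ preserves orientation and sends external rays by $\theta\mapsto d\theta$, the image $f(S_q)$ is the analogous closed disk at $\phi_{f,\sigma(v)}(B_v^0(q))$, bounded by rays of angles $\theta^\pm_{f,\sigma(v)}(B_v^0(q))\equiv d\theta^\pm_{f,v}(q)\pmod 1$. The degree $k$ of the branched cover $f\colon S_q\to f(S_q)$ is determined by the wrap count of the doubling map on the source arc of length $\Delta_{f,v}(q)$ over the target arc of length $\Delta_{f,\sigma(v)}(B_v^0(q))$; the forward-invariance of $\partial_0 U_{f,\cdot}$ (Fact \ref{fact-sub-boundary}) ensures $\Delta_{f,v}(q)>0$ forces $\Delta_{f,\sigma(v)}(B_v^0(q))>0$, yielding
\[ d\,\Delta_{f,v}(q)=(k-1)+\Delta_{f,\sigma(v)}(B_v^0(q)),\qquad 0<\Delta_{f,\sigma(v)}(B_v^0(q))<1.\]
Riemann--Hurwitz for branched covers of topological disks gives $\chi(S_q)=k\,\chi(f(S_q))-\nu$, i.e.\ $1=k-\nu$, where $\nu$ is the total interior ramification of $f$ in $S_q$. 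External rays avoid critical points, and all critical points of $f$ in $\mathrm{int}(S_q)$ lie in the interior of $L_{U_{f,v},\phi_{f,v}(q)}$; hence $\nu=\nu_{f,v}(q)=k-1=d\,\Delta_{f,v}(q)-\Delta_{f,\sigma(v)}(B_v^0(q))$, as required. (When $\Delta_{f,v}(q)=0$, both sides vanish.)

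The main technical obstacle I expect is the rigorous verification in Part 1 that every subsequential limit of $\theta^\pm_{f,v}(q_n)$ actually accumulates on $\phi_{f,v}(q_0)$ rather than escaping to an unrelated landing point; this is resolved by combining uniform continuity of $\phi_{f,v}$ on $\overline{\mathbb D}$ with the rigid separation imposed by $R_f(\theta^+_{f,v}(q_0))$. In Part 2 the delicate step is identifying $f(S_q)$ precisely as the target limb-side disk and extracting the correct degree via the wrap-around formula, which depends crucially on the limb forward-invariance $\Delta_{f,v}(q)>0\Rightarrow\Delta_{f,\sigma(v)}(B_v^0(q))>0$ noted above to keep the fractional part $\Delta_{f,\sigma(v)}(B_v^0(q))$ strictly in $(0,1)$.
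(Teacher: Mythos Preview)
Your Part~1 argument is along the right lines and matches the paper's appeal to Proposition~\ref{RY}: the intervals $[\theta^-_{f,v}(q),\theta^+_{f,v}(q)]$, $q\in\partial\mathbb D$, have pairwise disjoint interiors and cover $\mathbb R/\mathbb Z$ (Proposition~\ref{RY}(3)(4)), so the induced monotone degree-one circle map forces the stated one-sided limits. Your phrasing ``any subsequential limit angle $\theta^*$ corresponds to a ray whose closure accumulates on $\phi_{f,v}(q_0)$ and thus lands there'' is a bit loose (landing of $R_f(\theta^*)$ is not automatic without local connectivity), but the monotonicity reformulation avoids this.

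Part~2 has a genuine gap. Your central claim---that $f(S_q)$ equals the analogous truncated wake $S'$ at $B_v^0(q)$, making $f\colon S_q\to S'$ a proper branched cover of disks---fails exactly when $\nu_{f,v}(q)\ge 1$. If $d\Delta_{f,v}(q)\ge 1$ the equipotential arc on $\partial S_q$ maps to an arc wrapping fully around $K(f)$, so $f(S_q\cap\Omega_f)$ is an entire sub-level annulus, not a sector; and whenever the limb $L$ contains a preimage component of $U_{f,\sigma(v)}$ (which can happen as soon as $\deg(f|_{U_{f,v}})<d$) one has $f(L)\cap U_{f,\sigma(v)}\ne\emptyset$, so $f(L)\not\subset L'$. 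In either case $f(S_q)\supsetneq S'$ is not a topological disk, and Riemann--Hurwitz in the form $1=k-\nu$ does not apply. A related error: you cite Fact~\ref{fact-sub-boundary} for the implication $\Delta_{f,v}(q)>0\Rightarrow\Delta_{f,\sigma(v)}(B_v^0(q))>0$, but that fact gives the \emph{opposite} direction (trivial limb implies trivial image limb); your implication is false when $\phi_{f,v}(q)$ is itself critical with $d\Delta_{f,v}(q)\in\mathbb Z$, in which case $\Delta'=0$ yet the formula $\nu=d\Delta$ must still hold. The paper does not prove Part~2 in-house but defers to \cite[\S2]{GM}, where the count is obtained by an argument-principle computation on $\partial S_q$ (tracking the winding of $f(\partial S_q)$) rather than by exhibiting $f|_{S_q}$ as a disk-to-disk branched cover; that route handles the wrapping and the degenerate case $\Delta'=0$ uniformly.
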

\begin{proof} 1. It is an immediate consequence of Proposition \ref{RY}.
	
	2. See \cite[\S 2]{GM}.
	\end{proof}

\begin{pro} \label{combinatorial-property-2} 
 Let $D=\big((B_v^0, D_v^{\partial})\big)_{v\in V}\in \partial_0^* {\rm Div}{(\mathbb D)}^S$.
For any $f_1, f_2\in I_{\Phi}(D)$ and any $v\in V$, we have 
$$\partial_v^{f_1} \mathbb D=\partial_v^{f_2} \mathbb D, \    \theta^+_{f_1,v}\equiv\theta^+_{f_2,v}, \ \theta^-_{f_1,v}\equiv\theta^-_{f_2,v}, \
D_{v}^\partial (f_1)=D_{v}^\partial(f_2).$$
\end{pro}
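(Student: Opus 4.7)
My plan is to first establish that the angle functions $\theta^{\pm}_{f_1,v}$ and $\theta^{\pm}_{f_2,v}$ coincide on a dense subset $\mathcal{T}\subset\partial\mathbb{D}$, then extend this identity to the full circle using the one-sided continuity in Lemma \ref{limb-angles}(1); the equalities $\partial_v^{f_1}\mathbb{D}=\partial_v^{f_2}\mathbb{D}$ and $D_v^\partial(f_1)=D_v^\partial(f_2)$ will then fall out.

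The set $\mathcal{T}$ will consist of $D$-pre-periodic points $s_0\in\partial\mathbb{D}$ that (i) lie in $\partial_v^{f_1}\mathbb{D}\cap\partial_v^{f_2}\mathbb{D}$, which is all of $\partial\mathbb{D}$ minus a countable set by Fact \ref{fact-sub-boundary}; (ii) avoid the relevant exceptional sets of Proposition \ref{extension-repelling} so that the local motion $r_{v,s_0}$ is available; and (iii) whose eventual $\widehat{B}^0_v$-periodic orbit has $f_i$-period strictly greater than $\max(M_{f_1},M_{f_2})$, with $M_{f_i}$ as defined preceding Lemma \ref{rays-number}. Because $\widehat{B}^0_v$-pre-periodic points of arbitrarily large period are dense in $\partial\mathbb{D}$, such $\mathcal{T}$ is dense. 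For each $s_0\in\mathcal{T}$ and $i\in\{1,2\}$, Proposition \ref{combinatorial-property0} ensures that $\phi_{f_i,v}(s_0)$ is $f_i$-pre-repelling, and the triviality of the limb at $\phi_{f_i,v}(s_0)$ forces a unique external ray $R_{f_i}(\theta_i)$ to land there. By Proposition \ref{convergence-repelling}, for every $g\in\mathcal{H}$ both rays $R_g(\theta_1)$ and $R_g(\theta_2)$ land at the common point $\phi_{g,v}(r_{v,s_0}(\Phi^{-1}(g)))$. Choosing $g\in\mathcal{H}$ close enough to $f_1$ and invoking Lemma \ref{rays-number}, exactly one external ray lands at this point in the plane of $g$, forcing $\theta_1=\theta_2$ and hence the agreement of $\theta^{\pm}_{f_1,v}$ and $\theta^{\pm}_{f_2,v}$ throughout $\mathcal{T}$.

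Next I apply Lemma \ref{limb-angles}(1): since $\theta^+_{f,v}$ is right-continuous and $\theta^-_{f,v}$ is left-continuous on $\partial\mathbb{D}$, and $\mathcal{T}$ is dense, taking one-sided limits extends $\theta^{\pm}_{f_1,v}\equiv\theta^{\pm}_{f_2,v}$ to all of $\partial\mathbb{D}$. Consequently, $\partial_v^{f_i}\mathbb{D}=\{s\in\partial\mathbb{D}\colon \theta^+_{f_i,v}(s)=\theta^-_{f_i,v}(s)\}$ is the same for $i=1,2$. Finally, the formula $\nu_{f,v}(q)=d\,\Delta_{f,v}(q)-\Delta_{f,\sigma(v)}(B^0_v(q))$ from Lemma \ref{limb-angles}(2) expresses each coefficient $\nu_{f,v}(q)$ of $D_v^\partial(f)$ entirely in terms of the angle gaps $\Delta_{f,v},\Delta_{f,\sigma(v)}$ and the fixed map $B^0_v$ (which depends only on $D$); as all three are independent of $f\in I_\Phi(D)$ by what has just been proved, we conclude $D_v^\partial(f_1)=D_v^\partial(f_2)$.

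The chief obstacle is the first step: producing a map $g\in\mathcal{H}$ at which only a single external ray lands at the perturbed pre-repelling point $\phi_{g,v}(r_{v,s_0}(\Phi^{-1}(g)))$. The genericity conditions on $s_0$---avoiding the degeneration sets and forcing the eventual periodic cycle to have period exceeding $M_{f_i}$---are precisely what make Lemma \ref{rays-number} applicable, transferring the landing count known at $f_i$ over to the nearby interior map $g$. Once this dense identification of angles is in hand, everything else is routine manipulation of the angle formulas from Lemma \ref{limb-angles}.
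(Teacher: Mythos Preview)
Your proposal is correct and follows essentially the same strategy as the paper's proof: identify a dense set of $\widehat{B}^0_v$-(pre)periodic points in $\partial_v^{f_1}\mathbb{D}\cap\partial_v^{f_2}\mathbb{D}$ with suitably large eventual period, use Proposition~\ref{convergence-repelling} to show that for such $s_0$ the rays $R_g(\theta_1),R_g(\theta_2)$ share a landing point for all $g\in\mathcal H$, invoke Lemma~\ref{rays-number} near $f_1$ to force $\theta_1=\theta_2$, then extend by one-sided continuity (Lemma~\ref{limb-angles}(1)) and read off the divisor equality from Lemma~\ref{limb-angles}(2). The paper's version first reduces to $v\in V_{\rm p}$ and works with genuinely periodic points rather than pre-periodic ones, but this is only a cosmetic difference.
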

\begin{proof} By taking preimages, it suffices to treat the case $v\in V_{\rm p}$ 
	
	Note that 
$\partial_v^{f_k} \mathbb D=\big\{s\in \partial \mathbb D; \theta^+_{f_k,v}(s)=\theta^-_{f_k,v}(s) \big\}$ for $k=1,2$.
To show $\partial_v^{f_1} \mathbb D=\partial_v^{f_2} \mathbb D$, it suffices to show that 
$\theta^+_{f_1,v}\equiv\theta^+_{f_2,v}$ and $\theta^-_{f_1,v}\equiv\theta^-_{f_2,v}$.


 Let $\Theta(f_1, f_2)$ be the set of all $\widehat{B}^0_{v}$-periodic points in $\partial_v^{f_1} \mathbb D \cap \partial_v^{f_2} \mathbb D$, whose periods are greater than $\max\{M_{f_1}, M_{f_2}\}$ (here $M_{f_k}$ is defined before
Lemma \ref{rays-number}).
Clearly,  $\Theta(f_1, f_2)$ is a dense subset of $\partial \mathbb D$. 
For each $s_0\in \Theta(f_1, f_2)$, let $\theta_{f_k, v}(s_0)$ be the unique angle so that $R_{f_k}(\theta_{f_k, v}(s_0))$ lands at $\phi_{f_k,v}(s_0)$.  

\vspace{5pt}
{\it Claim: For any $s_0\in \Theta(f_1, f_2)$, we have $\theta_{f_1,v}(s_0)=\theta_{f_2,v}(s_0)$. }
\begin{proof}
  By Proposition \ref{convergence-repelling}, for all $g\in \mathcal H$, both $R_g(\theta_{f_1, v}(s_0))$ and $R_g(\theta_{f_2,v}(s_0))$ land at $\phi_{g,v}(r_{v,s_0}(\Phi^{-1}(g)))\in \partial U_{g,v}$. Since $s_0\in \Theta(f_1, f_2)\subset\partial_v^{f_1} \mathbb D \cap \partial_v^{f_2} \mathbb D$,  by Lemma \ref{rays-number},
   the external ray $R_g(\theta_{f_k,v}(s_0))$ is the unique one landing at $\phi_{g,v}(r_{v,s_0}(\Phi^{-1}(g)))$ for $g\in \mathcal H$ sufficiently close to $f_k$, hence for all $g\in \mathcal H$ (by the stability).
This shows
  $\theta_{f_1,v}(s_0)=\theta_{f_2,v}(s_0)$.
\end{proof}
By the density of $\Theta(f_1, f_2)$ in $\partial \mathbb D$, for any $s_*\in \partial \mathbb D$, there are two sequences
 $\{s^+_n\}_{n\geq 1}, \{s^-_n\}_{n\geq 1} \subset \Theta(f_1, f_2)$ satisfying that
$$s^+_1>s^+_2>\cdots > s_*> \cdots> s^-_2>s^-_1, \  \lim_{n\rightarrow \infty} s^+_n=\lim_{n\rightarrow \infty} s^-_n=s_*.$$
By Claim, we have $\theta_{f_1,v}(s^+_n)=\theta_{f_2,v}(s^+_n)$ and $\theta_{f_1,v}(s^-_n)=\theta_{f_2,v}(s^-_n)$  for all $n$. By Lemma \ref{limb-angles}(1), we have 
$$\theta^+_{f_1,v}(s_*)=\lim_{n\rightarrow \infty} \theta_{f_1,v}(s^+_n)=\lim_{n\rightarrow \infty} \theta_{f_2,v}(s^+_n)=\theta^+_{f_2,v}(s_*);$$
$$\theta^-_{f_1,v}(s_*)=\lim_{n\rightarrow \infty} \theta_{f_1,v}(s^-_n)=\lim_{n\rightarrow \infty} \theta_{f_2,v}(s^-_n)=\theta^-_{f_2,v}(s_*).$$
Hence $\theta^+_{f_1,v}\equiv\theta^+_{f_2,v}$ and $\theta^-_{f_1,v}\equiv\theta^-_{f_2,v}$.

It remains to show $D_{v}^\partial(f_1)=D_{v}^\partial(f_2)$. By definition,  
$$D_{v}^\partial(f_k)=\sum_{q\in \partial\mathbb D\setminus \partial_v^{f_k}\mathbb D} \nu_{f_k, v}(q)\cdot q, \ k=1,2.$$

By the proven fact $\theta^{\pm}_{f_1,u}\equiv\theta^{\pm}_{f_2,u}$ for all $u\in V_{\rm p}$, we have 
$$\Delta_{f_1, \sigma(v)}(B_v^0(q))=\Delta_{f_2, \sigma(v)}(B_v^0(q)), \ \Delta_{f_1, v}(q)=\Delta_{f_2, v}(q).$$

By Lemma \ref{limb-angles}(2),    for  $q\in  \partial\mathbb D\setminus \partial_v^{f_1}\mathbb D=\partial\mathbb D\setminus \partial_v^{f_2}\mathbb D$, we get $\nu_{f_1, v}(q)=\nu_{f_2, v}(q)$.
This implies that $\nu_{1, v} \equiv\nu_{2, v}$. Hence $D_{v}^\partial(f_1)=D_{v}^\partial(f_2)$.
\end{proof}

By Proposition \ref{combinatorial-property-2}, the notations $\theta^+_{f,v}, \theta^-_{f,v}, D_{v}^\partial(f)$ and $\partial_v^f \mathbb D$ are dependent on $D$ and independent of  $f\in I_{\Phi}(D)$. For this reason, we denote them by 
\begin{equation}\label{notations}\theta^+_{D,v}, \theta^-_{D,v}, D_{v}^\partial(\mathcal H), \partial_v^D \mathbb{D}
	\end{equation}
respectively. 
They will be used in the following part of the paper.



\section{$\mathcal H$-admissible divisors}\label{h-adm-div}

In this section, we shall introduce the  {\it $\mathcal H$-admissible divisors}, which will play a crucial role in our further discussions.

\subsection{A priori characterization}

Let $D\in \partial_0^* {\rm Div}{(\mathbb D)}^S$.  We first give  an a  priori characterization of the maps in $I_{\Phi}(D)$:

\begin{pro} \label{para-dym}
Let $D=\big((B_v^0, D_v^{\partial})\big)_{v\in V}\in \partial_0^* {\rm Div}{(\mathbb D)}^S$. 
Let $f\in I_{\Phi}(D)$, $v\in V$ and $q\in {\rm supp}(D_v^{\partial})$.

1. If  $B_{\sigma^{k-1}(v)}^0\circ \cdots \circ B_{\sigma(v)}^0\circ B_{v}^0(q)\notin 
{\rm supp}(D_{\sigma^k(v)}^\partial(\mathcal H)),   \forall k\geq 1,$
 then the limb  $L_{U_{f,\sigma(v)}, f(\phi_{f,v}(q))}$ is trivial and   $\phi_{f,v}(q)$ is a critical point of $f$.
 
 2. If $v\in V_{\rm p} $ and $q$ is  $\widehat{B}^0_v$-periodic, then $\phi_{f,v}(q)$ is a parabolic point of $f$; 
\end{pro}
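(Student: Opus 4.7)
The plan is to deduce Part 1 from the combinatorial dictionary between boundary divisors and limb structure (Definition \ref{an induced divisor} together with Proposition \ref{RY}(5)), and to deduce Part 2 via a multiplier argument that plays the holomorphic tracking of a supposed repelling cycle against the Blaschke degeneration encoded by $D_v^\partial$.

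For Part 1, I first unpack the hypothesis. By Lemma \ref{model-map}, for every $k\geq 1$, $f^k(\phi_{f,v}(q))=\phi_{f,\sigma^k(v)}(B^0_{\sigma^{k-1}(v)}\circ\cdots\circ B^0_v(q))$. So, by Definition \ref{an induced divisor}, the hypothesis says that the limb $L_{U_{f,\sigma^k(v)},f^k(\phi_{f,v}(q))}$ contains no critical point of $f$ for every $k\geq 1$. Proposition \ref{RY}(5) then forces $L_{U_{f,\sigma(v)},f(\phi_{f,v}(q))}=\{f(\phi_{f,v}(q))\}$. For the critical-point assertion I fix an approximating sequence $\{f_n\}\subset\mathcal H$ with $f_n\to f$, $\Phi^{-1}(f_n)\to D$, and $\phi_{f_n,u}\to \phi_{f,u}$ locally uniformly for all $u\in V$. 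Because $q\in\mathrm{supp}(D_v^\partial)$, Theorem \ref{zakeri-exten} supplies Blaschke critical points $c_{B,n}\to q$; the points $c_n:=\phi_{f_n,v}(c_{B,n})$ are $f_n$-critical and, after passing to a subsequence, converge to some $c\in\overline{U_{f,v}}$ with $f'(c)=0$. The triviality of the downstream limb just established, combined with $f(c_n)\to f(\phi_{f,v}(q))$ and the homeomorphic boundary extension of $\phi_{f,\sigma(v)}$, pins down $c=\phi_{f,v}(q)$.

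For Part 2 I argue by contradiction, supposing $\phi_{f,v}(q)$ is a repelling periodic point of $f$ of period $p\ell$, where $p$ is the period of $q$ under $\widehat B^0_v$ and $\ell$ is the $\sigma$-period of $v$ (by Proposition \ref{RY}(1) this is the only alternative to parabolic). The implicit function theorem provides a continuous tracking $g\mapsto z(g)$ in a neighborhood of $f$, with $z(g)$ a repelling $g$-periodic point of period $p\ell$ and multiplier $\lambda(g)$, and $z(f)=\phi_{f,v}(q)$. For $f_n\to f$ in $\mathcal H$ with $\Phi^{-1}(f_n)\to D$, Carath\'eodory convergence $U_{f_n,v}\to U_{f,v}$ together with $z(f_n)\in J(f_n)$ forces $z(f_n)\in\partial U_{f_n,v}$, so that $r_n:=\phi_{f_n,v}^{-1}(z(f_n))\in\partial\mathbb D$ is a $\widehat B^p_{f_n,v}$-fixed point converging to $q$. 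Koenigs linearisation at the repelling point $z(f_n)$ extends $\phi_{f_n,v}$ conformally across $r_n$, yielding
\[
(\widehat B^p_{f_n,v})'(r_n)=\lambda(f_n)^p\longrightarrow \lambda(f)^p,
\]
a finite nonzero limit. The contradiction then comes from the fact that $q\in\mathrm{supp}(D_v^\partial)$ forces at least one zero of $\widehat B_{f_n,v}$ to approach $q$, while the formula $B'(z)=B(z)\sum_k(1-|a_k|^2)/\bigl((z-a_k)(1-\bar a_k z)\bigr)$ makes $|B'(r_n)|$ blow up when a zero $a_k$ approaches $r_n\in\partial\mathbb D$ non-tangentially; a tracking through the Koenigs coordinate at $r_n$, exploiting that $r_n$ is a fixed point of the expanding boundary circle map of the full Blaschke product $\widehat B^0_v$, shows that the Blaschke zeros degenerating to $q$ do approach non-tangentially, giving $|(\widehat B^p_{f_n,v})'(r_n)|\to\infty$ and the desired contradiction.

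The hard part will be the direction-of-approach analysis in Part 2: verifying that Blaschke zeros degenerating to $q$ arrive non-tangentially, and that Koenigs linearisation at the repelling boundary point $r_n$ genuinely extends $\phi_{f_n,v}$ conformally across the boundary so that the multiplier identification is legitimate. Both points require carefully fitting the Heins--Zakeri description of degenerating Blaschke critical and zero divisors together with the local Riemann-mapping geometry of $U_{f_n,v}$ near a repelling boundary periodic point; the combinatorial input from Section 6 (Proposition \ref{combinatorial-property0} in particular) should supply the right framework for this bookkeeping.
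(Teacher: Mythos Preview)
Your Part 1 is morally right but the critical step ``$f(c_n)\to f(\phi_{f,v}(q))$ pins down $c=\phi_{f,v}(q)$'' is not justified: the convergence $\phi_{f_n,u}\to\phi_{f,u}$ is only locally uniform on $\mathbb D$, and here $c_{B,n}\to q\in\partial\mathbb D$, so you cannot read off the limit of $c_n=\phi_{f_n,v}(c_{B,n})$ directly. The paper bypasses this entirely. By Proposition \ref{divisor-correspondence} (Remark \ref{divisor-relation}) one already knows $q\in{\rm supp}(D_v^\partial(\mathcal H))$, so the limb $L_{U_{f,v},\phi_{f,v}(q)}$ is non-trivial and at least two external rays land at $\phi_{f,v}(q)$. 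Your argument gives triviality of the forward limb, so exactly one ray lands at $f(\phi_{f,v}(q))$. The local degree of $f$ at $\phi_{f,v}(q)$ is therefore at least $2$.

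Your Part 2 has a genuine gap. The identity $(\widehat B_{f_n,v}^p)'(r_n)=\lambda(f_n)$ would require $\phi_{f_n,v}$ to extend conformally across $r_n$, i.e.\ $\partial U_{f_n,v}$ to be an analytic arc at $z(f_n)$. But an arc through $0$ invariant under $w\mapsto\lambda w$ can be $C^1$ only when $\lambda$ is real; for generic $f_n\in\mathcal H$ the multiplier is complex, so the extension fails. Indeed $(\widehat B_{f_n,v}^p)'(r_n)$ is always real positive (the Blaschke product preserves $\partial\mathbb D$), while $\lambda(f_n)$ is typically not real, so the two quantities cannot coincide. The non-tangential approach claim does not rescue this: even if the Blaschke derivative blows up, there is nothing finite to compare it with. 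The paper's argument is different and robust: assuming $\phi_{f,v}(q)$ is repelling, the rays $R_f(\theta_{D,v}^\pm(q))$ are stable by Lemma \ref{stability-e-r}, so for $g$ near $f$ the landing point $\zeta(g)$ tracks continuously with the same separating ray-pair; hence the number of $g$-critical points in $L_{U_{g,v},\zeta(g)}$ is constant in $g\in\mathcal U\cap\mathcal C_d$. But at $g=f$ this limb contains the extra critical point coming from $q\in{\rm supp}(D_v^\partial)$, whereas for $g\in\mathcal H\cap\mathcal U$ the corresponding Blaschke critical point sits in $U_{g,v}$, not in the limb --- a contradiction.
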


\begin{proof} 
 By Proposition \ref{divisor-correspondence},   there is at least one 
$f$-critical point in $L_{U_{f,v}, \phi_{f,v}(q)}$.




1. By the definition of $D_{v}^{\partial}(\mathcal H)$, all $f$-critical points in $\mathbb C\setminus U_{f, v}$ are contained in  $\bigcup_{y\in {\rm supp}(D_{v}^{\partial}(\mathcal H))}L_{U_{f,v}, \phi_{f,v}(y)}$. Translating the assumption  
to the dynamical plane of $f$, we have 
$$L_{f^k(U_{f,v}), f^k(\phi_{f,v}(q))}\cap {\rm Crit}(f)=\emptyset, \ k\geq 1.$$

By Proposition \ref{RY},  the limb $L_{f(U_{f,v}), f(\phi_{f,v}(q))}=L_{U_{f,\sigma(v)}, f(\phi_{f,v}(q))}$ is trivial.  Since $q\in {\rm supp}(D_v^{\partial})$, the limb $L_{U_{f,v}, \phi_{f,v}(q)}$ is non-trivial. Hence $\phi_{f,v}(q)$ is $f$-critical. 

2. Let $\ell\geq 1$ be the $\sigma$-period of $v$. If $q$ is  $\widehat{B}^0_v$-periodic, then it is repelling, with   $\widehat{B}^0_v$-period say $l\geq 1$.
 By Konig's Theorem,  there is a path $\gamma\subset \mathbb D$ converging to $q$ such that
$\gamma\subset(\widehat{B}^0_v)^{l}(\gamma)$. By the relation $f^{l\ell }\circ \phi_{f,v}=\phi_{f,v}\circ (\widehat{B}^0_v)^{l}$ on $\overline{\mathbb D}$, the point
$\phi_{f,v}(q)$ is $f$-periodic. Moreover, $\gamma_f:=\phi_{f,v}(\gamma)$ is a path in $U_{f,v}$ converging to 
$\phi_{f,v}(q)$, and  $\gamma_f\subset f^{l\ell}(\gamma_f)$. By the Snail Lemma \cite[Lemma 16.2]{M},  $\phi_{f,v}(q)$ is either  repelling or parabolic for $f$.

Suppose that $\phi_{f,v}(q)$ is $f$-repelling. 
Note that $R_{f}(\theta_{D,v}^+(q)), R_f(\theta_{D,v}^-(q))$ land at 
$\phi_{f,v}(q)$.
By  Lemma \ref{stability-e-r}, there exist a neighborhood $\mathcal U$ of $f$,  a continuous map $\zeta: \mathcal U\rightarrow \mathbb C$ 
 such that 
\begin{itemize}
\item$\zeta(g)$ is $g$-repelling for all $g\in \mathcal U$,  with $\zeta(f)=\phi_{f,v}(q)$. It follows from Proposition \ref{convergence-repelling} that $\zeta(g)=\phi_{g,v}(r_{v,q}(\Phi^{-1}(g)))$  for  $g\in \mathcal U\cap \mathcal H$.

\item for all $g\in \mathcal U\cap \mathcal C_d$, the   rays
$R_{g}(\theta_{D,v}^+(q)), R_g(\theta_{D,v}^-(q))$ land at  $\zeta(g)$, and  
$\overline{R_{g}(\theta_{D,v}^+(q))}\cup \overline{R_g(\theta_{D,v}^-(q))}$
 separates 
$U_{g,v}$ from $L_{U_{g,v}, \zeta(g)}\setminus \{\zeta(g)\}$.
\end{itemize}

 This implies the number of the $g$-critical points  in  $L_{U_{g,v}, \zeta(g)}$  remains unchanged, for all $g\in \mathcal U\cap \mathcal C_d$.
However this  contradicts that the number of the $f$-critical points  in $L_{U_{f,v}, \phi_{f,v}(q)}$
is strictly larger than that of the $g$-critical points in $L_{U_{g,v}, \zeta(g)}$ when $g\in \mathcal H\cap \mathcal U$.
\end{proof}

\begin{rmk}  
By constrast, if  $B_{\sigma^{k-1}(v)}^0\circ \cdots \circ B_{\sigma(v)}^0\circ B_{v}^0(q)\in {\rm supp}(D_{\sigma^k(v)}^\partial)\big(\subset {\rm supp}(D_{\sigma^k(v)}^\partial(\mathcal H))\big)$ 
 for some $k\geq 1$,
 it might happen that $\phi_{f,v}(q)$ is not $f$-critical.  
\end{rmk}

\subsection{$\mathcal H$-admissible divisor}


For each $v\in V$, let 
\begin{equation} \label{index-v}
V(v)=\{u\in V\setminus \{v\}; v=\sigma^{k}(u) \text{ for some } k\geq 1\}.
\end{equation}
For each $u\in V(v)$, let $r_v(u)\geq 1$ be the smallest integer with $\sigma^{r_v(u)}(u)=v$.

\begin{defi}  [Misiurewicz divisor] \label{Misiurewicz}
	Let $D=\big((B_v^0, D_v^{\partial})\big)_{v\in V}\in \partial_0^* {\rm Div}{(\mathbb D)}^S$. We call  $D$ a Misiurewicz divisor if 
	$$ {\rm supp}(D_{v}^\partial)\cap R_v(D)=\emptyset, \forall v\in V; \ {\rm supp}(D_{v}^\partial)\cap O_v(D)=\emptyset, \forall v\in V_{\rm p},$$  where
	$$R_v(D)=\bigcup_{u\in V(v)}B_{\sigma^{r_v(u)-1}(u)}^0\circ \cdots \circ  B_{u}^0({\rm supp}(D_u^\partial)), \  v\in V,$$
	$$O_v(D)=	\overline{\bigcup_{l\geq 1} \big(\widehat{B}^0_{v}\big)^l\big(R_v(D)\cup {\rm supp}(D_v^\partial)\big)}, \  v\in V_{\rm p}.$$
\end{defi}

\begin{lem} \label{m-w-lim}  Let $D=\big((B_v^0, D_v^{\partial})\big)_{v\in V}\in \partial_0^* {\rm Div}{(\mathbb D)}^S$ be  Misiurewicz.
	
	1.   For any $v\in V$ and any $k\geq 1$,   
	$$B_{\sigma^{k-1}(v)}^0\circ \cdots \circ B_{\sigma(v)}^0\circ B_{v}^0({\rm supp}(D_v^\partial))\bigcap {\rm supp}(D_{\sigma^k(v)}^\partial)=\emptyset.$$

2.   Let $v\in V_{\rm p}$ have period $\ell_v$. If one of  ${\rm supp}(D_{v}^\partial), \cdots, {\rm supp}(D_{\sigma^{\ell_v-1}(v)}^\partial)$ is non-empty, then $O_v(D)$
		is a totally disconnected subset of $\partial \mathbb D$. 
\end{lem}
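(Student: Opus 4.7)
The plan is to prove Parts~1 and~2 by unpacking the Misiurewicz hypothesis through the combinatorics of the mapping scheme, exploiting the forward-invariance structure built into the definitions of $R_v(D)$ and $O_v(D)$.

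For Part~1, I will proceed by case analysis on the relationship between $v$ and $w := \sigma^k(v)$. If $w \neq v$, let $r = r_w(v)$. When $k = r$, the image $B^0_{\sigma^{k-1}(v)} \circ \cdots \circ B^0_v({\rm supp}(D_v^\partial))$ is by definition a summand of $R_w(D)$, so the Misiurewicz condition ${\rm supp}(D_w^\partial) \cap R_w(D) = \emptyset$ gives the claim. When $k > r$, necessarily $w \in V_{\rm p}$ and $k - r = m\ell_w$ for some $m \geq 1$; the composition factors as $(\widehat{B}^0_w)^m \circ (B^0_{\sigma^{r-1}(v)} \circ \cdots \circ B^0_v)$, placing the image in $(\widehat{B}^0_w)^m(R_w(D)) \subset O_w(D)$. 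If instead $w = v$, then $v \in V_{\rm p}$ and $k = m\ell_v$; the composition equals $(\widehat{B}^0_v)^m$, whose image on ${\rm supp}(D_v^\partial)$ lies in $O_v(D)$. In both of the last two situations, the Misiurewicz condition ${\rm supp}(D_w^\partial) \cap O_w(D) = \emptyset$ closes the argument.

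For Part~2, I first note that $O_v(D)$ is closed and forward-invariant under $\widehat{B}^0_v$, the latter following because $\widehat{B}^0_v\big(\bigcup_{l\ge 1}(\widehat{B}^0_v)^l(F_v)\big)\subset \bigcup_{l\ge 2}(\widehat{B}^0_v)^l(F_v)$ for $F_v := R_v(D)\cup{\rm supp}(D_v^\partial)$, combined with continuity. Because $D \in \partial_0^*\mathrm{Div}({\mathbb D})^S$ forces $\deg(\widehat{B}^0_v) \geq 2$, the restriction $\widehat{B}^0_v|_{\partial\mathbb{D}}$ is an expanding covering map of $\partial\mathbb{D}$ (topologically conjugate to $z \mapsto z^{\deg(\widehat{B}^0_v)}$). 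A standard dichotomy for closed forward-invariant subsets of $\partial\mathbb{D}$ under such expanding maps then applies: either the set contains no arc (hence is totally disconnected in $\partial\mathbb{D}$) or it contains an arc, in which case enough forward iterates of that arc cover $\partial\mathbb{D}$ and the set equals the whole circle. It therefore suffices to rule out $O_v(D) = \partial\mathbb{D}$.

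To do this, I will transfer the hypothetical equality across the cycle using semiconjugacies. For $0 \le i < \ell_v$, define $\Psi_i := B^0_{\sigma^{i-1}(v)} \circ \cdots \circ B^0_v$ (with $\Psi_0 = {\rm id}$), a Blaschke surjection $\partial\mathbb{D}\to\partial\mathbb{D}$ satisfying $\Psi_i \circ \widehat{B}^0_v = \widehat{B}^0_{\sigma^i(v)} \circ \Psi_i$. A case analysis parallel to Part~1 (the contribution from $u = v$ lands directly in $R_{\sigma^i(v)}(D)$; contributions from $u\in V(v)$ either realize a summand of $R_{\sigma^i(v)}(D)$ or appear as $(\widehat{B}^0_{\sigma^i(v)})^m$-images of such summands, in particular in $O_{\sigma^i(v)}(D)$) yields $\Psi_i(F_v) \subset F_{\sigma^i(v)} \cup O_{\sigma^i(v)}(D)$. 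Pushing through the semiconjugation and taking closures gives $\Psi_i(O_v(D)) \subset O_{\sigma^i(v)}(D)$. If $O_v(D) = \partial\mathbb{D}$, then surjectivity of $\Psi_i$ forces $O_{\sigma^i(v)}(D) = \partial\mathbb{D}$ for every $0\le i<\ell_v$; choosing the $i$ given by the hypothesis with ${\rm supp}(D_{\sigma^i(v)}^\partial) \neq \emptyset$ contradicts ${\rm supp}(D_{\sigma^i(v)}^\partial) \cap O_{\sigma^i(v)}(D) = \emptyset$. The main technical hurdle is the combinatorial bookkeeping required to verify $\Psi_i(F_v) \subset F_{\sigma^i(v)} \cup O_{\sigma^i(v)}(D)$ uniformly in $i$, which hinges on tracking $r_w(u)$ across the cycle and distinguishing the summands that realize directly in $R_{\sigma^i(v)}(D)$ from those needing extra iterates of $\widehat{B}^0_{\sigma^i(v)}$.
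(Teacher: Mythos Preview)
Your proposal is correct and takes essentially the same approach as the paper: Part~1 unpacks the Misiurewicz definition via the case analysis you describe (the paper just writes ``follows from the definition''), and Part~2 uses forward-invariance of $O_v(D)$ under $\widehat{B}^0_v$, expansion of $\widehat{B}^0_v$ on $\partial\mathbb D$, and the transfer inclusion $B^0_v(O_v(D))\subset O_{\sigma(v)}(D)$ to reach an index where the support is non-empty. The only cosmetic difference is that the paper pushes a hypothetical non-trivial arc to the good index and expands it there directly, rather than first invoking the circle-dichotomy at $v$ and then transferring the equality $O_v(D)=\partial\mathbb D$; the combinatorial bookkeeping you flag is the same in both versions and is likewise left implicit in the paper.
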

\begin{proof} 1. It follows from the definition of the Misiurewicz divisor.
	
	2.  Clearly $O_v (D)\neq \emptyset$.  One may check that $ B_{v}^0(O_v (D))\subset  O_{\sigma(v)}(D)$,  and $\widehat{B}^0_{v}( O_v(D))\subset  O_v(D)$.
	Without loss of generality, we  assume ${\rm supp}(D_{v}^\partial)\neq\emptyset$.    Let $I\subset \partial\mathbb D $ be a connected  component of $ O_v(D)$. 
	If $I$ is  not  a singleton, then $I$ is  a closed interval in $\partial \mathbb D$.
	The assumption $D\in \partial_0^* {\rm Div}{(\mathbb D)}^S$ implies that 
	$\widehat{B}^0_v$ has degree at least two, hence  $\widehat{B}^0_v$ is expanding on $\partial \mathbb D$.
	So there is an integer $l\geq 1$ such that $(\widehat{B}_v^0)^{l}(I)=\partial \mathbb D$.
	However,  this    contradicts the fact  
	$$(\widehat{B}_v^0)^{l}(I)\subset  (\widehat{B}_v^0)^{l}(O_v(D))\subset O_v(D)\subset \partial \mathbb D\setminus {\rm supp}(D_{v}^\partial)\neq  \partial \mathbb D.$$
	\end{proof}


	
	

 \begin{defi}[$\mathcal H$-admissible divisor] \label{adm-divisor} Let $D=\big((B_v^0, D_v^{\partial})\big)_{v\in V}\in \partial_0^* {\rm Div}{(\mathbb D)}^S$.  We call $D$  generic, if for any $v\in V$ and any $q\in {\rm supp}(D_v^\partial)$, the multiplicity of $D_v^\partial$ at $q$ is exactly one.  The divisor $D$ is called $\mathcal H$-admissible if it 
 	is  a generic  Misiurewicz divisor, such that
  for any $v\in V$ and any $k\geq 1$, 
 	$$B_{\sigma^{k-1}(v)}^0\circ \cdots \circ B_{\sigma(v)}^0\circ B_{v}^0({\rm supp}(D_v^\partial))\bigcap 
 	{\rm supp}(D_{\sigma^k(v)}^\partial(\mathcal H))=\emptyset.$$
 \end{defi}

The maps in the impression of an $\mathcal H$-admissible divisor enjoy the following properties.

 \begin{pro} \label{h-admissible-characterization}   Let $D=\big((B_u^0, D_u^{\partial})\big)_{u\in V}\in \partial_0^* {\rm Div}{(\mathbb D)}^S$ be an  $\mathcal H$-admissible divisor.  Let  $v\in V$ and  $f\in I_{\Phi}(D)$.
 	
 	1.    We have
 	\begin{itemize}
 		\item  $\phi_{f, v}({\rm supp}(D_{v}^\partial))\subset {\rm Crit}(f)$;  moreover, for any   $q\in  {\rm supp}(D_{v}^{\partial})$, there are exactly two external rays landing at $\phi_{f,v}(q)$;
 		
 		\item  $f$ is a Misiurewicz map;
 		
 		\item  The critical set on $J(f)$ satisfies  
 		  $$J(f)\cap {\rm Crit}(f)=\bigsqcup_{u\in V} (\partial U_{f,u}\cap {\rm Crit}(f))=\bigsqcup_{u\in V}\phi_{f, u}({\rm supp}(D_{u}^\partial));$$
 		\end{itemize}

 
 	2. If $q\in  {\rm supp}(D_{v}^{\partial}(\mathcal H))\setminus {\rm supp}(D_{v}^{\partial})$,
 	 and 
 	 \begin{itemize}
 	 	\item   if  $B_{\sigma^{k-1}(v)}^0\circ \cdots \circ B_{\sigma(v)}^0\circ B_{v}^0(q)\notin 
 	 	{\rm supp}(D_{\sigma^k(v)}^\partial)$ for  $1\leq  k\leq l$, where $l\geq 1$ is an integer, then none of  $\phi_{f,v}(q), \cdots, f^{l}(\phi_{f,v}(q))$ is critical;

 	 	\item   if  $B_{\sigma^{k-1}(v)}^0\circ \cdots \circ B_{\sigma(v)}^0\circ B_{v}^0(q)\notin 
 	 	{\rm supp}(D_{\sigma^k(v)}^\partial)$ for all $ k\geq 1$, then $\phi_{f,v}(q)$ is a  pre-repelling point.
\end{itemize} 	 

 	\end{pro}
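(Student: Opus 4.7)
The plan is to combine the three defining features of an $\mathcal H$-admissible divisor---genericity, the Misiurewicz-divisor condition, and the disjointness $B^0_{\sigma^{k-1}(v)}\circ\cdots\circ B_v^0(\mathrm{supp}(D_v^\partial))\cap \mathrm{supp}(D_{\sigma^k(v)}^\partial(\mathcal H))=\emptyset$ for all $k\ge 1$---with Proposition~\ref{para-dym} and the combinatorial information from Propositions~\ref{combinatorial-property0} and \ref{combinatorial-property-2}, together with the critical-point counting supplied by Proposition~\ref{divisor-correspondence} and the limb structure of Proposition~\ref{RY}.

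For Part 1, the disjointness clause of admissibility is exactly the hypothesis of Proposition~\ref{para-dym}(1), applied at each $q\in \mathrm{supp}(D_v^\partial)$; hence $\phi_{f,v}(q)\in \mathrm{Crit}(f)$ and the adjacent limb $L_{U_{f,\sigma(v)},f(\phi_{f,v}(q))}$ is trivial. Genericity ensures these boundary critical points are simple. To obtain the decomposition $J(f)\cap \mathrm{Crit}(f)=\bigsqcup_u \phi_{f,u}(\mathrm{supp}(D_u^\partial))$, I would count: each $U_{f,u}$ carries $d_{u,1}$ interior critical points (images of zeros of $B_u^0$ in $\mathbb D$ under $\phi_{f,u}$), the boundary produces $\sum_u d_{u,2}=\sum_u|\mathrm{supp}(D_u^\partial)|$ more, and the identity $\sum_u(d_{u,1}+d_{u,2})=\sum_u(\delta(u)-1)=d-1$ saturates the total critical count. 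Disjointness $\phi_{f,u}(\mathrm{supp}(D_u^\partial))\cap\phi_{f,u'}(\mathrm{supp}(D_{u'}^\partial))=\emptyset$ for $u\ne u'$ reduces, via iteration to the periodic case, to Corollary~\ref{intersection-Fatou-components}: a shared boundary point would be parabolic or repelling periodic, not critical. The Misiurewicz property for $f$ then follows: indifferent cycles on some $\partial U_{f,v}$, $v\in V_{\mathrm p}$, would force periodic support points via Proposition~\ref{para-dym}(2), contradicting the Misiurewicz-divisor condition, and indifferent cycles elsewhere would need a critical attractor which the critical budget forbids. The $\omega$-limit condition follows by transporting the $f$-orbit of $\phi_{f,v}(q)$ to the $B^0$-orbit of $q$, contained in $O_v(D)$, a set disjoint from every $\mathrm{supp}(D_{\cdot}^\partial)$ by Misiurewicz. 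The ``exactly two external rays'' count combines Proposition~\ref{RY}(4) (two rays separating the non-trivial limb from $U_{f,v}$) with the simple-critical-point local picture and Lemma~\ref{limb-angles}(2) to exclude further landings.

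Part 2 is a consequence of Part 1. For the first bullet, the hypothesis translates via the conjugacy of Lemma~\ref{model-map} into $f^k(\phi_{f,v}(q))\notin \phi_{f,\sigma^k(v)}(\mathrm{supp}(D_{\sigma^k(v)}^\partial))$ for $0\le k\le l$, and the disjoint-union formula from Part 1 removes these points from $J(f)\cap \mathrm{Crit}(f)$. For the second bullet, the assumption $q\in \mathrm{supp}(D_v^\partial(\mathcal H))$ makes the limb at $\phi_{f,v}(q)$ non-trivial, and Proposition~\ref{RY}(4) yields two external rays $R_f(\theta_{D,v}^\pm(q))$ landing there; the Misiurewicz property shows every critical point inside this limb is preperiodic, so the angles of the rays they determine are rational, and propagating along and across limb boundaries via Lemma~\ref{limb-angles}(1) forces $\theta_{D,v}^\pm(q)$ to be rational. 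Hence $\phi_{f,v}(q)$ is preperiodic, and Proposition~\ref{RY}(1) together with the already-established absence of parabolic cycles yields that it is pre-repelling. The main obstacle I foresee is the disjointness step in Part 1 when the relevant Fatou components are only pre-periodic---requiring a careful pullback to the periodic case before Corollary~\ref{intersection-Fatou-components} applies---and the rationality-of-angles step in the second bullet of Part 2, which hinges on a delicate combinatorial propagation of admissibility along the forward orbit.
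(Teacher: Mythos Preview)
Your outline captures most of the ingredients, but two of the proposed steps do not go through as stated, and the paper takes a different route at each.

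\textbf{Disjointness in Part 1.} Reducing to Corollary~\ref{intersection-Fatou-components} by iteration fails: if a critical point $c$ lay on both $\partial U_{f,u_1}$ and $\partial U_{f,u_2}$ with $u_1\ne u_2$, then Proposition~\ref{para-dym}(1) forces $L_{U_{f,\sigma(u_j)},f(c)}$ to be trivial for $j=1,2$; were $\sigma(u_1)\ne\sigma(u_2)$ one would have $U_{f,\sigma(u_2)}\subset L_{U_{f,\sigma(u_1)},f(c)}$, a contradiction. So necessarily $\sigma(u_1)=\sigma(u_2)$, and after \emph{one} step the two marked components coincide, leaving Corollary~\ref{intersection-Fatou-components} nothing to say. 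The paper instead argues via the postcritically finite center $f_0\in\mathcal H$: since $f_0$ is hyperbolic, $\partial U_{f_0,u_1}\cap\partial U_{f_0,u_2}=\emptyset$, so separating ray-pairs exist for $f_0$; once the critical count has established that $f$ is Misiurewicz, Lemma~\ref{stability-e-r} transports these ray-pairs to $f$ and yields $\overline{U_{f,u_1}}\cap\overline{U_{f,u_2}}=\emptyset$. This is what pins down $m(c)=1$ for every Julia critical point, and hence simplicity and the ``exactly two rays'' statement. (A smaller point: your use of Proposition~\ref{para-dym}(2) to exclude parabolics on $\partial U_{f,v}$ runs in the wrong direction---that proposition produces parabolic points from periodic support, not conversely---but your ``critical budget'' clause already rules out \emph{all} parabolic basins, so this is harmless.)

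\textbf{Pre-repelling in Part 2.} Your rationality-of-angles argument requires that the Julia critical points inside the limb be preperiodic, but the Misiurewicz property only gives $\omega(c)\cap\mathrm{Crit}(f)=\emptyset$, and nothing in the definition of $\mathcal H$-admissibility forces $\mathrm{supp}(D_u^\partial)$ to consist of $D$-preperiodic points; moreover Lemma~\ref{limb-angles}(1) is a continuity statement and cannot propagate rationality across limb boundaries. The paper's argument is direct: the hypothesis says the entire $f$-orbit of $\phi_{f,v}(q)$ avoids critical points. If this orbit were wandering, the angular difference of the (at least two) rays landing along it would eventually grow exponentially (cf.\ Remark~\ref{limb-wandering} and Lemma~\ref{limb-angles}(2)), which is impossible. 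Hence $\phi_{f,v}(q)$ is preperiodic, and Proposition~\ref{RY}(1) together with the already-established absence of parabolic cycles gives pre-repelling.
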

 Here,  a rational map $g$ of degree $\geq 2$ is {\it Misiurewicz} if 
 it is non-hyperbolic, without
 parabolic cycles, and  $\omega(c)\cap {\rm Crit}(g)=\emptyset$ for every critical point $c\in J(g)$, where $\omega(c)=\bigcap_{n\geq 0}\overline{\{g^k(c);k>n\}}$ is the $\omega$-limit set of $c$. 
 
 \begin{proof} 
 	1. By the definition of $\mathcal H$-admissibility and  Proposition \ref{para-dym}(1),  we have    $\phi_{f, v}({\rm supp}(D_{v}^\partial))\subset {\rm Crit}(f)$.
 In the following, we show 
   $${\rm Crit}(f)\setminus{\bigcup_{u\in V} U_{f,u}}=C:=\bigcup_{u\in V}\phi_{f, u}({\rm supp}(D_{u}^\partial)).$$
 	 To this end, 
 	 for each $c\in C$, let $Y_c=\{u\in V;  c\in \partial U_{f,u}\}$ and $m(c)=\# Y_c$.\vspace{2pt}
 	
 	{\it Fact 1:  If $m(c)\geq 2$, then for any $u_1, u_2\in Y_c$,  we have $\sigma(u_1)=\sigma(u_2)$.}\vspace{2pt}
 	
 	In fact, if $\sigma(u_1)\neq \sigma(u_2)$, then $U_{f,\sigma(u_2)}\subset L_{U_{f,\sigma(u_1)}, f(c)}$ and $U_{f,\sigma(u_1)}\subset L_{U_{f,\sigma(u_2)}, f(c)}$. Hence neither $L_{U_{f,\sigma(u_1)}, f(c)}$ nor $L_{U_{f,\sigma(u_2)}, f(c)}$  is trivial. However, this contradicts  Proposition \ref{para-dym}(1).\vspace{2pt}
 	
 		{\it Fact 2:  If $m(c)\geq 2$, then the local degree  ${\rm deg}(f, c)\geq m(c)+1$.}\vspace{2pt}
 		
 		\begin{figure}[h]
 			\begin{center}
 				\includegraphics[height=3.5cm]{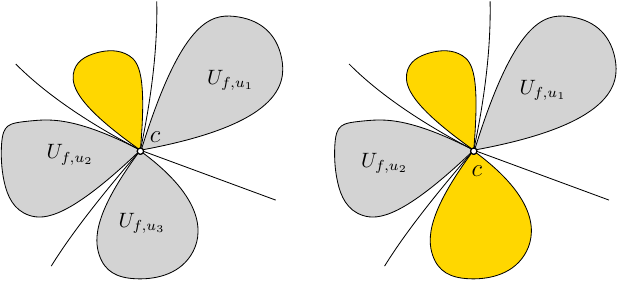}
 			\end{center}
 			\caption{$(m(c), {\rm deg}(f, c))=(3, 4)$ (left) and $(2,4)$ (right).} \label{fig:m-d}
 		\end{figure}  
 	
 	By Fact 1, there is $v_c\in V$ so that $\sigma(u)=v_c$ for all $u\in Y_c$. By Proposition \ref{para-dym}(1), the limb $L_{U_{f, v_c}, f(c)}$ is trivial, hence there is only one external ray landing at $f(c)$. It follows that
 	the local degree ${\rm deg}(f, c)$ equals the number of the external rays landing at $c$.
 		These rays separate the Fatou components $U_{f, u}, u\in Y_c$ as well as at least one extra  Fatou component caused by the critical point $c$ (this Fatou component can be specified as follows: note that $c\in \phi_{f, u}({\rm supp}(D_{u}^\partial))$ for some $u\in V$, there is an additional Fatou component  splitting from $U_{f, u}$).  Hence ${\rm deg}(f, c)\geq m(c)+1$.  See Figure \ref{fig:m-d}.
 	   
 	    By counting the number of critical points, we have 
 	 $$\sum_{c\in C}({\rm deg}(f, c)-1 )\geq \sum_{c\in C} m(c)\geq \sum_{u\in V}{\rm deg}(D_u^\partial).$$
 	 On the other hand, since $C\subset {\rm Crit}(f)\setminus{\bigcup_{u\in V} U_{f,u}}$, we have
 	 $$\sum_{c\in C}({\rm deg}(f, c)-1 )\leq d-1-\sum_{u\in V}({\rm deg}(B_u^0)-1)=\sum_{u\in V}{\rm deg}(D_u^\partial).$$
 	It follows  that ${\rm Crit}(f)\setminus{\bigcup_{u\in V} U_{f,u}}=C$ and  
 	 $$\sum_{c\in C}({\rm deg}(f, c)-1 )=\sum_{u\in V}{\rm deg}(D_u^\partial).$$

 	 	The equality implies  that $f$ has no parabolic point (otherwise there will be other critical points in parabolic basins). Transferring the  Misiurewicz property (see Definition \ref{Misiurewicz})  of   $D$ to $f$,  for any $c\in J(f)\cap {\rm Crit}(f)=C$, its $\omega$-limit set $\omega(c)$ is disjoint from  
 	 $J(f)\cap {\rm Crit}(f).$
 	 Hence $f$ is  Misiurewicz.
 	
 	\begin{figure}[h]
 		\begin{center}
 			\includegraphics[height=3.5cm]{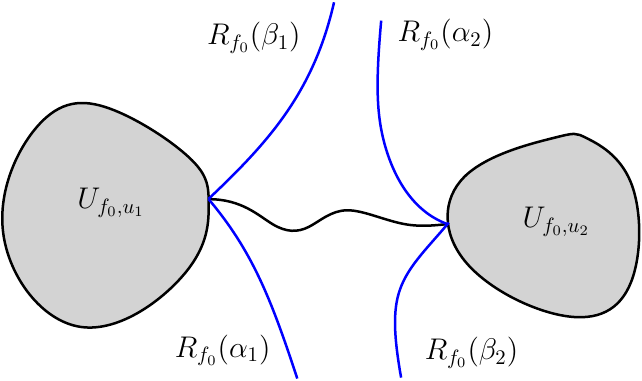}
 		\end{center}
 		\caption{Relative positions of $U_{f_0, u_1}$ and $ U_{f_0, u_2}$.} \label{fig: p-landing-relation}
 	\end{figure}
 
 In the following, we shall derive some further properties.\vspace{2pt}
 	
 		{\it Fact 3:  We actually have  $m(c)=1$ for all $c\in C$.}\vspace{2pt}
 		
 	Suppose $m(c)\geq 2$ for some $c\in C$. By Fact 1,  for any different $u_1, u_2\in Y_c$,  we have $\sigma(u_1)=\sigma(u_2)$.
 	Let $f_0$ be the center of $\mathcal H$. The property  $\sigma(u_1)=\sigma(u_2)$ implies that $\partial U_{f_0, u_1}\cap \partial U_{f_0, u_2}=\emptyset$ (otherwise the intersection point would be a critical point of $f_0$,  contradiction!). For each $j\in \{1,2\}$, suppose $R_{f_0}(\alpha_j), R_{f_0}(\beta_j)$ are external rays landing at the same point, separating $U_{f_0, u_j}$ for its limb containing $U_{f_0, u'}$, here $u'\in \{1,2\}\setminus\{u_j\}$. See Figure \ref{fig: p-landing-relation}.  By the   Misiurewicz property of $f$ and Lemma \ref{stability-e-r},  
 	 the external rays
 	$R_{f}(\alpha_j), R_{f}(\beta_j)$ land at the same point.   These four rays separate
 	$\overline{U_{f,u_1}}$ from  	$\overline{U_{f,u_2}}$.  
 	Hence $\partial U_{f,u_1}\cap \partial U_{f,u_2}=\emptyset$. This contradicts that $c\in \partial U_{f,u_1}\cap \partial U_{f,u_2}$.

By Fact 3, we have 
 	\begin{equation} \label{intersection-property}
 		\partial{U_{f,u_1}}\cap \partial{U_{f,u_2}}\cap {\rm Crit}(f)=\emptyset, \ \forall u_1, u_2\in V, u_1\neq u_2.
 	\end{equation}
 This equality implies that 
 \begin{equation}\label{critical-location}
  J(f)\cap {\rm Crit}(f)=\bigsqcup_{u\in V}\phi_{f, u}({\rm supp}(D_{u}^\partial)).
 \end{equation}

It follows that for any $q\in  {\rm supp}(D_{v}^{\partial})$, there are exactly two external rays landing at $\phi_{f,v}(q)$.

 		

 	2.  
 	Suppose  $q\in  {\rm supp}(D_{v}^{\partial}(\mathcal H))\setminus {\rm supp}(D_{v}^{\partial})$.  If  $B_{\sigma^{k-1}(v)}^0\circ \cdots \circ B_{\sigma(v)}^0\circ B_{v}^0(q)\notin 
 	{\rm supp}(D_{\sigma^k(v)}^\partial)$ for  $1\leq  k\leq l$,  then 
 	 $f^k(\phi_{f, v}(q))\notin \phi_{f, \sigma^k(v)}({\rm supp}(D_{\sigma^k(v)}^\partial))$  for  $1\leq  k\leq l$.  By (\ref{intersection-property}) and (\ref{critical-location}),
 	$f^k(\phi_{f, v}(q))$ is not a critical point.  In particular, if $l=\infty$, then the $f$-orbit of $\phi_{f,v}(q)$ meets no critical points. If this orbit is wandering, then the  angular difference of the external rays landing at $f^k(\phi_{f,v}(q))$ grows  exponentially as $k\rightarrow \infty$. This is a contradiction. Hence $\phi_{f,v}(q)$ is $f$-pre-periodic. Since $f$    is  Misiurewicz,   $\phi_{f,v}(q)$ is pre-repelling.
 	\end{proof}
 
 
 \begin{rmk}  1.    By contrast, if $q\in  {\rm supp}(D_{v}^{\partial}(\mathcal H))\setminus {\rm supp}(D_{v}^{\partial})$ satisfies that 	$B_{\sigma^{k-1}(v)}^0\circ \cdots \circ B_{\sigma(v)}^0\circ B_{v}^0(q)\in 
 	{\rm supp}(D_{\sigma^k(v)}^\partial)$ for some $ k\geq 1$,   it can happen that $\phi_{f,v}(q)$ is  not a pre-periodic point of $f$.
 	
 	2. If $D\in  \partial_0^* {\rm Div}{(\mathbb D)}^S$ is  Misiurewicz but not  $\mathcal H$-admissible, it can happen that  some $f\in I_{\Phi}(D)$  is not Misiurewicz.
 \end{rmk}

While generic Misiurewicz divisors can be readily constructed, determining their $\mathcal{H}$-admissibility remains non-trivial. The subsequent three subsections aim to establish an $\mathcal{H}$-admissibility criterion (Proposition  \ref{criterion-H-adm}), achieved through the following two steps:
  in Section \ref{divisors-p-d},   we construct a `stretching sequence' of divisors $E_n$'s  approaching $D$,   with the help of the higher dimensional  Intermediate Value Theorem (Theorem \ref{higher-ivt}); we then characterize in Section \ref{p-prescribed-dynamics} the limit dynamics  of the polynomial sequence associated with these divisors, inspired by the idea of  Petersen and Ryd \cite{PR}.  
  
  Building on these preparations, we establish in Section  \ref{S-criterion-h-a} an $\mathcal{H}$-admissibility criterion, which implies 
  the density of  $\mathcal H$-admissible divisors on $\partial_0^* {\rm Div}{(\mathbb D)}^S$.

\subsection{Divisors with prescribed dynamics} \label{divisors-p-d}
 
 Let $E=\big((B_u, E_u^{\partial})\big)_{u\in V}\in  \partial_0^* {\rm Div}{(\mathbb D)}^S$. As we did   before,    for each $u\in V$,  we shall view  $B_u$ as a map $B_u:\C_u\rightarrow \C_{\sigma(u)}$, and denote a set $X\subset \C_u$ by $X_u$. 
 
 
 \begin{lem} \label{family-koenig}  
 	Let $D=\big((B_u^0, D_u^{\partial})\big)_{u\in V}\in  \partial_0^* {\rm Div}{(\mathbb D)}^S$. Suppose that   $({B}_u^0)'(0)\neq 0$ for all $u\in V$.
 		There exist $\tau>0$ and for each $v\in V$,  a continuous  family of Koenigs maps 
 	$$\kappa_{E, v}: \mathbb D_v\rightarrow \mathbb C,  \ \text{ parameterized by }  E=((B_u, E_u^{\partial}))_{u\in V}\in \U_\tau(D),$$
 	satisfying that $\kappa_{E, v}(0)=0, \kappa'_{E, v}(0)=1$, and  $\kappa_{E, \sigma(v)}\circ  {B}_v= {B}'_v(0)\cdot \kappa_{E, v}$.
 \end{lem}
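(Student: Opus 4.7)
The plan is to construct $\kappa_{E,v}$ in two stages. First, for $v\in V_{\rm p}$, apply the classical Koenigs Linearization Theorem to the first-return Blaschke product $\widehat{B}_v^E := B_{\sigma^{\ell_v-1}(v)}\circ\cdots\circ B_v$ at its attracting fixed point $0$, producing $\kappa_{E,v}$ and analogously $\kappa_{E,u}$ for every $u$ in the $\sigma$-cycle of $v$. Second, for $v\in V_{\rm np}$, use the desired functional equation itself as a recursive definition via backward pullback, namely $\kappa_{E,v}(z) := B_v'(0)^{-1}\,\kappa_{E,\sigma(v)}(B_v(z))$, induction running along the finite tree structure of $V_{\rm np}$ toward $V_{\rm p}$.

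\textbf{Construction on $V_{\rm p}$.} The key observation for the first stage is that the cycle multiplier $\lambda_v(E) := \prod_{k=0}^{\ell_v-1}(B_{\sigma^k(v)})'(0)$ is nonzero at $E=D$ by hypothesis, and satisfies $|\lambda_v(D)|<1$: the mapping-scheme condition $\prod_k\delta(\sigma^k(v))\geq 2$ forces at least one factor $B^0_{\sigma^k(v)}$ to be a non-identity fixed-point-centered Blaschke product, whose derivative at $0$ therefore lies in $\mathbb{D}\setminus\{0\}$ by Schwarz's Lemma (any self-map of $\mathbb{D}$ fixing both $0$ and $1$ with unit derivative at $0$ must be the identity). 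Shrinking $\tau$ preserves $0<|\lambda_v(E)|<1$ and $(B_u^E)'(0)\neq 0$ throughout $\U_\tau(D)$. The Koenigs limit
\[
\kappa_{E,v}(z) \;=\; \lim_{n\to\infty}\frac{(\widehat{B}_v^E)^n(z)}{\lambda_v(E)^n}
\]
then converges on the immediate basin of $0$, which by the Denjoy--Wolff Theorem equals all of $\mathbb{D}$ for nontrivial holomorphic self-maps of $\mathbb{D}$ with interior fixed point. The normalization $\kappa_{E,v}(0)=0,\ \kappa_{E,v}'(0)=1$ and the functional equation along the cycle follow from the uniqueness of normalized Koenigs linearizations: both sides of $\kappa_{E,\sigma(u)}\circ B_u = B_u'(0)\kappa_{E,u}$ linearize $\widehat{B}_u^E$ and have derivative $B_u'(0)$ at $0$.

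\textbf{Continuity and the main obstacle.} The delicate point is continuity of $E\mapsto \kappa_{E,v}$ in the compact-open topology on $\mathbb{D}_v$. The difficulty is that $\U_\tau(D)$ meets $\partial\,{\rm Div}(\mathbb{D})^S$, so $B_u^E$ may be a lower-degree Blaschke product whose degree jumps as $E$ crosses the boundary stratum. However, all free-zero degenerations occur on $\partial\mathbb{D}$ by the definition of $\U_\tau(D)$, so Lemma \ref{degenerate-0} guarantees that $B_u^E \to B_u^D$ uniformly on every compact subset of $\mathbb{D}$; this yields joint continuity of $(E,z)\mapsto (\widehat{B}_v^E)^n(z)$ on $\U_\tau(D)\times K$ for each compact $K\subset\mathbb{D}$, and feeds into both stages of the construction. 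The main obstacle I anticipate is propagating continuity of $\kappa_{E,v}$ from a fixed small disk around $0$ (where it is immediate from uniform convergence of the Koenigs sequence) to all of $\mathbb{D}_v$ for boundary parameters. I would handle this by using the functional equation globally: for any $z\in\mathbb{D}$ and any compact neighborhood of a given $E_0\in\U_\tau(D)$, the iterates $(\widehat{B}_v^E)^N(z)$ eventually enter a fixed Koenigs disk around $0$ uniformly in $E$ (by equicontinuity of $\{(\widehat{B}_v^E)^n\}$ on $\U_\tau(D)\times K$ together with Denjoy--Wolff contraction), so the identity $\kappa_{E,v}(z)=\lambda_v(E)^{-N}\kappa_{E,v}((\widehat{B}_v^E)^N(z))$ globalizes continuity to all of $\mathbb{D}$.
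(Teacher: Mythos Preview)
Your two-stage approach matches the paper's: classical Koenigs on the periodic cycles (the paper cites \cite[\S 8]{M}, picks one representative $v$ per cycle, and defines $\kappa_{E,u}$ for all other $u$ mapping into that cycle by the single pullback formula $\kappa_{E,u}=(B_{\sigma^{r_v(u)-1}(u)}\circ\cdots\circ B_u)'(0)^{-1}\,\kappa_{E,v}\circ B_{\sigma^{r_v(u)-1}(u)}\circ\cdots\circ B_u$), then extends to $V_{\rm np}$ by the same mechanism. Your continuity discussion is more detailed than the paper's, which simply absorbs it into the citation.

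One correction: the inequality $|\lambda_v(D)|<1$ does \emph{not} follow from the mapping-scheme condition $\prod_k\delta(\sigma^k(v))\geq 2$. Those $\delta$'s are the pre-degeneration degrees, and after boundary degeneration every $B^0_{\sigma^k(v)}$ could have degree $1$ (hence be the identity, giving $\lambda_v=1$). The hypothesis you actually need is $D\in\partial_0{\rm Div}(\mathbb D)^S$ (contained in $\partial_0^*$), which by definition says $\sum_{k=0}^{\ell_v-1} d_{\sigma^k(v),1}\geq 1$, i.e., at least one $B^0_{\sigma^k(v)}$ retains a free zero in $\mathbb D$ and so has degree $\geq 2$; then your Schwarz argument goes through.
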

\begin{proof}  Take small $\tau>0$  so that   ${B}_u'(0)\neq 0$ for  all $E=((B_w, E_w^{\partial}))_{w\in V}\in \U_\tau(D)$ and all $u\in V$. 
	Take a representative $v\in V_{\rm p}$ in each $\sigma$-cycle.   Recall that $\ell_v\geq 1$ is the $\sigma$-period of $v$, and  
	$ \widehat{B}_{v}=B_{{\sigma^{\ell_v-1}(v)}}\circ \cdots  \circ B_{v}.$  By \cite[\S 8]{M}, there is a unique continuous  family of global Koenigs maps $\big\{\kappa_{E, v}: \mathbb D_v\rightarrow \mathbb C\big\}_{E\in \U_\tau(D)}$,  satisfying that 
	 $\kappa_{E, v}(0)=0$, $\kappa_{E, v}\circ \widehat{B}_v=\widehat{B}'_v(0)\cdot \kappa_{E, v}$, and $\kappa'_{E, v}(0)=1$. 
	 For each $u\in V(v)$ (given by \eqref{index-v}),  set
	 $$\kappa_{E, u}=
	 {\kappa_{E, v}\circ B_{\sigma^{r_v(u)-1}(u)}\circ \cdots \circ B_{u}}/
	 {( B_{\sigma^{r_v(u)-1}(u)}\circ \cdots \circ B_{u})'(0)}.$$
	 In this way, for each $u\in V$, we get a  required   family $\{\kappa_{E, u}\}_{E\in \U_\tau(D)}$.
	\end{proof}

Based on Lemma \ref{family-koenig}, for $E=\big((B_u, E_u^{\partial})\big)_{u\in V}\in \U_\tau(D)$ and $v\in V$,   define the potential function $G_{E, v}: \mathbb D_v\rightarrow  [-\infty, +\infty)$ by
$$G_{E, v}(z)=\log |\kappa_{E, v}(z)|,   \  z\in \mathbb D_v.$$
It satisfies that 
$G_{E, \sigma(v)}(B_v(z))=G_{E, v}(z)+ {\log|{B}'_v(0)|}, \ z\in \mathbb D_v$.

Let $D=\big((B_w^0, D_w^{\partial})\big)_{w\in V}\in \partial_0^* {\rm Div}{(\mathbb D)}^S$ and $u\in V$.  A point $q\in \partial \mathbb D_{u}$  is called  {\it $D$-preperiodic} if  there are two  minimal integers $m\geq 0, l\geq 1$  such that $v:=\sigma^m(u)\in V_{\rm p}$, and 
$$\begin{cases}
	(\widehat{B}_{v}^0)^{l}(q)=q,   &m=0, \\
	(\widehat{B}_{v}^0)^{l}(B_{\sigma^{m-1}(u)}^0\circ \cdots \circ B_{u}^0(q))= B_{\sigma^{m-1}(u)}^0\circ \cdots  \circ B_{u}^0(q), & m\geq 1.
		\end{cases}$$
In this case, we say $q$  is $(m, l)$-$D$-preperiodic.
If   $m\geq 1$, then $q$ is called 
{\it   strictly $D$-preperiodic}; otherwise,   $m=0$, and $q$ is called  
{\it   $D$-periodic}.   Note  that  $m\geq r_{v}(u)$, here recall that $r_{v}(u)$  is the minimal integer such that
$\sigma^{r_{v}(u)}(u)=v$.  

The divisor $D$ is called {\it  preperiodic (resp. strictly preperiodic)} if for any $u\in V$, any   $q\in {\rm supp}(D_u^\partial)$  is   {\it  $D$-preperiodic (resp. strictly $D$-preperiodic)}.

Let $\mathbf{Div}_{\rm spp}$ be the set of all strictly preperiodic divisors $D\in \partial_0^* {\rm Div}{(\mathbb D)}^S$.

\begin{lem} [\textbf{and definition for internal ray}] \label{internal-ray-model}
Let $D=\big((B_w^0, D_w^{\partial})\big)_{w\in V}\in \partial_0^* {\rm Div}{(\mathbb D)}^S$ and $u\in V$.  Suppose   $({B}_{\sigma^k(u)}^0)'(0)\neq 0$ for all $ k\geq 0$.  Let $q\in \partial \mathbb D_{u}$ be $(m, l)$-$D$-preperiodic, with orbit 
 	$$q_{0}:=q\overset{B^0_{u}}{\longrightarrow}  q_{1}  \overset{B^0_{\sigma(u)}}{\longrightarrow} q_{2}   \overset{B^0_{\sigma^2(u)}}{\longrightarrow} \cdots.$$
Then for $k\geq 0$,  there is a ray $\mathbf{R}^D_{\sigma^k(u),  q_k}:  (-\infty, +\infty)\rightarrow \mathbb D_{\sigma^k(u)}$
such that

\begin{itemize}
\item	$G_{D, \sigma^k(u)}(\mathbf{R}^D_{\sigma^k(u),  q_k}(t))=t, \ t\in \mathbb R$.

\item  $\lim_{t\rightarrow +\infty } \mathbf{R}^D_{\sigma^k(u),  q_k}(t)= q_k$ and 
$$\lim_{t\rightarrow -\infty } \mathbf{R}^D_{\sigma^k(u),  q_k}(t)=\begin{cases}  0, & \text{ if }  k\geq m,\\ 
	\zeta_k,  &  \text{ if }    0\leq k< m,
\end{cases}$$
where $\zeta_k$ is a point in $\mathbb D_{\sigma^k(u)}$ with $B_{\sigma^{m-1}(u)}^0\circ \cdots \circ B_{\sigma^k(u)}^0(\zeta_k)=0$.

\item  the dynamical  compatibility holds:
$$B^0_{\sigma^k(u)}(\mathbf{R}^D_{\sigma^k(u),  q_k}(t))=
\mathbf{R}^D_{\sigma^{k+1}(u),  q_{k+1}}\Big(t+\log|(B^0_{\sigma^k(u)})'(0)|\Big).$$

\item the ray  $\mathbf{R}^D_{u,  q}$ is  $D$-preperiodic: 
$$\begin{cases}
	(\widehat{B}_{v}^0)^{l}(\mathbf{R}^D_{u,  q})=\mathbf{R}^D_{u,  q},   &m=0, \\
	(\widehat{B}_{v}^0)^{l}(B_{\sigma^{m-1}(u)}^0\circ \cdots \circ B_{u}^0(\mathbf{R}^D_{u,  q}))= B_{\sigma^{m-1}(u)}^0\circ \cdots  \circ B_{u}^0(\mathbf{R}^D_{u,  q}), & m\geq 1.
\end{cases}$$
	\end{itemize}
\end{lem}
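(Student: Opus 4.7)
The plan is to build a single seed ray $\mathbf{R}^D_{v,q_m}$ at the periodic index $k=m$, and then to obtain every other $\mathbf{R}^D_{\sigma^k(u),q_k}$ from it by forward iteration (for $k>m$) and by inverse-branch pullback along the strictly preperiodic part of the orbit (for $0\leq k<m$, which is only relevant when $m\geq 1$). Once the seed is constructed with its $G$-parameterization, the dynamical compatibility, the $G$-parameterization of the other rays, and the $D$-preperiodicity of $\mathbf{R}^D_{u,q}$ are all forced by the functional equation $G_{D,\sigma(w)}(B^0_w(z))=G_{D,w}(z)+\log|B^{0\prime}_w(0)|$ satisfied by the Koenigs potentials constructed in Lemma \ref{family-koenig}.

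To produce the seed I would exploit that $q_m\in\partial\mathbb{D}_v$ is a repelling periodic point of $\widehat{B}_v^0$ of period $l$, with multiplier $\lambda:=((\widehat{B}_v^0)^l)'(q_m)$ of modulus strictly greater than $1$ (by expansion of Blaschke products on the unit circle). Koenigs' theorem yields a local linearizing coordinate $\kappa_{q_m}\colon W\to\mathbb{C}$ on a neighborhood $W$ of $q_m$ with $\kappa_{q_m}(q_m)=0$ and $\kappa_{q_m}\circ(\widehat{B}_v^0)^l=\lambda\kappa_{q_m}$. For an angle $\theta_0$ such that the radial arc $\rho_{\theta_0}:=\{se^{i\theta_0}\colon s_0/|\lambda|\leq s\leq s_0\}$ lies in $\kappa_{q_m}(W\cap\mathbb{D}_v)$, set $\gamma_0:=\kappa_{q_m}^{-1}(\rho_{\theta_0})$. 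Its two endpoints $a_0,a_1\in\mathbb{D}_v$ satisfy $(\widehat{B}_v^0)^l(a_1)=a_0$, so the iterates $(\widehat{B}_v^0)^{nl}(\gamma_0)$, $n\in\mathbb{Z}$ (using the $(\widehat{B}_v^0)^{-l}$-branch fixing $q_m$ for $n<0$), glue at matching endpoints into a connected arc $\gamma\subset\mathbb{D}_v$. The $n\to-\infty$ end accumulates at $q_m$ by the local linearization, while the $n\to+\infty$ end accumulates at $0$ because $0$ is globally attracting for $\widehat{B}_v^0$ on $\mathbb{D}_v$. Since $G_{D,v}\circ(\widehat{B}_v^0)^l=G_{D,v}+l\log|\widehat{B}_v^{0\prime}(0)|$ with $l\log|\widehat{B}_v^{0\prime}(0)|<0$, the potential $G_{D,v}$ drops by the same positive constant from each fundamental piece to the next, hence sweeps out all of $\mathbb{R}$ once embeddedness is secured, giving the parameterization $\mathbf{R}^D_{v,q_m}(t):=\gamma\cap G_{D,v}^{-1}(t)$.

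The main technical obstacle is to ensure that $\gamma$ is embedded and that $G_{D,v}$ is strictly monotone along it, which amounts to arranging that $\gamma$ avoids the critical set of the Koenigs map $\kappa_{D,v}$ at $0$. That critical set is exactly the grand orbit under $\widehat{B}_v^0$ of the (finitely many) critical points of $\widehat{B}_v^0$ in $\mathbb{D}_v$, hence is a countable subset of $\mathbb{D}_v$; since varying $\theta_0$ gives an uncountable one-parameter family of candidate arcs $\gamma_0$ and only countably many can pass through a critical orbit point, a generic $\theta_0$ produces an embedded $\gamma$ on which $G_{D,v}$ is a homeomorphism onto $\mathbb{R}$.

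For the propagation, when $k>m$ I define $\mathbf{R}^D_{\sigma^k(u),q_k}$ as the image of $\mathbf{R}^D_{v,q_m}$ under the composition $B^0_{\sigma^{k-1}(u)}\circ\cdots\circ B^0_{\sigma^m(u)}$ and reparameterize by the cumulative $G_D$-shift dictated by the functional equation; when $0\leq k<m$ I successively pull back $\mathbf{R}^D_{\sigma^{k+1}(u),q_{k+1}}$ through the unique inverse branch of $B^0_{\sigma^k(u)}$ that sends $q_{k+1}$ to $q_k$, which exists because that branch is determined by $q_k\in\partial\mathbb{D}_{\sigma^k(u)}$ and $B^0_{\sigma^k(u)}$ is a proper map of $\mathbb{D}_{\sigma^k(u)}$. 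Under this inverse-branch chain, the point $0\in\mathbb{D}_v$ lifts to a unique preimage $\zeta_k\in\mathbb{D}_{\sigma^k(u)}$ with $B^0_{\sigma^{m-1}(u)}\circ\cdots\circ B^0_{\sigma^k(u)}(\zeta_k)=0$, which is the required inner endpoint, and all four properties in the statement then follow tautologically from the construction.
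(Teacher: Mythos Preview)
Your strategy is the same as the paper's: build a seed ray at the periodic point $q_m$ via the repelling linearization, arrange that it avoids the relevant critical orbits, then propagate by forward iteration and by pulling back along the strictly preperiodic part. The paper packages the avoidance step using the quotient torus $T_{D,v}=\mathbb D_v/\widehat{B}_v^0$, but the content is the same.

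There is one genuine gap. You write that making $\gamma$ embedded with $G_{D,v}$ strictly monotone ``amounts to arranging that $\gamma$ avoids the critical set of the Koenigs map $\kappa_{D,v}$'', and you then secure this by a genericity argument in the single angular parameter $\theta_0$. Avoiding the critical set of $\kappa_{D,v}$ gives local injectivity of $\kappa_{D,v}$ along $\gamma$, but it does \emph{not} force $G_{D,v}=\log|\kappa_{D,v}|$ to be strictly monotone along $\gamma$: the radial arc $\rho_{\theta_0}$ in the repelling Koenigs coordinate can be tangent to level curves of $G_{D,v}$ at non-critical points, and the one-parameter family $\{\theta_0\}$ is not enough to rule this out. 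The paper deals with this by allowing an arbitrary fundamental arc $\alpha$ in the linearization neighborhood (not necessarily radial) and explicitly choosing it so that different points of $\alpha$ have different potentials; equivalently, one chooses $\alpha$ so that its image in the torus $T_{D,v}$ is an embedded loop avoiding the finite critical-orbit set $C_{D,v}$. Your argument is easily repaired by allowing this extra freedom in the shape of $\gamma_0$.

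A smaller omission: for the pullbacks through $0\le k<m$ to produce unbranched arcs, the seed ray must avoid the critical \emph{values} of the individual $B^0_{\sigma^k(u)}$, which include contributions from indices in $V_{\rm np}$ not seen by the $\widehat{B}_v^0$-grand orbit alone. The paper's set $C_{D,v}$ is defined to include these; you should enlarge your avoidance set accordingly.
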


\begin{figure}[h]
	\begin{center}
		\includegraphics[height=4.8cm]{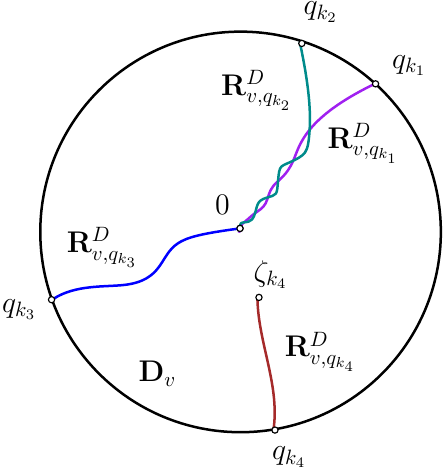}
	\end{center}
	\caption{Some internal rays in $\mathbb D_v$}
\end{figure}

The ray $\mathbf{R}^D_{\sigma^k(v),  q_k}$ in Lemma \ref{internal-ray-model} is called the {\it  internal ray}  landing at $q_k$.  As the proof indicates,  the ray  $\mathbf{R}^D_{\sigma^k(v),  q_k}$  with the required properties is not unique, because of the flexibility in the choice of the fundamental arc.   
 
\begin{proof}  We first assume $q\in \partial \mathbb D_{v}$ is $\widehat{B}_{v}^0$-periodic ($m=0$). We shall construct a  ray 
$\mathbf{R}^D_{v,  q}:  (-\infty, +\infty)\rightarrow \mathbb D_v$ with the first two  properties (for $k=0$), and  
	\begin{equation} \label{invariant-property}
(\widehat{B}_{v}^0)^{l}(\mathbf{R}^D_{v,  q}(t))= \mathbf{R}^D_{v,  q}(t+l \cdot {\log|({\widehat{B}_{v}^0})'(0)|}), \ t\in \mathbb R.
\end{equation}

Let ${\bf go}(0)\subset \mathbb D_v$ be the $\widehat{B}_{v}^0$-grand orbit of $0$. Identifying $z$ and $\widehat{B}_{v}^0(z)$ in $\mathbb D_v\setminus {\bf go}(0)$ yields a complex torus 
$T_{D, v}:=\mathbb C/(2\pi i \mathbb Z \oplus \log (\widehat{B}_{v}^0)'(0)\mathbb Z)$, and a projection $\Pi_{D,v}: \mathbb D_v\setminus {\bf go}(0)\rightarrow T_{D, v}$. 
The $\widehat{B}_{v}^0$-grand orbit  of 
$${\rm Crit} (B_{v}^0)\bigcup\bigcup_{w\in V(v)} \bigcup_{0\leq k<r_{v}(w)}  B_{\sigma^{r_v(w)-1}(w)}^0\circ \cdots  \circ B_{\sigma^k(w)}^0({\rm Crit} (B_{\sigma^k(w)}^0))$$
 projects to a finite  set $C_{D,v}\subset T_{D, v}$, by  $\Pi_{D,v}$.
Let $V$ be the linearization neighborhood of the $(\widehat{B}_{v}^0)^{l}$-fixed point $q$.
Take  $z_0\in V\cap \mathbb D$ so that $(\widehat{B}_{v}^0)^{l}(z_0)\in V$.
There is an arc  $\alpha \subset V$ connecting $z_0$ and   $(\widehat{B}_{v}^0)^{l}(z_0)$, avoiding $C_{D,v}$,  so that $\Pi_{D,v}(\alpha)$ is a loop in $T_{D,v}$.  We use $\alpha$  to generate a set 
\begin{equation} \label{arc-to-ray} \mathbf{R}^D_{v,  q}=\bigcup_{k\geq 0 }(\widehat{B}_{v}^0)^{kl}(\alpha) \bigcup 
\bigcup_{k\geq 1} (\widehat{B}_{v}^0)^{l }|_{V}^{-k}(\alpha).
\end{equation}
 By the choice of $\alpha$,  the set $\mathbf{R}^D_{v,  q}$ is a curve with endpoints $0$ and $q$,   and without self-intersections  because different points in $\alpha$ (can be made to) have different potentials.  It is clear that $\mathbf{R}^D_{v,  q}$  is a component of $\Pi_{D,v}^{-1}\Pi_{D,v}(\alpha)$  containing $\alpha$. Parameterize $\mathbf{R}^D_{v,  q}:  (-\infty, +\infty)\rightarrow \mathbb D_v$ so that 
$G_{D,  v}(\mathbf{R}^D_{v,  q}(t))=t$ for $t\in \mathbb R$,  then
 $\lim_{t\rightarrow +\infty } \mathbf{R}^D_{v,  q}(t)= q$ and $\lim_{t\rightarrow -\infty } \mathbf{R}^D_{v,  q}(t)= 0$.
    It satisfies  (\ref{invariant-property}), because  
    $$G_{D, v}((\widehat{B}^0_v)^{l}(z))=G_{D, v}(z)+l \cdot {\log|({\widehat{B}_{v}^0})'(0)|}, \ z\in \mathbb D_v.$$

Now  suppose $q\in \partial \mathbb D_{u}$ is $(m, l)$-$D$-preperiodic with $m\geq 1$.  Then 
$q_{m}$ is $\widehat{B}_{v}^0$-periodic, with period $l$.
By above construction, there is a periodic ray $\mathbf{R}^D_{v,  q_{m}}$  avoiding all critical orbits. So there is a unique component   $\mathbf{R}^D_{\sigma^{m-1}(u),  q_{m-1}}$ of   $({B}_{\sigma^{m-1}(u)}^0)^{-1}(\mathbf{R}^D_{v,  q_{m_q}})$ with one endpoint  $q_{m-1}$ and the other endpoint   $\zeta_{m-1}\in ({B}_{\sigma^{m-1}(u)}^0)^{-1}(0)$. 
Inductively, by taking preimages step by step, we get rays $\mathbf{R}^D_{\sigma^k(u),  q_k}, 0\leq k< m$, which can be parameterized  as required.

By taking images,  we get the rays $\mathbf{R}^D_{\sigma^k(u),  q_k}, k\geq m$.
	\end{proof}

\begin{lem} [Continuous motion of internal rays] \label{c-m-internal-ray-model}
	Let $D=\big((B_w^0, D_w^{\partial})\big)_{w\in V}\in \partial_0^* {\rm Div}{(\mathbb D)}^S$ and $u\in V$.  Let $q\in \partial \mathbb D_{u}$ be $(m, l)$-$D$-preperiodic, with orbit 
	$$q_{0}:=q\overset{B^0_{u}}{\longrightarrow}  q_{1}  \overset{B^0_{\sigma(u)}}{\longrightarrow} q_{2}   \overset{B^0_{\sigma^2(u)}}{\longrightarrow} \cdots.$$
	

	Let $\mathbf{R}^D_{u,  q}$ be the internal ray given by Lemma \ref{internal-ray-model}.
	Suppose   $({B}_{\sigma^k(u)}^0)'(0)\neq 0$ and  $q_k\notin {\rm supp}(D_{\sigma^k(u)}^\partial)$ for all $k\geq 0$.   

	Then there exist $\tau>0$,  $T\in \mathbb R\cup\{-\infty\}$, and  a continuous motion 
    $$H_{u,q}:  \U_{\tau}(D)\times \mathbf{R}^D_{u,  q}[T, +\infty) \rightarrow \mathbb D_u$$
    satisfying that 
    $$H_{u,q}(E, \mathbf{R}^D_{u,  q}(t))=\mathbf{R}^E_{u,  r_{u, q}(E)}(t), \   \forall (E, t)\in \U_{\tau}(D)\times [T, +\infty) ,$$
    where $ r_{u, q}(E)$ is given by Proposition \ref{extension-repelling}.
\end{lem}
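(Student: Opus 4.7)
The approach is to handle the periodic case $m=0$ first (so $u=v\in V_{\rm p}$ and $q$ is $\widehat{B}_v^0$-periodic of period $l$), and then reduce the general preperiodic case to the periodic one by pulling back through the finite composition $B^0_{\sigma^{m-1}(u)}\circ\cdots\circ B^0_u$. In the periodic case, Proposition \ref{extension-repelling} supplies a continuous moving repelling periodic endpoint $q(E):=r_{v,q}(E)$ on some $\U_{\tau}(D)\cup {\rm Div}(\mathbb D)^S$, and the Implicit Function Theorem then produces a continuously varying multiplier $\mu_E:=(\widehat{B}_{v,E}^l)'(q(E))$ with $|\mu_E|>1$, together with a continuously varying Koenigs coordinate $\psi_E:V(E)\to\mathbb C$ at $q(E)$ conjugating $\widehat{B}_{v,E}^l$ to $z\mapsto\mu_E z$ on a neighborhood $V(E)$ of $q(E)$.

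Next I would pick a fixed arc $\alpha^*\subset\mathbb C\setminus\{0\}$ connecting some basepoint $w_0$ to $w_0/\mu_D$, and for each $E\in \U_{\tau}(D)$ transport it to a continuously varying fundamental arc $\alpha_E\subset V(E)\cap\mathbb D_v$ connecting $\psi_E^{-1}(w_0)$ to $(\widehat{B}_{v,E}^l)^{-1}(\psi_E^{-1}(w_0))$. The internal ray for $E$ is then assembled exactly as in formula (\ref{arc-to-ray}):
\[
\mathbf{R}^E_{v,q(E)}=\bigcup_{k\ge 0}\widehat{B}_{v,E}^{kl}(\alpha_E)\;\cup\;\bigcup_{k\ge 1}(\widehat{B}_{v,E}^{l})^{-k}|_{V(E)}(\alpha_E).
\]
The backward iterates stay in $V(E)$ and depend continuously on $E$ because $\psi_E$ does. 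The forward iterates depend continuously on $E$ because the hypothesis $q_k\notin{\rm supp}(D_{\sigma^k(u)}^{\partial})$ together with Lemma \ref{degenerate-0} and Proposition \ref{convergence} yield local uniform convergence of the compositions $B_{\sigma^{k-1}(v),E}\circ\cdots\circ B_{v,E}$ to $B^0_{\sigma^{k-1}(v)}\circ\cdots\circ B^0_v$ on compact neighborhoods of the forward orbit of $\alpha_D$ under $\widehat{B}_v^0$.

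The continuous motion $H_{v,q}$ is then defined by matching potentials: parameterizing each $\mathbf{R}^E_{v,q(E)}$ via the global potential $G_{E,v}=\log|\kappa_{E,v}|$ from Lemma \ref{family-koenig}, I set $H_{v,q}(E,\mathbf{R}^D_{v,q}(t))=\mathbf{R}^E_{v,q(E)}(t)$. The equivariance $G_{E,\sigma(v)}\circ B_{v,E}=G_{E,v}+\log|B_{v,E}'(0)|$ ensures this parameterization matches iteratively, $H_{v,q}(D,\cdot)={\rm id}$ by construction, and continuity of $\kappa_{E,v}$ together with continuity of all forward and backward iterates in $E$ yields continuity of $H_{v,q}$; injectivity on each slice follows because $\widehat{B}_{v,E}^l$ is injective on $V(E)$ and hence on each of the iterated pieces. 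One may take $T=-\infty$, since the forward orbit of any point in $\alpha_E$ shrinks geometrically towards $0$ and $\kappa_{E,v}$ is continuous near $0$. For the preperiodic case $m\ge 1$, Lemma \ref{degenerate-0} combined with $q_k\notin{\rm supp}(D^\partial_{\sigma^k(u)})$ yields uniform convergence $B_{\sigma^k(u),E}\to B^0_{\sigma^k(u)}$ on neighborhoods of $q_k$ for $0\le k\le m-1$; Proposition \ref{extension-repelling} provides moving preperiodic endpoints $r_{\sigma^k(u),q_k}(E)$, and one defines $H_{u,q}$ by pulling back $H_{v,q_m}$ through the continuously varying local inverse branches of $B_{\sigma^k(u),E}$ along the orbit for $k=m-1,m-2,\ldots,0$. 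Here $T$ may need to be finite so that the relevant segment of $\mathbf{R}^D_{u,q}$ stays within the domain on which all these inverse branches are uniformly defined over $\U_{\tau}(D)$.

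The principal obstacle is verifying continuous dependence for the forward iterates in the periodic case: once $\widehat{B}_{v,E}^{kl}(\alpha_E)$ leaves the linearization neighborhood $V(E)$, it traverses compact sets whose $E$-dependence has to be tracked by applying Lemma \ref{degenerate-0} at each intermediate vertex $\sigma^j(v)\in V$ visited during the iteration. The hypothesis that the forward orbit $q_0,q_1,q_2,\ldots$ avoids every ${\rm supp}(D^\partial_{\sigma^k(u)})$ is precisely what keeps this orbit, and hence the forward iterates of $\alpha_E$ for $E$ close enough to $D$, uniformly away from the degeneration loci of the Blaschke products associated with $E$, permitting an iterative application of Proposition \ref{convergence} to succeed uniformly in $E\in\U_{\tau}(D)$; a careful accounting is required to ensure that no stray accumulation at a degeneration point appears at any step.
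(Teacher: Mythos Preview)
Your approach is correct and essentially matches the paper's: handle the periodic case via a continuously varying fundamental arc near the moving repelling endpoint $r_{v,q}(E)$, assemble the ray by forward and backward iterates, parameterize by the potential $G_{E,v}$, and then pull back through the finite compositions for the strictly preperiodic case (with $T$ possibly finite there, and $T=-\infty$ in the periodic case, as the paper also records in Remark~\ref{c-m-ray-end}). The only organizational difference is that the paper packages the forward-iteration continuity through a continuous family of quotient tori $T_{E,v}=\mathbb C/(2\pi i\mathbb Z\oplus \log(\widehat B_v'(0))\mathbb Z)$ and projections $\Pi_{E,v}$, extending the torus construction from Lemma~\ref{internal-ray-model}; this makes the avoidance of the critical grand-orbit set $C_{E,v}$ uniform in $E$ transparent, whereas you track iterates directly and flag the same bookkeeping in your final paragraph.
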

	\begin{proof}  The proof is almost same as that of Lemma \ref{internal-ray-model},  we shall sketch the slight difference.  We mainly treat the case $m=0$ (hence $v:=\sigma^m(u)=u$),  for which  $q$ is $\widehat{B}_{v}^0$-periodic, with period $l$.  The strictly preperiodic case follows by taking preimages.
		
		The assumption  on $(q_k)_{k\geq 0}$  implies that
	$$Q:=\{q_0=q, q_{\ell_v}, \cdots, q_{\ell_v (l-1)}\}\subset \partial \mathbb D_v\setminus  F,$$
	where  $F=\bigcup_{0\leq j\leq l-1} (\widehat{B}^0_v)^{-j}(Z_v)$,  $Z_v$ is   given by Proposition \ref{extension-repelling}, and $\ell_v$ is the $\sigma$-period of $v$.
	Choose $r>0$ so that $Q\subset \partial \mathbb D_v\setminus  F^r$, where 
	$F^r=\bigcup_{\zeta\in F} \overline{\mathbb D(\zeta, r)}$. 
	
	Choose small $\tau>0$ so that   ${B}_{\sigma^k(v)}'(0)\neq 0$  for all  $E=((B_u, E_u^{\partial}))_{u\in V} \in \U_\tau(D)$ and all $0\leq k<\ell_v$. 
	By Propositions  \ref{convergence} and \ref{extension-repelling},  we may assume $\tau$ also satisfies that
	
	\begin{itemize}
		\item  $\psi_E:= \widehat{B}_v^{l}: \C\setminus F^r \rightarrow \C$
		defines a continuous family of holomorphic maps,  parameterized by   
		$E\in \U_\tau(D)$.
		
		\item for $w\in Q$, the   map $r_{v, w}: \U_\tau(D)\rightarrow \partial \mathbb D_v$ (given by Proposition \ref{extension-repelling}) are defined  with image $r_{v, w}(\U_\tau(D))
		\subset
		\partial \mathbb D_v\setminus F^r$.
		
		\item there is a neighborhood $V\subset \C\setminus F^r$ of $q$ so that it is also a linearization neighborhood of the  $\widehat{B}_v^{l}$-fixed point $r_{v, q}(E)$, with $\overline{V}\subset \widehat{B}_v^{l}(V)$, for all $E\in \U_\tau(D)$.
	\end{itemize}
	
	For  $E \in \U_\tau(D)$, let $A(E)=\bigcup_{k\geq 0}\widehat{B}_v^{-k}(F^r)$, and let ${\bf go}_E(z)\subset \mathbb D_v$ be the $\widehat{B}_{v}$-grand orbit of $z\in \mathbb D_v$. Then $\mathbb D_v\setminus(A(E)\cup {\bf go}_E(0))$ modulo  $\widehat{B}_{v}$-action gives a continuous family of 
	complex tori
	$$T_{E, v}=\mathbb C/(2\pi i \mathbb Z \oplus \log (\widehat{B}_{v}'(0))\mathbb Z), \  E\in \U_\tau(D).$$
	There is an induced continuous family of projections:
	$$\Pi_{E,v}: \begin{cases}  \mathbb D_v\setminus(A(E)\cup {\bf go}_E(0))\rightarrow T_{E, v}\\ 
		z\mapsto {\bf go}_E(z)
	\end{cases}.$$ 
	Note also that the $\widehat{B}_{v}$-grand orbit of the  critical set (associated with $v$)    in
	$\mathbb D_v\setminus(A(E)\cup {\bf go}_E(0))$ are mapped by $\Pi_{E,v}$  to a finite set $C_{E,v}\subset T_{E,v}$.  
	 The rest part of the  proof is   same as that of Lemma \ref{internal-ray-model}, 
we omit the details.
\end{proof}

\begin{rmk} \label{c-m-ray-end}   In Lemma \ref{c-m-internal-ray-model}, we  take $T=-\infty$ if  $q\in \partial \mathbb D_{u}$ is	$D$-periodic (note that $\mathbf{R}^D_{u,  q}(-\infty)=0$).
\end{rmk}
 
 For further discussion, we need a fact on limit behavior of critical values:
 
 \begin{lem} \label{cp-cv}  Let  $e\geq 1$ be an integer. Let $\{B_n\}_{n\geq 1}\subset {\rm Div}_e({\mathbb D})$,
 	such that	
 	$$B_n\rightarrow D=(B, D^\partial)\in {\rm Div}_e(\overline{\mathbb D})\   \text{ as  } n\rightarrow \infty.$$ 
 	Suppose $1\notin {\rm  supp}(D^\partial)$.   For each $q\in  {\rm supp}(D^\partial)$, there is a $B_{n}$-critical point    $c_{n}(q)\in\mathbb D$  with the following property	 :
 	$$\lim_{n\rightarrow \infty}c_{n}(q)=q, \  \lim_{n\rightarrow \infty} B_{n }(c_{n}(q))= B(q).$$
 \end{lem}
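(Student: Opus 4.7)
The plan is to prove the lemma in two steps: first exhibit the critical points $c_n(q)$ converging to $q$ via the Heins--Zakeri boundary extension, then track the critical values through a local product decomposition of $B_n$ combined with the critical-point equation.

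For the first step, identify $B_n$ with its free zero divisor ${\rm div}(B_n) \in {\rm Div}_e(\mathbb D)$. The hypothesis $B_n \to (B, D^\partial)$ reads ${\rm div}(B_n) \to {\rm div}(B) + D^\partial$ in ${\rm Div}_e(\overline{\mathbb D})$, and the boundary extension (Theorem \ref{zakeri-exten}) gives convergence of ramification divisors $R_{B_n} \to R_B + D^\partial$ in ${\rm Div}_e(\overline{\mathbb D})$. In particular, for each $q \in {\rm supp}(D^\partial)$ of multiplicity $\nu = \nu(q) \geq 1$, exactly $\nu$ critical points of $B_n$ (counted with multiplicity) accumulate at $q$; take $c_n(q)$ to be any one of them, which automatically gives $c_n(q) \to q$.

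For the second step, choose $\delta > 0$ small enough that $\overline{\mathbb D(q,\delta)}$ meets neither ${\rm supp}({\rm div}(B))$ nor ${\rm supp}(D^\partial)\setminus\{q\}$, and for large $n$ let $a_{n,1},\ldots,a_{n,\nu}$ be the zeros of $B_n$ in $\mathbb D(q,\delta)$. Factor $B_n = C_n \cdot D_n$ with
\[
C_n(z) := \prod_{k=1}^{\nu} \frac{z - a_{n,k}}{1 - \overline{a_{n,k}}\,z}, \qquad D_n := B_n/C_n.
\]
Using $|q|=1$ and the identity $1 - \bar q z = -\bar q(z-q)$, Lemma \ref{degenerate-0} shows that $C_n \to (-q)^{\nu}$ locally uniformly on $\mathbb C\setminus\{q\}$, while $D_n$ (whose zero divisor converges to a limit not supported at $q$) converges locally uniformly on a neighborhood of $q$ to a holomorphic function $D_\infty$; comparing with $B_n \to B$ on a punctured neighborhood and invoking holomorphicity gives $D_\infty(q) = B(q)/(-q)^{\nu}$, of modulus $1$. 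Since $D_n(c_n(q)) \to D_\infty(q)$, the lemma reduces to showing $C_n(c_n(q)) \to (-q)^{\nu}$.

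To establish this, the critical-point equation $C_n'(c_n) D_n(c_n) + C_n(c_n) D_n'(c_n) = 0$ combined with $D_n(c_n) \to D_\infty(q) \neq 0$ forces the logarithmic derivative
\[
\frac{C_n'(c_n)}{C_n(c_n)} = \sum_{k=1}^{\nu} \frac{1 - |a_{n,k}|^2}{(c_n - a_{n,k})(1 - \overline{a_{n,k}}\,c_n)}
\]
to stay bounded. Transferring the analysis to the upper half-plane via a Möbius map $\phi \colon \mathbb D \to \mathbb H$ with $\phi(q)=0$ and rescaling both $\alpha_{n,k} := \phi(a_{n,k})$ and $w_n := \phi(c_n(q))$ by a common scale $s_n > 0$ chosen so that the rescaled zeros $\beta_{n,k} := \alpha_{n,k}/s_n$ are bounded in $\mathbb H$ with $\max_k |\beta_{n,k}| = 1$, one extracts along a subsequence a limit configuration $\beta_{n,k} \to \beta_k$, $W_n := w_n/s_n \to W_\infty$. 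The rescaled critical-point equation forces $W_\infty$ to lie on $\partial \mathbb H \cup \{\infty\}$ and to be a boundary critical point of the limiting Blaschke-like product $\hat C_\infty(W) := \prod_k (W - \beta_k)/(W - \overline{\beta_k})$ with value $1$ there; tracking the unimodular constants from the Möbius change of variables back yields $C_n(c_n(q)) \to (-q)^\nu$, as required. The main obstacle is precisely this final step: a careful case analysis on how the $\alpha_{n,k}$ approach $0$ (radially, tangentially, or along separated rays) is needed to pin down $W_\infty$ on the boundary and to unravel the bookkeeping of the unimodular normalization constants; the uniqueness of the subsequential limit $(-q)^\nu$ then upgrades subsequential convergence to full convergence.
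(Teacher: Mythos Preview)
Your proof has a genuine gap at the very first step. You write ``take $c_n(q)$ to be any one of them,'' but the lemma is an \emph{existence} statement, not a universal one, and for good reason: when $\nu(q)\ge 2$, it is in general \emph{false} that every critical point of $B_n$ near $q$ has critical value tending to $B(q)$. (The paper records this explicitly in Remark~\ref{cp-cv-rmk}.) Your entire second step is devoted to proving $C_n(c_n(q))\to(-q)^\nu$ for the arbitrarily chosen $c_n(q)$, which would imply the stronger universal statement; so the rescaling argument you sketch cannot possibly close as written. Concretely, with $\nu=2$ one can arrange two zeros $a_{n,1},a_{n,2}$ approaching $q$ along very different scales, producing two critical points near $q$ at those two scales; only one of them need have its image near $B(q)$. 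Your rescaling by a single $s_n$ with $\max_k|\beta_{n,k}|=1$ will simply miss the critical point living at the other scale.

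Even if you restrict to a well-chosen $c_n(q)$, the final step is only a plan (``a careful case analysis\ldots is needed''), and you acknowledge this as ``the main obstacle.'' So the proposal is doubly incomplete: it targets a statement stronger than the lemma, and it does not finish the argument for that stronger target.

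The paper's proof avoids all of this by never fixing a critical point in advance. It argues topologically: for small $r>0$ the arc $\alpha_n:=B_n(\mathbb D\cap\partial\mathbb D(q,r))$ lies in $\mathbb D(B(q),\epsilon)$ by locally uniform convergence $B_n\to B$ away from $q$. One then looks at $B_n^{-1}(\alpha_n)$ inside $\mathbb D(q,r)\cap\mathbb D$; either this preimage already contains a critical point, or it cuts off a simply connected region $V_n$ adjacent to $\partial\mathbb D(q,r)$ on which $B_n$ is proper of degree $\ge 2$ into $\mathbb D(B(q),\epsilon)$. Either way one obtains a critical point whose \emph{critical value} is automatically in $\mathbb D(B(q),\epsilon)$, with no analysis of $C_n(c_n)$ required. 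This is both shorter and, crucially, selects the correct critical point rather than trying to control an arbitrary one.
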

\begin{proof}
	By Lemma \ref{degenerate-0},  $B_n$ converges locally and 
	uniformly to $B$ in
	$\C\setminus {\rm  supp}(D^\partial)$.  Hence for any $\epsilon>0$, any $r>0$ with 
	$B(\mathbb D\cap \partial \mathbb D(q,r))\subset   \mathbb D(B(q), \epsilon)$, $B_n$ converges 
	uniformly to $B$ in
	$\C\setminus \bigcup_{q\in {\rm  supp}(D^\partial)} \mathbb D(q, r)$,  and 
	$$\alpha_n(q):=B_n(\mathbb D\cap \partial \mathbb D(q,r))\subset  \mathbb D(B(q), \epsilon), \ \ \forall  q\in {\rm  supp}(D^\partial)$$
	for large $n$.
	Note that for small $r$  and large $n$, $\mathbb D\cap   \mathbb D(q,r)$ contains exactly $\nu(q)\geq 1$ (here, $\nu(q)$ is the multiplicity of $D^\partial$ at $q$) critical points of $B_n$, so the preimage $B_n^{-1}(\alpha_n(q))\cap \overline{\mathbb D(q,r)}$ either contains a critical point, or consists of $\nu(q)$ arcs.
	In the former case, let $c_n(q)$ be this critical point. In the latter case,  the component $V_n(q)$ of $(\mathbb D\cap   \mathbb D(q,r))\setminus B_n^{-1}(\alpha_n(q))$ whose boundary contains  $\mathbb D\cap \partial \mathbb D(q,r)$, is simply connected and $\partial V_n(q)$ necessarily contains  a  component of $B_n^{-1}(\alpha_n(q))\cap \mathbb D(q,r)$.
	It follows that ${\rm deg}(B_n|_{V_n(q)})\geq 2$, $B_n(V_n(q))\subset  \mathbb D\cap  \mathbb D(B(q), \epsilon)$, and $V_n(q)$ contains at least one critical point.  We denote one of them    by $c_n(q)$. 
	
	\begin{figure}[h] \label{cv-cp}
		\begin{center}
			\includegraphics[height=3.5cm]{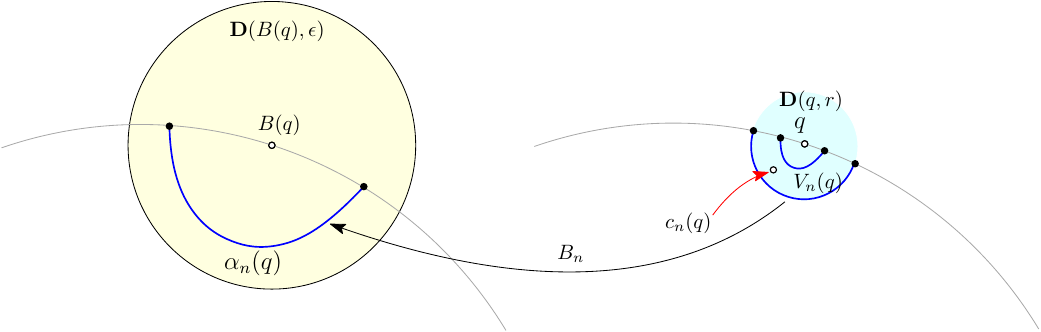}
		\end{center}
 	\caption{Behavior of the critical value.}
	\end{figure}

	In either case, we have $B_n(c_n(q))\in \mathbb D\cap {\mathbb D(B(q), \epsilon)}$ for large $n$.   
	\end{proof}

\begin{rmk} \label{cp-cv-rmk}  Write $D^\partial=\sum_{\zeta\in  {\rm supp}(D^\partial)} \nu(\zeta)\cdot \zeta$. 
	
 If some $q\in  {\rm supp}(D^\partial)$ has multiplicity $\nu(q)=1$, 	
then for large $n$,  there is a  unique $B_{n}$-critical point $c_{n}(q)$  close to $q$. Lemma \ref{cp-cv} implies that  $B_{n }(c_{n}(q))\rightarrow B(q)$ as $n\rightarrow \infty$.

	  If some $q\in  {\rm supp}(D^\partial)$ has multiplicity $\nu(q)\geq 2$, then  for large $n$,   there are  $\nu(q)$  $B_{n}$-critical points  $c_{n}^1(q), \cdots, c_{n}^{\nu(q)}(q)$, which are close to $q$. In general, it is not true that  for all $1\leq k\leq \nu(q)$, 
	 $$B_{n }(c^k_{n}(q))\rightarrow B(q) \ \text{ as  } n\rightarrow \infty.$$
The limit  behavior of some sequence $\{B_{n}(c^k_{n}(q))\}_{n\geq 1}$  might be out of control.
	\end{rmk}


\begin{pro}  \label{c-prescribed-divisors}
	 Let  $D=\big((B_u^0, D_u^{\partial})\big)_{u\in V}\in \mathbf{Div}_{\rm spp}$
 be a generic  Misiurewicz  divisor. Suppose 
 	$({B}_u^0)'(0)\neq 0$ for all $u\in V$.

 For each $v\in V$ and $q\in {\rm supp}(D_v^\partial)$, let $\mathbf{R}^D_{\sigma(v),  q_1}$ (here $q_1=B^0_{v}(q)$) be the internal ray  given by Lemma \ref{internal-ray-model}, and let $H_{\sigma(v),  q_1}:  \U_{\tau}(D)\times  \mathbf{R}^D_{\sigma(v),  q_1}[T, +\infty) \rightarrow \mathbb D_{\sigma(v)}$ be the continuous motion given by Lemma \ref{c-m-internal-ray-model} (here $\tau, T$ are independent of $v\in V$ and $q\in {\rm supp}(D_v^\partial)$).
 
 Then there is a sequence of divisors  $E_n=(B_{n,u})_{u\in V}\in {\rm Div}(\mathbb D)^S$  with $E_n\rightarrow D$ as $n\rightarrow \infty$, such that for all $v\in V$ and all $q\in {\rm supp}(D_v^\partial)$, 
 $$B_{n, v}(c_{n, v}(q))\in  H_{\sigma(v),  q_1}(E_n,  \mathbf{R}^D_{\sigma(v),  q_1}[T, +\infty)),$$
 here $c_{n, v}(q)$ is the critical point of $B_{n,v}$ such that $c_{n, v}(q)\rightarrow q$ as $n\rightarrow \infty$.
 \end{pro}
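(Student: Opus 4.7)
The plan is to build the $E_n$ by a controlled perturbation of $D$ from $\partial{\rm Div}(\overline{\mathbb D})^S$ into ${\rm Div}(\mathbb D)^S$, arranging via the higher dimensional Intermediate Value Theorem (Theorem \ref{higher-ivt}) that every boundary-type critical value lands on its prescribed moved internal ray.

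First I would fix local coordinates near $D$. Since $D$ is generic, every $q\in{\rm supp}(D_v^\partial)$ has multiplicity one, so Lemma \ref{cp-cv} and Remark \ref{cp-cv-rmk} give, for each $E=(B_u)_{u\in V}\in{\rm Div}(\mathbb D)^S$ sufficiently close to $D$, a unique zero $z_{E,v}(q)\in\mathbb D$ of $B_v$ near $q$ and a unique nearby critical point $c_{E,v}(q)$, with $B_v(c_{E,v}(q))\to q_1=B_v^0(q)$ as $E\to D$. Together with the remaining interior zeros (which stay close to those of $B_v^0$), the positions $z_{E,v}(q)$ furnish local complex coordinates on ${\rm Div}(\mathbb D)^S$ at $D$.

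Next I would freeze the interior zeros at those of $B_v^0$ and write each boundary-coming zero in polar form
$$z_{E,v}(q)=q\bigl(1-s_{v,q}\,e^{i\varphi_{v,q}}\bigr),\quad s_{v,q}\in(0,\delta),\ \varphi_{v,q}\in\mathbb R/2\pi\mathbb Z.$$
Fix a sequence $s_n\to 0^+$, set $s_{v,q}=s_n$ for every $(v,q)$, and view the angles $\varphi=(\varphi_{v,q})$ as a variable in the torus $\mathbb T^N$, where $N=\sum_v\#{\rm supp}(D_v^\partial)$. By Lemmas \ref{internal-ray-model} and \ref{c-m-internal-ray-model}, the moved ray $H_{\sigma(v),q_1}(E,\cdot)$ is a continuously varying real-analytic arc landing at $r_{\sigma(v),q_1}(E)$, so near $q_1$ it is cut out by a single continuous real equation $F_{v,q}(E)=0$ (for instance the signed transverse distance of $B_v(c_{E,v}(q))$ to the arc). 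The proposition therefore reduces to solving the system
$$F_{v,q}\bigl(E(\varphi)\bigr)=0\qquad\text{for every }(v,q),$$
for some $\varphi=\varphi^{(n)}\in\mathbb T^N$ and every large $n$.

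I would then apply Theorem \ref{higher-ivt} to $\Psi_n:\varphi\mapsto\bigl(F_{v,q}(E(\varphi))\bigr)_{v,q}\in\mathbb R^N$. The key local input is that, by Lemma \ref{degenerate-0} together with a Rouch\'e-type estimate, when every parameter except $\varphi_{v,q}$ is held fixed, the critical value $B_v(c_{E,v}(q))$ traces a loop of winding number one around $q_1$ as $\varphi_{v,q}$ runs once around $S^1$; consequently each $F_{v,q}$ changes sign along antipodal semicircles in the $\varphi_{v,q}$-direction, while the coupling between distinct pairs $(v,q)$ is $o(1)$ as $s_n\to 0$. Theorem \ref{higher-ivt} then supplies a zero $\varphi^{(n)}$ of $\Psi_n$; the associated divisor $E_n\in{\rm Div}(\mathbb D)^S$ converges to $D$ and satisfies all the required ray-incidence conditions.

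The main obstacle is the degree computation: one must verify the winding-number-one statement for $\varphi_{v,q}\mapsto B_v(c_{E,v}(q))$ around $q_1$ on every small circle, and control the off-diagonal coupling between distinct $(v,q)$ pairs so that the sign-change hypotheses of Theorem \ref{higher-ivt} hold uniformly as $s_n\to 0$. Here the genericity of $D$ (multiplicity one at every boundary support point) and the Koenigs normalisation from Lemma \ref{family-koenig} enter decisively: they ensure the diagonal block of the relevant Jacobian is non-degenerate to leading order, while the off-diagonal entries are lower order.
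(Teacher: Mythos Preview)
Your overall strategy—reduce to a system of $N$ real equations (one ``signed transverse distance'' per boundary support point) and invoke Miranda's theorem—is exactly what the paper does, but your specific parametrisation is broken. With $z_{E,v}(q)=q(1-s_n e^{i\varphi_{v,q}})$ and $\varphi_{v,q}$ ranging over the full circle, one has $|z_{E,v}(q)|^2=1-2s_n\cos\varphi_{v,q}+s_n^2$, which exceeds $1$ whenever $\cos\varphi_{v,q}<s_n/2$; so for roughly half of all $\varphi_{v,q}$ the ``zero'' lies outside $\overline{\mathbb D}$ and the divisor is not in ${\rm Div}(\mathbb D)^S$ at all. The winding-number statement therefore does not make sense, and Theorem \ref{higher-ivt} (stated for a cube $[0,1]^N$, not a torus) cannot be applied as written.

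The paper's remedy is to move each near-boundary zero along a short Jordan arc $\gamma_{v,q}\subset\mathbb D$ whose two endpoints $a(q),b(q)$ lie on $\partial\mathbb D$ on either side of $q$. This makes the parameter domain a genuine cube $[0,1]^N$. At each endpoint the divisor degenerates in exactly one factor, and Lemma \ref{cp-cv} together with Remark \ref{cp-cv-rmk} (this is where genericity is used) gives $B_v(c_{E,v}(q))\to B_v^0(a(q))$ respectively $B_v^0(b(q))$; since these two boundary points flank $q_1=B_v^0(q)$ on $\partial\mathbb D_{\sigma(v)}$, they lie on opposite sides of the extended arc through the moved ray, producing the sign change Miranda requires on the two opposite faces $\{\varphi_{v,q}=0\}$ and $\{\varphi_{v,q}=1\}$. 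The cross-coupling you allude to (the moved ray $H_{\sigma(v),q_1}(E,\cdot)$ depends on \emph{all} the zeros) is handled not by an $o(1)$ Jacobian estimate but by a compactness/Lebesgue-number argument: one first controls the landing point $r_{\sigma(v),q_1}(E)$ to stay in $\mathbb D(q_1,\rho_{v,q}/2)$, bounded away from both $B_v^0(a(q))$ and $B_v^0(b(q))$, so that the signed distance keeps a definite sign at the endpoints regardless of the other coordinates.
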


To prove Proposition \ref{c-prescribed-divisors}, 
we need the following higher dimensional  Intermediate Value Theorem, first proven by Miranda, also  known as Bolzano-Poincar\'e-Miranda theorem.

\begin{thm}  [Miranda \cite{Mir, V}] \label{higher-ivt} 
	Let $F=(f_1, \cdots, f_n): [0,1]^n\rightarrow \mathbb R^n$ be continuous, such that
	for any $1\leq k\leq n$, 
	$$f_k \geq 0  \text{ on  }\{\mathbf x\in [0,1]^n; x_k=0\}; \ f_k \leq 0  \text{ on  }\{\mathbf x\in [0,1]^n; x_k=1\}.$$
		Then there is at least one  $\mathbf x\in  [0,1]^n$  such that $F(\mathbf x)=\mathbf 0$.
  \end{thm}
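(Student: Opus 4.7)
The plan is to reduce Miranda's theorem to Brouwer's fixed-point theorem by converting the problem of finding a zero of $F$ into a fixed-point problem for an auxiliary self-map of the cube. Concretely, for any fixed parameter $\epsilon > 0$, I would define $G = (G_1, \ldots, G_n) : [0,1]^n \to [0,1]^n$ coordinatewise by
$$G_k(\mathbf{x}) = \max\bigl\{0,\, \min\{1,\, x_k + \epsilon f_k(\mathbf{x})\}\bigr\}, \quad 1 \le k \le n.$$
The outer clipping ensures $G$ takes values in the cube, and continuity of $G$ is inherited from that of $F$. Brouwer's fixed point theorem then produces $\mathbf{x}^* \in [0,1]^n$ with $G(\mathbf{x}^*) = \mathbf{x}^*$.

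The remaining task is to verify $f_k(\mathbf{x}^*) = 0$ for each $k$, by a case analysis on the location of $x_k^*$. If $x_k^* \in (0,1)$, then the identity $G_k(\mathbf{x}^*) = x_k^*$ cannot be realized with active clipping, since active clipping would force $G_k(\mathbf{x}^*) \in \{0, 1\}$; therefore $x_k^* + \epsilon f_k(\mathbf{x}^*) = x_k^*$, and hence $f_k(\mathbf{x}^*) = 0$. If $x_k^* = 0$, then the fixed-point equation combined with the definition of $G_k$ forces $\min\{1,\, \epsilon f_k(\mathbf{x}^*)\} \le 0$, i.e.\ $f_k(\mathbf{x}^*) \le 0$; on the other hand, the hypothesis $f_k \ge 0$ on the face $\{x_k = 0\}$ gives $f_k(\mathbf{x}^*) \ge 0$, so $f_k(\mathbf{x}^*) = 0$. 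The case $x_k^* = 1$ is handled symmetrically using $f_k \le 0$ on $\{x_k = 1\}$.

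The conceptual obstacle, minor but essential, is arranging the sign of the perturbation so that the clipping mechanism matches the hypothesis: the truncation at the floor $x_k = 0$ must activate precisely in the direction compatible with $f_k \ge 0$ there, and similarly at the ceiling. Using $x_k + \epsilon f_k$ (rather than $x_k - \epsilon f_k$) is exactly what aligns with the sign convention in the statement; reversing it would reverse the inequalities and break the final matching step. Beyond this bookkeeping, no degree theory, homotopy invariance, or further analytic input is needed; everything reduces to Brouwer plus a three-case elementary argument, and there is no need to assume differentiability of $F$ or to select $\epsilon$ with any particular smallness.
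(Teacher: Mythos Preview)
Your proof is correct. The paper does not supply its own proof of this theorem; it merely states the result with attribution to Miranda \cite{Mir} and Vrahatis \cite{V} and moves on. Your reduction to Brouwer's fixed-point theorem via the clipped map $G_k(\mathbf{x}) = \max\{0, \min\{1, x_k + \epsilon f_k(\mathbf{x})\}\}$ is in fact the standard short argument (and is essentially the content of the Vrahatis reference), so there is nothing to compare.
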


\begin{proof}
[Proof of Proposition \ref{c-prescribed-divisors}.]
For $\rho\in(0, \tau ]$ 
we define a subset  of $\U_\rho(D)$:
$$\U_\rho^{*}(D)=\Big\{E=\big((B_u, E^\partial_u)\big)_{u\in V}\in {\U_{\rho}(D)}; \ {\rm deg}(E^\partial_u)= \ {\rm deg}(D^\partial_u), \forall u\in V
\Big\}.$$
It's worth observing  that if $E\in \U_\tau^{*}(D)$ takes the form
  $E=\big((B_u^0, E^\partial_u)\big)_{u\in V}$, then for any    $v\in V$ and $q\in {\rm supp}(D_v^\partial)$,  we have $H_{\sigma(v),  q_1}(E,  \cdot )=  {\rm id}$.  That is  
$$H_{\sigma(v),  q_1}(E,  \mathbf{R}^D_{\sigma(v),  q_1} (t))=  \mathbf{R}^D_{\sigma(v),  q_1}(t), \ \forall t\in [T, +\infty). $$

Fix any  small $\delta\in (0, \tau)$.  For  $v\in V$ and $q\in {\rm supp}(D_v^\partial)$, let $\partial \mathbb D(q, \delta)\cap \partial \mathbb D_v=\{a(q), b(q)\}$. We assume $\delta$ is small so that  the  arcs $\{\overline{ \mathbb D(q, \delta)}\cap \partial \mathbb D_v\}_{q\in {\rm supp}(D_v^\partial)}$ are pairwise disjoint. Let 
$$\rho_{v, q}=\min\Big\{|q_1-B_v^0(a(q))|, |q_1-B_v^0(b(q))|\Big\}.$$

Let $\mathcal A=\{E=\big((B_u^0, E^\partial_u)\big)_{u\in V}\in  \overline{\U_\delta^{*}(D)}\}$. 
For any $E\in  \mathcal A$, there is $\delta_E>0$ so that for any $E'\in \U_{\delta_E}(E)$, the landing point $r_{\sigma(v),q_1}(E')$ 
of the  truncated ray $H_{\sigma(v),  q_1}(E',  \mathbf{R}^D_{\sigma(v),  q_1} [T, +\infty))=  \mathbf{R}^{E'}_{\sigma(v),  r_{\sigma(v),q_1}(E')} [T, +\infty)$ 
satisfies 
$$r_{\sigma(v),q_1}(E')\in \mathbb D(q_1, \rho_{v, q}/2)\cap \partial \mathbb D_{\sigma(v)}.$$
Note  that $\big\{ \U_{\delta_E}(E); E\in \mathcal A \big\}$ is an open covering of the compact set $\mathcal A$.    
Let  $\epsilon_0\in (0, \delta]$  be its
 Lebesgue number.

 For  $v\in V$ and $q\in {\rm supp}(D_v^\partial)$, let $\gamma_{v,q}: (0,1)\rightarrow \mathbb D_v$ be a Jordan arc in  the
 $\epsilon_0$-neighborhood of $\overline{ \mathbb D(q, \delta)}\cap \partial \mathbb D_v$,  
with endpoints
$\gamma_{v,q}(0)=a(q), \gamma_{v,q}(1)=b(q)$. We further assume $\epsilon_0$ is small so that the arcs $\{\gamma_{v,q}\}_{q\in {\rm supp}(D_v^\partial)}$ are pairwise disjoint.
 Let $\mathcal D$ be the set of   $E=(B_u)_{u\in V} \in {\rm Div}(\mathbb D)^S$ of the form
$${\rm div}(B_u)={\rm div}(B_u^0)+\sum_{q\in {\rm supp}(D_u^\partial)}1\cdot \zeta_{u,q},  \ \zeta_{u,q} \in \gamma_{u,q}, \forall  q\in {\rm supp}(D_u^\partial),  \forall   u\in V.$$
Clearly $\mathcal D$ can be parameterized by $\prod_{u\in V}\prod_{q\in {\rm supp}(D_u^\partial)} \gamma_{u,q}$,   homeomorphic to $(0,1)^m$ with  $m=\sum_{u\in V}{\rm deg}(D_u^\partial)$. So  
$\overline{\mathcal D}$ is  homeomorphic to $[0,1]^m$.

Define a map 
$$F=\big(f_{v,q}\big)_{ v\in V,  q\in {\rm supp}(D_v^\partial) }: \mathcal D \rightarrow \mathbb R^m$$
so that $f_{v,q}(E)$ is the signed distance from  $B_v(c_{E,v}(q))$  to the  truncated ray:
$$f_{v,q}(E)={\rm sgn}(\zeta_{v,q})\cdot {\rm dist}(B_v(c_{E,v}(q)), H_{\sigma(v),  q_1}({E},  \mathbf{R}^{D}_{\sigma(v),  q_1}[T, +\infty)),$$
here   $ E=(B_u)_{u\in V}\in \mathcal D$, 
and $c_{E, v}(q)$ is the unique critical point of $B_{v}$  close to $q$; to define ${\rm sgn}(\zeta_{v,q})$,  note that for any $E\in \mathcal D$, 
we can extend   $\mathbf{R}^{E}_{\sigma(v),  r_{\sigma(v),q_1}(E)} [T, +\infty)$ to a longer arc $\alpha^{E}_{\sigma(v), q_1}\subset  \mathbb D_{\sigma(v)}$, whose two endpoints 	are on $\partial\mathbb D_{\sigma(v)}$, and $\alpha^{E}_{\sigma(v), q_1}$ is continuous with respect to $E\in \mathcal D$. The  two  components of  $\mathbb D_{\sigma(v)}\setminus \alpha^{E}_{\sigma(v), q_1}$ are denoted by 
 $W^{E,+}_{\sigma(v),  q_1}$ and $W^{E,-}_{\sigma(v),  q_1}$. Set
$${\rm sgn}(\zeta_{v,q}) =\begin{cases}  +1, & \text{ if }  B_v(c_{E, v}(q))\in W^{E,+}_{\sigma(v),  q_1},\\ 
		0,  &  \text{ if }  B_v(c_{E, v}(q))\in H_{\sigma(v),  q_1}({E},  \mathbf{R}^D_{\sigma(v),  q_1}[T, +\infty)), \\
	 -1, & \text{ if }  B_v(c_{E, v}(q))\in W^{E,-}_{\sigma(v),  q_1}.
\end{cases}$$
In this way, we get a continuous map $f_{v,q}: \mathcal D \rightarrow \mathbb R$.

\begin{figure}[h]  
	\begin{center}
		\includegraphics[height=4.8cm]{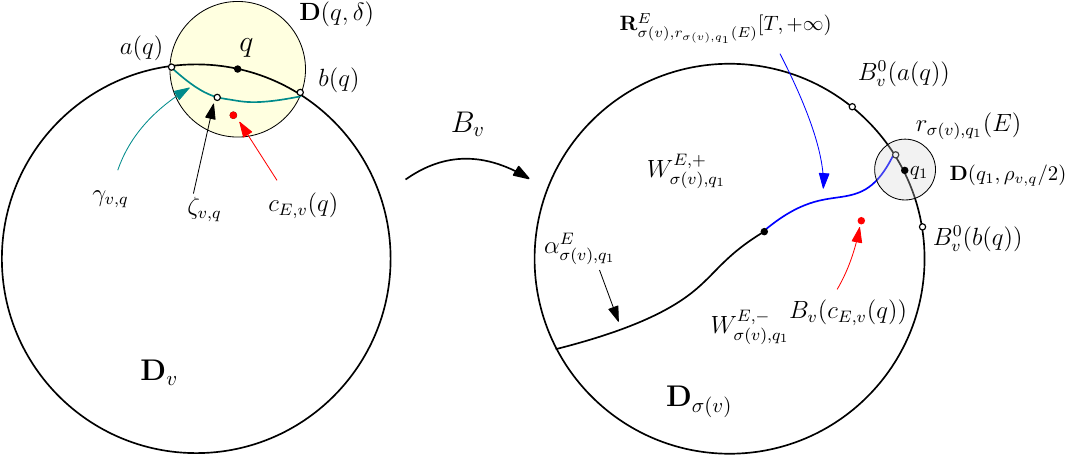}
	\end{center}
	\caption{The map $B_v$ when $E\in \mathcal D$.}
\end{figure}

It is important to note that when $\zeta_{v, q}$ moves along $\gamma_{v,q}$ to one endpoint, say $a(q)$ (resp. $b(q)$),  by Lemma \ref{cp-cv} and Remark \ref{cp-cv-rmk},  we have $B_v(c_{E,v}(q))\rightarrow B_v^0(a(q))$ (resp. $B_v^0(b(q))$). It follows that $F$ extends to a continuous map in $\overline{\mathcal D}$.
On the other hand, the landing point $r_{\sigma(v),q_1}({E})$ of the truncated ray 
$\mathbf{R}^{E}_{\sigma(v),  r_{\sigma(v),q_1}({E})}[T, +\infty)$ remains in $\mathbb D(q_1, \rho_{v, q}/2)\cap \partial \mathbb D_{\sigma(v)}$, which is bounded away from $B_v^0(a(q))$ and $B_v^0(b(q))$.  
By  this,  we find that 
$$f_{v,q}(E)|_{\zeta_{v,q}=a(q)}\cdot f_{v,q}(E)|_{\zeta_{v,q}=b(q)}<0, \ \forall q\in {\rm supp}(D_v^\partial),  \forall  v\in V.$$
In particular,  $\mathbf{0}\notin F(\partial \mathcal D)$.
By Theorem \ref{higher-ivt}, there is $E\in \mathcal D$ with $F(E)=\mathbf{0}$.

The above argument shows that for any given $\delta\in (0, \tau)$, we can find $E\in \mathcal D\subset \N_{\delta+\epsilon_0}(D)\subset \N_{2\delta}(D)$ with  required properties. Take a sequence $\delta_n\rightarrow 0$, we get a sequence of divisors $E_n$.
The proof is completed.
\end{proof}

\subsection{Polynomials with prescribed dynamics} \label{p-prescribed-dynamics}
 Let  $D=\big((B_u^0, D_u^{\partial})\big)_{u\in V}\in \mathbf{Div}_{\rm spp}$ be
  a generic  Misiurewicz  divisor. Suppose 
 	$({B}_u^0)'(0)\neq 0$ for all $u\in V$.

Let $f_n=\Phi(E_n)$, where $E_n=(B_{n,u})_{u\in V}$'s are given by Proposition \ref{c-prescribed-divisors}. 
Clearly,  $f_n'(v(f_n))\neq 0$ for all $v\in V$.
By Fact \ref{fact-def},  the accumulation set of the sequence $\{f_n\}_{n\geq 1}$ is contained in  $I_{\Phi}(D)$.  By choosing a subsequence, we assume $f_n\rightarrow f\in  I_{\Phi}(D)$ as $n\rightarrow \infty$.  For   $g=f_n$ or $f$,   by \cite[\S 8]{M},   there exist  Koenigs  linearization maps $\{\kappa_{g,v}: U_{g,v}\rightarrow \mathbb C\}_{v\in V}$ satisfying that 
$$\kappa_{g, \sigma(v)}(g(z))=g'(v(g)) \cdot \kappa_{g, v}(z), \ z\in U_{g,v}, \ v\in V,$$
with normalizations $\kappa_{g, v}(v(g))=0$ and $\kappa_{g, v}'(v(g))=1$. 


Define the potential function 
$$G_{g,v}(z)=\log |\kappa_{g, v}(z)|,  \  z\in U_{g,v}, \ v\in V.$$
It satisfies that  $G_{g, \sigma(v)}(g(z))=G_{g,v}(z)+{\log|g'(v(g))|}, v\in V$.

Recall that $\phi_{f_n,v}: \mathbb D_v\rightarrow U_{f_n,v}$(see Section \ref{imp-divisor}) and $\phi_{f,v}: \mathbb D_v\rightarrow U_{f,v}$ (given by Proposition \ref{combinatorial-property0}) are  conformal maps.
The linearization maps $\kappa_{g,v}: U_{g,v}\rightarrow \mathbb C$ and $\kappa_{E(g), v}:  \mathbb D_v\rightarrow\mathbb C$ satisfy that $\kappa_{g,v}\circ \phi_{g,v}=\phi_{g,v}'(0)\cdot \kappa_{E(g), v}$, where $E(f_n)=E_n$ and $E(f)=D$.
It follows that the potential functions $G_{g,v}: U_{g,v}\rightarrow\mathbb R\cup\{-\infty\}$ and $G_{E(g), v}: \mathbb D_v\rightarrow\mathbb R\cup\{-\infty\}$  satisfy
$$G_{g,v}(\phi_{g,v}(w))=G_{E(g),v}(w)+\log|\phi_{g,v}'(0)|, \  w\in \mathbb D_v.$$  

Let $v\in V$, $q\in {\rm supp}(D_v^\partial)$. Suppose $q$ is $(m, l)$-$D$-preperiodic, with orbit 
	$$q_{0}:=q\overset{B^0_{v}}{\longrightarrow}  q_{1}  \overset{B^0_{\sigma(v)}}{\longrightarrow} q_{2}   \overset{B^0_{\sigma^2(v)}}{\longrightarrow} \cdots.$$
For any $k\geq 1$,   by Lemma \ref{c-m-internal-ray-model}, there exist (possibly truncated) internal rays $\mathbf{R}^{E}_{\sigma^k(v),  r_{\sigma^k(v),q_k}(E)} [T, +\infty)$, continuous in $E\in \{D, E_n; n\geq 1\}$,  
 with landing points  $r_{\sigma^k(v),q_k}(E)$ (given by Proposition \ref{extension-repelling}) satisfying that $r_{\sigma^k(v),q_k}(D)=q_k$ .
The $\phi_{f_n,\sigma^k(v)}$-image of these rays are   dynamical internal rays in $U_{f_n, \sigma^k(v)}$: 
$$R^{f_n}_{\sigma^k(v), \xi_{n,k}}[T_{n,k}, +\infty)=\phi_{f_n,\sigma^k(v)}(\mathbf{R}^{E_n}_{\sigma^k(v),  r_{\sigma^k(v),q_k}(E_n)} [T, +\infty)),$$
parameterized in the way that $G_{f_n, \sigma^k(v)}(R_{\sigma^k(v), \xi_{n,k}}^{f_n}(t))=t$, 
where 
\begin{equation}\label{xi-nk}
	\xi_{n,k}=\phi_{f_n, \sigma^k(v)}(r_{\sigma^k(v),q_k}(E_n)), \  T_{n,k}=T+\log|\phi'_{f_n, \sigma^k(v)}(0)|. 
	\end{equation}

Recall from Proposition \ref{c-prescribed-divisors} that $c_{n, v}(q)\in \mathbb D_{v}$ is the critical point of $B_{n,v}$ such that $c_{n, v}(q)\rightarrow q$ as $n\rightarrow \infty$.
 Then $c^*_{n,v}(q):=\phi_{f_n, v}(c_{n, v}(q))\in U_{f_n, v}$ is $f_n$-critical, and $e^*_{n,v}(q):=f_n(c^*_{n,v}(q))\in U_{f_n, \sigma(v)}$ is an $f_n$-critical value.  By the property of $E_n$ in  Proposition \ref{c-prescribed-divisors}, we have   $e^*_{n,v}(q)\in R_{\sigma(v), \xi_{n,1}}^{f_n}[T_{n,1}, +\infty)$.
 Note that $T_{n,1}(q):=G_{f_n, \sigma(v)}(e^*_{n,v}(q))\rightarrow +\infty$ as $n\rightarrow +\infty$.

Consider the sequence of the truncated rays 
$$R_n:=R_{\sigma(v), \xi_{n,1}}^{f_n}[T_{n,1}, T_{n,1}(q)], \ n\geq 1.$$
By choosing a subsequence, we assume they have a limit $R$ in Hausdorff topology. 
Clearly, $R$ contains the $f$-critical value $e^*_{v}(q)=\lim_{n\rightarrow +\infty}e^*_{n,v}(q)$.

Before further discussion, we need the notion of the {\it band neighborhood} of an internal ray.  We first consider the periodic case.  Suppose $u\in V_{\rm p}$ and  $q'\in \partial \mathbb D_u$ is $D$-periodic. We recall that  the internal ray $\mathbf{R}^{E}_{u,  r_{u, q'}(E)}$ (see Lemmas \ref{internal-ray-model} and \ref{c-m-internal-ray-model})
 is defined so that its projection  $\Pi_{E, u}(\mathbf{R}^{E}_{u,  r_{u, q'}(E)})$ is a loop in  the torus $T_{E,v}$, without intersection with $C_{E, u}$.  Let $\mathbf{A}_{u,q'}^E$ be an annular neighborhood of $\Pi_{E, u}(\mathbf{R}^{E}_{u,  r_{u, q'}(E)})$  avoiding $C_{E,v}$.
 The  {\it band neighborhood}  $\mathbf{B}_{u,q'}^E$ of $\mathbf{R}^{E}_{u,  r_{u, q'}(E)}$ is the 
 component of $\Pi_{E,u}^{-1}(\mathbf{A}_{u,q'}^E)$ containing $\mathbf{R}^{E}_{u,  r_{u, q'}(E)}$.
Similarly, we can  define the band neighborhoods in the dynamical plane:  the set  $\mathcal B_{u,q'}^{g}=\phi_{g, u}(\mathbf{B}_{u,q'}^E)$ is called the {\it band neighborhood} of $R^g_{u, \phi_{g,u}(r_{u, q'}(E))}=\phi_{g, u}(\mathbf{R}^{E}_{u,  r_{u, q'}(E)})$, where $g=\Phi(E)$ (if $E\in  {\rm Div}{(\mathbb D)}^S\cap \U_{\tau}(D)$) or $g\in I_{\Phi}(E)$  (if $E\in  \partial {\rm Div}{(\mathbb D)}^S\cap \U_{\tau}(D)$) . 
 
 \begin{figure}[h]  
 	\begin{center}
 		\includegraphics[height=5.5cm]{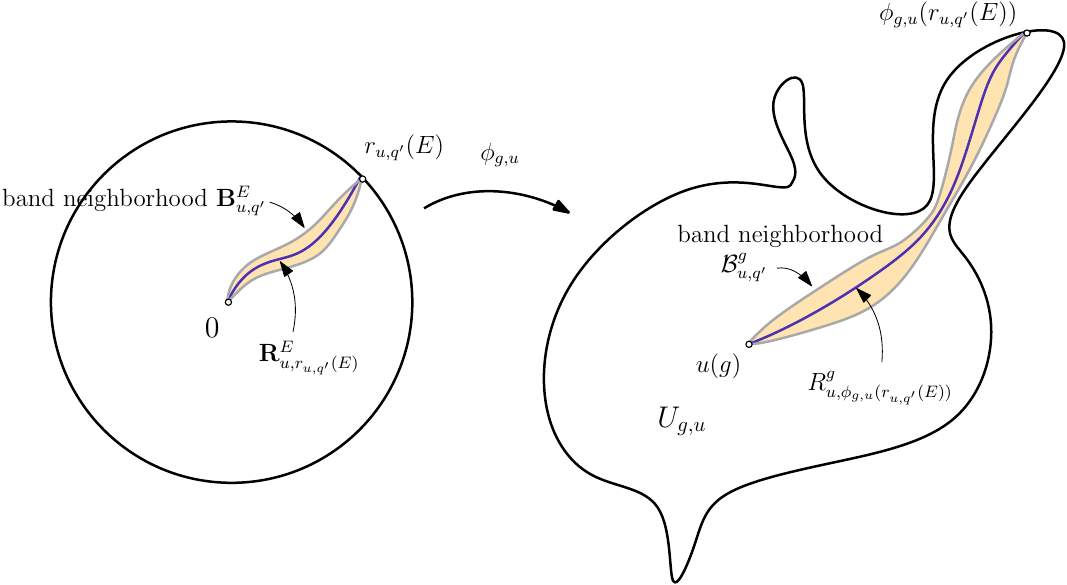}
 	\end{center}
 	\caption{The band neighborhoods}
 \end{figure}

To continue  our discussion, note that $q_m\in\partial\mathbb D_{\sigma^m(v)}$ is $D$-periodic.
Let $\mathbf{B}^{E_n}_{v,q, m}$ (resp. $\mathbf{B}^{D}_{v,q, m}$) be the band neighborhood of $\mathbf{R}^{E_n}_{\sigma^{m}(v),  r_{\sigma^{m}(v), q_{m}}(E_n)}$ (resp. $\mathbf{R}^{D}_{\sigma^{m}(v),   q_{m}}$). 
By iterations and taking preimages, for any $k\geq 1$,   the band neighborhood  $\mathbf{B}^{E_n}_{v,q, k}$ (resp. $\mathbf{B}^{D}_{v,q, k}$)  of  $\mathbf{R}^{E_n}_{\sigma^{k}(v),  r_{\sigma^{k}(v), q_{k}}(E_n)}$ (resp. $\mathbf{R}^{D}_{\sigma^{k}(v),   q_{k}}$) are defined. 
Then for $k\geq 1$,
$$\mathcal B_{v,q,k}^{f_n}=\phi_{f_n, \sigma^k(v)}(\mathbf{B}^{E_n}_{v,q,k})\  \text{ and } \ \mathcal B_{v,q,k}^{f}=\phi_{f, \sigma^k(v)}(\mathbf{B}^{D}_{v,q,k})$$
  are band neighborhoods of $R^{f_n}_{\sigma^{k}(v), \xi_{n,k}}$ and $R^f_{\sigma^{k}(v), \xi_{k}}=\phi_{f, \sigma^k(v)}(\mathbf{R}^{D}_{\sigma^{k}(v),   q_{k}})$, respectively,   where  $\xi_k=\phi_{f, \sigma^k(v)}(q_k)$.
 They satisfy $g(\mathcal B_{v,q,k}^{g})=\mathcal B_{v,q,k+1}^{g}$  and $\mathcal B_{v,q, m+l\ell_v}^{g}=\mathcal B_{v,q, m}^{g}$, for $g=f_n$ or $f$. 
  By passing to a subsequence, we assume $\mathcal B_{v,q, k}^{f_n}$ has a Hausdorff limit $\mathcal B_{v,q,k}$ as $n\rightarrow \infty$.  Clearly, $\mathcal B_{v,q,k}$ is  compact.

\begin{lem}  \label{hausdorff-limit} 1. The Hausdorff limit $R=\lim_{n\rightarrow +\infty}R_n$
satisfies that 
$$R\cap U_{f, \sigma(v)}=R_{\sigma(v), \xi_{1}}^{f}[T_1, +\infty), \ \ R\subset R_{\sigma(v), \xi_{1}}^{f}[T_1, +\infty)\cup L_{U_{f, \sigma(v)}, \xi_1},$$
where $T_1=T+\log|\phi'_{f, \sigma(v)}(0)|$.

2.  For any $k\geq 1$, the Hausdorff limit $\mathcal B_{v,q, k}$
 satisfies that 
 $$\mathcal B_{v,q, k}\cap U_{f, \sigma^k(v)}=\overline{\mathcal B_{v,q, k}^{f}}\setminus\{\xi_k\}, \  \mathcal B_{v,q, k} \subset  \overline{\mathcal B_{v,q,k}^{f}}\cup L_{U_{f, \sigma^k(v)}, \xi_k}.$$
\end{lem}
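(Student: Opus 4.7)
\medskip

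\noindent\textbf{Proof proposal.} The plan is to treat the two parts in parallel by splitting the objects of interest into a \emph{head} with bounded Koenigs potential and a \emph{tail} approaching the landing point, and to handle each piece with a different tool: the head by uniform convergence on compact subsets of $\mathbb{D}_{\sigma^k(v)}$, the tail by the pair of external rays that separate $U_{f,\sigma^k(v)}$ from $L_{U_{f,\sigma^k(v)},\xi_k}$.

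For Part (1), I would first record the convergence of normalizations. Since $\phi_{f_n,\sigma(v)}\to\phi_{f,\sigma(v)}$ locally uniformly on $\mathbb{D}_{\sigma(v)}$ (Proposition \ref{combinatorial-property0}), one has $T_{n,1}\to T_1$ and, by the continuous motion of internal rays (Lemma \ref{c-m-internal-ray-model}),
$\mathbf{R}^{E_n}_{\sigma(v),\,r_{\sigma(v),q_1}(E_n)}[T,S']\longrightarrow \mathbf{R}^{D}_{\sigma(v),q_1}[T,S']$
in Hausdorff topology for any fixed $S'\in\mathbb{R}$. Applying $\phi_{f_n,\sigma(v)}$ (uniformly convergent on the compact subset containing this model arc) one obtains that the head $R_n\cap\{G_{f_n,\sigma(v)}\le S\}$ converges to $R^{f}_{\sigma(v),\xi_1}[T_1,S]$. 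Letting $S\to+\infty$ shows $R^{f}_{\sigma(v),\xi_1}[T_1,+\infty)\subset R$, and conversely that any $z\in R\cap U_{f,\sigma(v)}$ is in this ray: if $z_n\in R_n$ with $z_n\to z$, then the preimages $w_n:=\phi_{f_n,\sigma(v)}^{-1}(z_n)$ cannot accumulate on $\partial\mathbb{D}_{\sigma(v)}$ (otherwise $z\in\partial U_{f,\sigma(v)}$ by kernel convergence of the Fatou components, contradicting $z\in U_{f,\sigma(v)}$), so they lie in a compact subset and the limit lands on $\mathbf{R}^{D}_{\sigma(v),q_1}$.

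For the upper bound $R\subset R^{f}_{\sigma(v),\xi_1}[T_1,+\infty)\cup L_{U_{f,\sigma(v)},\xi_1}$ I would treat any $z\in R\setminus U_{f,\sigma(v)}$ as a limit of $z_n\in R_n$ with $G_{f_n,\sigma(v)}(z_n)\to+\infty$ (the bounded-potential case already covered). Write the endpoint as $e^{*}_{n,v}(q)=\phi_{f_n,\sigma(v)}(B_{n,v}(c_{n,v}(q)))$, whose model image converges to $q_1$ (Lemma \ref{cp-cv} combined with $c_{n,v}(q)\to q$), so $e^{*}_{n,v}(q)\to\xi_1$. Since $\xi_1$ is $f$-pre-repelling (either by direct verification that $q_1$ satisfies the hypotheses of Propositions \ref{extension-repelling} and \ref{convergence-repelling} via the Misiurewicz condition on $D$, or by Remark \ref{fixed-1} together with the orbit combinatorics), I would take the finitely many external rays landing at $\xi_1$, apply Lemma \ref{stability-e-r} together with Proposition \ref{convergence-repelling} to conclude that the landing points $\xi^{n}_{1}=\phi_{f_n,\sigma(v)}(r_{\sigma(v),q_1}(E_n))$ converge to $\xi_1$ and that the two external rays sandwiching $L_{U_{f,\sigma(v)},\xi_1}$ are accumulated on by honest external rays of $f_n$. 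The tail of $R_n$ is then squeezed into the region between those rays, which in the limit is $L_{U_{f,\sigma(v)},\xi_1}\cup\{\xi_1\}$, proving the second inclusion of (1).

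For Part (2), the strategy is identical once one sets up the band. In the model one chooses the annular neighborhood $\mathbf{A}^{E_n}_{\sigma^k(v),q_k}$ of $\Pi_{E_n,\sigma^k(v)}(\mathbf{R}^{E_n}_{\sigma^k(v),r_{\sigma^k(v),q_k}(E_n)})$ so that its diameter at the end approaching $q_k$ shrinks, and uses the fact that the projections $\Pi_{E_n,\sigma^k(v)}$ and the constructions of Lemma \ref{internal-ray-model} vary continuously with $E_n$. Repeating the head/tail split for $\mathbf{B}^{E_n}_{v,q,k}$ and applying $\phi_{f_n,\sigma^k(v)}$ yields $\mathcal{B}_{v,q,k}\cap U_{f,\sigma^k(v)}=\overline{\mathcal{B}^{f}_{v,q,k}}\setminus\{\xi_k\}$; the tail again lies in the two-external-ray sector accumulating on $L_{U_{f,\sigma^k(v)},\xi_k}$. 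For $k>1$ one can alternatively propagate by dynamics, using $g(\mathcal{B}^{g}_{v,q,k})=\mathcal{B}^{g}_{v,q,k+1}$ and $f_n\to f$ uniformly on compact sets, together with the forward invariance of the limb structure $f(L_{U_{f,\sigma^k(v)},\xi_k})\supset L_{U_{f,\sigma^{k+1}(v)},\xi_{k+1}}$, reducing everything to the $k=1$ case modulo orbit combinatorics.

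The main obstacle I foresee is the tail analysis. Pinching of the Fatou components $U_{f_n,\sigma^k(v)}$ as $n\to\infty$ means that pieces of $R_n$ that lie well inside $U_{f_n,\sigma^k(v)}$ with very high potential may in the limit jump outside $U_{f,\sigma^k(v)}$ into the limb. The clean way to control this is through the pair of external rays landing at $\xi_k$; establishing their stability and using them as a barrier requires showing that $\xi_k$ is pre-repelling for $f$ and that the landing configuration is preserved under perturbation, which is where Proposition \ref{convergence-repelling} and Lemma \ref{stability-e-r} carry the real weight of the argument.
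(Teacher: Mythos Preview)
Your head/tail decomposition is the right skeleton, and your treatment of the head (bounded potential) matches the paper. The gap is in the tail argument: you assume that $\xi_1=\phi_{f,\sigma(v)}(q_1)$ is $f$-pre-repelling in order to invoke Proposition~\ref{convergence-repelling} and Lemma~\ref{stability-e-r} for the external rays landing at $\xi_1$. But at this point of the paper that fact is \emph{not available}. The hypotheses of Proposition~\ref{convergence-repelling} explicitly require the landing point to be pre-repelling, and Proposition~\ref{combinatorial-property0} only yields pre-repelling for points $s_0\in\partial_{\sigma(v)}^f\mathbb{D}$ (trivial limb); there is no reason a priori that $q_1\in\partial_{\sigma(v)}^f\mathbb{D}$. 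In fact the entire purpose of the subsequent Lemmas~\ref{repelling-landing}--\ref{limit-Fatou} and Proposition~\ref{criterion-H-adm} is to decide whether $\xi_m$ (and hence $\xi_1$) is repelling or parabolic; were it already known, those results would be redundant. Your parenthetical justifications (``Misiurewicz condition on $D$'' or Remark~\ref{fixed-1}) do not close this circle: the Misiurewicz property of the divisor controls model orbits, not the nature of the dynamical landing point before $\mathcal{H}$-admissibility is established.

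The paper sidesteps this by never using the rays at $\xi_1$. Instead, for each small $\epsilon>0$ it picks auxiliary $D$-pre-periodic points $a,b\in\mathbb{D}(q_1,\epsilon)\cap\partial_{\sigma(v)}^D\mathbb{D}$ on either side of $q_1$, whose orbits avoid $\bigcup_k{\rm supp}(D_{\sigma^k(v)}^\partial)$. For these points Proposition~\ref{combinatorial-property0} does apply, so $\phi_{f,\sigma(v)}(a),\phi_{f,\sigma(v)}(b)$ are genuinely pre-repelling, and Proposition~\ref{convergence-repelling} gives Hausdorff continuity of the associated truncated internal rays. These rays, together with an equipotential $G_{E,\sigma(v)}^{-1}(L)$ and an arc of $\partial\mathbb{D}_{\sigma(v)}$, bound a small sector $V^\epsilon_{E,\sigma(v)}$ of diameter $\le 2\epsilon$; the tail of $R_n$ is trapped there, and in the limit $R$ sits inside $R^f_{\sigma(v),\xi_1}[T_1,\infty)\cup\phi_{f,\sigma(v)}(\overline{V^\epsilon_{D,\sigma(v)}})\cup\bigsqcup_{\zeta\in\alpha_\epsilon}L_{U_{f,\sigma(v)},\zeta}$. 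Letting $\epsilon\to 0$ collapses the auxiliary pieces to $\{\xi_1\}\cup L_{U_{f,\sigma(v)},\xi_1}$. Your argument becomes correct if you replace the barrier at $\xi_1$ by this $a,b$-barrier; the rest of your outline, including Part~(2), then goes through.
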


\begin{proof}  By Lemma  \ref{c-m-internal-ray-model} and the  local and uniform convergence  $\phi_{f_n, \sigma(v)}\rightarrow \phi_{f, \sigma(v)}$ (see Proposition \ref{combinatorial-property0}),  we have 
	$$T_{n,1}=T+\log|\phi'_{f_n, \sigma(v)}(0)|\rightarrow T+\log|\phi'_{f, \sigma(v)}(0)|=T_1, $$ 
 and	  for any $t\in (T_1, +\infty)$, 
	 $$R_{\sigma(v), \xi_{n,1}}^{f_n}[T_{n,1}, t]\rightarrow R_{\sigma(v), \xi_{1}}^{f}[T_{1}, t]   \ \text{ (in Hausdorff topology)},$$
	 as $n\rightarrow \infty$. Hence $R\cap U_{f, \sigma(v)}=R_{\sigma(v), \xi_{1}}^{f}[T_1, +\infty)$.
	
	To locate $R\setminus U_{f, \sigma(v)}$, for 
  any $\epsilon>0$,   take two $D$-pre-periodic points $a, b \in \mathbb D(q_1,  \epsilon)\cap \partial_{\sigma(v)}^D \mathbb D$  (defined by \eqref{notations})
   in two sides of $q_1$, so that orbit 
  $$s_{0}:=s\overset{B^0_{v}}{\longrightarrow}  s_{1}  \overset{B^0_{\sigma(v)}}{\longrightarrow} s_{2}   \overset{B^0_{\sigma^2(v)}}{\longrightarrow} \cdots, \ \ s\in\{a,b\}$$
  satisfies that  $s_k\notin {\rm supp}(D_{\sigma^k(u)}^\partial)$ for all $k\geq 0$.     By Proposition \ref{combinatorial-property0},   $\phi_{f, \sigma(v)}(a)$ and $ \phi_{f, \sigma(v)}(b)$ are $f$-pre-repelling. By Lemma  \ref{c-m-internal-ray-model}, there is an  (possibly truncated) internal ray $\mathbf{R}^{E}_{\sigma(v),  r_{\sigma(v), s}(E)}$ ($s=a,b; E=E_n, D$), landing at $r_{\sigma(v), s}(E)\in \partial \mathbb D_{\sigma(v)}$, continuous in $E\in \{D, E_n; n\geq 1\}$.  These rays together with  $ \partial \mathbb D_{\sigma(v)}$ and a suitable equipotential curve $G_{E,\sigma(v)}^{-1}(L)$ (where $L=L(\epsilon)$    satisfies $L(\epsilon)\rightarrow +\infty$ as $\epsilon\rightarrow 0$) bound a disk $V^\epsilon_{E, \sigma(v)}$ with ${\rm diam}({V^\epsilon_{E, \sigma(v)}})\leq 2\epsilon$.
 
  \begin{figure}[h]  
 	\begin{center}
 		\includegraphics[height=5.3cm]{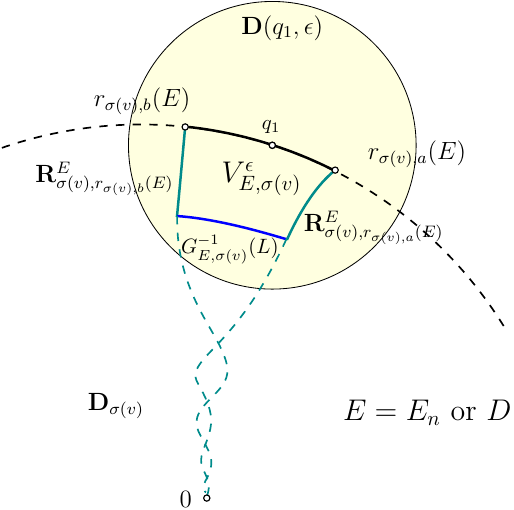}
 	\end{center}
 	\caption{The set $V_{E, \sigma(v)}^\epsilon$.}
 \end{figure}

By   the Hausdorff continuity:
$\partial V^\epsilon_{E_n, \sigma(v)}\rightarrow \partial V^\epsilon_{D, \sigma(v)}$ and $\phi_{f_n, \sigma(v)}(\partial V^\epsilon_{E_n, \sigma(v)}\cap  \mathbb D_{\sigma(v)})\rightarrow \phi_{f, \sigma(v)}(\partial V^\epsilon_{D, \sigma(v)}\cap  \mathbb D_{\sigma(v)})$\footnote{The reason of the Hausdorff continuity is: for $s\in\{a,b\}$, since  $\phi_{f, \sigma(v)}(s)$ is $f$-pre-repelling, 
	we have  $\lim_n \phi_{f_n, \sigma(v)}(r_{\sigma(v), s}(E_n))=\phi_{f, \sigma(v)}(s)$(by Proposition \ref{convergence-repelling}), this allows us to establish the Hausdorff continuity of truncated dynamical rays $\phi_{f_n, \sigma(v)}(\mathbf{R}^{E_n}_{\sigma(v),  r_{\sigma(v), s}(E_n)}[L, \infty))\rightarrow  \phi_{f, \sigma(v)}(\mathbf{R}^{D}_{\sigma(v), s}[L, \infty))$.
} as $n\rightarrow \infty$,   we get
$$R\subset R_{\sigma(v), \xi_{1}}^{f}[T_1, +\infty)\bigcup \phi_{f,\sigma(v)}(\overline{V^\epsilon_{D, \sigma(v)}}) \bigcup  
\bigsqcup_{\zeta\in   \alpha_\epsilon}L_{U_{f, \sigma(v)}, \zeta},$$
where $\alpha_\epsilon=\phi_{f,\sigma(v)}(\partial V^\epsilon_{D, \sigma(v)}\cap \partial \mathbb D_{\sigma(v)})$.
Clearly 
$\bigcap_{\epsilon>0}\phi_{f,\sigma(v)}(\overline{V^\epsilon_{D, \sigma(v)}})=\{\xi_1\}=\bigcap_{\epsilon>0} \alpha_\epsilon$. Since $\epsilon>0$ is arbitrary,    we have  
$R\subset R_{\sigma(v), \xi_{1}}^{f}[T_1, +\infty)\cup L_{U_{f, \sigma(v)}, \xi_1}$.

The second statement follows from the same argument. 
	\end{proof}

Recall that $q\in {\rm supp}(D_v^\partial)$ is $(m, l)$-$D$-preperiodic.  
Let 
$$\mu=\log |(\widehat{B}_{\sigma^m(v)}^0)'(0)|, \ \mu_n=\log |\widehat{B}_{n, \sigma^m(v)}'(0)|, n\geq 1.$$
Note that $\sigma^m(v)\in V_{\rm p}$, $\mu,\mu_n<0$  and $\lim_{n}\mu_n=\mu$.
By Lemma \ref{hausdorff-limit}, we can study the part $R\cap L_{U_{f, \sigma(v)}, \xi_1}$ from the dynamical viewpoint:
 \begin{pro} \label{inverseb} Let $T_*>0$ be a large number. 
For any $a\in R\cap L_{U_{f, \sigma(v)}, \xi_1}$, there exist a point $b=b(a)\in R^f_{\sigma^{m}(v), \xi_{m}}[T_*+l\mu, T_*]$, 
and a sequence of holomorphic maps $\big\{h_k^a:\mathcal B_{v,q,m}^f\rightarrow L_{U_{f, \sigma^{k+1}(v)}, \xi_{k+1}}\big\}_{k\geq 0}$, 
satisfying that:

(1). $h^a_k(b)=f^k(a)$ and $h^a_{k+1}=f\circ h^a_k$, for $k\geq 0$;

(2). $h^a_k\circ f^{l\ell_v}=f^{l\ell_v}\circ h^a_k$, for $k\geq m-1$;
 
(3). $\{h^a_k\}_{k\geq 0}$ are either all constant or all univalent. In the latter case,  for different $k$ and $s$, we have 
$$h^a_k(\mathcal B^f_{v,q,m})\cap h^a_s(\mathcal B^f_{v,q,m})\neq \emptyset \Longleftrightarrow k,s\geq m-1 \text{ and } k-s\in l\ell_v\mathbb Z.$$
 \end{pro}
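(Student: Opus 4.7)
The strategy is to realize $a$ as a limit of points on the $f_n$-dynamical ray, iterate them forward until their potentials land in a fixed fundamental segment of the periodic cycle, and then construct the maps $h^a_k$ as limits of suitable univalent inverse branches built from the band/torus construction. Since $a\in L_{U_{f,\sigma(v)},\xi_1}$ lies outside the Fatou component, any sequence $a_n\in R_n$ with $a_n\to a$ must satisfy $t_n:=G_{f_n,\sigma(v)}(a_n)\to +\infty$. Using the potential-shift $G_{g,\sigma(w)}(g(z))=G_{g,w}(z)+\log|g'(w(g))|$ together with the fact that one full return through the cycle ($l\ell_v$ iterates) shifts the potential by $\mu_n$ (resp.\ $\mu$), I choose integers of the form $k_n=m+l\ell_v j_n$ with $j_n\to\infty$ so that $b_n:=f_n^{k_n-1}(a_n)\in R^{f_n}_{\sigma^m(v),\xi_{n,m}}\subset\mathcal B^{f_n}_{v,q,m}$ has $G_{f_n,\sigma^m(v)}(b_n)\in[T_*+l\mu_n,T_*]$. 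Passing to a subsequence yields $b_n\to b\in R^f_{\sigma^m(v),\xi_m}[T_*+l\mu,T_*]$, which will be the base point.

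Next I would construct $h^a_0$ via inverse branches. Because the band $\mathcal B^{f_n}_{v,q,m}$ was built via the torus projection $\Pi_{E_n,\sigma^m(v)}$ avoiding the critical-value locus $C_{E_n,\sigma^m(v)}$, a univalent inverse branch $H_n:\mathcal B^{f_n}_{v,q,m}\to U_{f_n,\sigma(v)}$ of $f_n^{k_n-1}$ with $H_n(b_n)=a_n$ is well-defined. The uniform boundedness of the filled Julia sets gives normality of $\{H_n\}$, and by Hurwitz any subsequential limit $h^a_0:\mathcal B^f_{v,q,m}\to\overline{\mathbb{C}}$ is either constant or univalent. Applying Lemma \ref{hausdorff-limit} to the band $\mathcal B^{f_n}_{v,q,1}$ (which contains the images of $H_n$) forces $h^a_0(\mathcal B^f_{v,q,m})\subset\overline{\mathcal B^f_{v,q,1}}\cup L_{U_{f,\sigma(v)},\xi_1}$; since $h^a_0(b)=a\in L_{U_{f,\sigma(v)},\xi_1}$ and the dichotomy rules out a mixed image, $h^a_0$ lands entirely in the limb $L_{U_{f,\sigma(v)},\xi_1}$. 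Set $h^a_k:=f^k\circ h^a_0$ for all $k\geq 0$.

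Property (1) is then immediate. For (2), the key observation is that $f_n^{m-1}\circ H_n$ is itself the inverse branch of $f_n^{l\ell_v j_n}$ on $\mathcal B^{f_n}_{v,q,m}$ sending $b_n$ to $f_n^{m-1}(a_n)$; writing $\tau_n:=f_n^{l\ell_v}$ for its self-action on the band, this coincides with the specific inverse branch $\tau_n^{-j_n}$ through $f_n^{m-1}(a_n)$, and inverse branches of iterates commute pointwise with the map, $\tau_n^{-j_n}\circ\tau_n=\tau_n\circ\tau_n^{-j_n}=\tau_n^{-(j_n-1)}$. Taking limits gives $h^a_{m-1}\circ f^{l\ell_v}=f^{l\ell_v}\circ h^a_{m-1}$ on $\mathcal B^f_{v,q,m}$, where $h^a_{m-1}=f^{m-1}\circ h^a_0$, and composing with $f^{k-m+1}$ extends the equivariance to every $k\geq m-1$. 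Property (3): the constant/univalent dichotomy transfers to each $h^a_k=f^k\circ h^a_0$ because, by $\mathcal H$-admissibility of $D$ together with Proposition \ref{h-admissible-characterization}, the forward $f$-orbit of $L_{U_{f,\sigma(v)},\xi_1}$ avoids $\mathrm{Crit}(f)$; in the univalent case, the ambient limbs $L_{U_{f,\sigma^{k+1}(v)},\xi_{k+1}}$ containing $h^a_k(\mathcal B^f_{v,q,m})$ are pairwise disjoint unless $\xi_{k+1}=\xi_{s+1}$, i.e.\ unless both $k,s\geq m-1$ and $k\equiv s\pmod{l\ell_v}$, and on that diagonal the images coincide by (2) combined with $\mathcal B^f_{v,q,m+l\ell_v}=\mathcal B^f_{v,q,m}$.

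The main technical obstacle is verifying that the inverse branches $H_n$ extend univalently over the entire band $\mathcal B^{f_n}_{v,q,m}$ uniformly in $n$, which requires controlling the full backward orbit of the band under $f_n^{k_n-1}$ and ruling out critical values along the way. The torus-based construction (removing $C_{E_n,\sigma^m(v)}$) was arranged precisely so that the critical orbit of $f_n$ attached to $v$ stays off the band, and the separation condition in Definition \ref{adm-divisor} prevents the orbits of the remaining critical points from intruding as $n\to\infty$; these two ingredients together supply the required uniform univalence.
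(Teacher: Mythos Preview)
Your overall strategy matches the paper's: approximate $a$ by $a_n\in R_n$, iterate forward until the potential lands in a fixed fundamental segment, take inverse branches $H_n$ on the bands, and extract limits via Montel/Hurwitz. However, two steps in your execution fail.

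First, your justification that the dichotomy transfers to $h_k^a=f^k\circ h_0^a$ invokes ``$\mathcal H$-admissibility of $D$ together with Proposition~\ref{h-admissible-characterization}''. This is circular: at this point in Section~\ref{p-prescribed-dynamics}, $D$ is only assumed to be a generic Misiurewicz divisor in $\mathbf{Div}_{\rm spp}$, not $\mathcal H$-admissible---indeed, Proposition~\ref{inverseb} is a tool \emph{used to establish} the $\mathcal H$-admissibility criterion (Proposition~\ref{criterion-H-adm}). The correct argument stays at the $f_n$ level: by construction of the bands (lifts of annuli in the torus avoiding $C_{E_n,\sigma^m(v)}$), each $f_n:\mathcal B^{f_n}_{v,q,j}\to\mathcal B^{f_n}_{v,q,j+1}$ is conformal, so $h_{n,k}=f_n^k\circ h_{n,0}$ is univalent for every $k$, and Hurwitz on the limits gives the dichotomy simultaneously for all $k$.

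Second, your disjointness argument for (3)---that the ambient limbs $L_{U_{f,\sigma^{k+1}(v)},\xi_{k+1}}$ are pairwise disjoint unless $\xi_{k+1}=\xi_{s+1}$---is wrong: when $\sigma^{k+1}(v)\neq\sigma^{s+1}(v)$, one Fatou component sits inside a limb of the other, so the limbs typically nest. The paper instead proves disjointness at the approximating level: the equipotential relation $h_{n,0}(\mathcal B^{f_n}_{v,q,m}\cap G^{-1}_{f_n,\sigma^m(v)}(L))=\mathcal B^{f_n}_{v,q,1}\cap G^{-1}_{f_n,\sigma(v)}(L_n)$ with $L_n\to+\infty$ shows that, for any fixed $L$, the truncated images $h_{n,k}(\mathcal B^{f_n}_{v,q,m}\cap G^{-1}[L,+\infty))$, $0\le k\le m+l\ell_v-2$, lie at pairwise different potential ranges and are disjoint for large $n$; Rouch\'e's theorem then transfers this disjointness to the limits $h_k^a(\mathcal B^f_{v,q,m})$. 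A related technical point you skipped is the kernel convergence $(\mathcal B^{f_n}_{v,q,m},b_n)\to(\mathcal B^f_{v,q,m},b)$, which the paper proves as an explicit Claim and which is needed to make sense of normality for maps on varying domains; the equipotential relation above is also what forces the limiting image outside $U_{f,\sigma^{k+1}(v)}$, which your ``dichotomy rules out a mixed image'' does not quite accomplish.
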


 \begin{proof} Let $\{a_n\}_{n\geq 1}$ be a sequence of points with   $a_n\in R_n$ and $a_n\rightarrow a$.
Then we have $G_{f_n, \sigma(v)}(a_n)\rightarrow +\infty$ as $n\rightarrow +\infty$.

 Note that $f_n^{m-1}(a_n)\in R^{f_n}_{\sigma^{m}(v), \xi_{n,m}}$ 
 for each $n$.  For large $n$, there is an integer $t_n\geq 1$ so that $f_n^{t_nl\ell_v}(f_n^{m-1}(a_n))\in  R^{f_n}_{\sigma^{m}(v), \xi_{n,m}}[T_*+l\mu_n,T_*]$.
The fact  $G_{f_n, \sigma(v)}(a_n)\rightarrow +\infty$ implies that $t_n\rightarrow +\infty$.
 Let $s_n=t_nl\ell_v+m-1$ and $b_n:=f_{n}^{s_n}(a_n)$.
By the Hausdorff continuity of the truncated rays, we have  $R^{f_n}_{\sigma^{m}(v), \xi_{n,m}}[T_*+l\mu_n,T_*]\rightarrow R^{f}_{\sigma^{m}(v), \xi_{m}}[T_*+l\mu,T_*]$ as $n\rightarrow +\infty$. 
 By passing to a subsequence, we assume
 $b_n\rightarrow b\in  R^{f}_{\sigma^{m}(v), \xi_{m}}[T_*+l\mu,T_*]$.

\textbf{Claim}: We have the Carath\'eodory kernel convergence
$$(\mathcal B_{v,q,m}^{f_n}, b_n)\rightarrow (\mathcal B_{v,q,m}^{f}, b) \ \text{ as } n\rightarrow +\infty.$$

To see this,   for  any $L\in \mathbb R$, we have the Hausdorff convergence 
$$\partial(\mathcal B_{v,q,m}^{f_n}\cap G_{f_n, \sigma^{m}(v)}^{-1}(-\infty, L))\rightarrow 
\partial(\mathcal B_{v,q,m}^{f}\cap G_{f, \sigma^{m}(v)}^{-1}(-\infty, L)) \ \text{ as } n\rightarrow +\infty.$$
This verifies the first property of the kernel convergence (see Section \ref{s-ckc}).

Clearly $\sigma^m(v)(f_n)\rightarrow \sigma^m(v)(f)$.
Take $w\in \partial\mathcal B_{v,q,m}^{f}\setminus\{\sigma^m(v)(f)\}$.

If $w\in U_{f, \sigma^m(v)}$, let $t_w=G_{f, \sigma^m(v)}(w)$.
By the Hausdorff convergence $\partial\mathcal B_{v,q,m}^{f_n}\cap G_{f_n, \sigma^m(v)}^{-1}(t_w)\rightarrow \partial\mathcal B_{v,q,m}^{f}\cap G_{f, \sigma^m(v)}^{-1}(t_w)$, 
there exists $w_n\in  \partial\mathcal B_{v,q,m}^{f_n}\cap  G_{f_n, \sigma^m(v)}^{-1}(t_w)$ for each $n$, such that $\lim_{n\rightarrow +\infty}w_n= w$.

If $w=\xi_m\in \partial U_{f, \sigma^m(v)}$, for any integer $k>0$, there is  
$\zeta_k\in  \partial\mathcal B_{v,q,m}^{f}\cap U_{f, \sigma^m(v)}$ with $|\zeta_k-\xi_m|\leq 1/k$. For this $\zeta_k$, by the former case, there exist  an integer $n_k>0$ and
$w'_n\in  \partial\mathcal B_{v,q,m}^{f_n}\cap  G_{f_n, \sigma^m(v)}^{-1}(t_{k})$ (here  $t_k=G_{f, \sigma^m(v)}(\zeta_k)$) such that 
$|w'_n-\zeta_k|\leq 2/k$ for all $n\geq n_k$. We assume $n_k$ is increasing in $k$, and set 
$w_n=w'_n$ for $n_k\leq n< n_{k+1}$. Then we get a sequence $\{w_n\}_{n\geq1}$ with $w_n\in\partial\mathcal B_{v,q,m}^{f_n}$ and $\lim_{n\rightarrow +\infty}w_n=\xi_m$. This verifies the second property of the kernel convergence (see Section \ref{s-ckc}).  The proof of the claim is completed.

Now we trace the $f_n$-orbit  of $a_n$:
$$a_n\mapsto f_n(a_n)\mapsto \cdots \mapsto b_n=f^{s_n}(a_n)\in \mathcal B_{v,q,m}^{f_n}.$$
Note that  $a_n\in  \mathcal B_{v,q,1}^{f_n}$ and $f_n^{s_n}: \mathcal B_{v,q,1}^{f_n}\rightarrow \mathcal B_{v,q,m}^{f_n}$ is conformal.
 Let $h_{n,0}: \mathcal B_{v,q,m}^{f_n}\rightarrow \mathcal B_{v,q,1}^{f_n}$ be the inverse of $f_n^{s_n}|_{\mathcal B_{v,q,1}^{f_n}}$.
For any $L\in \mathbb R$, we have 
\begin{equation} \label{equi-potential-eq}
	h_{n,0}(\mathcal B_{v,q,m}^{f_n}\cap G_{f_n, \sigma^m(v)}^{-1}(L))=\mathcal B_{v,q,1}^{f_n} \cap G_{f_n, \sigma(v)}^{-1}(L_n),
\end{equation}
where 
$$L_n=L-t_n\mu_nl-\log|(f_n^{m-1})'(f_n(v(f_n)))|. $$
Note that 
$L_n\rightarrow +\infty $ as $n\rightarrow +\infty$. 

 For each $0\leq k\leq s_n$, define $h_{n,k}=f_{n}^k\circ h_{n,0}$. Clearly $h_{n,k}$ is univalent. 
 The maps $\{h_{n,k}\}_k$ satisfy the following properties:

(a). $h_{n,k}(b_n)=f_{n}^{k}(a_n)$ for $0\leq k\leq s_n$;
		
(b). $h_{n,k}\circ f_{n}^{l\ell_v}=f_{n}^{l\ell_v}\circ h_{n,k}$ for all $m-1\leq k\leq s_n-l\ell_v$;
		
(c).  Fix any $L\in \mathbb R$, there is an integer $n_L$ such that for $n\geq n_L$,   the sets 
$$h_{n,k}(\mathcal B_{v,q,m}^{f_n}\cap G_{f_n, \sigma^m(v)}^{-1}[L, +\infty)), \ 0\leq k\leq m+l \ell_v -2$$
are pairwise disjoint.


 Fix any integer $k\geq 0$, and consider the sequence of  univalent maps  $\{h_{n,k}:\mathcal B_{v,q, m}^{f_n}\rightarrow \mathbb C\}_{n\geq L_k}$, where $L_k>0$ is an integer depending on $k$ so that $h_{n,k}$ is defined for $n\geq L_k$. By Montel's Theorem  and the kernel convergence $(\mathcal B_{v,q,m}^{f_n}, b_n)\rightarrow (\mathcal B_{v,q,m}^{f}, b)$, and by choosing a subsequence if necessary, the sequence $\{h_{n,k}\}_{n\geq L_k}$ has a limit $h_k^a: \mathcal B_{v,q,m}^{f}\rightarrow \mathbb C$, with $h_k^a(b)=f^k(a)$. 
 By Hurwitz's theorem, $h_k^a$'s are either all 
 constant or all univalent.  By the fact that $f_{n}^{l\ell_v}\rightarrow f^{l\ell_v}$ as $n\rightarrow\infty$, we get  $h_k^a\circ f^{l\ell_v}=f^{l\ell_v}\circ h_k^a$ for $k\geq m-1$.
 
  By equality (\ref{equi-potential-eq}) and Lemma  \ref{hausdorff-limit},  
  $$h_k^a(\mathcal B_{v,q,m}^f)\subset \mathcal B_{v,q,k+1}\cap (\mathbb C\setminus U_{f, \sigma^{k+1}(v)})\subset  L_{U_{f, \sigma^{k+1}(v)}, \xi_{k+1}}$$


If $h_k^a$'s are all univalent, by  property (c) and Rouch\'e's Theorem, the sets 
$$h_{k}^a(\mathcal B_{v,q,m}^{f}), \ 0\leq k\leq m+l \ell_v -2$$
are pairwise disjoint. 
 This completes the proof of the Proposition.
\end{proof}

Note that $\xi_m\in \partial U_{f, \sigma^m(v)}$ is the landing point of the periodic ray  $R^f_{\sigma^{m}(v), \xi_{m}}=f^{l\ell_v}(R^f_{\sigma^{m}(v),\xi_{m}})$.
	By the Snail Lemma \cite[Lemma 16.2]{M}, $\xi_m$ is either repelling or parabolic. 

\begin{lem} \label{repelling-landing} If $\xi_m$ is repelling, then $\xi_0=\phi_{f,v}(q)$ is a critical point, 
 $\xi_1=f(\phi_{f,v}(q))$ is a critical value, and
$$R=R_{\sigma(v), \xi_{1}}^{f}[T_1, +\infty)\cup\{\xi_1\}.$$ 
\end{lem}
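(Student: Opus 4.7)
My plan for (1) and (2) is simply passing to the limit in the critical equation. Since $c^*_{n,v}(q)=\phi_{f_n,v}(c_{n,v}(q))$ is an $f_n$-critical point with $c_{n,v}(q)\to q$ (Lemma~\ref{cp-cv}) and $\phi_{f_n,v}\to\phi_{f,v}$ locally uniformly in $\mathbb{D}$, I get $c^*_{n,v}(q)\to\phi_{f,v}(q)=\xi_0$. Combined with $f_n\to f$ in the $C^1$-topology on compact subsets, $f'(\xi_0)=\lim_n f_n'(c^*_{n,v}(q))=0$, so $\xi_0\in{\rm Crit}(f)$ and $\xi_1=f(\xi_0)$ is a critical value of $f$. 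Note that the repelling hypothesis is not used for these two conclusions.

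For (3), Lemma~\ref{hausdorff-limit}(1) already gives $R\subset R^f_{\sigma(v),\xi_1}[T_1,+\infty)\cup L_{U_{f,\sigma(v)},\xi_1}$ with $R\cap U_{f,\sigma(v)}=R^f_{\sigma(v),\xi_1}[T_1,+\infty)$, and $\xi_1\in R$ since $e^*_{n,v}(q)\in R_n$ and $e^*_{n,v}(q)\to\xi_1$. So I just need to exclude any $a\in R\cap L_{U_{f,\sigma(v)},\xi_1}$ with $a\neq\xi_1$. Assuming such $a$ exists, pick $r_n\in R_n$ with $r_n\to a$ and extract a subsequence so that $t_n:=G_{f_n,\sigma(v)}(r_n)\in[T_{n,1},T_{n,1}(q)]$ converges to some $t^*\in[T_1,+\infty]$; observe that $a\notin\overline{U_{f,\sigma(v)}}$, since $L_{U_{f,\sigma(v)},\xi_1}\cap\overline{U_{f,\sigma(v)}}=\{\xi_1\}$. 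If $t^*<+\infty$, the continuous motion of rays (Lemma~\ref{internal-ray-model} and Lemma~\ref{c-m-internal-ray-model}) combined with $\phi_{f_n,\sigma(v)}\to\phi_{f,\sigma(v)}$ (Proposition~\ref{combinatorial-property0}) yields $r_n=R^{f_n}_{\sigma(v),\xi_{n,1}}(t_n)\to R^f_{\sigma(v),\xi_1}(t^*)\in U_{f,\sigma(v)}$, contradicting $a\notin\overline{U_{f,\sigma(v)}}$.

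The remaining case $t^*=+\infty$ is the substantial one, and I plan to handle it via the following uniform tail estimate: for every $\epsilon>0$ there exist $S_\epsilon$ and $N_\epsilon$ such that $R^{f_n}_{\sigma(v),\xi_{n,1}}[S_\epsilon,+\infty)\subset B(\xi_{n,1},\epsilon)$ for all $n\geq N_\epsilon$. Once this is proven, $t_n\to+\infty$ forces $r_n\in B(\xi_{n,1},\epsilon)$ eventually, and $\xi_{n,1}\to\xi_1$ then gives $r_n\to\xi_1$, so $a=\xi_1$ and we have a contradiction. To prove the tail estimate I would exploit the repelling hypothesis: by the Implicit Function Theorem, $\xi_{n,m}$ is a stable $f_n^{l\ell_v}$-fixed point whose multipliers $\lambda_n$ converge to $\lambda=(f^{l\ell_v})'(\xi_m)$ with $|\lambda|>1$, and Koenigs linearizations of $f_n^{l\ell_v}$ at $\xi_{n,m}$ converge to that of $f^{l\ell_v}$ at $\xi_m$ on a common neighborhood. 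Combined with the relation $G_{f_n,\sigma^m(v)}\circ f_n^{l\ell_v}=G_{f_n,\sigma^m(v)}+l\ell_v\mu_n$ and the geometric contraction of the linearized inverse, the tail of the periodic ray $R^{f_n}_{\sigma^m(v),\xi_{n,m}}$ at high potential will lie uniformly in $B(\xi_{n,m},\epsilon)$. Pulling back through the inverse branch of $f_n^{m-1}$ sending $\xi_{n,m}$ to $\xi_{n,1}$—well-defined on a small neighborhood and varying continuously with $n$ because of the stability of pre-repelling orbits—then transfers the estimate to the one needed for $R^{f_n}_{\sigma(v),\xi_{n,1}}$.

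The main obstacle will be exactly this uniform pullback along the preperiodic chain $\xi_1\to\cdots\to\xi_m$. While the linearization near the repelling $\xi_m$ is a standard ingredient, selecting and controlling a single-valued inverse branch of $f_n^{m-1}$ on a neighborhood of uniform size requires that the intermediate orbit $\xi_1,\ldots,\xi_{m-1}$ be either disjoint from the critical set, or—where it meets critical points—that the branch structure of $f^{m-1}$ be tracked carefully; this will rely on the Misiurewicz structure of $D$ (Lemma~\ref{m-w-lim}) and stability arguments in the spirit of Section~\ref{perturbation-hyp-set}.
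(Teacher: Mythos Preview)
There is a genuine gap in your treatment of (1) and (2). The step $c^*_{n,v}(q)=\phi_{f_n,v}(c_{n,v}(q))\to\phi_{f,v}(q)$ cannot be deduced from the locally uniform convergence $\phi_{f_n,v}\to\phi_{f,v}$ on $\mathbb{D}$, because $c_{n,v}(q)\to q\in\partial\mathbb{D}$ leaves every compact subset of $\mathbb{D}$. In fact your claim that (1) and (2) hold without the repelling hypothesis is false: in the complementary parabolic case the limit critical value $e^*_v(q)$ lies inside a parabolic Fatou component (this is exactly what is exploited in the proof of Proposition~\ref{criterion-H-adm}, where one uses $e^*_v(q)\in R\cap L_{U_{f,\sigma(v)},\xi_1}\cap F(f)$), so $e^*_v(q)\neq\xi_1$ and the limit critical point $c^*_v(q)$ sits in $F(f)$, not at $\xi_0\in J(f)$.

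Your approach to (3) via a uniform tail estimate is correct and is precisely what the paper means by ``continuity of the internal rays (analogous to Lemma~\ref{stability-e-r})''. The right logical order is therefore: first prove the tail estimate using the repelling hypothesis (stable Koenigs linearization at $\xi_m$, then pull back along the preperiodic orbit---your identified obstacle is handled on the model side, since the $B^0_{\sigma^k(v)}$ have no critical points on $\partial\mathbb{D}$ and the Misiurewicz condition gives $q_k\notin{\rm supp}(D^\partial_{\sigma^k(v)})$ for $k\geq1$, exactly as in Proposition~\ref{convergence-repelling}). This tail estimate simultaneously yields (3) and the convergence $e^*_{n,v}(q)\to\xi_1$, hence (2). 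For (1) one then traps $c^*_{n,v}(q)$ near $\xi_0$ by the graph argument of Proposition~\ref{divisor-correspondence}, choosing nearby pre-periodic points $a',b'\in\partial\mathbb{D}_v$ on either side of $q$ whose $\phi_{f,v}$-images are pre-repelling (Proposition~\ref{combinatorial-property0}) and whose internal rays are therefore stable.
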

\begin{proof} It follows  from the continuity of  the internal rays (analogous to  Lemma \ref{stability-e-r}). 
\end{proof}


\begin{lem} \label{limit-Julia}   Suppose $\xi_m$ is parabolic. Let $a\in R\cap L_{U_{f, \sigma(v)}, \xi_1}$. Then 
$$a\in R\cap J(f) \Longleftrightarrow f^{m-1}(a)=f^{m-1+l\ell_v}(a) \Longleftrightarrow h_0^a \text{ is constant}.$$
\end{lem}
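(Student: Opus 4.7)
The plan is to establish the three equivalences using the dichotomy from Proposition \ref{inverseb} that the maps $\{h_k^a\}_{k\geq 0}$ are either all constant or all univalent, together with the commutation relation $h_{m-1}^a\circ f^{l\ell_v}=f^{l\ell_v}\circ h_{m-1}^a$ and the fact that $b$ sits on a truncated Koenigs internal ray at non-zero potential.

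First I would prove $h_0^a \text{ constant} \Longleftrightarrow f^{m-1}(a)=f^{m-1+l\ell_v}(a)$. For the forward direction, if $h_0^a$ is constant then so is $h_{m-1}^a$ with value $f^{m-1}(a)$; evaluating the commutation relation at $b$ gives $f^{m-1}(a)=h_{m-1}^a(f^{l\ell_v}(b))=f^{l\ell_v}(h_{m-1}^a(b))=f^{m-1+l\ell_v}(a)$. For the reverse, suppose $f^{m-1}(a)=f^{m-1+l\ell_v}(a)$ but $h_0^a$ is univalent. Then $h_{m-1}^a$ is univalent, $f^{l\ell_v}(b)\in \mathcal B_{v,q,m}^f$, and the commutation gives $h_{m-1}^a(b)=h_{m-1}^a(f^{l\ell_v}(b))$, forcing $b=f^{l\ell_v}(b)$ by injectivity. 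This contradicts the Koenigs equation $\kappa_{f,\sigma^m(v)}\circ f^{l\ell_v}=(f^{l\ell_v})'(\sigma^m(v)(f))\cdot\kappa_{f,\sigma^m(v)}$, which shifts potentials by $l\mu<0$, so $f^{l\ell_v}(b)$ and $b$ have distinct potentials in the truncated range $[T_*+l\mu,T_*]$.

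Next I would prove $h_0^a \text{ constant} \Longleftrightarrow a\in R\cap J(f)$. The direction ``$h_0^a$ univalent $\Longrightarrow a\notin J(f)$'' is immediate: the image $h_0^a(\mathcal B_{v,q,m}^f)$ is an open set containing $a$ and contained in the limb $L_{U_{f,\sigma(v)},\xi_1}\subset K(f)$, hence lies in the interior of $K(f)$, i.e., in the Fatou set. For the converse, assume $h_0^a$ is constant. Then $p:=f^{m-1}(a)=h_{m-1}^a(b)$ is a fixed point of $f^{l\ell_v}$ (by the first equivalence) lying in $L_{U_{f,\sigma^m(v)},\xi_m}$. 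Since $J(f)$ is totally invariant, it suffices to show $p\in J(f)$. The key dynamical observation is that $h_{n,m-1}$ is the inverse branch of $f_n^{t_nl\ell_v}$ on the band $\mathcal B_{v,q,m}^{f_n}$ with $t_n\to\infty$, and its pointwise convergence to the constant $p$ forces the derivatives $|(f_n^{t_nl\ell_v})'(p_n)|\to\infty$ for $p_n:=h_{n,m-1}(z_0)\to p$.

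To conclude $p\in J(f)$, I would rule out that $p$ lies in a Fatou component $W$ of $f$: by No-Wandering-Domain \`a la Sullivan (applicable since $f\in\partial\mathcal H\subset \mathcal C_d$), $W$ is eventually periodic; since $p$ is a fixed point of $f^{l\ell_v}$, $W$ is periodic with period dividing $l\ell_v$, and $W$ is of attracting, super-attracting, parabolic, or Siegel type. Parabolic basins contain no interior fixed points, so that case is excluded. In each of the remaining cases, the iterates $(f^{l\ell_v})^{t}$ remain precompact locally near $p$ with bounded derivatives (contracting for attracting/super-attracting, isometric for Siegel), contradicting the derivative blow-up established above. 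Hence $p\in J(f)$, yielding $a\in J(f)$. The main obstacle I expect is the last step, namely controlling the convergence of $(f_n^{t_nl\ell_v})'$ along the diagonal $n,t_n\to\infty$ simultaneously; this requires using the kernel convergence $(\mathcal B_{v,q,m}^{f_n},b_n)\to(\mathcal B_{v,q,m}^f,b)$ together with the conformality of $h_{n,m-1}$ and Koebe distortion on the non-degenerating limit band to transfer the derivative estimate rigorously to the limit dynamics at $p$.
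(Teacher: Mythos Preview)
Your treatment of the equivalence $h_0^a\text{ constant}\Longleftrightarrow f^{m-1}(a)=f^{m-1+l\ell_v}(a)$ and of the implication ``$h_0^a$ univalent $\Rightarrow a\in F(f)$'' coincides with the paper's. The divergence is in the converse ``$h_0^a$ constant $\Rightarrow a\in J(f)$''.

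Your derivative blow-up idea can be made to work in the attracting case, though not quite by the route you sketch. The correct transfer is: if $p$ were attracting for $f^{l\ell_v}$, choose $\epsilon>0$ with $f^{l\ell_v}(\overline{\mathbb D(p,\epsilon)})\subset\mathbb D(p,\epsilon)$; by uniform convergence the same inclusion holds for $f_n^{l\ell_v}$ when $n$ is large, and since $p_n\to p$ we get $f_n^{jl\ell_v}(p_n)\in\mathbb D(p,\epsilon)$ for all $j\geq 0$. But $f_n^{t_nl\ell_v}(p_n)=z_0\in\mathcal B^f_{v,q,m}\subset U_{f,\sigma^m(v)}$, which is disjoint from $\mathbb D(p,\epsilon)$ (as $p$ lies in the limb, not in $U_{f,\sigma^m(v)}$). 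This is close to the paper's terse ``attracting cycles are stable under perturbation''.

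The Siegel case, however, is a genuine gap in your scheme. Siegel disks are \emph{not} stable under perturbation: since each $f_n$ is hyperbolic, the perturbed fixed point near $p$ may be repelling, and there is no forward-invariant neighborhood of $p$ for $f_n$. Consequently you cannot confine the $f_n^{l\ell_v}$-orbit of $p_n$ to a region with controlled derivative, and neither kernel convergence of the band nor Koebe distortion produces the bound you need; indeed $\sup_K|(f^{l\ell_v})'|>1$ on compact subsets $K$ of a Siegel disk in general, so even if the orbit stayed in $K$ the product could blow up. The paper instead argues by non-recurrence: pick a nearby $a'\in R\setminus\{a\}$ in the same Fatou component as $a$. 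Then $f^{m-1}(a')\neq f^{m-1+l\ell_v}(a')$ (otherwise $f^{m-1}(a')$ would be a second fixed point in the Siegel disk), so $h_0^{a'}$ is univalent. The commutation $h_{m-1}^{a'}\circ f^{l\ell_v}=f^{l\ell_v}\circ h_{m-1}^{a'}$ shows that the $f^{l\ell_v}$-orbit of $f^{m-1}(a')$ is the $h_{m-1}^{a'}$-image of the $f^{l\ell_v}$-orbit of $b'$ in $\mathcal B^f_{v,q,m}$; the latter orbit escapes every compact subset of $\mathcal B^f_{v,q,m}$ (it converges to the attracting point $\sigma^m(v)(f)\in\partial\mathcal B^f_{v,q,m}$), so the former escapes every compact subset of $h_{m-1}^{a'}(\mathcal B^f_{v,q,m})$ and in particular is not recurrent---contradicting that every orbit in a Siegel disk is recurrent. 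You should replace your Siegel argument with this one.
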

\begin{proof} 
	Let $\{h_k^a\}_{k\geq 0}$ be given 
by Proposition \ref{inverseb}.  Note that $h^a_{m-1}(b)=f^{m-1}(a)$ and $h^a_{m-1}(f^{l\ell_v}(b))=f^{m-1+l\ell_v}(a)$.
Since $b\neq f^{l\ell_v}(b)$, we see that $f^{m-1}(a)=f^{m-1+l\ell_v}(a)$ if and only if 
$h_{m-1}^a$ (hence $h_0^a$) is constant. 

Suppose $a\in R\cap J(f)$. If $h_0^a$ is not constant, then $h^a_0(\mathcal B^f_{v,q,m})$ is contained in Fatou set $F(f)$. This contradicts that $a\in h^a_0(\mathcal B^f_{v,q,m})\cap J(f)$. Hence  $h_0^a$ is constant, and consequently, $f^{m-1}(a)=f^{m-1+l\ell_v}(a)$.

Conversely, assume $f^{m-1}(a)=f^{m-1+l\ell_v}(a)$. If $a\in F(f)$, then $f^{m-1}(a)$ is either attracting or  Siegel. The former is impossible, because attracting cycle is stable under perturbation. Hence $f^{m-1}(a)$ is a Siegel point. Take a nearby point $a'\in R\setminus\{a\}$ in the Fatou component containing $a$. 
 Then we have  $f^{m-1}(a')\neq f^{m-1+l\ell_v}(a')$, implying that $h_0^{a'}$ is not constant.
Note that $f^{l\ell_v}\circ h_{m-1}^{a'}=h_{m-1}^{a'}\circ f^{l\ell_v}$. Hence the $f^{l\ell_v}$-orbit of $f^{m-1}(a')$ is not recurrent. This is again a contradiction. 
 \end{proof}

\begin{lem}  \label{limit-Fatou} Suppose $\xi_m$ is parabolic. 
 If $a\in R \cap L_{U_{f, \sigma(v)}, \xi_1}\cap F(f)$, then $f^{m-1}(a)$ is contained in an immediate parabolic basin of exact period $l\ell_v$.
\end{lem}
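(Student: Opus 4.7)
The plan is to show that the Fatou component $W$ containing $f^{m-1}(a)$ is $f^{l\ell_v}$-invariant, that $\xi_m\in\partial W$, and that orbits in $W$ converge to $\xi_m$ under $f^{l\ell_v}$; this will identify $W$ as the immediate parabolic basin attached to $\xi_m$ with exact period $l\ell_v$. The argument will combine Lemma~\ref{limit-Julia}, Proposition~\ref{inverseb}, and a parabolic-implosion analysis of the limits $h_{n,m-1}\to h^a_{m-1}$ built in the previous subsection.

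First I would invoke Lemma~\ref{limit-Julia} to obtain that $h^a_{m-1}$ is univalent, so $W':=h^a_{m-1}(\mathcal{B}^f_{v,q,m})$ is an open subset of $F(f)$ containing $f^{m-1}(a)=h^a_{m-1}(b)$, hence contained in $W$. Since the model band $\mathbf{B}^D_{v,q,m}$ is $\widehat{B}_v^l$-invariant (being the lift of a loop in the torus $T_{D,\sigma^m(v)}$), the dynamical band $\mathcal{B}^f_{v,q,m}=\phi_{f,\sigma^m(v)}(\mathbf{B}^D_{v,q,m})$ is $f^{l\ell_v}$-invariant; combined with the conjugacy $h^a_{m-1}\circ f^{l\ell_v}=f^{l\ell_v}\circ h^a_{m-1}$ from Proposition~\ref{inverseb}(2), this forces $f^{l\ell_v}(W')\subset W'$, and consequently $f^{l\ell_v}(W)=W$.

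Next I would classify $W$ via Sullivan's theorem. Since $f^{l\ell_v}\vert_{W'}$ is conformally conjugate via $h^a_{m-1}$ to the contracting map $f^{l\ell_v}\vert_{\mathcal{B}^f_{v,q,m}}$ (whose unique attracting fixed point $\sigma^m(v)(f)$ lies on the boundary of the band, not in its interior), $W$ can neither be a Siegel disk nor contain a fixed point of $f^{l\ell_v}$ inside $W'$. To exclude (super-)attracting dynamics on $W$, and to simultaneously identify $\xi_m$ as the parabolic fixed point on $\partial W$, I would pass to the approximating inverse branches $h_{n,m-1}$: these are branches of $(f_n^{l\ell_v})^{-t_n}$ along the ray with $t_n\to\infty$, so their images accumulate on the attracting fixed points $\xi_{n,m}$ of this inverse branch, and $\xi_{n,m}\to\xi_m$. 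Using the Hausdorff-convergence argument of Lemma~\ref{hausdorff-limit}(2), I would deduce $\xi_m\in\overline{W'}\setminus W'\subset\partial W$. The orbit identity $f^{jl\ell_v}(f^{m-1}(a))=h^a_{m-1}(f^{jl\ell_v}(b))$ together with $f^{jl\ell_v}(b)\to\sigma^m(v)(f)$ would then yield $f^{jl\ell_v}(f^{m-1}(a))\to\xi_m$, making $W$ an immediate parabolic basin at $\xi_m$. Since $\xi_m$ has exact $f$-period $l\ell_v$ (because $q_m$ has exact $\widehat{B}_v^0$-period $l$ and $\phi_{f,\sigma^m(v)}$ is a homeomorphism on $\overline{\mathbb D}$), any period $p$ of $W$ properly dividing $l\ell_v$ would force $f^p$ to fix the unique parabolic limit point $\xi_m$ on $\partial W$, contradicting the exact period of $\xi_m$; so the period of $W$ is exactly $l\ell_v$.

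The hardest step will be the rigorous verification that $\xi_m\in\partial W$, since the attracting endpoint $\sigma^m(v)(f)$ of the band lies outside $\mathcal{B}^f_{v,q,m}$ and $h^a_{m-1}$ admits no obvious continuous extension to it. One cannot take the pointwise limit $\lim_j h^a_{m-1}(f^{jl\ell_v}(b))$ by soft continuity alone: the inverse branches $h_{n,m-1}$ contract by $(\rho_n^{-1})^{t_n}$ where $\rho_n=(f_n^{l\ell_v})'(\xi_{n,m})\to\rho$ with $|\rho|=1$, and the product $t_n(\rho_n-\rho)$ must be tracked in the spirit of parabolic implosion. The essential input is the Hausdorff-limit control of the band neighborhoods $\mathcal B^{f_n}_{v,q,k}$ supplied by Lemma~\ref{hausdorff-limit}(2), which pins down $\xi_m$ as the accumulation point of the images $h_{n,m-1}(\mathcal{B}^{f_n}_{v,q,m})$ in the limit.
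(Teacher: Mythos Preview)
Your proof diverges from the paper's at two points, and the second is a genuine gap.

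First, on classifying $W$: the paper disposes of the attracting and Siegel cases in one line each (attracting cycles are stable under perturbation, so would persist for $f_n\in\mathcal H$, contradicting $f\in\partial_{\rm reg}\mathcal H$; Siegel is excluded because points on $h^a_{m-1}(R^f_{\sigma^m(v),\xi_m})$ are not recurrent). Your route via establishing $\xi_m\in\partial W$ through parabolic implosion is substantially harder, and as you yourself note, the limit $h^a_{m-1}(f^{jl\ell_v}(b))\to\xi_m$ cannot be read off directly since $h^a_{m-1}$ has no obvious extension to the center $\sigma^m(v)(f)\in\partial\mathcal B^f_{v,q,m}$. This part is overcomplicated but not fatal.

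The real gap is in your exact-period argument. You assert that $\xi_m$ has exact $f$-period $l\ell_v$ ``because $q_m$ has exact $\widehat{B}^0_v$-period $l$ and $\phi_{f,\sigma^m(v)}$ is a homeomorphism on $\overline{\mathbb D}$''. But the conjugacy $\phi_{f,\sigma^m(v)}$ only intertwines $\widehat{B}^0_v$ with $f^{\ell_v}$, so it gives the $f^{\ell_v}$-period of $\xi_m$ equal to $l$, not the $f$-period equal to $l\ell_v$. Since $\xi_m$ is parabolic, it may well lie on $\partial U_{f,\sigma^m(v)}\cap\partial U_{f,\sigma^{m+r}(v)}$ for some $0<r<\ell_v$ (cf.\ Corollary~\ref{intersection-Fatou-components}), in which case its $f$-period is a proper divisor of $l\ell_v$. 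Your deduction ``period of $W$ divides $l\ell_v$, and period of root $\xi_m$ divides period of $W$, and period of $\xi_m$ equals $l\ell_v$'' then collapses.

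The paper sidesteps this entirely. It sets $\gamma(t)=h^a_{m-1}(R^f_{\sigma^m(v),\xi_m}(t))$, so $f^{l\ell_v}(\gamma(t))=\gamma(t+l\mu)$, and assumes the period $s$ of the parabolic component $U$ satisfies $s<l\ell_v$. In the Fatou coordinate $\alpha$ of an attracting petal $P\subset U$ (where $\alpha\circ f^s=\alpha+1$), the fundamental arc $\gamma[t_0+l\mu,t_0]$ and its image $f^s(\gamma[t_0+l\mu,t_0])$ map to two arcs differing by translation by $1$, each of horizontal extent $l\ell_v/s>1$; hence they intersect, giving $\gamma\cap f^s(\gamma)\neq\emptyset$. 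This contradicts the disjointness of $f^j(\gamma)$, $0\le j<l\ell_v$, furnished by Proposition~\ref{inverseb}(3). That disjointness property is precisely the leverage you are not using.
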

\begin{proof} By Lemma \ref{limit-Julia},   $h_0^a$ is not constant. So $h_{m-1}^a(\mathcal B_{v,q,m}^{f})$ is contained in a Fatou component $U$. Note that $h_{m-1}^a(\mathcal B_{v,q,m}^{f})=h_{m-1}^a\circ f^{l\ell_v}(\mathcal B_{v,q,m}^{f})=f^{l\ell_v}\circ h_{m-1}^a(\mathcal B_{v,q,m}^{f})$. We see that $f^{l\ell_v}(U)=U$. So the exact period $s$ of $U$ is a divisor of $l\ell_v$. Note that $U$ can not be attracting (since it is stable under perturbation) or Siegel (since any point on $h_{m-1}^a(R^f_{\sigma^{m}(v), \xi_{m}})$ is not recurrent). Hence $U$ is parabolic.

In the following, we show  $s=l\ell_v$.
 Let $P$ be an attracting petal in $U$, and $\alpha: P\rightarrow \mathbb H_r:=\{z; {\rm Re}(z)>0\}$ be the Fatou coordinate, conformal and  satisfying that $\alpha(f^s(z))=\alpha(z)+1$ for all $z\in P$.

 We assume by contradiction that $s<l\ell_v$. Let $\gamma: \mathbb R\rightarrow U$ be a curve defined by  $\gamma(t)=h_{m-1}^a(R^f_{\sigma^{m}(v), \xi_{m}}(t)), t\in \mathbb R$.  Clearly $f^{l\ell_v}(\gamma(t))=\gamma(t+l\mu)$. There is $t_0\in \mathbb R$ so that $\gamma(-\infty, t_0]\subset P$.  Let 
$$z_0=\gamma(t_0),\  w_0=f^{l\ell_v}(z_0), \ \zeta_0=f^{s}(z_0), \ \xi_0=f^{l\ell_v}(\zeta_0)=f^s(w_0).$$
It is clear that $z_0, w_0\in \gamma[t_0+l\mu, t_0]$, and $\zeta_0, \xi_0\in f^s(\gamma[t_0+l\mu, t_0])$. 
Note that the $\alpha$-images of $z_0, w_0, \zeta_0, \xi_0$ are 
$$\alpha(z_0), \alpha(w_0)=\alpha(z_0)+l\ell_v/s,  \alpha(\zeta_0)=\alpha(z_0)+1, \alpha(\xi_0)=\alpha(z_0)+l\ell_v/s+1.$$ 
They have the same imaginary parts,  with  ${\rm Re}(\alpha(z_0))< {\rm Re} (\alpha(\zeta_0))< {\rm Re} (\alpha(w_0))<{\rm Re}(\alpha(\xi_0))$.
Since $\alpha(\gamma[t_0+l\mu, t_0])$ connects two endpoints $\alpha(z_0)$ and $\alpha(w_0)$, 
$\alpha(f^s(\gamma[t_0+l\mu, t_0]))$ connects two endpoints $\alpha(\zeta_0)$ and $\alpha(\xi_0)$, also note that $ \alpha(f^s(\gamma[t_0+l\mu, t_0]))$ is the translation of $\alpha(\gamma[t_0+l\mu, t_0])$ by 1, 
we have 
$$\alpha(\gamma[t_0+l\mu, t_0])\cap \alpha(f^s(\gamma[t_0+l\mu, t_0]))\neq \emptyset.$$
It follows that $\gamma[t_0+l\mu, t_0]\cap f^s(\gamma[t_0+l\mu, t_0])\neq \emptyset$.
  However this contradicts the proven property (Proposition \ref{inverseb} (3)) that the curves  $f^{j}(\gamma), \ 0\leq j< l\ell_v$ are pairwise disjoint.
\end{proof}

\subsection{A criterion for $\mathcal H$-admissible divisors} \label{S-criterion-h-a}

In this part, we establish a criterion for $\mathcal H$-admissible divisors. 


Let  $D=\big((B_u^0, D_u^{\partial})\big)_{u\in V}\in \mathbf{Div}_{\rm spp}$  be  
a generic  Misiurewicz  divisor.  
 Let 
$$\mathbf{I}=\{(v,q); v\in V, q\in {\rm supp}(D_v^\partial)\}.$$

Recall that for each $v\in V_{\rm p}$, $\ell_v$ is the $\sigma$-period of $v$. Now for each 
$v\in V_{\rm np}$,   there is $m>0$ so that $\sigma^m(v)$ is $\sigma$-periodic, let $\ell_v$ be the  $\sigma$-period of $\sigma^m(v)$. In this way, for all $v\in V$, the number $\ell_v$ is defined. We further assume $V_{\rm p}$ can be decomposed into 
$s\geq 1$ $\sigma$-cycles, and the $k$-th one form the index set $V^k_{\rm p}, 1\leq k\leq s$.

 To emphasize the dependence of the  objects (periods and preperiods, sets, etc) with respect to the pair 
 $(v,q)\in \mathbf{I}$, we will add a subscript to them. For $v\in V, q\in {\rm supp}(D_v^\partial)$, suppose $q$ is $(m_{v,q}, l_{v,q})$-$D$-preperiodic. Let 
$$L_k (D)=\{l_{v,q};   \text{the $\sigma$-orbit of $v$ meets } V_{\rm p}^k,   q\in {\rm supp}(D_v^\partial)\}, \ 1\leq k\leq s.$$

  Recall that $f_0$ is the center of the hyperbolic component $\mathcal H$.
For each $k$,  $v\in V^k_{\rm p}$ and each $f$-periodic point $a\in \partial U_{f_0, v}$ so that the limb $L_{U_{f_0, v}, a}$ is non-trivial, let $m_{v,a}\geq 1$ be the minimal integer with $f_0^{m_{v,a}\ell_v}(a)=a$.  Note that the exact $f_0$-period  of $a$ is, in general, a divisor of $m_{v,a}\ell_v$. 

Let $Q_k(f_0)$ be the set of all such $m_{v,a}$ for $v$ ranging over $V^k_{\rm p}$, and $a\in \partial U_{f_0, v}$  ranging over  $f$-periodic points with non-trivial limbs $L_{U_{f_0, v}, a}$. Clearly, $Q_k(f_0)$  is a finite set.



\begin{pro}  \label{criterion-H-adm} Let  $D=\big((B_u^0, D_u^{\partial})\big)_{u\in V}\in \mathbf{Div}_{\rm spp}$ be  
	a generic  Misiurewicz  divisor. Suppose that
 	$({B}_u^0)'(0)\neq 0$ for all $u\in V$. Assume further

(a).  $L_k(D)\cap Q_k(f_0)=\emptyset$,  for all $1\leq k\leq s$;

(b).  for any different $(v_1, q_1), (v_2, q_2)\in \mathbf{I}$,  one has $l_{v_1,q_1}\ell_{v_1}\neq l_{v_2,q_2}\ell_{v_2}$.


Then $D$ is  $\mathcal H$-admissible.
\end{pro}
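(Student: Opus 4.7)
My plan is to realize $D$ as the limit of divisors in ${\rm Div}(\mathbb{D})^S$ with prescribed critical positions, extract a polynomial limit $f \in I_\Phi(D)$, and use hypotheses (a) and (b) to pin down the critical combinatorics of $f$ tightly enough to verify the $\mathcal{H}$-admissibility condition for $D$. First I would invoke Proposition \ref{c-prescribed-divisors} to obtain a sequence $E_n \to D$ in ${\rm Div}(\mathbb{D})^S$ along which each critical value $B_{n,v}(c_{n,v}(q))$ remains pinned to the truncated internal ray landing at $r_{\sigma(v),q_1}(E_n)$, for every $(v,q)\in\mathbf{I}$. Setting $f_n=\Phi(E_n)$ and passing to a subsequence with $f_n\to f\in I_\Phi(D)$, the analysis of Section \ref{p-prescribed-dynamics} describes the Hausdorff limit of the dynamical rays carrying the critical values $e^*_{n,v}(q)$: by Lemma \ref{hausdorff-limit} this limit meets $U_{f,\sigma(v)}$ in an internal ray landing at $\xi_1=\phi_{f,\sigma(v)}(q_1)$, whose iterate $\xi_{m_{v,q}}$ is either $f$-repelling or $f$-parabolic by the Snail Lemma.

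The first key step is to use (a) to exclude the parabolic alternative for every $(v,q)\in\mathbf{I}$. Were $\xi_{m_{v,q}}$ parabolic, Lemmas \ref{limit-Julia} and \ref{limit-Fatou} would place an immediate parabolic basin of exact period $l_{v,q}\ell_v$ bordering $U_{f,\sigma^{m_{v,q}}(v)}$ at $\xi_{m_{v,q}}$. Deforming back to the center $f_0$ inside $\overline{\mathcal{H}}$ via the holomorphic motion attached to $J(f_0)$, this configuration perturbs into an $f_0$-attracting cycle of exact period $l_{v,q}\ell_v$ which, by postcritical finiteness of $f_0$, must absorb a critical orbit, and is hosted inside the limb $L_{U_{f_0,\sigma^{m_{v,q}}(v)},a}$ at a certain $f_0$-periodic point $a$; in particular this limb is non-trivial. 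Letting $k$ be the index with $\sigma^{m_{v,q}}(v)\in V^k_{\rm p}$, we obtain $l_{v,q}\in Q_k(f_0)\cap L_k(D)$, contradicting (a). Hence $\xi_{m_{v,q}}$ is repelling, and Lemma \ref{repelling-landing} identifies $\phi_{f,v}(q)$ as an $f$-critical point with critical value $\phi_{f,\sigma(v)}(q_1)$. Applying (b), the $|\mathbf{I}|$ critical points $\{\phi_{f,u}(q)\}_{(u,q)\in\mathbf{I}}$ are pairwise distinct since a coincidence would give two $(u,q)$'s with common eventual $f$-period $l_{u,q}\ell_u$; a critical-point count against the Fatou critical points of the limit Blaschke products $B_u^0$ (totaling $\sum_u(\delta(u)-{\rm deg}\,D_u^\partial-1)=(d-1)-|\mathbf{I}|$) then exhausts ${\rm Crit}(f)\cap J(f)$, with each such critical point simple by the genericity of $D$.

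It remains to show $q_k\notin{\rm supp}(D^\partial_{\sigma^k(v)}(\mathcal{H}))$ for each $(v,q)\in\mathbf{I}$ and $k\geq 1$, i.e., that no $f$-critical point lies in $L_k:=L_{U_{f,\sigma^k(v)},\phi_{f,\sigma^k(v)}(q_k)}$. A Julia critical point $\phi_{f,v'}(q')$ at the tip of $L_k$ gives $f^k(\phi_{f,v}(q))=\phi_{f,v'}(q')$; matching eventual periods via (b) forces $(v',q')=(v,q)$, making $\phi_{f,v}(q)$ $f$-periodic --- impossible, since $D\in\mathbf{Div}_{\rm spp}$ renders $q$ strictly $D$-preperiodic. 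A Julia critical point strictly interior to $L_k$ drags its hosting marked Fatou component $U_{f,v'}$ into $L_k$, reducing to the Fatou case; and a Fatou component $U_{f,v'}$ trapped inside $L_k$ --- which for $k\geq m_{v,q}$ is $f^{l_{v,q}\ell_v}$-invariant --- forces $\ell_{v'}$ to be a multiple of $l_{v,q}\ell_v$, and iterating inside the $\sigma$-cycle of $v'$ produces a second index pair with the same eventual period $l_{v,q}\ell_v$, again contradicting (b). The main obstacle I anticipate is the parabolic-to-$f_0$ transfer in the second paragraph: one must verify precisely how an $f$-parabolic cycle on the boundary of a marked Fatou component deforms along paths in $\overline{\mathcal{H}}$ into an $f_0$-periodic point $a$ whose limb absorbs a specific critical orbit of $f_0$, which is what turns the bare period $l_{v,q}\ell_v$ into membership in $Q_k(f_0)$.
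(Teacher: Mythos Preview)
Your overall architecture---build the sequence $E_n\to D$ via Proposition~\ref{c-prescribed-divisors}, extract $f\in I_\Phi(D)$, analyze the repelling/parabolic dichotomy for the landing points $\xi_{m_{v,q}}$ through Section~\ref{p-prescribed-dynamics}---matches the paper. But the way you deploy hypotheses (a) and (b) is inverted relative to what actually works, and this creates a genuine gap.

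Your use of (a) to exclude the parabolic case is the main problem. You claim a parabolic basin bordering $U_{f,\sigma^{m_{v,q}}(v)}$ would deform along $\overline{\mathcal H}$ into an $f_0$-attracting cycle of period $l_{v,q}\ell_v$ sitting in a nontrivial limb. This is false: $f_0$ is postcritically finite and its only attracting cycles are the super-attracting ones indexed by the $\sigma$-cycles in $V_{\rm p}$; no new attracting cycle can appear. The holomorphic motion of $J(f_0)$ over $\mathcal H$ does not extend to parabolic boundary maps, and a parabolic cycle of $f$ does not persist as an attracting cycle inside $\mathcal H$---it becomes repelling. You flag this transfer as ``the main obstacle,'' and indeed it does not go through.

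In the paper, hypothesis (a) is used in the \emph{repelling} case, not the parabolic one. The key Claim~1 says: if $(v,q)\in\mathbf I_r(f)$ then the limb at $\xi_{m_{v,q}}$ is trivial, and hence the exact $f$-period of $\xi_{m_{v,q}}$ is $l_{v,q}\ell_v$. The argument is that a nontrivial limb at a \emph{repelling} $\xi_{m_{v,q}}$ continues (via Proposition~\ref{holo-hyperbolic-set} and Lemma~\ref{stability-e-r}) to a repelling point $b(g)\in\partial U_{g,\sigma^{m_{v,q}}(v)}$ with nontrivial limb for all $g\in\mathcal H$, in particular for $g=f_0$; this forces $l_{v,q}\in Q_k(f_0)$, contradicting (a). You never make this argument, so you never establish limb triviality nor the exact period in the repelling case.

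The parabolic case is instead excluded by a critical-point \emph{count}. Claim~1 plus (b) show the points $\xi_{m_{v,q}}$ for $(v,q)\in\mathbf I_r$ are distinct, so after accounting for the Fatou critical points of the Blaschke factors $B_u^0$ there remain exactly $\#\mathbf I_p$ critical points to populate parabolic basins. Hypothesis (b) ensures the parabolic basins $W_{v,q}$ for different $(v,q)\in\mathbf I_p$ lie in cycles of distinct periods, hence are disjoint; together with a structural lemma (Claim~2, using the Blaschke model of the basin and the strict preperiodicity from $\mathbf{Div}_{\rm spp}$) this forces each parabolic $(v,q)$ to consume \emph{two} critical points in the grand orbit, giving $\ge 2\#\mathbf I_p$ versus the available $\#\mathbf I_p$. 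This contradiction yields $\mathbf I_p=\emptyset$.

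Your third paragraph also has problems. The assertion that $L_k$ is $f^{l_{v,q}\ell_v}$-invariant for $k\ge m_{v,q}$ presupposes the limb contains no critical points, which is exactly what you are trying to prove; and ``iterating inside the $\sigma$-cycle of $v'$ produces a second index pair with the same eventual period'' is not substantiated---a trapped marked Fatou component $U_{f,v'}$ need not give an element of $\mathbf I$, let alone one with a matching period. In the paper this final step is almost immediate once Claim~1 is in hand: the forward orbits of the critical points $\xi_0^{v,q}$ are disjoint (by Claim~1 and (b)), so $f(\xi_0^{v,q})\mapsto f^2(\xi_0^{v,q})\mapsto\cdots$ meets no critical point, and pulling back limb triviality from $\xi_{m_{v,q}}$ gives $L_{U_{f,\sigma(v)},\xi_1^{v,q}}$ trivial, which is the $\mathcal H$-admissibility condition.
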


\begin{proof} Let $f\in  I_{\Phi}(D)$ be the limit of the maps $f_n=\Phi(E_n)$ given by  Section \ref{p-prescribed-dynamics}. For each $(v,q)\in \mathbf{I}$, recall that $\xi_{m_{v,q}}$ is the landing point of the dynamical internal ray $R^{f}_{\sigma^{m_{v,q}}(v), \xi_{{m_{v,q}}}}=\phi_{f,\sigma^{m_{v,q}}(v)}(\mathbf{R}^{D}_{\sigma^{m_{v,q}}(v),   q_{m_{v,q}}})$.

The index set $\mathbf{I}$ admits a  decomposition $\mathbf{I}_{r}(f)\sqcup \mathbf{I}_{p}(f)$, where
\bess 
&\mathbf{I}_{r}(f)=\{(v,q)\in \mathbf I; \ \xi_{m_{v,q}} \text{ is } f\text{-repelling}\},&\\
&\mathbf{I}_{p}(f)=\{(v,q)\in \mathbf I; \ \xi_{m_{v,q}} \text{ is } f\text{-parabolic}\}.&
\eess

{\it Claim 1: For any $(v, q)\in \mathbf{I}_r(f)$, the exact $f$-period of $\xi_{m_{v,q}}$ is $l_{v,q}\ell_v$. Moreover, the limb $L_{U_{f, \sigma^{m_{v,q}}(v)}, \xi_{m_{v,q}}}$ is trivial.}

Clearly,  $l_{v,q}$ is the minimal $l\geq 1$ with the property $f^{l\ell_v}(\xi_{m_{v,q}})=\xi_{m_{v,q}}$.  To show the exact $f$-period of $\xi_{m_{v,q}}$ is $l_{v,q}\ell_v$, it suffices to show that the limb $L_{U_{f, \sigma^{m_{v,q}}(v)}, \xi_{m_{v,q}}}$ is trivial. If not,  by Proposition \ref{holo-hyperbolic-set} and  Lemma \ref{stability-e-r},  there exist a neighborhood $\mathcal N$ of $f$, a continuous map $b: \mathcal N\cup \mathcal H\rightarrow \mathbb C$ so that
\begin{itemize}
	\item   $b(g)$ is $g$-repelling  for all $g\in \mathcal N\cup \mathcal H$,  with  $b(f)= \xi_{m_{v,q}}$;
	
	\item   $ b(g)\in \partial U_{g, \sigma^{m_{v,q}}(v)}$
	and   $L_{U_{g, \sigma^{m_{v,q}}(v)},  b(g)}$ is non-trivial for $g\in\mathcal H$.
	\end{itemize}
 Then $l_{v,q}$ is the minimal $s\geq 1$ with $g^{s\ell_v}(b(g))=b(g)$  for $g\in   \mathcal H$ (in particular, for $g=f_0$).   Assume  $\sigma^{m_{v,q}}(v)\in V_{\rm p}^k$ for some $k$, then the above argument implies that $l_{v,q}\in L_k(D)\cap Q_k(f_0)$, which is  a contradiction.

By Claim 1 and  assumption (b),  for different $(v_1, q_1), (v_2, q_2)\in \mathbf{I}_r(f)$,   we have $\xi_{m_{v_1,q_1}} \neq \xi_{m_{v_2,q_2}}$.
Hence  there are at most $\# \mathbf I-\# \mathbf{I}_{r}(f)=\# \mathbf{I}_{p}(f)$ critical points in all possible parabolic Fatou components.

Assume  $\mathbf{I}_{p}\neq \emptyset$, we will get a contradiction.
For each $(v,q)\in \mathbf{I}_{p}$,  by Lemma \ref{limit-Fatou}, the set $f^{m_{v,q}-1}(R\cap L_{U_{f, \sigma(v)}, \xi_1^{v,q}} \cap F(f))$ (here $\xi_1^{v,q}= f(\phi_{f,v}(q))$) is contained in finitely many parabolic Fatou components,  each has exact period $l_{v,q}\ell_v$. We denote the union of these parabolic Fatou components by $W_{v,q}$.  
 By   assumption (b),   also by counting the number of critical points in parabolic Fatou components,  we conclude that
\begin{itemize}
\item  for different $(v_1, q_1), (v_2, q_2)\in \mathbf{I}_p$,  one has $W_{v_1,q_1}\cap  W_{v_2,q_2}=\emptyset$;

\item  for each $(v, q)\in \mathbf{I}_p$, the set $W_{v,q}$ is contained in a cycle of Fatou components, whose union contains only one critical point. 
\end{itemize}


We can further explore the structure of $f^{m_{v,q}-1}(R\cap L_{U_{f, \sigma(v)}, \xi_1^{v,q}})$ and $W_{v,q}$:

{\it Claim 2:  $W_{v,q}$  consists of finitely many immediate parabolic basins of $\xi_{m_{v,q}}$ and  $f^{m_{v,q}-1}(R\cap L_{U_{f, \sigma(v)}, \xi_1^{v,q}})\subset \{\xi_{m_{v,q}}\}\cup W_{v,q}$,.}

To see this, note that $W_{v,q}$ contains at least one immediate parabolic basin $V$ of $\xi_{m_{v,q}}$. Then 
for any $a\in f^{m_{v,q}-1}(R)\cap V$, let $h_{m_{v,q}-1}^a$ be the 
univalent map constructed by Proposition \ref{inverseb}. The curve $\gamma: \mathbb R\rightarrow V$  defined by  $\gamma(t)=h_{m_{v,q}-1}^a(R^f_{\sigma^{m_{v,q}}(v), \xi_{m_{v,q}}}(t))$ satisfies  $f^{l_{v,q}\ell_v}(\gamma(t))=\gamma(t-l_{v,q})$, for all $t\in \mathbb R$. 
Since $V$ is the immediate parabolic basin of $\xi_{m_{v,q}}$ with $f^{l_{v,q}\ell_v}(V)=V$, we have 
$\lim_{t\rightarrow -\infty }\gamma(t)=\xi_{m_{v,q}}$.
The limit $\lim_{t\rightarrow +\infty }\gamma(t)$ also exists,  and is denoted by $b$.
If $b\neq \xi_{m_{v,q}}$, then $f^{l_{v,q}\ell_v}$ has two fixed points on $\partial V$. 
Since $\partial V$ is a Jordan curve (Proposition \ref{RY}),   the Blaschke model $B_V$ of $f^{l_{v,q}\ell_v}|_V$ has at least two fixed points on $\partial \mathbb D$, one of which is   parabolic  with two attracting petals, hence  having multiplicity (i.e. the order of the zero of $B_V-{\rm id}$) $3$.  So $B_V$ has at least $4$ fixed points (counting multiplicity).
By \cite[Lemma 12.1]{M}, we have 
${\rm deg}(B_V)={\rm deg}(f^{l_{v,q}\ell_v}|_V)\geq 3$. 
 Since the exact $f$-period of $V$ is $l_{v,q}\ell_v$,   
 the $f$-cycle of $V$ contains at least two $f$-critical points.
This contradicts the proven property of $W_{v,q}$.
Thus $b=\xi_{m_{v,q}}$.  The same reason applies to  other components of $W_{v,q}\setminus V$ (if any).   Combining  the connectivity of $ f^{m_{v,q}-1}(R\cap L_{U_{f, \sigma(v)}, \xi_1^{v,q}})$,  this implies  Claim 2. 

Note that the $f$-critical value $e^*_v(q)\in R\cap L_{U_{f, \sigma(v)}, \xi_1^{v,q}}\cap F(f)$.
 By  Claim 2 and taking preimages,  $R\cap L_{U_{f, \sigma(v)}, \xi_1^{v,q}}\cap F(f)$
  is  in a union of finitely many Fatou components attaching at $\xi_1^{v,q}$. 
 The one containing $e^*_v(q)$ is denoted by $W_1$.  
 There is a  component $W_0$ of $f^{-1}(W_1)$ containing  an $f$-critical point   and  satisfying  $W_0\subset L_{U_{f, v}, \xi_0^{v,q}}$ (here $\xi_0^{v,q}=\phi_{f,v}(q)$) and $\partial W_{0}\cap \partial U_{f,v}=\{\xi_0^{v,q}\}$. 
This $W_0$ is strictly $f$-pre-periodic(because  $\xi_0^{v,q}$ has this property). 
It follows that the grand orbit of $W_{v,q}$ contains at least two critical points.
 Hence the number of all critical points in grand orbits of all  parabolic Fatou components is at least  $2\# \mathbf{I}_{p}(f)$.  This   contradiction implies that $\mathbf{I}_{p}(f)=\emptyset$.

To finish, we  shall show that $D$ is $\mathcal H$-admissible.   
The proven fact $\mathbf{I}_{p}(f)=\emptyset$ implies that for all $(v, q)\in \mathbf{I}$, the point $\xi_0^{v,q}=\phi_{f,v}(q)$ is  $f$-critical
and pre-repelling. 
By Claim 1 and assumption (b),  the points  $\xi_0^{v,q}$, $(v,q)\in \mathbf{I}$ have disjoint  $f$-forward orbits.  This implies, in particular,  for all  $(v,q)\in \mathbf{I}$,
the orbit $f(\xi_0^{v,q})\mapsto f^2(\xi_0^{v,q})\mapsto \cdots$
meets no $f$-critical points. By Claim 1 and   taking preimages,  $L_{U_{f, \sigma(v)}, \xi_1^{v,q}}$ is trivial, implying that
  $D$ is $\mathcal H$-admissible.  
\end{proof}

\begin{rmk}    By Proposition \ref{criterion-H-adm},  one may verify that $\mathcal H$-admissible divisors are dense in $\partial_0^* {\rm Div}{(\mathbb D)}^S$.
\end{rmk}

\section{The impression for $\mathcal H$-admissible divisors} \label{impression-singleton}


In this section, we shall prove the following

\begin{pro} \label{divisor-singleton} 
Let $D=\big((B_v^0, D_v^{\partial})\big)_{v\in V}\in  \partial_0^* {\rm Div}{(\mathbb D)}^S$ be $\mathcal H$-admissible. Then  $I_{\Phi}(D)$ consists of a singleton.
\end{pro}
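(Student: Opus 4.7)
The plan is to show that if $f_1, f_2 \in I_\Phi(D)$ then $f_1 = f_2$, by assembling a global conjugacy $h: \mathbb C \to \mathbb C$ with $h \circ f_1 = f_2 \circ h$ and then invoking rigidity to force $h = \mathrm{id}$ via the monic-centered normalization of $\mathcal P_d$. The $\mathcal H$-admissibility hypothesis is what makes this rigidity argument go through: by Proposition \ref{h-admissible-characterization} both $f_1$ and $f_2$ are Misiurewicz, all critical points on $J(f_k)$ are simple and live on boundaries of marked Fatou components, and (by Proposition \ref{combinatorial-property-2}) the landing-angle data $\theta_{D,v}^\pm$ and the divisors $D_v^\partial(\mathcal H)$ depend only on $D$, not on the choice of $f_k \in I_\Phi(D)$.

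First, on the Fatou side, I would use the conformal maps $\phi_{f_k, v} : \overline{\mathbb D} \to \overline{U_{f_k, v}}$ from Proposition \ref{combinatorial-property0} (with Carath\'eodory extension, available since Misiurewicz implies local connectivity of $\partial U_{f_k, v}$) and set
\[
h_v := \phi_{f_2, v} \circ \phi_{f_1, v}^{-1} : \overline{U_{f_1, v}} \longrightarrow \overline{U_{f_2, v}}.
\]
By Lemma \ref{model-map}, both maps realize the same Blaschke model $B_v^0$, so $h_{\sigma(v)} \circ f_1 = f_2 \circ h_v$ on $\overline{U_{f_1, v}}$. Proposition \ref{h-admissible-characterization} ensures every bounded Fatou component of $f_k$ lies in the $f_k$-grand orbit of some marked $U_{f_k, v}$ (since $f_k$ has exactly $m_\mathcal H$ attracting cycles and no parabolic ones). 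Pulling back the $h_v$'s by $f_1$ and pushing forward by $f_2$ through this grand orbit gives a conformal, equivariant extension of $h$ to the closure of the full bounded Fatou set.

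Next, on the Julia side, Proposition \ref{combinatorial-property-2} says that for every $v \in V$ and every $s \in \partial \mathbb D$ the angles $\theta_{D,v}^\pm(s)$ are intrinsic to $D$, so the external rays of angles $\theta_{D,v}^\pm(s)$ land at the "same" boundary point $\phi_{f_k, v}(s)$ for both $k$. This identifies the landing combinatorics. Since $f_k$ is Misiurewicz, $J(f_k)$ is locally connected and the Carath\'eodory semiconjugacy $\gamma_k : \mathbb R/\mathbb Z \to J(f_k)$ is a continuous surjection intertwining $\theta \mapsto d\theta$ with $f_k$. The rule $\gamma_2 = h \circ \gamma_1$, which already holds on the dense set of preperiodic angles (by the Fatou-boundary construction plus Proposition \ref{combinatorial-property-2} on shared landing patterns), then extends continuously to all of $\mathbb R/\mathbb Z$, producing a continuous surjection $h : J(f_1) \to J(f_2)$ that glues with the Fatou conjugacy. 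The $\mathcal H$-admissibility enters crucially here: since all critical points on $\partial U_{f_k, v}$ are simple and disconnect $J(f_k)$ into two pieces (Proposition \ref{h-admissible-characterization}), the limb decomposition of $K(f_k)$ is compatible between $k=1,2$, and the gluing across each such critical point is unambiguous.

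Finally, rigidity: the assembled $h : \mathbb C \to \mathbb C$ is holomorphic on the (open) Fatou set and a homeomorphism; its dilatation is supported on $J(f_1)$, which has zero Lebesgue measure for any Misiurewicz polynomial (standard consequence of expansion on $J$ away from critical points). Thus $h$ is $1$-quasiconformal, hence by Weyl's lemma holomorphic on all of $\mathbb C$, and a homeomorphism of $\widehat{\mathbb C}$ fixing $\infty$; so $h(z) = az + b$. Comparing B\"ottcher coordinates of $f_1$ and $f_2$ at $\infty$ forces $a = 1$, and the fact that both maps are centered (sum of roots equals $0$) together with $h \circ f_1 = f_2 \circ h$ yields $b = 0$; hence $h = \mathrm{id}$ and $f_1 = f_2$. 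The main obstacle I anticipate is verifying that the boundary gluing in the third step is genuinely a homeomorphism rather than merely a continuous bijection—specifically, ruling out collapses in the fibers of $h$ at critical landing points on $\partial U_{f_k, v}$, where two external rays land at the same point in $J(f_1)$ and need to land at the same point in $J(f_2)$ with the correct cyclic order. This is exactly what Proposition \ref{h-admissible-characterization} (simple critical points, exactly two landing rays at each, critical portrait governed by $\mathrm{supp}(D_v^\partial)$) is designed to deliver, but the bookkeeping must be done carefully across the global limb structure.
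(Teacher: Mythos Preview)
Your overall architecture matches the paper's: build a conformal conjugacy on the bounded Fatou set from the maps $\phi_{f_k,v}$, extend to a homeomorphism $h:\mathbb C\to\mathbb C$ conjugating $f_1$ to $f_2$, then invoke rigidity to conclude $h=\mathrm{id}$. However, the Julia-side step has a genuine gap.

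To define $h$ on $J(f_1)$ via $\gamma_1(\theta)\mapsto\gamma_2(\theta)$ you need $\lambda_{\mathbb R}(f_1)=\lambda_{\mathbb R}(f_2)$: whenever two external rays of $f_1$ co-land, the same two rays of $f_2$ must co-land. Proposition \ref{combinatorial-property-2} only tells you that the functions $\theta_{D,v}^\pm:\partial\mathbb D\to\mathbb R/\mathbb Z$ agree for $f_1$ and $f_2$; this controls exactly the rays landing on $\partial U_{f_k,v}$ for $v\in V$, i.e.\ the restricted laminations $\lambda_v(f_k)$ on the angle sets $\Theta_v^0(f_0)\cup\Theta_v^*(f_0)$. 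It says nothing directly about rays landing deep inside non-trivial limbs, nor about which preimage of $U_{f_k,\sigma(v)}$ a given unmarked Fatou component is. Your pullback construction of $h$ on the unmarked bounded Fatou components implicitly assumes a canonical correspondence between the Fatou-component trees of $f_1$ and $f_2$, but establishing that correspondence is precisely the hard part. Concretely: given $q\in\mathrm{supp}(D_{\sigma(v)}^\partial)$, the preimages of $\phi_{f_k,\sigma(v)}(q)$ in $K_{f_k,v}$ lie in several limbs, and one must show the pairing of rays $(\alpha_j,\beta_j)$ landing at these preimages is the same for $k=1,2$. This is where $\mathcal H$-admissibility is genuinely used (the paper's Lemma \ref{div-lam}, with the counterexample in Figure \ref{fig: not-unique} showing failure otherwise), and it then requires further work (Lemmas \ref{div-lam2} and \ref{h-admissible-lamination}) to upgrade the equality of restricted laminations $\lambda_v$ to the full equality $\lambda_{\mathbb R}(f_1)=\lambda_{\mathbb R}(f_2)$. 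You correctly flag this as ``the main obstacle'' at the end, but your proposal does not actually supply the argument.

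A smaller issue: for the rigidity step, ``dilatation supported on a measure-zero set'' is not enough on its own, since you have not shown $h$ is quasiconformal to begin with. The paper instead uses that $J(f_1)$ is conformally removable (Jones--Smirnov \cite{JS}) for a Misiurewicz polynomial, so a global homeomorphism that is conformal off $J(f_1)$ is automatically conformal everywhere.
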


The proof needs the notion of lamination  and we recall some necessities.
Let $f$ be a polynomial with connected Julia set $J(f)$.
The \emph{rational lamination} $\lambda_\mathbb{Q}(f)\subset \mathbb Q^2$ of $f$ consists of the pairs $(\theta_1,\theta_2)\in \mathbb Q^2$ for which the external rays $R_f(\theta_1)$ and $R_f(\theta_2)$ land at the same point.  
If   $J(f)$ is also locally connected, then by Carath\'eodory's theorem, all external rays land. 
The \emph{real lamination} $\lambda_\mathbb{R}(f)\subset(\mathbb R / \mathbb Z)^2$ of $f$  consists of   $(\theta_1,\theta_2)\in (\mathbb R / \mathbb Z)^2$ for which the external rays $R_f(\theta_1)$ and $R_f(\theta_2)$ land at the same point. It's clear that  $\lambda_\mathbb{Q}(f)=\mathbb{Q}^2\cap\lambda_\mathbb{R}(f)$, and $\lambda_\mathbb{R}(f)$ is closed in $(\mathbb R / \mathbb Z)^2$.

Let $\lambda$ be a subset of $\mathbb Q^2$ (resp. $(\mathbb R / \mathbb Z)^2$).
The smallest closed equivalence relation in $\mathbb Q^2$ (resp.$(\mathbb R / \mathbb Z)^2$)
that contains $\lambda$ is denoted by $\langle\lambda\rangle_\mathbb{Q}$ (resp. $\langle\lambda\rangle_\mathbb{R}$). 


\begin{lem} [\cite{K}, Lemma 4.17]
	\label{connection-rational-real}
	Let $f$ be a polynomial with connected Julia set $J(f)$,  then  $\lambda_\mathbb{Q}(f)$ is  a closed  subset of  $\mathbb Q^2$. If   $J(f)$ is also locally connected,
then $\langle\lambda_\mathbb{Q}(f)\rangle_\mathbb{R}=\lambda_\mathbb{R}(f)$ if and only if
	every critical point on the boundary of any bounded Fatou component is preperiodic.
\end{lem}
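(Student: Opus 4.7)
The lemma has two independent assertions, so the plan is to attack them separately. First, I would prove the closedness of $\lambda_{\mathbb{Q}}(f)$ in $\mathbb{Q}^2$ by an upper-semicontinuity argument. Take a sequence $(\alpha_n,\beta_n) \in \lambda_{\mathbb{Q}}(f)$ converging to $(\alpha,\beta) \in \mathbb{Q}^2$; by the Douady--Hubbard landing theorem the common landing point $z_n$ of $R_f(\alpha_n)$ and $R_f(\beta_n)$ is preperiodic (repelling or parabolic), and after extracting a subsequence $z_n \to z_\infty \in J(f)$. Using continuity of the B\"ottcher coordinate on $\{|w|>1\}$ together with Hausdorff convergence of truncated rays $\overline{R_f(\alpha_n)} \cap \{G_f \leq t\}$, the landing point $z_\alpha$ of $R_f(\alpha)$ must coincide with $z_\infty$; the same argument applied to $\beta_n$ yields $z_\beta = z_\infty$, so $z_\alpha = z_\beta$ and $(\alpha,\beta) \in \lambda_{\mathbb{Q}}(f)$.

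For the second assertion, local connectivity of $J(f)$ gives the Carath\'eodory semiconjugacy $\gamma \colon \mathbb{R}/\mathbb{Z} \to J(f)$, so $\lambda_{\mathbb{R}}(f)$ is closed as an equivalence relation and $\langle \lambda_{\mathbb{Q}}(f)\rangle_{\mathbb{R}} \subseteq \lambda_{\mathbb{R}}(f)$ automatically. I would dispatch the contrapositive of $(\Rightarrow)$ first: if $c \in \partial U$ is a non-preperiodic critical point of a bounded Fatou component, Proposition \ref{RY}(4) supplies two distinct rays $R_f(\alpha)$, $R_f(\beta)$ landing at $c$, necessarily with irrational angles since $c$ is not preperiodic. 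Every element of $\langle \lambda_{\mathbb{Q}}(f)\rangle_{\mathbb{R}}$ arises from finite chains of closures of rational identifications, each realized at a preperiodic point. The forward orbit of $c$ is infinite, non-preperiodic and disjoint from the preperiodic locus, so no such finite chain can pinch $\alpha$ to $\beta$; this gives $(\alpha,\beta) \notin \langle \lambda_{\mathbb{Q}}(f)\rangle_{\mathbb{R}}$ while $(\alpha,\beta) \in \lambda_{\mathbb{R}}(f)$.

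Next, for $(\Leftarrow)$, assume every critical point on a bounded Fatou-component boundary is preperiodic. Take $(\alpha,\beta) \in \lambda_{\mathbb{R}}(f) \setminus \lambda_{\mathbb{Q}}(f)$ with common landing point $z = \gamma(\alpha)=\gamma(\beta)$; then $z$ is not preperiodic, else $\alpha,\beta$ would be rational. Under the hypothesis, no critical point lies on the forward orbit of $z$, so each iterate $f^n$ is locally injective near $f^k(z)$, and the Markov structure on $J(f)$ induced by a finite system of preperiodic rays survives unambiguously. I would construct nested ``rational rectangles'' shrinking to $(\alpha,\beta)$ by pulling back a carefully chosen dividing pair of preperiodic rays that straddles the orbit of $z$, producing rational pairs $(\alpha_n,\beta_n) \in \lambda_{\mathbb{Q}}(f)$ whose transitive closure in $\langle \lambda_{\mathbb{Q}}(f)\rangle_{\mathbb{R}}$ identifies $\alpha$ with $\beta$.

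The main obstacle is making the pinching in $(\Leftarrow)$ rigorous: one must ensure that the rational approximants genuinely glue to identify $\alpha$ with $\beta$ in the smallest closed equivalence relation, not merely accumulate to $(\alpha,\beta)$. The natural framework is Kiwi's Markov partition of $J(f)$ by a finite family of preperiodic external rays and Fatou-boundary arcs--valid here because all branching vertices are preperiodic under the hypothesis--followed by a standard expansion argument exploiting hyperbolicity away from critical orbits. This reproduces \cite[Lemma 4.17]{K}.
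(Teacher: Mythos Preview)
The paper does not prove this lemma; it is quoted from Kiwi \cite[Lemma~4.17]{K} with no argument given, so there is no in-paper proof to compare your proposal against.

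Your closedness argument has a gap: Hausdorff convergence of truncated rays $\overline{R_f(\alpha_n)}\cap\{G_f\geq\epsilon\}$ follows from continuity of $\phi_f^{-1}$, but this controls only the part of the ray at positive potential and says nothing about landing points. The step ``$z_\alpha=z_\infty$'' amounts to continuity of the landing map on rational angles, which is not given (local connectivity of $J(f)$ is \emph{not} assumed for this assertion) and is essentially what you are trying to prove. A correct argument must use something more, for instance the non-crossing of external rays together with the finiteness of the ray portrait at each preperiodic point. Your $(\Rightarrow)$ direction also has a gap: the assertion that every pair in $\langle\lambda_{\mathbb Q}(f)\rangle_{\mathbb R}$ is ``realized at a preperiodic point'' is false, since already the topological closure of $\lambda_{\mathbb Q}(f)$ in $(\mathbb R/\mathbb Z)^2$ can contain pairs of irrational angles whose rays land at a common non-preperiodic point; to exclude the specific pair $(\alpha,\beta)$ at the wandering critical point you must exhibit a closed equivalence relation containing $\lambda_{\mathbb Q}(f)$ that still separates $\alpha$ from $\beta$. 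Your outline for $(\Leftarrow)$ and the obstacle you identify are on the right track and match Kiwi's strategy.
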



  Recall that $f_0$ is the center of the hyperbolic component $\mathcal H$.

   \begin{lem} \label{lamination-subset} 
  	Let $D=\big((B_v^0, D_v^{\partial})\big)_{v\in V}\in  \partial_0^* {\rm Div}{(\mathbb D)}^S$ be $\mathcal H$-admissible.  For any $f\in  I_{\Phi}(D)$, we have $\lambda_{\mathbb R}(f_0)\subset \lambda_{\mathbb R}(f)$. 
  \end{lem}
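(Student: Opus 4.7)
The plan is to combine Lemma \ref{connection-rational-real} with the stability of external rays (Lemma \ref{stability-e-r}(1)) applied at $f$. Since $f_0$ is postcritically finite and hyperbolic, its bounded Fatou components are superattracting basins and the critical points lie in the interior of these components rather than on their boundaries. Hence the hypothesis of Lemma \ref{connection-rational-real} is satisfied trivially, and we obtain
\[
\lambda_{\mathbb R}(f_0)=\langle\lambda_{\mathbb Q}(f_0)\rangle_{\mathbb R}.
\]
Since $\lambda_{\mathbb R}(f)$ is closed in $(\mathbb R/\mathbb Z)^2$, it therefore suffices to establish the inclusion $\lambda_{\mathbb Q}(f_0)\subset\lambda_{\mathbb R}(f)$.

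Fix $(\theta_1,\theta_2)\in \lambda_{\mathbb Q}(f_0)$, so $R_{f_0}(\theta_1)$ and $R_{f_0}(\theta_2)$ land at a common (pre-)repelling point $q_0\in J(f_0)$. The holomorphic motion $h:\mathcal H\times J(f_0)\to\mathbb C$ recalled in Section \ref{phc} conjugates $f_0|_{J(f_0)}$ to $g|_{J(g)}$ for every $g\in\mathcal H$, so $q_g:=h(g,q_0)$ is (pre-)repelling for $g$, and by the standard stability of external rays along a hyperbolic component, both $R_g(\theta_1)$ and $R_g(\theta_2)$ land at $q_g$ for every $g\in\mathcal H$. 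Now choose a sequence $\{f_n\}_{n\geq 1}\subset\mathcal H$ with $f_n\to f$ and $\Phi^{-1}(f_n)\to D$ as guaranteed by Fact \ref{fact-def}.

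By Proposition \ref{h-admissible-characterization}, $f$ is a Misiurewicz polynomial, so $J(f)$ is locally connected and every rational external ray of $f$ lands at a (pre-)repelling (pre-)periodic point. In particular, for $j=1,2$ the ray $R_f(\theta_j)$ lands at a pre-repelling point $p_j\in J(f)$, and Lemma \ref{stability-e-r}(1) provides a neighborhood $\mathcal N$ of $f$ in $\mathcal P_d$ on which the landing map
\[
g\ \longmapsto\ p_j(g):=\lim_{r\to 1^+}\phi_g^{-1}(re^{2\pi i\theta_j})
\]
is continuous for $g\in\mathcal N\cap\mathcal C_d$, $j=1,2$. For $n$ large enough, $f_n\in\mathcal N\cap\mathcal C_d$ and $p_1(f_n)=p_2(f_n)=q_{f_n}$; passing to the limit gives $p_1=p_1(f)=p_2(f)=p_2$. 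Therefore $(\theta_1,\theta_2)\in\lambda_{\mathbb R}(f)$, which proves $\lambda_{\mathbb Q}(f_0)\subset\lambda_{\mathbb R}(f)$ and, after taking the closed equivalence closure, $\lambda_{\mathbb R}(f_0)\subset\lambda_{\mathbb R}(f)$.

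The main obstacle in this outline is verifying that Lemma \ref{stability-e-r}(1) genuinely applies at $f$, i.e.\ that the rational rays of $f$ land at pre-repelling points. This relies essentially on $f$ being Misiurewicz (no parabolic cycles, no rational ray landing on a critical orbit) together with local connectivity of $J(f)$; both are consequences of the $\mathcal H$-admissibility of $D$ via Proposition \ref{h-admissible-characterization}. Once this input is in place, the rest of the argument is a straightforward continuity passage inside $\mathcal C_d$.
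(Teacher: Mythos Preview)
Your proof is correct and follows essentially the same approach as the paper: reduce to $\lambda_{\mathbb Q}(f_0)\subset\lambda_{\mathbb R}(f)$ via Lemma \ref{connection-rational-real}, use that $f$ is Misiurewicz (Proposition \ref{h-admissible-characterization}) so rational rays of $f$ land at pre-repelling points, and then apply the stability of external rays (Lemma \ref{stability-e-r}(1)) along a sequence $f_n\to f$ in $\mathcal H$. One small remark: when you say ``since $\lambda_{\mathbb R}(f)$ is closed, it suffices \ldots'', you implicitly also use that $\lambda_{\mathbb R}(f)$ is an equivalence relation (so that the \emph{closed equivalence} closure $\langle\lambda_{\mathbb Q}(f_0)\rangle_{\mathbb R}$ is contained in it), which is immediate from its definition but worth stating.
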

  
  \begin{proof}
  	Since $f_0$ is hyperbolic, by Lemma \ref{connection-rational-real}, we have $\lambda_\mathbb{R}(f_0)=\langle \lambda_\mathbb{Q}(f_0)\rangle_\mathbb{R}$.
  	Note that for any $g\in \mathcal H$,  $\lambda_\mathbb{Q}(f_0)=\lambda_\mathbb{Q}(g)$.
  	By Proposition \ref{h-admissible-characterization},  $f$ is Misiurewicz, hence $J(f)$ is locally connected and $f$ has no parabolic  cycles.  Take any pair  $(\theta_1, \theta_2)\in \lambda_\mathbb{Q}(f_0)$ and  any sequence $\{f_n\}_{n\geq 1}\subset \mathcal H$ with $f_n\rightarrow f$. Note that $(\theta_1, \theta_2)\in \lambda_\mathbb{Q}(f_n)$ for all $n\geq 1$.    Lemma \ref{stability-e-r} implies
  the Hausdorff convergence  $\overline{R_{f_n}(\theta_k)}\rightarrow \overline{R_f(\theta_k)}$ for $k=1,2$.  Hence $(\theta_1, \theta_2)\in \lambda_\mathbb{R}(f)$. This shows that 
  	$\lambda_{\mathbb Q}(f_0)\subset \lambda_{\mathbb R}(f)$. 
  	Taking the closure, we get $\lambda_{\mathbb R}(f_0)\subset \lambda_{\mathbb R}(f)$. 
  \end{proof}

Let $V^{\infty}=\bigcup_{k\geq 0}f_0^{-k}(V)$. For $v\in V^{\infty}$, we still use  $U_{f_0, v}$ to denote the Fatou component of $f_0$  containing $v$. Let 
$$\Theta_v(f_0)=\{\theta\in   \mathbb R/\mathbb Z; \ R_{f_0}(\theta) \text{ lands at a point on } \partial U_{f_0,v}\}.$$
Extend $\sigma: V\rightarrow V$ to $\sigma: V^\infty\rightarrow V^\infty$ by defining $\sigma(v)=f_0(v)$; extend $\delta: V\rightarrow \mathbb N$ to $\delta: V^\infty\rightarrow  \mathbb N$ by defining $\delta(v)=1$ for all $v\in  V^\infty\setminus V$.
Recall that $\tau:  \mathbb R/\mathbb Z\rightarrow  \mathbb R/\mathbb Z$ is defined by $\tau(\theta)=d\theta  \ {\rm mod } \  \mathbb Z$.

\begin{ft} \label{property-theta}
We have the following facts:
	

1.  $\tau: \Theta_v(f_0)\rightarrow \Theta_{\sigma(v)}(f_0)$ is a $\delta(v)$-to-one covering map for $v\in V^{\infty}$.

2.  Given $v_1, v_2\in V^{\infty}$, with $v_1\neq v_2$ and $\sigma(v_1)=\sigma(v_2)$, we have $$\Theta_{v_1}(f_0)\cap \Theta_{v_2}(f_0)=\emptyset.$$
\end{ft}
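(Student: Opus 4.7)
The plan is to treat the two statements separately, in both cases exploiting that $f_0$ is postcritically finite and hyperbolic. Two structural facts will be used repeatedly: first, every critical point of $f_0$ lies in an attracting basin, so $\mathrm{Crit}(f_0)\cap J(f_0)=\emptyset$; second, by Proposition \ref{RY}, every $\partial U_{f_0,v}$ is a Jordan curve, and since there is no ramification on the Julia set, the restriction $f_0:\partial U_{f_0,v}\to \partial U_{f_0,\sigma(v)}$ is an unramified covering of degree $\delta(v)$. Also, $J(f_0)$ is locally connected, so every external ray lands, and $\Theta_v(f_0)$ is a closed subset of $\mathbb{R}/\mathbb{Z}$ by Lemma \ref{stability-e-r}.

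For (1), I would first note that $\tau(\Theta_v(f_0))\subset \Theta_{\sigma(v)}(f_0)$ from the functional equation $f_0\circ R_{f_0}(\theta)=R_{f_0}(d\theta)$: if $R_{f_0}(\theta)$ lands at $z\in\partial U_{f_0,v}$ then $R_{f_0}(d\theta)$ lands at $f_0(z)\in\partial U_{f_0,\sigma(v)}$. Next I would count fibers. Fix $\eta\in\Theta_{\sigma(v)}(f_0)$ with landing point $w\in\partial U_{f_0,\sigma(v)}$. Since $f_0|_{\partial U_{f_0,v}}$ is an unramified covering of degree $\delta(v)$, $w$ has exactly $\delta(v)$ preimages $z_1,\dots,z_{\delta(v)}$ on $\partial U_{f_0,v}$, each with local degree one. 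Because $f_0$ is a local biholomorphism at each $z_i$, the rays landing at $w$ lift bijectively (locally) to rays landing at $z_i$; in particular the single ray $R_{f_0}(\eta)$ at $w$ lifts to exactly one ray at each $z_i$, contributing exactly one angle in $\tau^{-1}(\eta)\cap\Theta_v(f_0)$. Thus $|\tau^{-1}(\eta)\cap\Theta_v(f_0)|=\delta(v)$. Finally, since $\tau:\mathbb{R}/\mathbb{Z}\to\mathbb{R}/\mathbb{Z}$ is a local homeomorphism, its restriction to the closed set $\Theta_v(f_0)$ is a local homeomorphism onto $\Theta_{\sigma(v)}(f_0)$ with constant finite fiber $\delta(v)$; any such map between compact Hausdorff spaces is a covering (given distinct fiber points, separate them by disjoint local-homeomorphism neighborhoods and shrink in the base to a common evenly covered neighborhood). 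This gives the covering statement.

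For (2), suppose for contradiction that some $\theta\in\Theta_{v_1}(f_0)\cap\Theta_{v_2}(f_0)$ exists with $v_1\neq v_2$, $\sigma(v_1)=\sigma(v_2)$, and (the nontrivial case) $U_{f_0,v_1}\neq U_{f_0,v_2}$; the case $U_{f_0,v_1}=U_{f_0,v_2}$ is vacuous for the intended use, corresponding to picking one marked point per Fatou component. Let $z$ be the landing point of $R_{f_0}(\theta)$, so $z\in\partial U_{f_0,v_1}\cap\partial U_{f_0,v_2}\subset J(f_0)$. Since $z$ is not critical, choose a small open disk $D\ni z$ on which $f_0|_D$ is a biholomorphism onto $D':=f_0(D)$. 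The set $U_{f_0,\sigma(v_1)}\cap D'$ is connected, being the intersection of the Jordan disk $U_{f_0,\sigma(v_1)}$ with $D'$, cut off by the single Jordan arc $\partial U_{f_0,\sigma(v_1)}\cap D'$ through $f_0(z)$. Consequently $W:=(f_0|_D)^{-1}(U_{f_0,\sigma(v_1)}\cap D')$ is a single connected open subset of $D$, and $W\subset f_0^{-1}(U_{f_0,\sigma(v_1)})$ lies entirely in one Fatou component of this preimage. But $z\in\partial U_{f_0,v_k}$ forces both $U_{f_0,v_1}$ and $U_{f_0,v_2}$ to meet every neighborhood of $z$, hence both meet $W$, so $U_{f_0,v_1}=U_{f_0,v_2}$, a contradiction.

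The only delicate step is the covering property in (1), i.e.\ passing from a fiberwise count to local triviality; this is handled by the general fact that a local homeomorphism between compact Hausdorff spaces with constant finite fiber is a covering, which closes the argument cleanly once the landing geometry has supplied the fiber count. The rest is a direct application of Proposition \ref{RY}, Lemma \ref{stability-e-r}, and the non-critical nature of boundary points of Fatou components in the PCF hyperbolic setting.
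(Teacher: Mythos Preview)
Your proof is correct and takes the same approach as the paper. For part~(1) the paper writes only ``It is immediate,'' and you supply the standard details; for part~(2) your local-biholomorphism argument is exactly the contrapositive of the paper's one-line observation that a point in $\partial U_{f_0,v_1}\cap\partial U_{f_0,v_2}$ with $f_0(U_{f_0,v_1})=f_0(U_{f_0,v_2})$ would have to be $f_0$-critical, contradicting hyperbolicity. One minor correction: the closedness of $\Theta_v(f_0)$ follows from continuity of the Carath\'eodory landing map (via local connectivity of $J(f_0)$), not from Lemma~\ref{stability-e-r}, which concerns stability of rays under perturbation of the polynomial rather than landing behaviour for a fixed map.
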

\begin{proof} 
	  1. It is immediate.  For 2,  if $\theta\in \Theta_{v_1}(f_0)\cap \Theta_{v_2}(f_0)\neq \emptyset$,  
	then the landing point $q$ of $R_{f_0}(\theta)$ is $f_0$-critical, because 
	$f_0(U_{f_0, v_1})=f_0(U_{f_0, v_2})$ and $q\in \partial U_{f_0, v_1}\cap \partial U_{f_0, v_2}$. This is a contradiction because $f_0$ is hyperbolic. 
	\end{proof}

For $v\in  V^{\infty}$, 
there is  a decomposition $\partial U_{f_0,v}=\partial_0 U_{f_0,v}\sqcup \partial_* U_{f_0, v}$, where 
\bess &\partial_0 U_{f_0, v}=\{\zeta\in \partial U_{f_0, v}; L_{U_{f_0, v},\zeta}= \{\zeta\}\},&\\
&\partial_* U_{f_0, v}=\{\zeta\in \partial U_{f_0, v}; L_{U_{f_0, v},\zeta}\neq \{\zeta\}\}.&
\eess
Each $\zeta\in \partial_0 U_{f_0, v}$ is the landing point of a unique external ray, say $R_{f_0}(\theta_\zeta)$;  each  $\zeta\in \partial_* U_{f_0, v}$ is $f_0$-pre-periodic (see Remark \ref{limb-wandering}),  and there are at least two external rays landing at $\zeta$. In the latter case, let $\theta^+_v(\zeta), \theta^-_v(\zeta)\in \mathbb R/\mathbb Z$ be the angles so that  $R_{f_0}(\theta^-_v(\zeta)), L_{U_{f_0,v}, \zeta}, R_{f_0}(\theta^+_v(\zeta))$ attach at $\zeta$  in positive cyclic order.
This induces  two disjoint subsets $\Theta_v^0(f_0),  \Theta_v^*(f_0)$ of $\Theta_v(f_0)$: 
\begin{equation} \label{angles=pm}
	\Theta_v^0(f_0)=\{\theta_\zeta; \zeta\in \partial_0 U_{f_0, v}\}, \ \Theta_v^*(f_0)=\{\theta^+_v(\zeta), \theta^-_v(\zeta); \zeta\in \partial_* U_{f_0, v}\}.
	\end{equation}
Clearly $\tau(\Theta_v^0(f_0))=\Theta_{\sigma(v)}^0(f_0)$, $\tau(\Theta_v^*(f_0))=\Theta_{\sigma(v)}^*(f_0)$. 
Note that the union 
$$\Theta_v^0(f_0)\cup \Theta_v^*(f_0)=\mathbb R/\mathbb Z-\bigsqcup_{\zeta\in \partial_* U_{f_0, v}}(\theta^-_v(\zeta), \theta^+_v(\zeta)),$$
and it  might be strictly smaller than  $\Theta_v(f_0)$, because some $\zeta \in \partial_* U_{f_0, v}$ can be a landing point of at least three external rays.   Clearly $\Theta_v^0(f_0)\cup \Theta_v^*(f_0)$ is a compact subset of $\mathbb R/\mathbb Z$; all angles in $\Theta_v^*(f_0)$ are pre-periodic under   $\tau$.


For a polynomial $g$ with connected Julia set, if $R_g(t_1)$  and $R_g(t_2)$   land  at the same point, let $S_g(t_1, t_2)$ be the component of $\mathbb C\setminus(\overline{R_{g}(t_1)}\cup \overline{R_{g}(t_2)})$ containing the external rays $R_g(t)$ with $t_1<t<t_2$ (in positive cyclic order). 

Let $D=\big((B_v^0, D_v^{\partial})\big)_{v\in V}\in  \partial_0^* {\rm Div}{(\mathbb D)}^S$ be $\mathcal H$-admissible.  For each
$f\in I_{\Phi}(D)$ and each $v\in V^{\infty}$,  following \cite{IK}, we define
\begin{equation} \label{kfv}
	K_{f,v}=K(f)\setminus\bigsqcup_{\zeta\in \partial_* U_{f_0, v}} S_{f}(\theta_v^-(\zeta), \theta_v^+(\zeta)).
	\end{equation}
One may verify that $K_{f,v}$ is a compact subset of $K(f)$,   $f(K_{f,v})=K_{f,\sigma(v)}$.
When  $v\in V^{\infty}\setminus V$, let $l\geq 1$ be  minimal  so that $\sigma^l(v)\in  V$. By the location of critical points (Proposition \ref{h-admissible-characterization}(1)),  $f^l:  K_{f,v}\rightarrow K_{f,\sigma^l(v)}$ is one-to-one. Let 
\begin{equation}
	U_{f,v}=f^l|_{K_{f,v}}^{-1}(U_{f, \sigma^l(v)}).
	\end{equation}
In this way, for all $v\in V^{\infty}$, the marked Fatou component $U_{f,v}$ is well defined. 

For $g\in I_{\Phi}(D)$ or $g=f_0$, let $x_g(\theta)$ be the landing point of the  external ray
$R_g(\theta)$ with  $\theta\in \mathbb R/\mathbb Z$, let $\mathcal F_b(g)$ be the collection of all bounded Fatou components of $g$.
For $v\in V^\infty$,  define two restricted laminations
\bess
&\lambda_{v}(g)=\{(\alpha, \beta); \ \alpha,\beta \in \Theta_v^0(f_0)\cup \Theta_v^*(f_0)  \text{ and } x_g(\alpha)=x_g(\beta)\}, &\\
 &\lambda_{v}^F(g)=\{(\alpha, \beta)\in   \lambda_{v}(g);  \ x_g(\alpha)\in \partial U \text{ for some } U\in  \mathcal F_b(g)\}.&
 \eess 
 Clearly $\lambda_{v}^F(f_0)=\lambda_{v}(f_0)$.  By Lemma \ref{lamination-subset},  
\begin{equation}\label{lamination-restricted}
	\lambda_{v}(f_0)\cup \lambda_{v}^F(f)\subset \lambda_{v}(f)   \text{ for } f\in I_{\Phi}(D).
	\end{equation}

Recall the definitions of $\theta^-_{D,v},  \theta^+_{D,v}$ at the end of Section \ref{comb-pro-imp}.
Let
 $$\Theta_v(D)=\big\{\theta^-_{D,v}(q), \theta^+_{D,v}(q); q\in {\rm supp}(D_{v}^\partial) \big\}\subset \mathbb R/\mathbb Z, \  v\in V.$$

 \begin{lem} \label{disjoint angle} 
 	Let $D=\big((B_v^0, D_v^{\partial})\big)_{v\in V}\in  \partial_0^* {\rm Div}{(\mathbb D)}^S$ be  $\mathcal H$-admissible and $f\in I_{\Phi}(D)$.

 	
 	
 	1. For any   $v\in V$ and any $v'\in V^\infty\setminus\{v\}$, we have 
 \begin{equation}\label{angle-relation}\Theta_v(D)\subset \Theta_v^0(f_0),   \  \Theta_v(D)\cap \Theta_{v'}(f_0)=\emptyset.
 	\end{equation}

 	2. For any  $v\in V^\infty$ and  any
  $\theta\in \Theta^*_v(f_0)$, the pre-repelling  orbit $x_f(\theta)\mapsto f(x_f(\theta))\mapsto\cdots$ meets no critical point of $f$.

3. For $v_1, v_2\in V^{\infty}$, with $v_1\neq v_2$ and $\sigma(v_1)=\sigma(v_2)$, we have $$K_{f, v_1}\cap K_{f,v_2}=\emptyset.$$
\end{lem}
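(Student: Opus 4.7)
I will prove Parts 1--3 in order, using the $\mathcal H$-admissibility of $D$ (Definition \ref{adm-divisor}), the Misiurewicz structure of $f\in I_\Phi(D)$ from Proposition \ref{h-admissible-characterization}, and the lamination inclusion $\lambda_{\mathbb R}(f_0)\subset\lambda_{\mathbb R}(f)$ from Lemma \ref{lamination-subset}. The recurring input is this: for $q\in\mathrm{supp}(D_v^\partial)$, the point $c=\phi_{f,v}(q)$ is a simple critical point of $f$ on $\partial U_{f,v}$ at which exactly two external rays land (with angles $\theta_{D,v}^{\pm}(q)$), and by admissibility plus Proposition \ref{para-dym}(1) the forward $f$-orbit of $f(c)$ entirely avoids $\mathrm{Crit}(f)$.

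For Part 1, let $\theta=\theta_{D,v}^{\pm}(q)\in\Theta_v(D)$. The non-critical forward orbit of $f(c)$ allows Lemma \ref{stability-e-r}(2), applied to $d\theta$ and then pulled back inside $\mathcal C_d$, to give continuity of $g\mapsto x_g(\theta)$ near $f$ with $x_g(\theta)\to c$. The crux is to show that for $g\in\mathcal H$ close to $f$ the two rays $R_g(\theta_{D,v}^{+}(q))$ and $R_g(\theta_{D,v}^{-}(q))$ land at \emph{distinct} boundary points $\zeta_g^\pm\in\partial U_{g,v}$, each landed by a single external ray. I will pair each external ray with a model internal ray: taking approximations $s_g^\pm\in\partial_v^{E_g}\mathbb D$ converging to $q$ from the two sides of $\partial\mathbb D_v$, Proposition \ref{convergence-repelling} identifies the landing of $R_g(\theta_{D,v}^{\pm}(q))$ with the corresponding $\phi_{g,v}(r_{v,s_g^{\pm}}(\Phi^{-1}(g)))\in\partial U_{g,v}$ on the appropriate side. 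Combinatorial rigidity of the rational lamination throughout $\mathcal H$ then transfers the single-ray-landing property to $f_0$, giving $\zeta_{f_0}^{\pm}\in\partial_0 U_{f_0,v}$, hence $\theta\in\Theta_v^0(f_0)$. The disjointness statement follows immediately: a common angle in $\Theta_v(D)\cap\Theta_{v'}(f_0)$ with $v'\ne v$ would place $x_{f_0}(\theta)$ on two Fatou boundaries of $f_0$, making $\overline{U_{f_0,v'}}$ a non-trivial limb attached at that point and contradicting $\Theta_v(D)\subset\Theta_v^0(f_0)$.

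For Part 2, suppose $f^k(x_f(\theta))\in\mathrm{Crit}(f)$ for some $k\geq 0$. By Proposition \ref{h-admissible-characterization}(1) this critical point equals $\phi_{f,u}(q')$ for a unique $u\in V$ and $q'\in\mathrm{supp}(D_u^\partial)$, and only two rays land there, so $\tau^k\theta\in\Theta_u(D)\subset\Theta_u^0(f_0)$ by Part 1. In parallel, I will check $\tau(\Theta_v^*(f_0))\cap\Theta_{\sigma(v)}^0(f_0)=\emptyset$: if $\zeta=x_{f_0}(\theta)\in\partial_*U_{f_0,v}$, the two distinct boundary angles $\theta_v^{\pm}(\zeta)$ have distinct $\tau$-images (since $f_0$ is a local homeomorphism at the non-critical $\zeta$), both landing at $f_0(\zeta)$; thus $f_0(\zeta)\in\partial_*U_{f_0,\sigma(v)}$ and $d\theta\notin\Theta_{\sigma(v)}^0(f_0)$. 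Iterating yields $\tau^k\theta\notin\Theta_{\sigma^k(v)}^0(f_0)$. Since $\tau^k\theta\in\Theta_{\sigma^k(v)}(f_0)$ by $\tau$-equivariance and Part 1's disjointness forces $u=\sigma^k(v)$, the combination yields the required contradiction.

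For Part 3, Fact \ref{property-theta}(2) and the hyperbolicity of $f_0$ (a shared boundary point would be $f_0$-critical on the Julia set) give $\overline{U_{f_0,v_1}}\cap\overline{U_{f_0,v_2}}=\emptyset$. The tree structure of $K(f_0)$ then produces pinches $\zeta_i\in\partial_*U_{f_0,v_i}$ with $\overline{U_{f_0,v_{3-i}}}$ lying in the closed sector $\overline{S_{f_0}(\theta_{v_i}^{-}(\zeta_i),\theta_{v_i}^{+}(\zeta_i))}$. The lamination inclusion preserves the co-landing of these rays in $f$, and Carath\'eodory convergence (Lemma \ref{carathodory-convergence}) preserves the sector inclusion $\overline{U_{f,v_{3-i}}}\subset\overline{S_f(\theta_{v_i}^{-}(\zeta_i),\theta_{v_i}^{+}(\zeta_i))}$. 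Combined with the equivariance $f(K_{f,v})=K_{f,\sigma(v)}$ and the partition $f^{-1}(K_{f,w})=\bigsqcup_{\sigma(v)=w}K_{f,v}$, disjointness at higher preimage levels reduces by induction on $\sigma$-level to the $V$-level case. The principal obstacle is the splitting step at the heart of Part 1: separating the two external rays colliding at the critical point $c$ in $f$ into two distinct accesses of $\partial U_{g,v}$ for $g\in\mathcal H$, which requires the internal-ray and band-neighborhood machinery developed in Sections \ref{divisors-p-d}--\ref{p-prescribed-dynamics}.
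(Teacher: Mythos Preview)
Your Part 2 is essentially the paper's argument. The main issue is Part 1: you have manufactured an unnecessary ``principal obstacle''. The paper's proof of Part 1 is purely combinatorial and uses no perturbation, no internal rays, and no band neighborhoods. The key observation you are missing is this: for $q\in\mathrm{supp}(D_v^\partial)$, the limb $L_{U_{f,\sigma(v)},f(\phi_{f,v}(q))}$ is trivial (Proposition~\ref{para-dym}(1) plus admissibility), so only one external ray lands at $f(\phi_{f,v}(q))$, forcing $d\,\theta_{D,v}^{+}(q)=d\,\theta_{D,v}^{-}(q)\ \mathrm{mod}\ \mathbb Z$. On the other hand, for any $\zeta\in\partial_*U_{f_0,v}$ the hyperbolicity of $f_0$ gives $d^{\,l}\theta_v^{+}(\zeta)\ne d^{\,l}\theta_v^{-}(\zeta)$ for all $l\ge 0$. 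This already gives $\Theta_v(D)\cap\Theta_v^*(f_0)=\emptyset$. Then the lamination inclusion $\lambda_{\mathbb R}(f_0)\subset\lambda_{\mathbb R}(f)$ forces, for every $\zeta\in\partial_*U_{f_0,v}$, the interval $[\theta_v^{-}(\zeta),\theta_v^{+}(\zeta)]$ either to lie inside $(\theta_{D,v}^{-}(q),\theta_{D,v}^{+}(q))$ or to be disjoint from it; hence $\Theta_v(D)\subset\mathbb R/\mathbb Z\setminus\bigsqcup_\zeta(\theta_v^{-}(\zeta),\theta_v^{+}(\zeta))=\Theta_v^0(f_0)\cup\Theta_v^*(f_0)$, and combining with the disjointness gives $\Theta_v(D)\subset\Theta_v^0(f_0)$. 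The second half of \eqref{angle-relation} is then immediate, since any $\theta\in\Theta_v^0(f_0)$ lands at a point of $\partial U_{f_0,v}$ touching no other $\overline{U_{f_0,v'}}$. Your splitting argument via Proposition~\ref{convergence-repelling} might be made to work, but it is a detour around a two-line angle computation.

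Your Part 3 also takes a longer route and has loose ends. You argue $\overline{U_{f,v_{3-i}}}\subset\overline{S_f(\theta_{v_i}^{-}(\zeta_i),\theta_{v_i}^{+}(\zeta_i))}$, but the conclusion is about $K_{f,v_{3-i}}$, which is in general strictly larger than $\overline{U_{f,v_{3-i}}}$; and your claimed partition $f^{-1}(K_{f,w})=\bigsqcup_{\sigma(v)=w}K_{f,v}$ presupposes exactly the disjointness you are trying to prove. The paper instead works directly from the definition \eqref{kfv}: if $K_{f,v_1}\cap K_{f,v_2}\ne\emptyset$, then the intersection is a single point $x_f(\theta)$ with $\theta\in\Theta_{v_1}^*(f_0)\cap\Theta_{v_2}^*(f_0)$; since $\sigma(v_1)=\sigma(v_2)$ this point is forced to be $f$-critical, contradicting Part 2.
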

 \begin{proof}  
 1.  Since $D$ is $\mathcal H$-admissible, 
  by  Proposition \ref{h-admissible-characterization},  for any $q\in {\rm supp}(D_{v}^\partial)$,  $\phi_{f,v}(q)$ is $f$-critical  and the  limb $L_{U_{f, \sigma(v)}, f(\phi_{f, v}(q))}$ is trivial.  
  Hence  only one external ray lands at   $f(\phi_{f, v}(q))$.
  It follows that 
 	$d \theta^+_{D,v}(q)=d \theta^-_{D,v}(q)  \ {\rm mod \ }\mathbb Z$.
 	
 For any  $\zeta\in \partial_* U_{f_0, v}$,  since $f_0$ is locally conformal near $f_0^l(\zeta)$ for all $l\geq 0$,     we have $d^l \theta_{v}^-(\zeta)\neq  d^l \theta_{v}^+(\zeta)$ for all $l\geq 0$. 
 	This implies  $\Theta_v^*(f_0)\cap \Theta_v(D)=\emptyset$.

 By (\ref{lamination-restricted}), (\ref{angle-relation}) and the limb decomposition (Proposition \ref{RY}), we have
\begin{itemize}
\item $\overline{S_{f}(\theta_{v}^-(\zeta), \theta_{v}^+(\zeta))}\subset S_{f}(\theta^-_{D,v}(q), \theta^+_{D,v}(q))$, or
\item $\overline{S_{f}(\theta_{v}^-(\zeta), \theta_{v}^+(\zeta))}\cap \overline{S_{f}(\theta^-_{D,v}(q), \theta^+_{D,v}(q))}=\emptyset$.
\end{itemize}
The former corresponds to the situation $[\theta_{v}^-(\zeta), \theta_{v}^+(\zeta)]\subset (\theta^-_{D,v}(q), \theta^+_{D,v}(q))$, while the latter means that $[\theta_{v}^-(\zeta), \theta_{v}^+(\zeta)]\cap [ \theta^-_{D,v}(q), \theta^+_{D,v}(q)]=\emptyset$. Therefore
  $$\Theta_v(D)\subset  \mathbb R/\mathbb Z-\bigsqcup_{\zeta\in \partial_* U_{f_0, v}}(\theta^-_v(\zeta), \theta^+_v(\zeta))=\Theta_v^0(f_0)\cup \Theta_v^*(f_0).$$ 
  The disjointness $\Theta_v(D)\cap   \Theta_v^*(f_0)=\emptyset$ implies that $\Theta_v(D)\subset \Theta_v^0(f_0)$.

Note that for any $\theta\in  \Theta_v^0(f_0)$, we have
$\{u\in V^\infty; x_{f_0}(\theta)\in \partial U_{f_0, u}\}=\{v\}.$
This implies that $\Theta_v(D)\cap \Theta_{v'}(f_0)=\emptyset$ for any  $v'\in V^\infty\setminus\{v\}$.



2. Let  $v\in V^\infty$ and $\theta\in \Theta^*_v(f_0)$, suppose that the orbit $x_f(\theta)\mapsto f(x_f(\theta))\mapsto\cdots$ meets an $f$-critical point.
By Proposition \ref{h-admissible-characterization}(1),   we can find an integer  $m\geq 0$ and some $v'\in V$, so that  $f^m(x_f(\theta))=x_f(\tau^m(\theta))\in \phi_{f,v'}({\rm supp}(D_{v'}^\partial))$. 
It follows that $\tau^m(\theta)\in \Theta_{v'}(D)\cap \tau^m(\Theta_v^*(f_0))=\Theta_{v'}(D)\cap \Theta_{\sigma^m(v)}^*(f_0)$.  
However this contradicts (\ref{angle-relation}).


3. Given $v_1, v_2\in V^{\infty}$, with $v_1\neq v_2$ and $\sigma(v_1)=\sigma(v_2)$, suppose that $K_{f, v_1}\cap K_{f,v_2}\neq\emptyset$. By definition (\ref{kfv}), the intersection $K_{f, v_1}\cap K_{f,v_2}$ consists of a single point, of the form $x_f(\theta)$  for some  $\theta\in \Theta^*_{v_1}(f_0)\cap \Theta^*_{v_2}(f_0)$. Since $\sigma(v_1)=\sigma(v_2)$, the point $x_f(\theta)$ is   $f$-critical, which  contradicts statement 2.
 	\end{proof}

\begin{lem} \label{div-lam} 
	Let $D=\big((B_v^0, D_v^{\partial})\big)_{v\in V}\in  \partial_0^* {\rm Div}{(\mathbb D)}^S$ be $\mathcal H$-admissible. Let $v\in V^\infty$ and $f\in I_{\Phi}(D)$.
	
	1.   For any $\zeta\in \partial_*U_{f_0,v}$, we have 
	$$\big\{\alpha\in  \Theta_v^0(f_0)\cup \Theta_v^*(f_0);  \  x_f(\alpha)=x_f(\theta_v^+(\zeta))\big\}=\{\theta_v^+(\zeta), \theta_v^-(\zeta)\}.$$
	 
	2.  The set $\lambda_{v}^F(f)$  does not depend on the choice of 
	$f\in I_{\Phi}(D)$.
\end{lem}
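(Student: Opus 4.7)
The plan for Part 1 is to identify the angles of rays landing at $q := x_f(\theta_v^+(\zeta))$ under $f$ with the $f_0$-lamination class of $\zeta$, and then intersect with $\Theta_v^0(f_0)\cup\Theta_v^*(f_0)$. I will first verify that $q$ is pre-repelling with $f$-orbit disjoint from $\mathrm{Crit}(f)$, using Lemma \ref{disjoint angle}(2) (applied to $\theta_v^+(\zeta)\in\Theta_v^*(f_0)$) and the Misiurewicz property from Proposition \ref{h-admissible-characterization}. Then Lemma \ref{rays-number} gives a persistent motion $q(g)$ on a neighborhood $\mathcal N$ of $f$ with constant ray-count on $\mathcal N\cap\mathcal C_d$; inserting $f_n\in\mathcal N\cap\mathcal H$ and invoking $J$-stability of $\mathcal H$ (with center $f_0$) identifies $q(f_n)=h(f_n,\zeta)$ and pins the landing angles to the $f_0$-class $\Theta(\zeta):=\{\theta\in\mathbb Q/\mathbb Z:R_{f_0}(\theta)\text{ lands at }\zeta\}$. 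Transporting back to $f$ via Lemma \ref{stability-e-r}(1) and matching cardinalities yields $\pi_f^{-1}(q)=\Theta(\zeta)$; direct inspection of $\Theta(\zeta)\cap(\Theta_v^0(f_0)\cup\Theta_v^*(f_0))$ using $\zeta\in\partial_*U_{f_0,v}$ then gives $\{\theta_v^+(\zeta),\theta_v^-(\zeta)\}$.

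For Part 2, I fix $f,f'\in I_\Phi(D)$ and a non-trivial pair $(\alpha,\beta)\in\lambda_v^F(f)$ with common landing $q=x_f(\alpha)=x_f(\beta)$ on $\partial U_{f,u}$; by symmetry it suffices to exhibit $(\alpha,\beta)\in\lambda_v^F(f')$. The first step is to rule out that $q$ is wandering: Remark \ref{limb-wandering} together with finite-to-one-ness of the Carath\'eodory map $\pi_f$ would force a minimal $N\geq 1$ with $\tau^N\alpha=\tau^N\beta$ (otherwise an angular sector with trivial limb would consist entirely of rays landing at one point, contradicting finite fibers), and the collapse of two distinct rays at $f^{N-1}(q)$ to a single ray at $f^N(q)$ would make $f^{N-1}(q)$ critical; but critical points of $f$ on $J(f)$ are pre-periodic by Proposition \ref{h-admissible-characterization}, contradicting wandering. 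So $q$ is pre-repelling and $\alpha,\beta$ are rational.

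Then I split according to whether the $f$-orbit of $q$ meets $\mathrm{Crit}(f)$. If not (which holds automatically when $\alpha,\beta\in\Theta_v^*(f_0)$ by Lemma \ref{disjoint angle}(2)), the Part 1 machinery identifies $\pi_f^{-1}(q)$ with the $f_0$-lamination class of a corresponding point $q(f_0)$, a class depending only on $(f_0,D)$; the parallel analysis at $f'$ via Proposition \ref{convergence-repelling} then puts $(\alpha,\beta)\in\lambda_v^F(f')$. If the orbit does meet $\mathrm{Crit}(f)$, I take $k\geq 0$ minimal with $f^k(q)\in\mathrm{Crit}(f)$; reapplying the local-injectivity argument used in the wandering step yields $\{\tau^k\alpha,\tau^k\beta\}=\{\theta_{D,u'}^+(q_0'),\theta_{D,u'}^-(q_0')\}$ for some $u'\in V$ and $q_0'\in\mathrm{supp}(D_{u'}^\partial)$, a pair intrinsic to $D$ via Proposition \ref{h-admissible-characterization}. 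The $\tau^k$-preimage branches at $q$ are then combinatorially determined by $D$, giving the same pair on $f'$.

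The main obstacle I expect is the critical-meeting sub-case in Part 2, where Lemma \ref{rays-number} is unavailable because the $f$-orbit of $q$ hits $\mathrm{Crit}(f)$. Handling it requires descending to the critical step, anchoring to the $D$-determined ray pair at that critical point supplied by Proposition \ref{h-admissible-characterization}, and then verifying that the $\tau^k$-preimage branches at $q$ carry no identifications beyond those forced by $D$ — in particular no spurious new identifications arise on one side but not the other — so that the ray combinatorics at $q$ is the same for $f$ and for $f'$.
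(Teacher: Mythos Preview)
Your Part 1 is correct and is the paper's argument spelled out; the paper compresses it to a one-line citation of Lemma~\ref{disjoint angle}(2) and Lemma~\ref{stability-e-r}.

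For Part 2, your overall architecture matches the paper's --- show the orbit of the landing point must meet ${\rm Crit}(f)$, anchor at the critical step to a $D$-determined pair, then lift back --- but the lifting step you flag as the ``main obstacle'' is a genuine gap, and you do not resolve it. The paper's resolution (its Claim~2 and the inductive case $m\geq 2$) is a concrete limb-count: at $m=1$, list the $\delta(v)$ angles in each of $\tau^{-1}(\theta_{D,\sigma(v)}^\pm(q))\cap\Theta_v(f_0)$ cyclically as $\alpha_1<\beta_1<\cdots<\alpha_{\delta(v)}<\beta_{\delta(v)}$ and prove $(\alpha_k,\beta_j)\in\lambda_v^F(f)$ iff $k=j$. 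The mechanism is that $\mathcal H$-admissibility forces the $\delta(v)$ points of $f^{-1}(\phi_{f,\sigma(v)}(q))\cap K_{f,v}$ to be non-critical and to be distributed exactly one per limb among the limbs rooted at the ${\rm deg}(B_v^0)$ points of $\partial U_{f,v}\cap f^{-1}(\phi_{f,\sigma(v)}(q))$ and at the points $\phi_{f,v}(q')$, $q'\in{\rm supp}(D_v^\partial)$; this pins the pairing independently of $f$. Without $\mathcal H$-admissibility the one-per-limb distribution can fail and the pairing becomes genuinely ambiguous (Figure~\ref{fig: not-unique}), so this structural input is essential and not mere bookkeeping. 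The inductive step $m\geq 2$ repeats the same limb argument with $(\tau\alpha,\tau\beta)$ already determined.

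A smaller structural point: your case split in Part 2 is slightly off. The paper first proves (its Claim~1) that for every nontrivial $(\alpha,\beta)\in\lambda_v^F(f)\setminus\lambda_v(f_0)$ the orbit \emph{must} meet ${\rm Crit}(f)$; your non-critical branch therefore contains only pairs already in $\lambda_v(f_0)$, which are covered by Part~1, so the separate ``Part 1 machinery'' argument there is not doing independent work.
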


\begin{proof} 1. By Lemma \ref{lamination-subset},   the  $\supset$ part is true.
	The converse follows from Lemma \ref{disjoint angle} (2) and Lemma \ref{stability-e-r}.

	2.  Let $(\alpha, \beta)\in  \lambda_{v}^F(f)\setminus \lambda_{v}(f_0)$. \vspace{2pt}
	
		{\it  Claim 1:  The $f$-orbit: $x_f(\alpha)\mapsto f(x_f(\alpha))\mapsto\cdots$ meets an $f$-critical point. }\vspace{2pt}
	 
	  If not, the local  conformality of $f$ at $f^{l}(x_f(\alpha))$ for all $l\geq 0$  implies that 
	$$\tau^l(\alpha)\neq \tau^l(\beta) \ {\rm mod } \ \mathbb Z, \ (\tau^l(\alpha),  \tau^l(\beta))\in  \lambda_{\sigma^l(v)}^F(f),  \  \forall \ l\geq 0.$$  It follows that there is an integer $n_0\geq 0$ such that $$\big\{y_k:=f^{n_0+\ell_{v_0} k}(x_f(\alpha)); k\geq 0\big\}\subset \partial U_{f, v_0}, \ v_0=\sigma^{n_0}(v)\in V_{\rm p},$$
	and the limbs $L_{U_{f, v_0}, y_k}$'s are not singletons. 
	
	If the $f$-orbit of $x_f(\alpha)$ is pre-periodic (hence pre-repelling), then the non-critical assumption and the stability of external rays imply that
	 $x_{f_0}(\alpha)=x_{f_0}(\beta)$, hence $\alpha, \beta\in \Theta_v^*(f_0)$ and 
	 $(\alpha, \beta)\in  \lambda_{v}(f_0)$.  A contradiction.
	
	If the $f$-orbit of $x_f(\alpha)$ is wandering, then by Remark \ref{limb-wandering},   the limb $L_{U_{f, v_0}, y_k}$ is a singleton for all large $k$. This again gives  a contradiction.

	This shows the claim,  which implies   $f^{m}(x_f(\alpha))$ is $f$-critical for some $m\geq 0$.
	In the following, we will prove the statement by induction on $m$.
	
 (1). 	If $m=0$, then $x_f(\alpha)$ is $f$-critical.  By Proposition \ref{h-admissible-characterization}(1), there is  $q\in {\rm supp}(D_v^\partial)$ with $(\alpha,\beta)=(\theta_{D, v}^-(q), \theta_{D, v}^+(q))  \text{ or }  (\theta_{D, v}^+(q), \theta_{D, v}^-(q))$.
 
 (2). 	If $m=1$, then $f(x_f(\alpha))$ is  $f$-critical.  Up to an order of $\alpha$ and $\beta$, we  assume $(\tau(\alpha),\tau(\beta))=(\theta_{D, \sigma(v)}^-(q), \theta_{D, \sigma(v)}^+(q))$ for some  $q\in {\rm supp}(D_{\sigma(v)}^\partial)$. 
 By Fact \ref{property-theta}(1), (\ref{angle-relation}) and the fact $\tau^{-1}(\Theta_{\sigma(v)}^0(f_0))\cap  \Theta_v(f_0) =\Theta_{v}^0(f_0)$,  we have
 \bess &\tau^{-1}(\tau(\alpha))\cap \Theta_v(f_0):=\{\alpha_1, \cdots, \alpha_{\delta(v)}\} \subset \Theta_v^0(f_0),&\\
 &\tau^{-1}(\tau(\beta))\cap  \Theta_v(f_0):=\{\beta_1, \cdots, \beta_{\delta(v)}\}\subset \Theta_v^0(f_0),&
 \eess
here, the angles $\alpha_k, \beta_j$ are numbered so that
 $$\alpha_1<\beta_1<\alpha_2<\beta_2<\cdots<\alpha_{\delta(v)}<  \beta_{\delta(v)}<\alpha_1+1$$
 in positive cyclic order of $\mathbb R/\mathbb Z$.
 
 In the following, we shall determine the pairs $(\alpha_k, \beta_j)\in \lambda_{v}^F(f)$.\vspace{2pt}
 
	{\it Claim 2: $(\alpha_k, \beta_j)\in \lambda_{v}^F(f) \Longleftrightarrow k=j$. }
 
 
 \begin{figure}[h]  
 	\begin{center}
 		\includegraphics[height=5cm]{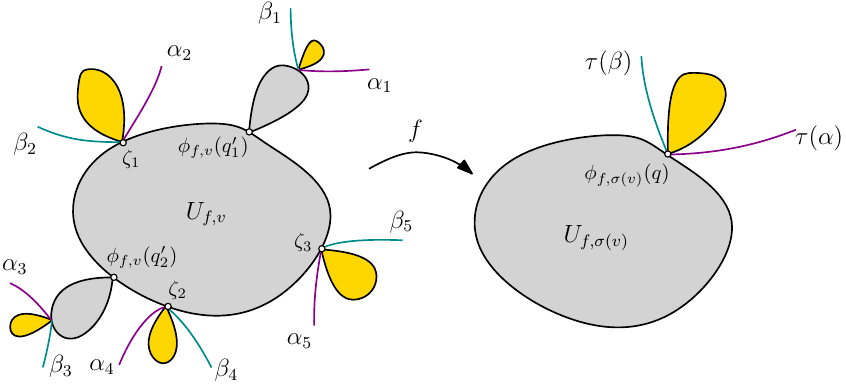}
 	\end{center}
 	\caption{The pairing $(\alpha_k, \beta_j)$ is uniquely determined, when $D$ is $\mathcal H$-admissible}
 	\label{fig: pairing-unique0}
 \end{figure}

 \begin{figure}[h]  
 	\begin{center}
 		\includegraphics[height=5cm]{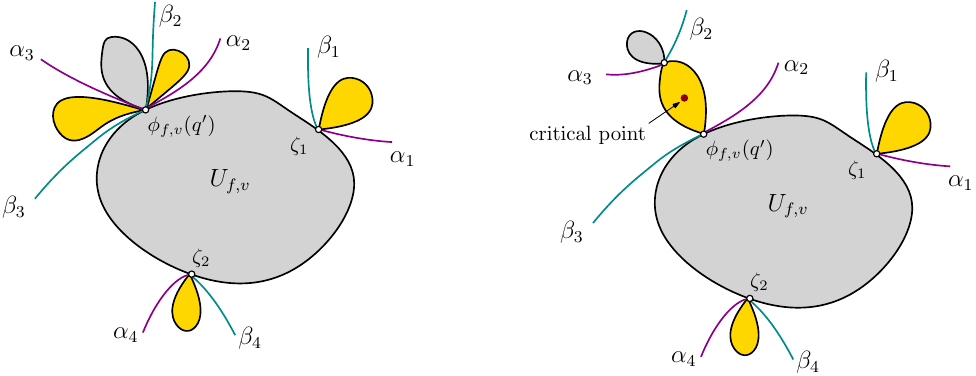}
 	\end{center}
 	\caption{Two possibe pairings of $(\alpha_k, \beta_j)$, when $D$ is not $\mathcal H$-admissible. Hence $\lambda_v^F(f)$  is not uniquely determined. (In this case, $B_v^0(q')=q\in  {\rm supp}(D_{\sigma(v)}^\partial)$ for some $q'\in  {\rm supp}(D_v^\partial)$.)}
 	\label{fig: not-unique}
 \end{figure}

 Note that $f(x_f(\alpha))=\phi_{f, \sigma(v)}(q)$.  By Lemma \ref{disjoint angle} (2), we have 
 $$f^{-1}(\phi_{f, \sigma(v)}(q))\cap K_{f,v}\subset K_{f,v}^{\rm res}:=K_{f,v}-\{x_f(\theta); \theta\in \Theta_v^*(f_0)\}.$$ 
 Moreover, $f^{-1}(\phi_{f, \sigma(v)}(q))\cap K_{f,v}$ consists of    $\delta(v)$ points
  in $ K_{f,v}^{\rm res}$, counting multiplicity.
  Claim 2 is obvious if $v\in V^{\infty}\setminus V$, in which case $\delta(v)=1$.
  
  Assume $v\in V$. 
Since $D$ is $\mathcal H$-admissible, we have two crucial properties:

\begin{itemize}
	\item  $f^{-1}(\phi_{f, \sigma(v)}(q))\cap \partial U_{f,v}$ contains exactly ${\rm deg}(f|_{U_{f,v}})={\rm deg}(B_v^0)$-points $\zeta_1$, $\cdots$, $\zeta_{{\rm deg}(B_v^0)}$, none  is $f$-critical;
	\item   $f^{-1}(\phi_{f, \sigma(v)}(q))\cap (K_{f,v}\setminus \partial U_{f,v})$ contains  exactly one point in each of the limbs $L_{U_{f,v}, \phi_{f, v}(q')}, \  q'\in {\rm supp}(D_v^\partial)$. 
\end{itemize}

These properties imply that  $f^{-1}(\phi_{f, \sigma(v)}(q))\cap K_{f,v}$
consists of exactly $\delta(v)$ points, each  is non-critical, distributed in $\delta(v)$  limbs (see Figure \ref{fig: pairing-unique0}):
$$L_{U_{f,v}, \zeta_k},   L_{U_{f,v}, \phi_{f, v}(q')};  \  1\leq k\leq {\rm deg}(B_v^0),\  q'\in {\rm supp}(D_v^\partial).$$

Since $f^{-1}(\phi_{f, \sigma(v)}(q))\cap K_{f,v}$ is exactly the set of all landing points of  $2\delta$ external rays   $R_f(\alpha_k), R_f(\beta_j)$,  
we conclude that $x_f(\alpha_k)=x_f(\beta_j)  \Longleftrightarrow k=j$.

 It's clear that these pairs  are independent of the choice of $f\in I_{\Phi}(D)$.
 
 (3). 	Assume $m\geq 2$. 
  Assume by induction that $(\alpha_0,\beta_0):=(\tau(\alpha), \tau(\beta))\in \lambda_{\sigma(v)}^F(f)\cap  (\Theta_{\sigma(v)}^0(f_0)\times  \Theta_{\sigma(v)}^0(f_0))$ is uniquely determined by $D$. 
   The angles  $\alpha_0, \beta_0$ are numbered so that $S_f(\alpha_0, \beta_0)$ is disjoint from $U_{f, \sigma(v)}$. Write 
 \bess \tau^{-1}(\alpha_0)\cap  \Theta_v(f_0)=\{\alpha_1, \cdots, \alpha_{\delta(v)}\}, \ \tau^{-1}(\beta_0)\cap  \Theta_v(f_0)=\{\beta_1, \cdots, \beta_{\delta(v)}\},
 \eess
so that $\alpha_1<\beta_1<\alpha_2<\beta_2<\cdots<\alpha_{\delta(v)}<  \beta_{\delta(v)}<\alpha_1+1$.
 Then there exist $r\in \partial U_{f, \sigma(v)}$ and $\widetilde{\alpha},\widetilde{\beta}\in \mathbb R/\mathbb Z$  so that $\partial S_f(\widetilde{\alpha}, \widetilde{\beta})\cap  \partial U_{f, \sigma(v)}=\{r\}$ and $S_f({\alpha}, {\beta})\subset S_f(\widetilde{\alpha}, \widetilde{\beta})$.
 By the same reason as (2),   
   the finite set $f^{-1}(r)\cap K_{f,v}$ is distributed in  $\delta(v)$ disjoint limbs (see Figure \ref{fig: pairing-m}):
 $$L_{U_{f,v}, \zeta},   L_{U_{f,v}, \phi_{f, v}(q')};  \ \zeta\in f^{-1}(r)\cap \partial U_{f, v},\  q'\in {\rm supp}(D_v^\partial).$$
  \begin{figure}[h]  
 	\begin{center}
 		\includegraphics[height=5cm]{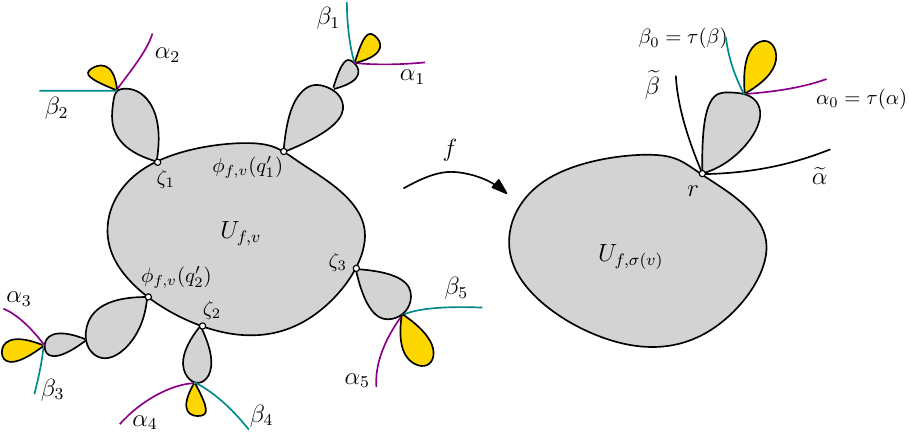}
 	\end{center}
 	\caption{The pairing $(\alpha_k, \beta_j)$ is uniquely determined, assuming $(\tau(\alpha), \tau(\beta))\in \lambda_{\sigma(v)}^F(f)$ is given,  when $m\geq 2$.}
 	\label{fig: pairing-m}
 \end{figure}

	Each   limb contains exactly one point in the set $f^{-1}(x_f(\alpha_0))\cap K_{f,v}$. 
	We conclude  again  that $(\alpha_k, \beta_j)\in \lambda_{v}^F(f)$ if and only if   $k=j$.   These pairs are independent of $f\in I_{\Phi}(D)$.  
 	\end{proof}
 
 \begin{rmk}    If $D$ is not $\mathcal H$-admissible, Lemma \ref{div-lam} is false, see Figure \ref{fig: not-unique}.
 \end{rmk}
 
 \begin{lem} \label{div-lam2} 
 	Let $D=\big((B_v^0, D_v^{\partial})\big)_{v\in V}\in  \partial_0^* {\rm Div}{(\mathbb D)}^S$ be $\mathcal H$-admissible. For any $v\in V^\infty$ and any $f\in I_{\Phi}(D)$,
 	$$\lambda_v(f)= \lambda_{v}(f_0)\cup \lambda_{v}^F(f).$$
 \end{lem}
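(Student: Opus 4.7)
My plan is as follows. The inclusion $\lambda_v(f_0) \cup \lambda_v^F(f) \subset \lambda_v(f)$ follows at once from \eqref{lamination-restricted} and the definition of $\lambda_v^F$, so only the reverse inclusion requires work. For this, I will take $(\alpha, \beta) \in \lambda_v(f)$ with $\alpha \neq \beta$ and set $q := x_f(\alpha) = x_f(\beta)$, then split on whether $q$ lies on the boundary of some bounded Fatou component of $f$: if so, $(\alpha,\beta) \in \lambda_v^F(f)$ by definition; otherwise, I aim to show $(\alpha, \beta) \in \lambda_v(f_0)$.

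In the second case the key preliminary is that the forward $f$-orbit of $q$ avoids $\operatorname{Crit}(f)$. The reasoning: if $f^k(q) \in \operatorname{Crit}(f)$ for some $k \ge 0$, Proposition \ref{h-admissible-characterization}(1) places $f^k(q) \in \bigsqcup_{u \in V} \partial U_{f,u}$, so pulling back by $f^k$ would force $q$ onto the boundary of a bounded component of $f^{-k}(U_{f,u})$, contradicting the case hypothesis. Combining this with $f$ being Misiurewicz (Proposition \ref{h-admissible-characterization}), the fact that $q$ is a cut point of $J(f)$ (two distinct rays land there) forces $q$ to be pre-periodic to a repelling cycle; in particular $\alpha, \beta \in \mathbb Q$.

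To transfer $(\alpha, \beta)$ to $\mathcal H$, I will pick integers $m, n \ge 1$ with $f^{m+n}(q) = f^m(q)$ and use the Implicit Function Theorem to produce a continuous branch $\tilde q : \mathcal N \to \mathbb C$ on a neighborhood $\mathcal N \subset \mathcal P_d$ of $f$ with $\tilde q(f) = q$ and $g^{m+n}(\tilde q(g)) = g^m(\tilde q(g))$; the non-degeneracy making $\tilde q$ unique comes from $q$ being a simple root, which follows from the orbit of $q$ avoiding $\operatorname{Crit}(f)$ together with $f^n$ being repelling at $f^m(q)$. Lemma \ref{stability-e-r}(1) then ensures that $g \mapsto x_g(\alpha)$ and $g \mapsto x_g(\beta)$ are continuous on $\mathcal N \cap \mathcal C_d$, both taking values in the discrete zero set of $g^{m+n}(z) - g^m(z)$; since they coincide with $\tilde q(f) = q$ at $g = f$, after shrinking $\mathcal N$ they must agree with $\tilde q$ identically. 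Selecting $f_n \in \mathcal N \cap \mathcal H$ (non-empty because $f \in \partial \mathcal H$) will then give $x_{f_n}(\alpha) = x_{f_n}(\beta) = \tilde q(f_n)$, whence $(\alpha, \beta) \in \lambda_v(f_n) = \lambda_v(f_0)$ by the constancy of the lamination on the hyperbolic component $\mathcal H$.

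The principal obstacle will be establishing the pre-periodicity of $q$, i.e., excluding wandering cut points of $J(f)$. Thurston's wandering-triangle theorem is unavailable for $\deg f \ge 3$, so the Misiurewicz hypothesis must be invoked in a more refined way: the expansion of $f$ on $J(f) \setminus \mathcal P(f)$ (with $\mathcal P(f)$ the finite post-critical set) together with local connectivity of $J(f)$ implies that the sector bounded by $R_f(\alpha)$ and $R_f(\beta)$ cannot be transported indefinitely along a wandering orbit without violating the expansion estimate, so the orbit of $q$ must eventually close up. Once pre-periodicity of $q$ is secured, the remaining steps of the plan are routine continuity and discreteness arguments.
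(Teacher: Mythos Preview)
Your plan is sound through the first reduction and through the pre-repelling sub-case: if $q = x_f(\alpha)$ lies on no bounded Fatou boundary, then indeed its forward orbit avoids ${\rm Crit}(f)$ (since Julia critical points sit on the $\partial U_{f,u}$'s by Proposition~\ref{h-admissible-characterization}(1)), and once $q$ is known to be pre-repelling your stability/implicit-function argument correctly transfers $(\alpha,\beta)$ into $\lambda_v(f_0)$.

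The gap is precisely where you locate it: excluding the wandering case. Your proposed heuristic (``expansion on $J(f)\setminus \mathcal P(f)$ prevents the sector from being transported indefinitely'') is not a proof, and in fact the implication you would need---Misiurewicz $+$ cut point $+$ orbit avoids critical points $\Rightarrow$ pre-periodic---is \emph{false} in general: for $z\mapsto z^2-2$ the Julia set is $[-2,2]$, every interior point is a cut point, and almost all of them are wandering with orbits avoiding the critical point $0$. Your argument never invokes the constraint $\alpha,\beta\in\Theta_v^0(f_0)\cup\Theta_v^*(f_0)$ built into the definition of $\lambda_v(f)$, so if it worked it would prove this false general statement.

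It is exactly this angle constraint (which forces $q\in K_{f,v}$), together with the presence of attracting Fatou components, that the paper exploits. The paper builds the filtration $K_{f,v}^0=\overline{U_{f,v}}\subset K_{f,v}^1\subset\cdots$ of connected unions of Fatou-component closures, observes that $(\alpha,\beta)\notin\lambda_v^F(f)$ forces $q\in K_{f,v}\setminus K_{f,v}^\infty$, and then threads an infinite chain of adjacent Fatou components $U_0=U_{f,v},U_1,U_2,\dots$ in $K_{f,v}^\infty$ towards $q$. The cut points $q_k\in\partial U_k\cap\partial U_{k+1}$ carry ray-angle pairs $(\alpha_k,\beta_k)\subset\Theta_v^0(f_0)$, whose limits $(\alpha_\infty,\beta_\infty)$ still land at a common point; one then shows this point's orbit must hit a critical point on some $\partial U_{f,v'}$, which produces an angle in $\Theta_{v'}(D)\cap\Theta_{\sigma^m(v)}(f_0)$ and contradicts Lemma~\ref{disjoint angle}(1). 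Without this structural/combinatorial argument---or a genuine substitute that actually uses the $\Theta_v^0(f_0)\cup\Theta_v^*(f_0)$ restriction---the wandering case remains open in your approach.
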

\begin{proof}   
 For $u\in  V$,  set $K_{f,u}^0=\overline{U_{f,u}}$.
	Clearly $f(K_{f,u}^0)=K_{f,\sigma(u)}^0$.
 	Let $K_{f,u}^{1}$ be  the connected component of 
 	$f^{-1}(K_{f,\sigma(u)}^{0})$ containing $K_{f,u}^0$.
 	It's clear that $K_{f,u}^{0}\subsetneq  K_{f,u}^{1}$ if and only if ${\rm supp}(D_u^\partial)\neq \emptyset$, because    $\partial K_{f,u}^{0}\cap {\rm Crit}(f)=\phi_{f,u}({\rm supp}(D_u^\partial))$ (by Proposition \ref{h-admissible-characterization}(1)).
Let  
 	$K_{f,u}^{l}$  be  the connected component of $f^{-1}(K_{f,\sigma(u)}^{l-1})$ containing $K_{f,u}^{l-1}$ inductively for all $l\geq 1$, and let
 	$$K_{f,u}^{\infty}={\bigcup_{l\geq 0}K_{f,u}^{l}}, \  K_{f,u}^{*}=\overline{K_{f,u}^{\infty}}.$$
 	
 	 We first consider the case $v\in V_{\rm p}$. 	
 		If  ${\rm supp}(D_v^\partial), \cdots, {\rm supp}(D_{\sigma^{\ell_v-1}(v)}^\partial)$ are all empty, then $K_{f,v}^{l}=K_{f,v}^{0}$ for all $l\geq 1$. 
 		In this case,  by Mane's Theorem \cite{Mane}, $\partial U_{f,v}$ is an $f^{\ell_v}$-hyperbolic set.  By Proposition \ref{RY}, if the limb $L_{U_{f,v},a}$ is non-trivial for some $a\in \partial U_{f,v}$, then $a$ is $f$-pre-repelling.   By the implicit function theorem, there exist a  neighborhood $\mathcal N$ of $f$, a continuous  map $a: \mathcal N\cup \mathcal H\rightarrow \mathbb C$
 so that $a(g)$ is $g$-pre-repelling for $g\in \mathcal N\cup \mathcal H$, with $a(f)=a$.

 	By Proposition \ref{holo-hyperbolic-set} and  Lemma \ref{stability-e-r},  that $R_f(\theta)$ lands at $a$ implies that 
 $R_g(\theta)$ lands at $a(g)\in \partial U_{g, v}$ for $g\in  \mathcal H$.  Hence $\lambda_{v}(f)
\subset \lambda_{v}(f_0)$.  By (\ref{lamination-restricted}), we have  $\lambda_v(f)= \lambda_{v}(f_0)\cup\lambda_{v}^F(f)$.

 	
 	  Assume one of ${\rm supp}(D_v^\partial)$, $\cdots$, ${\rm supp}(D_{\sigma^{\ell_v-1}(v)}^\partial)$ is non-empty. For $u\in\{v, \cdots, \sigma^{\ell_v-1}(v)\}$,  we have
 	$K_{f,u}^{l}\supsetneq K_{f,u}^{0}$ for  $l\geq \ell_v$.  Hence $K_{f,u}^{\infty}\supsetneq K_{f,u}^{0}$.
 	 By Lemma \ref{disjoint angle},  $\Theta_{u}(D)\subset \Theta_{u}^0(f_0)$.	 By the definition of $K_{f,u}$  (see (\ref{kfv})) and the fact 
 	$\tau^{-1}(\Theta_{\sigma(u)}^0(f_0))\cap \Theta_u(f_0)=\Theta_u^0(f_0)$,   
 	all cut points \footnote{Here $b$ is a cut point of a connected and compact set $K$  if $K-\{b\}$ is disconnected.} of $K_{f,u}^1$ are contained in $K_{f,u}$, hence $K_{f,u}^1\subset K_{f,u}$.
 	By Lemma \ref{disjoint angle}(3), the set $K_{f,u}$ is a component of $f^{-1}(K_{f,\sigma(u)})$, we conclude by induction that $K_{f,u}^l\subset K_{f,u}$ for all $l\geq 1$. Hence $K_{f,u}^{*} \subset K_{f,u}$.

 	
 	\vspace{5pt }
 	{\it  Claim 1: Let $\zeta\in K_{f,u}^{\infty}$
 	be a  cut point.
 	
 	(a). $\zeta$ is pre-critical (i.e., there is $n\geq 0$ so that $f^n(\zeta)$ is $f$-critical);
 	
 	(b). Only two external rays $R_f(\theta_1), R_f(\theta_2)$ land at $\zeta$, and $\theta_1, \theta_2\in \Theta_u^0(f_0)$.}	
 

 Note that by the construction of $K_{f,u}^l$, a cut point appears if and only if it is an iterated preimage of a critical point on $\partial U_{f, v}\cup \cdots\cup \partial U_{f, \sigma^{\ell_v-1}(v)}$.  Hence there is an integer $n\geq 0$ so that $f^{n}(\zeta)$ is a critical point on 
 $\partial U_{f,\sigma^n(u)}$.
 
    Observe   that  for  $k\geq 0$,  $f^k(\zeta)\in K_{f, \sigma^k(u)}$ and  all critical points in $K_{f, \sigma^k(u)}$ are contained in $\overline{U_{f, \sigma^k(u)}}$ (by Proposition \ref{h-admissible-characterization} and Lemma \ref{disjoint angle}). 
 Since $D$ is $\mathcal H$-admissible,  the orbit $\zeta\mapsto  f(\zeta)\mapsto   \cdots$ meets  an $f$-critical point only at the moment $f^n(\zeta)$.  Since  only two  external rays  land at  $f^{n}(\zeta)$ (by Proposition \ref{h-admissible-characterization}), by pulling back via $f^n$,  there are exactly two external rays $R_f(\theta_1), R_f(\theta_2)$ landing at $\zeta$.  
 The fact $\tau^{-n}(\Theta_{\sigma^n(u)}^0(f_0))\cap \Theta_u(f_0)=\Theta_u^0(f_0)$ implies that $\theta_1, \theta_2\in \Theta_u^0(f_0)$.
 This proves Claim 1.
 
 \vspace{5pt}


	We assume by contradiction that
	$(\alpha,\beta)\in \lambda_v(f) \setminus (\lambda_{v}(f_0)\cup \lambda_{v}^F(f))\neq \emptyset$.
 	Consider the location of  $x_f(\alpha)\in K_{f,v}$.  The assumption $(\alpha,\beta)\notin \lambda_{v}(f_0)\cup \lambda_{v}^F(f)$ implies that $x_f(\alpha)\in K_{f,v}\setminus K_{f,v}^\infty$. Hence
there is a sequence of Fatou components $U_0=U_{f, v}, U_1, U_2, \cdots$ in $K_{f,v}^{*} $ with the following properties: 
	\begin{itemize}
		\item   for any $k\geq 0$,  $\partial U_{k}\cap \partial U_{k+1}$ is a singleton $q_k$, which is the landing point of exactly two external rays $R_f(\alpha_k), R_f(\beta_k)$ (by Claim 1(b)).

	 \item the limbs $L_{U_k, q_k}, k\geq 0$ satisfy that 
	 $$L_{U_0, q_0}\supset L_{U_1, q_1}\supset L_{U_2, q_2}\supset\cdots, \  x_f(\alpha)\in \bigcap_{k\geq 0}L_{U_k, q_k}.$$
	 
	 \item the angles $\alpha_k, \beta_k$ 
	 are numbered so that $\alpha_0<\alpha_1<\cdots <  \beta_1<\beta_0$.
	 \end{itemize}
 
  \begin{figure}[h]  
 	\begin{center}
 		\includegraphics[height=4cm]{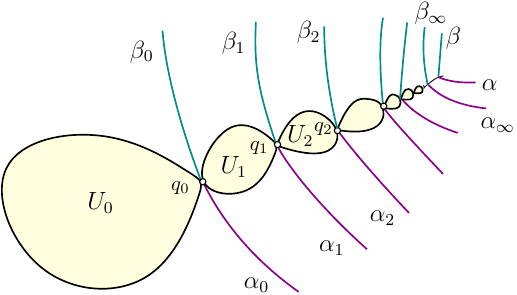}
 	\end{center}
 	\caption{The limbs and external rays.}
 \end{figure}

 
 Let $\alpha_\infty=\lim_{k\rightarrow \infty}\alpha_k$,  $\beta_\infty=\lim_{k\rightarrow \infty}\beta_k$. 
 By Claim 1(b),  $\alpha_k, \beta_k\in \Theta_v^0(f_0)$.  By the  compactness of $\Theta_v^0(f_0)\cup \Theta_v^*(f_0)$, we have $\alpha_\infty, \beta_\infty\in \Theta_v^0(f_0)\cup \Theta_v^*(f_0)$.
 Note that $\alpha\neq \beta$, by changing the order of $\alpha, \beta$ if necessary, we assume  
     $\alpha_\infty\leq \alpha< \beta\leq \beta_\infty$.  
  The  locally connectivity  of $J(f)$ implies  $x_{f}(\alpha_\infty)=x_f(\beta_\infty)$.
	 
	\vspace{5pt }
	{\it  Claim 2:  The orbit of $x_{f}(\alpha_\infty)$ meets a critical point. }
	\vspace{5pt }
	
	Suppose it is not true. We need to treat two cases.
	
	Case 1.  $x_{f}(\alpha_\infty)$ is pre-repelling. By the stability of external rays, we have  
	$(\alpha_\infty, \beta_\infty)\in \lambda_{v}(f_0)$. 	This situation happens only if $\alpha_\infty=\alpha\in  \Theta_v^*(f_0)$, $\beta_\infty=\beta\in  \Theta_v^*(f_0)$. 
	However,	this  contradicts the assumption $(\alpha,\beta)\notin \lambda_{v}(f_0)$.
	
	Case 2. $x_{f}(\alpha_\infty)$ is wandering. In this case, there is an integer $n_0\geq 0$ so that 
	$S_{f}(\tau^l(\alpha_\infty), \tau^l(\beta_\infty))$ contains no critical point of $f$ when $l\geq n_0$. So the angular difference grows exponentially as $k\rightarrow \infty$. A contradiction.
	
	\vspace{5pt}
	
	Claim 2 implies that there exist an integer $m\geq 0$  and $v'\in V_{\rm p}\setminus \{\sigma^m(v)\}$  so that $f^m(x_{f}(\alpha_\infty))=x_f(\tau^m(\alpha_\infty))$ is a critical point on $\partial U_{f, v'}$.  By Proposition \ref{h-admissible-characterization}(1), we have $ \tau^m(\alpha_\infty)\in \Theta_{v'}(D)\cap \tau^m(\Theta_v^0(f_0)\cup \Theta_v^*(f_0))\subset  \Theta_{v'}(D)\cap \Theta_{\sigma^m(v)}(f_0)$. However, this contradicts
	 Lemma \ref{disjoint angle} (1).
	 
	 This finishes the proof of $\lambda_v(f)= \lambda_{v}(f_0)\cup \lambda_{v}^F(f)$ when $v\in V_{\rm p}$.

	 When $v\in  V\setminus V_{\rm p}$,  assume   $\sigma(v)\in V_{\rm p}$. Take  $(\alpha,\beta)\in \lambda_v(f)$.  If $x_f(\alpha)$ is $f$-critical, then up to an order,  $(\alpha,\beta)=(\theta_{D, v}^-(q), \theta_{D, v}^+(q))\in \lambda_{v}^F(f)$; if  $x_f(\alpha)$ is not $f$-critical, then $(\tau(\alpha),\tau(\beta))\in \lambda_{\sigma(v)}(f)= \lambda_{\sigma(v)}(f_0)\cup \lambda_{\sigma(v)}^F(f)$,  implying that $(\alpha,\beta)\in \lambda_{v}(f_0)\cup \lambda_{v}^F(f)$.  Hence $\lambda_v(f)= \lambda_{v}(f_0)\cup \lambda_{v}^F(f)$.  By induction, this equality holds for all $v\in V$. 
	 
	 If $v\in  V^\infty\setminus V$, then  $\tau: \Theta_v(f_0)\rightarrow \Theta_{\sigma(v)}(f_0)$  is one-to-one. By the same reasoning and induction, one has 
	 $\lambda_v(f)= \lambda_{v}(f_0)\cup \lambda_{v}^F(f)$.
	\end{proof}

\begin{lem} \label{h-admissible-lamination} 
 	Let $D=\big((B_v^0, D_v^{\partial})\big)_{v\in V}\in  \partial_0^* {\rm Div}{(\mathbb D)}^S$ be $\mathcal H$-admissible.
 	Then the lamination $\lambda_\mathbb{R}(f)$ does not depend on the choice of 
$f\in I_{\Phi}(D)$.
 \end{lem}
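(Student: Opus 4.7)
The plan is to reduce the invariance of $\lambda_{\mathbb R}(f)$ across $I_\Phi(D)$ to the invariance of the rational lamination $\lambda_{\mathbb Q}(f)$. Since $D$ is $\mathcal H$-admissible, any $f\in I_\Phi(D)$ is Misiurewicz by Proposition \ref{h-admissible-characterization}(1), so $J(f)$ is locally connected and every critical point lying on the boundary of a bounded Fatou component is pre-periodic. Lemma \ref{connection-rational-real} then yields $\lambda_{\mathbb R}(f)=\langle\lambda_{\mathbb Q}(f)\rangle_{\mathbb R}$, so it suffices to show that $\lambda_{\mathbb Q}(f)$ does not depend on $f\in I_\Phi(D)$.

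Fix any pair $(\theta_1,\theta_2)\in\lambda_{\mathbb Q}(f)$ with $\theta_1\neq\theta_2$, and let $z=x_f(\theta_1)=x_f(\theta_2)$. I would split the argument according to whether the forward orbit of $z$ under $f$ meets ${\rm Crit}(f)$. If it does not, then Lemma \ref{stability-e-r}(2) gives a neighborhood $\mathcal U$ of $f$ in $\mathcal P_d$ on which $g\mapsto x_g(\theta_i)$ is continuous and takes values in discrete sets of pre-periodic points; consequently $x_g(\theta_1)=x_g(\theta_2)$ throughout $\mathcal U$. Choosing $g_n\in\mathcal H\cap\mathcal U$ converging to $f$ gives $(\theta_1,\theta_2)\in\lambda_{\mathbb Q}(g_n)=\lambda_{\mathbb Q}(f_0)$, and Lemma \ref{lamination-subset} then transfers this to every $g\in I_\Phi(D)$.

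In the remaining case, let $k\geq 0$ be minimal with $f^k(z)\in{\rm Crit}(f)$. By Proposition \ref{h-admissible-characterization}(1), $f^k(z)=\phi_{f,v}(q)$ for a unique pair $(v,q)$ with $v\in V$, $q\in{\rm supp}(D_v^\partial)$, and exactly two external rays, $R_f(\theta_{D,v}^{\pm}(q))$, land there. A short contradiction based on the minimality of $k$ excludes $\tau^k\theta_1=\tau^k\theta_2$ (otherwise the first index $j\leq k$ with $\tau^j\theta_1=\tau^j\theta_2$ would force $f^{j-1}(z)$ to be critical with $j-1<k$), so $\{\tau^k\theta_1,\tau^k\theta_2\}=\{\theta_{D,v}^+(q),\theta_{D,v}^-(q)\}$, and hence $(\tau^k\theta_1,\tau^k\theta_2)\in\lambda_v^F(f)$. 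Lemma \ref{div-lam}(2) gives $(\tau^k\theta_1,\tau^k\theta_2)\in\lambda_v^F(g)$ for every $g\in I_\Phi(D)$.

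The remaining step is to pull this identification back $k$ iterations to obtain $x_g(\theta_1)=x_g(\theta_2)$, and this will be the main obstacle. The idea is to argue inductively on $j=k,k-1,\dots,0$: each intermediate point $f^j(z)$ lies on $\partial U_{f,v_j}$ for some $v_j\in V^\infty$ with $\sigma^{k-j}(v_j)=v$, and combining Lemma \ref{div-lam2} (the decomposition $\lambda_{v_j}(g)=\lambda_{v_j}(f_0)\cup\lambda_{v_j}^F(g)$) with Lemma \ref{div-lam}(2) yields that the pair $(\tau^j\theta_1,\tau^j\theta_2)\in\lambda_{v_j}(g)$ is independent of $g\in I_\Phi(D)$. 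The technical crux is to verify that the angles $\tau^j\theta_i$ actually lie in $\Theta_{v_j}^0(f_0)\cup\Theta_{v_j}^*(f_0)$ so that the restricted lamination $\lambda_{v_j}$ applies; this requires a careful analysis at multi-ray landing points of $f_0$, exploiting the postcritically finite structure of the centre together with the location of the critical points imposed by the $\mathcal H$-admissibility of $D$.
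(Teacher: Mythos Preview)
Your reduction to the rational lamination does not go through. You claim that since $f$ is Misiurewicz, every critical point on the boundary of a bounded Fatou component is preperiodic, and then invoke Lemma~\ref{connection-rational-real} to get $\lambda_{\mathbb R}(f)=\langle\lambda_{\mathbb Q}(f)\rangle_{\mathbb R}$. But in this paper ``Misiurewicz'' only means no parabolic cycles and $\omega(c)\cap{\rm Crit}(f)=\emptyset$ for each Julia critical point $c$; it does \emph{not} force the Julia critical points to be preperiodic. And Definition~\ref{adm-divisor} of $\mathcal H$-admissibility imposes no preperiodicity on ${\rm supp}(D_v^\partial)$ either: a point $q\in{\rm supp}(D_v^\partial)$ can have a wandering, non-recurrent $\widehat{B}^0_v$-orbit and still satisfy all the required disjointness conditions. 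For such $D$, the map $f\in I_\Phi(D)$ has a wandering critical point on $\partial U_{f,v}$, Lemma~\ref{connection-rational-real} gives $\langle\lambda_{\mathbb Q}(f)\rangle_{\mathbb R}\subsetneq\lambda_{\mathbb R}(f)$, and your argument misses exactly the irrational leaves created by that critical point.

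The paper avoids this by working with $\lambda_{\mathbb R}$ directly. Given $f_1,f_2\in I_\Phi(D)$ and an $f_1$-adjacent pair $(\alpha,\beta)\in\lambda_{\mathbb R}(f_1)\setminus\lambda_{\mathbb R}(f_2)$, one looks at the allowable arc $[x_{f_0}(\alpha),x_{f_0}(\beta)]\subset K(f_0)$; adjacency forces the open arc to miss $J(f_0)$, so both endpoints lie on $\partial U_{f_0,v}$ for a single $v\in V^\infty$, and then $(\alpha,\beta)\in\lambda_v(f_1)=\lambda_v(f_2)\subset\lambda_{\mathbb R}(f_2)$ by Lemmas~\ref{div-lam} and~\ref{div-lam2}. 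No case split, no pullback, and no rationality assumption is needed. Incidentally, even in your Case~2 the ``technical crux'' of putting both $\tau^j\theta_1,\tau^j\theta_2$ into the same $\Theta_{v_j}^0(f_0)\cup\Theta_{v_j}^*(f_0)$ is not automatic; it requires the disjointness $K_{f,v'}\cap K_{f,v''}=\emptyset$ for siblings $v',v''$ from Lemma~\ref{disjoint angle}(3), which you did not identify.
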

\begin{proof} Let $f_1, f_2\in I_{\Phi}(D)$.  By symmetry, it suffices to show  $\lambda_\mathbb{R}(f_1)\subset\lambda_\mathbb{R}(f_2)$.
	
	 If it is not true, take $(\alpha, \beta)\in \lambda_\mathbb{R}(f_1)\setminus \lambda_\mathbb{R}(f_2)$. 
	 Without loss of generality, assume $\alpha, \beta$ are $f_1$-adjacent (i.e. for any $\theta$ with $\alpha<\theta<\beta$, $x_{f_1}(\theta)\neq x_{f_1}(\alpha)$).
	 By Lemma \ref{lamination-subset},  
	 $x_{f_0}(\alpha)\neq x_{f_0}(\beta)$.
	Then there is an allowable arc $[x_{f_0}(\alpha),  x_{f_0}(\beta)] \subset K(f_0)$ connecting $x_{f_0}(\alpha)$ and $ x_{f_0}(\beta)$ (see \cite[Chapter 4]{DH}). 
	 We claim that
  $(x_{f_0}(\alpha),  x_{f_0}(\beta))\cap J(f_0)=\emptyset$. 
  
   \begin{figure}[h]  
  	\begin{center}
  		\includegraphics[height=2.8cm]{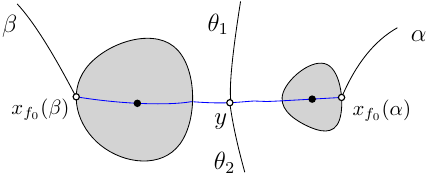}
  	\end{center}
  	\caption{External rays in different homotopy classes }
  \end{figure}
   If not, choose $y\in (x_{f_0}(\alpha),  x_{f_0}(\beta))\cap J(f_0)$. Then there are two  external rays  $R_{f_0}(\theta_1), R_{f_0}(\theta_2)$  corresponding to  two homotopy classes of paths to $y$ relative to  $[x_{f_0}(\alpha),  x_{f_0}(\beta)]$ so that $x_{f_0}(\theta_1)=x_{f_0}(\theta_2)$. 
  By Lemma \ref{lamination-subset},  
  $x_{f_1}(\alpha)=x_{f_1}(\beta)=x_{f_1}(\theta_1)=x_{f_1}(\theta_2)$. This  contradicts the assumption that
  $\alpha, \beta$ are $f_1$-adjacent.
  
The claim implies that $x_{f_0}(\alpha),  x_{f_0}(\beta)\in \partial U_{f_0, v}$ for some $v\in V^\infty$, hence $(\alpha, \beta)\in \lambda_{v}(f_1)$.  By Lemmas \ref{div-lam} and \ref{div-lam2}, we have $\lambda_{v}(f_1)=\lambda_{v}(f_2)$. Therefore $(\alpha, \beta)\in \lambda_\mathbb{R}(f_2)$.
 \end{proof}
 
 \begin{proof}[Proof of Proposition \ref{divisor-singleton}.]
 Let $f_1, f_2\in  I_{\Phi}(D)$. The idea is to construct a topological conjugacy $h$ between $f_1, f_2$.  To this end, we need to define $h$ piece by piece (in each Fatou component of $f_1$) and then glue them together.\vspace{3pt}
 
 {\textbf{Step 1.  Defining $h$ in the unbounded Fatou component.}}\vspace{3pt}

 For $j=1,2$, 
 let $\psi_{j, \infty}: \mathbb C\setminus K(f_j)\rightarrow \mathbb C\setminus \overline{\mathbb D}$ be the B\"ottcher map of $f_j$, normalized so that 
 $\psi_{j, \infty}(z)=z+O(1)$ near $\infty$. Then $h_\infty=\psi_{2, \infty}^{-1}\circ \psi_{1, \infty}:  \mathbb C\setminus K(f_1)\rightarrow  \mathbb C\setminus K(f_2)$ is a conformal conjugacy: $h_\infty\circ f_1=f_2\circ h_\infty$.
 
 By Lemma  \ref{h-admissible-lamination}, we have $ \lambda_{\mathbb R}(f_1)=\lambda_{\mathbb R}(f_2)$.  This means that for any $\alpha, \beta \in \mathbb R/\mathbb Z$,  $x_{f_1}(\alpha)=x_{f_1}(\beta)\Longleftrightarrow x_{f_2}(\alpha)=x_{f_2}(\beta)$.
   It follows that $h_\infty$ extends to a homeomorphism $h_\infty:  (\mathbb C\setminus K(f_1)) \cup J(f_1)\rightarrow  (\mathbb C\setminus K(f_2))\cup J(f_2)$ by defining $h_\infty(x_{f_1}(\theta))=x_{f_2}(\theta)$
 for all $\theta \in \mathbb R/\mathbb Z$.  It keeps the conjugacy $h_\infty\circ f_1|_{J(f_1)}=f_2\circ h_\infty|_{J(f_1)}$.\vspace{3pt}
 
 {\textbf{Step 2.  Defining $h$ in the Fatou components indexed by $V$.}} \vspace{3pt}
 
 First, the  boundary marking $\nu_{f_0}: V\rightarrow \mathbb C$ (defined in Section \ref{phc}) of $f_0$ extends to $\nu_{f_0}: V^\infty\rightarrow \mathbb C$ so that $\nu_{f_0}(v)\in \partial U_{f_0, v}$ and $f_0(\nu_{f_0}(v))=\nu_{f_0}(\sigma(v))$. 
Choose $(\theta_v)_{v\in V^\infty}$   so that $x_{f_0}(\theta_v)=\nu_{f_0}(v)$ and $\tau (\theta_v)=\theta_{\sigma(v)}$ for $v\in V^\infty$ (note that $\theta_v$ may be not unique for a  given $v\in V^\infty$, we take one of them). 
 The boundary marking $\nu_g: V^\infty\rightarrow \mathbb C$  with $g\circ \nu_g=\nu_g\circ \sigma$ can be defined in a continuous way  for $g\in \mathcal H$. It satisfies $\nu_g(v)=x_g(\theta_v)$ for  $v\in V^\infty$.

 Since each $f\in I_{\Phi}(D)$ is Misiurewicz (by Proposition \ref{h-admissible-characterization}), by Lemma \ref{stability-e-r},
     for  any sequence $\{g_n\}_{n\geq 1}\subset \mathcal H$ with $\lim_n g_n=f$, we have 
     $\lim_n x_{g_n}(\theta_v)= x_{f}(\theta_v)\in \partial U_{f,v}$ for $v\in V$. 
     If  we further require $\lim_n \Phi^{-1}(g_n)=D$, then
      Propositions \ref{convergence-repelling}, \ref{combinatorial-property0} and   Remark \ref{fixed-1} imply
     $\lim_n\phi_{g_n, v}(1)= \phi_{f, v}(1)$.  Since $\phi_{g_n, v}(1)=\nu_{g_n}(v)=x_{g_n}(\theta_v)$ for all $n$, we have 
     $$ x_{f}(\theta_v)=\phi_{f, v}(1), \ v\in V.$$

Define $h_v: U_{f_1, v}\rightarrow U_{f_2, v}$  by 
 $h_v=\psi_{f_2, v}^{-1}\circ \psi_{f_1, v}$.   It extends to a homeomorphism $h_v:  \overline{U_{f_1, v}}\rightarrow \overline{U_{f_2, v}}$ (by Proposition \ref{RY} and Caratheodory Theorem).
 satisfying that  $h_{\sigma(v)}\circ f_1=f_2\circ h_v$ and $h_v(x_{f_1}(\theta_v))=x_{f_2}(\theta_v)$. 
 
 In the following, we show that $h_v|_{\partial U_{f_1,v}}=h_\infty|_{\partial U_{f_1,v}}$ for $v\in V$. By taking preimages, it suffices to consider the case $v\in V_{\rm p}$. In this case,
 \begin{equation} \label{conj-normaliztion} \phi\circ f_1^{\ell_v}|_{\partial U_{f_1,v}}=f_2^{\ell_v}\circ \phi,  \  \phi(x_{f_1}(\theta_v))=x_{f_2}(\theta_v),  
 	\end{equation}
 for $\phi\in \{h_v|_{\partial U_{f_1,v}}, h_\infty|_{\partial U_{f_1,v}}\}$.
 
By  (\ref{conj-normaliztion}),
 $\phi$ sends $X_1:=f_1^{\ell_v}|^{-1}_{\partial U_{f_1,v}}(x_{f_1}(\theta_v))$  to $Y_1:=f_2^{\ell_v}|^{-1}_{\partial U_{f_2,v}}(x_{f_2}(\theta_v))$.  Since $\phi$ is orientation preserving,  the restriction $\phi|_{X_1}: X_1\rightarrow Y_1$ is uniquely determined.  By induction,  $\phi$ sends $X_k:=f_1^{\ell_v}|^{-k}_{\partial U_{f_1,v}}(x_{f_1}(\theta_v))$  to $Y_k:=f_2^{\ell_v}|^{-k}_{\partial U_{f_2,v}}(x_{f_2}(\theta_v))$, and $\phi|_{X_k}$ is   uniquely  determined,  for all $k\geq 1$.  By   density  
 $\overline{\cup_{k\geq 1} X_k}=\partial U_{f_1,v}$, we get a unique $\phi$. Hence   $h_v|_{\partial U_{f_1,v}}=h_\infty|_{\partial U_{f_1,v}}$.\vspace{3pt}
 

{\textbf{Step 3.  Defining $h$ in other pre-periodic Fatou components.}}\vspace{3pt}

 For $f\in \{f_1, f_2\}$, recall that $\mathcal F_b(f)$ is the set of all bounded Fatou components of $f$.  Let
 $\mathcal F_b^*(f)=\mathcal F_b(f)\setminus\{U_{f,v};v\in V\}$. For each $v\in V$, let
  \bess 
  &\Theta_v(f)=\{\theta\in   \mathbb R/\mathbb Z; \ x_f(\theta)\in  \partial U_{f,v}\},&\\
   &\Theta_v^0(f)=\{\theta\in   \Theta_v(f);    L_{U_{f,v}, x_f(\theta)}=\{ x_f(\theta)\}\}. &
 \eess 
Note that $\tau(\Theta_v(f))=\Theta_{\sigma(v)}(f), \tau(\Theta_v^0(f))=\Theta_{\sigma(v)}^0(f)$.
 By Proposition \ref{combinatorial-property-2}, $\Theta_v^0(f_1)=\Theta_v^0(f_2)$ for all $v\in V$.

 
 Suppose $V_{\rm p}$ consists of $m$ $\sigma$-cycles. We choose a representative in each cycle and denote them by $v_1, \cdots, v_m$. For   $1\leq j\leq m$, choose $\theta_j\in \Theta_{v_j}^0(f)$ so that $x_{f}(\theta_j)$ avoids the grand orbit of all $f$-critical points.
 Set $\theta(v_j)=\theta_j$ and $\theta(\sigma^k(v_j))=\tau^k(\theta_j)$ for   $1\leq k<\ell_{v_j}$.  If $v\in V_{\rm np}$, we define $\theta(v)$ inductively so that $\tau(\theta(v))=\theta(\sigma(v))$.
   In this way,  each $v\in V$ is assigned an angle $\theta(v)\in \Theta_v^0(f)$,  so that $x_{f}(\theta(v))$ avoids all $f$-critical orbits.
   Let 
 $$A=\tau^{-1}\big\{\theta(v); v\in V\big\}  \setminus\bigcup_{u\in V} \Theta_u(f), \ \ T=\bigcup_{k\geq 0}\tau^{-k}(A).$$
 
 {\it Claim:  There is a bijection $\theta_f: \mathcal F_b^*(f)\rightarrow T$.}
 
 To prove the claim, we fix $v\in V$.  Since  $\theta(v)\in \Theta_v^0(f)$  and $x_{f}(\theta(v))$ avoids all $f$-critical orbits, for any $\alpha\in \tau^{-1}(\theta(v)) \setminus\bigcup_{u\in V} \Theta_u(f)$, there is a unique 
 connected component $V$ of $f^{-1}(U_{f, v})\setminus \bigcup_{u\in V} U_{f, u}$ so that 
  $x_f(\alpha)\in \partial V$; conversely, each  component $V$ of $f^{-1}(U_{f, v})\setminus \bigcup_{u\in V} U_{f, u}$ maps to $U_{f, v}$ conformally, hence there is a unique point $y\in \partial V\cap f^{-1}(x_f(\theta(v)))$. 
  By the choice of $\theta(v)$, we see that $y$ is not $f$-critical,  hence  $y$  is the landing point of a unique external ray $R_f(\alpha)$ with $\alpha\in \tau^{-1}(\theta(v))\setminus \bigcup_{u\in V} \Theta_u(f)$.  This gives a bijection between the components of $f^{-1}(U_{f, v})\setminus \bigcup_{u\in V} U_{f, u}$  and the set $\tau^{-1}(\theta(v))\setminus \bigcup_{u\in V} \Theta_u(f)$. The claim  follows by induction.

 
Let  $U\in \mathcal F_b^*(f_1)$, and let $s(U)\geq 1$  be the minimal integer so that $f_1^{s(U)}(U)\in \{U_{f,v};v\in V\}$.  Write  $f_1^{s(U)}(U)=U_{f_1, v(U)}$ for some $v(U)\in V$. 
 Then   $\tau^{s(U)-1}(\theta_{f_1}(U))\in A$ and $\tau^{s(U)}(\theta_{f_1}(U))=\theta(v(U))$.
 Set $U'=\theta_{f_2}^{-1}\circ\theta_{f_1}(U)\in  \mathcal F_b^*(f_2)$, then $f_2^{s(U)}: U'\rightarrow U_{f_2, v(U)}$ is a conformal map.  Define  a conformal map $h_U: U\rightarrow U'$  so that the
  diagram commutes
  $$\xymatrix{ 
 	U \ar[r]^{f_1^{s(U)}} \ar[d]^{h_{U}}& U_{f_1, v(U)} \ar[d]^{h_{v(U)}}\\
 	U'\ar[r]^{f_2^{s(U)}} & U_{f_2, v(U)}
 }$$
The map $h_U$ extends to a homeomorphism $h_U: \overline{U}\rightarrow \overline{U'}$, satisfying that $h_U(x_{f_1}(\theta_{f_1}(U)))=x_{f_2}(\theta_{f_1}(U))$.  
 By Step 3,  $h_{v(U)}|_{\partial U_{f_1, v(U)}}=h_\infty|_{\partial U_{f_1, v(U)}}$.  Hence
  $$h_U|_{\partial U}=f_2^{s(U)}|_{U'} ^{-1}\circ h_\infty|_{\partial U_{f_1, v(U)}} \circ f_1^{s(U)}|_{\partial U}=h_\infty|_{\partial U}.$$
 
 {\textbf{Step 4.   Showing that $h={\rm id}$.}}\vspace{3pt}
 
By gluing the maps  in $\{h_{\infty}, h_{v}, h_U;  v\in V, \text{ and } U\in  \mathcal F_b^*(f_1)\}$, 
 we get a homeomorphism $h: \mathbb C\rightarrow \mathbb C$.
It is a topological conjugacy between $f_1$ and $f_2$, conformal in  Fatou set $F(f_1)$.  Since the Julia set $J(f_1)$ is conformally removable (see \cite[Corollary 1]{JS}), $h$ is  conformal.   The normalization  $h'(\infty)=1$ implies $h(z)=z+b$. Since  $f_1$ and $f_2$ are centered, we have $b=0$. Thus $f_1=f_2$, and $I_{\Phi}(D)$ is a singleton.
\end{proof}

\begin{rmk} By contrast, if $D\in  \partial_0^* {\rm Div}{(\mathbb D)}^S$ is not $\mathcal H$-admissible, it can happen that $I_{\Phi}(D)$ is not a singleton.  The reason is that
	  Lemma \ref{div-lam}(2)  does not hold without $\mathcal H$-admissibility.
It is a fundamental problem to give a description of  $I_{\Phi}(D)$ in general cases. 
\end{rmk}

	

\section{Boundary Extension Theorem} \label{bet}

In this section, we shall prove the boundary extension theorem: Theorem \ref{thm:parameterization}.   Recall that $\mathcal B^S$ is identified as ${\rm Div}(\mathbb D)^S$, and $\mathcal A$ is the set of all $\mathcal H$-admissible divisors.

By Propostion \ref{divisor-singleton},  
the map $\Phi: {\rm Div}(\mathbb D)^S\rightarrow \mathcal H$ has a continuous extension
 $$\overline{\Phi}: {\rm Div}(\mathbb D)^S\cup \mathcal A\to \mathcal H\cup \partial_\mathcal{A}\mathcal H,$$
  where  $\overline{\Phi}(D)$ is defined as the single map in $I_{\Phi}(D)$ for $D\in \mathcal A$, and  $\partial_\mathcal{A}\mathcal H:=\overline{\Phi}(\mathcal A)$.  To prove Theorem \ref{thm:parameterization}, it is equivalent  to show that $\overline{\Phi}$ is  injective and $\overline{\Phi}^{-1}$ is continuous.   This relies on the   non self-intersection property of $\partial \mathcal H$ along $\partial_\mathcal{A}\mathcal H$ (Proposition \ref{map-imp}) and  Lemma \ref{neigh-continuous}.

For any $g\in \partial \mathcal H$, it is reasonable to define the following set
$${\rm Div}[g]:=\Big\{E\in \partial {\rm Div}{(\mathbb D)}^S; g\in I_{\Phi}(E) \Big\}.$$

From the geometric viewpoint, the set ${\rm Div}[g]$ gives a characterization of  self-intersection  of $\partial \mathcal H$ at $g$. 
If ${\rm Div}[g]$ is a singleton, then $\partial \mathcal H$ has no  self-intersection at $g$; otherwise,  $\partial \mathcal H$ has   self-intersection at $g$. See Figure \ref{fig:toy-model-si}.

    \begin{figure}[h]   
	\begin{center}
		\includegraphics[height=4cm]{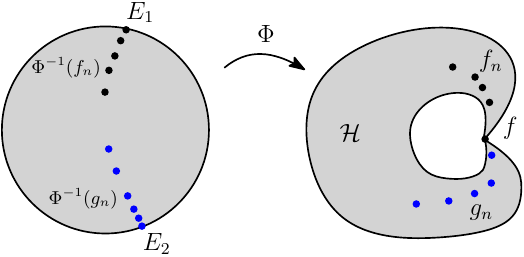}
	\end{center}
	\caption{In this toy model, $\partial \mathcal H$ has   self-intersection at $f$.  There are two  ways approaching $f$, and ${\rm Div}[f]=\{E_1, E_2\}$.}
	\label{fig:toy-model-si}
\end{figure}




\begin{pro}\label{map-imp}
	Let $D=\big((B_v^0, D_v^{\partial})\big)_{v\in V}\in  \partial_0^* {\rm Div}{(\mathbb D)}^S$ be $\mathcal H$-admissible, with $I_\Phi(D)=\{f\}$.  
	There is a neighborhood $\mathcal N$ of $f$ such that  
	for any
	$g\in \mathcal N \cap \partial\mathcal H$,  the  set 	${\rm Div}[g]$
consists of a singleton $E_g$, which is  in $\partial^*_0 {\rm Div}{(\mathbb D)}^S$.
\end{pro}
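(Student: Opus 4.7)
The plan is to attach to each $g \in \mathcal{N} \cap \partial\mathcal{H}$ an intrinsic divisor $E_g$, read off from the dynamics of $g$ itself (with no approximating sequence in the construction), and then to reduce ${\rm Div}[g] = \{E_g\}$ to a continuity statement for $g \mapsto E_g$. I would start by using Proposition \ref{h-admissible-characterization} to control $f = \overline{\Phi}(D)$: it is Misiurewicz, with every boundary critical point simple, every attracting cycle $v(f)$ strictly attracting, and every boundary marking $\nu_f(v) = \phi_{f,v}(1)$ pre-repelling with orbit disjoint from the critical set. Classical Misiurewicz stability and the implicit function theorem then let me shrink $\mathcal{N}$ so that on $\mathcal{N}$: (i) each critical point of $f$, each attracting point $v(f)$, and each boundary marking $\nu_f(v)$ admits a unique continuous continuation $c(g), v(g), \nu_g(v)$, with $\nu_g(v)$ remaining pre-repelling and never critical; (ii) $U_{g,v}$ (the attracting Fatou component of $g$ containing $v(g)$) varies in the Carathéodory topology based at $v(f)$ (Lemma \ref{carathodory-convergence}, Remark \ref{pre-c-t}) and has Jordan boundary (Proposition \ref{RY}); (iii) every \emph{interior} critical point of $f$ continues strictly inside the corresponding $U_{g,v}$.

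For $g \in \mathcal{N}$ I would take the unique Riemann map $\psi_{g,v} : \overline{U_{g,v}} \to \overline{\mathbb{D}}$ normalized by $\psi_{g,v}(v(g)) = 0$, $\psi_{g,v}(\nu_g(v)) = 1$, set $B_{g,v} := \psi_{g,\sigma(v)} \circ g \circ \psi_{g,v}^{-1}$ (a Blaschke product of degree $\delta(v)$ fixing $0$ and $1$), decompose its free zero divisor (Theorems \ref{z-p}--\ref{zakeri-exten}) as $E_{g,v}^0 + E_{g,v}^\partial$ with $E_{g,v}^0 \in {\rm Div}(\mathbb{D})$ and $E_{g,v}^\partial \in {\rm Div}(\partial\mathbb{D})$, and set $E_g := \big((B_{g,v}^0, E_{g,v}^\partial)\big)_{v \in V}$. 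This uses no approximating sequence and depends only on $g$. The next step is to verify that $g \mapsto E_g$ is continuous on $\mathcal{N}$: when $g_n \to g$ in $\mathcal{N}$, the Carathéodory convergence $(U_{g_n,v}, v(g_n)) \to (U_{g,v}, v(g))$ together with the continuity of $\nu_g(v)$ forces $\psi_{g_n,v} \to \psi_{g,v}$ locally uniformly on $\mathbb{D}$, hence $B_{g_n,v} \to B_{g,v}$ locally uniformly, and the homeomorphism between Blaschke products and divisors (Theorems \ref{z-p}--\ref{zakeri-exten}) then gives $E_{g_n} \to E_g$ in ${\rm Div}(\overline{\mathbb{D}})^S$. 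For $g \in \mathcal{H} \cap \mathcal{N}$ one has $E_g = \Phi^{-1}(g)$ directly from the Milnor parameterization of Section \ref{phc}. Hence any $E \in {\rm Div}[g]$ witnessed by $h_n \in \mathcal{H}$ with $h_n \to g$ and $\Phi^{-1}(h_n) \to E$ satisfies $E = \lim \Phi^{-1}(h_n) = \lim E_{h_n} = E_g$; and any sequence $g_n \in \mathcal{H}$ with $g_n \to g$ yields $E_g \in {\rm Div}[g]$. Thus ${\rm Div}[g] = \{E_g\}$.

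For membership in $\partial_0^* {\rm Div}(\mathbb{D})^S$: since $\Phi$ is a bijection onto $\mathcal{H}$ and $g \notin \mathcal{H}$, $E_g$ must lie in $\partial {\rm Div}(\mathbb{D})^S$. By (iii) every interior critical point of $f$ continues inside $U_{g,v}$, so boundary critical points of $g$ counted by $E_{g,v}^\partial$ can only come from the (simple) boundary critical points of $f$; this gives $d_{g,v,2} \le d_{D,v,2}$ and thus $d_{g,v,1} \ge d_{D,v,1}$, so $\sum_{k=0}^{\ell_v-1} d_{g,\sigma^k(v),1} \ge \sum_{k=0}^{\ell_v-1} d_{D,\sigma^k(v),1} \ge 1$ for $v \in V_{\rm p}$ by $D \in \partial_0^*$, placing $E_g$ in $\partial_0 {\rm Div}(\mathbb{D})^S$; and $1 \notin {\rm supp}(E_{g,v}^\partial)$ is exactly $\nu_g(v) \notin {\rm Crit}(g)$, which is (i). The hard part will be the Carathéodory continuity at non-hyperbolic $g \in \partial\mathcal{H}$: although $\partial U_{g,v}$ is in general \emph{not} Hausdorff-continuous in $g$ (Remark \ref{rem:2.4}(1)), convergence of the open domains in the Carathéodory sense suffices for the convergence of the normalized Riemann maps, and this is what drives the whole argument.
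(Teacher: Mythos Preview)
There are two substantive gaps. First, your intrinsic construction of $E_g$ for $g\in\mathcal N\cap\partial\mathcal H$ does not yield an element of ${\rm Div}(\overline{\mathbb D})^S$. When $g\in\partial\mathcal H$, some critical points sit on $\partial U_{g,v}$, so $\deg(g|_{U_{g,v}})<\delta(v)$ and your $B_{g,v}=\psi_{g,\sigma(v)}\circ g\circ\psi_{g,v}^{-1}$ is a Blaschke product of degree strictly less than $\delta(v)$. Its free zero divisor therefore has degree $<\delta(v)-1$ and lies entirely in $\mathbb D$, so the ``decomposition'' is trivial with $E_{g,v}^\partial=0$. The boundary part of the true divisor (see Remark~\ref{divisor-def-g}) records how the critical points of approximating $B_{h_n,v}$ accumulate on $\partial\mathbb D$ and involves the limb structure of $K(g)$ relative to the combinatorics of $\mathcal H$; this information is simply absent from the lower-degree model $B_{g,v}$.

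Second, and more fundamentally, the continuity step ``Carath\'eodory convergence of $(U_{g_n,v},v(g_n))$ together with continuity of $\nu_g(v)$ forces $\psi_{g_n,v}\to\psi_{g,v}$'' is the heart of the matter, not a corollary of the cited facts. Kernel convergence pins down the Riemann map only up to rotation; fixing the rotation via the boundary condition $\psi_{g_n,v}(\nu_{g_n}(v))=1$ would require boundary convergence of the conformal maps, which Carath\'eodory does not give (recall $\partial U_{g_n,v}$ need not Hausdorff-converge to $\partial U_{g,v}$). Relatedly, your assertion ``$1\notin{\rm supp}(E_{g,v}^\partial)$ is exactly $\nu_g(v)\notin{\rm Crit}(g)$'' is too quick: $E_{g,v}^\partial$ records where $B_{h_n,v}$-critical points accumulate on $\partial\mathbb D$, and ruling out accumulation at $1$ is precisely what needs to be proved. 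The paper handles this in the opposite order: Step~2 uses an extremal-length argument---a continuous family of cross-cuts in $U_{g,v}$ separates $\nu_g(v)$ from all near-degenerate critical points, and a modulus bound (via reflection and Teichm\"uller's theorem) then keeps $\psi_{h_n,v}(c)$ uniformly away from $1$---to first establish $E\in\partial_0^*{\rm Div}(\mathbb D)^S$. Only then do Propositions~\ref{convergence-repelling} and~\ref{combinatorial-property0} (which \emph{assume} $1\notin{\rm supp}(E_v^\partial)$) supply convergence of the correctly normalized Riemann maps, from which $E=D$ follows.
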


\begin{proof}
	
		\vspace{5pt}
	{\textbf{Step 0.  Some   notations and properties for the pair $(f, D)$.}}
	\vspace{5pt}
	
	Recall that each map $g\in \mathcal H$ is assigned an internal marking 	${\boldsymbol v}(g)=(v(g))_{v\in V}$ and  a boundary marking $\nu_g=(\nu_g(v))_{v\in V}$.

  Take any sequence $\{f_n\}_{n\geq 1}\subset \mathcal H$ with $f_n\rightarrow f$ and $\Phi^{-1}(f_n)\rightarrow D$ as $n\rightarrow \infty$. 
 Since $f\in \partial_{\rm reg}\mathcal H$, the point $v_D(f):=\lim_{n} v(f_n)$ is  $f$-pre-attracting  for each $v\in V$.  By Lemma \ref{carathodory-convergence}, we have the kernel convergence 
 $(U_{f_n, v}, v(f_n))\rightarrow (U_{f, v_D}, v_D(f)),  \  \forall v\in V,$
 where $U_{f, v_D}:=U_f(v_D(f))$ is the Fatou component of $f$ containing  $v_D(f)$.

	Here, for the objects (points, sets, maps, etc.) associated with $f$,  we emphasize their dependence   
	 on $D$, 
	 and mark a 
	  subscript $D$ to them.
	  The reason is that if ${\rm Div}[f]$ is not a singleton, then there are at least two ways approaching $f$ (i.e. there is  $E\in {\rm Div}[f]\setminus\{D\}$ and a sequence $\{g_n\}_{n\geq 1}\subset \mathcal H$ with $g_n\rightarrow f$  and $\Phi^{-1}(g_n)\rightarrow E$ as $n\rightarrow \infty$, see Figure \ref{fig:toy-model-si}). 
	  This means that the notations  actually depend on the pair $(f,D)$.

	  We summarize what have been proven. 
		By Proposition \ref{combinatorial-property0}: 
	the  conformal maps $\phi_{f_n, v}: \mathbb{D}\rightarrow U_{f_n, v}$  converges locally and uniformly to the conformal map $\phi_{f, v_D}: \mathbb{D}\rightarrow U_{f, v_D}$.
	By Caratheodory Theorem and Proposition \ref{RY},  these maps $\phi_{f_n, v},\phi_{f, v_D}$  extend to homeomorphisms between the closures of their domains and ranges.
	 By Proposition \ref{h-admissible-characterization},  $f$ is Misiurewicz, 
	  implying that
	
	\begin{itemize}
		\item the convergence of the boundary marking (by Proposition \ref{convergence-repelling} and Remark  \ref{fixed-1}): for any $v\in V$,
		\begin{equation}
			\label{convergence-landing-point}
		 \nu_{f_n}(v)=\phi_{f_n, v}(1)\rightarrow \phi_{f, v_{D}}(1), \  \text{ as } n\rightarrow \infty. 
		\end{equation}
		
		\item  	the   Hausdorff  convergence of external rays:
		 \begin{equation}
				\label{convergence-external-ray}
			\overline{R_{f_n}(\theta)} \rightarrow   \overline{R_{f}(\theta)}, \  \text{ as } n\rightarrow \infty,
			\end{equation}
		for any rational  angle $\theta\in \mathbb Q/\mathbb Z$ (see Lemma \ref{stability-e-r}).

	\item $ \phi_{f, v_{D}}(1)=x_f(\theta_v)$, where $\theta_v\in \mathbb Q/\mathbb Z$ is chosen  so that $R_{f_0}(\theta_v)$ lands at $\nu_{f_0}(v)$ (by   (\ref{convergence-landing-point}) and (\ref{convergence-external-ray}), since $x_{f_n}(\theta_v)=\nu_{f_n}(v)$ for all $n$).
		\end{itemize}

	
	
	
	Let $v\in V$.  Choose  $\alpha, \beta \in \mathbb Q/\mathbb Z$  so that  $x_f(\alpha), x_f(\theta_{v}),x_f(\beta)$ are different $f$-pre-repelling   points  on $\partial U_{f, v_D}$,  in positive cyclic order.   
	We may further require that $x_f(\alpha),  x_f(\beta)\in \partial_0 U_{f, v_D}$ are sufficiently close to $x_f(\theta_v)$ so that ${\rm Crit}(f)\cap \partial U_{f, v_D}$ and  $x_f(\theta_{v})$
	are in different components of $\partial U_{f, v_D}\setminus \{x_f(\alpha),  x_f(\beta)\}$, and the $f$-orbits of $x_f(\alpha),  x_f(\beta)$ meet no critical points.
	
	In the following, we first give a neighborhood $\mathcal N$ of $f$ with some required properties (Step 1),
	 then show  that ${\rm Div}[f]\subset \partial^*_0 {\rm Div}{(\mathbb D)}^S$ in Step 2 (the key step); in  Step  3, we show ${\rm Div}[f]=\{D\}$;  in Step 4, we complete the proof by showing that  for each map  $g\in \mathcal N\cap \partial{\mathcal H}$,    ${\rm Div}[g]$ is a singleton in $\partial^*_0 {\rm Div}{(\mathbb D)}^S$.
	
	\vspace{5pt}
	{\textbf{Step 1.  A neighborhood $\mathcal N$ of $f$ with prescribed properties.}}
	\vspace{5pt}
	
There exist a neighborhood $\mathcal N$ of $f$, 
	satisfying that for any $v\in V$,
	
	
\textbf{(a)}.   The landing points $ x_{g}(\alpha), x_{g}(\theta_v), x_{g}(\beta)$ are continuous with respect to $g\in \mathcal  H\cup (\mathcal N \cap \overline{\mathcal H})$.
	In particular, $\{x_{f_n}(\alpha),  x_{f_n}(\theta_v), x_{f_n}(\beta)\}\subset \partial U_{f_n,v}$  for large $n$.
	Since $\{f_n\}_n\subset \mathcal H$, by J-stability (see \cite{MSS}), 
  $\{x_{g}(\alpha),  x_{g}(\theta_v), x_{g}(\beta)\}\subset \partial U_{g,v}$ for all $g\in \mathcal H$.
	
\textbf{(b)}.  There is a Jordan disk $W$ so that  $\overline{W} \subset U_{g,v}$ for all $g\in \mathcal N \cap  {\mathcal H}$ (by  Lemma \ref{carathodory-convergence}), and 
$W$ contains all critical points of $f|_{U_{f, v_D}}$.

\textbf{(c)}.  Let $g\in \mathcal N\cap {\mathcal H}$.   For  $t\in \{\alpha, \beta\}$,  there is an arc $\gamma_g(t)$ starting at $a(t)\in \partial W$ and converging to $x_g(t)$;   there is also an arc $\gamma_g(\theta_v)$ starting at $v(g)$ and converging to $x_g(\theta_v)$ (see Proposition \ref{holo-hyperbolic-set}). See Figure \ref{fig: notations-nis}.  These arcs are  required to contain their endpoints.  Further, for any $t\in \{\alpha, \beta,\theta_v\}$,  $g\mapsto \gamma_g(t)$ is  Hausdorff    continuous in $g\in (\mathcal N\cap  {\mathcal H})\cup \{f\}$.

 \begin{figure}[h]   
 	\begin{center}
 		\includegraphics[height=5cm]{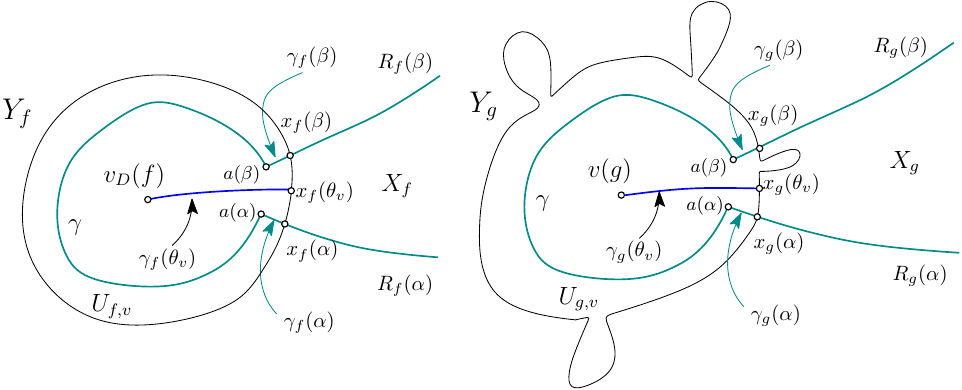}
 	\end{center}
 	\caption{Curves and sets in the dynamical plane of $f$ and a nearby map $g\in \mathcal H$.} 	
 	\label{fig: notations-nis}
 \end{figure}

Let $\gamma$ be the component of $\partial W\setminus\{a(\alpha), a(\beta)\}$, disjoint from $\gamma_g(\theta_v)$.  Let 
$$C_g=\gamma\cup \gamma_g(\alpha)\cup \gamma_g(\beta)\cup R_g(\alpha)\cup R_g(\beta), \ g\in  (\mathcal N\cap  {\mathcal H})\cup \{f\}.$$
Then $\mathbb C\setminus C_g$ has  two components: $X_g$ and $Y_g$, where $X_g$ is  the one containing $x_g(\theta_v)$,  and $Y_g$ is   the other one.

By the choice of $\alpha, \beta$ in Step 0, we have ${\rm Crit}(f)\cap (\overline{U_{f,v_D}}\setminus W)\subset Y_f$.
By  the Hausdorff continuity of $g\mapsto C_g$,  also by shrinking $\mathcal N$ if necessary,    for all $g\in  (\mathcal N\cap  {\mathcal H})\cup \{f\}$,  we have  $\#({\rm Crit}(g)\cap W)=\#({\rm Crit}(f)\cap W)$ and 
\begin{equation} \label{critical-position-g} 
  {\rm Crit}(g)\cap ( {U_{g,v}}\setminus W)\subset Y_g.
		 \end{equation}

	

 \begin{figure}[h]   
 	\begin{center}
 		\includegraphics[height=11cm]{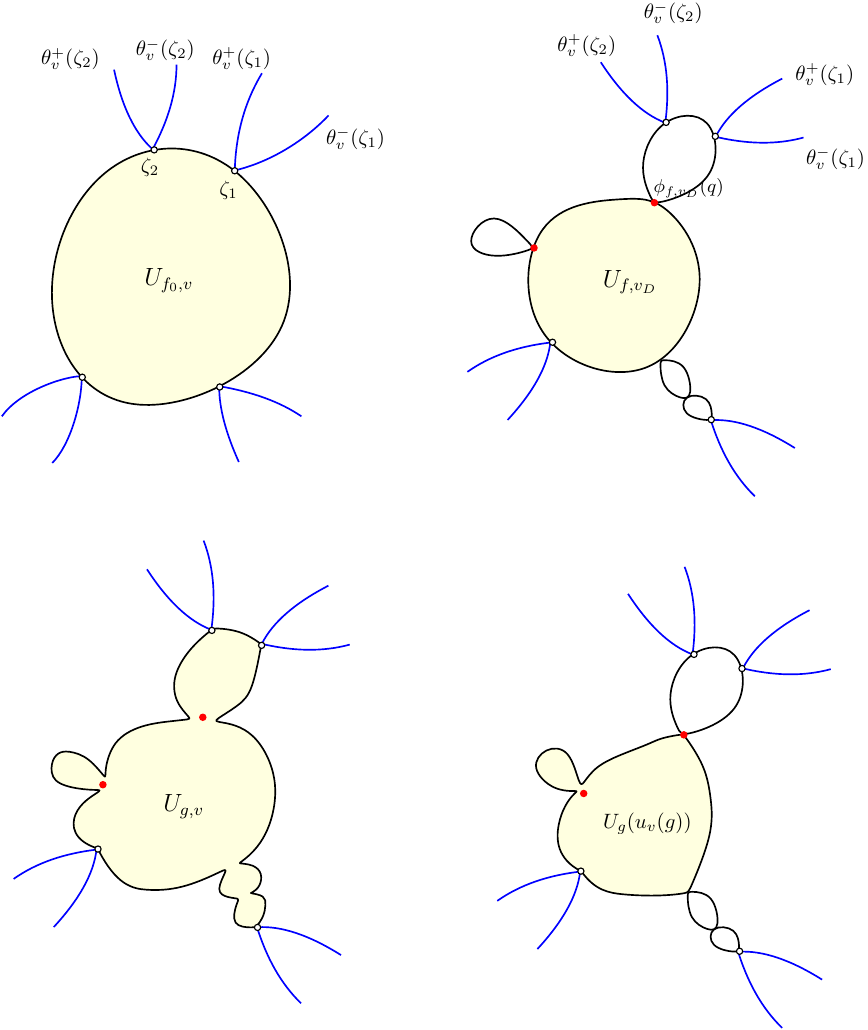}
 	\end{center}
 	\caption{Upper left: the dynamical plane of $f_0$ (the unique PCF in $\mathcal H$); Upper right: the dynamical plane of $f$; 
 		Lower left: the dynamical plane for some $g\in   \mathcal H\cap \mathcal N$; Lower right: the dynamical plane for some $g\in \partial\mathcal H\cap \mathcal N$. The red points are critical points on or close to the boundary of the marked Fatou component.}	
 	\label{fig: non-intersection-graphs}
 \end{figure}
 
\textbf{(d)}. Let 
\begin{equation} \label{partial-v}
	\partial_{v}:=\{\xi\in \partial_*U_{f_0,v}; L_{U_{f_0, v},\xi}\cap {\rm Crit}(f_0)\neq \emptyset\}.
	\end{equation}
For any $\zeta\in \partial_{v}$,  the set $ \overline{R_g(\theta_v^-(\zeta) )\cup R_g(\theta_v^+(\zeta))}$  (here $\theta_v^{\pm}(\zeta)$ are defined in Section \ref{impression-singleton}) is Hausdorff continuous in  $g\in  \mathcal H\cup   \{f\}$, see Figure \ref{fig: non-intersection-graphs}. 
Hence, the number of the $g$-critical points (counting multiplicity) in  $S_g(\theta_{v}^-(\zeta), \theta_{v}^+(\zeta))$, is independent of   $g\in \mathcal H\cup  \{f\}$. 
Denote this number by $n(\theta_{v}^-(\zeta), \theta_{v}^+(\zeta))$.


	
	

		\vspace{5pt}
	{\textbf{Step 2.   ${\rm Div}[f]\subset \partial_0^*{\rm Div}{(\mathbb D)}^S.$}}
	\vspace{5pt}
	
Let $E=\big((B_v, E_v^{\partial})\big)_{v\in V}\in {\rm Div}[f]$,
and $\{g_{n}\}_{n\geq 1}\subset \mathcal H$ be  a sequence with
$$g_{n}\rightarrow f, \ \Phi^{-1}(g_{n}):=(B_{n,v})_{v\in V}\rightarrow E \ \text{ as } n\rightarrow \infty.$$
To show $E\in  \partial_0^*{\rm Div}{(\mathbb D)}^S$,   it is equivalent  to show the near   degenerate critical points of each factor $B_{n,v}$ of  $\Phi^{-1}(g_n)$ has  uniformly  bounded distance (for all $n$) from  $1\in \partial \mathbb D_v$.    To this end, we will use the extremal length method.

	
	Without the assumption $E\in  \partial_0^*{\rm Div}{(\mathbb D)}^S$, by Lemma \ref{carathodory-convergence} and Montel's Theorem,    we have the following:
	
  {\it Fact:   There exist a subsequence $\{g_{n_k}\}_{k\geq 1}$ of  $\{g_n\}_n$, and  unimodular  numbers
  	$(e^{i\theta_v})_{v\in V}$, such that for each $v\in V$,  the conformal maps 
  	$\psi_{g_{n_k}, v}$ converge in Carath\'eodory topology on functions \cite[Chapter 5, pp. 67]{Mc} to $e^{i\theta_v}\psi_{f, v_D}$, where $\psi_{f, v_D}$ is the inverse of the conformal map $\phi_{f,v_D}:\mathbb D\rightarrow U_{f, v_{D}}$.}

For $g\in \mathcal N\cap \mathcal H$, 
let $Q_g=(X_g\cap U_{g,v})\setminus  {\gamma_{g}(\theta_v)}$.
Let $\Gamma_g$ be the collection of rectifiable curves   in $Q_g$,
with one endpoint in $\partial X_g\cap U_{g,v}$ and the other  in  $ {\gamma_{g}(\theta_v)}$, see Figure \ref{fig: step2-modulus}. 
By the definition of the extremal length $\lambda(\Gamma_g)$,
$$\lambda(\Gamma_g)\geq \frac{L(Q_g)^2}{{\rm area}(Q_g)}, \ \  \forall g\in \mathcal N\cap \mathcal H,$$
where $L(Q_g)$ is the infimum of the Euclidean length of the curves in $\Gamma_g$.  By the Hausdorff continuity of $g\mapsto \gamma_g(\alpha)\cup \gamma_g(\beta)\cup \gamma_g(\theta_v)$, 
 we have $L(Q_g)\geq L$ for all $g\in \mathcal N\cap \mathcal H$, here $L>0$ is a  constant.
By area theorem (see \cite[\S 5.1]{A}),  ${\rm area}(Q_g)\leq {\rm area} (K(g))\leq \pi$. 
Hence $\lambda(\Gamma_g)\geq L^2/\pi $ for all $g\in \mathcal N\cap \mathcal H$.

\begin{figure}[h]   
	\begin{center}
		\includegraphics[height=5cm]{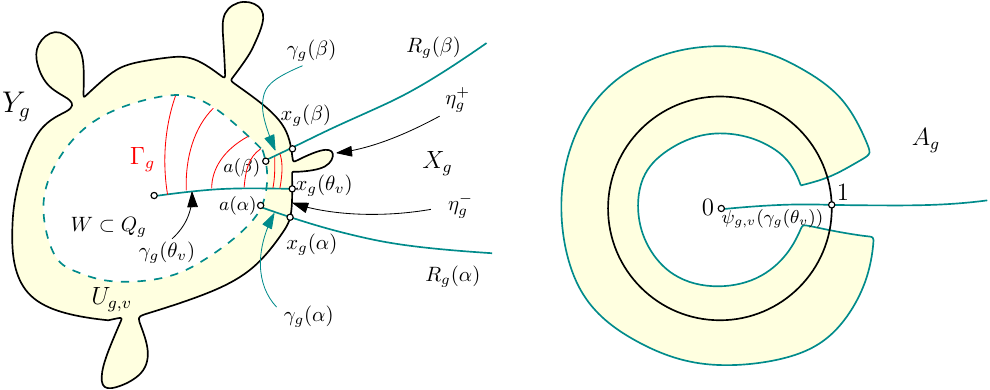}
	\end{center}
	\caption{Bounding the moduli in the dynamical plane (left) and  the model plane (right)} 	
	\label{fig: step2-modulus}
\end{figure}

 The set $(\partial Q_g\cap \partial U_{g,v})\setminus\{x_g(\theta_v)\}$ consists of two arcs: $\eta_g^-$ and $\eta_g^+$.    Let $R(z)=1/\overline{z}$ be the reflection with respect to $\partial \mathbb D$, then 
  $$A_g:=\psi_{g,v}(Q_g\cup \eta_g^{-}  \cup \eta_g^+)\cup R\circ \psi_{g,v}(Q_g)$$   is an annulus separating   
 $\psi_{g,v}(\overline{Y_g\cap U_{g,v}})$ and $\psi_{g,v}({\gamma_g(\theta_v)})\cup R\circ \psi_{g,v}(\gamma_g(\theta_v))$.
 By the symmetry principle (see \cite[Theorem 5, pp 13]{A2}), 
 $${\rm mod}(A_g)=\frac{1}{2}\lambda(\Gamma_g)\geq \frac{L^2}{2\pi}.$$
 
 Clearly,  ${\rm diam}\psi_{g,v}(\overline{Y_g\cap U_{g,v}}) \geq {\rm diam}\psi_{g,v}(\gamma)$.  For the subsequence $\{g_{n_k}\}_k$ given by   the Fact,  we have 
 $$\lim_{k\rightarrow +\infty}{\rm diam} \psi_{g_{n_k}, v}(\gamma)= {\rm diam} \psi_{f, v_D}(\gamma)>0.$$
Hence for all large $k$, $g_{n_k}\in \mathcal H\cap \mathcal N$ and ${\rm diam} \psi_{g_{n_k}, v}(\gamma)\geq c_v:=  {\rm diam} \psi_{f, v_D}(\gamma)/2$.
 
 Let ${\rm dist}_k$ be the Euclidean distance between the two boundary components of $A_{g_{n_k}}$.  
 Let $Z_k$ be the bounded component of $\mathbb C\setminus {A_{g_{n_k}}}$, with Euclidean diameter ${\rm diam}_k$. 
 Take  $\zeta_1\in Z_k$,   $\zeta_2\in \mathbb C\setminus ({A_{g_{n_k}}}\cup Z_k)$ so that $|\zeta_1-\zeta_2|={\rm dist}_k$. Then $\partial{\mathbb D(\zeta_1,  {\rm diam}_k/2)}\cap Z_k\neq \emptyset$.  Take $\zeta_3\in \partial{\mathbb D(\zeta_1, {\rm diam}_k/2)}\cap Z_k$,  then the annulus  $A_{g_{n_k}}$ separates $\{\zeta_1, \zeta_3\}$ and $\{\zeta_2, \infty\}$.
  By   Teichm\"uller's theorem  \cite[Theorem 4.7]{A},
 $$ {\rm mod}(A_{g_{n_k}})\leq h\bigg(\frac{|\zeta_1-\zeta_2|}{|\zeta_1-\zeta_3|}\bigg) = h\bigg(2\frac{{\rm dist}_k}{{\rm diam}_k}\bigg),$$
 where $h(r):={\rm mod}(\mathbb C\setminus ([-1,0]\cup [r, +\infty)))$  is the Teichm\"uller modulus function.
 It follows that  for  large $k$,
 $${\rm dist}({\rm Crit}({B}_{n_k,v})\cap \psi_{g_{n_k},v}(Y_{g_{n_k}}\cap U_{g_{n_k}, v}) , 1)\geq  {\rm dist}_k   \geq \frac{{\rm diam}_k}{2}   h^{-1}\bigg(\frac{L^2}{2\pi}\bigg)\geq   \frac{c_v}{2} h^{-1}\bigg(\frac{L^2}{2\pi}\bigg).$$
 
 Combine this lower bound and the divisor convergence ${B}_{n_k,v}\rightarrow (B_v, E^\partial_v)$,  we have $1\notin {\rm supp}( E^\partial_v)$,  for all $v\in V$.  Hence $E\in  \partial_0^*{\rm Div}{(\mathbb D)}^S$.
 
  

	\vspace{5pt}
	{\textbf{Step 3.    	    ${\rm Div}[f]=\{D\}$}.}
	\vspace{5pt}
	
	By Step 2, ${\rm Div}[f]\subset \partial_0^*{\rm Div}{(\mathbb D)}^S$.  Let $E=((B_v, E_v^{\partial}))_{v\in V}\in {\rm Div}[f]$.  In this step, we  show $E=D$.
Let $\{g_{n}\}_{n\geq 1}\subset \mathcal H$ be  a sequence with
	$$g_{n}\rightarrow f, \ \Phi^{-1}(g_{n})\rightarrow E \ \text{ as } n\rightarrow \infty.$$
	
	Note that the convergence  $g_{n}\rightarrow f$ actually means   $(g_n, \boldsymbol{v}(g_n)) \to (f, \boldsymbol{v}_D(f))$ as $n\rightarrow \infty$. 
	Hence $v_E(f)=v_D(f)$ for all $v\in V$.
	 Similar as we did for $\phi_{f, v_{D}}$ in Step 0,  
	 the  conformal maps $\phi_{g_n, v}: \mathbb{D}\rightarrow U_{g_n, v}$  converge locally and uniformly to the conformal map $\phi_{f, v_E}: \mathbb{D}\rightarrow U_{f, v_E}$, which satisfies   $\phi_{f, v_{E}}(1)=x_f(\theta_v)$.
	 

	 	Since $\phi_{f, v_{D}}$ and $\phi_{f, v_{E}}$ are two conformal mappings from  $\mathbb D$ to $U_{f, v_D}$, with same normalizations
	 	$\phi_{f, v_{D}}(0)=\phi_{f, v_{E}}(0)= v_D(f)$, $\phi_{f, v_{D}}(1)=\phi_{f, v_{E}}(1)=x_f(\theta_v)$,
	 	 we have $\phi_{f, v_{D}}=\phi_{f, v_{E}}$  for all $v\in V$.  By Lemma \ref{model-map}, 
	 		 $${B}^0_{v}=\phi^{-1}_{f, \sigma(v)_{D}}\circ f\circ \phi_{f_{v_{D}}}=\phi^{-1}_{f, \sigma(v)_{E}}\circ f\circ \phi_{f,v_{E}}={B}_{v}, \ \ \forall v\in V.$$
	 		 By definitions of $*_{v}^{\partial}(\mathcal H), \theta_{*, v}^\pm (q)$ for $*\in \{D, E\}$ (see Definition \ref{an induced divisor}    and  Section  \ref{cp2}),  for $v\in V$,
	 $$D_{v}^{\partial}(\mathcal H)=E_{v}^{\partial}(\mathcal H);   \ \theta_{D, v}^+(q)=\theta_{E, v}^+(q),  \ \theta_{D, v}^-(q)=\theta_{E, v}^-(q), \  \forall q\in {\rm supp}(D_{v}^{\partial}(\mathcal H)).$$
	 By Proposition \ref{divisor-correspondence} and Lemma \ref{disjoint angle}, we may write   
	 $$*_{v}^{\partial}=*_{v}^{\partial}(\mathcal H)-\sum_{q\in  {\rm supp} (*_{v}^{\partial}(\mathcal H))}\Big(\sum_{\zeta\in I^*_v(q)}
	n(\theta_v^-(\zeta), \theta_v^+(\zeta))\Big)\cdot q, \ \ *\in \{D, E\},$$
	 where  $I^*_v(q)=\{\zeta\in \partial_v; (\theta_v^-(\zeta), \theta_v^+(\zeta))\subset (\theta_{*, v}^-(q),  \theta_{*, v}^+(q)) \text{ as intervals in } \mathbb R/\mathbb Z\}$ (here $ \partial_v$ is defined in Step 1).  It follows that $I^D_v(q)=I^E_v(q)$ and  $D_{v}^{\partial}=E_{v}^{\partial}$.  Hence $D=E$.

		\vspace{5pt}
	{\textbf{Step 4.   For any $g\in \mathcal N \cap \partial\mathcal H$,  the  set 	${\rm Div}[g]$  consists of a singleton $E_g$, which is in  $\partial^*_0 {\rm Div}{(\mathbb D)}^S$}.}
	\vspace{5pt}
	
	Because of Step 3, we may write $\phi_{f, v_D}, v_D(f), U_{f, v_D}$  as $\phi_{f, v},  v(f), U_{f, v}$.
	
	By  Step 3, for any sequence   $\{g_{n}\}_{n\geq 1}\subset \mathcal H$  approaching $f$, we have  $\Phi^{-1}(g_{n}):=(B_{n,v})_{v\in V}\rightarrow D \ \text{ as } n\rightarrow \infty$.
	By Proposition \ref{combinatorial-property0}, 
		the sequence  of conformal maps
		$\{\phi_{g_n, v}\}_{n\geq 1}$ converges locally and uniformly in $\mathbb D$ to $\phi_{f,v}$.
		It follows  that the conformal maps $\{\psi_{g_n, v}\}_n$   converge in Carath\'eodory topology on functions \cite[Chapter 5, pp. 67]{Mc} to $\psi_{f, v}$.
		As we did in Step 2, there is a  constant $c_v>0$ such that for all large $n$, 
		  $${\rm diam}(\psi_{g_n,v}(\overline{Y_{g_n}\cap U_{g_n,v}}))\geq c_v.$$
		
		
Repeating the  extremal length argument in Step 2, there is $b_v>0$ so that    
$${\rm dist}\big({\rm Crit}({B}_{n,v})\cap \psi_{g_n,v}(Y_{g_n}\cap U_{g_n, v}) , 1\big)\geq b_v, \ \forall \text{ large } n.$$

Since the above behavior holds for any sequence $\{g_n\}_n\subset \mathcal H$  approaching $f$, by shrinking $\mathcal N$ if necessary, we get  a uniform lower bound $b'_v>0$ so that 
\begin{equation} \label{uniform-bound} {\rm dist}\big({\rm Crit}({B}_{g,v})\cap \psi_{g,v}(Y_{g}\cap U_{g, v}) , 1\big)\geq b'_v, \ \ 
\forall g\in \mathcal H\cap \mathcal N, 
\end{equation}
where $B_{g,v}$ is a factor of $\Phi^{-1}(g)=(B_{g,v})_{v\in V}$.

	For any $g\in \partial \mathcal H\cap \mathcal N$ and any sequence   $\{\widehat{g}_{n}\}_{n\geq 1}\subset \mathcal H$ approaching $g$,   it is clear that $\widehat{g}_{n}\in  \mathcal H\cap \mathcal N$ for all large $n$. 
	By (\ref{uniform-bound}),  
	 the accumulation set of $\{\Phi^{-1}(\widehat{g}_{n})\}_{n\geq 1}$ is  contained in $\partial^*_0 {\rm Div}{(\mathbb D)}^S$.
This shows   ${\rm Div}[g]\subset \partial^*_0 {\rm Div}{(\mathbb D)}^S$.

	Applying the same argument  in Step 0 and  Step  3 to $g$ (note that the properties of the maps near  $f$ used in these steps  also hold for the maps near $g$, hence $\mathcal H$-admissibility is not necessary), we conclude that  	${\rm Div}[g]$  consists of a singleton $E_g$. 
	This completes the proof.
\end{proof}

\begin{rmk} \label{divisor-def-g} The divisor $E_g:=((B_{g,v}, E_{g, v}^\partial))_{v\in V}$ for $g\in  \partial \mathcal H\cap \mathcal N$ in Step 4   can be written  explicitly as follows:

	 For each $v\in V$, there is a unique Riemann mapping $\psi_{g,v}:  U_{g}(u_v(g))\rightarrow \mathbb D$ with   $\psi_{g, v}(u_v(g))=0$ and  $\psi_{g, v}(x_g(\theta_v))=1$. 
	 For $v\in V$, let $\phi_{g,v}=\psi_{g,v}^{-1}$ and 
	 $$B_{g, v}=\phi_{g, \sigma(v)}^{-1}\circ g \circ \phi_{g,  v}, \  E_{v}^\partial(g)=\sum_{q\in \partial\mathbb D}\Big(\sum_{c\in C_{q,v}}({\rm deg}(g,c)-1)\Big)\cdot q, $$
	 where   $C_{q,v}={\rm Crit}(g)\cap L_{U_{g}(u_v(g)), \phi_{g,v}(q)}$. 
	 For $q\in {\rm supp}(E_{v}^\partial(g))$, let $\theta_{g, v}^{\pm}(q)$ be given in the paragraph before Corollary \ref{limb-angles}.
	 Then $E_{g, v}^\partial$ is given by 
	 $$E_{g,v}^\partial =E_{v}^\partial(g)-\sum_{q\in  {\rm supp} (E_{v}^{\partial}(g))}\Big(\sum_{ \zeta\in I_v(q)}
	 n(\theta_v^-(\zeta), \theta_v^+(\zeta))\Big)\cdot q,$$
	 where  
	 $I_v(q)$    consists of $\zeta\in \partial_v$  such that 
	 $(\theta_v^-(\zeta), \theta_v^+(\zeta))\subset (\theta_{g, v}^-(q),  \theta_{g, v}^+(q))$  as intervals in $\mathbb R/\mathbb Z$.

	 Observe that for $g\in  \partial \mathcal H\cap \mathcal N$, the divisor  $E_g$ may be not $\mathcal H$-admissible,   and the impression $I_{\Phi}(E_g)\supset\{g\}$ may be not a  singleton. 
	\end{rmk}
	
	

	 

Let  $\mathcal N$ be the neighborhood  of $f$
given by Proposition \ref{map-imp}. For   $g\in \mathcal N\cap \partial \mathcal H$,  let $E_g$ be the divisor given by Proposition \ref{map-imp}; for $g\in  \mathcal N\cap  \mathcal H$, let $E_g=\Phi^{-1}(g)$.  In this way, for each $g\in   \overline{\mathcal H}\cap \mathcal N$, the divisor $E_g$ is well-defined. 
Moreover, Proposition \ref{map-imp} implies that for each $g\in \mathcal N \cap \partial\mathcal H$ and each $v\in V$, the conformal maps $\phi_{g, v}, \psi_{g, v}$ are well-defined (as given by Remark \ref{divisor-def-g}).

Recall that $\mathcal N_\epsilon(g)$ is the $\epsilon$-neighborhood of $g$ in the polymial space $\mathcal P_d$.  
\begin{lem} \label{neigh-continuous}
	Let $D=\big((B_v^0, D_v^{\partial})\big)_{v\in V}\in  \partial_0^* {\rm Div}{(\mathbb D)}^S$ be $\mathcal H$-admissible, with $I_\Phi(D)=\{f\}$.  Then for any 
	$g\in \mathcal N \cap \partial\mathcal H$, any sequence  $\{g_n\}_{n\geq 1}\subset  \overline{\mathcal H}$ satisfying that 
	$g_n\rightarrow g$, we have  $E_{g_n}\rightarrow E_g$, and $\phi_{g_n,v}$ converges locally and uniformly on $\mathbb D$ to $\phi_{g,v}$.
\end{lem}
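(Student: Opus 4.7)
The plan is to first establish the locally uniform convergence $\phi_{g_n, v} \to \phi_{g, v}$ on $\mathbb{D}$, and then deduce $E_{g_n} \to E_g$ from the explicit formula of Remark \ref{divisor-def-g}. The overall strategy parallels Step 4 in the proof of Proposition \ref{map-imp}, with the base point shifted from $f$ to an arbitrary $g \in \mathcal{N} \cap \partial\mathcal{H}$.

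First, I would establish Carath\'eodory kernel convergence of the Fatou components. By Proposition \ref{map-imp}, $E_g \in \partial_0^* {\rm Div}(\mathbb{D})^S$, so $g \in \partial_{\rm reg}\mathcal{H}$ and each internal marking $u_v(g)$ is (pre-)attracting, with $U_g(u_v(g))$ a Jordan domain by Proposition \ref{RY}. The convergence $g_n \to g$ in $\overline{\mathcal{H}}$ includes convergence of internal markings $u_v(g_n) \to u_v(g)$, and Lemma \ref{carathodory-convergence} together with Remark \ref{pre-c-t} then yields the kernel convergence $(U_{g_n}(u_v(g_n)), u_v(g_n)) \to (U_g(u_v(g)), u_v(g))$. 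By Carath\'eodory's kernel theorem together with Montel's theorem, any subsequence of $\{\phi_{g_n, v}\}$ has a further subsequence converging locally uniformly on $\mathbb{D}$ to a conformal map $\phi_* : \mathbb{D} \to U_g(u_v(g))$ with $\phi_*(0) = u_v(g)$.

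Next, I would fix the remaining rotational ambiguity in $\phi_*$ by identifying $\phi_*(1)$. Property (a) of Step 1 in the proof of Proposition \ref{map-imp} gives the continuity $x_{g_n}(\theta_v) \to x_g(\theta_v)$ on $\mathcal{H} \cup (\mathcal{N} \cap \overline{\mathcal{H}})$, and the $\mathcal{H}$-admissibility of $D$ combined with Proposition \ref{h-admissible-characterization} ensures that the forward $f$-orbit of $x_f(\theta_v) = \phi_{f, v}(1)$ is pre-repelling and avoids ${\rm Crit}(f)$; the same holds for $g$ close enough to $f$ by continuity. This places us in the setting of Propositions \ref{holo-hyperbolic-set} and \ref{convergence-repelling}, which produce, for each sufficiently small $\epsilon > 0$, a Hausdorff-continuous motion of the internal-ray tails $\phi_{g_n, v}([1 - \epsilon, 1)) \to \phi_{g, v}([1 - \epsilon, 1))$. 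Letting $\epsilon \to 0$ and combining with the local uniform convergence from the previous paragraph forces $\phi_*(1) = x_g(\theta_v) = \phi_{g, v}(1)$, hence $\phi_* = \phi_{g, v}$. A diagonal argument then upgrades the subsequence convergence to full-sequence convergence $\phi_{g_n, v} \to \phi_{g, v}$ locally uniformly for each $v \in V$.

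Finally, divisor convergence follows from the explicit formula of Remark \ref{divisor-def-g}. The Blaschke factors $B_{g_n, v} = \phi_{g_n, \sigma(v)}^{-1} \circ g_n \circ \phi_{g_n, v}$ converge to $B_{g, v}$ locally uniformly on $\mathbb{D}$, so by Lemma \ref{degenerate-0} their free zero divisors on $\mathbb{D}$ converge to that of $B_{g, v}$. For the boundary parts of $E_{g_n, v}$, the correction term in Remark \ref{divisor-def-g} is built from the combinatorial coefficients $n(\theta_v^-(\zeta), \theta_v^+(\zeta))$, which by property (d) of Step 1 of Proposition \ref{map-imp} are locally constant on $\mathcal{H} \cup (\mathcal{N} \cap \overline{\mathcal{H}})$; consequently the correction terms agree for $g$ and all large $g_n$, giving $E_{g_n, v}^\partial \to E_{g, v}^\partial$ in ${\rm Div}(\partial\mathbb{D})$. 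The main obstacle is the identification of $\phi_*(1)$ in the second paragraph: kernel convergence alone leaves the limit determined only up to a rotation at the origin, and pinning down this rotation relies crucially on the $\mathcal{H}$-admissibility of $D$ to secure the non-critical orbit condition needed for the continuous-motion machinery of Section \ref{perturbation-hyp-set}.
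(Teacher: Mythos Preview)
Your plan has two genuine gaps. In Step 2, Propositions \ref{holo-hyperbolic-set} and \ref{convergence-repelling} do not furnish Hausdorff continuity of the model-ray images $\phi_{g_n,v}([1-\epsilon,1))$. What Remark \ref{rem:2.4}(3) produces is a continuous motion of \emph{dynamically constructed} curves $\gamma_h \subset U_{h,v}$ landing at $x_h(\theta_v)$, built by pullback in the dynamical plane; these are not the $\phi_{h,v}$-images of any fixed radial segment in $\mathbb{D}$. Knowing $\gamma_{g_n} \to \gamma_g$ does not force $\phi_*(1) = x_g(\theta_v)$, because kernel convergence gives no control at the boundary: you would need the preimages $\psi_{g_n,v}(\gamma_{g_n})$ to converge together with their endpoint at $1$, and that endpoint is precisely the unknown rotation. (Also, Proposition \ref{convergence-repelling} is stated only for sequences in $\mathcal{H}$.) In Step 3, your reduction to Remark \ref{divisor-def-g} is circular: the formula writes $E_{g,v}^\partial = E_v^\partial(g) - (\text{correction})$, and even with the correction locally constant you still need $E_v^\partial(g_n) \to E_v^\partial(g)$; this is exactly Lemma \ref{induced-continuous}, whose proof invokes the present lemma. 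Separately, when $g_n \in \mathcal{H}$ the boundary part $E_{g_n,v}^\partial$ is zero, so your argument never tracks where the escaping zeros of the degree-$\delta(v)$ Blaschke product $B_{g_n,v}$ land on $\partial\mathbb{D}$; local uniform convergence of $B_{g_n,v}$ does not determine this, and Lemma \ref{degenerate-0} goes the opposite direction.

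The paper's proof reverses your order and avoids both issues. Divisor convergence comes first, directly from the singleton property ${\rm Div}[g] = \{E_g\}$ of Proposition \ref{map-imp}: for $\{g_n\} \subset \mathcal{H}$, compactness of ${\rm Div}(\overline{\mathbb{D}})^S$ plus Fact \ref{fact-def} forces any subsequential limit $E$ of $\Phi^{-1}(g_n)$ into ${\rm Div}[g]$, hence $E = E_g$; the case $\{g_n\} \subset \partial\mathcal{H}$ is then handled by approximating each $g_n$ by some $f_n \in \mathcal{H}$ with $\Phi^{-1}(f_n)$ close to $E_{g_n}$. With $E_{g_n} \to E_g$ in hand, the convergence $\phi_{g_n,v} \to \phi_{g,v}$ follows from Proposition \ref{combinatorial-property0} applied at $g$ (legitimate since $E_g \in \partial_0^* {\rm Div}(\mathbb{D})^S$ by Proposition \ref{map-imp}), together with the same approximation trick for $g_n \in \partial\mathcal{H}$. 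No rotation-identification argument and no limb-divisor continuity is required.
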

\begin{proof}  To show $E_{g_n}\rightarrow E_g$, 
	it suffices to consider two cases: 
	
	Case 1: $\{g_n\}_{n\geq 1}\subset \mathcal H$;
	
	Case 2: $\{g_n\}_{n\geq 1}\subset \partial\mathcal H\cap \mathcal N$.
	
	For Case 1,  by the compactness of $\partial  {\rm Div}{(\mathbb D)}^S$ and  choosing a  subsequence if necessary, we assume $E_{g_n}\rightarrow E\in \partial  {\rm Div}{(\mathbb D)}^S$.
	By Fact \ref{fact-def},   $g\in I_{\Phi}(E)$.
	It follows from Proposition \ref{map-imp} that $E=E_g$. Therefore $ E_{g_n} \rightarrow E_g$.
	
	For Case 2, by Fact \ref{fact-def}, for each $n$, there is $f_n\in \mathcal{H}\cap \mathcal N_{1/n}(g_n)$ such that $E_{f_n}=\Phi^{-1}(f_n)\in \mathbf{N}_{1/n}(E_{g_n})$. The convergence $g_n\rightarrow g$ implies that $f_n\rightarrow g$. 
	By  Case 1, we have  $ E_{f_n}\rightarrow E_g$.
	It turns out that $E_{g_n}\rightarrow  E_g$.

	To show that $\phi_{g_n,v}$ converges locally and uniformly on $\mathbb D$ to $\phi_{g,v}$,	
	it is equivalent to show that for any $\epsilon>0$ and any $\rho\in(0,1)$, 
	there is an integer $M>0$ such that $\|\phi_{g_n,v}-\phi_{g,v}\|_{\mathbb D(0,\rho)}\leq \epsilon$ for $n\geq M$, where  $\|h\|_K:=\sup_{z\in K}|h(z)|$.
	For Case 1, it follows immediately from Proposition \ref{combinatorial-property0}.
	We only need to deal with Case 2.
	In fact, for the given $\epsilon,\rho$, for each $n$,  by Proposition \ref{combinatorial-property0},  we may further require $f_n\in \mathcal N_{1/n}(g_n)\cap  \Phi(\mathbf{N}_{1/n}(E_{g_n}))\subset \mathcal H$ to satisfy that 
	$\|\phi_{g_n,v}-\phi_{f_n,v}\|_{\mathbb D(0,\rho)}\leq \epsilon/2$. The assumption $g_n\rightarrow g$ implies that $f_n\rightarrow g$.
	By  Proposition \ref{combinatorial-property0},  $\phi_{f_n,v}$ converges locally and uniformly on $\mathbb D$ to $\phi_{g,v}$. Hence there is $M=M(\epsilon, \rho)$ such that for all $n\geq M$, 
	$\|\phi_{f_n,v}-\phi_{g,v}\|_{\mathbb D(0,\rho)}\leq \epsilon/2$. It turns out that
	$$\|\phi_{g_n,v}-\phi_{g,v}\|_{\mathbb D(0,\rho)}\leq \|\phi_{g_n,v}-\phi_{f_n,v}\|_{\mathbb D(0,\rho)}+\|\phi_{f_n,v}-\phi_{g,v}\|_{\mathbb D(0,\rho)}
	\leq \epsilon.$$
	This completes the proof of the local and uniform convergence.
\end{proof}

\begin{proof}[Proof of Theorem \ref{thm:parameterization}. ]  By Proposition \ref{map-imp}, the continuous  extension  $\overline{\Phi}:\mathcal B^S\cup \mathcal A\to \mathcal H\cup \partial_\mathcal{A}\mathcal H$ is injective.  By  Lemma \ref{neigh-continuous}, the inverse $\overline{\Phi}^{-1}$ is continuous. Hence $\overline{\Phi}$ is a homeomorphism.
\end{proof}


\section{Local connectivity} \label{lc}
The aim of this part is to prove Theorem \ref{boundary-lc}:  $\partial_\mathcal{A} \mathcal H\subset \partial_{\rm LC} \mathcal H$.

Let $D\in \mathcal A$ and $f=\overline{\Phi}(D)$.  Let  $\mathcal N$ be the neighborhood  of $f$ given by  Proposition \ref{map-imp}.	
 By  Lemma \ref{neigh-continuous},  the following map
\begin{equation}  \label{pi-d}
	\pi_{D}: 
\begin{cases}  \overline{\mathcal H}\cap \mathcal N\rightarrow \partial_0^* {\rm Div}{(\mathbb D)}^S\cup    {\rm Div}{(\mathbb D)}^S,\\
	g\mapsto E_g.
\end{cases}
\end{equation}
  is continuous.  For any set $\mathbf{W}\subset \pi_D(\partial{\mathcal H}\cap \mathcal N)\subset \partial_0^* {\rm Div}{(\mathbb D)}^S$, define
 $$\pi_D^{-1}(\mathbf{W}):=\bigcup_{E\in \mathbf{W}}I_{\Phi}(E).$$

\begin{lem}\label{open-fibre}  Let $D, f, \mathcal N$ be given as above.
	
1. There is  $\delta_0>0$ such that  $\mathbf{U}_0:=\U_{\delta_0}(D)\cap \partial {\rm Div}{(\mathbb D)}^S$  is an open subset of $\partial_0^* {\rm Div}{(\mathbb D)}^S$, and $\mathbf{U}_0\subset \pi_D(\partial\mathcal H\cap \mathcal N).$

2. For any open subset $\mathbf{U}$ of $\partial_0^* {\rm Div}{(\mathbb D)}^S$ contained in  $\mathbf{U}_0$, the set
$\pi_D^{-1}(\mathbf{U})$ is an open subset of $\partial \mathcal H$. 
\end{lem}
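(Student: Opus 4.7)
The plan is to deduce both parts from three ingredients already in hand: the singleton property $I_\Phi(D)=\{f\}$ furnished by $\mathcal H$-admissibility, the unique-divisor statement at boundary maps near $f$ given by Proposition \ref{map-imp}, and the continuity of $\pi_D$ on $\overline{\mathcal H}\cap \mathcal N$ established in Lemma \ref{neigh-continuous}. The core observation, which ties part 2 back to part 1, is that once $\delta_0$ is chosen correctly, the set $\pi_D^{-1}(\mathbf U)$ defined as a union of impressions coincides with the genuine preimage of $\mathbf U$ under the continuous map $\pi_D$, so topological openness follows automatically.

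For part 1, I will first shrink $\delta_0$ so that $\mathbf U_0\subset\partial_0^*{\rm Div}(\mathbb D)^S$. Since $D\in \partial_0^*{\rm Div}(\mathbb D)^S$, this means $1\notin {\rm supp}(D_v^\partial)$ for all $v$ and $D\in\partial_0{\rm Div}(\mathbb D)^S$; by Fact \ref{subset-boundary} and the openness of the condition $1\notin {\rm supp}(\cdot)$, any sufficiently small $\U_{\delta_0}(D)\cap \partial {\rm Div}(\mathbb D)^S$ lies inside $\partial_0^*{\rm Div}(\mathbb D)^S$, and is automatically open there. Next I will further shrink $\delta_0$ so that
\[
K_{\delta_0}:=\overline{\Phi(\mathbf N_{\delta_0}(D))}\subset \mathcal N.
\]
This is possible by a standard nested-compact argument: the sets $K_\delta$ decrease with $\delta$, and any $g\in \bigcap_{\delta>0}K_\delta$ arises as a diagonal limit $\Phi(E_{n(\delta)})\to g$ with $E_{n(\delta)}\to D$, so $g\in I_\Phi(D)=\{f\}$ (the case $g\in\mathcal H$ is ruled out because $\Phi^{-1}$ would then be continuous at $g$, contradicting $E_n\to D\in \partial{\rm Div}(\mathbb D)^S$). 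Hence $\bigcap_\delta K_\delta=\{f\}\subset \mathcal N$, and compactness yields $K_{\delta_0}\subset \mathcal N$. To realise any $E\in\mathbf U_0$ as some $E_g$, I pick $E_n\in \mathbf N_{\delta_0}(D)\cap {\rm Div}(\mathbb D)^S$ with $E_n\to E$, extract a subsequential limit $g\in K_{\delta_0}\subset \mathcal N$ of $\Phi(E_n)$, note $g\in \partial\mathcal H$ (again using that $\Phi^{-1}$ is a homeomorphism on $\mathcal H$), and apply Fact \ref{fact-def} to get $g\in I_\Phi(E)$; Proposition \ref{map-imp} then forces $E_g=E$.

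For part 2, the key identification to verify is
\[
\pi_D^{-1}(\mathbf U)=\{g\in \partial \mathcal H\cap \mathcal N:\ E_g\in \mathbf U\}.
\]
The inclusion $\supset$ is immediate from $g\in I_\Phi(E_g)$. For $\subset$, any $g\in \bigcup_{E\in\mathbf U}I_\Phi(E)$ satisfies $g\in I_\Phi(E)$ with $E\in \mathbf U\subset \mathbf U_0$, so $I_\Phi(E)\subset K_{\delta_0}\subset \mathcal N$ gives $g\in \partial \mathcal H\cap \mathcal N$, and then Proposition \ref{map-imp} (uniqueness of the divisor realising $g$) yields $E_g=E\in \mathbf U$. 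With this identification in place, the continuity of $\pi_D$ on $\overline{\mathcal H}\cap \mathcal N$ (Lemma \ref{neigh-continuous}) together with openness of $\mathbf U$ in $\partial_0^*{\rm Div}(\mathbb D)^S$ shows that $\pi_D^{-1}(\mathbf U)$ is open in $\partial \mathcal H\cap \mathcal N$, hence in $\partial \mathcal H$ (since $\mathcal N$ is open in $\mathcal P_d$).

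The only genuinely delicate step is the shrinking argument giving $K_{\delta_0}\subset \mathcal N$: it requires the $\mathcal H$-admissibility of $D$ in order to invoke $I_\Phi(D)=\{f\}$, which is what makes the nested intersection collapse to the single point $f$. Without that singleton property, boundary self-bumping would allow limit maps far from $f$ to persist for every $\delta$, and the whole mechanism for converting openness in the divisor space into openness on $\partial \mathcal H$ would break down.
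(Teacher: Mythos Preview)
Your proof is correct and follows essentially the same approach as the paper. Both arguments hinge on the same three inputs (the singleton impression $I_\Phi(D)=\{f\}$, Proposition \ref{map-imp}, and the continuity from Lemma \ref{neigh-continuous}), and both obtain $\overline{\Phi(\mathbf N_{\delta_0}(D))}\subset \mathcal N$ from the nested-compact collapse to $\{f\}$; the paper phrases this as $\mathrm{diam}(\overline{\Phi(\mathbf N_\delta(D))})\to 0$, while you argue directly on the intersection, but these are equivalent. Your part 2 makes explicit the identification $\pi_D^{-1}(\mathbf U)=\{g\in\partial\mathcal H\cap\mathcal N: E_g\in\mathbf U\}$, which the paper uses implicitly when it writes ``for any $g\in\pi_D^{-1}(\mathbf U)$, we have $\pi_D(g)\in\mathbf U$'' and then invokes continuity; your version is clearer on this point but the substance is identical.
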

\begin{proof}
	1.   By Proposition \ref{divisor-singleton},   $I_{\Phi}(D)=\bigcap_{\delta>0} \overline{\Phi(\N_{\delta}(D))}=\{f\}$, so we have
	$${\rm diam}(\overline{\Phi(\N_{\delta}(D))})\rightarrow 0  \text{ as } \delta\rightarrow 0.$$
Since $\mathcal N$ is a neighborhood of $f$, there is  a small number $\delta_0>0$ such that $\overline{\Phi(\N_{\delta_0}(D))}\subset \mathcal N$ and 
$\U_0:= \U_{\delta_0}(D)\cap \partial {\rm Div}{(\mathbb D)}^S\subset \partial_0^* {\rm Div}{(\mathbb D)}^S$.

For any  $E\in \U_0$,
there is  $\epsilon>0$ so that $\N_{\epsilon}(E)\subset \N_{\delta_0}(D)$. Therefore
\begin{equation}\label{inclusion0} I_{\Phi}(E)\subset \overline{\Phi(\N_{\epsilon}(E))}\subset \overline{\Phi(\N_{\delta_0}(D))}\subset \mathcal N.
	\end{equation}
It follows that $\bigcup_{E\in \U_0 }I_\Phi(E)\subset \mathcal N$.  Therefore $\mathbf{U}_0\subset \pi_D(\mathcal N \cap \partial\mathcal H)$.

2. 	 
  For any $g\in \pi_D^{-1}(\mathbf{U})$, we have $\pi_D(g)\in \U$. Since $\mathbf{U}$ is an open subset of $\partial_0^* {\rm Div}{(\mathbb D)}^S$,  there is an $\epsilon_g>0$ so that $\U_{\epsilon_g}(\pi_D(g)) \cap \partial_0^* {\rm Div}{(\mathbb D)}^S \subset \mathbf{U}$.

By the continuity of $\pi_D$, there is a neighborhood $\mathcal N_{g}$ of $g$ such that
$$\pi_D(\mathcal N_{g}\cap \partial \mathcal H)\subset \U_{\epsilon_g}(\pi_D(g))\cap \partial_0^* {\rm Div}{(\mathbb D)}^S\subset  \U.$$
It follows that $\mathcal N_{g} \cap \partial \mathcal H\subset \pi_D^{-1}(\U)$. Hence  $\pi_D^{-1}(\U)$ is open in $\partial \mathcal H$.
\end{proof}

\begin{lem} \label{compact-fibre}  Let $\U_0$ be given by  Lemma \ref{open-fibre},  and let $\mathbf{K}$ be  a compact subset of $\U_0$.  
	We have the following properties for  $\pi_D^{-1}(\mathbf{K})$:
	
	1. the set $\pi_D^{-1}(\mathbf{K})$ is a compact subset of $\partial \mathcal H$.
	
	2. $\mathbf{K}$ is connected if and only if 
$\pi_D^{-1}(\mathbf{K})$ is connected.
\end{lem}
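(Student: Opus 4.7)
The plan is to treat both statements as exercises in general topology applied to the continuous map $\pi_D:\overline{\mathcal H}\cap\mathcal N\to{\rm Div}(\overline{\mathbb D})^S$ from Lemma \ref{neigh-continuous}. Two prior ingredients do the real work. First, the inclusion $\pi_D^{-1}(\mathbf U_0)\subset\overline{\Phi(\N_{\delta_0}(D))}$ established inside the proof of Lemma \ref{open-fibre}, together with compactness of $\overline{\Phi(\N_{\delta_0}(D))}\subset\overline{\mathcal H}\subset\mathcal C_d$ (the connectedness locus being compact in $\mathcal P_d$). Second, the fact recorded in Section \ref{imp-divisor} that each impression $I_\Phi(E)=\pi_D^{-1}(E)$ is a connected compact set, since it is a nested intersection of compact connected sets.

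For (1), I will first place $\pi_D^{-1}(\mathbf K)$ inside the fixed compact set $\overline{\Phi(\N_{\delta_0}(D))}$. Closedness then follows by selecting a convergent sequence $g_n\to g_*$ with $g_n\in\pi_D^{-1}(\mathbf K)$: since $\partial\mathcal H$ is closed in $\mathcal P_d$ and $\overline{\Phi(\N_{\delta_0}(D))}\subset\mathcal N$, the limit $g_*$ lies in the domain $\partial\mathcal H\cap\mathcal N$ of $\pi_D$; continuity of $\pi_D$ and closedness of $\mathbf K$ then give $\pi_D(g_*)\in\mathbf K$, hence $g_*\in\pi_D^{-1}(\mathbf K)$.

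For (2), the direction $(\Leftarrow)$ is essentially formal: by Lemma \ref{open-fibre}(1) every $E\in\mathbf K\subset\mathbf U_0$ has nonempty fiber under $\pi_D$, so $\mathbf K=\pi_D(\pi_D^{-1}(\mathbf K))$ is the continuous image of a connected set. For $(\Rightarrow)$ I will argue by contradiction: if $\pi_D^{-1}(\mathbf K)=A\sqcup B$ is a disconnection into disjoint nonempty relatively closed subsets, then by (1) both $A$ and $B$ are compact. Connectedness of each fiber $I_\Phi(E)\subset A\cup B$ forces it entirely into one piece, so $\pi_D(A)$ and $\pi_D(B)$ are disjoint; each is compact (hence closed in $\mathbf K$) as a continuous image of a compact set, and together they cover $\mathbf K$, disconnecting it.

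The only delicate point is ensuring in (1) that the limit $g_*$ does not escape the open set $\mathcal N$, where $\pi_D$ ceases to be defined; this is precisely the reason Lemma \ref{open-fibre} was formulated with the closed inclusion $\overline{\Phi(\N_{\delta_0}(D))}\subset\mathcal N$ rather than the open one. Otherwise the argument is purely formal and requires no further dynamical input beyond Proposition \ref{map-imp} and Lemma \ref{neigh-continuous}.
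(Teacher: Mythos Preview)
Your proof is correct and takes a genuinely different route from the paper's.

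The paper first establishes the intersection formula
\[
\pi_D^{-1}(\mathbf K)=\bigcap_{0<\epsilon<\epsilon_0}\overline{\Phi\Big(\bigcup_{E\in\mathbf K}\N_\epsilon(E)\Big)},
\]
and then reads off both parts: compactness is immediate, and for $(\Rightarrow)$ in (2) it observes that when $\mathbf K$ is connected the sets in the intersection form a nested family of compact continua, whose intersection is again a continuum. For $(\Leftarrow)$ the paper pulls back a disconnection of $\mathbf K$ to open subsets of $\partial\mathcal H$ using Lemma~\ref{open-fibre}(2). You bypass the intersection formula entirely and work directly with the continuous map $\pi_D$: compactness via closedness inside the compact set $\overline{\Phi(\N_{\delta_0}(D))}\subset\mathcal N$, $(\Leftarrow)$ via continuous image of connected sets, and $(\Rightarrow)$ via the classical ``connected fibers plus compact pieces'' argument. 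Your approach is more elementary and makes the role of Lemma~\ref{neigh-continuous} explicit; the paper's approach has the advantage that the intersection formula itself is a useful structural statement (it is reused implicitly in the proof of Theorem~\ref{boundary-lc} to show $\mathrm{diam}(\mathcal V_\rho(f))\to 0$). Both are equally rigorous here.
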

\begin{proof} 
		Let $\epsilon_0>0$ so that the $\epsilon_0$-neighborhood of $\mathbf{K}$ is contained in
	$\U_0$. We claim 
$$\pi_D^{-1}(\mathbf{K})=\bigcap_{0<\epsilon<\epsilon_0} \overline{\Phi\big(\bigcup_{E\in \mathbf{K}} \N_{\epsilon}(E)\big)},$$
which implies that $\pi_D^{-1}(\mathbf{K})$ is compact.

In fact `$\subset$' follows   from the definition of $I_{\Phi}(E)$. Now take
$g\in \bigcap_{0<\epsilon<\epsilon_0} \overline{\Phi(\cup_{E\in \mathbf{K}} \N_{\epsilon}(E))}$. For any $\epsilon\in (0, \epsilon_0)$, we have $g\in \overline{\Phi(\cup_{E\in \mathbf{K}} \N_{\epsilon}(E))}$.
It follows that there exist $E_\epsilon \in \mathbf{K}$ and $g_\epsilon \in \Phi(\N_\epsilon(E_\epsilon))\cap \mathcal N_\epsilon(g)$.
 Then there exist a sequence $\{\epsilon_n\}_{n\geq 1}$ with $\epsilon_n\rightarrow 0^+$ and
 a divisor $E\in \partial_0 {\rm Div}{(\mathbb D)}^S$, such that
 $E_{\epsilon_n}\rightarrow E\in \partial_0 {\rm Div}{(\mathbb D)}^S$ and $g_{\epsilon_n}\rightarrow g$. By the compactness of $\mathbf{K}$, we have $E\in \mathbf{K}$. By Fact \ref{fact-def},  we get $g\in I_{\Phi}(E)\subset  \pi_D^{-1}(\mathbf{K})$.
 This show the  `$\supset$' part, hence the equality holds.


If $\mathbf{K}$ is connected, then for any $\epsilon>0$, the set $\overline{\Phi(\cup_{E\in \mathbf{K}} \N_{\epsilon}(E))}$ is a connected and compact subset of $\overline{\mathcal H}$. Since  $\{\overline{\Phi(\cup_{E\in \mathbf{K}} \N_{\epsilon}(E))}\}_{0<\epsilon<\epsilon_0}$ is  a family of compact continua,  monotonically decreasing as  $\epsilon\rightarrow 0$, their intersection which equals to $\pi_D^{-1}(\mathbf{K})$ is compact and connected. 

If $\mathbf{K}$ is disconnected, then there are two  open sets $\N_1, \N_2\subset \U_0$ such that 
$$\N_1\cap\N_2=\emptyset, \  \N_1\cap \mathbf{K}\neq \emptyset, \ \N_2\cap \mathbf{K}\neq \emptyset, \  \mathbf{K}\subset \N_1\cup \N_2.$$ 
By Lemma \ref{open-fibre},   $\pi_D^{-1}(\N_1)$ and $\pi_D^{-1}(\N_2)$ are two  open subsets of $\partial \mathcal H$, separating the compact set
$\pi_D^{-1}(\mathbf{K})$.   Hence $\pi_D^{-1}(\mathbf{K})$ is disconnected.
  \end{proof}

\begin{proof}[Proof of Theorem \ref{boundary-lc}. ]
To show the local connectivity of $\partial \mathcal H$ at $f$, we need to construct a  shrinking sequence of connected open neighborhoods of $f$.
By Lemma \ref{open-fibre}, there is $\delta_0>0$ so that $\U_0=\U_{\delta_0}(D)\cap \partial {\rm Div}{(\mathbb D)}^S\subset \pi_D(\mathcal N \cap \partial\mathcal H)\subset \partial_0^* {\rm Div}{(\mathbb D)}^S$.
 For $\rho\in(0, \delta_0)$, define
$$\mathcal V_\rho(f):=\pi_D^{-1}(\U_{\rho}(D)\cap \partial_0^* {\rm Div}{(\mathbb D)}^S).$$

Clearly $\mathcal V_\rho(f)\subset \mathcal N \cap \partial\mathcal H$.
We will show that $\{\mathcal V_\rho(f); \rho\in(0,\delta_0)\}$  is a family of open and connected neighborhoods of $f$
 on $\partial \mathcal H$, with the property that $\lim_{ \rho \rightarrow 0^+} {\rm diam}(\mathcal V_{ \rho }(f))=0$.

In fact, the openness of  $\mathcal V_\rho (f)$ follows from Lemma \ref{open-fibre}. 
The proof of the connectivity of $\mathcal V_\rho  (f)$ goes as follows. Take any two maps $g_1, g_2\in \mathcal V_\rho(f)$, it is clear that $\pi_D(g_1), \pi_D(g_2)\in  \U_{\rho}(D)\cap \partial_0^* {\rm Div}{(\mathbb D)}^S$.
Since $\U_{\rho}(D)\cap \partial_0^* {\rm Div}{(\mathbb D)}^S$ is path connected, there is a path $\gamma\subset  \U_{\rho}(D)\cap \partial_0^* {\rm Div}{(\mathbb D)}^S$, such that $\gamma(0)=\pi_D(g_1)$ and  $\gamma(1)=\pi_D(g_2)$. By  Lemma \ref{compact-fibre}, $\pi_D^{-1}(\gamma)$ is a connected and compact subset of $\mathcal V_\rho(f)$, contained in $\mathcal V_\rho(f)$ and containing $I_{\Phi}(\pi_D(g_1))\owns g_1$ as well as $I_{\Phi}(\pi_D(g_2))\owns g_2$. 
Hence $\mathcal V_\rho(f)$ is a connected subset of $\partial \mathcal H$.


Finally, we  shall prove   ${\rm diam}(\mathcal  V_\rho(f))\rightarrow 0$ as  $\rho\rightarrow 0$.  To this end, it suffices to show that $\mathcal  V_\rho(f)\subset \overline{\Phi(\N_{\rho}(D))}$.
In fact, for any $g\in \mathcal  V_\rho(f)$, we have $\pi_D(g)\in \U_{\rho}(D)  \cap \partial_0^* {\rm Div}{(\mathbb D)}^S$,
so there is a $\epsilon_g>0$ with $\N_{\epsilon_g}(\pi_D(g))\subset \N_{\rho}(D)$. Therefore
$$g\in I_{\Phi}(\pi_D(g))\subset \overline{\Phi(\N_{\epsilon_g}(\pi_D(g)))}\subset \overline{\Phi(\N_{\rho}(D))}.$$
This shows  that $\mathcal  V_\rho(f)\subset \overline{\Phi(\N_{\rho}(D))}$.  Proposition \ref{divisor-singleton} gives that
  $I_{\Phi}(D)=\bigcap_{\rho>0} \overline{\Phi(\N_{\rho}(D))}=\{f\}$,   implying that
${\rm diam}(\overline{\Phi(\N_{\rho}(D))})\rightarrow 0  \text{ as } \rho\rightarrow 0.$
Hence  $\lim_{\rho\rightarrow 0^+} {\rm diam}(\mathcal V_{\rho}(f))=0$. 
\end{proof}


To conclude this section, we discuss the continuity of a kind of {\it induced  divisors}. 

Let $\mathcal N$ be 
given by Proposition \ref{map-imp}.   For   $g\in \mathcal N\cap \overline{\mathcal H}$, recall the definition of the divisor $E_g$ before Lemma \ref{neigh-continuous}. Write   $E_g=\big((B_{g,v}, E_{g,v}^{\partial})\big)_{v\in V}$ if $g\in \mathcal N\cap \partial{\mathcal H}$; $E_g=(B_{g,v})_{v\in V}$ if $g\in \mathcal N\cap {\mathcal H}$. For each $v\in V$, recall the definition of the divisor $E_{v}^\partial (g)$   in Definition \ref{an induced divisor} (for the case $g\in  \mathcal N\cap {\mathcal H}$) and
 Remark \ref{divisor-def-g} (for the case $g\in  \mathcal N\cap \partial{\mathcal H}$).
It's clear that 
for each $v\in V$, the induced divisor $(B_{g, v}, E_{v}^\partial (g))\in 
{\rm Div}_{d-1}(\overline{\mathbb D})$.

\begin{lem} \label{induced-continuous} Given a sequence of maps $\{g_n\}_{n\geq1}\subset \mathcal N\cap \overline{\mathcal H}$,  satisfying that 
	$g_n\rightarrow g\in \mathcal N\cap \overline{\mathcal H}$, we have the convergence of divisors
	$$(B_{g_n, v}, E_{v}^\partial (g_n))\rightarrow (B_{g, v}, E_{v}^\partial (g)), \ \forall v\in V.$$
\end{lem}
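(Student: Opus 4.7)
The plan is to deduce the convergence of the induced divisors from Lemma \ref{neigh-continuous} combined with the limb-correction formula in Remark \ref{divisor-def-g}. Lemma \ref{neigh-continuous} already supplies $(B_{g_n,v},E_{g_n,v}^\partial)\to (B_{g,v},E_{g,v}^\partial)$ in ${\rm Div}_{\delta(v)-1}(\overline{\mathbb D})$ and the locally uniform convergence $\phi_{g_n,v}\to \phi_{g,v}$ in $\mathbb D$, so it remains to upgrade this to $E_v^\partial(g_n)\to E_v^\partial(g)$ in ${\rm Div}_{d-\delta(v)}(\partial \mathbb D)$. Using Remark \ref{divisor-def-g}, I would first re-parametrize the identity $E_{g,v}^\partial = E_v^\partial(g) - \sum_{q\in\mathrm{supp}(E_v^\partial(g))}\big(\sum_{\zeta\in I_v(q)} n(\theta_v^-(\zeta),\theta_v^+(\zeta))\big)\cdot q$ in the form
$$E_v^\partial(h) = E_{h,v}^\partial + \sum_{\zeta\in\partial_v} n_\zeta\cdot q_h(\zeta),\qquad h\in\overline{\mathcal H}\cap\mathcal N,$$
where $\partial_v$ is the finite $f_0$-defined set in (8.1), $n_\zeta := n(\theta_v^-(\zeta),\theta_v^+(\zeta))$ is the sector critical count, and $q_h(\zeta)\in\partial\mathbb D$ is determined by the condition that $\phi_{h,v}(q_h(\zeta))\in\partial U_{h,v}$ is the common landing point of the rational external rays $R_h(\theta_v^\pm(\zeta))$.

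The argument then reduces to two assertions: (a) the coefficient $n_\zeta$ is locally constant in $h\in\overline{\mathcal H}\cap\mathcal N$, and (b) $q_{g_n}(\zeta)\to q_g(\zeta)$ in $\partial\mathbb D$ for each $\zeta\in\partial_v$. Assertion (a) is exactly property (d) in the proof of Proposition \ref{map-imp} for $h\in\mathcal H\cap\mathcal N$, and it propagates to $h\in\overline{\mathcal H}\cap\mathcal N$ after shrinking $\mathcal N$, using the stability of rational external rays (Lemma \ref{stability-e-r}) together with the $\mathcal H$-admissibility of $D$ and Lemma \ref{disjoint angle}(2) to ensure that the closed rays $\overline{R_h(\theta_v^\pm(\zeta))}$ carry no critical point of $h$. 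For (b), the landing point $\xi_{h,\zeta}\in\partial U_{h,v}$ of $R_h(\theta_v^\pm(\zeta))$ varies continuously in $h$ by Lemma \ref{stability-e-r}, but transferring this continuity to the model disc through Carath\'eodory kernel convergence is subtle since the latter gives no boundary values. I would sidestep this by reading $q_h(\zeta)$ off the model side: by $\mathcal H$-admissibility and the dynamical-to-model correspondence of Proposition \ref{convergence-repelling} (together with the continuous-motion machinery of Lemma \ref{c-m-internal-ray-model} and Proposition \ref{combinatorial-property-2}), the point $q_h(\zeta)$ coincides with the continuous-motion value $r_{v,q_g(\zeta)}(E_h)$, whose continuity in $h$ is inherited from the already established convergence $E_{g_n}\to E_g$.

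Combining (a) and (b) yields
$$\sum_{\zeta\in\partial_v} n_\zeta\cdot q_{g_n}(\zeta)\longrightarrow \sum_{\zeta\in\partial_v} n_\zeta\cdot q_g(\zeta),$$
which together with $E_{g_n,v}^\partial\to E_{g,v}^\partial$ forces $E_v^\partial(g_n)\to E_v^\partial(g)$, and with $B_{g_n,v}\to B_{g,v}$ produces the desired convergence $(B_{g_n,v},E_v^\partial(g_n))\to (B_{g,v},E_v^\partial(g))$. The main obstacle will be precisely the passage from the dynamical plane (where the landing points $\xi_{h,\zeta}$ live and are stable) to the model plane (where the $q_h(\zeta)$ live): Carath\'eodory convergence is not strong enough to transport boundary-value information, so the argument has to route through the combinatorial rigidity supplied by $\mathcal H$-admissibility, which identifies the boundary positions $q_h(\zeta)$ via their universal symbolic addresses rather than via the Riemann maps $\phi_{h,v}$ themselves.
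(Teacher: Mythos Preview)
Your re-parametrization $E_v^\partial(h) = E_{h,v}^\partial + \sum_{\zeta\in\partial_v} n_\zeta\cdot q_h(\zeta)$ is a correct rewriting of Remark~\ref{divisor-def-g}, and (a) is indeed property (d) from the proof of Proposition~\ref{map-imp}. The gap is in (b). First, your description of $q_h(\zeta)$ is wrong for $h\in\partial\mathcal H\cap\mathcal N$: the common landing point $\xi_{h,\zeta}$ of $R_h(\theta_v^\pm(\zeta))$ generally lies strictly inside a limb, not on $\partial U_{h,v}$, so $\phi_{h,v}(q_h(\zeta))\neq\xi_{h,\zeta}$ and you cannot extract $q_h(\zeta)$ from landing-point continuity alone. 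Second, and more seriously, the sidestep $q_h(\zeta)=r_{v,q_g(\zeta)}(E_h)$ is unjustified: Proposition~\ref{extension-repelling} requires the base point to be $E_g$-preperiodic and off the exceptional set $X_{u,s}$, but for a general $g\in\partial\mathcal H\cap\mathcal N$ (which need not be $\mathcal H$-admissible or even Misiurewicz --- Proposition~\ref{map-imp} does not assert this) the limb root $\phi_{g,v}(q_g(\zeta))$ can be wandering. Even granting preperiodicity, you would still owe a proof that the motion $r_{v,q_g(\zeta)}(E_h)$ coincides with the \emph{root} of the correct $h$-limb rather than some other preperiodic boundary point; none of Propositions~\ref{convergence-repelling}, \ref{combinatorial-property-2} or Lemma~\ref{c-m-internal-ray-model} provides this.

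The paper bypasses the root-tracking problem entirely. It splits into the cases $\{g_n\}\subset\mathcal H$ and $\{g_n\}\subset\partial\mathcal H$. The first is a direct application of Proposition~\ref{divisor-correspondence} (with $g,E_g$ playing the roles of $f,D$), yielding $E_v^\partial(g_n)+E_{g,v}^\partial\to E_v^\partial(g)$. For the second, one passes to a subsequence with constant $\deg(E_{g_n,v}^\partial)$ and limit $E_{\infty,v}^\partial\leq E_{g,v}^\partial$, so that $B_{g_n,v}\to(B_{g,v},E_{g,v}^\partial-E_{\infty,v}^\partial)$, and then \emph{repeats} the graph argument of Proposition~\ref{divisor-correspondence} --- using the locally uniform convergence $\phi_{g_n,v}\to\phi_{g,v}$ supplied by Lemma~\ref{neigh-continuous} --- to obtain $E_v^\partial(g_n)+(E_{g,v}^\partial-E_{\infty,v}^\partial)\to E_v^\partial(g)$. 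That argument needs only Hausdorff continuity of the ray-pair graph $\Gamma_v$ and a critical-point count in each complementary component; it never requires the limb roots to be preperiodic or to admit a continuous motion in the sense of Proposition~\ref{extension-repelling}. If you want to make your formula-based route work, the honest way to prove (b) is precisely this graph argument, at which point it becomes the paper's proof.
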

\begin{proof} The case $g\in \mathcal N\cap  {\mathcal H}$ is immediate, so we may  assume $g\in \mathcal N\cap \partial{\mathcal H}$.  It suffices to treat two cases: $\{g_n\}_{n\geq1}\subset \mathcal N\cap  {\mathcal H}$ or  $\{g_n\}_{n\geq1}\subset \mathcal N\cap  \partial{\mathcal H}$.
	
	If  $\{g_n\}_{n\geq1}\subset \mathcal N\cap  {\mathcal H}$, by Lemma \ref{neigh-continuous}, we have $E_{g_n}:=\big(B_{g_n,v}\big)_{v\in V}\rightarrow E_g$, implying that $B_{g_n,v}\rightarrow (B_{g,v}, E_{g,v}^{\partial})$  for each $v\in V$. By Proposition \ref{divisor-correspondence}, $E_{v}^\partial (g_n)+E_{g, v}^\partial\rightarrow E_{v}^\partial (g)$. It follows that 
	$(B_{g_n, v}, E_{v}^\partial (g_n))\rightarrow (B_{g, v}, E_{v}^\partial (g)).$
	
	If $\{g_n\}_{n\geq1}\subset \mathcal N\cap  \partial{\mathcal H}$, the proof is essentially same as the previous case. Here is a sketch. By Lemma \ref{neigh-continuous}, we have 
$$E_{g_n}:=\big((B_{g_n,v}, E_{g_n ,v}^\partial)\big)_{v\in V}\rightarrow E_g=\big((B_{g,v}, E_{g,v}^{\partial})\big)_{v\in V},$$ implying that $(B_{g_n,v}, E_{g_n ,v}^\partial)\rightarrow (B_{g,v}, E_{g,v}^{\partial})$  for each $v\in V$. By choosing subsequences, we assume ${\rm deg}(E_{g_n ,v}^\partial)$ is independent of $n$, and   $\lim_{n\rightarrow\infty }E_{g_n ,v}^\partial=E_{\infty, v}^\partial$ (it is clear that $E_{\infty, v}^\partial\leq E_{g,v}^{\partial}$).  Then we have 
	$$B_{g_n,v}\rightarrow (B_{g,v}, E_{g,v}^{\partial}-E_{\infty, v}^\partial).$$
	
	Lemma \ref{neigh-continuous} asserts that
	$\phi_{g_n,v}$ converges locally and uniformly  to $\phi_{g,v}$.  This allows us to 
	repeat  the argument  of Proposition \ref{divisor-correspondence}, and to deduce that 
	$$E_{v}^\partial (g_n)+E_{g ,v}^\partial-E_{\infty ,v}^\partial\rightarrow E_{v}^\partial (g).$$
	Combining above convergences, we have  $(B_{g_n, v}, E_{v}^\partial (g_n))\rightarrow (B_{g, v}, E_{v}^\partial (g))$ for all $v\in V$. The proof is completed.
\end{proof}

By Proposition \ref{map-imp}, 
$\pi_D(\mathcal N\cap \overline{\mathcal H})\subset \partial_0^* {\rm Div}{(\mathbb D)}^S\cup   {\rm Div}{(\mathbb D)}^S$. Each $E=((B_{u}, E_{u}^{\partial}))_{u\in V}\in \pi_D(\mathcal N\cap \overline{\mathcal H})$  can be written as $E=E_g$ for some $g\in \mathcal N\cap \overline{\mathcal H}$. For $v\in V$, let $E_v^{\partial}(\mathcal H)=E_v^{\partial}(g)$.  By Proposition \ref{combinatorial-property-2},   $E_v^{\partial}(\mathcal H)$ does not depend on the choice of $g\in I_{\Phi}(E)$, when $E\in  \partial_0^* {\rm Div}{(\mathbb D)}^S\cap \pi_D(\mathcal N\cap \overline{\mathcal H})$.



\begin{pro} \label{divisors-continuous}
	Take $r\in (0, \delta_0)$, 
where $\delta_0$ is given by  Lemma \ref{open-fibre}. Then for any $v\in V$, the map   
	$$\eta_{\mathcal H, v}: \begin{cases}   \overline{\U_r(D)} \rightarrow  {\rm Div}_{d-1}{(\overline{\mathbb D})},\\
		E=((B_u, E_u^{\partial}))_{u\in V}\mapsto (B_v, E_v^{\partial}(\mathcal H))
	\end{cases}$$
	is continuous.
\end{pro}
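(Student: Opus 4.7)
The plan is to deduce the continuity of $\eta_{\mathcal H,v}$ from the sequential continuity established in Lemma \ref{induced-continuous}, using $\pi_D$ (see \eqref{pi-d}) as a continuous right inverse (up to compact set-valued selection). The key conceptual point is that Proposition \ref{combinatorial-property-2} guarantees that $E_v^{\partial}(\mathcal H)$ depends only on $E$, not on the choice of $g\in I_{\Phi}(E)$. Hence for any $E\in\overline{\U_r(D)}$ we have the identification
$$\eta_{\mathcal H,v}(E)=(B_{g,v},E_v^{\partial}(g))$$
for \emph{any} $g\in\pi_D^{-1}(E)$ (with $g=\Phi(E)$ when $E\in{\rm Div}(\mathbb D)^S$), which converts the problem into one about maps in $\overline{\mathcal H}\cap\mathcal N$.

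First I would verify the set-up: by Lemma \ref{open-fibre}, the compact set $K:=\overline{\Phi(\mathbf N_{\delta_0}(D))}$ is contained in $\mathcal N\cap\overline{\mathcal H}$, and $I_{\Phi}(E)\subset K$ for every $E\in\overline{\U_r(D)}$ (since $r<\delta_0$ ensures $\overline{\U_r(D)}\subset\U_{\delta_0}(D)$, and an argument as in \eqref{inclusion0} gives the containment). In particular $\overline{\U_r(D)}\cap\partial{\rm Div}(\mathbb D)^S\subset\partial_0^*{\rm Div}(\mathbb D)^S$, so $\eta_{\mathcal H,v}$ is well-defined on $\overline{\U_r(D)}$. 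Next, for sequential continuity, take $E_n\to E$ in $\overline{\U_r(D)}$ and choose $g_n\in K$ with $\pi_D(g_n)=E_n$ (namely $g_n=\Phi(E_n)$ if $E_n$ lies in the interior, and any point of $I_{\Phi}(E_n)$ otherwise). Compactness of $K$ yields a subsequence $g_{n_k}\to g^{*}\in K$; by continuity of $\pi_D$ (Lemma \ref{neigh-continuous}), $\pi_D(g^{*})=\lim_k\pi_D(g_{n_k})=E$, so $g^{*}\in\pi_D^{-1}(E)$. Lemma \ref{induced-continuous} applied to $g_{n_k}\to g^{*}$ then gives
$$\eta_{\mathcal H,v}(E_{n_k})=(B_{g_{n_k},v},E_v^{\partial}(g_{n_k}))\longrightarrow(B_{g^{*},v},E_v^{\partial}(g^{*}))=\eta_{\mathcal H,v}(E).$$
Since every subsequence of $\{\eta_{\mathcal H,v}(E_n)\}$ admits a further subsequence converging to the common value $\eta_{\mathcal H,v}(E)$, the full sequence converges, which is exactly continuity.

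The main (mild) obstacle is the subsequence/selection step: one must ensure that the lifts $g_n\in\pi_D^{-1}(E_n)$ can be chosen coherently inside a fixed compact set, and that \emph{every} accumulation point of $(g_n)$ lies over $E$. The first is handled by the uniform containment $I_{\Phi}(E_n)\subset K$ coming from the defining property of $\delta_0$, while the second is built into the continuity of $\pi_D$. Neither requires new dynamical input beyond what was already assembled for the extension theorem, so the proof should be short.
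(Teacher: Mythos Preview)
Your proposal is correct and follows essentially the same route as the paper: both lift to representative maps $g_n$ in the compact set $\overline{\Phi(\N_{\delta_0}(D))}\subset\mathcal N$, extract a convergent subsequence, use the continuity of $\pi_D$ to identify the limit as lying over $E$, and then invoke Lemma~\ref{induced-continuous} to conclude. The only cosmetic difference is that the paper explicitly separates the cases $g^{*}\in\mathcal H$ and $g^{*}\in\partial\mathcal H$, whereas you correctly observe that Lemmas~\ref{neigh-continuous} and~\ref{induced-continuous} already handle both cases uniformly.
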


\begin{proof} 
We first claim that 
	$\pi_D^{-1}(\overline{\U_r(D)} )\subset \overline{\Phi(\N_{\delta_0}(D))}$. 
 First note that 
 \begin{equation} \label{inclusion1}
 	\overline{\U_r(D)} \cap {\rm Div}{(\mathbb D)}^S\subset \N_{\delta_0}(D).
 	\end{equation}
	For any $E\in \overline{\U_r(D)} \cap \partial {\rm Div}{(\mathbb D)}^S$, there is $\epsilon>0$ with $\N_\epsilon (E)\subset \N_{\delta_0}(D)$, it follows that $I_\Phi(E)\subset  \overline{\Phi(\N_\epsilon (E))}\subset  \overline{\Phi(\N_{\delta_0}(D))}$.  Hence
	 \begin{equation} \label{inclusion2}
	 \pi_D^{-1}(\overline{\U_r(D)} \cap \partial{\rm Div}{(\mathbb D)}^S)\subset \overline{\Phi(\N_{\delta_0}(D))}.
	\end{equation}
 Combining (\ref{inclusion1}) and  (\ref{inclusion2}), we get the claim.  

	
	Take a sequence  of divisors $E_n=((B_{n,u}, E_{n,u}^{\partial}))_{u\in V} \in \overline{\U_r(D)} $  converging to $E=((B_u, E_u^{\partial}))_{u\in V} \in  \overline{\U_r(D)}$.  Choose a representative sequence $\{g_n\}_{n\geq 1}$ so that $\pi_D(g_n)=E_n$. Then $g_n\in  \pi_D^{-1}(\overline{\U_r(D)} )\subset \overline{\Phi(\N_{\delta_0}(D))}$.   Since $ \overline{\Phi(\N_{\delta_0}(D))}$ is a compact subset of $\mathcal N$
	(see  (\ref{inclusion0})),  by choosing a  subsequence, we assume $g_n\rightarrow g\in \overline{\Phi(\N_{\delta_0}(D))}\subset  \mathcal N \cap \overline{\mathcal H}$.
	
	If $g\in  \mathcal N \cap {\mathcal H}$,  then $\Phi(E)=g$. In this case, the Julia set moves holomorphically near $g$, and  it's clear that $(B_{n, v}, E_{n,v}^\partial (\mathcal H))\rightarrow (B_{v}, E_{v}^\partial (\mathcal H))$.  
	If $g\in  \mathcal N \cap \partial{\mathcal H}$,  by  Lemma \ref{neigh-continuous},  $\pi_D(g)=E$. 
	By Lemma \ref{induced-continuous},  $(B_{n, v}, E_{n,v}^\partial (\mathcal H))\rightarrow (B_{v}, E_{v}^\partial (\mathcal H))$ as $n\rightarrow \infty$. 
\end{proof}
Proposition \ref{divisors-continuous} 
 is useful in the study of the  perturbation on  $\partial\mathcal H$ in next section (see the proof of Proposition \ref{pertubation-boundary}).

\section{Perturbation on the boundary  $\partial\mathcal H$}\label{pb-boundary}

In this section, we shall establish the local  perturbation theory near an $\mathcal H$-admissible map $f\in \partial\mathcal H$. We shall show that in any arbitrarily small neighborhood of $f$,  there are  bunch of $\mathcal H$-admissible maps on each local boundary strata of the local connectedness locus $\partial_{LC} \mathcal H$.
This allows us to understand the local structure of the boundary  $\partial\mathcal H$, and to estimate the local Hausdorff dimensions (in the next section). 



\subsection{Correspondence via continuous motions}

This part establishes  a  correspondence of the hyperbolic sets between the polynomial space and  its  model space.

\begin{pro} \label{perturbation-hyp-set-general} Let $D=\big((B_u^0, D_u^{\partial})\big)_{u\in V}\in  \partial_0^* {\rm Div}{(\mathbb D)}^S$ be  $\mathcal H$-admissible,  with $I_\Phi(D)=\{f\}$.
	Let $v\in V_{\rm p}$ and $\ell\geq 1$ be the $\sigma$-period of $v$. Let $\widehat{B}^0_{v}$  be defined in Proposition \ref{convergence}. 
	Let  $\mathcal N$ be the neighborhood  of $f$
	given by  Proposition \ref{map-imp}, and let $\pi_D$ be defined in (\ref{pi-d}).
	
 Given an   $f^\ell$-hyperbolic set  $X$  on $\partial U_{f,v}$, write $\Lambda=\psi_{f,v}(X)\subset \partial \mathbb D$. Then there exist
	
	\begin{itemize}
		\item a number $\tau_0>0$, 
		a continuous motion $H: \U_{\tau_0}(D)\times \Lambda\rightarrow \partial \mathbb{D}$, with base point $D$, compatible with dynamics:
		$$\widehat{B}_v \circ H(E, \lambda)= H(E, \widehat{B}_v^0(\lambda)), \ \forall (E, \lambda)\in \U_{\tau_0}(D)\times \Lambda,$$
		where 	$E=\big((B_u, E_u^{\partial})\big)_{u\in V}\in  \U_{\tau_0}(D)$,   $\widehat{B}_v:=B_{\sigma^{\ell-1}(v)}\circ \cdots \circ B_v$;
		
		\item a neighborhood $\mathcal{N}_0\subset \mathcal N$ of $f$, and a holomorphic motion
		 $h: \mathcal{N}_0\times X\rightarrow  \mathbb{C}$, with base point $f$, compatible
		 with dynamics:
		 $$g^\ell \circ h(g,  x)= h(g, f^\ell(x)), \ \forall (g, x)\in \mathcal{N}_0\times X,$$
		\end{itemize} 
	such that $\pi_D(\mathcal N_0\cap \overline{\mathcal H})\subset {\U_{\tau_0}(D)}$, and  the following equality holds
		\begin{equation}\label{two-motions}
	H(\pi_D(g), \psi_{f,v} (x))=\psi_{g,v}\circ h(g, x), \ \forall (g, x)\in (\mathcal N_0\cap \overline{\mathcal H})\times X.
	\end{equation}
\end{pro}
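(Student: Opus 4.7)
The plan is to construct $H$ and $h$ separately by invoking Proposition \ref{hyp-set} on the divisor side and the polynomial side, and then to identify them on $\mathcal{N}_0\cap\mathcal{H}$ via the local uniqueness of continuous motions, extending to $\partial\mathcal{H}$ by continuity. First I verify that $\Lambda=\psi_{f,v}(X)$ is a $\widehat{B}_v^0$-hyperbolic subset of $\partial\mathbb{D}$: iterating Lemma \ref{model-map} and extending to the boundary via Carath\'eodory's theorem (using that $\overline{U_{f,v}}\to\overline{\mathbb{D}}$ is a homeomorphism since $\partial U_{f,v}$ is a Jordan curve by Proposition \ref{RY}), we have $\widehat{B}_v^0=\psi_{f,v}\circ f^\ell\circ\phi_{f,v}$ on $\overline{\mathbb{D}}$, so the conformal conjugacy transports the $f^\ell$-hyperbolicity of $X$ to $\widehat{B}_v^0$-hyperbolicity of $\Lambda$.

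To construct $H$, I apply Proposition \ref{hyp-set} to the continuous family $\{\widehat{B}_v\}_{E\in\U_\tau(D)}$ of holomorphic maps defined in a fixed neighborhood of $\Lambda$; continuity of the family in the parameter $E$ is precisely Proposition \ref{convergence}(2). For $\tau_0>0$ sufficiently small, this yields a continuous motion $H:\U_{\tau_0}(D)\times\Lambda\to\mathbb{C}$ with base $D$, satisfying $\widehat{B}_v\circ H(E,\cdot)=H(E,\widehat{B}_v^0(\cdot))$. The image lies in $\partial\mathbb{D}$ because each $\widehat{B}_v$ is a composition of Blaschke products and thus preserves $\partial\mathbb{D}$, and the inverse-branch construction in Proposition \ref{hyp-set} therefore keeps $H$ on $\partial\mathbb{D}$.

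For $h$, I apply Proposition \ref{hyp-set} to the holomorphic family $\mathcal{P}_d$ and the $f^\ell$-hyperbolic set $X$, obtaining a holomorphic motion $h:\mathcal{N}_0\times X\to\mathbb{C}$ compatible with $f^\ell$-dynamics on some $\mathcal{N}_0\subset\mathcal{N}$. By Lemma \ref{neigh-continuous}, $\pi_D$ is continuous on $\mathcal{N}\cap\overline{\mathcal{H}}$ with $\pi_D(f)=D$, so after shrinking $\mathcal{N}_0$ I may assume $\pi_D(\mathcal{N}_0\cap\overline{\mathcal{H}})\subset\U_{\tau_0}(D)$. To match $h$ with $H$, define
$$\widetilde{h}(g,x):=\phi_{g,v}\bigl(H(\pi_D(g),\psi_{f,v}(x))\bigr),\quad g\in\mathcal{N}_0\cap\mathcal{H},\ x\in X.$$
For $g\in\mathcal{H}$, iterating Lemma \ref{model-map} gives $g^\ell\circ\phi_{g,v}=\phi_{g,v}\circ\widehat{B}_v^{\pi_D(g)}$, which combined with the dynamical compatibility of $H$ shows that $\widetilde{h}$ is a continuous motion of $X$ conjugating $f^\ell$ to $g^\ell$. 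By the local uniqueness in Proposition \ref{hyp-set}(2), after a final shrinkage of $\mathcal{N}_0$, $\widetilde{h}\equiv h$ on $(\mathcal{N}_0\cap\mathcal{H})\times X$, giving \eqref{two-motions} on the interior of $\mathcal{H}$.

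The main obstacle, and the step requiring the most care, is extending \eqref{two-motions} across $\partial\mathcal{H}$, since a priori $h(g,x)$ for $g\in\partial\mathcal{H}$ need not lie in $\overline{U_{g,v}}$ where $\psi_{g,v}$ is defined. The resolution exploits Lemma \ref{neigh-continuous}: for any $g\in\mathcal{N}_0\cap\partial\mathcal{H}$ and any sequence $g_n\in\mathcal{H}$ with $g_n\to g$, we have $\pi_D(g_n)\to\pi_D(g)\in\U_{\tau_0}(D)$ and $\phi_{g_n,v}\to\phi_{g,v}$ locally uniformly on $\mathbb{D}$, hence uniformly up to the boundary by a standard Carath\'eodory–Jordan argument (using again that $\partial U_{g,v}$ is a Jordan curve). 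Taking the limit in $\widetilde{h}(g_n,x)=h(g_n,x)$ yields $\phi_{g,v}(H(\pi_D(g),\psi_{f,v}(x)))=h(g,x)$, so in particular $h(g,x)\in\phi_{g,v}(\partial\mathbb{D})=\partial U_{g,v}\subset\overline{U_{g,v}}$ and applying $\psi_{g,v}$ gives \eqref{two-motions} on $(\mathcal{N}_0\cap\overline{\mathcal{H}})\times X$.
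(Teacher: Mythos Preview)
Your overall architecture—build $H$ on the divisor side and $h$ on the polynomial side via Proposition~\ref{hyp-set}, then identify them—matches the paper's. The construction of $H$ and $h$ is essentially correct, though you should verify $\Lambda\cap Z_v=\emptyset$ before invoking Proposition~\ref{convergence}(2); this uses $f^k(X)\cap{\rm Crit}(f)=\emptyset$ together with the $\mathcal H$-admissibility of $D$, and the paper spells it out.

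The substantive gap is your appeal to a ``standard Carath\'eodory--Jordan argument'' for uniform convergence $\phi_{g_n,v}\to\phi_{g,v}$ on $\overline{\mathbb D}$. No such result is available here: Lemma~\ref{neigh-continuous} gives only locally uniform convergence on $\mathbb D$, and the paper explicitly notes (Remark after Lemma~\ref{carathodory-convergence}) that $\overline{U_{f_n,v}}$ need not Hausdorff-converge to $\overline{U_{f,v}}$. As $g_n\to f\in\partial\mathcal H$ the Fatou component $U_{g_n,v}$ typically pinches off pieces, so $\phi_{g_n,v}|_{\partial\mathbb D}$ can misbehave on large arcs. This undermines both your step~6 and, more fundamentally, your step~5: since $f\notin\mathcal H$, your motion $\widetilde h$ is defined only on $(\mathcal N_0\cap\mathcal H)\times X$ and has no base point, so Proposition~\ref{hyp-set}(2) cannot be invoked unless you first show $\widetilde h(g,x)\to x$ as $\mathcal H\ni g\to f$—which is exactly the unproved boundary-convergence claim.

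The paper avoids this entirely. It never asserts boundary convergence of $\phi_{g,v}$; instead it invokes Proposition~\ref{holo-hyperbolic-set} to extend $h$ to a continuous motion of short arcs $l_x\subset\overline{U_{f,v}}$ with $l_x\setminus\{x\}\subset U_{f,v}$ landing at each $x\in X$. On the compact interior subarcs $l_x^0\Subset U_{f,v}$ the locally uniform convergence of $\psi_{g,v}$ from Lemma~\ref{neigh-continuous} \emph{is} available; the paper then pulls back along inverse branches of $g^\ell$ and $\widehat B_v$ (using the expanding metrics $\rho,\sigma$) to transport this interior identity out to the endpoint $x$, obtaining \eqref{two-motions} directly for every $g\in\mathcal N_0\cap\overline{\mathcal H}$ without any uniqueness argument. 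To repair your proof you would need to replace the global boundary-convergence claim with this arc-shrinking mechanism.
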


The relations of the maps in Proposition \ref{perturbation-hyp-set-general} are  illustrated by 
the following  commutative diagram (here $X_g=h(g, X), \  \Lambda_{\pi_D(g)}=H(\pi_D(g), \Lambda)$):
$$
\xymatrix{
	X\ar[rd]^{h(g, \cdot)}\ar[rr]^{f^\ell}\ar[dd]_{\psi_{f,v}}&&X\ar[rd]^{h(g, \cdot)}\ar[dd]^{\psi_{f,v}}|\hole\\
	&X_g\ar[rr]^{g^\ell \ \ \ \ \ \ \ }\ar[dd]^{\psi_{g,v}}&&X_g\ar[dd]^{\psi_{g,v}}\\
	\Lambda\ar[rd]_{H(\pi_D(g), \cdot)}\ar[rr]^{ \widehat{B}_v^0\ \ \ \ \ \ }|\hole&&\Lambda\ar[rd]\ar[rd]_{H(\pi_D(g), \cdot)}\\
	&\Lambda_{\pi_D(g)}\ar[rr]_{ \widehat{B}_v}&&\Lambda_{\pi_D(g)}\\
}
$$

\begin{proof}
	 Since $X$ is $f^\ell$-hyperbolic, we have $f^k(X)\cap {\rm Crit}(f)=\emptyset $ for all $k\geq 0$. 
Since $D$ is $\mathcal H$-admissible, by Proposition \ref{h-admissible-characterization},  
$f^k(X)\cap \phi_{f, \sigma^{k}(v)} ({\rm supp}(D_{\sigma^{k}(v)}^\partial))=\emptyset$ for all $k\geq 0$.
It follows that $\Lambda\cap Z_v=\emptyset$, where $Z_v$ is given by Proposition  \ref{extension-repelling}. 
Since  $\widehat{B}_{v}^0$ is a hyperbolic map with Julia set $\partial \mathbb D$, the compact set $\Lambda$  satisfying that  $\widehat{B}_{v}^0(\Lambda)\subset \Lambda$ is  a hyperbolic set of $\widehat{B}_{v}^0$.

Let  $\rho=\rho(z)|dz|$ be an expanding metric of $f^\ell$ defined in a neighborhood $U_X$ of $X$, let  $\sigma=\sigma(\zeta)|d\zeta|$ be an expanding metric of $\widehat{B}_{v}^0$ defined  in a neighborhood $U_\Lambda$ of $\Lambda$ (see the proof of Proposition \ref{hyp-set} for the construction of these metrics).  
Note that these metrics are comparable with the Euclidean metric.
By the uniform continuity of $\psi_{f, v}: \overline{U_{f,v}}\rightarrow \overline{\mathbb D}$,  for any small $\delta>0$, there is  $r(\delta)>0$ such that for any 
$z\in \overline{U_{f,v}}\cap U_X$ and $x\in X$, 
$$ d_{\rho}(z, x)<r(\delta) \Longrightarrow d_\sigma(\psi_{f, v}(z), \psi_{f,v}(x)) <\delta/2.$$

By Proposition \ref{convergence} (to apply  it, note that $Z_v\subset W_{v}^\ell$), there exist $\tau_0>0$ and $\epsilon_0>0$ such that  
  $\widehat{B}_{v}: \C\setminus \bigcup_{q\in Z_v}\overline{ \mathbb{D}(q, \epsilon_0)}\rightarrow  \C $ defines a continuous  family of holomorphic  maps, parameterized by 
$E=((B_u, E_u^{\partial}))_{u\in V}\in\U_{\tau_0}(D)$.
By the proven fact $\Lambda\cap Z_v=\emptyset$, we may assume $\epsilon_0>0$ is small  so that 
  $\Lambda \subset \C\setminus \bigcup_{q\in Z_v} \overline{ \mathbb{D}(q, \epsilon_0)}$. By Proposition \ref{hyp-set}, and  shrinking $\tau_0$ if necessary,  there is a continuous motion $H:
	\U_{\tau_0}(D)\times \Lambda\rightarrow  \partial\mathbb{D}$, compatible with dynamics, as required in Proposition \ref{perturbation-hyp-set-general}. Moreover, as indicated by the proof of Proposition \ref{hyp-set},    there is $\delta>0$, such that 
	\begin{equation} \label{expanding-model}
		 \begin{cases} 
		|\widehat{B}'_{v}(\lambda)|_\sigma>1, \ \forall  (E, \lambda)\in \U_{\tau_0}(D)\times \Lambda^\delta, \ 
		\Lambda^\delta=\bigcup_{\lambda\in \Lambda} D_\sigma(\lambda, \delta),\\
		D_\sigma(\widehat{B}^0_{v}(\lambda), \delta)\Subset \widehat{B}_{v}(D_\sigma(\lambda, \delta)),  \ \forall  (E, \lambda)\in \U_{\tau_0}(D)\times \Lambda.
		 \end{cases} 
	\end{equation}
	

For the $f^\ell$-hyperbolic set $X$,  again by Proposition \ref{hyp-set},  there exist a neighborhood $\mathcal{N}_0\subset \mathcal N$ of $f$, and a holomorphic motion
$h: \mathcal{N}_0\times X\rightarrow  \mathbb{C}$, compatible with dynamics,
 and $r\in (0, r(\delta)]$ such that 
 \begin{equation} \label{expanding-polynomial}
 	\begin{cases} 
 		|(g^\ell)'(z)|_\rho>1, \ \forall  (g, z)\in \mathcal{N}_0\times X^r, \ 
 		X^r=\bigcup_{x\in X} D_\rho(x, r),\\
 		D_\rho(f^\ell(x), r)\Subset g^\ell(D_\rho(x, r)),  \ \forall  (g, x)\in \mathcal{N}_0\times X.
 	\end{cases} 
 \end{equation}

	By the continuity of $\pi_D$, we may shrink $\mathcal N_0$  so that $\pi_D(\mathcal N_0\cap \overline{\mathcal H})\subset {\U_{\tau_0}(D)}$.
	
	In the following, we shall show  (\ref{two-motions}).
	By Proposition \ref{holo-hyperbolic-set} and shrink $\mathcal N_0$ if necessary,  the holomorphic motion $h: \mathcal{N}_0\times X\rightarrow  \mathbb{C}$  extends to a continuous motion  $h: \mathcal{N}_0\times \bigsqcup_{x\in X} l_x\rightarrow  \mathbb{C}$ (still denoted by $h$), where $l_x$ is an arc in $\overline{U_{f,v}}$  with one endpoint $x\in X\subset \partial U_{f,v}$ and the other endpoint $a(x)\in U_{f,v}$, satisfying that $l_x\setminus \{x\}\subset U_{f,v}$ and $l_{f(x)}\subsetneqq f^\ell(l_{x})$.  
 	Note that the component $l_x(k)$ of $f^{-\ell k}(l_{f^{\ell k}(x)})$ containing $x$ shrinks down to   $x$ as $k\rightarrow \infty$. Replacing $l_x$ by $l_x(m)$ for some large $m$ (independent of $x\in X$) if necessary, we   assume
	$l_x\subset D_{\rho}(x, r/2), \ \forall  x\in X$.
Then  $\psi_{f,v}(l_x)\subset D_{\sigma}(\psi_{f,v}(x), \delta/2)$ for all $x\in X$.

	Now for each $x\in X$, the arc $l_x$ contains a  subarc $l_x^0$, with endpoints $a(x)$ and $b(x)$, where $b(x)\in l_x$ satisfies $f^\ell(b(x))=a(f^\ell (x))$. 
	The set $K=\bigsqcup_{x\in X} l_x^0$ is  a compact subset    of $U_{f, v}$.
	
	By Lemma \ref{neigh-continuous},  the map $(\mathcal N_0\cap \overline{\mathcal H})\times \mathbb D\rightarrow \mathbb C$ defined by $(g, z)\mapsto \phi_{g,v}(z)$ is continuous. 
	 By Lemma \ref{carathodory-convergence}, we may further shrink $\mathcal{N}_0$ so that $K\subset U_{g,v}$ for all $g\in  \mathcal N_0\cap \overline{\mathcal H}$. By the uniform continuities of $h|_{\mathcal N_0\times K}$ and 
 the map $(\mathcal N_0\cap \overline{\mathcal H})\times K\ni (g, z)\mapsto \psi_{g,v}(z)$,  we have 
 $$h(g, l_x^0)\subset D_{\rho}(x, r), \ \forall (g, x)  \in   (\mathcal N_0\cap \overline{\mathcal H}) \times X,$$ 
	$$d_\sigma(\psi_{g,v}(z), \psi_{f,v}(z)) <\delta/2, \ \forall (g, z)  \in   (\mathcal N_0\cap \overline{\mathcal H}) \times K.$$
	It follows that 
	$\psi_{g,v}(l^0_x)\subset D_{\sigma}(\psi_{f,v}(x), \delta)$ for all $(g, x)\in   (\mathcal N_0\cap \overline{\mathcal H}) \times X$.
	
For  $(g, x)\in  (\mathcal N_0\cap \overline{\mathcal H})\times X$ and   $n\geq 0$,   we define  two sequences of arcs
\bess 
&l_{g,n,x} =g^\ell|_{D_{\rho}(x, r)}^{-1}\circ \cdots \circ g^\ell|_{D_{\rho}(f^{\ell n}(x), r)}^{-1}(h(g, l_{f^{\ell(n+1)}(x)}^0)), &\\
&L_{g,n,x}=\widehat{B}_{v}|_{D_{\sigma}(\lambda_x, \delta)}^{-1}\circ \cdots \circ
\widehat{B}_{v}|_{D_{\sigma}((\widehat{B}_{v}^0)^n(\lambda_x), \delta)}^{-1}(\psi_{g, v}(h(g, l_{f^{\ell(n+1)}(x)}^0))),&
\eess
where $\lambda_x=\psi_{f,v}(x)$, $\pi_D(g)=((B_u, E_u^{\partial}))_{u\in V}$.

 By the relation 
$\widehat{B}_{v}\circ \psi_{g,v}=\psi_{g,v} \circ g^{\ell}$ in $U_{g,v}$,  the arc
$L_{g, 0, x}$ is a $\widehat{B}_{v}$-preimage of $\psi_{g,v}(h(g, l_{f^\ell (x)}^0))$ in $D_{\sigma}(\lambda_x, \delta)$. 
Note that  $\psi_{g,v}(l_{g,0,x})$ is also a $\widehat{B}_{v}$-preimage of $\psi_{g,v}(h(g, l_{f^\ell(x)}^0))$. 
Since 
$L_{g, 0, x}$ and $\psi_{g,v}(l_{g,0,x})$ contain  a common   point 
$$\widehat{B}_{v}|_{D_{\sigma}(\lambda_x, \delta)}^{-1}(\psi_{g, v}(h(g, a(f^\ell(x)))))=\psi_{g,v} \circ g^\ell |_{D_{\rho}(x, r)}^{-1}(h(g, a(f^\ell (x)))),$$
 which is  a $\widehat{B}_{v}$-preimage of $\psi_{g, v}(h(g, a(f^\ell(x))))$, 
we have $L_{g, 0, x}=\psi_{g,v}(l_{g,0,x}).$

Similarly, by induction, for all $n\geq 1$,
\begin{equation} \label{relation-m-d}
	L_{g, n, x}=\psi_{g,v}(l_{g,n,x}).
	\end{equation}

The expanding properties  (\ref{expanding-model}) and (\ref{expanding-polynomial}) yield the Hausdorff convergence
 $$L_{g, n, x}\rightarrow H(\pi_D(g), \lambda_x), \  l_{g,n,x}\rightarrow h(g, x), \  \text{ as } n\rightarrow \infty.$$
Letting $n\rightarrow \infty$ in (\ref{relation-m-d})
gives  $H(\pi_D(g), \psi_{f,v} (x))=\psi_{g,v}\circ h(g, x)$.
	\end{proof}

\subsection{Local perturbation  on the boundary $\partial \mathcal H$}

Let $E=((B_u, E_u^{\partial}))_{u\in V}\in  \partial_0^* {\rm Div}{(\mathbb D)}^S$, and write
 $$E=E^*+E^\partial, \ \text{ where } E^*=(B_{u})_{u\in V}, \ E^{\partial}=(E_{u}^\partial)_{u\in V}.$$
We call $E^*$ the {\it dynamical part} of $E$, $E^\partial$  the {\it boundary part} of $E$.

Recall that for each $v\in V$,  we view  $B_v$ as a map $B_v:\C_v\rightarrow \C_{\sigma(v)}$, and denote a set $X\subset \C_v$ by $X_v$.

\begin{defi}  [Forward and  inverse orbits]
 Let $\lambda\in \partial \mathbb D_v$ for some $v\in V$.  Let $(\lambda_k)_{0\leq k\leq m}$ be a finite  (i.e. $m$ is a positive integer) or infinite sequence (i.e. $m=\infty$, in this case, $(\lambda_k)_{0\leq k\leq \infty}$ means $(\lambda_k)_{k\geq 0})$. We call $(\lambda_k)_{0\leq k\leq m}$ 

\begin{itemize}
		\item
	an  {\it $E^*$-orbit (or $E^*$-forward orbit)} of $\lambda$, if $\lambda_0=\lambda, v_0=v$, and
	$$\lambda_k  \in \partial \mathbb D_{v_k},  \sigma(v_{k})=v_{k+1},   B_{v_k}(\lambda_{k})=\lambda_{k+1},  \ \forall \ 0\leq k\leq m-1.$$
	
	\item
an  {\it $E^*$-inverse orbit} of $\lambda$, if $\lambda_0=\lambda, v_0=v$,
$$v_k\in V,  \lambda_k  \in \partial \mathbb D_{v_k},  \sigma(v_k)=v_{k-1},   B_{v_k}(\lambda_k)=\lambda_{k-1},  \ \forall \ 1\leq k\leq m.$$
\end{itemize}
\end{defi}

Given $(\lambda_k)_{0\leq k\leq m}$, an  $E^*$-orbit or $E^*$-inverse orbit of $\lambda$, 
it is called {\it non-repeating} if for any $0\leq k_1<k_2\leq m$ with $v_{k_1}=v_{k_2}$, we have $\lambda_{k_1}\neq \lambda_{k_2}$;   {\it  repeating}, otherwise.  Clearly, if an inverse orbit is repeating, then $v\in V_{\rm p}$ and $\lambda$ is $\widehat{B}_v$-periodic.
A point $\lambda'\in \partial \mathbb D_u$ for some $u\in V$ is in (or meets) the $E^*$-forward/inverse orbit  $(\lambda_k)_{0\leq k\leq m}$, if $v_k=u$ and  $\lambda'=\lambda_k$ for some $0\leq k\leq m$.  Moreover,  if $\lambda'$ is in the inverse orbit, 
the {\it depth} ${\rm dep}(\lambda')$ of $\lambda'$ is the minimal $k\geq 0$ so that $v_k=u$ and $\lambda'=\lambda_k$.   

 It is clear that the collection of all non-repeating $E^*$-inverse orbits  of $\lambda$ form a tree structure, so that $\lambda$ is the root (a vertex with depth $0$), and there is an arrowed edge from  $\lambda'$ to $\lambda''$ if $B_u(\lambda')=\lambda''$ for some $u\in V$ and  two points $\lambda',\lambda''$ with adjacent depths in the  same inverse orbit.

Let $D=\big((B_v^0, D_v^{\partial})\big)_{v\in V}\in  \partial_0^* {\rm Div}{(\mathbb D)}^S$ be $\mathcal H$-admissible. 
By  Propositions \ref {h-admissible-characterization} and   \ref{divisor-singleton},  $I_{\Phi}(D)=\{f\}$ and $f$ is  Misiurewicz. Moreover, for any $u\in V$, $f$ has exactly ${\rm deg}(D_{u}^\partial)$ distinct critical points on $\partial U_{f,u}$, and we mark them by 
$c_{u,k}(f)$, $1\leq k\leq {\rm deg}(D_{u}^\partial)$.  
Let
\bess
\mathcal{I}=\{(u, k); u\in V, 1\leq k\leq  {\rm deg}(D_{u}^\partial)\}.
 \eess
For any $(u,k)\in \mathcal{I}$, the marked critical point $c_{u,k}(f)$ extends to a unique continuous map $c_{u,k}: \mathcal N_{\varepsilon_0}(f)\rightarrow \mathbb C$  
in a $\varepsilon_0$-neighborhood $\mathcal N_{\varepsilon_0}(f)$ of $f$,  so that  $c_{u,k}(g)$ is a $g$-critical point for each $g\in \mathcal N_{\varepsilon_0}(f)$.

We may further assume that $\varepsilon_0$ is small so that the graph 
\begin{equation} \label{graph-gamma-g}
\Gamma(g)=\bigcup_{u\in V}\bigcup_{\zeta\in  \partial_{u}} \overline{R_g(\theta_u^-(\zeta) )\cup R_g(\theta_u^+(\zeta))}
\end{equation}
is Hausdorff continuous in $g\in  \mathcal N_{\varepsilon_0}(f)\cap \overline{\mathcal H}$, and avoids all $g$-critical points in $\mathbb C$, here recall that $\partial_{u}$ is given by \eqref{partial-v}, and $\theta_u^{\pm}(\zeta)$ are defined above \eqref{angles=pm}.

Let $\mathcal{C}$ be a non-empty subset of $V_{\rm p}$, consisting of indices in different $\sigma$-cycles. Let 
$$ \mathcal{I}(\mathcal C)=\big\{(u,k)\in \mathcal{I}; 
\text{ the } \sigma\text{-orbit of } u \text{ meets } \mathcal C\big\}.$$

\begin{pro} [Perturbation in the same boundary strata] \label{pertubation-boundary} 
	Let  $D=\big((B_u^0, D_u^{\partial})\big)_{u\in V}\in  \partial_0^* {\rm Div}{(\mathbb D)}^S$ be $\mathcal H$-admissible,  with $I_{\Phi}(D)=\{f\}$. 
	Let $\mathcal{C}$ be a non-empty subset of $V_{\rm p}$, consisting of points in different $\sigma$-cycles.
	Each  $v\in \mathcal{C}$ is associated with a triple $(\ell_v, X_v, h_v)$, where  
	\begin{itemize}
		
		\item  $\ell_v$ is the $\sigma$-period of $v$;
		
		\item  $X_v$ is an $f^{\ell_v}$-hyperbolic set on $\partial_0 U_{f,v}$;
		
		\item  $h_v: \mathcal N_0\times X_v\rightarrow \mathbb C$ is the holomorphic motion  of the hyperbolic set $X_v$, given by  Proposition \ref {perturbation-hyp-set-general}.
		\footnote{By shrinking $\mathcal N_0$ if necessary, we assume $\mathcal N_0$ is independent of $v\in \mathcal C$.} 
		
	\end{itemize}
	
	For any $\varepsilon\in (0, \varepsilon_0)$ with $\mathcal N_{\varepsilon}(f)\subset \mathcal N_0$,   any  non-empty index set $\mathcal J\subset \mathcal{I}(\mathcal C)$ and any 
	multipoint $ \mathbf x=(x_{u,k})_{(u,k)\in \mathcal{J}}\in  \prod_{(u,k)\in   \mathcal{J}} X_{v(u)}$\footnote{The notation means that $x_{u,k}\in X_{v(u)}$, for all $(u,k)\in  \mathcal{J}$. It is possible that $x_{u,k}=x_{u',k'}$ for different pairs $(u,k), (u',k')\in  \mathcal{J}$.}, where  $v(u)\in \mathcal{C}$ is the unique index in the $\sigma$-orbit of $u$,   there exist 
	a map 
	$g\in 
	\partial \mathcal H\cap  \mathcal N_{\varepsilon}(f)$, and positive integers $(m_{u,k})_{(u,k)\in  \mathcal{J}}$,
	such that the following statements hold: 
	\begin{itemize}
		\item for any $(u,k)\in  \mathcal{J}$,  the critical point $c_{u,k}(g)\in \partial U_{g, u}$, and  $$g^{m_{u,k}}(c_{u,k}(g))=h_{v(u)}(g,  x_{u,k});$$ 
		
		
		\item for any  $(u,k)\in \mathcal{I}\setminus \mathcal{J}$,   the critical point  $c_{u,k}(g)\in \partial U_{g, u}$;
		
		\item $\pi_D(g)$ is $\mathcal H$-admissible.
	\end{itemize}
\end{pro}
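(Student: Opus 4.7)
The plan is to work in the model space ${\rm Div}(\mathbb{D})^S$ via the correspondence $\pi_D$ from Lemma \ref{neigh-continuous}: I will find an $\mathcal{H}$-admissible divisor $E$ sufficiently close to $D$ whose Blaschke-product dynamics encodes the prescribed critical-orbit equalities, and then set $g := \overline{\Phi}(E)$. By Proposition \ref{perturbation-hyp-set-general}, each hyperbolic set $X_v$ with its motion $h_v$ corresponds to a $\widehat{B}^0_v$-hyperbolic set $\Lambda_v=\psi_{f,v}(X_v)\subset\partial\mathbb{D}_v$ with continuous motion $H_v:\U_{\tau_0}(D)\times\Lambda_v\to\partial\mathbb{D}_v$, and the orbit identity $g^{m_{u,k}}(c_{u,k}(g))=h_{v(u)}(g,x_{u,k})$ translates, via \eqref{two-motions}, to the model-space equation
\[
B_{\sigma^{m_{u,k}-1}(u)}\circ\cdots\circ B_u\bigl(\tilde c_{u,k}(E)\bigr) \;=\; H_{v(u)}(E,\tilde x_{u,k}),
\]
where $\tilde c_{u,k}(E)$ is the critical point of $B_u$ close to $q_{u,k}\in{\rm supp}(D^\partial_u)$ and $\tilde x_{u,k}=\psi_{f,v(u)}(x_{u,k})$.

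The next step is to reduce this to the construction in Section \ref{divisors-p-d}. For each $(u,k)\in\mathcal{J}$, I would choose an iterate count $m_{u,k}$ large enough that the target $H_{v(u)}(E,\tilde x_{u,k})$ admits an inverse image $\xi_{u,k}(E)\in\partial\mathbb{D}_{\sigma(u)}$ of depth $m_{u,k}-1$ under the compositions of Blaschke factors along the scheme; such a $\xi_{u,k}(E)$ depends continuously on $E\in\U_{\tau_0}(D)$ and can be placed arbitrarily close to a seed point $\xi^0_{u,k}\in\partial\mathbb{D}_{\sigma(u)}$ avoiding all singular data attached to $D$, because $\Lambda_{v(u)}$ is a hyperbolic expanding set for $\widehat{B}^0_{v(u)}$ and the preimages under iteration contract. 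To each $\xi_{u,k}(E)$ I would attach the internal ray $\mathbf{R}^E_{\sigma(u),\xi_{u,k}(E)}$ provided by Lemmas \ref{internal-ray-model} and \ref{c-m-internal-ray-model}. Once these target rays are fixed, the argument of Proposition \ref{c-prescribed-divisors} applies essentially verbatim: by deforming each free zero of $B_u$ along a transverse arc $\gamma_{u,q_{u,k}}$ and applying Miranda's Theorem (Theorem \ref{higher-ivt}) to the vector of signed distances from the critical values $B_u(\tilde c_{u,k}(E))$ to the corresponding target rays, one obtains, for every $\varepsilon>0$, a divisor $E_\varepsilon\in\partial_0^*{\rm Div}(\mathbb{D})^S\cap\U_\varepsilon(D)$ whose critical values land precisely on these rays; iterating along the scheme then yields the required identities after setting $g=\overline{\Phi}(E_\varepsilon)$.

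The main obstacle is to upgrade $E_\varepsilon$ to an $\mathcal{H}$-admissible divisor, so that $I_\Phi(E_\varepsilon)$ is a singleton $\{g\}$ with $g\in\partial_\mathcal{A}\mathcal{H}$. This will be handled through Proposition \ref{criterion-H-adm}: by choosing the seed points $\xi^0_{u,k}$ generically, $E_\varepsilon$ becomes a generic strictly preperiodic Misiurewicz divisor in the sense of Definition \ref{adm-divisor}. The two combinatorial conditions required by the criterion, namely (a)~avoidance $L_k\cap Q_k(f_0)=\emptyset$ and (b)~pairwise distinctness of the eventual periods $l_{v,q}\ell_v$ across the index set $\mathbf{I}$ associated with $E_\varepsilon$, are finite open conditions; we have enough freedom both in the iterate counts $m_{u,k}$ and in the particular inverse branches used to define $\xi_{u,k}(E)$ inside small sectors near $\xi^0_{u,k}$, since periodic orbits of arbitrarily large prime period are dense in $\Lambda_{v(u)}$ and its points can be approximated by strictly preperiodic points with controlled combinatorics. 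Once this verification is in place, Theorem \ref{thm:parameterization} identifies $g=\overline{\Phi}(E_\varepsilon)\in\partial_\mathcal{A}\mathcal{H}\cap\mathcal{N}_\varepsilon(f)$, and the critical points $c_{u,k}(g)$ for $(u,k)\in\mathcal{I}\setminus\mathcal{J}$ automatically lie on $\partial U_{g,u}$ because $E_\varepsilon$ inherits the full boundary-support structure of $D$.
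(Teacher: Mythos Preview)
Your proposal has a genuine gap in the $\mathcal{H}$-admissibility step, and also takes an unnecessarily heavy route before that.

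The essential problem is your appeal to Proposition \ref{criterion-H-adm}. That criterion applies only to divisors in $\mathbf{Div}_{\rm spp}$ --- every boundary support point must be strictly $D$-preperiodic --- and its hypotheses (a),(b) are phrased in terms of the periods $l_{v,q}$ of the eventual cycles. But the targets $x_{u,k}$ are \emph{arbitrary} points of the hyperbolic sets $X_{v(u)}$, and such points are typically aperiodic. Any inverse image $\xi_{u,k}$ of such a point is then aperiodic as well, so the divisor you build is not in $\mathbf{Div}_{\rm spp}$ and the criterion does not apply. Your proposed fix --- approximating $x_{u,k}$ by periodic points --- changes the conclusion: the proposition requires $g^{m_{u,k}}(c_{u,k}(g))=h_{v(u)}(g,x_{u,k})$ \emph{exactly}. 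For the same reason Lemma \ref{internal-ray-model}, which needs $D$-preperiodicity of the landing point, is not available to define your rays $\mathbf{R}^E_{\sigma(u),\xi_{u,k}(E)}$.

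The paper's argument is much more direct and avoids all of this machinery. Since every critical point must stay on the boundary, one keeps the Blaschke part $D^*=(B^0_u)_{u\in V}$ \emph{fixed} and perturbs only the boundary divisor. Because $H_{v(u)}(E,\cdot)$ depends only on $E^*$, one has $H_{v(u)}(D_0,\cdot)=\mathrm{id}$ for any such $D_0$, and the desired orbit identity becomes simply: choose $\zeta_{u,k}\in\mathbb{D}(q_{u,k},r)\cap\Lambda^\infty_u(\lambda_{u,k})$, where $\Lambda^\infty_u(\lambda_{u,k})$ is the set of $D^*$-preimages of $\lambda_{u,k}=\psi_{f,v(u)}(x_{u,k})$; this set is dense in $\partial\mathbb{D}_u$ since $\deg(\widehat{B}^0_v)\ge 2$. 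No internal rays and no Miranda's theorem are needed. The delicate point is rather the combinatorics of the $\zeta_{u,k}$'s: they must be chosen with depth at least a fixed $L_0$ and along carefully threaded inverse branches so that the forward orbit of each $\zeta_{u,k}$ avoids the others (properties (a),(b) in Step 2; cf.\ Remark \ref{key-property}). Admissibility of $D_0$ is then checked directly, not via Proposition \ref{criterion-H-adm}: the disjointness \eqref{disjoint-o1}, which follows from the continuity of $E\mapsto E^\partial_v(\mathcal H)$ in Proposition \ref{divisors-continuous}, handles the tail $l\ge m_{u,k}$ of each orbit, and a critical-point count against the stable graph $\Gamma(g)$ of \eqref{graph-gamma-g} rules out obstructions along the first $m_{u,k}$ steps.
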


\begin{proof}  Without loss of generality,  we assume $\mathcal C$ is maximal in the sense that 
	 $\mathcal{I}(\mathcal C)= \mathcal{I}$
 (in fact, if $V_{\rm p}\setminus \mathcal{I}(\mathcal C)\neq \emptyset$,  we may choose a representative $v$ in  each  $\sigma$-cycle in  $V_{\rm p}\setminus \mathcal{I}(\mathcal C)$,    and let $X_v$  consist of an $f^{\ell_v}$-repelling point on $\partial_0 U_{f,v}$), and  it suffices to consider the extremal case 
	$\mathcal J=\mathcal{I}(\mathcal C)=\mathcal{I}$.
	We  may further assume	$\mathbf x=(x_{u,k})_{(u,k)\in \mathcal{J}}$ satisfies the following property:  if $x_{u,k}$ and $x_{u',k'}$ (here $(u,k)\neq (u',k')$) are in the same $f$-periodic orbit, then $x_{u,k}=x_{u',k'}$.
	
	  For each $v\in\mathcal C$, let $\Lambda_v=\psi_{f,v}(X_v)$, and let	$H_v: \U_{\tau_0}(D)\times \Lambda_v\rightarrow \partial \mathbb{D}$ be the continuous motion given by Proposition \ref{perturbation-hyp-set-general}.  For each $u=\sigma^s(v)$ with $1\leq s<\ell_v$, let $\Lambda_u=B^0_{\sigma^{s-1}(u)}\circ \cdots\circ B^0_{u}(\Lambda_{v})$, and let 
	  $H_u: \U_{\tau_0}(D)\times \Lambda_u\rightarrow \partial \mathbb{D}$ be the continuous motion satisfying that  for any 
	  $E=((B_u, E^\partial_u))_{u\in V}\in {\U_{\tau_0}(D)}$,
	   $$H_u(E, B^0_{\sigma^{s-1}(v)}\circ \cdots\circ B^0_{v}(x))=
	  B_{\sigma^{s-1}(v)}\circ \cdots\circ B_{v}(H_v(E, x)), \ x\in \Lambda_v.$$
	  
	The assumption $X_v\subset \partial_0 U_{f,v}$ implies  that
	$\Lambda_u\cap {\rm supp}(D_u^\partial(\mathcal H))=\emptyset$ for all $u=\{v, \cdots, \sigma^{\ell_v-1}(v)\}$.
	By the Hausdorff continuity of $\U_{\tau_0}(D)\ni E\mapsto H_u(E, \Lambda_u)$ and  	Proposition \ref{divisors-continuous},  
we may choose $r_0\in (0,\min\{\tau_0,\delta_0\})$ (here   $\delta_0$ is given by  Lemma \ref{open-fibre}) so that 
	\begin{equation} \label{disjoint-o1} H_u(E, \Lambda_u)\cap {\rm supp}(E_u^\partial(\mathcal H))=\emptyset, \ \forall E\in \U_{r_0}(D),  \forall u\in V_{\rm p}. 
	\end{equation}

	The proof consists of three steps.
	
	\vspace{5pt}
	{\textbf{Step 1. Construction of a small neighborhood in model space.} }
	\vspace{5pt}
	
	By Proposition \ref{divisor-singleton},   $I_{\Phi}(D)=\bigcap_{r>0}\overline{\Phi(\N_r(D))}=\{f\}$.
	This implies that 
	$${\rm diam}(\overline{\Phi(\N_r(D))})\rightarrow 0 \text{ as  } r\rightarrow 0.$$
Hence for  any $\varepsilon>0$, there is $r=r(\varepsilon)\in (0, r_0)$  so that
	 $${\rm diam}(\overline{\Phi(\N_r(D))})< \varepsilon\Longrightarrow \overline{\Phi(\N_r(D))}\subset  \mathcal N_\varepsilon(f).$$
	We   assume $\varepsilon$ is small so that $\overline{\mathcal N_\varepsilon(f)}\subset \mathcal N_0\subset \mathcal N$ (recall that $\mathcal N_0, \mathcal N$ are given by   Propositions \ref {perturbation-hyp-set-general} and \ref{map-imp},  respectively).

	\vspace{5pt}
	{\textbf{Step 2. Construction of a suitable divisor $D_0\in  \U_{r}(D)$.}}
	\vspace{5pt}

	 Write $q_{u,k}=\psi_{f, u}(c_{u,k}(f)), (u,k)\in \mathcal{I}$; 
	$\lambda_{u,k}=\psi_{f, v(u)}(x_{u,k})\in \Lambda_{v(u)}$, $(u,k)\in  \mathcal{J}$.
	Since $D$ is $\mathcal H$-admissible, by Proposition   \ref{h-admissible-characterization},  for each $u\in V$, 
	$$\partial U_{f,u}\cap {\rm Crit}(f)=\phi_{f,u}({\rm supp}(D_u^\partial)),  \ D_u^\partial=\sum_{1\leq k\leq  {\rm deg}(D_{u}^\partial)}1\cdot q_{u,k}.$$
	
	For any  $v\in \mathcal{C}$,  since $X_v$
	is an   $f^{\ell_v}$-hyperbolic set, we get $X_v\cap   {\rm Crit}(f)=\emptyset$, implying that $\Lambda_{v}\cap {\rm supp}(D_v^\partial)=\emptyset$.
 We   assume  $r$ in Step 1 is small  so that 
	\begin{itemize}
		\item
		for all $v\in \mathcal{C}$,  $\Lambda_{v}\cap \big(\bigcup_{q\in {\rm supp}(D_v^\partial)} \overline{\mathbb D(q, r)}\big)=\emptyset$;
		
		\item for any $u\in V$,  the disks $\{\mathbb D(q, r)\}_{q\in {\rm supp}(D_u^\partial)}$ have disjoint closures.
	\end{itemize}

	Let $u\in V$.  Suppose $ \sigma^s(u)=v\in V_{\rm p}$ for some  minimal integer $s\geq 0$. 
	For  any $\lambda\in \partial \mathbb D_v$, we define
	$$\Lambda^\infty_{u}(\lambda)= \begin{cases}    \bigcup_{l\geq 0}(\widehat{B}_{v}^0)^{-l}(\lambda),  &\text{ if }s=0,\\
		\big(B^0_{\sigma^{s-1}(u)}\circ \cdots\circ B^0_{u}\big)^{-1}\big(\Lambda^\infty_{v}(\lambda)\big), &\text{ if }  s>0. \end{cases}$$

		
		The set $\Lambda^\infty_{u}(\lambda)$ 
		is dense in $\partial \mathbb D_u$, because
		the mapping degree  ${\rm deg}(\widehat{B}_{v}^0)\geq 2$.

		\begin{figure}[h]   
			\begin{center}
				\includegraphics[height=4.5cm]{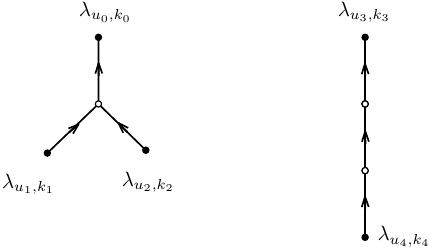}
			\end{center}
			\caption{Tree structure for the multipoint $(\lambda_{u_j, k_j})_{0\leq j\leq 4} $. In this case,   $B^0_{\sigma(u_1)}\circ B^0_{u_1}(\lambda_{u_1,k_1})=\lambda_{u_0,k_0}$, $B^0_{u_1}(\lambda_{u_1,k_1})=B^0_{u_2}(\lambda_{u_2,k_2})$, $B^0_{\sigma^2(u_4)}\circ B^0_{\sigma(u_4)}\circ B^0_{u_4}(\lambda_{u_4,k_4})=\lambda_{u_3,k_3}$. 
		Note that $(\widehat{u}_0,\widehat{k}_0)=(u_1,k_1)$ or $(u_2, k_2)$, 	$(\widehat{u}_1,\widehat{k}_1)=(u_1,k_1)$, $(\widehat{u}_2,\widehat{k}_2)=(u_2,k_2)$,
		$(\widehat{u}_3,\widehat{k}_3)=(\widehat{u}_4,\widehat{k}_4)=(u_4,k_4)$.}	
			\label{fig: multipoint-tree}
		\end{figure}

		The multipoint $(\lambda_{u,k})_{(u,k)\in  \mathcal{J}}\in \prod_{(u,k)\in   \mathcal{J}} \partial \mathbb D_{v(u)}$   can be viewed as a disjoint union of finitely many finite trees: if $\lambda_{u',k'}$ is in a $D^*$-inverse orbit of $\lambda_{u,k}$, with depth $m\geq 0$, then there is a branch of length $m$ from $\lambda_{u,k}$ to $\lambda_{u',k'}$. 
		 In this way, we get a tree structure, see Figure \ref{fig: multipoint-tree}. 
		For each $(u,k)\in  \mathcal{J}$, let $(\widehat{u},\widehat{k})\in  \mathcal{J}$ be  such that $\lambda_{\widehat{u},\widehat{k}}$ is in the  $D^*$-inverse  orbit of $\lambda_{u,k}$ with maximal depth. Note that this $(\widehat{u},\widehat{k})$ may be not unique, and  we  choose one of them;  we set $(\widehat{u},\widehat{k})=({u},{k})$ if there is no  $\lambda_{u',k'}$ with $(u',k')\neq (u,k)$ in the $D^*$-inverse orbits of $\lambda_{u,k}$. 
		
		Take $(u_0, k_0)\in   \mathcal{J}$, by the density of  $\Lambda^\infty_{u_0}(\lambda_{\widehat{u}_0,\widehat{k}_0})$   in $\partial \mathbb D_{u_0}$, there is  $\zeta_{u_0,k_0}\in  \mathbb D(q_{u_0,k_0}, r) \cap \Lambda^\infty_{u_0}(\lambda_{\widehat{u}_0,\widehat{k}_0})$
		so that $\zeta_{u_0,k_0}$ is in a non-repeating  $D^*$-inverse orbit of $\lambda_{\widehat{u}_0,\widehat{k}_0}$ with depth $\geq L_0$, where
		$$L_0=\max_{v\in \mathcal C}\big\{\ell_{v}\big\} \bigg(\Big[\frac{\log\# \mathcal J}{\log 2}\Big]+2\bigg), \ [x] \text{ is the integral part of } x\in \mathbb R.$$
		
		Initially, set $ \mathcal{J}'=\{(u_0, k_0)\}$.  
		Assume by induction that $ \mathcal{J}'$ is given so that for each $(u', k')\in \mathcal J'$,  $\zeta_{u', k'}$ is   in a non-repeating  $D^*$-inverse orbit of $\lambda_{\widehat{u'},\widehat{k'}}$ with depth $\geq L_0$.
		
		For  $(u,k)\in  \mathcal{J}\setminus  \mathcal{J}'$, we choose $\zeta_{u,k}\in  \mathbb D(q_{u,k}, r) \cap \Lambda^\infty_{u}(\lambda_{u,k})$ as follows: since  ${\rm deg}(\widehat{B}_{\widehat{u}}^0)\geq 2$, there is $y_{u, k}\in (\widehat{B}_{\widehat{u}}^0)^{-1}(\lambda_{\widehat{u},\widehat{k}})$
		which is not $\widehat{B}_{\widehat{u}}^0$-periodic.  	Let $m\geq 1$ be the minimal integer with $2^m\geq \# \mathcal J$, then
		$$m=\Big[\frac{\log\# \mathcal J}{\log 2}\Big]+1, \    \ \#(\widehat{B}_{\widehat{u}}^0)^{-m}(y_{u, k})={\rm deg}(\widehat{B}_{\widehat{u}}^0)^m\geq 2^m\geq   \# \mathcal J.$$
		
	Note that each point $a\in (\widehat{B}_{\widehat{u}}^0)^{-m}(y_{u, k})$ is in a $D^*$-inverse orbit of $\lambda_{\widehat{u},\widehat{k}}$,  with depth $(m+1)\ell_{\widehat{u}}\leq L_0$. 
			If $\# \mathcal J'<\# \mathcal J$, by the induction hypothesis,  there is  $a_{u,k}\in (\widehat{B}_{\widehat{u}}^0)^{-m}(y_{u, k})$ whose inverse orbits avoid all $\zeta_{u', k'}, (u', k')\in \mathcal{J}'$. Choose 
		$$\zeta_{u,k}\in  \mathbb D(q_{u,k}, r) \cap \Lambda^\infty_{u}(a_{u,k})\subset   \mathbb D(q_{u,k}, r) \cap \Lambda^\infty_{u}(\lambda_{\widehat{u},\widehat{k}}) \subset  \mathbb D(q_{u,k}, r) \cap \Lambda^\infty_{u}(\lambda_{u,k})$$
		whose depth in the $D^*$-inverse orbit of $\lambda_{\widehat{u},\widehat{k}}$  is  $\geq L_0$.   
		We may verify that 
		
			(a). for any  $(u',k')\in \mathcal{J}'$,   $\zeta_{u,k}$ does not meet the finite
		orbit
		$$\zeta_{u',k'}\overset{B^0_{u'}}{\longrightarrow}  \zeta_{u', k'}^1  \overset{B^0_{\sigma(u')}}{\longrightarrow} \zeta_{u', k'}^2  \overset{B^0_{\sigma^2(u')}}{\longrightarrow} \cdots
		{\longrightarrow}   \zeta_{u', k}^{{\rm dep}(\zeta_{u',k'})}=\lambda_{u',k'}.$$

		(b).  for any $(u',k')\in \mathcal{J}'$,  $\zeta_{u',k'}$ does not meet the 
		orbit
		$$\zeta_{u,k}\overset{B^0_{u}}{\longrightarrow}  \zeta_{u, k}^1  \overset{B^0_{\sigma(u)}}{\longrightarrow} \zeta_{u, k}^2  \overset{B^0_{\sigma^2(u)}}{\longrightarrow} \cdots
		{\longrightarrow}   \zeta_{u, k}^{{\rm dep}(\zeta_{u,k})}=\lambda_{u,k}.$$

		We add $(u,k)$ to $\mathcal{J}'$.   The inductive procedure terminates when $\mathcal{J}'=\mathcal{J}$. 
		
		Let
		\bess
			&D_{u,0}^\partial= \sum_{1\leq k\leq {\rm deg}(D_{u}^\partial)}1\cdot \zeta_{u,k},  \ u\in  V;&\\
			&D_0=D^*+ (D_{u,0}^\partial)_{u\in V}=\big((B_u^0, D_{u,0}^{\partial})\big)_{u\in V}.&
			\eess
		For each $(u,k)\in 
	\mathcal J$, note that $\zeta_{u,k}$ is in the $D^*$-inverse orbit of $\lambda_{u,k}$, let $m_{u,k}$ be the depth of $\zeta_{u,k}$. Thus we get the positive integers $(m_{u,k})_{(u,k)\in \mathcal J}$.

		\vspace{5pt}
		{\textbf{Step 3.   $D_0$  is $\mathcal H$-admissible.}}
		\vspace{5pt}


		Note that $D_0\in \U_{r}(D)\subset \U_{r_0}(D)$.
		By  equation  (\ref{disjoint-o1}), 
			for any $(u,k)\in \mathcal J$, and any $l\geq m_{u,k}$,
		\begin{equation}  \label{adm-h-adim}
			B_{\sigma^{l-1}(u)}^0\circ \cdots \circ B_{\sigma(u)}^0\circ B_{u}^0(\zeta_{u,k})\notin 
			{\rm supp}((D_0)_{\sigma^l(u)}^\partial(\mathcal H)).
		\end{equation}

	
	
	
		Let $l_{u,k}\in [1, m_{u,k}]$ be minimal  so that (\ref{adm-h-adim}) holds for $l\geq l_{u,k}$. 
		To show   $D_0$ is  $\mathcal H$-admissible, it is equivalent to show  $l_{u,k}=1$ for all $(u,k)\in \mathcal J$.
		
		Recall the graph $\Gamma(g)$ in (\ref{graph-gamma-g}).  Let $W_{g,u}$ be the component of $\mathbb C\setminus \Gamma(g)$ containing $U_{g,u}$. By the assumption that $\Gamma(g)\cap {\rm Crit}(g)=\emptyset$ for  $g\in  \mathcal N_{\varepsilon_0}(f)\cap \overline{\mathcal H}$, 
	  for each $(u,k)\in \mathcal J$ and each $g\in I_{\Phi}(D_0)\subset \mathcal N_{\varepsilon_0}(f)\cap \overline{\mathcal H}$,  there is exactly one critical point in $L_{U_{g, u}, \phi_{g,u}(\zeta_{u,k})}\cap W_{g,u}.$ Hence there are exactly 
		$\sum_{u\in V}{\rm deg}(D_{u}^\partial)$ critical points in $\bigcup_{(u,k)\in \mathcal J}(L_{U_{g, u}, \phi_{g,u}(\zeta_{u,k})}\cap W_{g,u})$.

	 If $l_{u,k}>1$ for some $(u,k)\in \mathcal J$, by Proposition \ref{para-dym}, for any $g\in I_{\Phi}(D_0)$, 
	 $\phi_{g,\sigma^{l_{u,k}-1}(u)}(B_{\sigma^{l_{u,k}-2}(u)}^0\circ \cdots \circ B_{\sigma(u)}^0\circ B_{u}^0(\zeta_{u,k}))$ is a $g$-critical point. This critical point is not in $\bigcup_{(u,k)\in \mathcal J}(L_{U_{g, u}, \phi_{g,u}(\zeta_{u,k})}\cap W_{g,u})$ because $D_0$ satisfies the properties (a)(b)(in Step 2). Hence it is an additional critical point, which is  a contradiction.  This implies  $l_{u,k}=1$ for all $(u,k)\in \mathcal J$, and $D_0$ is  $\mathcal H$-admissible.  	By Proposition \ref{divisor-singleton}, $ I_{\Phi}(D_0)$ consists of a singleton $g\in 
	 \partial \mathcal H\cap  \mathcal N_{\varepsilon}(f)$.  By the choice of $D_0$ and Proposition \ref{h-admissible-characterization}, the map $g$ has required properties. 
		\end{proof}
 
 \begin{rmk} \label{key-property} In the proof of Proposition \ref{pertubation-boundary}, the choice of $\zeta_{u,k}$ (with a lower bound $L_0$ of depth)  in  Step 2 is a little bit technical, but  it is crucial to guarantee the properties (a) and (b).   Here is an example to illustrate why a  lower bound on the depth is necessary.  Assume
 	$$v(u)\equiv v, \lambda_{u,k}\equiv \lambda,  \   \forall (u,k)\in \mathcal J.$$
Let $(u_0,k_0)\in \mathcal J$.  Assume each  $D^*$-inverse orbit of $\lambda$ meets a point in $\{q_{u,k}; (u,k)\in \mathcal J\setminus \{(u_0,k_0)\}\}$.  For $(u,k)\in \mathcal J\setminus \{(u_0,k_0)\}$,
we  take $\zeta_{u,k}=q_{u, k}$ (without a lower bound on depth).    No matter how to choose $\zeta_{u_0, k_0}$, the   properties (a) or (b) can not be satisfied. See Figure \ref{fig: tree-no-depth}

	\begin{figure}[h]   
	\begin{center}
		\includegraphics[height=4.5cm]{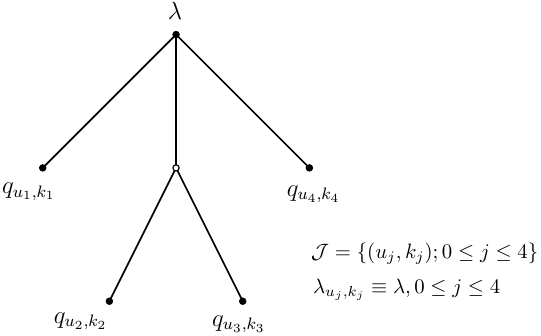}
	\end{center}
	\caption{In this case, each $D^*$-inverse orbit of  $\lambda$ meets a point in $\{q_{u_j, k_j}; 1\leq j\leq 4\}$. No matter how to choose $\zeta_{u_0, k_0}$, either the $D^*$-orbit of $\zeta_{u_0, k_0}$ meets some $q_{u_j, k_j}$, or   the $D^*$-orbit of some $q_{u_j, k_j}$ meets $\zeta_{u_0, k_0}$.}	
	\label{fig: tree-no-depth}
\end{figure}
 \end{rmk}

\begin{pro} [Perturbation to arbitrary boundary strata] \label{pertubation-boundary-strata} 
	Let  $D=\big((B_u^0, D_u^{\partial})\big)_{u\in V}\in  \partial_0^* {\rm Div}{(\mathbb D)}^S$ be $\mathcal H$-admissible,  with $I_{\Phi}(D)=\{f\}$. 
	Let $\mathcal{C}$ be a non-empty subset of $V_{\rm p}$, consisting of points in different $\sigma$-cycles.
	Each  $v\in \mathcal{C}$ is associated with a triple $(\ell_v, X_v, h_v)$, where  
	\begin{itemize}
		
		\item  $\ell_v$ is the $\sigma$-period of $v$;
		
		\item  $X_v$ is an $f^{\ell_v}$-hyperbolic set on $\partial_0 U_{f,v}$;
		
		\item  $h_v: \mathcal N_0\times X_v\rightarrow \mathbb C$ is the holomorphic motion  of the hyperbolic set $X_v$, given by  Proposition \ref {perturbation-hyp-set-general}.
		\footnote{By shrinking $\mathcal N_0$ if necessary, we assume $\mathcal N_0$ is independent of $v\in \mathcal C$.} 
		
	\end{itemize}
	
	Given any $\varepsilon\in (0, \varepsilon_0)$ with $\mathcal N_{\varepsilon}(f)\subset \mathcal N_0$,   any  index set $\emptyset\neq \mathcal J\subset \mathcal{I}(\mathcal C)$, and any 
	multipoint $ \mathbf x=(x_{u,k})_{(u,k)\in \mathcal{J}}\in  \prod_{(u,k)\in   \mathcal{J}} X_{v(u)}$, where  $v(u)\in \mathcal{C}$ is the unique index in the $\sigma$-orbit of $u$,
	there exist 
		$g_0\in 
	\partial \mathcal H\cap  \mathcal N_{\varepsilon}(f)$ and  positive integers $(m_{u,k})_{(u,k)\in  \mathcal{J}}$,
so that 
	\begin{itemize}
		\item   $g_0^{m_{u,k}}(c_{u,k}(g_0))=x_{u,k}(g_0):=h_{v(u)}(g_0,  x_{u,k})$ for all $(u,k)\in \mathcal J$;
	
		\item  $c_{u,k}(g_0)\in U_{g_0, u}$ for all  $(u,k)\in \mathcal{I}\setminus \mathcal{J}$;
		
		\item $\pi_D(g_0)$ is $\mathcal H$-admissible.
	\end{itemize}
\end{pro}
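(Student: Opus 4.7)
The approach is to construct directly an $\mathcal{H}$-admissible divisor $D_0 \in \partial_0^* {\rm Div}(\mathbb{D})^S$ lying in a strictly deeper boundary stratum than $D$, with $\sum_{u} \deg(D_{u,0}^\partial) = |\mathcal{J}|$ rather than $|\mathcal{I}|$, the difference being absorbed as additional interior critical points of the Blaschke factors $B_{u,0}$. By Proposition \ref{divisor-singleton} the impression $I_\Phi(D_0)$ is then a singleton $\{g_0\}$, which by proximity of $D_0$ to $D$ together with Theorem \ref{thm:parameterization} lies in $\mathcal{N}_\varepsilon(f)$; the structure of $D_0$ forces the marked critical points to be exactly where the proposition requires.

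Concretely, set $\mathcal{J}_u := \{k : (u,k) \in \mathcal{J}\}$, $\mathcal{J}^c_u := \mathcal{I}_u \setminus \mathcal{J}_u$, $q_{u,k} := \psi_{f,u}(c_{u,k}(f))$ and $\lambda_{u,k} := \psi_{f,v(u)}(x_{u,k})$. For small $r > 0$ I would choose, for each $(u,k) \in \mathcal{J}$, a point $\zeta_{u,k} \in \partial\mathbb{D} \cap \mathbb{D}(q_{u,k}, r)$ lying in a non-repeating $D^*$-inverse orbit of $\lambda_{u,k}$ of depth at least a threshold $L_0$, via the same depth-controlled inductive procedure as in Step 2 of the proof of Proposition \ref{pertubation-boundary}, so as to enforce the non-collision properties (a) and (b) there; and for each $(u,k) \in \mathcal{J}^c$, a point $a_{u,k} \in \mathbb{D} \cap \mathbb{D}(q_{u,k}, r)$ destined to become a new interior critical point of $B_{u,0}$. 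Using the boundary-extended Heins--Zakeri homeomorphism $\overline{\Psi}_e$ of Theorem \ref{zakeri-exten}, I would define $B_{u,0} \in \mathcal{B}_{\rm fc}^{\delta(u) - |\mathcal{J}_u|}$ as the unique Blaschke product whose ramification divisor is obtained by perturbing that of $B_u^0$ slightly and adjoining $\sum_{k \in \mathcal{J}^c_u} 1 \cdot a_{u,k}$; set $D_{u,0}^\partial := \sum_{k \in \mathcal{J}_u} 1 \cdot \zeta_{u,k}$ and $D_0 := ((B_{u,0}, D_{u,0}^\partial))_{u \in V}$. Continuity of $\overline{\Psi}_e$ as $a_{u,k} \to q_{u,k} \in \partial\mathbb{D}$ ensures $D_0 \to D$ in ${\rm Div}(\overline{\mathbb{D}})^S$.

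Verification of $\mathcal{H}$-admissibility of $D_0$ proceeds along the lines of Step 3 of the proof of Proposition \ref{pertubation-boundary}: the non-collision condition between the $D_0^*$-orbit of ${\rm supp}(D_{u,0}^\partial)$ and ${\rm supp}((D_0)_{\sigma^l(u)}^\partial(\mathcal{H}))$ reduces, via the analogue of equation \eqref{disjoint-o1} for the motion $H_u$ over $\U_{r_0}(D)$, to the depth-controlled choice of the $\zeta_{u,k}$. Proposition \ref{divisor-singleton} and Theorem \ref{thm:parameterization} then yield a unique $g_0 = \overline{\Phi}(D_0) \in \partial\mathcal{H} \cap \mathcal{N}_\varepsilon(f)$. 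For $(u,k) \in \mathcal{J}$, the model orbit relation for $\zeta_{u,k}$ transports to $g_0^{m_{u,k}}(c_{u,k}(g_0)) = h_{v(u)}(g_0, x_{u,k})$ via the intertwining identity \eqref{two-motions} of Proposition \ref{perturbation-hyp-set-general}. For $(u,k) \in \mathcal{J}^c$, the new interior critical point $a_{u,k}$ of $B_{u,0}$ transports under $\phi_{g_0,u}$ to a critical point of $g_0$ in $U_{g_0,u}$, which by continuity of the critical marking coincides with $c_{u,k}(g_0)$ provided $r$ is small enough.

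The main difficulty lies in verifying admissibility in this deeper stratum: the induced divisor $(D_0)_u^\partial(\mathcal{H})$ must avoid not only the forward $D_0^*$-orbit of $\zeta_{u,k}$, but must also correctly handle the new interior critical points $a_{u,k}$, $k \in \mathcal{J}^c_u$. The key observation is that these interior critical points lie in $\mathbb{D}$ rather than on $\partial\mathbb{D}$, so they do not appear in ${\rm supp}(D_{u,0}^\partial)$ and, by the continuity of $E \mapsto E^\partial(\mathcal{H})$ given by Proposition \ref{divisors-continuous}, do not contribute to ${\rm supp}((D_0)_u^\partial(\mathcal{H}))$ once $D_0$ is close enough to $D$. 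Together with the depth control $L_0$, this rules out all dangerous collisions and yields the required admissibility.
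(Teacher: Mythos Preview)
Your construction has a genuine gap in the orbit relation. You choose $\zeta_{u,k}$ to lie in a $D^*$-inverse orbit of $\lambda_{u,k}$, i.e.\ $B^0_{\sigma^{m_{u,k}-1}(u)}\circ\cdots\circ B^0_u(\zeta_{u,k})=\lambda_{u,k}$. But in the divisor $D_0$ you build, the dynamical part is $(B_{u,0})_{u\in V}$, not $(B^0_u)_{u\in V}$: you have absorbed the interior critical points $a_{u,k}$ into the Blaschke factors, so $B_{u,0}\neq B^0_u$. The intertwining identity \eqref{two-motions} applied at $g_0$ with $\pi_D(g_0)=D_0$ translates $g_0^{m_{u,k}}(c_{u,k}(g_0))=h_{v(u)}(g_0,x_{u,k})$ into the model-space equation
\[
B_{0,\sigma^{m_{u,k}-1}(u)}\circ\cdots\circ B_{0,u}(\zeta_{u,k})=H_{v(u)}(D_0,\lambda_{u,k}),
\]
which is \emph{not} the relation you arranged. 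Both sides differ from yours: the left uses $B_{u,0}$ rather than $B^0_u$, and the right is $H_{v(u)}(D_0,\lambda_{u,k})$ rather than $\lambda_{u,k}=H_{v(u)}(D,\lambda_{u,k})$. So the critical orbit of $g_0$ will miss the target hyperbolic set.

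The paper resolves this by a two-step procedure. First it applies Proposition \ref{pertubation-boundary} with $\mathcal J=\mathcal I$ to replace $(f,D)$ by a pair $(f_1,D_1)$ in the \emph{same} stratum for which the model relation $R_{u,k}^{D_1^*}(q_{u,k})=\lambda_{u,k}$ already holds exactly for every $(u,k)\in\mathcal I$. Only then does it move to the deeper stratum $\U_\tau^{\mathcal J}(D_1)$, and there the boundary divisor is defined \emph{self-consistently} in terms of the new Blaschke part: one picks $E_0^*=(B_{u,0})$ freely (close to $((B^0_u,F^\partial_{u,\mathcal J}))$) and then sets
\[
E^\partial_{u,0}=\sum_{(u,k)\in\mathcal J_u} 1\cdot R_{u,k}^{E_0^*}\big|_{V_{u,k}^s(E_0^*)}^{-1}\big(H_{v(u)}(E_0^*,\lambda_{u,k})\big).
\]
Because $R_{u,k}^{E^*}$ and $H_{v(u)}(E^*,\cdot)$ depend only on $E^*$, there is no circularity; and since $R_{u,k}^{D_1^*}(q_{u,k})=\lambda_{u,k}$ from the first step, continuity in $E^*$ places these boundary points near $q_{u,k}$. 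This is precisely the mechanism your proposal is missing: the boundary marks must be inverse images under the \emph{perturbed} Blaschke dynamics, not under $D^*$.
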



	\begin{figure}[h]   
	\begin{center}
		\includegraphics[height=4cm]{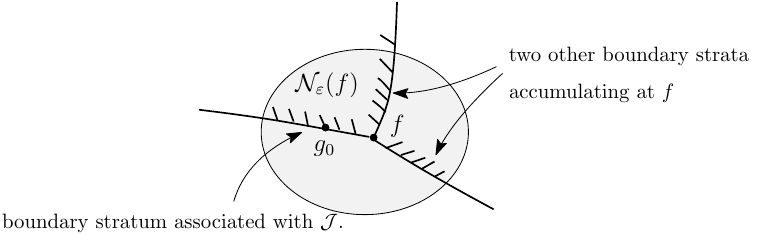}
	\end{center}
	\caption{Some boundary strata accumulating at $f$. }
	\label{fig: boundary-strata}
\end{figure}

\begin{proof}
	 For  any given  $\varepsilon\in (0, \varepsilon_0)$,  by Proposition \ref{divisor-singleton}, there is $r=r(\varepsilon)$  so that
	$${\rm diam}(\overline{\Phi(\N_r(D))})< \varepsilon\Longrightarrow \overline{\Phi(\N_r(D))}\subset  \mathcal N_\varepsilon(f).$$

	We may assume $\mathcal C$ is maximal in the sense that $\mathcal{I}(\mathcal C)=\mathcal{I}$, and we extend the multipoint $\mathbf x$ to $\widehat{\mathbf x}=(x_{u,k})_{(u,k)\in \mathcal{I}}\in  \prod_{(u,k)\in   \mathcal{I}} X_{v(u)}$ (as we did in the proof of Proposition \ref{pertubation-boundary}).
	By Proposition \ref{pertubation-boundary}, there exist  $f_1\in 
	\partial \mathcal H\cap  \mathcal N_{\varepsilon}(f)$,   positive integers $(m_{u,k})_{(u,k)\in  \mathcal{I}}$,
	such that  
	\begin{itemize}
		\item for any $(u,k)\in  \mathcal{I}$,  the critical point $c_{u,k}(f_1)\in \partial U_{f_1, u}$, and  $$f_1^{m_{u,k}}(c_{u,k}(f_1))=h_{v(u)}(f_1,  x_{u,k});$$ 
		

		\item $D_1:=\pi_D(f_1)$ is $\mathcal H$-admissible.
	\end{itemize}
Replacing $(f,D)$ by $(f_1,D_1)$, we may assume $f=f_1, D=D_1$.

For each $v\in\mathcal C$, let $\Lambda_v=\psi_{f,v}(X_v)$.
Write $q_{u,k}=\psi_{f, u}(c_{u,k}(f))$, 
$\lambda_{u,k}=\psi_{f, v(u)}(x_{u,k})\in \Lambda_{v(u)}$ for $(u,k)\in  \mathcal{I}$.

For any index set   $\mathcal J\subset \mathcal{I}$ with $\mathcal J\neq \emptyset$,   any $u\in V$, let  
$$\mathcal J_u=\{(u',k')\in \mathcal J; u'=u\}, \ \mathcal I_u=\{(u',k')\in \mathcal I; u'=u\}.$$
Write the $D_u^\partial$-factor of $D=\big((B_u^0, D^\partial_u)\big)_{u\in V}$ as
$D_u^\partial=D_{u, \mathcal J}^{\partial}+F_{u, \mathcal J}^{\partial}$,
where 
$$D_{u, \mathcal J}^\partial=\sum_{(u',k')\in \mathcal J_u} 1\cdot q_{u',k'}, \ \ F_{u, \mathcal J}^\partial=\sum_{(u',k')\in  \mathcal I_u\setminus \mathcal J_u} 1\cdot  q_{u',k'}.$$ 
It's clear that if $\mathcal J=\mathcal I$, then $D_{u, \mathcal J}^\partial=D_u^{\partial}$, $F_{u, \mathcal J}^\partial=0$.
 
For any  $\tau\in (0,r]$ so that $\U_\tau(D)\cap \partial {\rm Div}{(\mathbb D)}^S\subset  \partial_0^* {\rm Div}{(\mathbb D)}^S$,  define 
 $$\U_\tau^{\mathcal J}(D)=\left\{\big((B_u, E^\partial_u)\big)_{u\in V}\in {\U_{\tau}(D)};  \ E^\partial_u\in \U^0_\tau(D_{u, \mathcal J}^{\partial}), \
 \forall u\in V
 \right\}.$$
 Note that for $((B_u, E^\partial_u)\big)_{u\in V}\in \U_\tau^{\mathcal J}(D)$,  we have $\ {\rm deg}(E^\partial_u)=\#\mathcal J_u, \ \forall u\in V$.
View $\U_\tau^{\mathcal J}(D)$  as a boundary stratum in the model space.
For any sequence $E_n=E_n^*+E_n^\partial=((B_{n,u}, E_{n,u}^\partial))_{u\in V}\in \U_{\tau}^{\mathcal J}(D)$ with 
$E_n\rightarrow D$, 
we have 
\bess
& E_n^\partial=(E_{n,u}^\partial)_{u\in V}\rightarrow (D^\partial_{u, \mathcal J})_{u\in V}, & \\
&E_n^*=(B_{n,u})_{u\in V}\rightarrow \big((B_u^0, F_{u, \mathcal J}^\partial)\big)_{u\in V}.&
\eess

By Lemma \ref{composition-law} and Proposition \ref{convergence},  for any $u\in V$, any integer $m\geq 1$,  $$B_{n,\sigma^{m-1}(u)}\circ \cdots\circ B_{n,u}\rightarrow 
B^0_{\sigma^{m-1}(u)}\circ \cdots\circ B^0_{u},  \ \text{ as } n\rightarrow \infty$$
locally and uniformly in $\C\setminus Z_{u,\mathcal J}^m$, where  $Z_{u,\mathcal J}^1={\rm supp}(F_{u, \mathcal J}^\partial)$ and
$$Z_{u,\mathcal J}^m={\rm supp}(F_{u, \mathcal J}^\partial)\bigcup 
\bigcup_{l=1}^{m-1}\big(B^0_{\sigma^{l-1}(u)}\circ \cdots \circ B^0_u\big)^{-1}
\big({\rm supp}(F_{\sigma^{l}(u), \mathcal J}^{\partial})\big), \ m\geq 2.$$
Note that $Z_{u,\mathcal J}^{m}\subset Z_{u,\mathcal J}^{m+1}$.
 Since $D$ is $\mathcal H$-admissible,     $Z_{u,\mathcal J}^m\cap {\rm supp}(D_{u, \mathcal J}^\partial)=\emptyset$.



Let $Z_{u,\mathcal J}^m(t)=\bigcup_{\zeta\in  Z_{u,\mathcal J}^m}\mathbb D(\zeta, t)$ for 
$t>0$.  Let $m_u=\max_{(u,k)\in \mathcal J_u} m_{u,k}$.  Choose small $t_0>0$ so that 
$$ \mathbb D(q_{u,k}, t_0)\subset \C\setminus Z_{u,\mathcal J}^{m_u}(t_0),  \  \forall (u,k)\in \mathcal J.$$

Let $\tau$ be small. 
Then for any  $(u,k)\in  \mathcal{J}$,  the map $R_{u,k}^{E}: \C\setminus \overline{Z_{u,\mathcal J}^{m_u}(t_0)}\rightarrow \C$ given by  
$$R_{u,k}^{E}= B_{\sigma^{m_{u,k}-1}(u)}\circ \cdots\circ B_{u}, \ \forall \ E=((B_{u}, E_{u}^\partial))_{u\in V}\in   \U_{\tau}^{\mathcal J}(D)\cup\{D\},$$
 defines
	a continuous family of holomorphic maps, parameterized by $E\in    \U_{\tau}^{\mathcal J}(D)\cup\{D\}$.
	Note that $R_{u,k}^{E}$  depends only on the $E^*$-part of $E$, and is independent of $E^{\partial}$. Because of this,  we write $R_{u,k}^{E}$ as $R_{u,k}^{E^*}$.

	For each $v\in\mathcal C$,  let	$H_v: \U_{\tau_0}(D)\times \Lambda_v\rightarrow \partial \mathbb{D}$ be the continuous motion given by Proposition \ref{perturbation-hyp-set-general}.  The value $H_v(E, \lambda)$ depends only on the $E^*$-part of $E$, hence is also written as  $H_v(E^*, \lambda)$.
	
	Note that $R_{u,k}^{D^*}(q_{u,k})=\lambda_{u,k}$.
	By the continuity of   $R_{u,k}^{E^*}|_{\mathbb D(q_{u,k}, t_0)}: \mathbb D(q_{u,k}, t_0)\rightarrow \C$ in  $E\in \U_{\tau}^{\mathcal J}(D)\cup\{D\}$, there exist $s>0$ and  $\tau_1\in (0, \min\{\tau_0,\tau\}]$  so that
	 for any $(u,k)\in  \mathcal{J}$ and any $E\in  \U_{\tau_1}^{\mathcal J}(D)\cup\{D\}$, 
	 \begin{itemize}
	 	\item $R_{u,k}^{E^*}(q_{u,k})\in \mathbb D(H_{v(u)}(E^*,\lambda_{u,k}), s)$;
	 	
	 	\item	the  component $V_{u,k}^s(E^*)$ of 
	$(R_{u,k}^{E^*})^{-1}(\mathbb D(H_{v(u)}(E^*,\lambda_{u,k}), s))$ containing $q_{u, k}$  is a Jordan disk in $\mathbb D(q_{u,k}, t_0)$; 
	\item  $R_{u,k}^{E^*}: V_{u,k}^s(E^*)\rightarrow  \mathbb D(H_{v(u)}(E^*,\lambda_{u,k}),s)$ is conformal.  
	\end{itemize}
	
	By shrinking $s, \tau_1$ if necessary, we further assume
	\begin{itemize}
		\item
	 for any $u\in V$, the  disks $\{V_{u,k}^s(E^*)\}_{(u,k)\in \mathcal J_u}$ have disjoint closures.
	
	\item  for any $(u,k)\in  \mathcal{J}$ and any integer $1\leq j\leq m_{u,k}$,  the   orbit
	$$B_{u}(V_{u,k}^s(E^*))  \overset{B_{\sigma(u)}}{\longrightarrow}  B_{\sigma(u)}\circ B_{u}(V_{u,k}^s(E^*))   \overset{B_{\sigma^2(u)}}{\longrightarrow} \cdots
	{\longrightarrow}   \mathbb D(H_{v(u)}(E^*,\lambda_{u,k}), s),$$
	when restricted to $\C_{\sigma^j(u)}$,  avoids the disks   $\{V_{u',k'}^s(E^*)\}_{(u',k')\in \mathcal J_{\sigma^j(u)}}$. 
	\end{itemize}

 
 	 
 

Choose $E_0=E_0^*+E_0^\partial=(B_{u,0})_{u\in V}+(E^\partial_{u,0})_{u\in V}\in  \U_{\tau_1}^{\mathcal J}(D)$ 
so that
$$E_{u,0}^\partial=\sum_{(u,k)\in   \mathcal J_u} 1\cdot   R_{u,k}^{E_0^*}|^{-1}_{V_{u,k}^s(E_0^*)}(H_{v(u)}(E_0^*,\lambda_{u,k})), \  u\in V.$$

By the same argument as  Step 3 in the proof of Proposition \ref{pertubation-boundary},   $E_0$ is $\mathcal H$-admissible. By Proposition \ref{divisor-singleton},   $I_{\Phi}(E_0)$ consists of one map $g_0$.  By (\ref{two-motions}),  $g_0$ enjoys the  required properties.
  \end{proof}

	
	
	


 \section{Local Hausdorff dimension via perturbation}\label{lchd-via-p}

In this section, we shall establish the local Hausdorff dimension estimates for some typical boundary strata of $\partial_{\mathcal A}\mathcal H$.
 
 
 
Recall that an $\mathcal H$-admissible map is the one in the impression of an $\mathcal H$-admissible divisor.  However, this definition is conceptual and it relies on the definition of $\mathcal H$-admissible divisors. In the following, we shall give a  more intuitive description of such maps.
 
 Let $D \in  \partial_0^* {\rm Div}{(\mathbb D)}^S$ be $\mathcal H$-admissible,  with $I_{\Phi}(D)=\{f\}$. By Proposition \ref{h-admissible-characterization},  this $f$ has the following  properties:
 \begin{itemize}
 \item [(a)] $f\in \partial_{\rm reg}\mathcal H$ (see \eqref{reg-boundary}) and $f$ is Misiurewicz;
 \item [(b)] Each critical point of $f$ on $J(f)$ is simple and disconnects $J(f)$ into two parts;
 \item [(c)] There is  an internal marking ${\boldsymbol v}(f)=(v(f))_{v\in V}$ of $f$, as the limit of the internal markings ${\boldsymbol v}(f_n)=(v(f_n))_{v\in V}$ of some sequence $\{f_n\}_{n\geq 1}\subset \mathcal H$ approaching $f$, such that ${\rm Crit}(f)\subset \bigcup_{v\in V}\overline{U_f(v(f))}$;
 \item [(d)]  There is a boundary marking $\nu_f=(\nu_f(v))_{v\in V}$ of $f$, so that  $\nu_f(v)\in \partial U_f(v(f))\setminus {\rm Crit}(f)$ for each $v\in V$.
   \end{itemize}

 We remark that the boundary marking $\nu_f$  is defined in the same way as Section \ref{phc}: it
 means a function $\nu_f: V\rightarrow \mathbb C$  which assigns to each  $v\in V$ a  point $\nu_f(v)\in\partial U_{f}(v(f))$, satisfying that 
 $ f(\nu_f(v))=\nu_f(\sigma(v)), \  \forall\ v\in V$.
 
 
 
 
 \begin{figure}[h]  
 	\begin{center} 
 		\includegraphics[height=5cm]{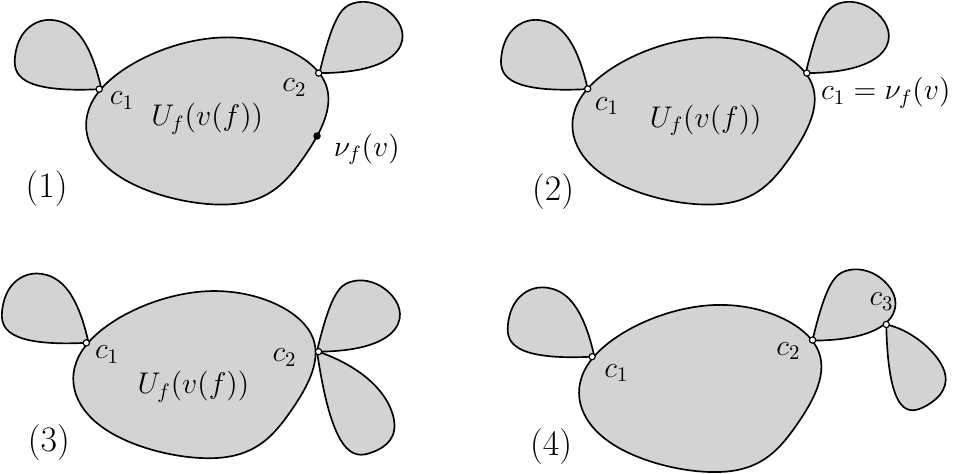}
 	\end{center}
 	\caption{The $v$-part of (a part of) the collapsed Fatou component, $c_k$'s are critical points. Only (1) is $\mathcal H$-admissible; (2) does not satisfy (d); (3) does not satisfy (b), because $c_2$ disconnects $J(f)$ into at least three parts; (4) does not satisfy (c), because no internal marking exists.}
 	\label{fig: pairing-unique}
 \end{figure}
 
 The following fact shows that these properties characterize $\mathcal H$-admissible maps.
 
 
 \begin{ft}\label{equiv-def}  The following statements are equivalent:
 	
 	1. $f$  satisfies above properties (a)(b)(c)(d).
 	
 	2. $f\in I_{\Phi}(D)$  for some parameterization $\Phi: {\rm Div}(\mathbb D)^S\rightarrow   {\mathcal H}$ and some $\mathcal H$-admissible divisor $D\in  \partial_0^*{\rm Div}{(\mathbb D)}^S$.
 \end{ft}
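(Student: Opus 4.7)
The proof has two directions. For $(2) \Rightarrow (1)$, this is an essentially direct application of Proposition \ref{h-admissible-characterization}. Given an $\mathcal H$-admissible $D \in \partial_0^* {\rm Div}{(\mathbb D)}^S$, the unique $f \in I_{\Phi}(D)$ is Misiurewicz by that proposition, and the condition $D \in \partial_0 {\rm Div}{(\mathbb D)}^S$ combined with \eqref{decomposition-boundary} places $f \in \partial_{\rm reg}\mathcal H$, giving (a). Since each critical point of $f$ on $J(f)$ is landed by exactly two external rays, it is simple and disconnects $J(f)$ into two parts, giving (b). The internal marking $\boldsymbol{v}(f)=(v(f))_{v\in V}$ is obtained as the limit of $\boldsymbol{v}(f_n)$ for any approximating sequence in $\mathcal H$, and the containment ${\rm Crit}(f)\subset\bigcup_v\overline{U_{f,v}}$ follows from Proposition \ref{h-admissible-characterization}(1), giving (c). Finally, the boundary marking $\nu_f(v):=\phi_{f,v}(1)$ avoids ${\rm Crit}(f)$ precisely because $1\notin{\rm supp}(D_v^\partial)$, giving (d).

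For $(1) \Rightarrow (2)$, I would start from the sequence $\{f_n\}\subset\mathcal H$ supplied by (c). Property (d) ensures $\nu_f(v)$ is a pre-repelling point of $f$ whose orbit avoids critical points, and the implicit function theorem together with Lemma \ref{stability-e-r} allows continuous lifting of $\nu_f$ to boundary markings $\nu_{f_n}$ of $f_n$ with $\nu_{f_n}(v)\to\nu_f(v)$ for every $v\in V$. This places the doubly marked sequence $(f_n,\boldsymbol{v}(f_n),\nu_{f_n})$ in a single doubly marked hyperbolic component $\widehat{\mathcal{H}}_*$, and Proposition \ref{branched-model} (combined with the identification $\mathcal B^S_*\cong{\rm Div}(\mathbb D)^S$) yields a parameterization $\Phi$. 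By compactness of ${\rm Div}(\overline{\mathbb D})^S$, after passing to a subsequence, $\Phi^{-1}(f_n)\to D\in\partial{\rm Div}(\mathbb D)^S$, and Fact \ref{fact-def} gives $f\in I_{\Phi}(D)$.

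The remaining task is to show that $D$ is $\mathcal{H}$-admissible. The containment $D\in\partial_0{\rm Div}(\mathbb D)^S$ follows from (a) via \eqref{decomposition-boundary}, and $D\in\partial_0^*$ follows from (d): if $1\in{\rm supp}(D_v^\partial)$, then zeros of $B_{f_n,v}$ would accumulate at $1$, placing a critical point of $f$ at $\nu_f(v)$. Genericness of $D$ translates directly from (b): a multiplicity-two factor in $D_v^\partial$ would produce either a critical point of $f$ of local degree at least three on $\partial U_{f,v}$ or two colliding critical points there, both incompatible with the simple-disconnecting hypothesis. The Misiurewicz-divisor and $D_v^\partial(\mathcal H)$-non-intersection conditions translate dynamically to the statement that the forward $f$-orbit of any critical point $c\in\partial U_{f,v}$ avoids every critical-limb attachment point on every marked boundary $\partial U_{f,\sigma^k(v)}$ for $k\ge 1$.

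The hard part will be this last condition, since Misiurewicz alone only forbids critical points in $\omega$-limit sets and does not a priori exclude finitely many critical collisions in the strictly preperiodic segment of an orbit. The essential input is the combinatorial rigidity provided by (b): the argument of Fact~3 in the proof of Proposition \ref{h-admissible-characterization} gives $m(c)=1$ for every boundary critical point, hence ${\rm Crit}(f)\cap J(f)=\bigsqcup_{u\in V}({\rm Crit}(f)\cap\partial U_{f,u})$, and the tight critical count $\sum_{u}(\delta(u)-1)=d-1$ leaves no room for extra critical content. A hypothetical collision $f^k(c_1)=c_2$ between two simple boundary critical points would propagate local-degree accumulation along the iterated orbit, forcing a non-trivial critical limb at $\nu_f(\sigma^j(v))$ for some $j$ and contradicting (d). I expect to carry out this exclusion by tracking the limb structure under $f$-iteration, using the external-ray stability of Lemma \ref{stability-e-r} and the preperiodic Misiurewicz structure to pin down the allowed landing locations, and then ruling out collisions via the critical-count constraint.
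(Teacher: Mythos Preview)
Your overall architecture for $(1)\Rightarrow(2)$ is correct and matches the paper's: extend the markings, take a sequence, pass to a limit divisor $D$, and then verify $\mathcal H$-admissibility. But there are two genuine gaps.

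First, your argument for $1\notin{\rm supp}(D_v^\partial)$ does not work as written. Accumulation of zeros (hence of critical points, by Theorem \ref{zakeri-exten}) of $B_{f_n,v}$ at $1$ means $\psi_{f_n,v}(c_n)\to 1$ for some critical point $c_n\in U_{f_n,v}$ of $f_n$. But the convergence $\phi_{f_n,v}\to\phi_{f,v}$ is only locally uniform on $\mathbb D$, so you cannot conclude $c_n\to\nu_f(v)$; a priori $c_n$ could escape to a limit outside $\overline{U_{f,v}}$. The paper handles this by invoking the extremal-length estimate from Step 2 of the proof of Proposition \ref{map-imp}, which gives a uniform lower bound on the distance from the near-degenerate critical points of $B_{f_n,v}$ to $1$.

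Second, and more seriously, your approach to the ``hard part'' is off-track. The paper does not exclude critical collisions via a propagation argument contradicting (d); indeed your proposed contradiction makes no sense, since the boundary marking $\nu_f$ is unrelated to the forward orbits of the Julia critical points. The paper's argument is much more direct and uses only (b): if the limb $L_{U_{f,\sigma(v)},f(c)}$ were non-trivial, take the two rays bounding it and pull them back through $f$ at the simple critical point $c$; this produces four rays landing at $c$, so $J(f)\setminus\{c\}$ has at least four components, contradicting (b). Hence the limb at $f(c)$ is trivial, and by the inclusion $L_{f(U),f(x)}\subset f(L_{U,x})$ all subsequent limbs $L_{U_{f,\sigma^k(v)},f^k(c)}$ for $k\ge 1$ are trivial as well. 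Combined with the Misiurewicz property, this gives $\mathcal H$-admissibility immediately. The exclusion of critical collisions $f^k(c_1)=c_2$ is then a \emph{consequence} (a critical point on $\partial U_{f,u}$ has non-trivial limb, but all forward limbs are trivial), not something that needs a separate argument. Your invocation of Fact 3 from the proof of Proposition \ref{h-admissible-characterization} is also circular, since that fact is established under the hypothesis that $D$ is already $\mathcal H$-admissible.
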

 \begin{proof} It suffices to show 1 implies 2.  Let ${\boldsymbol v}(f)=(v(f))_{v\in V}$, $\nu_f=(\nu_f(v))_{v\in V}$  be the  internal marking and  the boundary marking defining $f$, respectively.
 	
 	Since $f$ is Misiurewicz,     for each $v\in V$,  the marked  point $\nu_f(v)\in \partial U_f(v(f))$ is $f$-pre-repelling, whose orbit avoids critical points (by (d)). 
 	By perturbation, both  ${\boldsymbol v}(f)$ and $\nu_f$  can extend to nearby maps, hence to all maps in $\mathcal H$.   
 	By Proposition \ref{holo-hyperbolic-set}, we have $\nu_g(v)\in \partial U_{g, v}$ for all $v\in V$ and all $g\in \mathcal H$.
 	
 	The pair $(g, {\boldsymbol v}(g))$ together with  $\nu_g$ for $g\in \mathcal H$  induces a natural parameterization  $\Phi: {\rm Div}(\mathbb D)^S\rightarrow   {\mathcal H}_*$ (here, $ {\mathcal H}_*$ is the internally marked hyperbolic component. As before, if no confusion arises, we simply write it as   $\mathcal H$). 
 	
 	Let $\{f_n\}_{n\geq 1}$ be a sequence in $\mathcal H$ approaching $f$. Assume  $B_{n}:=\Phi^{-1}(f_n)$ has a limit $D:=((B^0_v,  D_v^\partial))_{v\in V}\in \partial {\rm Div}(\mathbb D)^S$.  By Fact \ref{fact-def}, $f\in I_{\Phi}(D)$.
 	By the same argument as Step 2 in the proof Proposition \ref{map-imp}, we have $1\notin {\rm supp}(D_v^\partial)$ for all $v\in V$.  Hence $D\in  \partial_0^*{\rm Div}{(\mathbb D)}^S$. 
 	
 	By Proposition \ref{combinatorial-property0}, for any $v\in V$,  the conformal maps $\phi_{f_n, v}:  \mathbb D\rightarrow U_{f_n, v}$ have a limit $\phi_{f,v}:  \mathbb D\rightarrow U_{f}(v(f))$. 
 	By Lemma \ref{model-map},  $${B}^0_v=\phi^{-1}_{f,\sigma(v)}\circ f\circ \phi_{f,v}, \ \forall v\in V.$$
 	This implies that $\phi_{f,v}(1)$ is $f$-pre-repelling. By Proposition \ref{convergence-repelling}, we have $\lim_n\phi_{f_n, v}(1)=\phi_{f, v}(1)$. Since $\phi_{f_n, v}(1)=\nu_{f_n}(v)$, and $\lim_n\nu_{f_n}(v)=\nu_{f}(v)$, we get $\phi_{f, v}(1)=\nu_{f}(v)$.  Since $f$ is Misiurewicz, by the same reasoning as Remark \ref{divisor-def-g} and also by 
 	Proposition \ref{divisor-correspondence}, we get
 	
 	$$D_v^\partial=\sum_{c\in {\rm Crit}(f)\cap \partial U_{f}(v(f))} 1\cdot \phi^{-1}_{f,v}(c), \ \forall v\in V.$$
 	
 	
 	
 	Note that $f$ satisfies property (a),   for each $v\in V$ and each $c\in {\rm Crit}(f)\cap \partial U_{f}(v(f))$, the orbit of $c$ does not meet other critical points,  hence   the limbs $L_{ U_{f}(f^k(v(f))), f^k(c)}, k\geq 1$  are either all trivial  or all non-trivial. The latter is impossible because the non-triviality of $L_{ U_{f}(f(v(f)), f(c)}$ implies that $c$ disconnects $J(f)$  into at least four parts, contradicting (b).
 	Now by the triviality of $L_{ U_{f}(f^k(v(f))), f^k(c)}, k\geq 1$  
 	and the Misiurewicz property of $f$, we conclude that $D$ is $\mathcal H$-admissible.
 \end{proof}

 Let $f$ be an $\mathcal H$-admissible map with internal marking ${\boldsymbol v}(f)=(v(f))_{v\in V}$.
 Write $U_{f,v}=U_f(v(f))$ for $v\in V$.
 By the Implicit Function Theorem, there exist 
 a neighborhood $\mathcal U$ (small enough to satisfy Proposition \ref{map-imp}) of $f$, and a continuous map
 ${\boldsymbol v}: \mathcal U\rightarrow \mathbb C^{\# V}$, defined as ${\boldsymbol v}(g)=(v(g))_{v\in V}$,  so that for each $v\in V$,  $v(g)$ is $g$-pre-attracting for  
 $g\in \mathcal U$\footnote{By  convention in this  paper, both the map $f$ and the neighborhood $\mathcal U$ should be understood as the objects in the internally marked space. Hence, even if internal marking ${\boldsymbol v}(f)=(v(f))_{v\in V}$ contains an $f$-critical point $v(f)$ for some $v\in V_{\rm np}$, the map  ${\boldsymbol v}: \mathcal U\rightarrow \mathbb C^{\# V}$ is always well-defined. }.   Write $U_{g,v}=U_g(v(g))$ for $v\in V$ and $g\in \mathcal U$.
 
 
 We mark the critical points of $f$ on Julia set  by  $c_1, \cdots, c_{n}$.  
  By Proposition   \ref{h-admissible-characterization} and Fact \ref{equiv-def},   for each $1\leq j\leq n$, there is a   unique $v_j\in V$ so that $c_j\in \partial U_{f, v_j}$.  Shrink $\mathcal U$ if necessary, there is a continuous map $c_j:   \mathcal U\rightarrow \mathbb C$ so that $c_j(g)$ is a $g$-critical point for  $g\in  \mathcal U$, with $c_j(f)=c_j$,  for each $1\leq j\leq n$,  and all other (non marked) critical points of $g$ are contained in the Fatou set.


 
 
Recall that $\partial_\mathcal{A} \mathcal H \subset \partial \mathcal H$ consists of all $\mathcal H$-admissible maps.
 For    any  non-empty index set $\mathcal J\subset \{1, \cdots, n\}$, define the  
   boundary strata of $\partial_\mathcal{A} \mathcal H $ associated with $\mathcal J$:
   $$(\partial_\mathcal{A} \mathcal H)^{\mathcal J}_{\mathcal U}:=\Bigg\{g\in \mathcal U\cap \partial_\mathcal{A} \mathcal H;    \begin{cases} c_j(g)\in \partial U_{g,v_j}, \forall  j\in \mathcal J \\
   	c_{j}(g)\in  U_{g, v_j},  \forall  j\in \{1, \cdots, n\}\setminus \mathcal J \end{cases}
   \Bigg\}.$$
   

    For each $g\in \mathcal U$,  let $\mathbf n(g)=(n_v(g))_{v\in V}$, where $n_v(g)=\#\{1\leq j\leq n  ; c_j(g)\in \partial U_{g,v} \}$ for $v\in V$.  For any $\mathbf n=(n_v)_{v\in V}\neq \mathbf 0$ with   $\mathbf 0\leq \mathbf n\leq \mathbf n(f)$ (meaning that $0\leq n_v\leq n_v(f)$ for all $v\in V$), recall that the $\mathbf n$-strata of $\partial_\mathcal{A} \mathcal H$ is
     $$(\partial_\mathcal{A} \mathcal H)^{\mathbf n}_{\mathcal U}:=\big\{g\in \mathcal U\cap\partial_\mathcal{A} \mathcal H;   \mathbf n(g)=\mathbf n
    \big\}.$$
    
    The following result estimates the local Hausdorff dimensions for the two kinds of boundary strata.  The first statement is a slightly more precise form of the second one, which is the restatement of Theorem \ref{local-hausdorff-dim}.
    
 \begin{thm} \label{local-hausdorff-dim-J} 
	Let $f\in \partial\mathcal H$ be an $\mathcal H$-admissible map.   
	
	1.  For  any  non-empty index set $\mathcal J\subset \{1, \cdots, n\}$, we have 
	$${\rm H.dim}((\partial_\mathcal{A} \mathcal H )^{\mathcal J}_{\mathcal U}, f)\geq \sum_{v\in V} \#\mathcal J_v \cdot {\rm H.dim}(\partial U_{f,v})+2(d-1- \#\mathcal J),$$
	where $\mathcal J_v=\{j\in \mathcal J; v_j=v\}$. 

2. 	For any $\mathbf n=(n_v)_{v\in V}\neq \mathbf 0$ with  $\mathbf 0\leq \mathbf n\leq \mathbf n(f)$, we have
$${\rm H.dim}( (\partial_\mathcal{A} \mathcal H)^{\mathbf n}_{\mathcal U}, f)\geq \sum_{v\in V} n_v\cdot {\rm H.dim}(\partial U_{f,v})+2(d-1-|\mathbf n|).$$
\end{thm}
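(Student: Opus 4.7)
The plan is to deduce statement 2 from statement 1, and to obtain statement 1 as a local Hausdorff dimension estimate on the set $\mathcal E(g_0,\rho_0)$ furnished by Theorem~\ref{local-perturbation0}(2). For the reduction, given $\mathbf n=(n_v)_{v\in V}\neq\mathbf 0$ with $\mathbf 0\leq\mathbf n\leq\mathbf n(f)$, choose any non-empty $\mathcal J\subset\{1,\dots,n\}$ satisfying $\#\mathcal J_v=n_v$ for every $v\in V$; since $(\partial_\mathcal{A}\mathcal H)^{\mathcal J}_{\mathcal U}\subset(\partial_\mathcal{A}\mathcal H)^{\mathbf n}_{\mathcal U}$, statement 1 applied to $\mathcal J$ immediately yields statement 2.

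To prove statement 1, fix $\varepsilon>0$. For each $v\in V_{\rm p}$ lying in the $\sigma$-orbit of some $v_j$, $j\in\mathcal J$, I would choose an $f^{\ell_v}$-hyperbolic set $X_v\subset\partial_0 U_{f,v}$ with $\mathrm{H.dim}(X_v)>\mathrm{H.dim}(\partial U_{f,v})-\varepsilon$; such sets exist because $\partial U_{f,v}$ is a Jordan curve (Proposition~\ref{RY}) on which the dynamics is uniformly expanding away from the finitely many (pre)critical points excluded from $\partial_0 U_{f,v}$, and its Hausdorff dimension is standardly approximated by its expanding subsets via Pesin/thermodynamic-formalism arguments. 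Proposition~\ref{perturbation-hyp-set-general} then supplies a compatible holomorphic motion $h_v:\mathcal N_0\times X_v\to\mathbb C$. Pick any $\mathbf x=(x_j)_{j\in\mathcal J}\in\prod_{j\in\mathcal J}X_{u_j}$ and apply Theorem~\ref{local-perturbation0}(1) to produce an $\mathcal H$-admissible $g_0\in\mathcal N_\varepsilon(f)$, integers $(m_j)_{j\in\mathcal J}$, and a radius $\rho_0>0$ such that
\[
\mathcal E(g_0,\rho_0)=\bigl\{g\in\mathcal N_{\rho_0}(g_0); \ g^{m_j}(c_j(g))\in h_{u_j}(g,X_{u_j}),\ \forall\,j\in\mathcal J\bigr\}\subset\partial_\mathcal{A}\mathcal H\cap\mathcal N_\varepsilon(f).
\]
Because $h_{u_j}(g,\cdot)$ takes values in $\partial U_{g,u_j}\subset J(g)$, the constraint for $j\in\mathcal J$ forces $c_j(g)\in\partial U_{g,v_j}$, while the remaining $c_i(g)$, $i\notin\mathcal J$, are held inside $U_{g,v_i}$ by the construction of $g_0$; thus $\mathcal E(g_0,\rho_0)\subset(\partial_\mathcal{A}\mathcal H)^{\mathcal J}_{\mathcal U}$.

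The crux is to show $\mathrm{H.dim}(\mathcal E(g_0,\rho_0))\geq 2(d-1-\#\mathcal J)+\sum_{j\in\mathcal J}\mathrm{H.dim}(X_{u_j})$. Consider the holomorphic critical-value map
\[
F:\mathcal N_{\rho_0}(g_0)\to\mathbb C^{\#\mathcal J},\qquad F(g)=\bigl(g^{m_j}(c_j(g))\bigr)_{j\in\mathcal J}.
\]
If $DF(g_0)$ is surjective, then the implicit function theorem (after shrinking $\rho_0$) expresses $\mathcal E(g_0,\rho_0)$, up to a bi-Lipschitz change of coordinates, as the product of a smooth transversal slice of real dimension $2(d-1-\#\mathcal J)$ with the holomorphically moved Cartesian product $\prod_{j\in\mathcal J}h_{u_j}(\cdot,X_{u_j})$; since holomorphic motions are quasiconformal and hence preserve Hausdorff dimension, the product factor has dimension $\sum_{j\in\mathcal J}\mathrm{H.dim}(X_{u_j})$. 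Summing and using $\mathrm{H.dim}(\partial U_{f,v_j})=\mathrm{H.dim}(\partial U_{f,u_j})$ (via the iterate $f^{r_j}$ with $\sigma^{r_j}(v_j)=u_j$, which is locally conformal on $\partial U_{f,v_j}$ off the finite critical set) and letting $\varepsilon\to 0$ concludes statement 1. The main obstacle is thus the transversality $\mathrm{rank}\,DF(g_0)=\#\mathcal J$: I plan to establish it by constructing, for each $j_0\in\mathcal J$, an explicit holomorphic vector field on $\mathcal P_d$ supported near the orbit $\{g_0^k(c_{j_0}(g_0))\}_{0\leq k\leq m_{j_0}}$ that moves $g_0^{m_{j_0}}(c_{j_0}(g_0))$ while fixing the other prescribed critical values, exploiting the $\mathcal H$-admissibility of $g_0$ (whereby the distinct critical orbits of $g_0$ on $J(g_0)$ are disjoint and meet no further critical points after time one) to invoke Tan Lei / Levin-type independence arguments for Misiurewicz parameters.
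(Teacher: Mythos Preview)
Your overall architecture matches the paper's: reduce statement 2 to statement 1 by choosing $\mathcal J$ with $\#\mathcal J_v=n_v$, produce hyperbolic sets $X_v\subset\partial_0 U_{f,v}$ approximating $\mathrm{H.dim}(\partial U_{f,v})$ (the paper invokes Lemma~\ref{hyp-dim-b} for this), apply the perturbation theorem to get $g_0$ and $\mathcal E(g_0,\rho_0)\subset(\partial_{\mathcal A}\mathcal H)^{\mathcal J}_{\mathcal U}$, and then extract a dimension lower bound for $\mathcal E(g_0,\rho_0)$.

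There is, however, a genuine gap in the last step. Your claim that ``holomorphic motions are quasiconformal and hence preserve Hausdorff dimension'' is false: quasiconformal maps are only H\"older, and can change Hausdorff dimension by a factor up to $K$. Consequently your product decomposition via the implicit function theorem does \emph{not} yield $\mathrm{H.dim}=2(d-1-\#\mathcal J)+\sum_j\mathrm{H.dim}(X_{u_j})$ on the nose. The paper circumvents this with a dedicated \emph{Transfer inequality} (Proposition~\ref{HD-trans}): for a holomorphic map $V:\mathbb D^n\to\mathbb C^m$ and a holomorphic motion $H$ of $\mathbf X$, the set $\mathcal X=\{\lambda:V(\lambda)\in H(\lambda,\mathbf X)\}$ satisfies $\mathrm{H.dim}(\mathcal X,\mathbf 0)\geq\mathrm{H.dim}(\mathbf X,\mathbf a)+2(n-m)$, provided only the \emph{weak} transversality $\mathrm{codim}\bigl((V-H(\cdot,\mathbf a))^{-1}(\mathbf 0)\bigr)=m$. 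Its proof absorbs the H\"older distortion by letting the quasiconformal constant tend to $1$ as one localizes, recovering the correct inequality in the limit.

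This also means you do not need the strong transversality $\mathrm{rank}\,DF(g_0)=\#\mathcal J$ that you propose to prove by hand. The paper instead quotes Gauthier's weak transversality for Misiurewicz parameters (Lemma~\ref{w-transversality}): the map $g\mapsto\bigl(g^{m_j}(c_j(g))-h(g,x_j)\bigr)_{j\in\mathcal J}$ has zero set of codimension exactly $\#\mathcal J$. That is all the Transfer inequality requires. Your vector-field plan might eventually yield strong transversality, but it is both harder and unnecessary; replace it by the cited result and the Transfer inequality, and your argument goes through.
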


The proof of Theorem \ref{local-hausdorff-dim-J}  is based on Theorem \ref{local-perturbation0}, restated as follows





\begin{thm}  [Perturbation on  $\partial \mathcal H$] \label{local-perturbation1} 
	Let $f\in \partial\mathcal H$ be an $\mathcal H$-admissible map, whose marked critical points on Julia set are  $c_1, \cdots, c_n$.  Suppose  the $f$-orbit of $c_j$ meets $\partial U_{f, u_j}$ for some  $u_j\in V_{\rm p}$.   Given  an $f$-hyperbolic set $X\subset \bigcup_{v\in V_{\rm p}}\partial_0 U_{f, v}$,  a holomorphic motion $h: \mathcal U\times X\rightarrow \mathbb C$  of $X$  based at $f$ (shrink $\mathcal U$ if necessary), an  index set $\emptyset\neq\mathcal J\subset \{1, \cdots, n\}$.

	Given  any  $\varepsilon>0$ with $\mathcal N_\varepsilon(f)\subset \mathcal U$, and any 
	multipoint $ \mathbf x=(x_{j})_{j\in \mathcal{J}}\in  \prod_{j\in \mathcal J}(X\cap \partial U_{f, u_j})$.
	
	(1).  There exist  
	$g_0\in    (\partial_\mathcal{A} \mathcal H )^{\mathcal J}_{\mathcal U}\cap  \mathcal N_{\varepsilon}(f)$,  positive integers $(m_{j})_{j\in  \mathcal{J}}$, 
	so that 
	$$g_0^{m_{j}}(c_{j}(g_0))=h(g_0,  x_{j}) , \forall j\in \mathcal J; \ \ c_{j}(g_0)\in U_{g_0, v_j},  \forall j\in\{1, \cdots, n\}\setminus \mathcal{J}.$$

	(2). Moreover, there is $\rho_0\in (0, \varepsilon)$ so that 
	$$\mathcal E(g_0, \rho_0):=\big\{g\in \mathcal N_{\rho_0}(g_0); \ g^{m_j}(c_j(g))\in h(g, X),  \  \forall j\in \mathcal J\big\}\subset  (\partial_\mathcal{A} \mathcal H )^{\mathcal J}_{\mathcal U}\cap \mathcal  N_{\varepsilon}(f). $$
\end{thm}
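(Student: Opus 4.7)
My plan is to obtain part (1) directly from Proposition \ref{pertubation-boundary-strata}, and to establish part (2) by a Fatou-counting argument that verifies the four criteria of Fact \ref{equiv-def} for all $g \in \mathcal{E}(g_0,\rho_0)$.

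For part (1), I will specialize the data in Proposition \ref{pertubation-boundary-strata} as follows: let $\mathcal{C} \subset V_{\rm p}$ be a set of representatives for the $\sigma$-cycles meeting $\{u_j : j \in \mathcal{J}\}$, and for each $v \in \mathcal{C}$ set $X_v := X \cap \partial U_{f,v}$, which is $f^{\ell_v}$-hyperbolic and contained in $\partial_0 U_{f,v}$ by the hypothesis $X \subset \bigcup_{v \in V_{\rm p}} \partial_0 U_{f,v}$, with restricted motion $h_v := h|_{\mathcal{U}\times X_v}$. The natural bijection $\{1,\ldots,n\} \leftrightarrow \mathcal{I}$ then identifies the multipoint $(x_j)_{j \in \mathcal{J}}$ with an element of $\prod_{(u,k) \in \mathcal{J}} X_{v(u)}$; applying Proposition \ref{pertubation-boundary-strata} produces $g_0 \in \partial_\mathcal{A}\mathcal{H} \cap \mathcal{N}_\varepsilon(f)$ together with integers $(m_j)$ satisfying every item of the conclusion.

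For part (2), I will shrink $\mathcal{U}$ so that $c_j(\cdot)$, $v(\cdot)$, and $h$ all extend continuously, and select $\rho_0 \in (0,\varepsilon)$ such that every $g \in \mathcal{N}_{\rho_0}(g_0)$ inherits the $m_\mathcal{H}$ attracting cycles of $g_0$ (via the Implicit Function Theorem), the open condition $c_j(g) \in U_{g,v_j}$ for $j \notin \mathcal{J}$, the continued hyperbolic set $h(g,X) \subset J(g)$ from Proposition \ref{hyp-set}, and the stable landings of the periodic external rays used in the markings of $g_0$. For $g \in \mathcal{E}(g_0,\rho_0)$ and $j \in \mathcal{J}$, the identity $g^{m_j}(c_j(g)) \in h(g,X) \subset J(g)$ places $c_j(g) \in J(g)$; continuity from $c_j(g_0) \in \partial U_{g_0,v_j}$ then forces $c_j(g) \in \partial U_{g,v_j}$.

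To conclude $g \in \partial_\mathcal{A}\mathcal{H}$, I will check Fact \ref{equiv-def}(a)--(d). The point is that every critical orbit is accounted for: the $c_j(g)$ with $j \in \mathcal{J}$ land on the repelling hyperbolic set $h(g,X) \subset \bigcup_{v \in V_{\rm p}} \partial_0 U_{g,v}$ (Proposition \ref{holo-hyperbolic-set}), the $c_j(g)$ with $j \notin \mathcal{J}$ stay in their marked Fatou components, and the remaining critical points lie in the basins of the continued attracting cycles. By Fatou's theorem a parabolic cycle would require an additional critical orbit to attract, which is unavailable; so $g$ has no parabolic cycles and exactly $m_\mathcal{H}$ attracting cycles, placing $g \in \partial_{\rm reg}\mathcal{H}$. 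Moreover $\omega(c_j(g)) \subset h(g,X)$ is disjoint from ${\rm Crit}(g)$, giving the Misiurewicz property, hence (a). Conditions (b)--(d) persist from $g_0$ by continuity: the simple character and bi-partition property at each $c_j(g)$ on $J(g)$ follow from the stability of the external-ray portrait at $c_j(g_0)$ (Lemma \ref{stability-e-r}), and the internal and boundary markings of $g_0$ continue to $g$ via Proposition \ref{holo-hyperbolic-set}. The main obstacle I foresee is making the exclusion of parabolic cycles uniform on $\mathcal{N}_{\rho_0}(g_0)$: this is not a pointwise stability assertion but a global counting statement that genuinely requires the equivariant motion of the entire hyperbolic set $h(g,X)$ to simultaneously absorb all the critical orbits indexed by $\mathcal{J}$.
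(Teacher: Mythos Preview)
Your treatment of part (1) is correct and matches the paper exactly: it is a direct specialization of Proposition \ref{pertubation-boundary-strata}.

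For part (2), however, there is a genuine gap. Your strategy is to verify conditions (a)--(d) of Fact \ref{equiv-def} for each $g\in\mathcal E(g_0,\rho_0)$, but condition (a) begins with the requirement $g\in\partial_{\rm reg}\mathcal H$, which presupposes $g\in\partial\mathcal H$. You never establish this. Showing that $g$ is Misiurewicz with exactly $m_{\mathcal H}$ attracting cycles and well-behaved critical orbits does \emph{not} place $g$ on the boundary of the \emph{particular} hyperbolic component $\mathcal H$; near $g_0$ there are many hyperbolic components, and your argument gives no mechanism for distinguishing $\mathcal H$ from its neighbours. Likewise, property (c) of Fact \ref{equiv-def} demands that the internal marking arise as a limit of internal markings of a sequence \emph{in $\mathcal H$}; continuing markings from $g_0$ is not the same thing unless you already know $g$ is approximable from within $\mathcal H$.

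The paper handles precisely this difficulty, and it is the heart of the proof. For each $g_*\in\mathcal E(g_0,\rho_0)$ it constructs a sequence $g_k\to g_*$ with $g_k^{m_j}(c_j(g_k))\in U_{g_k,\sigma^{m_j}(v_j)}$ for all $j\in\mathcal J$, using weak transversality (Lemma \ref{w-transversality}) to isolate $g_*$ as a zero of a holomorphic map on a slice, the arcs of Remark \ref{rem:2.4}(3) to push the target points slightly into the Fatou set, and a degree-theory argument to locate the perturbed zeros. A separate Claim (proved by contradiction via external-ray separation) then shows that each such $g_k$ has $c_j(g_k)\in U_{g_k,v_j}$ for all $j$, so Proposition \ref{pro:criterion}---a critical-portrait criterion developed specifically for this purpose---forces $g_k\in\mathcal H$. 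This yields $g_*\in\partial\mathcal H$. A second issue with your outline is the step ``continuity from $c_j(g_0)\in\partial U_{g_0,v_j}$ then forces $c_j(g)\in\partial U_{g,v_j}$'': the boundaries $\partial U_{g,v_j}$ are \emph{not} Hausdorff-continuous in $g$ (the paper emphasizes this repeatedly), so proximity alone is insufficient. The paper extracts $c_j(g_*)\in\partial U_{g_*,v_j}$ instead from the Hausdorff continuity of the pulled-back arcs $g^{-m_j}(\gamma_j(g,[0,1]))$ along the approximating sequence $g_k$.
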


In the following, we first introduce a transfer inequality in Section \ref{transfer-inequality}, and then prove 
Theorem \ref{local-hausdorff-dim-J}  assuming Theorem \ref{local-perturbation1} in Section \ref{proof-loc-hd-assumption}. The proof of  Theorem \ref{local-perturbation1}  is given in Section \ref{loc-perturbation}.


 \subsection{Transfer inequality} \label{transfer-inequality}
 
 For $\boldsymbol{z}=(z_1, \cdots, z_n)\in \mathbb C^n$, let $\|\boldsymbol{z}\|=\max_k|z_k|$.
 Let  $\mathbb D^n(\boldsymbol{z}_0, \delta)=\{\boldsymbol{z}\in \mathbb C^n; \|\boldsymbol{z}-\boldsymbol{z}_0\|<\delta\}$ be the polydisk centered at $\boldsymbol{z}_0\in \mathbb C^n$ with radius $\delta$.  For simplicity, write $\mathbb D^n=\mathbb D^n(\boldsymbol{0},  1)$. 
 
 Let $n\geq m\geq 1$ be two positive integers. 
 Let $X_1,  \cdots, X_m$ be subsets of $\C$. For each $1\leq k\leq m$, let 
 $H_k: \mathbb D^n\times X_k\rightarrow \C$ be a holomorphic motion of $X_k$, with base point $\boldsymbol{0}=(0, \cdots, 0)\in \mathbb D^n$. 
 Write $\mathbf{X}=X_1\times \cdots\times X_m\subset \C^m$ and $\boldsymbol{z}=(z_1, \cdots, z_m)\in \C^m$.
 Note that 
 \begin{equation} \label{h-m-higher}
 	H: 
 \begin{cases} \mathbb D^n\times \mathbf{X}\rightarrow \C^m,\\
 	(\lambda, \boldsymbol{z})\mapsto (H_1(\lambda, z_1), \cdots, H_m(\lambda, z_m))
 \end{cases}
 \end{equation}
 is a holomorphic motion of $\mathbf{X}$ with base point $\boldsymbol{0}$.

 \begin{pro} 
 	[Transfer inequality]
 	\label{HD-trans}  Let $n\geq m\geq 1$ be two  integers.
 	Let $X_1,  \cdots, X_m$ be subsets of $\C$, and $H: \mathbb D^n\times \mathbf{X}\rightarrow \C^m$ be the holomorphic motion given by (\ref{h-m-higher}).
 	Let $V=(V_1, \cdots, V_m): \mathbb D^n\rightarrow \mathbb C^m$ be a holomorphic map, with 
 	$V(\boldsymbol{0})=H(\boldsymbol{0}, \boldsymbol{a})$ for some $\boldsymbol{a}=(a_1, \cdots, a_m)\in \mathbf{X}$. Let
 	$$\mathcal X=\big\{\lambda\in \mathbb D^n; V(\lambda)\in H(\lambda, \mathbf{X})\big\}. $$
 	
 Assume the `weak transversality':
 	 ${\rm codim}\big((V-H(\cdot, \boldsymbol{a}))^{-1}(\boldsymbol{0})\big)=m$, 
 then  
 	$$  {\rm H.dim}(\mathcal X, \boldsymbol{0})\geq {\rm H.dim}(\mathbf X, \boldsymbol{a})+2(n-m)=\sum_{k=1}^m {\rm H.dim}(X_k,  a_k)+2(n-m).$$

 \end{pro}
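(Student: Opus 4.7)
The plan is to realise $\mathcal X$ as the preimage of $\mathbf X$ under a quasiconformal-flavoured continuous map $\Phi:\mathbb D^n\to\mathbb C^m$, and then to recover the advertised lower bound by a product-type estimate based on the complex-analytic fibration structure of $\Phi$. First I would use Slodkowski's $\lambda$-lemma (in its version for holomorphic motions parameterised by a polydisk, either the extension of Mitra or a one-parameter slicing reduction to the classical disk statement) to extend each coordinate motion $H_k:\mathbb D^n\times X_k\to\mathbb C$ to a holomorphic motion $\widehat H_k:\mathbb D^n\times\mathbb C\to\mathbb C$, quasiconformal in the space variable with dilatation locally uniformly controlled. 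Setting $\widehat H(\lambda,\boldsymbol z):=(\widehat H_1(\lambda,z_1),\ldots,\widehat H_m(\lambda,z_m))$ and $\Phi(\lambda):=\widehat H_\lambda^{-1}(V(\lambda))$, the map $\Phi$ is continuous on a small polydisk around $\mathbf 0$, $\Phi(\mathbf 0)=\boldsymbol a$, and the identity $\mathcal X=\Phi^{-1}(\mathbf X)$ holds in that polydisk.

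Next I would analyse the fibres of $\Phi$ by introducing the holomorphic-in-$\lambda$ family $G(\lambda,\boldsymbol z):=V(\lambda)-\widehat H(\lambda,\boldsymbol z)$, so that $\Phi^{-1}(\boldsymbol z)=\{\lambda:G(\lambda,\boldsymbol z)=\mathbf 0\}$ is a complex-analytic subset of $\mathbb D^n$ for each $\boldsymbol z$. The weak transversality assumption ${\rm codim}\,\Phi^{-1}(\boldsymbol a)=m$, combined with the upper semicontinuity of fibre dimension for holomorphic maps and the trivial bound $\dim\Phi^{-1}(\boldsymbol z)\ge n-m$, shows that in a neighbourhood $\mathcal V$ of $\boldsymbol a$ every fibre $\Phi^{-1}(\boldsymbol z)$ is of pure complex dimension $n-m$ at some point near $\mathbf 0$, hence of real Hausdorff dimension $2(n-m)$. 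Apply the local parametrisation (Noether normalisation) theorem to the analytic germ $Z:=\Phi^{-1}(\boldsymbol a)$ at $\mathbf 0$: there are holomorphic coordinates $(\mu,\nu)\in\mathbb C^{n-m}\times\mathbb C^m$ in which $\pi_\mu\colon Z\to\mathbb C^{n-m}$ is a finite branched cover near $\mathbf 0$, and the corresponding map $\nu\mapsto G((\mu,\nu),\boldsymbol a)$ has an isolated zero for every small $\mu$.

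By a continuity/implicit-function argument (using that $G(\lambda,\cdot)$ is quasiconformal in $\boldsymbol z$ with uniformly controlled dilatation), a single-valued branch $\nu(\mu,\boldsymbol z)$ of the equation $G((\mu,\nu),\boldsymbol z)=\mathbf 0$ can be selected for $(\mu,\boldsymbol z)$ close to $(\mathbf 0,\boldsymbol a)$, and the map $\iota(\mu,\boldsymbol z):=(\mu,\nu(\mu,\boldsymbol z))$ embeds a neighbourhood of $\{\mathbf 0\}\times\{\boldsymbol a\}$ inside $\mathbb D^{n-m}\times\mathbb C^m$ into $\mathbb D^n$ with $\iota(\mathbb D^{n-m}(\mathbf 0,r)\times\mathbf X)\subset\mathcal X$. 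The product inequality ${\rm H.dim}(A\times B)\ge{\rm H.dim}(A)+{\rm H.dim}(B)$, combined with the bi-H\"older (really: bi-Lipschitz in $\mu$ and quasiconformally bi-H\"older in $\boldsymbol z$) regularity of $\iota$, will yield the advertised estimate ${\rm H.dim}(\mathcal X,\mathbf 0)\ge{\rm H.dim}(\mathbf X,\boldsymbol a)+2(n-m)$. The main obstacle is the regularity of the local parametrisation $\iota$: naive composition of quasiconformal maps gives only H\"older control, which would weaken the product inequality. To avoid loss, I would exploit that the $\mu$-direction is genuinely holomorphic (the fibres $\Phi^{-1}(\boldsymbol z)$ are analytic), so that ${\rm H.dim}(\iota(\mathbb D^{n-m}\times\{\boldsymbol z\}))=2(n-m)$ exactly, and that the $\boldsymbol z$-direction uses only the quasi-invariance of Hausdorff dimension transversal to the holomorphic fibres; this separation should be enough to preserve the full sum $2(n-m)+{\rm H.dim}(\mathbf X)$. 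A secondary technical point is selecting branches of $\nu(\mu,\boldsymbol z)$ consistently when the branched cover has nontrivial monodromy, but this can be done locally and is sufficient for the local Hausdorff-dimension statement.
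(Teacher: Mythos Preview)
Your opening moves coincide with the paper's: extend each $H_k$ to a motion of all of $\mathbb C$ (the paper cites the Bers--Royden/Mitra harmonic $\lambda$-lemma rather than Slodkowski, but either works), define $\Phi(\lambda)=\widehat H_\lambda^{-1}(V(\lambda))$, and observe $\mathcal X=\Phi^{-1}(\mathbf X)$ near $\mathbf 0$. After that the routes diverge.

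The paper never attempts a fibration/product argument. It first treats $n=m$ directly: weak transversality makes $\mathbf 0$ an isolated zero of $\zeta_{\boldsymbol a}(\lambda):=V(\lambda)-H(\lambda,\boldsymbol a)$, so a generalised Rouch\'e theorem shows $\Phi$ is surjective and finite-to-one from $\mathcal X\cap\mathbb D^n(\mathbf 0,\delta_0)$ onto $\mathbf X\cap\mathbb D^m(\boldsymbol a,\eta_0)$. The crucial estimate is a H\"older bound on $\Phi$ itself: writing $\zeta_{\Phi(\lambda)}(\lambda)=0=\zeta_{\Phi(\lambda')}(\lambda')$, one combines the inverse-H\"older control of $\boldsymbol z\mapsto H(\lambda',\boldsymbol z)$ with the Lipschitz control of $\lambda\mapsto\zeta_{\boldsymbol z}(\lambda)$ to obtain $\|\Phi(\lambda)-\Phi(\lambda')\|\le C(\varepsilon)\|\lambda-\lambda'\|^{(1-\varepsilon)/(1+\varepsilon)}$ for $\lambda,\lambda'\in\mathbb D^n(\mathbf 0,\varepsilon\delta_0)$. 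Surjectivity plus this bound gives ${\rm H.dim}(\mathbf X,\boldsymbol a)\le\frac{1+\varepsilon}{1-\varepsilon}\,{\rm H.dim}(\mathcal X,\mathbf 0)$, and letting $\varepsilon\to 0$ finishes. For $n>m$ the paper uses a one-line padding trick: after a linear change of coordinates set $X_k=\mathbb C$, $H_k(\lambda,z_k)=z_k$, $V_k(\lambda)=\lambda_k$ for $m<k\le n$; this preserves $\mathcal X$ and reduces to the equidimensional case with ${\rm H.dim}(\widehat{\mathbf X},\mathbf 0)={\rm H.dim}(\mathbf X,\boldsymbol a)+2(n-m)$.

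Your proposal has a genuine soft spot, and it is precisely the place you flag. Applying the product inequality to $\mathbb D^{n-m}\times\mathbf X$ and then pushing forward by a map that is only H\"older in $\boldsymbol z$ costs a factor, and your ``separation'' heuristic (holomorphic $\mu$-fibres contribute $2(n-m)$ exactly, the $\boldsymbol z$-direction contributes $\dim\mathbf X$ with no loss) is not a theorem: a family of pairwise disjoint analytic $(n-m)$-folds indexed by a set of dimension $d$ need not sweep out a set of dimension $2(n-m)+d$ without quantitative transversal regularity. What actually rescues the situation---and what you do not mention---is that the H\"older exponent of $\Phi$ (equivalently of your $\iota^{-1}=({\rm id}_\mu,\Phi)$) on $\mathbb D^n(\mathbf 0,\varepsilon\delta_0)$ is $(1-\varepsilon)/(1+\varepsilon)$ and hence tends to $1$ as the neighbourhood shrinks; since the statement concerns the \emph{local} Hausdorff dimension, this limit kills the loss. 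Once you insert that observation, your Noether-normalised map $\iota^{-1}$ is essentially the paper's padded $\widehat\varphi$, and the arguments converge. The paper's padding-plus-Rouch\'e route is cleaner because it avoids branch selection, monodromy, and the product inequality altogether.
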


 


\begin{proof}
 We adapt here the argument of the proof of  Proposition 4.5 in  \cite{G}.	 By translation, assume $\boldsymbol{a}=\boldsymbol{0}$. By the harmonic-$\lambda$-lemma (named by McMullen-Sullivan \cite{McS},    see Bers-Royden \cite{BR} for $n=1$,  Mitra \cite{Mitra} for $n\geq 1$), and rescaling $\lambda$ if necessary, $H_k$ extends to a holomorphic motion 
	$H_k: \mathbb D^n \times \C\rightarrow \C$.  Hence $H$ extends to  a holomorphic motion 	$H: \mathbb D^n \times \C^m\rightarrow \C^m$.
	
	We define a continuous map $\varphi=(\varphi_k)_{1\leq k\leq m}:  \mathbb D^n\rightarrow \mathbb C^m$ by 
	$$\varphi_k(\lambda)=H_k(\lambda, \cdot)^{-1}(V_k(\lambda)), 1\leq k\leq m.$$
	For any $\boldsymbol{z}\in \mathbb C^m$, let $\zeta_{\boldsymbol{z}}(\lambda)=V(\lambda)-H(\lambda, \boldsymbol{z})$.
	Clearly  $\zeta_{\boldsymbol{z}}(\lambda)=\boldsymbol{0}$	 if and only if $\boldsymbol{z}=\varphi(\lambda)$. So $\mathcal X$ has the equivalent definition:
	$$\mathcal X=\big\{\lambda\in \mathbb D^n; \varphi(\lambda)\in \mathbf{X}\big\}=\varphi^{-1}(\mathbf{X}).$$
	
	First assume $n=m$. 
	
 {\it Claim: there is $\delta_0>0, \eta_0>0$ so that $\varphi: \mathcal{X}\cap \mathbb D^n(\boldsymbol{0}, \delta_0)\cap \varphi^{-1}(\mathbb D^n(\boldsymbol{0}, \eta_0))\rightarrow \mathbb D^n(\boldsymbol{0}, \eta_0)\cap \mathbf{X}$ is  {surjective} and finite-to-one.}
 
  By the weak transversality, $\lambda=\boldsymbol{0}$ is  an  isolated zero of    $\zeta_{\boldsymbol{0}}(\lambda)=\boldsymbol{0}$,  and let $l\geq 1$ be its multiplicity.  
  Choose $\delta_0>0$ so that $\zeta_{\boldsymbol{0}}^{-1}(\boldsymbol{0})\cap  \overline{\mathbb D^n(\boldsymbol{0}, \delta_0)}=\{\boldsymbol{0}\}$.
 By the continuity of $(\boldsymbol{z}, \lambda)\mapsto \zeta_{\boldsymbol{z}}(\lambda)$, there is $ \eta_0>0$ so that for all $\boldsymbol{z}\in \mathbb D^n(\boldsymbol{0}, \eta_0)$,  
 $$\|\zeta_{\boldsymbol{z}}(\lambda)-\zeta_{\boldsymbol{0}}(\lambda)\|<\min_{\|\alpha\|=\delta_0}\|\zeta_{\boldsymbol{0}}(\alpha)\|\leq \|\zeta_{\boldsymbol{0}}(\lambda)\|, \ \forall \lambda\in \partial\mathbb D^n(\boldsymbol{0}, \delta_0).$$
 By the generalized Rouch\'e Theorem \cite{L},  $\zeta_{\boldsymbol{z}}(\lambda)=0$  has exactly $l$ zeros  in $\mathbb D^n(\boldsymbol{0}, \delta_0)$.  In particular, for $\boldsymbol{z}\in \mathbb D^n(\boldsymbol{0}, \eta_0)\cap \mathbf{X}$, there are exactly $l$ points 
 $\lambda^{(1)}, \cdots, \lambda^{(l)}\in  \mathbb D^n(\boldsymbol{0}, \delta_0)$ so that $\zeta_{\boldsymbol{z}}(\lambda^{(k)})=\boldsymbol{0}$, or equivalently $\boldsymbol{z}=\varphi(\lambda^{(k)})$ (this means $\lambda^{(k)}\in \mathcal X$).  Hence $\varphi: \mathcal{X}\cap \mathbb D^n(\boldsymbol{0}, \delta_0)\cap \varphi^{-1}(\mathbb D^n(\boldsymbol{0}, \eta_0))\rightarrow \mathbb D^n(\boldsymbol{0}, \eta_0)\cap \mathbf{X}$ is   {surjective} and $l$-to-$1$, proving the claim.

 Note that  $\zeta_{\varphi(\lambda)}(\lambda')-\zeta_{\varphi(\lambda)}(\lambda')=
 \zeta_{\varphi(\lambda')}(\lambda')-\zeta_{\varphi(\lambda)}(\lambda')$ for 
      $\lambda, \lambda'\in \mathbb D^n(\boldsymbol{0}, \delta_0)$. By the H\"older estimate for holomorphic motions (see \cite[Lemma 1.1]{D}), for any  $\varepsilon\in (0,1)$,   $\lambda, \lambda'\in \mathbb D^n(\boldsymbol{0},\varepsilon \delta_0)$, there is a constant $C_1(\varepsilon)$  so that
     \bess \| \zeta_{\varphi(\lambda')}(\lambda')-\zeta_{\varphi(\lambda)}(\lambda')\|\geq C_1(\varepsilon)\|\varphi(\lambda')-\varphi(\lambda)\|^{\frac{1+\varepsilon}{1-\varepsilon}}.\eess
     Combining with $\|\zeta_{\varphi(\lambda)}(\lambda)-\zeta_{\varphi(\lambda)}(\lambda')\|\leq C_2(\varepsilon)\|\lambda-\lambda'\|$ yields  $$\|\varphi(\lambda')-\varphi(\lambda)\|\leq C(\varepsilon)\|\lambda-\lambda'\|^{\frac{1-\varepsilon}{1+\varepsilon}}.$$
By above claim, we get 
$${\rm H.dim}(\mathbf X, \boldsymbol{0})\leq  \frac{1+\varepsilon}{1-\varepsilon} {\rm H.dim}(\mathcal X, \boldsymbol{0}).$$
Letting $\varepsilon\rightarrow 0$ gives the required estimate.

Assume $n>m$.  Since ${\rm codim}\big((V-H(\cdot, \boldsymbol{0}))^{-1}(\boldsymbol{0})\big)=m$,   up to a linear change of coordinate, we may assume $\boldsymbol{0}$ is isolated in the analytic set
$$\zeta_{\boldsymbol{0}}^{-1}(\boldsymbol{0})\cap \{\lambda=(\lambda_k)_{1\leq k\leq n}\in  \mathbb D^n; \lambda_{m+1}=\cdots=\lambda_n=0\}.$$
For $m<k\leq n$, let $X_k=\C$, $H_{k}(\lambda, z_k)={z_k}$, $V_k(\lambda)=\lambda_k$. Write $\widehat{\mathbf{X}}=\prod_{1\leq k\leq n} X_{k}$,  $\widehat{H}=(H_k)_{1\leq k\leq n}$, $\widehat{V}=(V_k)_{1\leq k\leq n}$. The   `weak transversality' becomes ${\rm codim}\big((\widehat{V}-\widehat{H}(\cdot, \boldsymbol{0}))^{-1}(\boldsymbol{0})\big)=0$, and we are in the former case.
The proof is completed by noting that ${\rm H.dim}(\widehat{\mathbf{X}}, \boldsymbol{0})={\rm H.dim}(\mathbf X, \boldsymbol{0})+2(n-m)$.
	\end{proof}

\begin{rmk} In Proposition \ref{HD-trans},  if  the strong transversality holds:  
	$${\rm rank}(D\zeta_{\boldsymbol{a}})|_{\lambda=\boldsymbol{0}}=m, $$
	where  $D\zeta_{\boldsymbol{a}}$ is the Jacobian matrix of $\zeta_{\boldsymbol{a}}(\lambda)$ with respect to $\lambda$, then 
	\bess {\rm H.dim}(\mathcal X, 0)={\rm H.dim}(\mathbf X,  \boldsymbol{a})+2(n-m)
	\eess    
	The case $m=1$ is obtained in \cite[Section 7]{CWY}.
\end{rmk}

%

 
 \subsection{Proof of  Theorem \ref{local-hausdorff-dim-J}  assuming Theorem \ref{local-perturbation1}} \label{proof-loc-hd-assumption}
 
A rational map $f$ is  {\it semi-hyperbolic}, if $f$ has no parabolic cycle and  $c\notin \overline{\{f^n(c); n\geq 1\}}$ for any critical point $c\in J(f)$.

\begin{lem}  \label{hyp-dim-b} Let $f$ be a semi-hyperbolic polynomial of degree $d\geq 2$, and suppose $U$ is a bounded periodic Fatou component of $f$. Then
	$${\rm H.dim}(\partial U)={\rm dim}_{\rm hyp}(\partial U). \footnote{The hyperbolic Hausdorff dimension ${\rm dim}_{\rm hyp}(\partial U) $ of $\partial U$ is defined to be
		$${\rm dim}_{\rm hyp}(\partial U)= \sup\{ {\rm H.dim}(X); X  \text{ is a hyperbolic set of } \partial U\}.$$} $$
	Moreover, for any $\varepsilon>0$, there is a hyperbolic set $X_\varepsilon\subset \partial U$, such that
	\begin{itemize}
		\item ${\rm H.dim}(X_\varepsilon)\geq {\rm H.dim}(\partial U)-\varepsilon$;
		\item $X_\varepsilon\subset \partial_0 U:=\{q\in \partial U; L_{U, q}=\{q\}\}$;
		\item  $X_\varepsilon$ is an invariant set of a repelling system, hence is homogeneous: $${\rm H.dim}(X_\varepsilon,x)={\rm H.dim}(X_\varepsilon), \ \forall x\in X_\varepsilon.$$
\end{itemize} 
\end{lem}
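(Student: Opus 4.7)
My approach is to combine Mañé's expansion theorem for semi-hyperbolic maps with the Carleson--Jones--Yoccoz / Przytycki philosophy that Hausdorff dimension of Julia-like sets is approximated from below by Hausdorff dimensions of hyperbolic sets. First I would reduce to the case where $f(U)=U$ by replacing $f$ with $f^p$ (the hyperbolic sets produced are then $f^p$-hyperbolic, which is what the later sections actually use). By Proposition \ref{RY}, $\partial U$ is a Jordan curve, $f\colon\partial U\to\partial U$ is a topological covering of degree $\delta={\rm deg}(f|_U)\geq 2$, and only finitely many points of ${\rm Crit}(f)\cup(P(f)\cap J(f))$ can lie on $\partial U$; semi-hyperbolicity guarantees that these two finite sets are disjoint.

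\textbf{Step 1 (Hyperbolic approximation on $\partial U$).} For $\eta>0$ I would set
\[
X(\eta)=\bigl\{z\in\partial U:\mathrm{dist}(f^k(z),P(f)\cap\partial U)\geq\eta\ \text{for all}\ k\geq 0\bigr\}.
\]
This is compact and forward $f$-invariant, and by the standard Mañé lemma the non-recurrence of critical points on $J(f)$ combined with avoidance of the post-critical set yields uniform exponential expansion $|(f^n)'(z)|\geq c\mu^n$ on $X(\eta)$, so $X(\eta)$ is hyperbolic. The key dimension fact that I need is
\[
\lim_{\eta\to 0^+}{\rm H.dim}\bigl(X(\eta)\bigr)={\rm H.dim}(\partial U).
\]
This would follow by transporting Przytycki's telescope construction to the one-dimensional invariant curve $\partial U$: pick a Markov partition of $\partial U$ adapted to the $\eta$-neighborhood of $P(f)\cap\partial U$, verify that the $f$-conformal measure of maximal dimension on $\partial U$ has integrable return times to the complement of this neighborhood (a consequence of semi-hyperbolicity and the John property of $\partial U$), and then apply Bowen's pressure formula to conclude ${\rm H.dim}(X(\eta))\nearrow{\rm H.dim}(\partial U)$.

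\textbf{Step 2 (Staying in $\partial_0 U$ and homogeneity).} By Proposition \ref{RY}(5), $q\in\partial U\setminus\partial_0 U$ exactly when some $f^n(q)$ lies in the finite set $E\subset\partial U$ of points whose limbs contain a critical point. Hence $\partial U\setminus\partial_0 U=\bigcup_{n\geq 0}f^{-n}(E)|_{\partial U}$ is countable, and every point of $E$ is pre-repelling (no parabolics by semi-hyperbolicity). I enlarge the avoidance by setting
\[
\widetilde X(\eta)=\bigl\{z\in\partial U:\mathrm{dist}\bigl(f^k(z),(P(f)\cup E)\cap\partial U\bigr)\geq\eta\ \text{for all}\ k\geq 0\bigr\},
\]
which remains hyperbolic (the Mañé argument is unaffected by enlarging by a finite set disjoint from critical orbits), satisfies $\widetilde X(\eta)\subset\partial_0 U$ by construction, and still has ${\rm H.dim}(\widetilde X(\eta))\to{\rm H.dim}(\partial U)$ since only finitely many extra points are excluded. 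By construction $\widetilde X(\eta)$ is the maximal invariant set of a finite conformal expanding system with bounded distortion, hence by the standard Bowen--Ruelle thermodynamic formalism it carries an ergodic $t$-conformal measure of full support with $t={\rm H.dim}(\widetilde X(\eta))$, which gives homogeneity ${\rm H.dim}(\widetilde X(\eta),x)={\rm H.dim}(\widetilde X(\eta))$ at every $x\in\widetilde X(\eta)$. Choosing $\eta=\eta(\varepsilon)$ small enough and setting $X_\varepsilon=\widetilde X(\eta)$ gives all three bullets, and the first equality ${\rm H.dim}(\partial U)={\rm dim}_{\rm hyp}(\partial U)$ follows since ${\rm dim}_{\rm hyp}(\partial U)\leq{\rm H.dim}(\partial U)$ is trivial while the reverse is $\sup_\varepsilon{\rm H.dim}(X_\varepsilon)\geq{\rm H.dim}(\partial U)$.

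\textbf{Main obstacle.} The technical heart is the dimension convergence ${\rm H.dim}(X(\eta))\to{\rm H.dim}(\partial U)$. While the analogous statement for the full Julia set of a semi-hyperbolic polynomial is classical (Przytycki, Carleson--Jones--Yoccoz), restricting the inducing scheme, the $f$-conformal measure, and the pressure formalism to the invariant Jordan curve $\partial U$ requires some care. I expect the cleanest implementation to work via the Blaschke model of $f|_{\overline U}$: after quasiconformal surgery (as in the proof of Proposition \ref{holo-hyperbolic-set}), $f|_{\partial U}$ is quasisymmetrically conjugate to a semi-hyperbolic Blaschke product on $\partial\mathbb D$ for which the required thermodynamic formalism is well developed, and quasisymmetric conjugacies preserve both Hausdorff dimension and the hyperbolic-dimension approximation property.
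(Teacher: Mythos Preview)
The paper does not prove this lemma: it simply cites the appendix of \cite{CWY} and notes in a footnote that the additional properties (containment in $\partial_0 U$, repelling-system structure) can be extracted by choosing the repelling system in \cite{CWY} to avoid the finitely many periodic root points of non-trivial limbs. So your outline is not competing against a proof in the paper but is essentially a reconstruction of what \cite{CWY} presumably does, and the overall architecture (Ma\~n\'e expansion on sets avoiding the post-critical locus, dimension convergence via conformal measures / pressure, then excluding the finitely many limb roots) is correct.

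There is, however, a genuine error in your proposed ``cleanest implementation'': quasisymmetric conjugacies do \emph{not} preserve Hausdorff dimension. The quasiconformal surgery in the proof of Proposition~\ref{holo-hyperbolic-set} yields only a quasisymmetric identification of $\partial U$ with $\partial\mathbb D$, and since $\partial U$ typically has dimension strictly greater than $1$ while $\partial\mathbb D$ has dimension $1$, you cannot transport the dimension problem to the Blaschke model this way. You must carry out the thermodynamic formalism (conformal measure, pressure, inducing scheme) directly on $\partial U$ with the actual map $f$; the Blaschke model is useful for combinatorics and for landing of internal rays, but not for dimension.

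A second, smaller point: the lemma asserts that $X_\varepsilon$ is the invariant set of a \emph{repelling system} in the specific sense of Section~\ref{sec:construction} (finitely many simply-connected $U_k\Subset U$ with each $f^{l_k}\colon U_k\to U$ conformal). Your $\widetilde X(\eta)$, defined by an orbit-avoidance condition, is a hyperbolic set but not a priori of this form. You need an extra step: once $\widetilde X(\eta)$ is hyperbolic, take a Markov cover by pullback disks and pass to the associated IFS, whose limit set sits inside $\widetilde X(\eta)$ and whose dimension can be made to approximate ${\rm H.dim}(\widetilde X(\eta))$ by refining. This is routine but should be mentioned, since the homogeneity and the later transversality arguments rely on this IFS structure.
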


Lemma \ref{hyp-dim-b} is probably known to experts for a long time, and a detailed proof is recently  given in the appendix of \cite{CWY}. The property of $X_\varepsilon$ is not explicitly stated therein, but can be  extracted from its proof \footnote{In the proof of \cite[Lemma A.5]{CWY}, take $t_0=\sup_{n\geq 1} {\rm H.dim}(X_n)$, where $X_n\subset \partial U$ is the invariant set of some repelling system,  the proof still works and implies that $t_0= {\rm H.dim}(\partial U)$.
Also in the proof, take  the repelling system avoiding the non-trivial limbs $L_{U, q}$ whose root point $q$ is periodic, then the invariant set is a subset of $\partial_0 U$.}.

Another ingredient in the proof of Theorem   \ref{local-hausdorff-dim-J}  is  the following weak transversality result by Gauthier \cite[Theorem 1.2 and Lemma 4.2]{G}:

\begin{lem}   [Weak transversality] \label{w-transversality} Let $f$ be a $k$-Misiurewicz polynomial of degree $d\geq 2$, and suppose $X$ is a  hyperbolic set  capturing the orbits of $k$ marked critical points $c_1(f), \cdots, c_k(f)$ on $J(f)$.  Let $h: \mathcal U\times X\rightarrow \mathbb C$ be a holomorphic motion of the 
hyperbolic	set $X$ in a neighborhood $\mathcal U\subset \mathcal P_d$ of $f$.   
Assume $f^{n_j}(c_j(f))\in X$ for $1\leq j\leq k$, and let 
$$\chi:   \mathcal U\rightarrow \mathbb C^k,  \ g\mapsto \Big  (g^{n_j}(c_j(g))-h(g, f^{n_j}(c_j(f)))\Big)_{1\leq j\leq k}.$$
Then the codimension of $\chi^{-1}(0)$ is exactly $k$.
\end{lem}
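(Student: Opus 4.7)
The plan is to show $\operatorname{codim}(\chi^{-1}(0), f) \geq k$, since the reverse inequality $\operatorname{codim}\leq k$ is automatic: $\chi^{-1}(0)$ is cut out by $k$ holomorphic equations inside the $(d-1)$-dimensional complex manifold $\mathcal{U}$. By the harmonic $\lambda$-lemma of Mitra (the higher-dimensional Bers--Royden extension), I would first extend the holomorphic motion $h$ to a holomorphic motion $\mathcal{U}\times\mathbb{C}\to\mathbb{C}$ after shrinking $\mathcal{U}$, so that each component $\chi_j(g)=g^{n_j}(c_j(g))-h(g,f^{n_j}(c_j(f)))$ is a well-defined holomorphic function on $\mathcal{U}$ vanishing at $f$. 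The content of the lemma is then the weak-transversality claim that the germs $\chi_1,\dots,\chi_k$ at $f$ form a regular sequence in $\mathcal{O}_{\mathcal{U},f}$.

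I would proceed by induction on $k$. For the base case $k=1$, suppose for contradiction that $\chi_1\equiv 0$ on some neighborhood $\mathcal{V}\subset\mathcal{U}$ of $f$; then for every $g\in\mathcal{V}$ the orbit point $g^{n_1}(c_1(g))$ remains on the moving set $h(g,X)\subset J(g)$. However, the Misiurewicz hypothesis says $c_1$ is active at $f$ in the bifurcation-theoretic sense, so after possibly further shrinking there exist $g\in\mathcal{V}$ with $c_1(g)$ attracted to an attracting or super-attracting cycle produced by the perturbation. For such $g$ the point $g^{n_1}(c_1(g))$ lies in a Fatou component while $h(g,f^{n_1}(c_1(f)))$ stays in $J(g)$, contradicting $\chi_1\equiv 0$.

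For the inductive step, assume the lemma holds for $(k-1)$-Misiurewicz polynomials. Then the germ $Z:=\{\chi_1=\cdots=\chi_{k-1}=0\}$ at $f$ has pure complex dimension $d-k$, and it remains to show $\chi_k|_Z\not\equiv 0$ in any neighborhood of $f$ in $Z$. Concretely, I would construct a holomorphic disk $t\mapsto g_t\in Z$ through $f=g_0$ with $\chi_k(g_t)\not\equiv 0$. Choose a small topological disk $D\subset\mathbb{C}$ disjoint from $X$, from ${\rm Crit}(f)$, and from the finite orbit segments of $c_1(f),\dots,c_{k-1}(f)$ entering the relations $\chi_1,\dots,\chi_{k-1}$, yet meeting the initial orbit segment of $c_k(f)$ of length $\leq n_k$. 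One then supports a $t$-family of Beltrami coefficients on $\bigcup_{n\geq 0}f^{-n}(D)$ and integrates via the measurable Riemann mapping theorem to produce a holomorphic family $(g_t)\subset\mathcal{P}_d$ quasiconformally conjugate to $f$ off this grand orbit. The local-uniqueness clause of Proposition~\ref{hyp-set}, together with $J$-stability, then forces $\chi_j(g_t)\equiv 0$ for $j<k$, placing $g_t$ in $Z$, while the choice of $D$ ensures that $g_t^{n_k}(c_k(g_t))$ is genuinely perturbed.

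The main obstacle will be the simultaneous decoupling in this surgery: arranging the support $D$ so that only $c_k$'s orbit up to time $n_k$ is affected while none of the first $k-1$ relations is disturbed. This decoupling exploits the Misiurewicz hypothesis (the grand orbits of $c_1(f),\dots,c_k(f)$ are pairwise disjoint and avoid each other's $\omega$-limit sets) together with hyperbolicity of $X$ (so $X$ has zero Lebesgue measure and places no obstruction on the placement of $D$). An alternative, perhaps more conceptual route is via bifurcation currents: each $T_j:=dd^c G_{(\cdot)}(c_j(\cdot))$ is locally supported on $\{\chi_j=0\}$ near $f$, and the Misiurewicz condition combined with disjointness of the grand critical orbits ensures nontriviality of the wedge $T_1\wedge\cdots\wedge T_k$ at $f$ in the sense of Dujardin--Favre and Gauthier \cite{G}; Demailly's slicing theorem then forces $\operatorname{codim}\bigcap_{j=1}^k\{\chi_j=0\}=k$ at $f$, yielding the lemma.
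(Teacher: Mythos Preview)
The paper does not actually prove this lemma; it cites Gauthier \cite[Theorem 1.2 and Lemma 4.2]{G} and remarks that the argument for rational maps adapts to $\mathcal P_d$ by marking $c_d=\cdots=c_{2d-2}\equiv\infty$. Your alternative route via bifurcation currents is precisely Gauthier's method and is what the paper relies on.

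Your primary route, however, has a genuine gap in the inductive step. You construct $g_t$ by integrating a Beltrami coefficient supported on the grand orbit $\bigcup_{n\ge0}f^{-n}(D)$; for the integrated map to be a polynomial this Beltrami must be $f$-invariant, and then the deformation is a \emph{global} quasiconformal conjugacy $g_t=\psi_t\circ f\circ\psi_t^{-1}$. Hence $c_j(g_t)=\psi_t(c_j(f))$ and $g_t^{n_j}(c_j(g_t))=\psi_t\bigl(f^{n_j}(c_j(f))\bigr)$ for every $j$, and by the very local-uniqueness clause of Proposition~\ref{hyp-set} you invoke, the holomorphic motion of $X$ along this family coincides with $\psi_t|_X$. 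It follows that $\chi_j(g_t)\equiv 0$ for \emph{all} $j$, including $j=k$: quasiconformal conjugacy preserves every critical-orbit relation simultaneously and cannot decouple $\chi_k$ from $\chi_1,\dots,\chi_{k-1}$, no matter where $D$ is placed. The disk $t\mapsto g_t$ therefore lies entirely inside $\chi^{-1}(0)$, not merely in $Z$, and gives no information about $\chi_k|_Z$. (Your base case has a related circularity: exhibiting $g\in\mathcal V$ with $c_1(g)$ attracted already presupposes $\chi_1(g)\ne0$.) The argument that actually works is the one you sketch second: Gauthier shows the wedge $T_1\wedge\cdots\wedge T_k$ has positive mass at any $k$-Misiurewicz parameter, and this forces $\operatorname{codim}\chi^{-1}(0)=k$.
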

The original statement in \cite{G} concerns rational maps, the argument also works  for polynomials, by marking the critical points $c_{d}=\cdots=c_{2d-2}\equiv\infty$.





\begin{proof}
[Proof of  Theorem \ref{local-hausdorff-dim-J}  assuming Theorem \ref{local-perturbation1}. ]
   Fix  small  $\varepsilon>0$ with $\mathcal N_\varepsilon(f)\subset \mathcal U$.  Let $\mathcal C=\{u_j; j\in \mathcal J\}\subset V_{\rm p}$.  By Lemma \ref{hyp-dim-b}, for each $v\in \mathcal C$, there is an $f^{\ell_v}$-hyperbolic set $X_{v}\subset \partial_0 U_{f,v}$ with
	 $${\rm H.dim}(X_{v},x)={\rm H.dim}(X_{v})\geq {\rm H.dim}(\partial U_{f,v})-\varepsilon, \ \forall x\in X_{v}.$$
	 By Proposition \ref{hyp-set}, 
	 there is $\varepsilon_0\in (0,\varepsilon]$,
	 and a holomorphic motion 
	 $h_v:   \mathcal{N}_{\varepsilon_0}(f)\times X_{v}\rightarrow \mathbb C$ of the hyperbolic set $X_{v}$, with base point $f$. Write $X_{v}(g)=h_v(g, X_{v})$   for $g\in  \mathcal{N}_{\varepsilon_0}(f)$.

	  Note that   there is a smaller neighborhood $\mathcal{N}_{\varepsilon_1}(f)\subset  \mathcal{N}_{\varepsilon_0}(f)$, so that     
	\begin{equation}\label{hyp-0-u} X_{v}(g)\subset \partial_0 U_{g}(v(g)), \ \forall g\in  \mathcal{N}_{\varepsilon_1}(f). \footnote{ To see this, one can adapt the argument of Proposition \ref{divisor-correspondence}:  assume $f=I_{\Phi}(D)$ for some $\mathcal H$-admissible divisor $D=((B^0_u,  D_u^\partial))_{u\in V}$. By assumption $X_v\subset  \partial_0 U_{f,v}$,  one can  choose the graph in  Proposition \ref{divisor-correspondence} so that  $X_v\cap \big(\bigcup_{q\in {\rm supp}(D_{v}^\partial(f))} X_{v,q}(f)\big)=\emptyset$. 
	  	By the Hausdorff continuity of the graphs and hyperbolic sets, there is   $\varepsilon_1$ so that for all $g\in \mathcal N_{\varepsilon_1}(f)$, all critical points of $g$ outside $U_{g}(v(g))$ are contained in $A_v(g):=\bigcup_{q\in {\rm supp}(D_{v}^\partial(f))} X_{v,q}(g)$ and $X_v(g)\cap  A_v(g)=\emptyset$. This implies that $X_{v}(g)\subset \partial_0U_{g}(v(g))$. }
  \end{equation}
  	   By the H\"older estimate for holomorphic motions (see \cite[Lemma 1.1]{D}), and also by shrinking $\varepsilon_1$,  we get
	 $${\rm H.dim}(X_{v}(g), x(g))\geq  \frac{\varepsilon_0-\varepsilon_1}{\varepsilon_0+\varepsilon_1}\cdot {\rm H.dim}(X_{v}, x) \geq {\rm H.dim}(\partial U_{f,v})-2\varepsilon,$$
	 where $x(g)=h_v(g,x)$ for any $x\in X_{v}$ and $g\in  \mathcal{N}_{\varepsilon_1}(f)$.
	 
	 Let 
	 $\mathbf{x}=(x_j)_{j\in \mathcal{J}}\in \prod_{j\in \mathcal{J}} X_{u_j}$
	 be a multipoint.  By Theorem \ref{local-perturbation1},  there exist 
	 $g_0\in ( \partial_\mathcal{A} \mathcal H)^{\mathcal J}_{\mathcal U}\cap  \mathcal N_{\varepsilon_1}(f)$, positive integers $(m_{j})_{j\in  \mathcal{J}}$,    and positive  number $\rho_0>0$   so that  
	 \bess g_0^{m_{j}}(c_{j}(g_0))=x_{j}(g_0)=h_{u_j}(g_0,  x_{j}),  \ \forall j\in \mathcal J; \  \mathcal E(g_0, \rho_0)\subset ( \partial_\mathcal{A} \mathcal H)^{\mathcal J}_{\mathcal U}\cap  \mathcal N_{\varepsilon_1}(f).
	 \eess

	For each $j\in  \mathcal{J}$, define 
	$$V_{j}: \begin{cases} \mathcal{N}_{\rho_0}(g_0)\rightarrow\mathbb C\\
	g\mapsto g^{m_{j}}(c_{j}(g)) 
	\end{cases} \text{ and } \ H_{j}: 
	\begin{cases} \mathcal{N}_{\rho_0}(g_0)\times X_{u_j}(g_0)\rightarrow \mathbb C\\
		(g,x)\mapsto h_{u_j}(g, h_{u_j}(g_0, \cdot)^{-1}(x))
	\end{cases}.$$
Clearly  $V=(V_{j})_{j\in  \mathcal{J}}: \mathcal{N}_{\rho_0}(g_0)\rightarrow\mathbb C^{\#\mathcal J}$ is a holomorphic map, and
$H=(H_{j})_{j\in  \mathcal{J}}: \mathcal{N}_{\rho_0}(g_0)\times \mathbf{X}(g_0)\rightarrow \mathbb C^{\#\mathcal J}$ is a holomorphic motion of $\mathbf{X}(g_0)=\prod_{j\in  \mathcal{J}}  X_{u_j}(g_0)$. They satisfy the relation
$$V(g_0)=H(g_0, \boldsymbol{x}(g_0))=(g_0^{m_{j}}(c_{j}(g_0)))_{j\in  \mathcal{J}},  \ \boldsymbol{x}(g_0)=(x_{j}(g_0))_{j\in  \mathcal{J}}.$$

	By the transversality (see Lemma \ref{w-transversality}),   
	$${\rm codim}\Big((V-H(\cdot,  \boldsymbol{x}(g_0)))^{-1}(\mathbf 0)\Big)= \#\mathcal J, $$
	
	Note that $\mathcal E(g_0, \rho_0)=\big\{g\in  \mathcal{N}_{\rho_0}(g_0);  V(g)\in H(g, \mathbf{X}(g_0))\big\}$.
		By Proposition \ref{HD-trans},   
	\bess 
 	{\rm H.dim}( (\partial_\mathcal{A} \mathcal H)^{\mathcal J}_{\mathcal U}\cap  \mathcal N_{\varepsilon}(f))
 	&\geq& 	{\rm H.dim}(\mathcal E(g_0, \rho_0))
 	\geq{\rm H.dim}(\mathcal E(g_0, \rho_0), g_0)\\
	&\geq& {\rm H.dim}(\mathbf X(g_0),  \boldsymbol{x}(g_0))+2(d-1- \#\mathcal J)\\
	&=&\sum_{j\in \mathcal J} 
	{\rm H.dim}( X_{u_j}(g_0),  {x}_{j}(g_0))+2(d-1- \#\mathcal J)\\
&\geq &  \sum_{j\in \mathcal J}  ({\rm H.dim}(\partial U_{f,u_j})-2\varepsilon)+2(d-1- \#\mathcal J)\\
&=&  \sum_{u\in V}  \#\mathcal J_u \cdot {\rm H.dim}(\partial U_{f,u})-2 \varepsilon \#\mathcal J+2(d-1- \#\mathcal J).
\eess
Since $\varepsilon>0$ is arbitrary, by the definition of local Hausdorff dimension, we get the required inquality.   This proves the first statement.

Given $\mathbf n=(n_v)_{v\in V}$,  choose an index set $\mathcal J$ so that $\#\mathcal J_v=n_v$ for all $v\in V$.  Note that $(\partial_\mathcal{A} \mathcal H)^{\mathcal J}_{\mathcal U}\subset (\partial_\mathcal{A} \mathcal H)^{\mathbf n}_{\mathcal U}$.  The proof is completed  by applying the first statement.
 \end{proof}

\subsection{Proof of Theorem \ref{local-perturbation1}} \label{loc-perturbation}

 We first establish a criterion to determine which hyperbolic polynomials near an $\mathcal H$-admissble map are contained in $\mathcal H$.
 
 \begin{pro}\label{pro:criterion}
 	Let $f$ be an $\mathcal H$-admissible map, whose marked critical points on Julia set are  $c_1(f), \cdots, c_n(f)$, with $c_j(f)\in \partial U_{f, v_j}$.  Let $c_j:   \mathcal U\rightarrow \mathbb C$ be the continuous marking of critical point, defined in a neighborhood $\mathcal U$ of $f$.  
 	 Then there is $\epsilon>0$   with $ \mathcal{N}_{\epsilon}(f)\subset \mathcal U$,   such that a map $g\in\mathcal{N}_{\epsilon}(f)$ is contained in $\mathcal H$ provided that  $c_j(g) \in U_{g,v_j}$ for all $1\leq j\leq n$.
 \end{pro}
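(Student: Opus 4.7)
The plan is to split the argument into two steps: first, establish that $g$ is hyperbolic with the same mapping scheme as maps in $\mathcal{H}$; second, use a quasiconformal deformation to identify the hyperbolic component of $g$ with $\mathcal{H}$ itself.

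For hyperbolicity, I would apply the Implicit Function Theorem to each of the $m_\mathcal{H}$ attracting cycles of $f$, so that they persist as attracting cycles of $g$ for $g$ in a sufficiently small neighborhood $\mathcal{N}_\epsilon(f)\subset \mathcal{U}$, with the marked periodic points $v(g)$ and the Fatou components $U_{g,v}$ varying continuously in $g$. By Proposition \ref{h-admissible-characterization}, the $d-1-n$ unmarked critical points of $f$ lie strictly inside $\bigcup_{v\in V} U_{f,v}$; combining continuity of critical points with Lemma \ref{carathodory-convergence} (Carath\'eodory kernel convergence), for $\epsilon$ small each such unmarked critical point of $g$ remains inside the corresponding $U_{g,v}$. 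Together with the standing hypothesis $c_j(g)\in U_{g,v_j}$ for $1\leq j\leq n$, every critical point of $g$ belongs to $\bigcup_{v\in V} U_{g,v}$, which is a union of immediate attracting basins. Hence $g$ is hyperbolic with exactly $m_\mathcal{H}$ attracting cycles, and its mapping scheme on the marked basins coincides with that of $\mathcal{H}$.

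For the second step, take any sequence $\{f_n\}\subset \mathcal{H}$ with $f_n\to f$. For $n$ large, $f_n\in \mathcal{N}_\epsilon(f)$ and, by the characterization of admissibility (each $c_j(f_n)\in U_{f_n,v_j}$ converges to $c_j(f)\in\partial U_{f,v_j}$), the hypothesis of the proposition holds for $f_n$. Both $g$ and $f_n$ are then hyperbolic polynomials with matching combinatorial data. I would construct a quasiconformal conjugacy $\phi:\mathbb{C}\to\mathbb{C}$ from $g$ to $f_n$ by combining (i) the B\"ottcher conjugacy on $\mathbb{C}\setminus K(g)$, (ii) a qc interpolation of the normalized Riemann maps $\psi_{g,v}$ and $\psi_{f_n,v}$ on each marked Fatou component $U_{g,v}$, pulled back through iteration to all preperiodic components, and (iii) the $\lambda$-lemma motion of $J(g)$ inside its hyperbolic component to glue the pieces across $J(g)$. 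Setting $\mu=\bar\partial\phi/\partial\phi$, the Measurable Riemann Mapping Theorem applied to $\mu_t=t\mu$ produces a continuous path $g_t\in\mathcal{P}_d$, $t\in[0,1]$, with $g_0=g$ and $g_1=f_n$, each $g_t$ qc-conjugate to $g$ and hence hyperbolic with the same combinatorics. Since hyperbolic components are open and this path is continuous, $g$ and $f_n$ lie in the same hyperbolic component, namely $\mathcal{H}$.

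The main obstacle is the consistent gluing of the qc conjugacy across $J(g)$: one must verify that the holomorphic motion of $J(g)$ is compatible, on each boundary $\partial U_{g,v}$, with the qc-extension of $\psi_{f_n,v}\circ\psi_{g,v}^{-1}$, and in particular respects the boundary markings $\nu_g$ and $\nu_{f_n}$. Once this compatibility is arranged -- for instance, by choosing the interpolation on each $U_{g,v}$ to match Milnor's Blaschke-product model of $\mathcal{H}$ on $\partial U_{g,v}$ via $\psi_{g,v}$ -- the global map $\phi$ is automatically qc on $\mathbb{C}$, since Julia sets of hyperbolic polynomials are conformally removable. An alternative route that bypasses the explicit gluing is to attach to each $g\in\mathcal{W}_\epsilon:=\{g\in\mathcal{N}_\epsilon(f):c_j(g)\in U_{g,v_j},\ \forall j\}$ its Blaschke datum $B_g\in\mathcal{B}^S_*$ via the recipe of Section \ref{phc}, observe that $g\mapsto B_g$ is continuous and agrees with Milnor's $\Psi_*$ on $\mathcal{W}_\epsilon\cap\mathcal{H}$, and deduce that $\mathcal{W}_\epsilon\cap\mathcal{H}$ is clopen in $\mathcal{W}_\epsilon$; the final point then reduces to verifying that $\mathcal{W}_\epsilon$ is connected for $\epsilon$ small, which can be argued by path-continuation through $f$ along the critical point positions inside each $U_{g,v}$.
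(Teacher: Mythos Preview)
Your Step 1 is fine, but both routes in Step 2 share a genuine gap: they presuppose that $g$ and a nearby $f_n\in\mathcal H$ have the same real lamination, which is precisely the combinatorial content of the conclusion. In Route A, the B\"ottcher conjugacy on $\mathbb C\setminus K(g)$ extends continuously to a map $J(g)\to J(f_n)$ only when $\lambda_{\mathbb R}(g)=\lambda_{\mathbb R}(f_n)$; without this there is nothing to glue the Fatou pieces to, and the ``holomorphic motion of $J(g)$ inside its hyperbolic component'' produces maps to Julia sets of polynomials in that same (possibly different) component, not to $J(f_n)$. Conformal removability only upgrades an already continuous conjugacy across $J(g)$ to qc; it does not manufacture one. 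In Route B, the connectedness of $\mathcal W_\epsilon$ is exactly what is at stake: note first that $f\notin\mathcal W_\epsilon$ (each $c_j(f)\in\partial U_{f,v_j}$, not the interior), so ``path-continuation through $f$'' is not available, and if another hyperbolic component $\mathcal H'$ with the same mapping scheme accumulated at $f$ with compatible internal markings, $\mathcal W_\epsilon$ would split into disjoint pieces in $\mathcal H$ and $\mathcal H'$.

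The paper fills this gap with an external combinatorial rigidity input, namely Poirier's classification of bounded hyperbolic components by their \emph{critical portraits} $\{\mathcal F_v\}_{v\in V'}$. The boundary marking $\nu_f$ extends continuously to $\nu_g$ near $f$ (each $\nu_f(v)$ is pre-repelling with critical-free orbit), and for every $v\in V'$ one separates $U_{g,v}$ from the other components of $g^{-1}(U_{g,\sigma(v)})$ by finitely many ray-pairs $R_g(\theta_i^\pm)$. The $\mathcal H$-admissibility of $f$ ensures that the common landing points of $R_f(\theta_i^\pm)$ are pre-repelling and their forward orbits avoid ${\rm Crit}(f)$, so by Lemma \ref{stability-e-r}(2) and Lemma \ref{rays-number} the separating configuration persists for all $g\in\mathcal N_\epsilon(f)$. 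Under the hypothesis $c_j(g)\in U_{g,v_j}$ one has $\deg(g|_{U_{g,v}})=\delta(v)$, and the stable separation forces the rays in $g^{-1}(R_g(\alpha_{\sigma(v)}))$ landing on $\partial U_{g,v}$ to have exactly the angles in $\mathcal F_v$. Hence $(g,\nu_g)$ has the critical portrait of $\mathcal H$, and Poirier's theorem yields $g\in\mathcal H$. This replaces your attempted qc-path or connectedness argument by checking a finite amount of stable combinatorial data.
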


 \begin{proof}
 	The proof needs the notion of  \emph{critical portrait} and we recall some necessities. One can refer to \cite{Poi} for more details. 
 	
 	Let $g\in \mathcal C_d$ be a hyperbolic polynomial, with  the associated mapping scheme $S=(V,\sigma,\delta)$ and a boundary marking $\nu_{g}$. Set $V'=\{v\in V; \delta(v)>1\}$. The \emph{critical portrait} of $(g,\nu_{g})$ is a collection
 	$\{\mathcal{F}_v; v\in V'\}$
 	of finite subsets of $\mathbb{Q}/\mathbb{Z}$ constructed below: \vspace{3pt}

 	For each $v\in V$, let $\alpha_v$ be the angle so that $R_{g}(\alpha_v)$ \emph{left supports} $U_{g,v}$ at $\nu_{g}(v)\in\partial U_{g,v}$, i.e., $R_{g}(\alpha_v)$ lands at $\nu_{g}(v)$, and $R_{g}(\alpha_v),R_{g}(\alpha'),U_{g,v}$ are in the counterclockwise order near $\nu_{g}(v)$ for any other external ray $R_{g}(\alpha')$ (if any) landing at $\nu_{g}(v)$. Clearly, $\alpha_{\sigma(v)}=d\alpha_{v} \ {\rm mod }\ \mathbb Z$.
 	
 	If $\delta(v)>1$, we define
 	$\mathcal{F}_v$ to be the set of  arguments of the external rays in $g^{-1}(R_{g}(\alpha_{\sigma(v)}))$ that land at $\partial U_{g,v}$.  It follows immediately that $\alpha_v\in\mathcal{F}_v$ and $\# {\mathcal F}_v= {\rm deg}(g|_{U_{g,v}})=\delta(v)$.  

 	Now we start to prove the proposition.	According to condition (d) of $\mathcal{H}$-admissible maps, there exists a  boundary marking $\nu_{f}$ of $f$ such that $\nu_{f}(v)\in\partial U_{f,v}\setminus{\rm Crit}(f)$ for each $v\in V$. This implies that $\{\nu_{f}(v); v\in V\}$ is an $f$-hyperbolic set. By Proposition \ref{holo-hyperbolic-set}, the boundary marking $\nu_{f}$ extends to a continuous map $\nu:\mathcal{N}_{\epsilon_0}(f)\to \mathbb{C}^{\# V},\ g\mapsto \nu_g$, so that $\nu_g$ is a boundary marking of $g$ for an $\epsilon_0>0$. Furthermore, this marking map $\nu$  extends continuously to $\mathcal{N}_{\epsilon_0}(f)\cup\mathcal{H}$. 
 	It follows that all the marked maps $(g,\nu_g),g\in\mathcal{H},$ have the same critical portrait, denoted by $\{\mathcal{F}_v; v\in V'\}$.
 	
By \cite[Theorem 1.1]{Poi}, to prove that a hyperbolic polynomial $g$ near $f$ belongs to $\mathcal H$, it suffices to show the marked polynomial $(g,\nu_{g})$ has the critical portrait 	$\{\mathcal{F}_v; v\in V'\}$.
 	
 	For any $g\in \mathcal{H}$ and each $v\in V'$, denote by  $U_{g,v,1},\ldots U_{g,v,q_v}$ the components of $g^{-1}(U_{g,\sigma(v)})\setminus U_{g,v}$. Then $\overline{U_{g,v,i}}\cap \overline{U_{g,v}}=\emptyset$ for all $i$, since no critical points of $g$ lie in the Julia set.  For each $i\in\{1,\ldots,q_v\}$, there exist two angles $\theta_i^{\pm}$ such that $R_g(\theta_i^\pm)$ land at a common point in $\partial U_{g,v,i}$, and $\overline{R_g(\theta_i^+)}\cup\overline{R_g(\theta_i^-)}$ separates $U_{g,v}$ from $U_{g,v,i}$ \footnote{ These angles $\theta_i^\pm,i=1,\ldots,q_v$ are independent of $g\in\mathcal{H}$.}.
 	Let $Y_{g,v}$ be the   component of $\mathbb C\setminus \bigcup_{i=1}^{q_v}(\overline{R_g(\theta_i^+)}\cup\overline{R_g(\theta_i^-)})$  containing $U_{g,v}$. The above discussion implies that 
 	\begin{equation}\label{eq:111}
 		\Big(\bigcup_{\theta\in\mathcal{F}_v}\overline{R_g(\theta)}\Big)\bigcup U_{g,v}\subset Y_{g,v}\text{ and }g^{-1}(R_g(\alpha_{\sigma(v)}))\bigcap Y_{g,v}=\bigcup_{\theta\in\mathcal{F}_v}R_g(\theta).
 	\end{equation}
 	
 	\begin{figure}[h]  
 		\begin{center} 
 			\includegraphics[height=5cm]{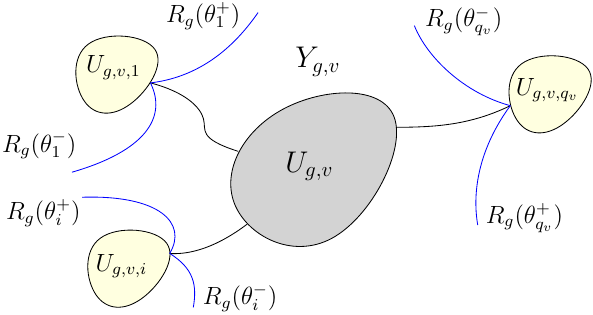}
 		\end{center}
 		\caption{The relative positions of $U_{g,v}$ and $U_{g,v,i}$'s.}
 		\label{fig: criterion-in-H}
 	\end{figure}
 	
 	Note that $\theta_i^\pm$ are preperiodic and $R_{g}(d^k\theta_i^\pm)$ are two distinct external rays landing at a common point for every integer $k\geq 1$ and $g\in\mathcal H$.
 	Since $f$ has no parabolic points,  it follows from Lemma \ref{stability-e-r} (1) that the external rays $R_{f}(d^k\theta_i^\pm)$ land at a common pre-repelling point for every $k\geq0$. As $f$ is $\mathcal{H}$-admissible, each of its post-critical point in the Julia set receives only one external ray. Hence, the 
 	orbit of the common landing point of $R_{f}(\theta_i^\pm)$ avoid  the critical points of $f$ for every $i\in\{1,\ldots,q_v \}$.  
 	
 	By the above discussion together with Lemma \ref{stability-e-r} (2) and Lemma \ref{rays-number}, there exists $\epsilon\in(0,\epsilon_0)$ such that 
 	\[\Gamma_v(g):=\bigcup_{1\leq i\leq q_v}\left(\overline{R_g(\theta_i^-)}\cup\overline{R_g(\theta_i^+)}\right)\]
 	varies continuously for $g\in \mathcal{N}_{\epsilon}(f)$. Thus, for any $g\in\mathcal{N}_\epsilon(f)$ and $v\in V'$, there is a component $Y_{g,v}$ of $\mathbb C\setminus \Gamma_v(g)$ that contains $U_{g,v}$, and 
 	$\overline{Y_{g,v}}$ varies continuously in $g$.
 	
 	On the other hand,  {by Lemma \ref{rays-number}}, the external ray $R_{g}(\alpha_v)$ left supports $U_{g,v}$ at $\nu_{g}(v)$ for every $v\in V$ and $g\in\mathcal{N}_{\epsilon}(f)$ (decreasing $\epsilon$ if necessary). Using Lemma \ref{stability-e-r} again, $R_{g}(\alpha_v)$ varies continuously with respect to $g$. Thus,  we can decrease $\epsilon$ so that
 	relation \eqref{eq:111} holds for all $g\in\mathcal{N}_{\epsilon}(f)$ and $v\in V'$.

 	Now suppose $g \in \mathcal{N}_{\epsilon}(f)$ satisfies that $c_j(g) \in U_{g,v_j}$ for all $1\leq j\leq n$. It follows that ${\rm deg}(g|_{U_{g,v}})=\delta(v)$ for all $v\in V$.  Since relation \eqref{eq:111} holds for all $g\in\mathcal{N}_{\epsilon}(f)$,  an external ray $R_{g}(\theta)\subset g^{-1}(R_{g}(\alpha_{\sigma(v)}))$ lands on  $\partial U_{g,v}$ if and only if $\theta\in\mathcal{F}_v$, for each $v\in V'$. It follows that  $(g,\nu_{g})$ has the critical portrait $\{\mathcal{F}_v; v\in V'\}$. 
 \end{proof}


\begin{proof}[Proof of Theorem \ref{local-perturbation1}] Without loss of generality, assume  for different indices $j_1, j_2\in \mathcal J$, either $u_{j_1}=u_{j_2}$, or 
	$u_{j_1}$ and $u_{j_2}$ are in different $\sigma$-cycles. 
	
	(1). It follows from Proposition \ref{pertubation-boundary-strata} (1).
	
  (2). The key point is to check that $\mathcal{E}(g_0,\rho_0)\subset (\partial_\mathcal{A} \mathcal H )^{\mathcal J}_{\mathcal U}$ for some $\rho_0>0$. 
	
	\vspace{5pt}

	{\bf  Claim.}
	{\it There exists $\rho_0>0$ with $\mathcal{N}_{\rho_0}(g_0)\subset \mathcal U$, such that  
		if $g\in\mathcal{N}_{\rho_0}(g_0)$ satisfies $g^{m_j}(c_j(g))\subset U_{g,\sigma^{m_j}(v_j)}$ for all $j \in \mathcal{J}$, then each  $c_j(g)$ belongs to $U_{g,v_j}$,  where $(m_j)_{j\in \mathcal J}$ is given in (1). } 
	\begin{proof}[Proof of the Claim]
		The argument goes by contradiction. Suppose that the claim is false. Then  there exist an index $j_0\in\mathcal{J}$ and a sequence $\{g_k\}_{k\ge1}\subset \mathcal{P}_d\cap \mathcal U$ such that $g_k\to g_0$ as $k\to\infty$, and that for every $k\geq1$, 
		\[ \text{$g_k^{m_j}(c_j(g_k))\subset U_{g_k,\sigma^{m_j}(v_j)}$,  $\forall\,j\in\mathcal{J}$; but $c_{j_0}(g_k)\not\in U_{g_k,v_{j_0}}$. }\]
		
		Note that  each $g_k$ is quasi-conformally conjugate to a postcritically finite polynomial $\tilde{g}_k$ near the Julia set. Since the lengths of the postcritical orbits for all $\tilde{g}_k$ are uniformly bounded above, there are finitely many choices of $\tilde{g}_k,k\geq1$. As a consequence, by taking a subsequence, we can assume that all $g_k,k\geq1$ are contained in the same bounded hyperbolic component $\mathcal{H}'$.  
		
		 
		 	 {	Since  $c_{j_0}(g_k)\not\in U_{g_k,v_{j_0}}$, we get $\partial{U_{g_k,v_{j_0}}}\cap \partial{U_{g_k}(c_{j_0}(g_k))}=\emptyset$ (if not, since  $g_k^{m_j}(U_{g_k,v_{j_0}})=g_k^{m_j}(U_{g_k}(c_{j_0}(g_k)))= U_{g_k, \sigma^{m_j}(v_j)}$,  the common boundary point is in an inverse orbit of a critical point,  which contradicts the hyperbolicity of $g_k$).  There exist four different angles $\theta_\pm, \alpha_{\pm} \in \mathbb{Q}/\mathbb{Z}$ so that $R_{g_k}(\theta_\pm)$ land at a common point on $\partial U_{g_k}(c_{j_0}(g_k))$, and $\overline{R_{g_k}(\theta_+)}\cup\overline{R_{g_k}(\theta_-)}$ separates $\overline{U_{g_k,v_{j_0}}}$ from $ U_{g_k}(c_{j_0}(g_k))$; $R_{g_k}(\alpha_\pm)$ land at a common point on $\partial U_{g_k,v_{j_0}}$, and $\overline{R_{g_k}(\alpha_+)}\cup\overline{R_{g_k}(\alpha_-)}$ separates $\overline{U_{g_k}(c_{j_0}(g_k))}$ from $U_{g_k,v_{j_0}}$, for every $k\geq1$. 
		 	By Lemma \ref{stability-e-r} and the $\mathcal H$-admissibility of $g_0$, 
		 	we see that 	$R_{g_0}(\theta_\pm)$ land at the same point $a$, and $R_{g_0}(\alpha_\pm)$ land at the same point $b$.  If $a=b$, then $a$ is a landing point of at least four external rays, and the forward orbit of $a$ meets a critical point,   contradicting the defining property (b) of  $\mathcal H$-admissible maps.
		 	Hence $a\neq b$.
		 	It follows that
		 	$\overline{R_{g_0}(\theta_+)}\cup\overline{R_{g_0}(\theta_-)}$ separates $\overline{U_{g_0,v_{j_0}}}$ from $c_{j_0}(g_0)$. 
		 	However, this contradicts  that $c_{j_0}(g_0)\in \partial U_{g_0,v_{j_0}}$. }
	\end{proof}

	By the claim and Proposition \ref{pro:criterion}, to prove that $\mathcal{E}(g_0,\rho_0)\subset  \partial  \mathcal H $, it is enough to show  that for any  $g_* \in \mathcal{E}(g_0,\rho_0)$, there exists a sequence  $\{g_k\}_{k\geq1}\subset \mathcal{P}_d$ such that \[\lim_{k\to\infty}g_k= g_*\quad\text{and}\quad g_k^{m_j}(c_j(g_k))\subset U_{g_k,\sigma^{m_j}(v_j)},\  \forall\,j\in\mathcal{J},\ k\geq1.\]


Add the critical markings $c_j: \mathcal U\rightarrow \mathbb C$  for $n<j\leq d-1$.  
Let $\mathcal{S}$  be the component of  
$$\Big\{g \in \mathcal{N}_{\rho_0}(g_0) ;  \ c_j(g)=c_j(g_*), ~\forall j \in \{1,\cdots, d-1\}\setminus \mathcal{J} \Big\}$$
that contains $g_*$. Then $\mathcal{S}$ is a $\#\mathcal{J}$-dimensional complex {analytic set}.



Let $\mathbf{y}:=( y_j)_{j\in \mathcal{J}}\in \prod_{j\in \mathcal J}(X\cap \partial U_{f, u_j})$ be a multipoint  such that 
$g_*^{m_j}(c_j(g_*))=h(g_*,y_j)$ for every $j\in\mathcal{J}$. 
Then the map $\xi_{\mathbf{y}}: \mathcal{S} \to \mathbb{C}^{\# \mathcal{J}}$ given by
$$g \mapsto (g^{m_j}(c_j(g))-h(g, y_j ))_{j\in \mathcal{J}}$$
is  holomorphic and $\xi_{\mathbf{y}}(g_*)=\mathbf{0}$.
By Lemma \ref{w-transversality}, the polynomial $g_*$ is an isolated zero of $\xi_{\mathbf{y}}$ (see \cite[Corollary, pp 34]{C}).
Thus, for any $\rho\in(0,\rho_0)$ with  $\overline{\mathcal{N}_{\rho}(g_*)}\subset  \mathcal{N}_{\rho_0}(g_0)$, 
$$\min_{g\in\partial (\mathcal{S}\cap \mathcal{N}_{\rho}(g_*))} \| \xi_{\mathbf{x}} (g) \|=\eta_\rho>0.$$

According to Remark \ref{rem:2.4} (3), by decreasing $\rho_0$, we obtain a continuous map 
$\gamma_j : \mathcal{S} \times [0,1] \to \mathbb{C}$ for each $j\in\mathcal{J}$, such that
\begin{itemize}
	\item [(1)]  $\gamma_j(g,(0,1]) \subset U_{g,\sigma^{m_j}(v_j)}$ and $\gamma_j(g,0)=h(g, y_j)$, for any $g\in\mathcal{S}$;
	\item [(2)] $\gamma_j(\cdot,s):\mathcal{S}\to \mathbb{C}$ is continuous, for any $s\in[0,1]$.
\end{itemize} 
As a  result, there exists a continuous map $\zeta_k:\mathcal{S}\to \mathbb{C}^{\mathcal{J}}$ for each $k\geq1$: 
$$\zeta_k(g):= (g^{m_j}(c_j(g)) - \gamma_j(g,1/k))_{j \in \mathcal{J}}. $$

Let $\{\rho_k\}_{k\geq1}\subset (0,\rho_0)$ be a sequence of numbers decreasing to $0$. For each $k$, one can find an integer $n_k>0$ such that  
$$
\| \xi_{\mathbf{y}}(g) - \zeta_{n_k}(g) \| = \max_{j\in \mathcal{J}} |\gamma_j(g,0)-\gamma_j(g,1/n_k)| < \eta_{\rho_k} \leq \| \xi_{\mathbf{y}}(g) \|
$$
for every $g\in \partial (\mathcal{S}\cap \mathcal{N}_{\rho_k}(g_*))$.
Then $h_t(g)=\xi_{\mathbf{y}}(g)-t(\xi_{\mathbf{y}}(g) - \zeta_{n_k}(g))$ is continuous in $(t, g)\in [0,1]\times \overline{\mathcal{S}\cap \mathcal{N}_{\rho_k}(g_*)}$, so that 
$\mathbf{0}\notin h_t(\partial(\mathcal{S}\cap \mathcal{N}_{\rho_k}(g_*)) )$ for all $t\in [0,1]$.
Now apply the degree theory  (see \cite[Section 3]{L} for the definition and basic properties of the degree ${\rm Deg}$),
we have 
$${\rm Deg}(\zeta_{n_k}, \mathcal{S}\cap \mathcal{N}_{\rho_k}(g_*), \mathbf{0})={\rm Deg}(\xi_{\mathbf{y}}, \mathcal{S}\cap \mathcal{N}_{\rho_k}(g_*), \mathbf{0})>0.$$
Therefore $\zeta_{n_k}$ has a zero in $\mathcal{S}\cap \mathcal{N}_{\rho_k}(g_*)$  (see  \cite[Section 3, (4) in pp. 262]{L}). That is, there exists  $g_k \in \mathcal{S}\cap \mathcal{N}_{\rho_k}(g_*)$ with $\zeta_{n_k}(g_k)=\mathbf{0}$. It follows that  
$g_k^{m_j}(c_j(g_k))=\gamma_j(g_k,1/n_k) \in U_{g_k,\sigma^{m_j}(v_j)}$ for every $j \in \mathcal{J}$.

This finishes the proof of the fact $\mathcal{E}(g_0,\rho_0)\subset \partial\mathcal{H}$.  
By the Hausdorff continuity of $g^{-m_j}(\gamma_j(g, [0,1]))$ in $g\in \mathcal S$, and by taking a limit,   the argument above also  yields that $c_j(g)\in \partial U_{g, v_j}$ for all $j\in \mathcal J$ and $g\in \mathcal{E}(g_0,\rho_0)$.
It remains to show $\mathcal{E}(g_0,\rho_0)\subset  (\partial_\mathcal{A} \mathcal H )^{\mathcal J}_{\mathcal U}$.  In fact this follows from the same argument as   Step 3 in the proof of Proposition \ref{pertubation-boundary}, by shrinking $\rho_0$ if necessary.
\end{proof}

\section{Bifurcation of parabolic fixed points}\label{sec:parabolic-implosion}
In Sections \ref{sec:parabolic} and \ref{sec:perturbation}, we review the parabolic bifurcation theory due to Douady-Hubbard \cite{DH}, Lavaurs \cite{La}, and Shishikura   \cite{S,Shi-book}. Our presentation follows from Shishikura. Subsequently, we  use this theory to establish a lemma in Section \ref{sec:control} that is useful for our further analysis.

In the following, a function $f$ is always associated with its domain of definition ${\rm Dom}(f)$. We say a sequence of maps ${f_n}$ \emph{converges} to $f$ if, given any compact set $K\subseteq {\rm Dom}(f)$,
\begin{itemize}
	\item $K\subseteq {\rm Dom}(f_n)$ for all large $n$;
	\item $f_n|_K$ uniformly converges to $f|_K$ as $n\to\infty$.
\end{itemize}
The topology under such convergence is usually called the \emph{compact-open} topology.
We set
$$\FFF=\Big\{f:{\rm Dom}(f)\to\widehat{\mathbb C} \text{ is holomorphic}; \ 0\in{\rm Dom}(f)\subseteq\widehat{\mathbb C}\text{ and }f(0)=0\Big\}.$$

For any point $a\in\mathbb C$,  let $T_a(z):=z+a$ denote the translation by $a$.

\subsection{Parabolic fixed points }\label{sec:parabolic}

\subsubsection{Parabolic petals, fundamental domains and parabolic basins}
\

Let $p<q$ be a pair of coprime positive integers. We denote by $\FFF_0$ the collection of maps $f\in\FFF$
such that  $f'(0)=e^{2\pi ip/q}$ and
\[f^q(z)=z+z^{q+1}+O(z^{q+2})\]
in a neighborhood of $0$.

Fix a map $f_0\in\FFF_0$.
Then $f_0$ has $q$ repelling directions with arguments $2k\pi/q,k=0,\ldots,q-1$; and $q$ attracting directions with arguments $-\pi/q+2k\pi/q,k=0,\ldots,q-1$.

For each $k\in \Z_q=\Z/q\Z$, there exist two  disks $Q_{+}^{(k)}, Q_{-}^{(k)}\subseteq {\rm Dom}(f_0)$, called $k$-th \emph{repelling/attracting petals},\footnote{Here we use the subscripts ``+'' to represent ``repelling'' objects and ``-'' to represent ``attracting'' objects. This notations follows from \cite{Shi-book}.  However in a closely related paper \cite{S}, the subscripts ``+'' represents ``attracting'' objects and ``-'' represents ''repelling'' objects.} such that
\begin{enumerate}
	\item  $0\in\partial  Q_{\pm}^{(k)}$, and the closures of ${ Q}_{-}^{(k)},\ { Q}_{+}^{(k)},k\in\Z_q$, intersect only at $\{0\}$;\vspace{2pt}
	\item $ Q_{+}^{(k)}$ and $ Q_{-}^{(k)}$ contain the $k$-th repelling and attracting directions, respectively;\vspace{2pt}
	\item  $f_0({ Q}_{-}^{(k)})\subset  Q_{-}^{(k+p)}$ and $f_0( Q_{+}^{(k)})\supset { Q}_{+}^{(k+p)}$; \vspace{2pt}
	\item the union of $Q_\pm^{(k)},k\in\Z_q,$ together with the point $0$ is a neighborhood of $0$, on which $f_0$ is defined and  injective; and \vspace{2pt}
	\item $f_0^{nq}\to 0$ as $n\to\infty$ uniformly in $ Q_{-}^{(k)}$.
\end{enumerate}

\begin{figure}[h]   
	\begin{center}
		\includegraphics[height=8.8cm]{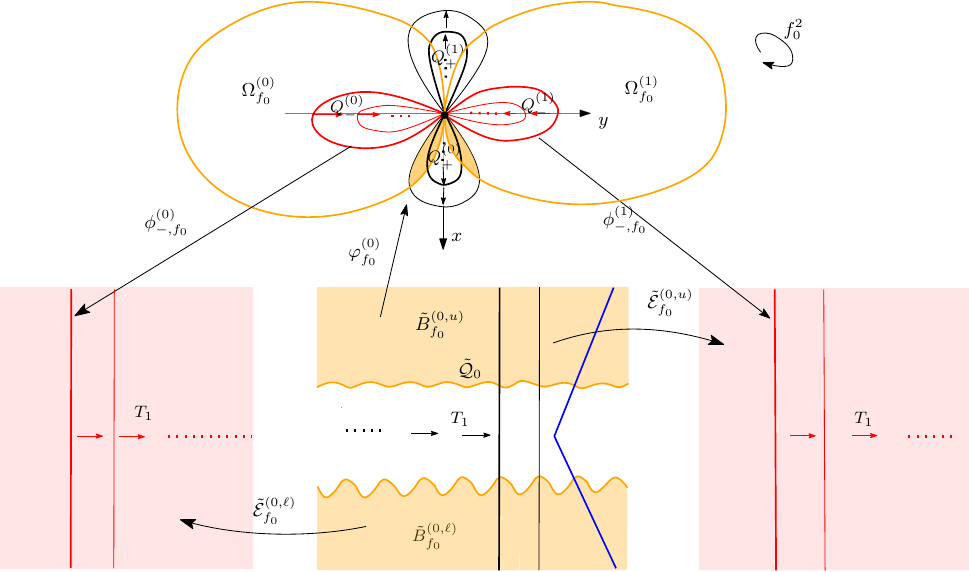}
	\end{center}
	\caption{The parabolic petals, Fatou coordinates and horn maps for $f_0$}\label{fig:f0}
\end{figure}

Set $\ell_{\pm,f_0}^{(k)}:=\partial Q_{\pm}^{(k)}$. We denote by $S_{\pm,f_0}^{(k)}$ the closure of the simply-connected domain bounded by $\ell_{\pm,f_0}^{(k)}$  and $f^q(\ell_{\pm,f_0}^{(k)})$. Then  ${S}_{\pm,f_0}^{(k)}\setminus\{0\},k\in\Z_q,$ are pairwise disjoint. The sets $S_{+,f_0}^{(k)}$ and $S_{-,f_0}^{(k)}$ are called the $k$-th repelling and attracting {\it fundamental domains} of $(f_0^q,0)$, respectively.

The $k$-th \emph{parabolic basin} of  $(f_0^q,0)$ is defined to be $\bigcup_{l\geq0}f^{-lq}_0( Q_{-}^{(k)})$. A connected open set $\Omega^{(k)}_{f_0}\subset {\rm Dom}(f_0^q)$ is called the $k$-th \emph{immediate parabolic basin} of  $(f_0^q,0)$ if $\Omega^{(k)}_{f_0}$ is a connected component of the $k$-th parabolic basin  such that $f^q_0(\Omega^{(k)}_{f_0})=\Omega^{(k)}_{f_0}$ and $f^q_0:\Omega^{(k)}_{f_0}\to \Omega^{(k)}_{f_0}$ is a branched covering of finite degree. The following result is due to \cite[Lemma 4.5.2]{Shi-book}

\begin{pro}\label{pro:immediate}
	The map $f_0^q$ has at most $q$ immediate parabolic basins of $0$, each of which contains at least one critical point of $f^q_0$ except for a parabolic M\"{o}bius transformation. Moreover, if an immediate parabolic basin contains only one critical point, then it is simply connected.
\end{pro}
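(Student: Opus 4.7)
My plan is to establish the three assertions in turn.

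For the first, that there are at most $q$ immediate parabolic basins, I would observe that the $k$-th immediate basin $\Omega^{(k)}_{f_0}$ is by definition a connected component of the $k$-th parabolic basin $\bigcup_{l\geq 0}f_0^{-lq}(Q_-^{(k)})$, and every point in this set has $f_0^q$-orbit converging to $0$ tangentially to the $k$-th attracting direction. Hence the parabolic basins for distinct $k\in\Z_q$ are pairwise disjoint, so no connected open set can be an immediate basin for two different indices. Since $k$ ranges over $\Z_q$, the number of immediate basins is bounded above by $q$. Each such $\Omega^{(k)}_{f_0}$ is a periodic Fatou component of $f_0^q$, so the restriction $f_0^q|_{\Omega^{(k)}_{f_0}}:\Omega^{(k)}_{f_0}\to\Omega^{(k)}_{f_0}$ is a proper holomorphic surjection of some finite degree $d\geq 1$.

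For the second assertion, I would argue by contradiction. Assume $\Omega^{(k)}_{f_0}$ contains no critical point of $f_0^q$, so that $f_0^q|_{\Omega^{(k)}_{f_0}}$ is an unramified holomorphic covering of degree $d$. Extend the Fatou coordinate $\Phi:Q_-^{(k)}\to\mathbb{C}$ (conformal onto a right half-plane and semiconjugating $f_0^q$ to $T_1$) to all of $\Omega^{(k)}_{f_0}$ via $\Phi(z):=\Phi(f_0^{qn}(z))-n$ for any $n$ large enough that $f_0^{qn}(z)\in Q_-^{(k)}$; the absence of critical points guarantees that this definition is unambiguous and yields a locally univalent $\Phi:\Omega^{(k)}_{f_0}\to\mathbb{C}$ still satisfying $\Phi\circ f_0^q=T_1\circ\Phi$. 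Passing to the Ecalle quotient $\Omega^{(k)}_{f_0}/\langle f_0^q\rangle$, the induced map into $\mathbb{C}/\mathbb{Z}$ is an unramified covering of a cylinder, from which, using the planar nature of $\Omega^{(k)}_{f_0}$, one deduces that the quotient is itself a cylinder. If $d\geq 2$, the covering property forces $f_0^q$ to be strictly expanding for the hyperbolic metric on $\Omega^{(k)}_{f_0}$; but the Denjoy--Wolff theorem guarantees that forward $f_0^q$-orbits in $\Omega^{(k)}_{f_0}$ converge to the parabolic fixed point $0\in\partial\Omega^{(k)}_{f_0}$, producing a direct contradiction with uniform expansion. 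If $d=1$, then $f_0^q|_{\Omega^{(k)}_{f_0}}$ is a conformal self-biholomorphism of a hyperbolic planar domain admitting $0$ as a parabolic boundary fixed point, which via the classification of conformal automorphisms on the universal cover identifies $f_0$ with a parabolic M\"obius transformation on $\widehat{\mathbb C}$, contradicting the standing exclusion.

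For the third assertion, I would apply Riemann--Hurwitz to the proper map $f_0^q|_{\Omega^{(k)}_{f_0}}$ of degree $d$:
\[(d-1)\,\chi(\Omega^{(k)}_{f_0})=\sum_{c\in {\rm Crit}(f_0^q)\cap\Omega^{(k)}_{f_0}}\bigl(\deg(f_0^q,c)-1\bigr).\]
If $\Omega^{(k)}_{f_0}$ contains only one critical point $c_0$, of local degree $m\geq 2$, the right-hand side equals $m-1\geq 1$. Since the degree-$1$ case gives a vanishing left-hand side, we must have $d\geq 2$, and then the positivity of the right-hand side forces $\chi(\Omega^{(k)}_{f_0})>0$. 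As every connected planar domain has Euler characteristic at most $1$, we conclude $\chi(\Omega^{(k)}_{f_0})=1$, i.e., $\Omega^{(k)}_{f_0}$ is simply connected.

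The most delicate step will be the second one, specifically the exclusion of unramified degree-$d\geq 2$ coverings: although the Fatou-coordinate extension is single-valued thanks to the absence of critical points, making the incompatibility between the covering degree and the parabolic boundary dynamics rigorous requires a careful interplay between the Ecalle-cylinder picture and hyperbolic expansion estimates near the parabolic fixed point.
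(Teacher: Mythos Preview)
The paper does not give its own proof of this proposition; it simply cites Shishikura \cite[Lemma 4.5.2]{Shi-book}. So I review your proposal on its own merits.

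Parts 1 and 3 are essentially correct. (In Part 3, Riemann--Hurwitz in the form $(d-1)\chi(\Omega)=\sum(\deg-1)$ presupposes that $\Omega$ has finite topological type; this deserves a sentence, but is not a serious obstacle.)

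Part 2, however, has a genuine gap. Your claim that a degree $d\geq2$ unramified self-covering $f_0^q:\Omega\to\Omega$ is \emph{strictly expanding} for the hyperbolic metric of $\Omega$ is false: a holomorphic self-covering of a hyperbolic Riemann surface is a local \emph{isometry} (pull back the hyperbolic metric and invoke uniqueness of the complete constant-curvature metric; e.g.\ $z\mapsto z^2$ on $\mathbb D\setminus\{0\}$). And a local isometry whose orbits all converge to a single boundary point is entirely possible---parabolic elements of ${\rm Aut}(\mathbb D)$ do exactly this---so no contradiction arises from your Denjoy--Wolff step. Your Ecalle-quotient idea also breaks down as stated, because $\Omega/\langle f_0^q\rangle$ is not a Riemann surface when $d\geq2$ (the action is $d$-to-$1$, not free).

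The standard route is the one you began with the Fatou coordinate but did not complete. The extended $\Phi:\Omega\to\mathbb C$ is surjective (translate any target point by a large integer into $\Phi(Q_-^{(k)})$ and use that $f_0^{qn}:\Omega\to\Omega$ is onto), and, in the absence of critical points, locally univalent. The key step is that $\Phi$ is then a \emph{covering map}: any path $\gamma$ in $\mathbb C$ can be translated by a large integer $N$ into $\Phi(Q_-^{(k)})$, lifted uniquely there, and then pulled back along the unramified covering $f_0^{qN}:\Omega\to\Omega$. Since $\mathbb C$ is simply connected, $\Phi$ is a biholomorphism $\Omega\cong\mathbb C$, and under this conjugacy $f_0^q|_\Omega$ becomes $T_1$, which has degree $1$. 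This already contradicts $d\geq2$; the residual case $d=1$ is precisely the excluded parabolic M\"obius situation.
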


\subsubsection{Fatou coordinates}\label{sec:Fatou-coordinate}
\

For each $k\in\Z_q$, there are two conformal injections $\phi_{\pm,f_0}^{(k)}: Q_{\pm}^{(k)}\to \C$ such that
\begin{enumerate}
	\item  $\phi_{\pm,f_0}^{(k)}\circ f_0^q(z)=\phi_{\pm,f_0}^{(k)}(z)+1$ if $z,f_0^q(z)\in Q_{\pm}^{(k)}$;\vspace{2pt}
	\item  $\phi_{\pm,f_0}^{(k)}=\phi_{\pm,f_0}^{(0)}\circ f_0^{r_k}$, where $0\leq r_k<q$ such that $k+r_kp\equiv 0({\rm mod}\ q)$.
\end{enumerate}

These collection of conformal maps $\phi_{+,f_0}^{(k)},\phi_{-,f_0}^{(k)},k\in\Z_q$ are called \emph{repelling and attracting Fatou coordinates} of $f_0$, respectively. Each Fatou coordinate is unique up to translations.

We denote $\varphi_{f_0}^{(k)}$ the maximal analytic extension of $(\phi_{+,f_0}^{(k)})^{-1}$ such that $\varphi_{f_0}^{(k)}(\t{w}+1)=f_0^q\circ \varphi_{f_0}^{(k)}(\t{w})$ if one side is defined. Then
\begin{itemize}
	\item [(3)] $\varphi_{f_0}^{(k)}({\rm Dom}(\varphi_{f_0}^{(k)}))\subset {\rm Dom}(f_0)$ and  ${\rm Dom}(\varphi_{f_0}^{(k)})$ contains the region
	$$\t{\QQQ}_{0}=\t{\QQQ}_{0}(\t w_*):=\{\tilde{w}\in\mathbb C:\pi/3<{\rm arg}(\tilde{w}+\t{w}_*)<5\pi/3\}$$
	for  a large number $\t w_*$.
\end{itemize}

The attracting coordinate $\phi_{-,f_0}^{(k)}$ also has an analytic extension $\psi_{f_0}^{(k)}$ to the $k$-th parabolic basin of $(f_0^q,0)$  by defining
\begin{equation*}\label{eq:extension}
	\psi_{f_0}^{(k)}(z)=\phi_{-,f_0}^{(k)}(f_0^{mq}(z))-m,\ \text{ if } f_0^{mq}(z)\in Q_{-}^{(k)}.
\end{equation*}

\subsubsection{Horn maps}\

In this subsection, we will consistently assume that:

\noindent{\bf Assumption 1}.
\emph{The map $f_0^q$ has exactly $q$ immediate parabolic basins at $0$, and their union $\Omega_{f_0}^*$, called the \emph{immediate parabolic basin of $(f_0,0)$}, contains only one critical point of $f_0$}.

Recall that $\Omega^{(k)}_{f_0}$ denotes the immediate parabolic basin of $(f_0^q,0)$ which contains $ Q_{-}^{(k)}$. Then $ Q_{+}^{(k)}\cap \,\Omega^{(k+1)}_{f_0}\not=\emptyset$. Note that the half plane $\{\t{w}\,|\ {\rm Im}\,\t{w}>\eta_0\}$  is contained in $(\varphi_{f_0}^{(k)})^{-1}(\Omega^{(k+1)}_{f_0})$ for some $\eta_0>0$. Let $\t{B}^{(k,u)}_{f_0}$ denote the component of $(\varphi_{f_0}^{(k)})^{-1}(\Omega^{(k+1)}_{f_0})$ that contains $\{\t{w} |\ {\rm Im}\,\t{w}>\eta_0\}$. It follows that  $T_1(\t{B}^{(k,u)}_{f_0})=\t{B}^{(k,u)}_{f_0}$.

We define a map $\t{\EEE}^{(k,u)}_{f_0}:\t{B}^{(k,u)}_{f_0}\to\mathbb C$ by
$$\t{\EEE}^{(k,u)}_{f_0}=\psi_{f_0}^{(k+1)}\circ \varphi_{f_0}^{(k)}.$$
It holds that $\t{\EEE}^{(k,u)}_{f_0}(\t{w}+1)=\t{\EEE}_{f_0}(\t{w})+1$ for $\t{w}\in\t{B}^{(k,u)}_{f_0}$. Hence one obtains a well-defined map
\[\EEE_{f_0}^{(k,u)}=\pi\circ\t{\EEE}_{f_0}^{(k,u)}\circ \pi^{-1}:B^{(k,u)}_{f_0}\to \mathbb C^*\]
via $\pi(\t{w}):=e^{2\pi i\t{w}}$ with $B^{(k,u)}_{f_0}:=\pi(\t{B}^{(k,u)}_{f_0})$. Moreover this map extends to $0$ holomorphically by defining $\EEE_{f_0}^{(k,u)}(0)=0$.

Since $ Q_{+}^{(k)}\cap \Omega^{(k)}_{f_0}$ is also non-empty, we can similarly define
\begin{itemize}
	\item $\t{B}^{(k,\ell)}_{f_0}$, the component of $(\varphi_{f_0}^{(k)})^{-1}(\Omega^{(k)}_{f_0})$   containing $\{\t{w} |\ {\rm Im}\,\t{w}<-\eta_0\}$;\vspace{2pt}
	\item a  holomorphic map $\t{\EEE}_{f_0}^{(k,\ell)}=\psi_{f_0}^{(k)}\circ\varphi_{f_0}^{(k)}:\t{B}^{(k,\ell)}_{f_0}\to\mathbb C$;\vspace{2pt}
	\item  a  holomorphic map $\EEE_{f_0}^{(k,\ell)}:B^{(k,\ell)}_{f_0}\to \mathbb C^*$, which extends  holomorphically  to $\infty$  by  $\EEE_{f_0}^{(k,\ell)}(\infty)=\infty$.
\end{itemize}

The Fatou coordinates $\phi_{\pm,f_0}^{(k)}$ can be normalized such that
\[ \t{\EEE}_{f_0}^{(k,u)}(\t{w})=\t{w}+o(1), \,\text{ as } {\rm Im}\,\t{w}\to +\infty.\]
Then we have
\begin{equation}\label{eq:normalize}
	\big(\EEE_{f_0}^{(k,u)}\big)'(0)=1\quad\text{ and }\quad \big(\EEE_{f_0}^{(k,\ell)}\big)'(\infty)\not=0.
\end{equation}

The maps $\EEE_{f_0}^{(k,u)},\EEE_{f_0}^{(k.\ell)}$ are called  the $k$-th upper and lower \emph{horn maps} associated to $f_0$.  They have the following properties.

\begin{pro}\label{pro:covering}
	For each $k\in\Z_q$, we have that
	\begin{enumerate}
		\item the sets $B^{(k,u)}_{f_0}\cup\{0\},B^{(k,\ell)}_{f_0}\cup\{\infty\}$ are simply connected and disjoint; 
		\item $\EEE_{f_0}^{(k,u)}:B^{(k,u)}_{f_0}\to\mathbb C^*$ \emph{(resp. $\EEE_{f_0}^{(k+1,\ell)}:B^{(k+1,\ell)}_{f_0}\to\mathbb C^*$)} is a branched covering of infinite degree, ramified only over one point $\pi\circ\psi_{f_0}^{(k+1)}(c_{k+1})$, where $c_{k+1}\in \Omega^{(k+1)}_{f_0}$ is a point in the orbit of the unique critical point of $f_0$ within  $\Omega_{f_0}^*:=\bigcup_{j=0}^{q-1}\Omega_{f_0}^{(j)}$;
		\item $(\EEE_{f_0}^{(k,u)})''(0)\not=0$ and $\EEE_{f_0}^{(k,u)}$ has a simply-connected immediate parabolic basin, which contains only one critical point.
	\end{enumerate}
\end{pro}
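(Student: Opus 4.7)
The plan is to prove (1), (2), (3) in order, using the defining factorization $\tilde{\EEE}_{f_0}^{(k,u)}=\psi_{f_0}^{(k+1)}\circ\varphi_{f_0}^{(k)}$ and its descent under $\pi(\tilde w)=e^{2\pi i\tilde w}$. For the disjointness in (1), since $\tilde{B}^{(k,u)}_{f_0}\supset\{\mathrm{Im}\,\tilde w>\eta_0\}$ and $\tilde{B}^{(k,\ell)}_{f_0}\supset\{\mathrm{Im}\,\tilde w<-\eta_0\}$, applying $\pi$ confines $B^{(k,u)}_{f_0}\cup\{0\}$ to a neighborhood of $0$ and $B^{(k,\ell)}_{f_0}\cup\{\infty\}$ to a neighborhood of $\infty$ in $\widehat{\mathbb C}$; since $\varphi_{f_0}^{(k)}(\{\mathrm{Im}\,\tilde w>\eta_0\})\subset Q_{-}^{(k+1)}\subset\Omega_{f_0}^{(k+1)}$ is bounded away from $\partial{\rm Dom}(f_0)$, the two horn-map domains are disjoint. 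For simple connectivity of $B^{(k,u)}_{f_0}\cup\{0\}$, I would first show $\tilde B^{(k,u)}_{f_0}$ is simply connected: $\varphi_{f_0}^{(k)}$ has nowhere vanishing derivative on its domain (it is an analytic continuation of the conformal inverse $(\phi_{+,f_0}^{(k)})^{-1}$), so $\varphi_{f_0}^{(k)}:\tilde B^{(k,u)}_{f_0}\to\Omega^{(k+1)}_{f_0}$ is a local biholomorphism onto the simply connected $\Omega^{(k+1)}_{f_0}$ (by Proposition~\ref{pro:immediate} and Assumption~1), and a standard monodromy lifting argument allows any disk in $\Omega^{(k+1)}_{f_0}$ bounding the image of a loop to be lifted back into $\tilde B^{(k,u)}_{f_0}$. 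The group of deck transformations of $\pi|_{\tilde B^{(k,u)}_{f_0}}$ is $\langle T_1\rangle\cong\mathbb Z$, so $B^{(k,u)}_{f_0}$ is an annulus around $0$ and filling the puncture makes it simply connected.

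For part (2), because $\varphi_{f_0}^{(k)}$ is unramified, the critical values of $\tilde{\EEE}_{f_0}^{(k,u)}$ coincide with those of $\psi_{f_0}^{(k+1)}:\Omega^{(k+1)}_{f_0}\to\mathbb C$. The functional equation $\psi_{f_0}^{(k+1)}\circ f_0^q=T_1\circ\psi_{f_0}^{(k+1)}$ together with Assumption~1 (only one critical point $c_{k+1}$ lies in $\Omega^{(k+1)}_{f_0}$) forces these critical values to form the coset $\psi_{f_0}^{(k+1)}(c_{k+1})+\mathbb Z$, which $\pi$ collapses to a single point $\pi\circ\psi_{f_0}^{(k+1)}(c_{k+1})\in\mathbb C^*$. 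Infinite degree is immediate: each fibre of $\tilde{\EEE}_{f_0}^{(k,u)}$ over a non-critical value contains a full $T_1$-orbit, which remains infinite after $\pi$. The analogue for $\EEE_{f_0}^{(k+1,\ell)}$ is handled identically after replacing ``upper'' by ``lower.''

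For part (3), I would extract the quadratic coefficient of $\EEE_{f_0}^{(k,u)}$ at $0$ from the normalization. The function $h(\tilde w):=\tilde{\EEE}_{f_0}^{(k,u)}(\tilde w)-\tilde w$ is $T_1$-invariant and satisfies $h(\tilde w)\to 0$ as $\mathrm{Im}\,\tilde w\to+\infty$, so it descends to a holomorphic function $\hat h(w)$ on a punctured disk around $0$ with removable singularity $\hat h(0)=0$; expanding $\hat h(w)=a\,w+O(w^2)$ yields
\[\EEE_{f_0}^{(k,u)}(w)=w\cdot e^{2\pi i\,\hat h(w)}=w+2\pi i\,a\,w^2+O(w^3),\]
so $(\EEE_{f_0}^{(k,u)})'(0)=1$ and $(\EEE_{f_0}^{(k,u)})''(0)=4\pi i\,a$. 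The non-vanishing $a\ne 0$ is the substance of the statement: it encodes the non-triviality of the Ecalle--Voronin modulus attached to the simple parabolic germ $f_0^q(z)=z+z^{q+1}+O(z^{q+2})$, and I would invoke the classical computation from Shishikura~\cite[\S4]{Shi-book}. Given $a\ne 0$, the point $0$ is a simple parabolic fixed point of $\EEE_{f_0}^{(k,u)}$; by part (2) the unique critical value is $\pi\circ\psi_{f_0}^{(k+1)}(c_{k+1})$, so in each immediate parabolic basin the forward orbits of critical points coalesce into a single grand orbit, forcing exactly one simple critical point in the basin. Applying Proposition~\ref{pro:immediate} to $\EEE_{f_0}^{(k,u)}$ then yields the simple connectivity of this immediate basin.

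The main obstacle is the non-vanishing $a\ne 0$ in part (3); parts (1) and (2) are largely topological bookkeeping built on the factorization and the covering structure of $\psi_{f_0}^{(k+1)}$, whereas ruling out $a=0$ is equivalent to the non-triviality of the Ecalle--Voronin invariant and requires either an explicit Leau--Fatou asymptotic expansion of the change of coordinates $\phi_{-,f_0}^{(k+1)}\circ(\phi_{+,f_0}^{(k)})^{-1}$, or an appeal to the general Ecalle--Voronin classification of parabolic germs. I would take the latter route for brevity.
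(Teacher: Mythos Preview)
The paper's own proof is a bare citation to Shishikura~\cite{Shi-book} (Proposition 4.5.4 and Lemma 4.5.5) for $q=1$, declaring the general case analogous. Your outline goes further and is largely on the right track for parts (1) and (2), but two points need correction. First, the map $\varphi_{f_0}^{(k)}$ is \emph{not} unramified on $\tilde B^{(k,u)}_{f_0}$: whenever $\varphi_{f_0}^{(k)}(\tilde w_0)=c_{k+1}$ (and such $\tilde w_0$ do exist in $\tilde B^{(k,u)}_{f_0}$), the functional equation gives $\varphi_{f_0}^{(k)}(\tilde w_0+1)=f_0^q(c_{k+1})$ and hence $(\varphi_{f_0}^{(k)})'(\tilde w_0+1)=0$. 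This undercuts both your monodromy-lifting argument for simple connectivity in (1) and your premise in (2). Fortunately the conclusion of (2) survives: the extra critical values coming from branching of $\varphi_{f_0}^{(k)}$ are of the form $\psi_{f_0}^{(k+1)}(f_0^q(c_{k+1}))=\psi_{f_0}^{(k+1)}(c_{k+1})+1$, so they still descend under $\pi$ to the single point $\pi\circ\psi_{f_0}^{(k+1)}(c_{k+1})$.

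The substantive gap is in (3). Your assertion that $a\ne 0$ ``encodes the non-triviality of the Ecalle--Voronin modulus'' of the germ $f_0^q$ is incorrect: the Ecalle--Voronin modulus \emph{is} the conjugacy class of the horn maps, so invoking it here is circular, and in any case its non-triviality only means the horn map is not affine---it does \emph{not} force the \emph{first} nonlinear coefficient to be nonzero. Abstract parabolic germs with $a=0$ but nonzero higher-order terms certainly exist. What forces $a\ne 0$ here is the \emph{global} hypothesis (Assumption~1): because $\Omega_{f_0}^{(k+1)}$ is simply connected and $f_0^q|_{\Omega_{f_0}^{(k+1)}}$ has degree~$2$, Shishikura's argument shows the horn map can have only one attracting petal at $0$, hence $0$ is a simple parabolic point. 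Likewise, your final step---deducing ``exactly one simple critical point in the immediate basin of $\EEE_{f_0}^{(k,u)}$'' from ``only one critical value''---does not follow; a single critical value permits many critical points. The argument in~\cite{Shi-book} instead establishes directly that the restriction of $\EEE_{f_0}^{(k,u)}$ to its immediate basin has degree~$2$, after which Proposition~\ref{pro:immediate} yields simple connectivity.
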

If $q=1$, this proposition is the combination of Proposition 4.5.4 and Lemma 4.5.5 in \cite{Shi-book}. The argument in the case of $q>1$ is similar, and we omit the proof.

\subsection{Perturbation of parabolic fixed points}\label{sec:perturbation}

Let
\begin{equation}\label{eq:F1}
	\FFF_1=\bigg\{f\in\FFF\mid f'(0)=e^{2\pi i \frac{p+\alpha(f)}{q}}\text{ with $\alpha(f)\not=0$ and $|{\rm arg}\,\alpha(f)|<\pi/4$}\bigg\}.
\end{equation}
In the following, we always assume  $f\in\FFF_1$ with $\alpha(f)$ sufficiently small.

By the parabolic bifurcation theory, such maps $f$ still have  Fatou coordinates, fundamental domains and horn maps, and these quantities depends continuously on $f$.
We will review some necessities  in this topic.

\begin{figure}[h]   
	\begin{center}
		\includegraphics[height=8.8cm]{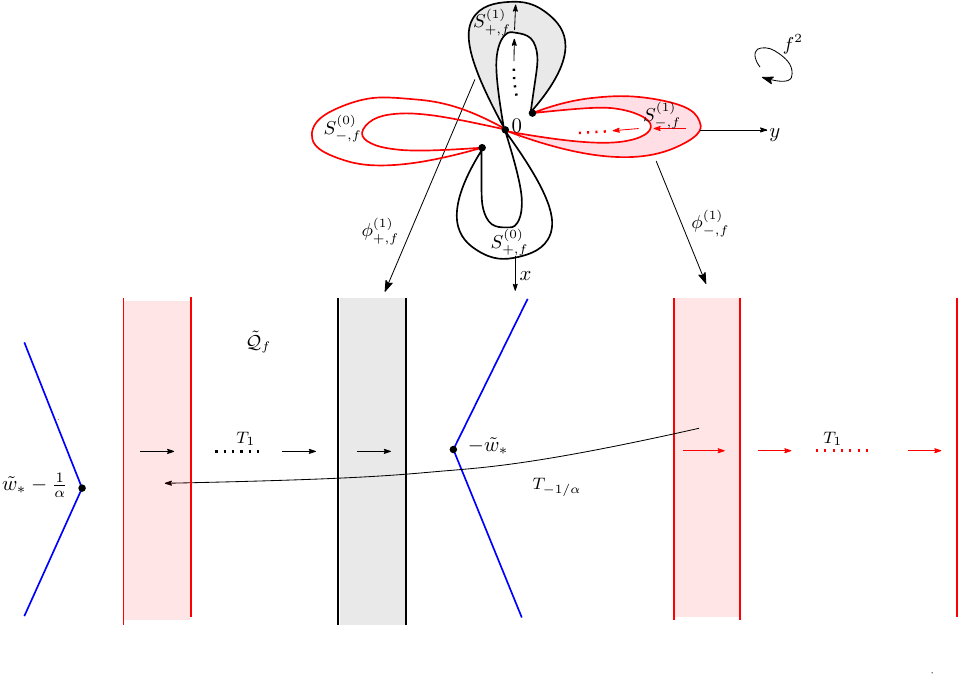}
	\end{center}
	\caption{The fundamental domains, Fatou coordinates and return maps for $f$,  in this case $q=2$.}\label{fig:f}
\end{figure}

The materials used in this subsection are sourced from \cite{S,Shi-book}. When $q$ equals to $1$, the detailed discussions and proofs appear in \cite{Shi-book}. The arguments can be similarly extended to the case of $q>1$ following the way explained in \cite[Appendix A.5]{S}.


As $ f\to f_0$, the fixed point $0$ of $f_0^q$ with multiplicity $q+1$ splits into a fixed point $0$ of $f$ and a cycle of period $q$ of $f$. These $q$-periodic points of $f$ near $0$ can be labelled so that
\[\sigma^{(k)}(f)=(-2\pi i\alpha(f))^{1/q} e^{2\pi i k/q}(1+o(1)),\ k\in\Z_q,\]
where $|{\rm arg}(-2\pi i\alpha(f))^{1/q}|<\pi/q$. Then $f(\sigma^{(k)}(f))=\sigma^{(k+p)}(f)$.

\subsubsection{Fatou coordinates and fundamental domains}\label{sec:Fatou-coordinates}
\

For each $k\in\Z_q$, there exist closed Jordan domains $S_{+,f}^{(k)},S_{-,f}^{(k)}$ and holomorphic maps $\phi_{+,f}^{(k)},\phi_{-,f}^{(k)}$,  satisfying the following properties (see Figure \ref{fig:f}):
\begin{itemize}
	\item [(1)] $S_{\pm,f}^{(k)}$ is bounded by an arc $\ell_{\pm,f}^{(k)}$ and its image $f^q(\ell_{\pm,f}^{(k)})$ such that
	\vspace{3pt}
	\begin{itemize}
		\item [(a)] $\ell_{\pm,f}^{(k)}$ joins the points $\{0,\sigma^{(k)}(f)\}$ and $\ell_{\pm,f}^{(k)}\bigcap f^q(\ell_{\pm,f}^{(k)})=\{0,\sigma^{(k)}(f)\}$,\vspace{1pt}
		\item [(b)] ${S}_{+,f}^{(k)}\bigcap {S}_{-,f}^{(k)}=\{0,\sigma^{(k)}(f)\}$, and ${S}_{\pm,f}^{(k)}\setminus\{0,\sigma^{(k)}(f)\},k\in \Z_q,$ are pairwise disjoint;
	\end{itemize}
	\vspace{3pt}
	\item [(2)] $\phi_{\pm,f}^{(k)}$ is defined, holomorphic and injective in a neighborhood of $S_{\pm,f}^{(k)}\setminus\{0,\sigma^{(k)}(f)\}$, such that \begin{itemize}
		\item [(a)] $\phi_{\pm,f}^{(k)}\circ f^q(z)=\phi_{\pm,f}^{(k)}(z)+1$ if $z,f^q(z)\in {\rm Dom}(\phi_{\pm,f}^{(k)})$, and\vspace{1pt}
		\item  [(b)] $\phi_{\pm,f}^{(k)}=\phi_{\pm,f}^{(0)}\circ f^{r_k}$, where $0\leq r_k<q$ satisfies $k+r_kp\equiv 0\,({\rm mod}\ q)$;
	\end{itemize}\vspace{3pt}
	\item [(3)] if $z\in S_{-,f}^{(k)}$, then there is an $n\geq1$ such that $f^n(z)\in S_{+,f}^{(k)}$ and for the smallest such $n$,
	\[\phi_{+,f}^{(k)}(f^{qn}(z))=\phi_{-,f}^{(k)}(z)-\frac{1}{\alpha(f)}+n;\]
	\item [(4)] when $f\in\FFF_1$ tends to $f_0$, we have ${S}_{\pm,f}^{(k)}\to {S}^{(k)}_{\pm,f_0}$ in the Hausdorff metric.
\end{itemize}

The above results are quoted from \cite[Proposition 3.2.2]{Shi-book}. We call $S_{\pm,f}^{(k)}$ and $\phi_{\pm,f}^{(k)}$ the $k$-th repelling/attracting {\it fundamental domains} and {\it Fatou coordinates} of $f$, respectively.

These facts show that, when perturbing $f_0$ in the family $\FFF_1$, the gate  between the $k$-th attracting and repelling petals of $f_0$ is open.

\subsubsection{Extension of Fatou coordinates}\label{sec:repelling-coordinate}
\

The inverse $(\phi_{+,f}^{(k)})^{-1}$ of the $k$-th repelling Fatou coordinate of $f$ can be extended to
an analytic map $\varphi_f^{(k)}:{\rm Dom}(\varphi_f^{(k)})\to\mathbb C$  with the following properties.
\begin{itemize}
	\item [(1)] If $\t{w},\t{w}+1\in{\rm Dom}(\varphi_f^{(k)})$, then $\varphi_f^{(k)}(\t{w})\in{\rm Dom}(f^q)$ and $\varphi_f^{(k)}(\t{w}+1)=f^q\circ\varphi_f^{(k)}(\t{w})$.
	\item [(2)] ${\rm Dom}(\varphi_f^{(k)})$ contains the region
	\begin{equation}\label{eq:domain}
		\t{\QQQ}_f=\t\QQQ_f(\t w_*):=\left\{\t{w}\in\mathbb C\mid \pi/3<{\rm arg}(\t{w}+\t{w}_*)<5\pi/3,\Big|{\rm arg}\left(\t{w}+\frac{1}{\alpha}-\t{w}_*\right)\Big|<2\pi/3\right\}.
	\end{equation}
	
	\item [(3)] If $\t{w}\in\t{\QQQ}_{f}$ and ${\rm Im}\,\t{w}\to+\infty$, then $\varphi_f^{(k)}(\t{w})\to 0$; and  when $\t w\in\t{\QQQ}_{f}$ and ${\rm Im}\,\t{w}\to-\infty$, we have $\varphi_f^{(k)}(\t{w})\to \sigma^{(k)}(f)$.
	\item [(4)] When $f\in\FFF_1$ tends to $f_0$, we have $\varphi_f^{(k)}\to \varphi_{f_0}^{(k)}$ in the compact-open topology.
	\item [(5)]  The $k$-th repelling and attracting coordinates of $f$ are related by the formula
	\[\varphi_f^{(k)}\big(\phi_{-,f}^{(k)}(z)-1/\alpha\big)=z,\ \forall\,z\in S_{-,f}^{(k)}.\]
	
\end{itemize}

Statements (1)-(4) are quoted directly from \cite[Proposition 3.2.3 (i),(iv)]{Shi-book}. Statement (5) is due to a special normalization of $\phi_{\pm,f}^{(k)}$ proved in \cite[Lemma 3.4.2]{Shi-book}.

Given any $\epsilon>0$, the restriction of $f\in\FFF_1$ on $\D(0,\epsilon)$ still belong to $\FFF_1$ and $f_0|_{\D(0,\epsilon)}\in\FFF_0$. By Section \ref{sec:Fatou-coordinate} (3), there exists $\t w_*>0$ such that $\varphi_{f_0|_{\D(0,\epsilon)}}^{(k)}(\t\QQQ_0)=\varphi_{f_0}^{(k)}(\t\QQQ_0)\subset {\rm Dom}(f_0|_{\D(0,\epsilon)})=\D(0,\epsilon)$. Together with statement (2) above, we obtain that

\begin{itemize}
	\item [(6)] For any $\epsilon>0$, there is $\t w_*>0$ such that $\varphi_f^{(k)}(\t{\QQQ}_f)\subset \D(0,\epsilon)$ for all $f\in\FFF_1$ close  to $f_0$.
\end{itemize}

The $k$-th attracting Fatou coordinate $\phi_{-,f}^{(k)}$ also has an analytic extension to a suitable region by the relation $\phi_{-,f}^{(k)}\circ f^q(z)=\phi_{-,f}^{(k)}(z)+1$. The extended map is denoted by $\psi_f^{(k)}$.


\subsubsection{Return maps associated to $f$}\label{sec:return-map}
\

{ In suitable regions, we can define the $k$-th upper and lower \emph{return maps} of $f$ by
	\[\tilde{\RRR}_f^{(k,u)}:=\psi_f^{(k+1)}\circ\varphi_f^{(k)}-\frac{1}{\alpha}\ \text{ and }\ \tilde{\RRR}_f^{(k,\ell)}:=\psi_f^{(k)}\circ\varphi_f^{(k)}-\frac{1}{\alpha},\]
	where\, $\tilde{\EEE}_f^{(k,u)}=\psi_f^{(k+1)}\circ\varphi_f^{(k)}$ and\, $\tilde{\EEE}_f^{(k,\ell)}=\psi_f^{(k)}\circ\varphi_f^{(k)}$ serve as the $k$-th upper and lower \emph{horn maps} of $f$, respectively. By analytic extension,}
we obtain the global return maps $\t{\RRR}_f^{(k,u)}:{\rm Dom}(\t{\RRR}_f^{(k,u)})\to \mathbb C$ and $\t{\RRR}_f^{(k,\ell)}:{\rm Dom}(\t{\RRR}_f^{(k,\ell)})\to\mathbb C$ that satisfy the following properties.

For simplicity of the statements, we use the symbol ``$\star$'' to represent either ``$u$'' or ``$\ell$'', and set $\H^u(\eta):=\{z:{\rm Im}\,z\geq \eta\}$ and $\H^\ell(\eta):=\{z:{\rm Im}\,z\leq -\eta\}$ for any $\eta>0$.
\begin{enumerate}
	\item The set ${\rm Dom}(\t{\RRR}_f^{(k,\star)})$ contains $\H^\star(\eta_0)$ for a constant $\eta_0>0$ independent of $f$, and is invariant under the translation $T_1$.\vspace{2pt}
	\item  If $\t w\in {\rm Dom}(\t{\RRR}_f^{(k,\star)})$, then $\t{\RRR}_f^{(k,\star)}(\t w+1)= \t{\RRR}_f^{(k,\star)}(\t w)+1$.\vspace{2pt}
	\item $\t{\RRR}_f^{(k,u)}(\t w)-\t w$ tends to $-1/\alpha(f)$ when ${\rm Im}\,\t w\to+\infty$, and $\t{\RRR}_f^{(k,\ell)}(\t w)-\t w$ tends to a constant when ${\rm Im}\,\t w\to-\infty$.\vspace{2pt}
	\item If $\t w\in {\rm Dom}(\t{\RRR}_f^{(k,\star)})\cap \t\QQQ_f$ and $\t w'=\t{\RRR}_f^{(k,
		\star)}(\t w)+n\in \t\QQQ_f$ for an integer $n\geq 0$, then
	$$f^{qn}(\varphi_f^{(k)}(\t w))=\varphi_f^{(k+1)}(\t w')\text { if $\star=u$, and }f^{qn}(\varphi_f^{(k)}(\t w))=\varphi_f^{(k)}(\t w')\text{ if }\star=\ell.$$
	\item If $f\in\FFF_1$ and $f\to f_0$, then $\t{\RRR}_f^{(k,\star)}+1/\alpha(f)\to\t{\EEE}_{f_0}^{(k,\star)}$ in the compact-open topology, and the convergence is uniform on $\H^\star(\eta_0)$.
\end{enumerate}
The above results are quoted directly from Propositions 3.2.3 and 4.4.1 in \cite{Shi-book}.

Since $\t{\RRR}_f^{(k,u)}$ commutes with $T_1$, we obtain a
well-defined analytic mapping
\[\RRR_f^{(k,u)}=\pi \circ \t{\RRR}_f^{(k,u)}\circ \pi^{-1}: {\rm Dom}({\RRR}_f^{(k,u)})\to \mathbb C^*.\]
The domain ${\rm Dom}(\RRR_f^{(k,u)}):= \pi({\rm Dom}(\t{\RRR}_f^{(k,u)}))$ contains $\{w:0<|w|<e^{-2\pi\eta_0}\}$, and $\RRR_f^{(k,u)}$ extends to $0$ analytically by defining  $\RRR_f^{(k,u)}(0)=0$.

Similarly, the function $\t{\RRR}_f^{(k,\ell)}$ induced a analytic map
\[\RRR_f^{(k,\ell)}=\pi \circ \t{\RRR}_f^{(u,\ell)}\circ \pi^{-1}: {\rm Dom}({\RRR}_f^{(k,\ell)})\to \mathbb C^*\]
such that ${\rm Dom}(\RRR_f^{(k,\ell)})$ contains $\{w:e^{2\pi\eta_0}<|w|<\infty\}$ and $\RRR_f^{(k,\ell)}$ extends to $\infty$ analytically by defining  $\RRR_f^{(k,\ell)}(\infty)=\infty$.
By statement (3) above, it follows that
\begin{equation}\label{eq:derivative}
	(\RRR_f^{(k,u)})'(0)=e^{-2\pi i\frac{1}{\alpha(f)}}.
\end{equation}

As an immediate consequence of statement (5), it follows that
\begin{itemize}
	\item [(6)]$e^{2\pi i/\alpha(f)}\RRR^{(k,u)}_f\to \EEE_{f_0}^{(k,u)}$ and $e^{2\pi i/\alpha(f)}\RRR^{(k,\ell)}_f\to \EEE_{f_0}^{(k,\ell)}$ as $f\in\FFF_1$ and $f\to f_0$.
\end{itemize}
The following fact relates  the iteration of $\RRR_f^{(k,u)}$ and that of $f$ \footnote{ We will not directly use this fact in our argument below. It only appears in Step 6 when sketching Shishikura's proof of Theorem \ref{thm:shishikura}. In the following Lemma \ref{lem:key1}, we shall give a more precise description of the relation between the iteration of $\RRR_f^{(k,u)}$ and that of $f$.}.
\begin{itemize}
	\item [(7)] If $U,U'$ are domains contained in of $\t{\QQQ}_f$ such that $({\RRR}_f^{(k,u)})^m$  is defined on $\pi(U)$ and $({\RRR}_f^{(k,u)})^m(\pi(U))\subset\pi(U')$ for some $m\geq1$, $\varphi_{f}^{(k)}|_U,\pi|_{U'}$ are injective, and $|{\rm arg}(\t{w}'+\frac{1}{2\alpha(f)}-\t{w})|<2\pi/3$ for $\t{w}\in U,\t{w}'\in U'$. Then there exists $n>m$ and $0\leq s<n$  such that
	\begin{equation}\label{eq:return1}
		f^{qn+s}=\varphi_f^{(k)}\circ(\pi|_{U'})^{-1}\circ (\RRR_f^{(k,u)})^m\circ\pi\circ(\varphi_f^{(k)}|_U)^{-1}\text{ on }\varphi_f^{(k)}(U). \footnote{ {If $q=1$, then $s=0$, because $f^n(\pi(U))$ can only enter the unique repelling fundamental domain of $f$. If $q>1$, then $s$ may be non-zero, since the iteration $f^{qn}(\pi(U))$ possibly enters the $k'$-th repelling fundamental domain  with $k'\not=k$. So we need an iteration $f^s$ sending $f^{qn}(\pi(U))$ to the $k$-th repelling fundamental domain.}}
	\end{equation}
\end{itemize}
This result is a consequence of statement (4) above. It appears in \cite{Shi-book} as Proposition 3.2.3 (iii') (Page 342). One can also refers to \cite[(4.2.3)\ and\ Section 7]{S}.

\subsection{Controlling the orbit}\label{sec:control}

To prove  Theorem \ref{thm:boundary-dimension0}, we need to document the location of the $f$-orbit of $\varphi_f^{(k)}(U)$  given in \eqref{eq:return1}.

We follow the notations and assumptions  in Sections \ref{sec:parabolic} and \ref{sec:perturbation}. Let $\Omega_{f_0}^{(0)},\ldots,\Omega_{f_0}^{(q-1)}$ denote the immediate parabolic basins of $(f_0^q,0)$.
Recall that $\t{B}_{f_0}^{(k,u)}=(\varphi_{f_0}^{(k)})^{-1}(\Omega_{f_0}^{(k+1)})$ and $\t{B}_{f_0}^{(k,\ell)}=(\varphi_{f_0}^{(k)})^{-1}(\Omega_{f_0}^{(k)})$. Moreover $B_{f_0}^{(k,u)}=\pi(\t{B}_{f_0}^{(k,u)})$ and $B_{f_0}^{(k,\ell)}=\pi(\t{B}_{f_0}^{(k,\ell)})$.

Let $\rho$ be any positive number. By Section \ref{sec:repelling-coordinate} (6), there exist a number $\t w_*>0$ and a neighborhood $\NNN_0$ of $f_0$ such that
\begin{equation}\label{eq:177}
	\bigcup_{k=0}^{q-1}f^k\left(\varphi_f^{(1)}(\t{\QQQ}_f(\t w_*))\right)\subset \D(0,\rho),\ \forall\, f\in\NNN_0\cap \FFF_1.
\end{equation}

\begin{lem}\label{lem:key1}
	Under the condition \eqref{eq:177}, given any  numbers $x_1<x_2$ and compact set $E\subset B_{f_0}^{(0,u)}$, there exist a compact set $K\subset \bigcup_{k=0}^{q-1} \Omega_{f_0}^{(k)}$  and a neighborhood $\NNN$ of $f_0$ with the following property:
	
	For any $f\in\FFF_1\cap \NNN$ and any domains $U,U'\subset\t\QQQ_f=\t{\QQQ}_{f}(\t w_*)$ such that
	\begin{itemize}
		\item $U,U'$ are contained in $\{\t w:  x_1< {\rm Re} (\t w)<x_2\}$, 
		\item $\pi(U)\subset E$, ${\RRR}_f^{(0,u)}(\pi(U))\subset\pi(U')$ and $\varphi_{f}^{(0)}|_{U},\pi|_{U'}$ are injective,
	\end{itemize}
	we have a minimal integer $n>0$ such that $f^{qn}(\varphi_f^{(0)}(U))\subset \varphi_f^{(1)}(U')$, and that the orbit
	\begin{equation}\label{eq:contral}
		\varphi_f^{(0)}(U)\overset{f^q}{\longrightarrow}\cdots\overset{f^q}{\longrightarrow}f^{qn}(\varphi_f^{(0)}(U))\overset{f^r}{\longrightarrow} f^{qn+r}(\varphi_f^{(0)}(U))\subset \varphi_f^{(0)}(U')
	\end{equation}
	is located in $ \D(0,\rho)\cup K$, where $0\leq r<q$ satisfies $rp\equiv-1\,({\rm mod}\,q)$.
	
\end{lem}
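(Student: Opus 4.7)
The plan is to split the orbit \eqref{eq:contral} into an initial part, a middle part, and a final part; control the first and last pieces by compact subsets of $\bigcup_{k=0}^{q-1}\Omega_{f_0}^{(k)}$ via continuity of the extended repelling Fatou coordinates $\varphi_f^{(0)},\varphi_f^{(1)}$ in $f$; and handle the middle piece by placing it inside $\varphi_f^{(1)}(\tilde{\mathcal Q}_f(\tilde w_*))\subset\mathbb D(0,\rho)$ via the hypothesis~\eqref{eq:177}.

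To set this up, I would first lift the data uniformly. Since $E\subset B_{f_0}^{(0,u)}$ is compact and the restriction of $\pi$ to the closed strip $\{x_1\leq\mathrm{Re}\,\tilde w\leq x_2\}$ is a finite-to-one covering onto its image, the intersection $\tilde E:=\pi^{-1}(E)\cap\{x_1\leq\mathrm{Re}\,\tilde w\leq x_2\}\subset\tilde B_{f_0}^{(0,u)}$ is compact; by continuity $\tilde E\subset\tilde B_f^{(0,u)}\cap\tilde{\mathcal Q}_f(\tilde w_*)$ for $f$ in a neighborhood of $f_0$, and $U\subset\tilde E$ automatically. The constraint $\mathcal R_f^{(0,u)}(\pi(U))\subset\pi(U')$ together with $\mathrm{Re}\,U'\in(x_1,x_2)$ and the convergence $\mathcal R_f^{(0,u)}\to\mathcal E_{f_0}^{(0,u)}$ from Section~\ref{sec:return-map}(6) traps $\pi(U')$ in a fixed compact set $E'\subset\mathbb C^*$, hence $U'$ in a common compact lift $\tilde E'$ independent of $f$. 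Next I would fix an integer $L_0$ so large that $\varphi_{f_0}^{(0)}(\tilde w+k)\in\mathbb D(0,\rho/2)$ for every $\tilde w\in\tilde E$ and every $k\geq L_0$---possible since $\varphi_{f_0}^{(0)}(\tilde w+k)\to 0$ uniformly for $\tilde w$ in a compact subset of $\tilde{\mathcal Q}_{f_0}$---and shrink the neighborhood $\mathcal N\subset\mathcal N_0$ of $f_0$ so that the same inclusion holds with $\mathbb D(0,\rho)$ and $\varphi_f^{(0)}$.

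With this data fixed, the initial segment $0\leq k<L_0$ is controlled by the functional equation $f^{qk}(\varphi_f^{(0)}(\tilde w))=\varphi_f^{(0)}(\tilde w+k)$, which places these iterates in a small compact neighborhood of $K_{\mathrm{in}}:=\bigcup_{k=0}^{L_0-1}\varphi_{f_0}^{(0)}(\tilde E+k)\subset\Omega_{f_0}^{(1)}$. For the middle segment $L_0\leq k\leq n-L_0$, the alternative expression $f^{qk}(\varphi_f^{(0)}(\tilde w))=\varphi_f^{(1)}(\tilde w'+k)$---with $\tilde w'=\tilde{\mathcal R}_f^{(0,u)}(\tilde w)$, obtained by combining property (4) in Section~\ref{sec:return-map} at $k=n$ with backward $f^{-q}$ continuation inside the image of $\varphi_f^{(1)}$---places the iterates in $\varphi_f^{(1)}(\tilde{\mathcal Q}_f(\tilde w_*))\subset\mathbb D(0,\rho)$ by \eqref{eq:177}, provided $\tilde w'+k\in\tilde{\mathcal Q}_f(\tilde w_*)$. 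For the final segment $n-L_0<k\leq n$ the same expression puts the iterates in a small compact neighborhood of $K_{\mathrm{out}}:=\bigcup_{j=0}^{L_0-1}\varphi_{f_0}^{(1)}(\tilde E'-j)\subset\Omega_{f_0}^{(2)}$, while the tail $f^s(\varphi_f^{(1)}(\tilde w'+n))$ for $1\leq s\leq r$ stays in $\mathbb D(0,\rho)$ directly from \eqref{eq:177}. Taking $K$ to be a small compact neighborhood of $K_{\mathrm{in}}\cup K_{\mathrm{out}}$ inside $\bigcup_{k=0}^{q-1}\Omega_{f_0}^{(k)}$ then closes the argument.

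The hardest point will be justifying $\tilde w'+k\in\tilde{\mathcal Q}_f(\tilde w_*)$ uniformly on the middle range $[L_0,n-L_0]$, which has length comparable to $\mathrm{Re}(1/\alpha(f))\to\infty$. The two defining inequalities of $\tilde{\mathcal Q}_f(\tilde w_*)$ in \eqref{eq:domain} behave complementarily along this range: the cone condition $\pi/3<\arg(\tilde w'+k+\tilde w_*)<5\pi/3$ is automatic on the left portion (where $\mathrm{Re}(\tilde w'+k+\tilde w_*)$ is negative), whereas $|\arg(\tilde w'+k+1/\alpha-\tilde w_*)|<2\pi/3$ is automatic on the right portion (where $\mathrm{Re}\approx k-\tilde w_*$ is large positive). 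The restriction $|\arg\alpha|<\pi/4$ from~\eqref{eq:F1} keeps $-1/\alpha$ in a fixed sector near the negative real axis, so the segment $\{\tilde w'+k\}_k$ crosses $\tilde{\mathcal Q}_f(\tilde w_*)$ transversally and the two conditions overlap comfortably on the required range, which is what makes the middle segment captured by \eqref{eq:177}.
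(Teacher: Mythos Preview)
Your decomposition into an initial compact piece and a middle piece in $\varphi_f^{(1)}(\t\QQQ_f)\subset\D(0,\rho)$ is the paper's strategy. But your ``final segment'' rests on a misreading: you claim that $\RRR_f^{(0,u)}\to\EEE_{f_0}^{(0,u)}$ traps $\pi(U')$ in a fixed compact $E'$, whereas Section~\ref{sec:return-map}(6) actually gives $e^{2\pi i/\alpha(f)}\RRR_f^{(0,u)}\to\EEE_{f_0}^{(0,u)}$. Since $|e^{-2\pi i/\alpha(f)}|=e^{2\pi\,\mathrm{Im}(1/\alpha(f))}$ is unbounded under $|\arg\alpha|<\pi/4$, the set $\pi(U')\supset\RRR_f^{(0,u)}(\pi(U))$ is \emph{not} confined to any compact set independent of $f$. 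Hence $\t E'$ and $K_{\mathrm{out}}$ are ill-defined, and your three-piece split breaks. (Also, even granting a compact $\t E'$, there is no reason for $\varphi_{f_0}^{(1)}(\t E')$ to lie in any $\Omega_{f_0}^{(k)}$.)

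The fix is that no final segment is needed, and your criterion for $L_0$ (based on $\varphi_{f_0}^{(0)}(\t w+k)\to 0$) is not the relevant one. Take instead the paper's $m_0$: the least integer with $T_{m_0}\big(\t{\EEE}_{f_0}^{(0,u)}(\t E)\big)\subset\{\xi:|\arg(\xi-\t w_*)|<2\pi/3\}$. For $k\geq m_0$ the second defining inequality of $\t\QQQ_f$ holds automatically, because $\t w'+k+1/\alpha-\t w_*=\t{\EEE}_f^{(0,u)}(\t w)+k-\t w_*$ (the $1/\alpha$ cancels), and $\t{\EEE}_f^{(0,u)}\to\t{\EEE}_{f_0}^{(0,u)}$ uniformly on the compact $\t E$. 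The first inequality holds for \emph{every} $k\leq n$: at $k=n$ it is the hypothesis $\t w'+n\in U'\subset\t\QQQ_f$, and the complement of the cone $|\arg(\cdot+\t w_*)|<\pi/3$ is stable under leftward translation. So $T_j\big(\t{\RRR}_f^{(0,u)}(U)\big)\subset\t\QQQ_f$ for all $j\in[m_0,n]$, giving a clean two-piece split---the first $m_0$ iterates of $f^q$ in a compact $K_1\subset\Omega_{f_0}^{(1)}$, the rest in $\D(0,\rho)$---with no $K_{\mathrm{out}}$ at all.
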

\begin{proof}
	According to Section \ref{sec:return-map} (6), we have a  neighborhood $\NNN\subset \NNN_0$ of $f_0$ such that $E\subset {\rm Dom}(\RRR_f^{(0,u)})$
	for all $f\in\NNN\cap\FFF_1$.
	
	Since $\t{\EEE}_{f_0}^{(0,u)}(\t w)=\t w+o(1)$ as $\t w\to+\infty$, by shrinking $\NNN$, it follows from Section \ref{sec:return-map} (5) that there exists a large number $\eta_0>0$ such that
	\begin{equation*}\label{eq:contral}
		|\t{\RRR}_f^{(0,u)}(\t{w})-(\t{w}-1/\alpha(f))|\leq 1
	\end{equation*}
	for any $f\in \NNN\cap\FFF_1$ and all $\t w\in\mathbb C$ with ${\rm Im}\,\t{w}>\eta_0$. By this and the definition of $\t\QQQ_f$, we can find a number $y_0>\eta_0$ such that for any $f\in \NNN\cap\FFF_1$,
	\begin{equation}\label{eq:estimate}
		\t\RRR_f^{(0,u)}(\t w)=\t w-1/\alpha(f)+O(1)\in \t\QQQ_f,\text{ when ${\rm Re}\,\t w\in[x_1,x_2],  {\rm Im}\,\t w\geq y_0$.}
	\end{equation}

	Since $\RRR_f^{(0,u)}(\pi(U))\subset \pi(U')$ and $\pi$ is injective on $U'$, there exists an integer $n$ with   minimal  $|n|$ such that \begin{equation}\label{eq:translation}
		T_{n}\circ \t{\RRR}_f^{(0,u)}(U)\subset U'\subset \t{\QQQ}_f.
	\end{equation}

	 \emph{Claim. There is a compact set $K_1\subset \Omega_{f_0}^{(1)}$ depending only on $f_0, x_1,x_2$ and $E$ such that, if $f$ is close to $f_0$, then  $f^{qn}(\varphi_f^{(0)}(U))\subset \varphi_f^{(1)}(U')$ with $n>0$ and $\varphi_f^{(0)}(U),\ldots, f^{qn}(\varphi_f^{(0)}(U))$ are contained in $K_1\cup \varphi^{(1)}_f(\t\QQQ_f)$.}
	
	One can quickly prove the lemma under this claim. By the claim and \eqref{eq:177}, the domains $\varphi_f^{(0)}(U),\ldots, f^{qn}(\varphi_f^{(0)}(U)), f^{qn+r}(\varphi_f^{(0)}(U))$ are contained in $K\cup\, \D(0,\rho)$ with $$K:=\bigcup_{k=0}^{q-1}f^k(K_1),$$ and $f^{qn}(\varphi_f^{(0)}(U))\subset \varphi_f^{(1)}(U')$.  Moreover, Section \ref{sec:Fatou-coordinates} (2).(b) implies that $f^r\circ\varphi_f^{(1)}(U')=\varphi_f^{(0)}(U')$.  Then the lemma is proved.
	It remains to prove the above claim in the following.

	First assume that $U$ lies above the horizontal line $\{\t w:{\rm Im}\,\t w=y_0\}$. Since $|x_2-x_1|\ll {\rm Re}(1/\alpha(f))$,  we conclude from \eqref{eq:estimate}  that the integer $n$ in \eqref{eq:translation} is positive and that $T_j\circ \t{\RRR}_f^{(0,u)}(U)\subset \t{\QQQ}_f$ for $j=0,\ldots,n$. Therefore, it follows from Section \ref{sec:return-map} (4) that the orbit
	\begin{equation}\label{eq:44}
		\varphi_f^{(0)}(U)=	\varphi_f^{(1)}(\t{\RRR}_f^{(0,u)}(U)) \overset{f^q}{\longrightarrow}\,\ldots\,\overset{f^q}{\longrightarrow} f^{qn}(\varphi_f^{(0)}(U))\subset \varphi_f^{(1)}(U')
	\end{equation}
	is contained in $\varphi_f^{(1)}(\t\QQQ_f)$. So the claim holds in this special case.

	To prove the claim in the general case, we  define a compact set
	\[L=\{\t w\in \pi^{-1}(E): x_1\leq {\rm Re}\,\t w \leq x_2,\ {\rm Im}\,\t w\leq y_0\}\subset {\rm Dom}(\t\EEE_{f_0}^{(0,u)}).\]
	There exists a minimal integer $m_0\geq0$ such that
	$P:=T_{m_0}(\t\EEE_{f_0}^{(0,u)} (L))=\t\EEE_{f_0}^{(0,u)}(T_{m_0}(L))$
	is a compact subset of
	$$-\t\QQQ_{0}=\{\tilde{w}\in\mathbb C:|{\rm arg}(\tilde{w}-\t{w}_*)|<2\pi/3\}.$$
	By shrinking $\NNN$, we have $P\subset T_{1/\alpha(f)}(\t\QQQ_f)$ for all $f\in\NNN\cap \FFF_1$, and it follows from Section \ref{sec:return-map} (2),(5)  that
	\begin{equation}\label{eq:33}
		T_{m_0}(\t\RRR_f^{(0,u)}(L))=\t\RRR_f^{(0,u)}\left(T_{m_0}(L)\right)
		\approx T_{-1/\alpha(f)} (P)\subset \t{\QQQ}_f.
	\end{equation}

	On the other hand,  $\varphi_{f_0}^{(0)}(L)$ is compact in $\Omega_{f_0}^{(1)}$. So there is a compact set $K_1\subset \Omega_{f_0}^{(1)}$ whose interior contains $\bigcup_{j=0}^{m_0}f_0^{qj}(\varphi_{f_0}^{(0)}(L))$.  By shrinking $\NNN$,  
	\begin{equation}\label{eq:compact}
		\text{$\bigcup_{j=0}^{m_0}f^{qj}(\varphi_{f}^{(0)}(L))\subset K_1$,\  $\forall\, f\in\NNN\cap\FFF_1$}
	\end{equation}
	
	We divide $U$ into $U_1=U\setminus L$ and $U_2=U\cap L$. By applying \eqref{eq:estimate} and \eqref{eq:33} to $U_1$ and $U_2$ respectively, we get
	\[\text{$T_{m_0}(\t{\RRR}_f^{(0,u)}(U))\subset \t\QQQ_f$, $\forall\,f\in\NNN\cap\FFF_1$}.\]
	
	Note that ${\rm Re}(1/\alpha(f))\gg\max\{\max\{|{\rm Re}\,\t w|; \t w\in P\}, |x_1|,|x_2|, m_0\}$. By  \eqref{eq:33}, we have $n\gg m_0$, and $T_j( \t{\RRR}_f^{(0,u)}(U))\subset \t{\QQQ}_f$ for $j=m_0,\ldots,n$. Therefore, as previously shown, the domains
	$$ f^{qm_0}\big(\varphi_f^{(0)}(U)\big),\ldots, f^{qn}\big(\varphi_f^{(0)}(U)\big)\subset \varphi_f^{(1)}(\t\QQQ_f).$$

	Moreover, by applying  \eqref{eq:44} to $U_1$, we have
	$
	\varphi_f^{(0)}(U_1),\ldots, f^{qm_0}\big(\varphi_f^{(0)}(U_1)\big)\subset \varphi_f^{(1)}(\t\QQQ_f).
	$
	Combining this fact together with  \eqref{eq:compact}, it holds that
	$$\varphi_f^{(0)}(U),\ldots, f^{qm_0}(\varphi_f^{(0)}(U))\subset K_1\cup \varphi_f^{(1)}(\t\QQQ_f).$$
	Then the claim is proved, and the lemma follows.
\end{proof}

\section{Hausdorff dimension for the boundary of attracting domain}  \label{hd-attracting}

\subsection{Hyperbolic sets with  large Hausdorff dimensions}\label{sec:construction}
We say that a compact set $X\subset{\C}$ is \emph{homogeneous} if $X$ has
no isolated points, and ${\rm H.dim}(U\cap X)={\rm H.dim}(X)$ for any open set $U$ intersecting $X$.

Let $f$ be a rational map. 
Let $U$ be a simply-connected open set in $\C$ with $\#(\partial U)\geq2$. Suppose that $U_1,\ldots,U_{\sp N}$ are pairwise disjoint simply-connected domains compactly contained in $U$, and $l_1,\ldots,l_{\sp N}$ are positive integers such that $f^{l_k}:U_k\rightarrow U$ is bijective. Then we call $\big(f,U,\{U_k\}_{k=1}^{\sp N}\big)$ a \emph{repelling system}. Define $g:\bigcup_{k}U_k\to U$ by $g|_{U_k}=f^{l_k}|_{U_k}$, and let $X_0$ be the set of \emph{non-escaping points} under the iteration of $g$, i.e., $x\in X_0$ precisely if $g^n(x)\in \bigcup_{k}U_k$ for every $n\geq0$.  Then  $X:=\bigcup_{j=0}^l f^j(X_0)$ is a homogenous hyperbolic set of $f$, where $l:=\max_{1\leq j\leq  N} l_j$ (see  \cite[Lemma 2.1]{S}).  


\begin{thm}[Shishikura]\label{thm:shishikura}
	Let $f_0\in {\rm Rat}_d$ have a parabolic fixed point $\zeta$ with multiplier $e^{2\pi ip/q}$ \emph{($p,q\in\Z,(p,q)=1$)} and that the immediate parabolic basin of $\zeta$ contains only one critical point of $f_0$. Suppose that $\FFF_2\subset {\rm Rat}_d$ is a family of maps tending to $f_0$ that satisfy the following  property:
	there is a fixed point $\zeta_f$ of $f$ close to $\zeta$, with  multiplier $f'(\zeta_f)={\rm exp}(2\pi i\frac{p+ \alpha(f)}{q})$, where $\alpha(f)$ has the form
	\begin{equation}\label{eq:good}
		\alpha(f)=\frac{1}{a_1(f)-\dfrac{1}{a_2(f)+\nu(f)}}
	\end{equation}
	such that $\mathbb N\ni a_1(f),a_2(f)\to\infty$ and $\nu(f)\to \nu_0\in\mathbb C$ as $f\to f_0$.
	
	Then given any $\epsilon>0$, there exists a neighborhood $\NNN$ of $f_0$ in ${\rm Rat}_d$ such that if $f\in\NNN\cap\FFF_2$,
	its Julia set $J(f)$ admits a homogenous hyperbolic set $X_f$ with
	${\rm H.dim}(X_f)>2-\epsilon.$
\end{thm}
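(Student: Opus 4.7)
The plan is to perform two consecutive parabolic renormalizations, exploiting the nested continued-fraction shape of $\alpha(f)$, build a repelling system with many branches in the doubly-renormalized picture, and then pull it back to the $f$-plane.

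\textbf{First renormalization.} Normalize $\zeta_f=0$ and consider the upper return map $\RRR_f^{(0,u)}$ of Section \ref{sec:return-map}. By \eqref{eq:derivative} its multiplier at $0$ is $e^{-2\pi i/\alpha(f)}$, and \eqref{eq:good} together with $a_1(f)\in\mathbb N$ gives $e^{-2\pi i/\alpha(f)}=e^{2\pi i/(a_2(f)+\nu(f))}\to 1$ as $f\to f_0$. Hence $\RRR_f^{(0,u)}$ is a perturbation (in the sense of the class $\FFF_1$ with $p/q=0/1$) of its compact-open limit $\EEE_{f_0}^{(0,u)}$, which by Proposition \ref{pro:covering}(3) satisfies Assumption 1. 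The second-level perturbation parameter is $\alpha_1:=1/(a_2(f)+\nu(f))\to 0$, so the bifurcation machinery of Section \ref{sec:perturbation} applies verbatim to $\{\RRR_f^{(0,u)}\}_{f\in\FFF_2}$ viewed as a family of maps near $\EEE_{f_0}^{(0,u)}$.

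\textbf{Second renormalization.} Applying the same construction to $\RRR_f^{(0,u)}$ produces its Fatou coordinate $\varphi_{\RRR_f^{(0,u)}}$ and its upper return map $\RRR_{\RRR_f^{(0,u)}}^{(u)}$. By \eqref{eq:derivative} this second-level return map has multiplier $e^{-2\pi i/\alpha_1}=e^{-2\pi i\nu(f)}\to e^{-2\pi i\nu_0}$ at $0$, so along any sequence $f\to f_0$ in $\FFF_2$ it converges on compacta, up to a fixed rotation, to the iterated horn map $G_0:=\EEE_{\EEE_{f_0}^{(0,u)}}^{(u)}$. By the analogue of Proposition \ref{pro:covering}(2), $G_0$ is a branched covering of $\mathbb C^{*}$ of infinite degree ramified over at most one value; this infinite covering degree is the key qualitative property unlocked by the second bifurcation.

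\textbf{Hyperbolic set and pullback.} Using the infinite covering degree of $G_0$, I would choose a simply-connected disk $V$ disjoint from the ramification value and, for any integer $N$, select $N$ pairwise disjoint simply-connected subdomains $V_k\Subset V$ each mapped univalently onto $V$ by a suitable iterate $G_0^{l_k}$. A Koebe-distortion plus Bowen-pressure computation (as in \cite[\S 3]{S}) then gives that, for $N$ large and the $V_k$ well-distributed, the non-escaping set of the repelling system $(G_0,V,\{V_k\})$ has Hausdorff dimension exceeding $2-\epsilon/2$. Because the second-level return maps converge to $G_0$, the same combinatorial data $(V,\{V_k\},\{l_k\})$ defines a repelling system for $\RRR_{\RRR_f^{(0,u)}}^{(u)}$ once $f\in\FFF_2\cap\NNN$ for a small neighborhood $\NNN$ of $f_0$, and continuity of Hausdorff dimension under the induced holomorphic motion preserves the bound $>2-\epsilon$. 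Two applications of \eqref{eq:return1}, first via $\pi$ and $\varphi_{\RRR_f^{(0,u)}}$ and then via $\pi$ and $\varphi_f^{(0)}$, convert this into a repelling system for $f$ itself in an arbitrarily small neighborhood of $\zeta$; since all these identifications are locally conformal, Hausdorff dimension is preserved, and saturating by finitely many $f$-iterates as in Section \ref{sec:construction} produces the desired homogeneous hyperbolic set $X_f\subset J(f)$.

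The main obstacle is Step 3: producing, in the doubly-renormalized coordinates, a repelling system whose non-escaping set has Hausdorff dimension arbitrarily close to $2$ with distortion controls uniform enough to survive both the double perturbation $G_0\rightsquigarrow\RRR_{\RRR_f^{(0,u)}}^{(u)}$ and the pullback via $\varphi_{\RRR_f^{(0,u)}}\circ\varphi_f^{(0)}$. The infinite covering degree of horn maps is indispensable for making $N$ as large as one wishes, but the quantitative estimate requires a delicate Bowen-pressure argument tailored to the non-uniformly hyperbolic branches of $G_0$, together with uniform Koebe bounds along the double renormalization tower.
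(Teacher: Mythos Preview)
Your double-renormalization framework (Steps 1 and 2) is correct and matches the paper's approach: the first return map $g=\RRR_f^{(0,u)}$ tends to $g_0=\EEE_{f_0}^{(0,u)}$, and the second return map $h=\RRR_g^{u}\cup\RRR_g^{\ell}$ tends to $e^{-2\pi i\nu_0}h_0$ with $h_0=\EEE_{g_0}^{u}\cup\EEE_{g_0}^{\ell}$, exactly as in the sketch after the theorem.

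The gap is in Step 3. You claim that the infinite covering degree of $G_0$ lets you build, for any $N$, a repelling system $(G_0,V,\{V_k\}_{k=1}^N)$ whose non-escaping set has dimension $>2-\epsilon/2$, and then simply pull back conformally. But infinite degree gives many inverse branches, not dimension close to $2$: for any such system the area bound $\sum_k|\lambda_k|^{-2}\le 1$ forces the Bowen dimension near $2$ only when the $V_k$ nearly tile $V$, and there is no reason the components of $G_0^{-1}(V)$ (or of $G_0^{-l}(V)$) lying in $V$ do this. In fact the non-escaping set for the partially defined map $G_0$ need not have hyperbolic dimension anywhere near $2$; nothing in Proposition \ref{pro:covering} provides such an estimate, and the reference to ``\cite[\S 3]{S}'' does not supply one either.

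Crucially, the paper's construction does \emph{not} produce a high-dimensional hyperbolic set at the $h$-level and then transport it conformally. At the $h$-level only a single contracting inverse branch $H$ is used, giving a sequence $W_k^f=H^{k-1}(W_1^f)$ accumulating at $0$; this carries essentially no dimension. The many branches needed for dimension close to $2$ are \emph{created in the lifting step}: one takes all components of $\pi^{-1}(W_k^f)$ lying in a large square $D(\eta')$ to obtain the $\t V_j^f$, and then all components of $\pi^{-1}(V_j^f)$ lying in $D(\eta)$ to obtain the $\t U_i^f$. Because $\pi(\t w)=e^{2\pi i\t w}$, a set near $0$ has roughly $\eta$ lifts in a square of side $\eta$, and the expansion of each resulting branch of $f^{l_i}$ is of order $e^{2\pi\eta}$; it is precisely this arithmetic between branch count and derivative that yields the estimate \eqref{eq:dimension}, which tends to $2$ as $\eta\to\infty$. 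So the applications of \eqref{eq:return1} are not mere conformal identifications of a pre-built repelling system --- they are the mechanism that manufactures the branches. Your plan misses this, and as written Step 3 does not go through.
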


This theorem is  \cite[Theorem 2]{S}.  For  our discussion below, we will   sketch   the construction of  $X_f$ under the following \vspace{3pt}

\textbf{Assumption: }{\it No critical points of $f_0$ lie on the boundary of the immediate parabolic basin $\Omega_{f_0}^*=\bigcup_{k=0}^{q-1} \Omega_{f_0}^{(k)}$ of $\zeta$.}

By a continuous family of translations,  we may assume $\zeta=0$ and $\zeta_f=0$ for $f\in\FFF_2$.  Then  $f_0$ satisfies all assumptions in Section \ref{sec:parabolic}. 
Note that 
\begin{equation}\label{attracting-condition}
	\zeta_f \text { is attracting } \Longleftrightarrow {\rm Im}\,\alpha(f)>0  \Longleftrightarrow  {\rm Im}\,\nu(f)<0.
\end{equation}
We can choose a neighborhood $\NNN$ of $f_0$ in ${\rm Rat}_d$ such that for any $f\in\NNN\cap\FFF_2$, the argument of $\alpha(f)$ is contained in $(-\pi/4,\pi/4)$.
Thus $\NNN\cap \FFF_2$
is  a subset of $\FFF_1$ that is defined
in \eqref{eq:F1}.\vspace{3pt}

\noindent\emph{Step 1. An inverse orbit $\{z_j\}$ of $f_0$.}\vspace{3pt}

A sequence $\{x_n\}_{n\geq0}$ is called an \emph{inverse orbit} of a map $F$ if $F(x_j)=x_{j-1}$ for all $j\geq1$.
Recall that $\Omega_{f_0}^{(k)},k=0,\ldots,q-1$ are the components of immediate parabolic basins of $(f_0,0)$.
Then there exists an inverse orbit $\{z_j\}_{j\geq0}$ of $f_0^q$ such that $z_0=0,0\not=z_j\in\partial \Omega^{(1)}_{f_0}$ for  $j>0$, and $z_j\to 0$ as $j\to\infty$ within the $0$-th repelling petal of $f_0^q$.\vspace{3pt}

\noindent\emph{Step 2. The maps $g_0,h_0$ and the point $\xi_0$}.\vspace{3pt}

In the following, we always assume $k=0$, and hence omit the superscript $(k)$ when referring to the  quantities in Section \ref{sec:parabolic-implosion}. By Section \ref{sec:parabolic}, we obtain $\varphi_{f_0}$ and $g_0:=\EEE_{f_0}^{u}$.
Then there exists a point $\tilde{w}_0\in {\rm Dom}(\varphi_{f_0})$ corresponding to the inverse orbit $\{z_j\}$ such that $\varphi_{f_0}(\tilde{w}_0-j)=z_j$.

As shown in \cite[Lemma 6.1]{S}, one can find two inverse orbit $\{w_j\}_{j\geq0},\{w_j'\}_{j\ge0}$ of $w_0:=\pi(\tilde{w}_0)$ for the map $g_0$ such that: $w_0=w_0'$; $w_j,w_j'\to 0\, (j\to\infty)$; $\{w_j\}_{j\geq1}\cap\{w_j'\}_{j\geq1}=\emptyset$; and $w_j,w_j'\,(j\geq1)$ are not critical points of $g_0$.

By Proposition \ref{pro:covering}, $g_0(z)=z+b_0z^2+O(z^3)$ near $0$ with $b_0\not=0$. So we also obtain $\varphi_{g_0}$ and $h_0:=\EEE_{g_0}^u\cup\EEE_{g_0}^\ell$. Similarly as above, there exist different $\t{\xi}_0,\t{\xi}_0'\in {\rm Dom}(\varphi_{g_0})$ corresponding to the orbits $\{w_j\},\{w_j'\}$ respectively, such that $w_0=\varphi_{g_0}(\t{\xi}_0)$ and $w_0'=\varphi_{g_0}(\t{\xi}_0')$. Set $\xi_0:=\pi(\t{\xi}_0)$ and $\xi_0':=\pi(\t{\xi}_0')$. It follows $\xi_0\not=\xi_0'$ since $\{w_j\}_{j\geq1}\cap \{w_j'\}_{j\geq 1}=\emptyset$.   Notice that $h_0$ has only one critical value by Proposition \ref{pro:covering}. As a consequence, one of the points $\xi_0$ and $\xi_0'$, say $\xi_0$, is not the critical value of $e^{-2\pi i\nu_0}h_0$. See Figure \ref{fig:step1}.
\begin{figure}
	\centering
	\includegraphics[width=15cm]{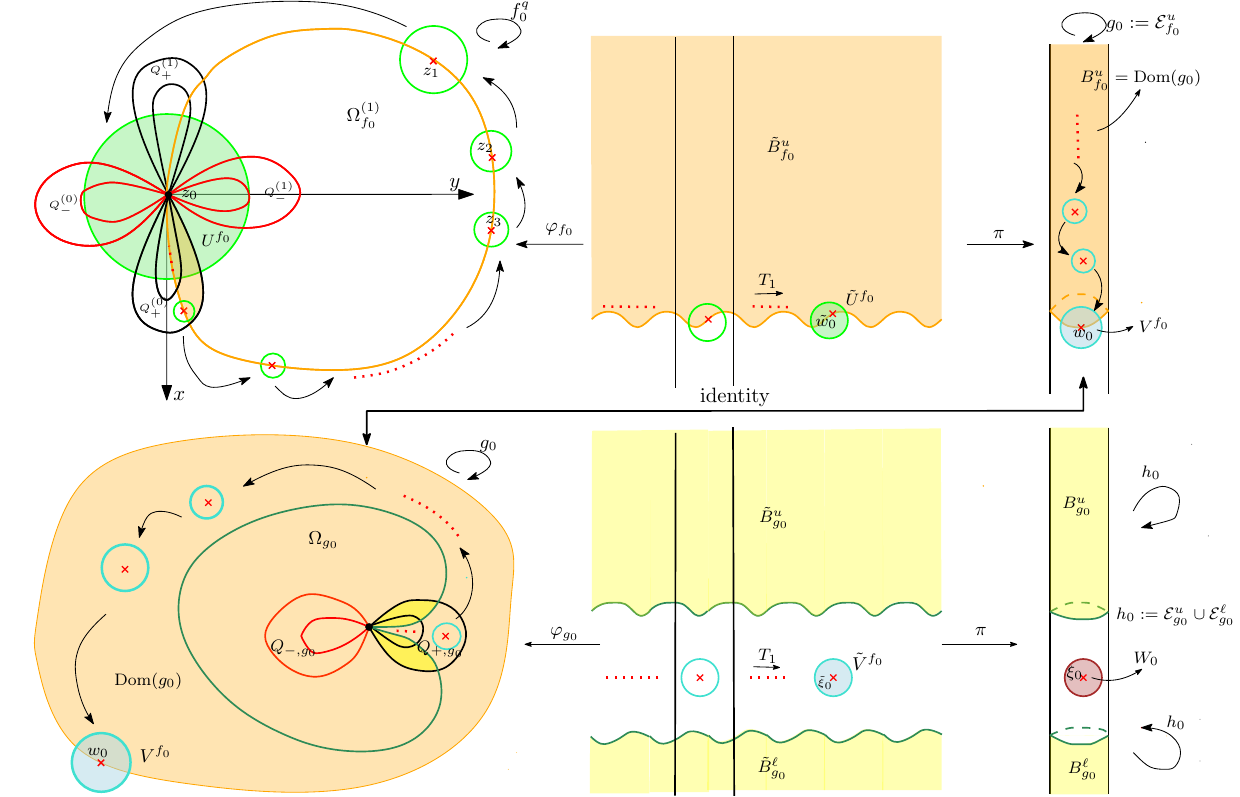}
	\caption{The maps $g_0,h_0$ and the points $w_0,\xi_0$}\label{fig:step1}
\end{figure}
\vspace{3pt}

\noindent\emph{Step 3. The maps $g,h$ and disks $\{W_k^f\}$.}\vspace{3pt}

When $f\in\NNN\cap \FFF_2$, we can apply the results  in Section \ref{sec:perturbation} to $f$. By formula \eqref{eq:derivative} and Section \ref{sec:return-map} (6),
the map $g:=\RRR_f^u$ is well-defined such that $g'(0)={\rm exp}(2\pi i\alpha(g))$ with  $\alpha(g)=1/(a_2(f)+\nu(f))$, and that $g\to g_0$ as $\FFF_2\ni f\to f_0$. With a similar reason, by shrinking $\NNN$ if necessary, the map $h:=\RRR^u_g\cup \RRR^\ell_g$ still exists and it tends to $e^{-2\pi i\nu_0}h_0$ as $\FFF_2\ni f\to f_0$.

Since the map  $e^{-2\pi i\nu_0}h_0$ has only one critical value (by Proposition \ref{pro:covering}), one of the two connected components $B^u_{h_0},B^\ell_{h_0}$ of ${\rm Dom}(h_0)$, say $B^u_{h_0}$, is disjoint from this critical value. Hence the local inverse of $e^{-2\pi i \nu_0}h_0$ near $0$ can be extended to $B^{u}_{h_0}\cup\{0\}$. So we have an analytic injection $H_0:B^u_{h_0}\cup\{0\}\to B^u_{h_0}\cup\{0\}$ such that $e^{-2\pi i\nu_0}h_0\circ H_0={\rm id},H_0(0)=0$ and $|H_0'(0)|<1$.

There exist a disk $ D(\ni0)\Subset B_{h_0}^{u}$ and a disk $W_0\ni \xi_0$ such that:
$\ov{W_0}$ is disjoint with the critical value of ${e^{-2\pi i\nu_0}}h_0$ (by the choice of $\xi_0$) and
$e^{-2\pi i \nu_0}h_0(D)$ covers $\ov{W_0}$ and $B^{u}_{h_0}$.
Therefore, by shrinking $\NNN$, we obtain
\begin{itemize}
	\item a disk $W_1^f\subseteq D$ such that $h:W_1^f\to W_0$ is a conformal homeomorphism, and
	\item an analytic injection $H:D\to D$ with $h\circ H={\rm id}$
\end{itemize}
for any $f\in\FFF_2\cap\NNN$. Here we use the subscript $f$ in $W_1^f$ since $h$ stems from $f$. Then for each $k\geq2$, we define $W_k^f:=H^{k-1}(W_1^f)$.\vspace{3pt}

\noindent \emph{Step 4. From $W_0$ to $U^f$.}\vspace{3pt}

The point $z_0=0$ in the dynamical plane of $f_0,f$ and the point $\xi_0$ in the dynamical plane of $h_0,h$ are related as follows:
\[z_0\overset{\varphi_{f_0}}{\longleftarrow} \tilde{w}_0\overset{\pi}{\longrightarrow}w_0\overset{\varphi_{g_0}}{\longleftarrow}\tilde{\xi}_0\overset{\pi}{\longrightarrow}\xi_0. \]
Since the inverse orbits $\{z_j\}_{j\geq1}$ of $f_0$ and $\{w_j\}_{j\geq1}$ of $g_0$ avoid the critical points of $f_0$ and $g_0$ respectively,
the map $\varphi_{f_0}\circ\pi^{-1}\circ \varphi_{g_0}\circ\pi^{-1}$ is well-defined and injective on $W_0$
by shrinking $W_0$ if necessary. For $f\in\FFF_2$ close to $f_0$, the map $\varphi_{f}\circ\pi^{-1}\circ \varphi_{g}\circ\pi^{-1}$ is still well-defined and injective on $W_0$, as $\varphi_f,\varphi_g$ are close to $\varphi_{f_0},\varphi_{g_0}$ respectively. Let us denote \[\text{$V^f:=\varphi_g\circ\pi^{-1}(W_0)$\ \ and\ \ $U^f:=\varphi_f\circ\pi^{-1}(V^f)$ (see Figure \ref{fig:proof1}).}\]
\begin{figure}
	\centering
	\includegraphics[width=15.5cm]{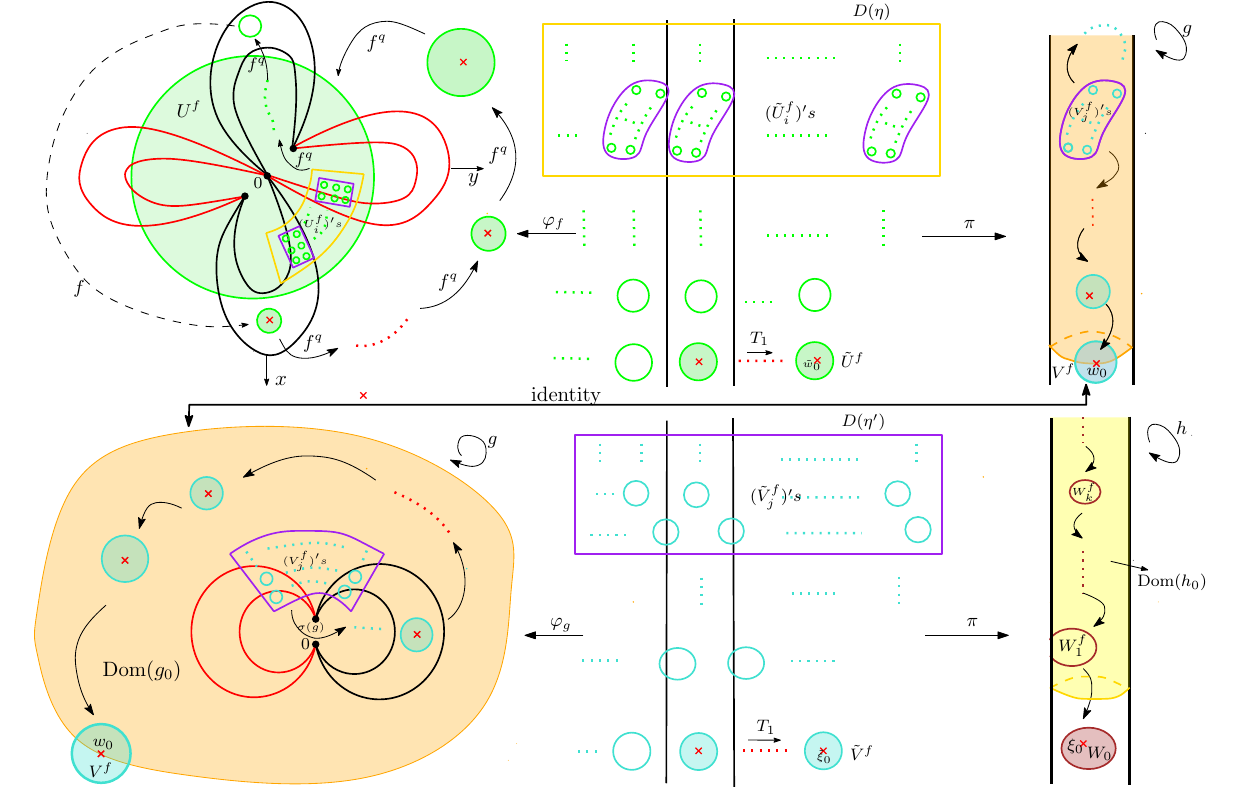}\\
	\caption{The construction of $\{W_k^f\},\{V_j^f\}$ and $\{U_i^f\}$.}\label{fig:proof1}
\end{figure}

\noindent\emph{Step 5. From $\{W_k^f\}$ to $\{U_i^f\}$.}

By a similar process as shown in step 4, we can pullback the collection of disks $\{W_k^f\}$ in the dynamical plane of $h$ to the collection of disks $\{U_i^f\}$ in the dynamical plane of $f$.

For any real number $t$, denote $D(t)$ the square with the center $it$ and the edge length $|t|/2$ .
Let $\eta>0$ be a large number and set $\eta'=e^{2\pi\eta}$. By shrinking $\NNN$, we have that $\varphi_{f}$ is injective on $D(\eta)$ and $\varphi_g$ is injective on $D(\eta')$ for any $f\in \FFF_2\cap \NNN$. Let $\{\t{V}_j^f\}$ denote the collection of components of $\pi^{-1}(W_k^f),k\geq1,$ contained in $D(\eta')$. Set $V_j^f:=\varphi_g(\t{V}_j^f)$ for all $j\geq 0$. Then these $V_j^f$ are located in the dynamical plane of $g$. Also, we denote by $\{\t{U}_i^f\}$ the collection of components of $\pi^{-1}(V_j^f),j\geq0$ that are contained in $D(\eta)$, and set $U_i^f:=\varphi_f(\t{U}_i^f)$. The collection $\{U_i^f\}$ of disks are in the dynamical plane of $f$, see Figure \ref{fig:proof1}.
\vspace{3pt}

\noindent\emph{Step 6. A hyperbolic set $X_f$ and the dimension estimate.}\vspace{3pt}

We apply Section \ref{sec:return-map} (7) successively to $(h,\{W_k^f\}),(g,\{V_j^f\})$ and $(f,\{U_i^f\})$. It then follows that, for  any $f\in\NNN\cap\FFF_2$ and every $k$, there exist an integer $l_k>0$ such that $f^{l_k}:U_k^f\to U^f$ is a homeomorphism. By choosing $\eta$ large enough, the collection of disks $\{U_k^f\}$ are contained in $U^f$. Hence
$(f,U^f,\{U^f_k\})$ is a repelling system described in the beginning of Section \ref{sec:construction}. Let $X_f\subset J(f)$ denote the hyperbolic set generated from this repelling system. According to  \cite[Section 6]{S},  its Hausdorff dimension  
\begin{equation}\label{eq:dimension}
	{\rm H.dim}(X_f)\geq\frac{C_1+\log \eta+4\pi \eta}{\log C_2+(q+1)\log \eta+2\pi\eta}.
\end{equation}
The right hand side tends to $2$ as $\eta\to\infty$, which implies Theorem \ref{thm:shishikura}.

\subsection{Hyperbolic sets on the boundary of attracting  domains}
In general, one just knows that the hyperbolic set $X_f$ in Theorem \ref{thm:shishikura} belongs to the Julia set. We will show how this hyperbolic set lies on the boundaries of attracting domains under specific properties. 

We first prove a distortion result about the variation of arguments under  conformal maps.  Let $\Omega\subset \mathbb C$ be a simply connected domain with $0\notin \Omega$.
The argument function $Arg$ admits a     continuous branch  ${\rm arg}_{\Omega}$   in  $\Omega$. For any set $K\subset \Omega$,  define the {\it angular width} ${\rm aw}_{\Omega}(K)$ of $K$ by  
$${\rm aw}_{\Omega}(K)=\sup_{w\in K} {\rm arg}_{\Omega}(w)-\inf_{w\in K} {\rm arg}_{\Omega}(w).$$
Clearly, this quantity does not depend on the choice of the branch ${\rm arg}_{\Omega}$.

\begin{lem}\label{lem:variation}  There is a  positive function $c: (0,1)\rightarrow (0, +\infty)$
  with the following property:  for any univalent map $f:\D\to \mathbb C$ with $0\not\in \Omega:=f(\D)$, and any $r\in (0, 1)$,  we have ${\rm aw}_{\Omega}(K_r)\leq c(r)$, where $K_r=f(\overline{\mathbb D(0,r)})$.
\end{lem}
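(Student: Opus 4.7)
The plan is to reduce the bound to the Koebe $1/4$ theorem and the Koebe growth theorem applied to a suitably chosen logarithm of $f$. First I would normalize: replacing $f$ with $f/f(0)$ preserves univalence and the property $0\notin f(\mathbb{D})$, while the angular width is unchanged. So I may assume $f(0)=1$. Since $\Omega=f(\mathbb{D})$ is simply connected and avoids $0$, there is a single-valued holomorphic branch $g:\mathbb{D}\to \mathbb{C}$ of $\log f$ with $g(0)=0$; then $f=\exp\circ g$, and ${\rm arg}_{\Omega}\circ f - {\rm Im}\,g$ is locally constant on $\mathbb{D}$, hence constant. Consequently,
\[
{\rm aw}_{\Omega}(K_r)=\sup_{|z|\le r}{\rm Im}\,g(z)-\inf_{|z|\le r}{\rm Im}\,g(z)\le 2\sup_{|z|\le r}|g(z)|,
\]
where the last inequality uses $g(0)=0$.

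The key observation is that $g$ is itself univalent on $\mathbb{D}$: if $g(z_1)=g(z_2)$, then $f(z_1)=e^{g(z_1)}=e^{g(z_2)}=f(z_2)$, whence $z_1=z_2$ by univalence of $f$. Moreover, the univalence of $f$ together with the identities $f(0)=1=e^{2\pi ik}$ ($k\in\mathbb{Z}$) forces $g(\mathbb{D})$ to omit every nonzero integer multiple of $2\pi i$; in particular $2\pi i\notin g(\mathbb{D})$.

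With these properties in hand, I would apply the Koebe $1/4$ theorem to $g$: since $g$ is univalent with $g(0)=0$ and $g(\mathbb{D})\supset D(0,|g'(0)|/4)$, and $g$ omits $2\pi i$, we deduce $|g'(0)|\le 8\pi$. The Koebe growth theorem then gives
\[
|g(z)|\le |g'(0)|\cdot\frac{|z|}{(1-|z|)^2}\le \frac{8\pi r}{(1-r)^2}\quad\text{for all }|z|\le r.
\]
Combining with the earlier inequality produces the explicit bound $c(r)=16\pi r/(1-r)^2$, proving the lemma. There is no substantial obstacle; the only subtleties are the verification that $g$ is univalent (immediate from $f=\exp\circ g$ and univalence of $f$) and the identification of a concrete omitted value of $g$ (forced by $e^{2\pi i}=1$ together with univalence of $f$).
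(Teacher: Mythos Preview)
Your proof is correct and takes a genuinely different route from the paper's. The paper also works with a branch of $\log f$ but, instead of applying Koebe's theorems to $g=\log f$, it bounds ${\rm aw}_\Omega(K_r)$ by the integral $\int_{[a,b]}|f'(z)|/|f(z)|\,|dz|$ over the hyperbolic geodesic $[a,b]$ between two extremal points, and then uses the hyperbolic metric comparison $\rho_\Omega(w)\geq 1/(2|w|)$ together with the Schwarz--Pick identity $\rho_\Omega(f(z))|f'(z)|=2/(1-|z|^2)$ to get $|f'(z)|/|f(z)|\leq 4/(1-|z|^2)$; integrating yields $c(r)=4\log\frac{1+r}{1-r}$. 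Your approach via Koebe $1/4$ (to bound $|g'(0)|$ through the omitted value $2\pi i$) and the Koebe growth bound is arguably more self-contained, relying only on classical univalent-function theory, and the observation that $g$ is univalent with a concrete omitted value is a nice trick. The paper's approach gives a sharper constant (logarithmic rather than $(1-r)^{-2}$ blow-up as $r\to 1$), but the lemma only asks for existence of some $c(r)$, so this makes no difference for the application.
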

\begin{proof}
	Choose  $a,b\in \overline{\mathbb D(0,r)}$ so that $\min_{w\in K_r} {\rm arg}_{\Omega}(w)={\rm arg}_{\Omega}(f(a))$, $\max_{w\in K_r} {\rm arg}_{\Omega}(w)={\rm arg}_{\Omega}(f(b))$. Let $\log\circ f$ be a holomorphic  branch of ${\rm Log} \circ f$. Then 
	$${\rm aw}_{\Omega}(K_r)\leq |\log\circ f(b)-\log\circ f(a)|=\Big|\int_{[a,b]}(\log\circ f)' dz\Big|\leq \int_{[a,b]}\frac{|f'(z)|}{|f(z)|}|dz|, $$
	where $[a,b]$ is the geodesic segment in $\mathbb D$.  
	By the properties of the hyperbolic metric $\rho_\Omega(w)|dw|$ of $\Omega$,
	$$\rho_\Omega(f(z))|f'(z)|=\frac{2}{1-|z|^2}, \ \rho_\Omega(w)\geq \frac{1}{2\cdot {\rm dist}(w, \partial \Omega)}\geq \frac{1}{2|w|}.$$
	It follows that 
	$|f'(z)|/|f(z)|\leq 4/{(1-|z|^2)}$. Hence 
	$${\rm aw}_{\Omega}(K_r)\leq \int_{[a,b]}\frac{4 }{1-|z|^2}|dz|\leq 8 \int_{[0,r]}\frac{|dz| }{1-|z|^2}=4\log\frac{1+r}{1-r}:=c(r).$$
	The proof is completed.
		\end{proof}

Now we follow the notations in Section \ref{sec:construction}. We still assume   that the boundary  $\partial \Omega_{f_0}^*$ of the immediate parabolic basin  avoids the $f_0$-critical points.

\begin{lem}\label{lem:orbit}
	Let $f_0$ and $\FFF_2$ satisfy the  conditions of Theorem \ref{thm:shishikura}.
	For any $M,\epsilon>0$ and any domain $\De$ containing   $\ov{\Omega_{f_0}^*}$,
	we can choose a neighborhood $\NNN\subset {\rm Rat}_d$ of $f_0$ such that, if $f\in\NNN\cap\FFF_2$,  there exists a repelling system $(f, U^f,\{U_i^f\})$ that satisfies:
	\begin{enumerate}
		\item each orbit \, $U_i^f\overset{f}{\to}\cdots\overset{f}{\to} f^{l_i}(U_i^f)=U^{f}$
		is entirely contained in $\De$,\vspace{1pt}
		\item ${\rm H.dim}(X_f)>2-\epsilon$, where $X_f$ is the hyperbolic set of $(f, U^f,\{U_i^f\})$,  \vspace{2pt}
		\item any periodic point in $X_f$ has period larger than $M$.
	\end{enumerate}
\end{lem}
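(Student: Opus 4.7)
The approach is to refine Shishikura's construction of $X_f$ from Section~\ref{sec:construction} by invoking Lemma~\ref{lem:key1} to confine every $f$-iterate appearing in the repelling system to $\Delta$. Recall that $X_f$ arises from a repelling system $(f, U^f, \{U_i^f\})$, and that by Section~\ref{sec:return-map}(7) and formula~\eqref{eq:return1}, each return $f^{l_i}:U_i^f \to U^f$ decomposes into finitely many blocks, where one block corresponds to a single application of the return map $\RRR_f^u$ on a pair of domains $(U,U') \subset \tilde{\QQQ}_f$. Lemma~\ref{lem:key1} is designed precisely to control the $f$-orbit inside such a block.

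The steps proceed as follows. Since $\overline{\Omega_{f_0}^*} \subset \Delta$ and $\Delta$ is open, I first fix $\rho > 0$ with $\overline{\D(0,\rho)} \subset \Delta$. Next, I choose constants $x_1 < x_2$ and a compact set $E \subset B_{f_0}^{(0,u)}$ large enough to contain the Fatou-coordinate projections of all Shishikura disks used in the construction below; since these disks live inside the bounded boxes $D(\eta)$ and $D(\eta')$, such $x_1, x_2, E$ can be fixed once and for all. I then apply Lemma~\ref{lem:key1} with this data to obtain a compact set $K \subset \bigcup_k \Omega_{f_0}^{(k)} \subset \Delta$ and a neighborhood $\NNN_0$ of $f_0$, ensuring that every $\RRR_f^u$-block of the orbit lies in $\D(0,\rho) \cup K \subset \Delta$ for $f \in \NNN_0 \cap \FFF_2$.

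Inside $\NNN_0$, I run Shishikura's construction of $X_f$ with $\eta$ chosen so large that estimate~\eqref{eq:dimension} yields $\mathrm{H.dim}(X_f) > 2 - \epsilon$; this settles~(2). For~(1), each full orbit segment $U_i^f \to \cdots \to U^f$ is the concatenation of finitely many $\RRR_f^u$-blocks, each contained in $\D(0,\rho) \cup K \subset \Delta$ by Lemma~\ref{lem:key1}. For~(3), the shortest return time $l_i$ is at least the length of a single $\RRR_f^u$-block, which by Section~\ref{sec:return-map}(7) is bounded below by $q\cdot n$ with $n \sim 1/|\alpha(f)|$; since $|\alpha(f)| \to 0$ as $f \to f_0$, shrinking $\NNN \subset \NNN_0$ further forces every periodic orbit in $X_f$ to have period larger than $M$.

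The main technical obstacle is verifying that every pair $(U,U')$ arising in the unfolded orbit uniformly satisfies the hypotheses of Lemma~\ref{lem:key1}. The subtle point is that the Shishikura repelling system encodes \emph{three} layers of return dynamics (the map $h$ acting on the $g$-plane, then $g = \RRR_f^u$ on the $f$-Fatou plane, then $f$ itself), so one must carefully track how the disks $W_k^f, V_j^f, U_i^f$ propagate under $\pi^{-1}$ and $\varphi_g, \varphi_f$ and check that every pair $(U,U')$ produced lies in a common horizontal strip $\{x_1 < \mathrm{Re}\,\tilde w < x_2\}$ with $\pi(U) \subset E$. Uniformity is available because the boxes $D(\eta), D(\eta')$ have real extent bounded independently of $f$, so the constants $x_1, x_2, E$ can be chosen in advance depending only on $\eta, \eta'$ and $f_0$.
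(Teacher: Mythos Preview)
Your overall strategy---using Lemma~\ref{lem:key1} to confine each $\RRR_f^u$-block to $\D(0,\rho)\cup K\subset\Delta$ and then concatenating---matches the paper's approach, and your treatment of (2) and (3) is fine. But the justification you give for the ``main technical obstacle'' has a real gap.

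You claim that every pair $(U,U')$ lies in a fixed strip $\{x_1<\mathrm{Re}\,\tilde w<x_2\}$ because ``the boxes $D(\eta),D(\eta')$ have real extent bounded independently of $f$''. This is not enough. Unfolding $f^{l_i}:U_i^f\to U^f$ into single $\RRR_f^u$-steps means passing through \emph{every} $g$-iterate $V_j^f, g(V_j^f),\ldots,g^{n_j^f+m_0}(V_j^f)=V^f$. These intermediate $g$-images are \emph{not} among the original $\{V_j^f\}$ (which are only those with lifts in $D(\eta')$); they live in a larger compact set inside $\Omega_{g_0}$. Crucially, the number $m^f$ of such intermediate disks is $\sim 1/|\alpha(g)|$ and hence \emph{tends to infinity} as $f\to f_0$. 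So you cannot bound the widths of their $\pi^{-1}$-lifts by appealing to the fixed finite boxes $D(\eta),D(\eta')$.

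The paper resolves this in two ways you are missing. First, Lemma~\ref{lem:key1} is applied at \emph{two} levels: once to the pair $(g_0,g)$ to control the $g$-orbit of the $\hat V_k^f$'s inside $\Omega_{g_0}$, producing a compact set $D_{g_0}$; and then to $(f_0,f)$ with $E=\overline{D_{g_0}}$. Second---and this is the key uniform estimate---the paper does not take lifts in $D(\eta)$ or $D(\eta')$ but chooses representatives $\hat V_k^f,\hat U_j^f$ in the unit-width strips $S_{\tilde\xi_*},S_{\tilde w_*}$, and uses the distortion Lemma~\ref{lem:variation} to bound the angular widths of the conformal images $W_k^f,V_j^f$ by a constant $\kappa$ \emph{independent of $k,j,f$}. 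This gives $x_2-x_1\le 1+\kappa$ uniformly, which is exactly what you need but cannot extract from $D(\eta),D(\eta')$ alone. Without this distortion argument, the hypotheses of Lemma~\ref{lem:key1} cannot be verified uniformly over the unbounded family of intermediate pairs.
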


\begin{proof}
	Without loss of generality, we may assume that $\zeta=0$ and $\zeta_f=0$ for all $f\in\FFF_2$.
	Usually, we omit the superscript $(k)$ when referring to the quantities in Section \ref{sec:parabolic-implosion}, since $k$ is always taken to be $0$.
	
	Let $\{z_n\}_{n\ge 0}\subset \partial \Omega_{f_0}^{(1)}$ be an inverse orbit of $f_0^q$ with $z_0=0$ and $z_n\to 0$ from the $0$-th repelling petal (on the up-left of   Figure \ref{fig:step1}). Fix a number $\rho>0$ such that $\bigcup_{k=0}^{q-1}f^k\left( \D(0,\rho)\right)\subset \De$. For any $x>0$, set
	\[S_x:=\{z\in\mathbb C:-x-1\leq{\rm Re} (z)<-x \} \text{\ \ and \ } \ell_x:=\{z\in\mathbb C:{\rm Re}(z)=-x\}.\]
	
	Let $\rho_1>0$ be a number with $\D(0,\rho_1)\Subset {\rm Dom}(g_0)=B^u_{f_0}$.
	By Section \ref{sec:repelling-coordinate} (6), we can choose two numbers $\t w_*,\t \xi_*>0$ and a neighborhood $\NNN$ of $f_0$ such that the domains $\t\QQQ_f:=\t\QQQ_f(\t w_*)$ and $\t\QQQ_g:=\t\QQQ_g(\t \xi_*)$ satisfy
	\begin{equation}\label{eq:277}
		\bigcup_{k=0}^{q-1}f^k\left(\varphi_f^{(1)}(\t{\QQQ}_f)\right)\subset \D(0,\rho)\ \text{ and }\ \varphi_g(\t\QQQ_g)\subset \D(0,\rho_1),\ \forall\, f\in\NNN\cap \FFF_2.
	\end{equation}
	We also denote $\t\QQQ_{f_0}:=\t\QQQ_0(\t w_*)$ and $\t\QQQ_{g_0}:=\t\QQQ_0(\t \xi_*)$.

	Recall  that $\t{w}_0\in {\rm Dom}(\varphi_{f_0})$ is the point corresponding to the inverse orbit $\{z_n\}$ of $f_0^q$ with $z_n=\varphi_{f_0}(\t{w}_0-n)$.
	We record the number $n_0$ such that $\t{w}_0-n_0\in S_{\t{w}_*}$.    By enlarging $\t w_*$ and shrinking $W_0$, we can make the following assumption:
	
	{\bf Assumption 1}. \emph{The disks in the inverse orbit of $U^{f_0}$ under $f^q_0$ along $z_0,z_1,\ldots,z_{n_0}$ are compactly contained in $\De$ with pairwise disjoint closures, and $n_0\geq 2M$}.
	
	
	Recall that $\t{\xi}_0\in {\rm Dom}(\varphi_{g_0})$ corresponds to the inverse orbit $\{w_n\}$ of $g_0$ with $w_0=\pi(\t w_0)$.
	Let $m_0\in \mathbb Z$ be such that $\t{\xi}_0-m_0\in S_{\t{\xi}_*}$.

	Recall the construction of $\{W_k^f\},\{V_j^f\}$ and $\{U_i^f\}$ in the dynamical plane of $h,g$ and $f$ respectively from the proof of Theorem \ref{thm:shishikura}.
	
	Let $\eta$ be a large number that satisfies: $D(e^{2\pi \eta})\subset \t\QQQ_{g_0}, D(\eta)\subset\t\QQQ_{f_0}$, $\varphi_{f_0}(D(\eta))$ is compactly contained in $U^{f_0}$ and the right side of equation \eqref{eq:dimension} is larger than $2-\epsilon$. As a consequence, we obtain a neighborhood $\NNN$ of $f_0$ such that, for any $f\in\NNN\cap \FFF_2$,
	\begin{itemize}
		\item $D(e^{2\pi \eta})\subset \t\QQQ_{g}$ and $D(\eta)\subset\t\QQQ_{f}$,
		\item $(f,U^f,\{U_i^f\})$ forms a repelling system, and
		\item  ${\rm H.dim}(X_f)>2-\epsilon$, where $X_f$ is the hyperbolic set  of $(f,U^f,\{U_i^f\})$.
	\end{itemize}
	Then statement (2) of the lemma follows.

	Moreover, all disks $\{W_k^f\}_{k\geq1}$ are contained in a domain  $D_{h_0}\Subset B^{u}_{h_0}$ for any $f\in\FFF_2\cap \NNN$ (shrinking $\NNN$ if necessary).
	For such a map $f$ and every $k\geq0$, we denote $\hat {V}_k^f$ the component of $\pi^{-1}(W_k^f)$ which intersects $S_{\t{\xi}_*}$ but avoids $\ell_{\t{\xi}_*}$, see  Figure \ref{fig:W-V}. Then $\t\xi_0-m_0\in \hat{V}_0^f$.
	\begin{figure}[http]
		\centering
		\includegraphics[width=13cm]{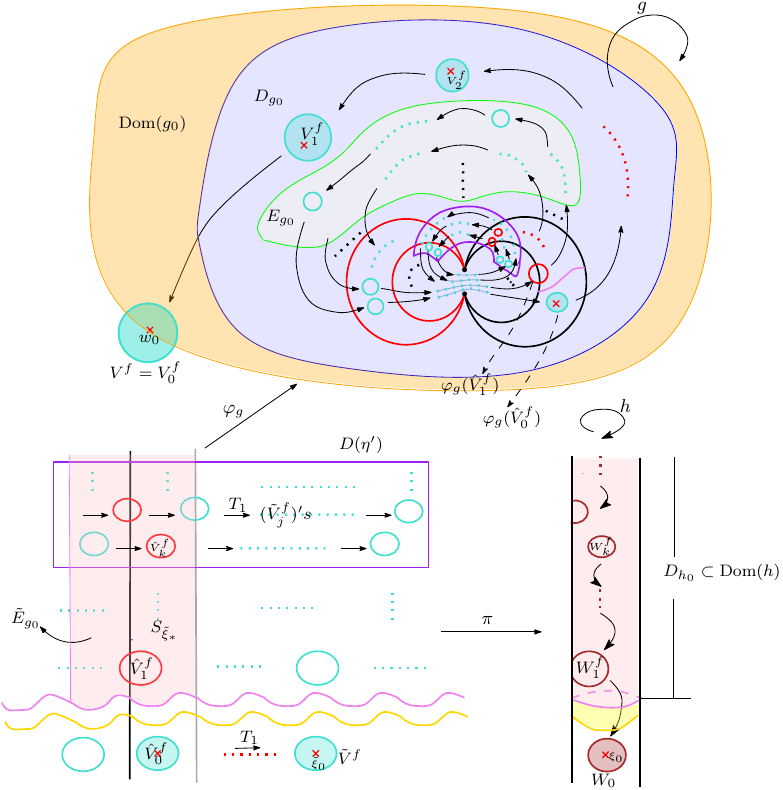}\\
		\caption{The orbits of $(V^f_j)'s$}\label{fig:W-V}
	\end{figure}

	{ \emph{Claim: The widthes of all disks $\hat{V}_k^{f},k\geq1$ are bounded above by a universal constant $\kappa$  for any $f\in\FFF_2\cap\NNN$. }

		\emph{Proof of Claim.}
		Each $W_k^f$ is the image of $W_0$ under a conformal map, denoted by $\chi_k^f$. By shrinking $W_0$, we can assume that $W_0=\D(\xi_0,\de)$, and that $\chi_k^f:\D(\xi_0,2\de)\to\mathbb C\setminus\{0\}$  is univalent.
		The width of  $\hat{V}_k^f$, which equals $\frac{1}{2\pi}{\rm aw}_{\chi_k^f(\D(\xi_0,2\de))}(\chi_k^f(W_0))$, is bounded above by $2\log(3)/\pi:=\kappa$, by  Lemma \ref{lem:variation}.
		\hfill $\Box$
	}\vspace{3pt}
	
	By  this claim  and condition \eqref{eq:277},  we can successively apply Lemma \ref{lem:key1} to $g$ with $E=\ov{D_{h_0}},\, x_1=-\t \xi_*-{ \kappa},\,x_2=-\t \xi_* $  and $(U,U')=(\hat{V}_k^f,\hat{V}_{k-1}^f)$ for  every $k\geq1$.
	Therefore, there exists a compact set $E_{g_0}\subset \Omega_{g_0}$ such that the orbit
	$$\varphi_g(\hat{V}_k^f)\overset{g}{\longrightarrow} \cdots\overset{g}{\longrightarrow} \varphi_g(\hat{V}_0^f)$$
	is contained in $E_{g_0}\cup \D(0,\rho_1)$ for any $f\in\FFF_2\cap\NNN$ and each $k\geq1$.
	
	Recall that the collection of disks $\{\t{V}_j^f\}$ consists of the components  of $\pi^{-1}(W_k^f),k\geq1$ contained in $D(e^{2\pi \eta})$.
	Then these $\t{V}_j^f$'s are the elements of  $\{T_m(\hat{V}_k^f):m\in\Z,k\geq 1\}$ which lie in $D(e^{2\pi \eta})$. Thus, for each $\t{V}_j^f$, there exist  unique $k\in\N$ and $m_j\in\Z$ such that $T_{m_j}(\hat{V}_k^f)=\t{V}_j^f$. 
	It follows from Section \ref{sec:repelling-coordinate} (1),(2) that
	$$\begin{cases}g(\varphi_g(\hat{V}_k^f)),\ldots,g^{m_j}(\varphi_g(\hat{V}_k^f))=V_j^f, & \text{ if } m_j>0,\\[3pt]
		V_j^f,	g(V_j^f),\ldots,g^{-m_j}(V_j^f)=\varphi_g(\hat{V}_k^f),  & \text{ if } m_j\leq 0, 
	\end{cases}$$
	are contained in $\varphi_g(D(2\pi\eta))\subset\varphi_g(\t\QQQ_g)\subset \D(0,\rho_1)$. Combining this point and the discussion in the last paragraph, we have that all $V_j^f$'s  are in the same orbit, and  for each $V_j^f$,
	there is an integer $n_j^f>0$ such that the orbit
	$$V_j^f\overset{g}{\longrightarrow} g(V_j^f)\overset{g}{\longrightarrow}	\cdots\overset{g}{\longrightarrow}g^{n_j^f}(V_j^f)=\varphi_g(\hat{V}_0^f)$$
	is entirely contained in $E_{g_0}\cup \D(0,\rho_1)$.
	
	Recall that  $V^f=\varphi_g(\t V^f)$, where $\t V^f$ is the component of $\pi^{-1}(W_0)$ containing $\t\xi_0$, see Figure \ref{fig:W-V}. Since $\t\xi_0-m_0\in \hat{V}_0^f$,  for each $V_j^f$,
	\[\text{$g^{n_j^f+m_0}(V_j^f)=g^{m_0}(\varphi_g(\hat{V}_0^f))=\varphi_g(\t V^f)=V^f,\  \forall\,f\in\NNN\cap \FFF_2$}.\]
	
	We can find a compact set $D_{g_0}\subset {\rm Dom}(g_0)=B_{f_0}^u$  such that its interior contains $E_{g_0}\cup \D(0,\rho_1)$ and the inverse orbit of $V^{f_0}$ under $g_0$ along the points $w_1,\ldots, w_{m_0}$. The previous discussion shows that
	the orbit
	\[ V_j^f\overset{g}{\longrightarrow} g(V_j^f)\overset{g}{\longrightarrow}\cdots\overset{g}{\longrightarrow}g^{n_j^f+m_0-1}(V_j^f)=g^{m_0-1}(\varphi_g(\hat{V}_0^f))\]
	is located in $D_{g_0}$ for any $f\in\NNN\cap\FFF_2$ and each $V_j^f$ (by shrinking $\NNN$ if necessary), see  Figure \ref{fig:W-V}.
	
	Enlarge the family of $\{V_j^f\}$ so that it contains all elements in the orbit from $V_j^f$ to $V^f$ for each $j$. Thus there exists a source element $V_*^f\in\{V_j^f\}$ in the sense that, each $V_j^f$ is an iterated image of $V_*^f$ under $g$. Remunerate the subscript of $\{V_j^f\}_{j=0}^{m^f}$ such that $V^f_0=V^f$ and $g(V_j^f)=V_{j-1}^f$.
	
	Similarly as above, for any $f\in\FFF_2\cap \NNN$ and  $j\geq0$, let $\hat{U}_j^f$ be the component of $\pi^{-1}(V_j^f)$ which intersects the strip $S_{\t{w}_*}$ but avoids the vertical line $\ell_{\t{w}_*}$. { By a similar argument as in the proof of the claim above, we  obtain that
		the widths  of all disks $\hat{U}_j^{f},0\leq j\leq m^f$ are no more than $\kappa$ for any $f\in\FFF_2\cap\NNN$.}

	According to this fact and condition \eqref{eq:277},  we can successively apply Lemma \ref{lem:key1} to $f$ with $E=\ov{D_{g_0}},\, x_1=-\t w_*-{\kappa},\,x_2=-\t w_*$ and $(U,U')=(\hat{U}_j^f,\hat{U}_{j-1}^f)$ for any $f\in\FFF_2\cap\NNN$ and every $1\leq j\leq m^f$. Note that $g=\mathcal R_f$ and $\mathcal R_f(\pi (\hat{U}_j^f))=\pi (\hat{U}_{j-1}^f)$. Let $m_j^f>0$ be the minimal integer so that $f^{qm_j^f}(\varphi_f^{(0)}(\hat{U}_j^f))=\varphi_f^{(1)}(\hat{U}_{j-1}^f)$.
	It follows that the orbit
	\[\varphi_f^{(0)}(\hat{U}_j^f)\overset{f^q}{\longrightarrow}\cdots \overset{f^q}{\longrightarrow}f^{qm_j^f}(\varphi_f^{(0)}(\hat{U}_j^f))=\varphi_f^{(1)}(\hat{U}_{j-1}^f)\overset{f^r}{\longrightarrow}\varphi_f^{(0)}(\hat{U}_{j-1}^f)\]
	is located in $\D(0,\rho)\cup\Omega_{f_0}^*$ for every $1\leq j\leq m^f$.
	Using a similar argument as that after the claim above, we can find an integer $n_i=n_i(f)>0$ and a multiple $s_i=s_i(f)$ of $r$ for each $U_i^f$,
	such that $f^{qn_i+s_i}(U_i^f)=\varphi_f^{(0)}(\hat {U}_0^f)$ and
	\begin{equation}\label{eq:66}
		\bigcup_{k=0}^{qn_i+s_i}f^k(U_i^f)
		\subset\left(\,\bigcup_{k=0}^{q-1}f^k(\D(0,\rho))\,\right)\bigcup\,\Omega_{f_0}^*\subset \De
	\end{equation}
	
	Let $\t U^f$ denote the component of $\pi^{-1}(V^f)$ that contains $\t w_0$. Then $\varphi_f(\t U^f)=U^f$.
	By the definition of $\hat{U}_0^f$ and the choice of $n_0$ before Assumption 1,  we have $T_{n_0}(\hat{U}_0^f)=\t{U}^f$ for $f\in\NNN\cap \FFF_2$ (shrinking $\NNN$ if necessary). It means that the disk $f^{qt}(\varphi_f^{(0)}(\hat{U}_0^f))$ contains the point $z_{n_0-t}$ for every $0\le t\leq n_0$ and $f^{qn_0}(\varphi_f^{(0)}(\hat{U}_0^f))=U^f$. Set $l_i:=(n_i+n_0)q+s_i$, then $f^{l_i}(U_i^f)=U^{f}$. 
	
	By Assumption 1 and shrinking $\NNN$,  we obtain that
	\begin{equation}\label{eq:77}
		\varphi_f^{(0)}(\hat{U}_0^f),\ldots, f^{qn_0}(\varphi_f^{(0)}(\hat{U}_0^f))=U^f\emph{ are pairwise disjoint and contained in $\De$}
	\end{equation}
	for all $f\in\NNN\cap\FFF_2$. Then statement (1)  follows from \eqref{eq:66} and \eqref{eq:77}.
	
	This point also implies that the length of the orbit for any point in $X_f$ is larger than $n_0>2M$. Then statement (3) holds.
\end{proof}

Suppose  that $f\in{\rm Rat}_d$ has a fixed point  $\zeta_f$ whose multiplier has the form \eqref{eq:good}. 
Suppose further that   $\zeta_f$ is attracting (see \eqref{attracting-condition}).
We denote by $A_f^*(\zeta_f)$ the immediate attracting basin of $f$ at the fixed point $\zeta_f$.

\begin{lem}\label{lem:on-boundary}
	Let $f_0$ and $\FFF_2$ satisfy the conditions in Theorem \ref{thm:shishikura}, and let $\Omega^*_{f_0}$ denote the immediate parabolic basin of $f_0$ at $\zeta$.
	Suppose that  $\zeta_f$ is $f$-attracting  and that $\ov{\Omega_{f_0}^*}$ is contained in a domain $\De$ which satisfies
	\begin{equation}\label{eq:keyproperty1}
		\De\cap \big(f^{-1}(A_f^*(\zeta_f))\setminus A_f^*(\zeta_f)\big)=\emptyset
	\end{equation}
	for all $f\in\FFF_2$.
	Then given any $\epsilon,M>0$, there exists a neighborhood $\NNN$ of $f_0$, such that any map $f\in\NNN\cap\FFF_2$ has a hyperbolic set
	$X_f\subset\partial A_f^*(\zeta_f)$ with Hausdorff dimension larger than $2-\epsilon$ and containing no    periodic points of $f$ with periods less than $M$.
\end{lem}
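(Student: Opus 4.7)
The hyperbolic set $X_f$ supplied by Lemma \ref{lem:orbit} already enjoys the dimension and period bounds, so the new point is to place it on the boundary of the immediate attracting basin $A_f^*(\zeta_f)$. My plan is to first exhibit a point of $A_f^*(\zeta_f)$ inside the base disk $U^f$ of the repelling system, and then pull it back through the inverse branches, using hypothesis \eqref{eq:keyproperty1} to keep each pullback inside $A_f^*(\zeta_f)$ and the expansion of the repelling system to make these pullbacks accumulate on every point of $X_0 := X_f \cap \bigcup_i U_i^f$.

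Concretely, I would first apply Lemma \ref{lem:orbit} with the given $\Delta,\epsilon,M$ to obtain a neighborhood $\NNN$ of $f_0$ and, for each $f \in \NNN \cap \FFF_2$, a repelling system $(f,U^f,\{U_i^f\})$ with hyperbolic set $X_f = \bigcup_{j=0}^l f^j(X_0)$ satisfying conditions (1)--(3) of that lemma. Normalize so that $\zeta = \zeta_f = 0$. By Shishikura's construction $U^{f_0} = \varphi_{f_0}(\tilde U^{f_0})$, where $\tilde U^{f_0}$ is a neighborhood of $\tilde w_0$ and $\varphi_{f_0}(\tilde w_0) = z_0 = 0$; arranging the inverse orbit $\{z_j\}$ so that $z_1 \notin {\rm Crit}(f_0^q)$ makes $\varphi_{f_0}$ a local diffeomorphism at $\tilde w_0$, so $U^{f_0}$ is a genuine open neighborhood of $\zeta$. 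The continuous dependence of $\varphi_f$ and $\tilde U^f$ on $f$ (Sections \ref{sec:repelling-coordinate} and \ref{sec:return-map}) then gives $\zeta_f = 0 \in U^f$ for every $f \in \NNN \cap \FFF_2$ after further shrinking $\NNN$, and since $\zeta_f \in A_f^*(\zeta_f)$ this yields $U^f \cap A_f^*(\zeta_f) \ne \emptyset$.

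Now fix $x \in X_0$ and let its itinerary under $g|_{U_i^f} = f^{l_i}$ be $\{i_k\}_{k \ge 0}$. For each $n$ the map $f^{l_{i_0}+\cdots+l_{i_{n-1}}}$ restricts to a conformal bijection from a simply connected domain $V_n \subset U_{i_0}^f$ onto $U^f$, with $x \in V_n$ and ${\rm diam}(V_n) \to 0$ by expansion. Let $\Psi_n : U^f \to V_n$ denote the inverse and set $\zeta_n := \Psi_n(\zeta_f) \in V_n$. By condition (1) of Lemma \ref{lem:orbit} applied segment by segment, the full forward orbit $\zeta_n, f(\zeta_n), \ldots, \zeta_f$ lies in $\Delta$; since $\zeta_f \in A_f^*(\zeta_f)$, repeated application of \eqref{eq:keyproperty1} one step at a time forces $\zeta_n \in A_f^*(\zeta_f)$. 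As $\zeta_n, x \in V_n$ and ${\rm diam}(V_n) \to 0$ we get $\zeta_n \to x$, so $x \in \overline{A_f^*(\zeta_f)} \cap J(f) = \partial A_f^*(\zeta_f)$. Hence $X_0 \subset \partial A_f^*(\zeta_f)$, and since $\zeta_f$ is a fixed point of $f$ the set $\partial A_f^*(\zeta_f)$ is $f$-forward invariant, yielding $X_f = \bigcup_{j=0}^l f^j(X_0) \subset \partial A_f^*(\zeta_f)$.

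The main obstacle is the uniform verification in the second step that $\zeta_f \in U^f$ throughout $\NNN \cap \FFF_2$. This requires unwrapping Shishikura's construction of $U^{f_0}$ and exploiting the continuous dependence of the Fatou coordinates $\varphi_f$ and of the lifted neighbourhoods $\tilde U^f$ on $f$ with enough precision to keep $\zeta_f = 0$ inside $U^f$ for all $f$ close to $f_0$. Once this intersection claim is established, everything else is a formal consequence of hypothesis \eqref{eq:keyproperty1} and the contraction of the inverse branches of the repelling system.
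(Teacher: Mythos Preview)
Your proposal is correct and follows essentially the same route as the paper: apply Lemma \ref{lem:orbit}, observe that $\zeta_f\in U^f$ (the paper handles this in one line, noting $U^{f_0}\ni\zeta$ and $U^f\to U^{f_0}$), and then use condition (1) of that lemma together with hypothesis \eqref{eq:keyproperty1} to pull back through $\Delta$ and land in $\partial A_f^*(\zeta_f)$. The only cosmetic difference is that the paper also places a repelling periodic point in $U^f$, so it directly has $U^f\cap\partial A_f^*(\zeta_f)\neq\emptyset$ and pulls back the boundary set, whereas you pull back the single point $\zeta_f\in A_f^*(\zeta_f)$ and use $x\in J(f)$ to conclude $x\in\partial A_f^*(\zeta_f)$; both arguments are equally short once $\zeta_f\in U^f$ is known.
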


\begin{proof}
	By Lemma \ref{lem:orbit}, there exists a neighborhood $\NNN$ of $f_0$ fulfilling that, any $f\in\NNN\cap\FFF_2$ has a repelling system $(f,U^f,\{U_i^f\})$ and a hyperbolic set $X_f$ generated from it such that the following properties hold:
	\begin{enumerate}
		\item each orbit \, $U_i^f\overset{f}{\to}\cdots\overset{f}{\to} f^{l_i}(U_i^f)=U^{f}$
		is located in $\De$,\vspace{1pt}
		\item ${\rm Hdim}(X_f)>2-\epsilon$, and  \vspace{1pt}
		\item any periodic point in $X_f$ has period larger than $M$.
	\end{enumerate}
	Therefore, it is enough to prove that $X_f\subset \partial A_f^*(\zeta_f)$ for $f\in\NNN\cap\FFF_2$.
	
	Note that $U^{f_0}$ (given by Section \ref{sec:construction} ) contains the parabolic fixed point $\zeta$. So it also contains repelling periodic points of $f_0$.
	Since $U^f$ converge to $U^{f_0}$, it follows that $U^f$ contains the attracting fixed point $\zeta_f$ and repelling periodic points of $f$ for every  $f\in\NNN\cap\FFF_2$ (shrinking $\NNN$ if necessary). Thus the intersection of  $U^f$ and $\partial A_f^*(\zeta_f)$ is non-empty.

	For each $U_i^f$, we have $f^{l_i-1}(U_i^f)\cap \partial f^{-1}(A_f^*(\zeta_f))\not=\emptyset$ since $f^{l_i}(U_i^f)=U^f$ intersects $\partial A_f^*(\zeta_f)$.
	By condition \eqref{eq:keyproperty1} and statement (1) above, it follows that $f^{l_i-1}(U_i^f)\cap \partial A_f^*(\zeta_f)\not=\emptyset$.
	Inductively using this argument, we can deduce that the non-escaping set of the repelling system $(f^q,U^f,\{U_i^f\})$ is contained in $\partial A_f^*(\zeta_f)$. Hence $X_f\subset \partial  A^*_f(\zeta_f)$.
\end{proof}

\subsection{Boundary dimension for bounded attracting domains }

In this part, we will apply Lemma \ref{lem:on-boundary} to 
prove Theorem \ref{thm:boundary-dimension0}.   
The idea of the proof is to construct a sequence of quasiconformal deformation of $f$ such that these maps and their limit map satisfy the property of Lemma \ref{lem:on-boundary}.



The following fact  follows from Proposition \ref{RY}.

 \begin{ft}  \label{pro:separation}
 	Let $f$ be a polynomial. Then any cut-point   
 	in the boundary of a parabolic  or bounded attracting domain of $f$ is iterated to either the set of critical points, or the set of  periodic cut-points in boundaries of a parabolic  or bounded attracting  domains. These two sets are both finite.
\end{ft}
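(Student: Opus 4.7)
The plan is to deduce the fact from Proposition \ref{RY} by a case analysis on the dynamical nature of a cut-point $x\in\partial U$, where $U$ is a bounded attracting or parabolic Fatou component of $f$. First I would establish the key equivalence: \emph{$x\in\partial U$ is a cut-point of $J(f)$ if and only if the limb $L_{U,x}$ is non-trivial, i.e.\ $L_{U,x}\neq\{x\}$}. This is immediate from Proposition \ref{RY}(3)--(4): the limb decomposition $K(f)=U\sqcup\bigsqcup_y L_{U,y}$ together with $\partial U$ being a Jordan curve shows that if $L_{U,x}=\{x\}$ then $J(f)\setminus\{x\}$ stays connected, while if $L_{U,x}\neq\{x\}$ the two distinct external rays at $x$ supplied by Proposition \ref{RY}(4) separate $L_{U,x}\setminus\{x\}$ from the remainder of $J(f)$.

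Next I would exploit the following monotonicity: \emph{if $x\in\partial U$ is not a critical point of $f$, then $L_{U,x}$ non-trivial implies $L_{f(U),f(x)}$ non-trivial.} Indeed, by Proposition \ref{RY}(4) two distinct external rays $R_f(\alpha),R_f(\beta)$ land at $x$; non-criticality of $x$ makes the images $R_f(d\alpha),R_f(d\beta)$ two distinct external rays landing at $f(x)$, so the preceding equivalence (applied in $\partial f(U)$) forces $L_{f(U),f(x)}$ to be non-trivial. The case analysis now runs as follows. If $x$ is periodic, then $x$ itself is a periodic cut-point. If $x$ is strictly preperiodic, let $N\geq 1$ be minimal with $f^N(x)$ periodic; if some $f^k(x)$ with $0\leq k<N$ is critical we are done, otherwise iterated application of monotonicity shows $L_{f^N(U),f^N(x)}$ is non-trivial, so $f^N(x)$ is a periodic cut-point on the boundary of the periodic Fatou component $f^N(U)$. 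If $x$ is wandering, Remark \ref{limb-wandering} yields $L_{f^n(U),f^n(x)}=\{f^n(x)\}$ for large $n$, and monotonicity again forces the orbit $\{f^n(x)\}$ to hit a critical point.

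It remains to verify that both target sets are finite. The set of critical points is finite (cardinality at most $d-1$). For the periodic cut-points, let $\mathcal{U}$ denote the finite collection of bounded attracting or parabolic periodic Fatou components of $f$. For each periodic cut-point $x\in\partial V$ with $V\in\mathcal{U}$, Proposition \ref{RY}(5) yields a minimal $i\geq 0$ such that $L_{f^i(V),f^i(x)}$ contains a critical point $c$; since the limbs along $\partial f^i(V)$ are pairwise disjoint, $f^i(x)$ is the unique boundary point of $f^i(V)$ whose limb contains $c$, hence is uniquely determined by the pair $(f^i(V),c)\in\mathcal{U}\times{\rm Crit}(f)$. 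As $x$ lies in the (finite) orbit of $f^i(x)$, the collection of periodic cut-point orbits injects into $\mathcal{U}\times{\rm Crit}(f)$, giving only finitely many such orbits; each is itself finite, and thus total finiteness follows. The delicate step is the monotonicity claim, which requires care with the action $\tau:\theta\mapsto d\theta$ on external angles at non-critical points; the rest is straightforward bookkeeping via Proposition \ref{RY}.
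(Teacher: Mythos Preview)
Your proposal is correct and follows exactly the route the paper indicates: the paper gives no detailed proof at all, merely stating that the fact ``follows from Proposition~\ref{RY}'', and your argument is precisely a careful unpacking of that claim via the limb decomposition, Proposition~\ref{RY}(4)--(5), and Remark~\ref{limb-wandering}. One small remark: in your monotonicity step you should note that $d\alpha\neq d\beta$ is forced by the local injectivity of $f$ at the non-critical point $x$ (otherwise the two distinct ray-germs at $x$ would map to the same ray-germ at $f(x)$); this is implicit in your phrasing but worth making explicit.
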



We first fix some notations. Let $f$ be a subhyperbolic polynomial. By Fact \ref{pro:separation}, we have a maximum $N(f)$ of the periods for all
\begin{itemize}
	\item periodic Fatou components of $f$,
	\item periodic postcritical points, and
	\item periodic cut-points of $J(f)$ on  boundaries of bounded attracting Fatou components.
\end{itemize}

Let $O_1(f),\ldots,O_m(f)$ be a collection of marked geometrically-attracting cycles with lengthes $\ell_1,\ldots,\ell_m$, respectively. Suppose that each of these cycles attracts only one critical point. Choose different  integers $q_1,\cdots, q_m$  so that
$\ell_kq_k>N(f)$ for every $k=1,\ldots,m$.

\begin{lem}\label{lem:separate}
	Let $\{f_n\}_{n\geq1}$ be a sequence of quasi-conformal deformation of $f$, such that the deformation occurs only at the attracting basin of $O_k(f),k=1,\ldots,m$, and the multiplier of $O_k(f_n)$ converges to
	${\rm exp}(2\pi i /q_k)$  as $n\to\infty$.
	Then by taking a subsequence, $f_n$ converges to a geometrically-finite polynomial $g_0$ with the following properties:
	\begin{enumerate}
		\item the parabolic cycles of $g_0$ are exactly $O_k(g_0):=\lim_{n\to\infty} O_k(f_n),k=1,\ldots,m,$ each of which attracts only one critical point of $g_0$;
		\item for every $k=1,\ldots,m$, the parabolic cycle $O_k(g_0)$ has length $\ell_k$ and multiplier $e^{2\pi i/q_k}$;
		\item for every $k=1,\ldots,m$, the boundary of the immediate basin of the parabolic cycle $O_k(g_0)$ contains no critical points of $g_0$;
		\item if $z$ is a repelling $f$-periodic point with period no more than $N(f)$, then its deformation $z_n$ at $f_n$ converges to a repelling $g_0$-periodic point with the same period.
	\end{enumerate}
\end{lem}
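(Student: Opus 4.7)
The plan is to extract a subsequential limit in the compact set $\mathcal{C}_d$ and then verify the four listed properties by combining the stability of repelling cycles, the continuity of multipliers, and a careful analysis of where the critical points can land once the attracting cycles bifurcate to parabolic ones. Since each $f_n$ is a quasiconformal deformation of $f$ supported in the basins of the $O_k(f)$'s, the map $f_n$ is qc-conjugate to $f$ on $J(f_n)$, so the entire combinatorial structure outside the marked basins (laminations, periods and locations of other cycles, preperiodic critical orbits) is preserved. Normalizing $f_n \in \mathcal{P}_d$ and passing to a subsequence gives $f_n \to g_0 \in \mathcal{C}_d$, and all subsequent limits can be assumed to exist by further extraction.

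Property (2) follows from continuity of the multiplier: $O_k(f_n)$ is a cycle of length $\ell_k$, the limit set $O_k(g_0) := \lim_n O_k(f_n)$ is $g_0$-invariant, and since the prescribed limit multiplier $e^{2\pi i/q_k}$ is different from $1$, the implicit function theorem applied to $g_0^{\ell_k} - \mathrm{id}$ at a limit point shows that $O_k(g_0)$ stays a cycle of length exactly $\ell_k$ with multiplier $e^{2\pi i/q_k}$. Property (4) is similar: for any $f$-repelling periodic point $z$ of period $p \le N(f)$, the quasiconformal deformation keeps the analogous $f_n$-point repelling (qc-conjugacy on $J$ preserves expansion), and in particular its multiplier stays bounded away from the unit circle; the implicit function theorem then gives the continuous deformation of $z$ through all $f_n$ and into a $g_0$-repelling point of the same period.

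For (1), I would observe that any $g_0$-periodic point is the limit of an $f_n$-periodic point of the same combinatorial type (tracked via the qc-conjugacy on $J(f_n)$), whose multiplier is either fixed equal to the $f$-value (for cycles outside the marked basins) or is prescribed to tend to $e^{2\pi i/q_k}$ (for the marked cycles themselves). Since $f$ is subhyperbolic, cycles outside the marked basins are attracting or repelling and remain so in the limit, so the only parabolic cycles of $g_0$ are the $O_k(g_0)$'s. For the ``each attracts only one critical point'' half of (1), the unique critical point $c_k$ of $f$ attracted by $O_k(f)$ deforms qc to a critical point $c_k(f_n)$ attracted by $O_k(f_n)$, converging to a critical point $c_k(g_0)$ of $g_0$; all other critical points of $g_0$ are either limits of preperiodic critical points (landing at repelling preperiodic points, by property (4)) or are attracted by other non-marked attracting cycles, so none of them enters the parabolic basin of $O_k(g_0)$.

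The main obstacle, and the content of (3), is to prove that $c_k(g_0)$ lies in the interior of the immediate parabolic basin $A_k^*(g_0)$ rather than on its boundary, and moreover that no other critical point sits on $\partial A_k^*(g_0)$. The latter is easy: critical points preperiodic to repelling cycles cannot land on $\partial A_k^*(g_0)$ since the postcritical set in $J$ is disjoint from the parabolic cycle and its preimages (by the combinatorial preservation); and critical points attracted by $O_{k'}(g_0)$ with $k' \ne k$ lie in the interior of $\overline{A_{k'}^*(g_0)}$, which is disjoint from $\overline{A_k^*(g_0)}$. For $c_k(g_0)$ itself, the key is the choice $\ell_k q_k > N(f)$. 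If $c_k(g_0)$ were on $\partial A_k^*(g_0)$, then by Fact~\ref{pro:separation} applied to the geometrically finite map $g_0$, its forward orbit would eventually hit either another critical point or a periodic cut-point of period at most $N(g_0) = N(f)$; combining with the fact that the orbit of $c_k(g_0)$ must accumulate on $O_k(g_0)$ (of period $\ell_k$, parabolic, with $\ell_k q_k > N(f)$ attracting petals in its cycle), and that the only periodic cut-points on $\partial A_k^*(g_0)$ are the parabolic periodic points themselves, one obtains a contradiction with the separation bound $\ell_k q_k > N(f)$. This combinatorial separation argument, which is the reason for the precise choice of $q_k$, is the step I expect to require the most care.
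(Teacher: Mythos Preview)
Your overall strategy matches the paper's, but there is a genuine gap in your argument for (4), and it propagates into (1) and (3).

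For (4) you write that the multiplier of the deformed repelling point $z_n$ ``stays bounded away from the unit circle''. This does not follow: multipliers are not quasiconformal invariants (already for $z^2+c$ the $\beta$-fixed point multiplier varies as $c$ moves inside the main cardioid), and here the dilatations of the conjugacies $\psi_n$ blow up as the attracting multipliers approach $\partial\mathbb D$, so no uniform bound on $|(f_n^p)'(z_n)|$ is available. The paper's argument is different and is exactly where the choice $\ell_k q_k>N(f)$ first enters: since $O_k(g_0)$ attracts only one critical point, one has
\[g_0^{\ell_k}(z)=z_0+e^{2\pi i/q_k}(z-z_0)+a(z-z_0)^{q_k+1}\big(1+O(z-z_0)\big),\]
so when perturbing $g_0$ to $f_n$ the parabolic cycle splits into the attracting cycle $O_k(f_n)$ of period $\ell_k$ and a single repelling cycle of period exactly $\ell_k q_k>N(f)$. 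Any $f_n$-periodic point of period $p\le N(f)$ converging to a point of $O_k(g_0)$ would therefore have to lie in $O_k(f_n)$, contradicting repellingness. Hence the limit $z_0$ is repelling for $g_0$.

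This gap cascades. For (1), your claim that the only parabolic cycles of $g_0$ are the $O_k(g_0)$ via type preservation has the same flaw (a repelling $f_n$-cycle could in principle limit to a new parabolic $g_0$-cycle); the paper instead counts critical points: after showing the Julia critical points of $f$ limit to preperiodic points of $J(g_0)$ (ruling out Siegel disks separately), the remaining $m$ critical points must populate the $m$ parabolic basins, leaving none for any extra parabolic cycle. For (3), your assertion $N(g_0)=N(f)$ is false (the parabolic Fatou components of $g_0$ have period $\ell_k q_k>N(f)$), and the paper's route is shorter than the cut-point analysis you outline: once (4) is established, the periodic postcritical points of $g_0$ in $J(g_0)$ are repelling of period $\le N(f)$, while any repelling periodic point on the boundary of an immediate parabolic basin of $O_k(g_0)$ has period a multiple of $\ell_k q_k>N(f)$, so no Julia critical point can lie there.
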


\begin{proof}
	Let $\psi_n:{\mathbb C}\to{\mathbb C}$ denote the quasiconformal mapping which conjugates $f$ to $f_n$ and is  normalized by $\psi_n(z)/z\to1$ as $z\to\infty$.
	Note that $\{f_n\}$ is cocompact  in $\mathcal P_d$ (see \cite[Chapter 8]{DH} or \cite{DP}). So we can assume that $f_n\rightarrow g_0\in\PPP_d$.

	
	Set $Y:=F(f)\setminus \bigcup_{k=1}^m A_f(O_k(f))$, where $A_f(O_k(f))$ is the whole attracting basin of $O_k(f)$. The restriction $\psi_n|_Y$ is a conformal conjugacy between $f:Y\to Y$ and $f_n:Y_n\to Y_n$ with $Y_n:=\psi_n(Y)$.\vspace{3pt}

	\noindent\emph{$\bullet$ The attracting domains of $g_0$.}\vspace{3pt}

	Let $z\in\mathbb C$ be an attracting point of $f$ not in $O_1(f),\ldots, O_m(f)$, with period $l$.  Set $z_n:=\psi_n(z)$. Since $f_n^l(z_n)=z_n$ and the multiplier $(f^l_n)'(z_n)\in \mathbb D$ is constant, the point $z_0:=\lim_{n\to\infty}z_n$ is an attracting  point of $g_0$ with period $l$.  By Lemma \ref{carathodory-convergence}, we have the kernel convergence:
	$(U_{f_n}(z_n), z_n)\rightarrow (U_{g_0}(z_0), z_0)$.
	By choosing a subsequence and by Carath\'eodory's Kernel Convergence Theorem, the conformal maps $\psi_n|_{U_f(z)}$'s converge locally uniformly to a conformal map $\psi_0: U_f(z)\rightarrow U_{g_0}(z_0)$, which conjugate $f|_{U_f(z)}$ to $g_0|_{U_{g_0}(z_0)}$.  It follows that $\psi_n|_{Y}$ converges  a conformal map (we still denote by) $\psi_0: Y\rightarrow Y_0$ with $Y_0:=\psi_0(Y)$.
	This map $\psi_0$ gives a conformal conjugacy between $f:Y\to Y$ and $g_0: Y_0\to Y_0$.  It is worth noting that $Y_0$ is the union of all attracting basins of $g_0$. Otherwise, let $w$ be an attracting periodic point of $g_0$ not in $Y_0$. Then it can be perturbed to an attracting periodic point $w_n$ of $f_n$ not in $Y_n$. Hence $w_n\in O_k(f_n)$ for some $k=1,\ldots,m$. However $O_k(f_n)$ converges to a parabolic cycle of $g_0$, a contradiction.\vspace{3pt}

	\noindent \emph{$\bullet$ The parabolic cycles of $g_0$.}\vspace{3pt}
	
	Fix a $k\in \{1,\ldots,m\}$ and a point $x\in O_k(f)$. Set $x_n:=\psi_n(x)$ and denote $x_0=\lim_{n\to\infty} x_n$. Then we have $g^{\ell_k}_0(x_0)=x_0$ and
	$(g_0^{\ell_k})'(x_0)=e^{2\pi i/q_k}$. By  the Implicity Function Theorem, for every large $n$, there is exactly one point $y_n$ such that $f_n^{\ell_k}(y_n)=y_n$ and $y_n\to x_0$ as $n\to\infty$. So $x_n=y_n$. This implies that the parabolic cycle $O_k(g_0)$ has length $\ell_k$ and multiplier $e^{2\pi i/q_k}$ {(otherwise, a point in $O_k(f_n)$ other than $x_n$ would converge to $x_0$, a contradiction.)}.
	As a consequence, any immediate parabolic basin associated to $O_k(g_0)$ has period $\ell_k\cdot q_k>N(f)$. Moreover, the parabolic cycles $O_1(g_0),\ldots,O_m(g_0)$ are pairwise disjoint, since they have pairwise different multipliers.
	
	According to the discussion in the last step, each critical point of $f$ in $Y$ deforms to a critical point of $g_0$ in an attracting domain. Since any critical point in $J(f)$ is preperiodic, it deforms to a preperiodic critical point of $g_0$ in the Julia set (otherwise, the critical orbit would eventually meet the center of some Siegel disk,  implying the existence of a wandering {critical point in the Julia set}. Contradiction!). The remaining $m$ critical points of $f$ are attracted by $O_1(f),\ldots,O_m(f)$ respectively. They deform to $m$ critical points of $g_0$, which are only able to be located in parabolic basins such that each parabolic cycle of $O_1(g_0),\ldots,O_m(g_0)$ attracts exactly one critical point. Thus $g_0$ is geometrically-finite.\vspace{3pt}

	\noindent $\bullet$ \emph{The  periodic points of $g_0$ with periods no more than $N(f)$.}\vspace{3pt}
	
	For each $k\in\{1,\ldots,m\}$, since $O_k(g_0)$ attracts only one critical point, any point $z_0\in O_k(g_0)$ belongs to the boundaries of $q_k$ parabolic basins, which forms a cycle under the iteration of $g_0^{\ell_k}$. Then
	\[g_0^{\ell_k}(z)=z_0+e^{2\pi i/q_k}(z-z_0)+a(z-z_0)^{q_k+1}\big(1+O(z-z_0)\big),\ z\to z_0.\]
	
	Thus, when perturbing $g_0$ to $f_n$, the cycle $O_k(g_0)$ splits into the attracting cycle $O_k(f_n)$ and a repelling cycle of $f_n$ with length $\ell_k\cdot q_k>N(f)$. It means that every repelling periodic point of $f$ with period $\leq N(f)$ deforms to a repelling periodic point of $g_0$. 
	
	In particular, a periodic postcritical point $b$ of $f$   deforms to a repelling periodic point $b_0$ of $g_0$, with period $\leq N(f)$. This $b_0$ is not  on the boundary of  any immediate parabolic basin of $O_k(g_0)$ because otherwise, its period would  $\geq \ell_k\cdot q_k>N(f)$.  This implies  that  the boundary of any immediate parabolic basin of $O_k(g_0)$ contains no critical points.
	
	Now we have checked statements (1)-(4), and the lemma is proved.
\end{proof}

\begin{proof}[Proof of Theorem \ref{thm:boundary-dimension0}]
	We follow the notations
	$\ell_k,q_k\,(k=1,\ldots,m)$ and $N(f)$ defined before Lemma \ref{lem:separate}. Let $\{f_n=\psi_n\circ f\circ\psi_n^{-1}\}_{n\geq1}$ be a sequence of quasi-conformal deformation of $f$ such that the deformation occurs only at the attracting basin of $O_k(f)$ ($k=1,\ldots,m$) and the multiplier of $O_k(f_n)$ has the form ${\rm exp}(2\pi i (1+\alpha_k(n))/q_k)$, where
	\begin{equation}\label{arithmetic-condition}
		\alpha_k(n)=\frac{1}{a_k(n)-\dfrac{1}{b_k(n)+\nu_k(n)}}
	\end{equation}
	with $\N\ni a_k(n),b_k(n)\to\infty$ and $\nu_k(n)\to -1$ as $n\to\infty$.

	According to Lemma \ref{lem:separate}, if taking a subsequence, we have $f_n\to f_0\in\PPP_d$  as $n\to\infty$, such that each attracting cycle $O_k(f_n)$ of length $\ell_k$ ($k=1,\ldots,m)$ converges to a parabolic cycle $O_k(f_0)$  with length $\ell_k$, which attracts only one critical point of $f_0$.

	Fix a $k\in\{1,\ldots,m\}$ and a point $z\in O_k(f)$. For each $n\geq1$, denote $z_n=\psi_n(z)$ and set $z_0=\lim_{n\to\infty} z_n$. Then $g_0:=f^{\ell_k}_0$ and $\FFF_2:=\{g_n:=f^{\ell_k}_n, n\geq 1\}$ satisfy the conditions of Theorem \ref{thm:shishikura} with $\zeta=z_0$ and $\zeta_f=z_n$.

	Let $\Omega_{g_0}^*$ denote the immediate parabolic basin of $g_0$ at $z_0$, and let $A^*$ denote the immediate attracting basin of $z$ for $g=f^{\ell_k}$. For every $n\geq1$, set $A_{n}^*:=\psi_n(A^*)$. Note that $N(f)=N(f_n)$ for all $n\geq1$.
	The  theorem follows immediately from
	Lemma \ref{lem:on-boundary} if we can verify the following condition for all large $n$:
	\begin{equation}\label{eq:disjoint1}
		\exists\text{ domain }\De\supset \ov{\Omega_{g_0}^*},\ \text{s.t.}\ \De\cap \big(g_n^{-1}(A_n^*)\setminus A_n^*\big)=\emptyset.
	\end{equation}
	
	To check condition \eqref{eq:disjoint1},  note first that there are cut-points $x_1,\ldots,x_r\subset\partial A^*$  and ray pairs
	$$R_g(\theta_i,\theta_i'):=R_{g}(\theta_i)\bigcup\{x_i\}\bigcup R_{g}(\theta_i'),\ i=1,\ldots, r,$$
	such that 
	the component $V$ of $\mathbb C\setminus \bigcup_{i=1}^rR_g(\theta_i,\theta_i')$  containing $A^*$  is disjoint from  $g^{-1}(A^*)\setminus A^*$. 
	Since $f$ is subhyperbolic, by Fact \ref{pro:separation}, all $x_1,\ldots,x_r$ are pre-repelling (note that $x_k$ might be a critical point).

	By the definition of $N(f)$ and Lemma \ref{lem:separate} (4), the limit point $x_i(f_0):=\lim_{n\to\infty}\psi_n(x_i)$ is  a pre-repelling point of $f_0$ with period no more than $N(f)$ for every $i=1,\ldots,r$.
	Moreover, since any immediate parabolic basin associated to $O_k(f_0)$ has period  $\ell_k\cdot q_k>N(f)$ and the unique root on its boundary  is a parabolic point,  the points $x_1(f_0),\ldots,x_r(f_0)$ are  disjoint from $\ov{\Omega_{g_0}^*}$.
	
	By Lemma \ref{stability-e-r},  the ray pair $R_{g_n}(\theta_i,\theta_i')$ converges to the ray pair $R_{g_0}(\theta_i,\theta_i')$ in the Hausdorff topology for every $i=1,\ldots,r$. It follows that $V_n=\psi_n(V)$ converges to a Jordan domain $V_0$ with  $\partial V_0=\bigcup_{i=1}^rR_{g_0}(\theta_i,\theta_i')$. Since $z_n\in V_n$ and $$\ov{\Omega_{g_0}^*}\cap \partial V_0=\ov{\Omega_{g_0}^*}\cap\{x_1(f_0),\ldots,x_r(f_0)\}=\emptyset,$$
	we have $\ov{\Omega_{g_0}^*}\subset V_0$. Thus, there is a domain $\De\Subset V_0$ such that $\ov{\Omega_{g_0}^*}\subset \De$, and then
	$\De\subset V_n$ for every large $n$. 
	
	Since $V$ is disjoint from $g^{-1}(A^*)\setminus A^*$, it follows that $V_n$ is disjoint from $g_n^{-1}(A_n^*)\setminus A_n^*$ for every $n\geq1$. Hence condition \eqref{eq:disjoint1} holds, and the theorem is proved.
\end{proof}

\begin{rmk}
	Theorem \ref{thm:boundary-dimension0} is still true  even if $f$ has disconnected Julia set. In its proof,  the connectivity of $J(f)$ is only used to ensure the validity of following two results:
	\begin{itemize}
		\item [(1)] \emph{a landing property of external rays with rational angles;}
		\item [(2)] \emph{a combinatorial property of cut-points:} for any two distinct Fatou components $U,V$, there is a cut-point $x\in \partial U$ separating $U,V$, i.e., $U,V$ lie in distinct components of $K(f)\setminus\{x\}$.
	\end{itemize}
	
	In the case that  $J(f)$ is not connected,  some external rays are not smooth  but break up at iterated preimages of the escaping critical points of $f$. However, there is still a \emph{generalized landing theory of external rays}, see \cite{LP,PZ}. To guarantee the smoothness of  external rays with rational angles, we impose the following additional condition on $f$:
	\begin{equation}\label{eq:88}
		\emph{every escaping critical point corresponds to irrational external angles.}
	\end{equation}
	
	Under this condition, the statement (2) above  has a generalized form: \emph{if distinct Fatou components $U,V$ lie in the same component of $K(f)$, then they are separated by a cut-point $x\in\partial U$; otherwise, $U,V$ are separated by an arc in the basin of infinity.}
	
	By the same argument as Theorem \ref{thm:boundary-dimension0} but in place of results (1) and (2) with their corresponding generalized version mentioned above, we can prove that the conclusion of Theorem \ref{thm:boundary-dimension0} holds for any sub-hyperbolic polynomial which satisfies condition \eqref{eq:88}.

\end{rmk}

\section{Proof of the main theorems} \label{proof-main}

In this part, we shall prove Theorem \ref{lchd-gf-section1}.  By Theorem \ref{boundary-lc}, it is implied by the following   

\begin{thm}\label{lchd-gf}  	Let $\mathcal H\subset \mathcal C_d$ be a  non disjoint-type  hyperbolic component. 
	There is a geometrically finite map $g_0\in\partial\mathcal H$ with $m_{\mathcal H}$ parabolic cycles, 
	such that for any $1\leq l\leq d-1-m_{\mathcal H}$, 
	\begin{equation} \label{local-hd-g}
		{\rm H. dim}( \partial_{\mathcal A}\mathcal H\cap \mathfrak{M}_l, g_0)=2d-2.
	\end{equation}
\end{thm}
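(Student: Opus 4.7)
The plan is to produce $g_0$ as a parabolic bifurcation of a carefully chosen Misiurewicz map in $\partial_{\mathcal A}\mathcal H$, and then to estimate the local dimension at $g_0$ by populating each neighborhood of $g_0$ with $\mathcal H$-admissible Misiurewicz maps whose marked Fatou-component boundaries have Hausdorff dimension close to $2$. The upper bound ${\rm H.dim}\leq 2d-2$ is automatic since $\mathcal P_d\simeq \mathbb C^{d-1}$, so the argument is directed entirely at the lower bound.

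First, I would pick an $\mathcal H$-admissible divisor $D_*\in\mathcal A$ whose boundary part has the maximal total degree $l^*:=d-1-m_{\mathcal H}$. Since $\mathcal A$ is dense in $\partial{\rm Div}(\mathbb D)^S$, such a divisor exists, and can be chosen so that each attracting cycle of $f_*=\overline{\Phi}(D_*)\in\partial_{\mathcal A}\mathcal H\cap\mathfrak M_{l^*}$ retains exactly one critical point. By Proposition \ref{h-admissible-characterization}, $f_*$ is Misiurewicz---hence subhyperbolic---and has $m_{\mathcal H}$ geometrically attracting cycles each attracting a single critical point, which is the precise hypothesis of Theorem \ref{thm:boundary-dimension0}. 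Applying that theorem to $f_*$ with all $m_{\mathcal H}$ cycles marked produces a geometrically finite polynomial $g_0\in\mathcal C_d$ with $m_{\mathcal H}$ parabolic cycles (by Lemma \ref{lem:separate}); moreover, for every $\epsilon,\rho>0$ there is a qc-conjugate $\widetilde f_*\in\mathcal N_\rho(g_0)$ of $f_*$ whose $m_{\mathcal H}$ immediate attracting basin boundaries each contain a hyperbolic set of Hausdorff dimension exceeding $2-\epsilon$, disjoint from the cut-points of $J(\widetilde f_*)$. Since qc-conjugates of $\mathcal H$-admissible maps remain $\mathcal H$-admissible and $\partial\mathcal H$ is closed in $\mathcal P_d$, we get $g_0\in\partial\mathcal H$ with the desired parabolic structure.

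Fix $1\le l\le l^*$ and $\epsilon\in(0,1)$. Given any $\rho>0$, I would pick $\widetilde f_*\in\mathcal N_\rho(g_0)\cap\partial_{\mathcal A}\mathcal H\cap\mathfrak M_{l^*}$ as above, noting that the large hyperbolic set in each immediate attracting basin boundary propagates around its cycle by the dynamics, so every periodic Fatou-component boundary of $\widetilde f_*$ carries a hyperbolic set of Hausdorff dimension greater than $2-\epsilon$. I then produce a nearby $\mathcal H$-admissible map $f_l$ with exactly $l$ critical points on $J(f_l)$ by perturbing the divisor $\widetilde D_*=\overline{\Phi}^{-1}(\widetilde f_*)\in\mathcal A$: it sits in the top stratum of $\partial_0^*{\rm Div}(\mathbb D)^S$, and strata of lower boundary degree $l$ are accessed by moving $l^*-l$ of the boundary support points of $\widetilde D_*$ into $\mathbb D$. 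By the criterion in Proposition \ref{criterion-H-adm} together with the genericity in Definition \ref{adm-divisor}, a generic such perturbation yields $D_l\in\mathcal A$ with total boundary degree $l$, and continuity of $\overline{\Phi}$ (Theorem \ref{thm:parameterization}) places $f_l:=\overline{\Phi}(D_l)$ in $\partial_{\mathcal A}\mathcal H\cap\mathfrak M_l\cap\mathcal N_\rho(g_0)$. Since each periodic Fatou component $U_{\widetilde f_*,v}$ is attracting with Jordan boundary (Proposition \ref{RY}), Proposition \ref{holo-hyperbolic-set} provides a continuous motion of the hyperbolic set $X_v\subset\partial U_{\widetilde f_*,v}$ into $\partial U_{f_l,v}$, and the H\"older estimate for holomorphic motions keeps its Hausdorff dimension above $2-\epsilon/(2l)$ whenever $f_l$ is close enough to $\widetilde f_*$. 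Applying Theorem \ref{local-hausdorff-dim-J}(1) at $f_l$ with $\mathcal J=\{1,\dots,l\}$ then gives
\[
{\rm H.dim}\bigl(\partial_{\mathcal A}\mathcal H\cap\mathfrak M_l\cap\mathcal N_\rho(g_0)\bigr)\ \ge\ l\Bigl(2-\frac{\epsilon}{2l}\Bigr)+2(d-1-l)=2d-2-\frac{\epsilon}{2},
\]
and letting $\rho\to 0^+$ and then $\epsilon\to 0^+$ yields the lower bound ${\rm H.dim}(\partial_{\mathcal A}\mathcal H\cap\mathfrak M_l,g_0)\ge 2d-2$.

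The main obstacle I expect is the transfer of dimension from $\widetilde f_*$ to $f_l$. Ensuring that the continuous motion of each $X_v$ stays on $\partial U_{f_l,v}$---rather than escape into the attracting basin---requires the full hypotheses of Proposition \ref{holo-hyperbolic-set}, in particular local connectivity of $\partial U_{\widetilde f_*,v}$ and preservation of geometric attraction along the family; this forces $f_l$ to remain in $\partial_{\mathcal A}\mathcal H$ throughout the perturbation, which is precisely what the divisor parameterization delivers. A subsidiary obstacle is the admissibility of $D_l$: Proposition \ref{criterion-H-adm} requires avoiding certain arithmetic coincidences between the period data of $D_l$ and the exceptional set $Q_k(f_0)$, but these form a countable collection of conditions and are avoided by a generic choice within the appropriate stratum of $\partial_0^*{\rm Div}(\mathbb D)^S$.
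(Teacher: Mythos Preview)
Your proposal is correct and follows essentially the same strategy as the paper: produce a maximal-stratum $\mathcal H$-admissible map $f_*$, apply Theorem~\ref{thm:boundary-dimension0} to obtain $g_0$ and nearby qc-deformations $\widetilde f_*$ with large hyperbolic sets on their attracting basin boundaries, then invoke Theorem~\ref{local-hausdorff-dim-J}. Two points where the paper is tighter than your write-up:

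\textbf{(i)} Your claim that qc-conjugates of $\mathcal H$-admissible maps remain $\mathcal H$-admissible is true but deserves more than an assertion; the paper handles this by constructing an explicit one-parameter family of divisors $D_{\boldsymbol\rho}\in\mathcal A$ indexed by the multiplier vector $\boldsymbol\rho\in(\mathbb D\setminus\{0\})^{m_{\mathcal H}}$ (replacing each degree-two Blaschke factor $\beta_{\rho_k^0}$ by $\beta_{\rho_k}$ and using the Implicit Function Theorem to track the boundary support), and then verifies Proposition~\ref{criterion-H-adm} uniformly for the family. This gives $\widetilde f_*=\overline\Phi(D_{\boldsymbol\rho})\in\partial_{\mathcal A}\mathcal H$ directly, and $g_0\in\partial\mathcal H$ as the limit.

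\textbf{(ii)} Your detour through an auxiliary map $f_l\in\mathfrak M_l$ is unnecessary and your mechanism for producing it is slightly off: moving $l^*-l$ boundary support points of $\widetilde D_*$ into $\mathbb D$ alters the Blaschke factors $B_u$, so a ``generic'' such perturbation will not be strictly preperiodic and Proposition~\ref{criterion-H-adm} does not apply as stated (you would instead need the density of $\mathcal A$ within the lower stratum). The paper bypasses this entirely by applying Theorem~\ref{local-hausdorff-dim-J}(2) at $\widetilde f_*$ itself with any $\mathbf n\leq\mathbf n(\widetilde f_*)$ satisfying $|\mathbf n|=l$; the theorem already produces the $l$-Misiurewicz maps and yields ${\rm H.dim}(\partial_{\mathcal A}\mathcal H\cap\mathfrak M_l,\widetilde f_*)\geq 2d-2-l\epsilon$ in one step, with no need to transfer hyperbolic sets to a second basepoint.
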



\begin{proof} Let  $D=\big((B_u^0, D_u^{\partial})\big)_{u\in V}\in \mathbf{Div}_{\rm spp}$ be  a generic  Misiurewicz  divisor satisfying the condition of Proposition \ref{criterion-H-adm}, so that 
	$$\sum_{u\in V}{\rm deg}(D_u^{\partial})=d-1-m_{\mathcal H}.$$
	Then  Proposition \ref{criterion-H-adm} yields that $D$ is $\mathcal H$-admissible. By Proposition \ref{divisor-singleton},  $I_{\Phi}(D)$ consists of an $\mathcal H$-admissible map, say $f$.  This $f$ is subhyperbolic, with $m_{\mathcal H}$ geometrically attracting cycles, say $O_1(f),\ldots,O_{m_{\mathcal H}}(f)$,  and each whole attracting basin contains exactly  one critical  point.
			Suppose the $m_\mathcal{H}$-attracting Fatou components containing critical points correspond to indices $v_1, \cdots, v_{m_\mathcal{H}}$.
	For $\rho\in \mathbb D$,  let 
	$$\beta_\rho(z)=z\frac{(1-\rho)z+\rho(1-\overline{\rho})}{(1-\overline{\rho})+\overline{\rho}(1-\rho)z}.$$
	Then $\beta_\rho$ is a Blaschke product of degree two, with $\beta_\rho(1)=1$ and $\beta'_\rho(0)=\rho$.
	The $B$-factor of $D=\big((B_u^0, D_u^{\partial})\big)_{u\in V}$  can be written as
	$$B_{v_k}^0=\beta_{\rho_k^0}, \ 1\leq k\leq m_{\mathcal H};  \ B^0_v(z)=z, \ v\in V\setminus\{v_1, \cdots, v_{m_{\mathcal H}}\},$$
where $\rho_k^0$ is the multiplier of $f$ at the $k$-th cycle $O_k(f)$.

Since $D=\big((B_u^0, D_u^{\partial})\big)_{u\in V}\in \mathbf{Div}_{\rm spp}$, by the Implicit Function Theorem, there is a continuous family of divisors  $\{D_{\boldsymbol{\rho}}\}_{\boldsymbol{\rho}\in \mathbb D^{m_{\mathcal H}}}$ so that $D_{\boldsymbol{\rho_0}}=D$ for $\boldsymbol{\rho_0}=(\rho^0_1, \cdots, \rho^0_{m_{\mathcal H}})$, and for each $\boldsymbol{\rho}=(\rho_1, \cdots, \rho_{m_{\mathcal H}}) \in \mathbb D^{m_{\mathcal H}}$,
\begin{itemize}
	\item $D_{\boldsymbol{\rho}}=\big((B_{u, {\boldsymbol{\rho}}}, D_{u, {\boldsymbol{\rho}}}^{\partial})\big)_{u\in V}\in \mathbf{Div}_{\rm spp}$;
	
	\item $B_{v_k, {\boldsymbol{\rho}}}=\beta_{\rho_k}, \ 1\leq k\leq  m_{\mathcal H}$;
	
	\item $B_{v, {\boldsymbol{\rho}}}(z)=z, \ v\in V\setminus\{v_1, \cdots, v_{m_{\mathcal H}}\}$.
	\end{itemize}
The divisors $\{D_{\boldsymbol{\rho}}\}_{{\boldsymbol{\rho}}\in (\mathbb D\setminus\{0\})^{m_{\mathcal H}}}$ satisfy the conditions of  Proposition \ref{criterion-H-adm}, hence they are $\mathcal H$-admissible. 

For each $n\geq 1$, let $f_n$ be the map  in the impression of  $D_{{\boldsymbol{\rho}}_n}$ with ${\boldsymbol{\rho}}_n=({\rm exp}(2\pi i (1+\alpha_k(n))/q_k))_{1\leq k\leq {m_{\mathcal H}}}$, where     $\alpha_k(n)$ satisfies \eqref{arithmetic-condition}. Then $\{f_n\}_{n\geq1}$ is a sequence of quasi-conformal deformations of $f$ with prescribed multipliers, and the deformation occurs  at the attracting basins of   $O_1(f),\ldots,O_{m_{\mathcal H}}(f)$.

By Lemma \ref{lem:separate},  the sequence $f_n$ converges to a geometrically finite map $g_0\in\partial\mathcal H$, with ${m_{\mathcal H}}$ parabolic cycles.
Now, for any $\epsilon>0$, by Theorem \ref{thm:boundary-dimension0},  there is  an integer $n>0$ so that $f_n\in \mathcal N_\epsilon(g_0)$, and 
$${\rm H.dim}(\partial U_{f_n, v_k})\geq 2-\epsilon, \  \forall 1\leq k\leq {m_{\mathcal H}}.$$

Note that the vector  $\mathbf n(f_n)=(n_v(f_n))_{v\in V}$ satisfies $|\mathbf n(f_n)|= d-1-m_{\mathcal H}$, where $n_v(f_n)=\#({\rm Crit}(f_n)\cap \partial U_{f_n,v})$ for $v\in V$.  
For any $1\leq l\leq d-1-m_{\mathcal H}$,  by Theorem \ref{local-hausdorff-dim-J} and taking    $\mathbf 0\leq \mathbf n\leq \mathbf n(f_n)$ with $|\mathbf n|=l$,  we have 
$${\rm H.dim}(  \partial_{\mathcal A}\mathcal H\cap  \mathfrak{M}_l, f_n)\geq 2d-2- l\epsilon.$$
It follows that 
$${\rm H.dim}( \partial_{\mathcal A}\mathcal H\cap \mathfrak{M}_l\cap \mathcal N_\epsilon(g_0))\geq 2d-2- l\epsilon.$$
Since $\epsilon>0$ is arbitrary, we have  ${\rm H. dim}(\partial_{\mathcal A}\mathcal H\cap \mathfrak{M}_l, g_0)=2d-2.$
	\end{proof}


For each $f\in {\mathcal H}\cup \partial_{\rm reg}\mathcal H$, let $\mathbf{F}(f)$ be the collection of all bounded attracting Fatou components. Let
$${\rm H.dim}_B(f)=\min_{U\in \mathbf{F}(f)}{\rm H.dim}(\partial U).$$

The following result is a by-product of the proof of Theorem \ref{lchd-gf}.

\begin{pro} \label{sup-hb-Fatou} Let $\mathcal H\subset \mathcal C_d$ be a  non disjoint-type  hyperbolic component,
  then for any
	$1\leq l\leq d-1-m_\mathcal{H}$, 
	$$\sup_{g\in \mathcal{H}}{\rm H.dim}_B(g)=\sup_{g\in   \partial\mathcal H\cap \mathfrak{M}_l}{\rm H.dim}_B(g)=2.$$
\end{pro}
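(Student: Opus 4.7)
My plan is to derive both equalities from the $\mathcal H$-admissible maps $f_n \in \partial\mathcal H \cap \partial_{\mathcal A}\mathcal H \cap \mathfrak M_{d-1-m_{\mathcal H}}$ already constructed in the proof of Theorem \ref{lchd-gf}: for $n$ large, each periodic marked Fatou-component boundary $\partial U_{f_n,v_k}$ contains a hyperbolic set $X_k(f_n) \subset \partial_0 U_{f_n,v_k}$ with ${\rm H.dim}(X_k(f_n)) > 2-\epsilon$. The upper bound $\le 2$ is automatic. The idea is to transport the lower bound $> 2-\epsilon$ first to every bounded attracting Fatou component of $f_n$, and then via two separate perturbations to maps in $\mathcal H$ and in $\partial\mathcal H \cap \mathfrak M_l$ for smaller $l$.

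The first step is to observe ${\rm H.dim}_B(f_n) > 2-\epsilon$. Any bounded attracting Fatou component $W$ of $f_n$ satisfies $f_n^N(W) = U_{f_n,v_k}$ for some $N \ge 0$ and some $k$; since $f_n^N$ is Lipschitz on $\overline W$ and $f_n^N(\partial W) = \partial U_{f_n,v_k}$, it follows that ${\rm H.dim}(\partial W) \ge {\rm H.dim}(\partial U_{f_n,v_k}) > 2-\epsilon$. Minimizing over $W$ settles the case $l = d-1-m_{\mathcal H}$ of the second equality. For $1 \le l < d-1-m_{\mathcal H}$, the plan is to apply Theorem \ref{local-perturbation1}(1) to $f_n$ with hyperbolic set $X := \bigcup_k \bigcup_{j=0}^{\ell_k-1} f_n^j(X_k(f_n))$, an index set $\mathcal J$ of size $l$, and any multipoint $\mathbf x \in \prod_{j \in \mathcal J}(X \cap \partial U_{f_n,u_j})$. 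The output $g_0 \in \partial_{\mathcal A}\mathcal H$ is arbitrarily close to $f_n$ and has exactly $l$ critical points on $J(g_0)$, so $g_0 \in \partial\mathcal H \cap \mathfrak M_l$; moreover, Proposition \ref{holo-hyperbolic-set} places the holomorphic motion $h(g_0, X_k(f_n))$ inside $\partial U_{g_0,v_k}$, and the H\"older estimate for holomorphic motions keeps its dimension above $2-2\epsilon$. Combining this with the Lipschitz propagation above gives ${\rm H.dim}_B(g_0) > 2-2\epsilon$.

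For the first equality, because $f_n \in \partial_{\mathcal A}\mathcal H \subset \partial_{\rm LC}\mathcal H$ by Theorem \ref{boundary-lc} and $f_n \in \overline{\mathcal H}$, I can pick $g \in \mathcal H$ arbitrarily close to $f_n$. Propositions \ref{hyp-set} and \ref{holo-hyperbolic-set} again transport the sets $X_k(f_n)$ into $\partial U_{g,v_k}$ while preserving their dimensions up to a factor $1-O(\epsilon)$; because $g$ is hyperbolic and therefore has no critical points on $J(g)$, every bounded attracting Fatou component of $g$ is an iterated preimage of some $U_{g,v_k}$, and the polynomial Lipschitz argument from the previous paragraph propagates the lower bound to every such component, yielding ${\rm H.dim}_B(g) > 2-2\epsilon$.

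The key technical point to verify is the hypothesis of Proposition \ref{holo-hyperbolic-set} for the sets $X_k(f_n)$, namely that $\partial U_{f_n,v_k}$ is connected and locally connected; but $f_n$ is Misiurewicz, so by Proposition \ref{RY} the boundary of every bounded periodic Fatou component is a Jordan curve and the hypothesis holds. Beyond this, the argument is a routine assembly of the perturbation and holomorphic-motion tools already developed in the paper, with no essentially new machinery required.
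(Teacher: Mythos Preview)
Your proposal is correct and follows essentially the same route as the paper's proof: both start from the $\mathcal H$-admissible maps $f_n$ produced in the proof of Theorem \ref{lchd-gf}, use the large hyperbolic sets $X_k \subset \partial_0 U_{f_n,v_k}$ from Theorem \ref{thm:boundary-dimension0}, transport them by holomorphic motion (the paper cites \eqref{hyp-0-u}, you cite Proposition \ref{holo-hyperbolic-set}) with the H\"older estimate to nearby maps in $\mathcal H$, and invoke Theorem \ref{local-perturbation1} to reach $\partial_{\mathcal A}\mathcal H \cap \mathfrak M_l$ for each $l$. Your version is slightly more explicit in two places---the Lipschitz propagation of the dimension bound from $\partial U_{\,\cdot\,,v_k}$ to every bounded attracting Fatou component, and the verification that $\partial U_{f_n,v_k}$ is a Jordan curve so Proposition \ref{holo-hyperbolic-set} applies---but these are details the paper leaves implicit rather than genuine differences. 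One harmless overcomplication: invoking $\partial_{\rm LC}\mathcal H$ to find $g\in\mathcal H$ near $f_n$ is unnecessary, since $f_n\in\partial\mathcal H$ already guarantees this.
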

\begin{proof} We use the same notations as that in  the proof of Theorem \ref{lchd-gf}. By the proof Theorem \ref{lchd-gf}, there exist a sequence of subhyperbolic polynomials $\{f_n\}_{n\geq 1}\subset \partial_{\mathcal A}\mathcal H\cap \mathfrak{M}_{d_0}$ with $d_0=d-1-m_{\mathcal H}$, approaching   a geometrically finite polynomial $g_0\in\partial\mathcal H$, with $m_{\mathcal H}$ parabolic cycles.  Note that the multiplier vector of $f_n$ is  $({\rm exp}(2\pi i (1+\alpha_k(n))/q_k))_{1\leq k\leq m_\mathcal{H}}$, where    $\alpha_k(n)$ satisfies \eqref{arithmetic-condition} (this is the condition  given in the proof of Theorem \ref{thm:boundary-dimension0}). 
	 By Theorem \ref{thm:boundary-dimension0},  for any $\epsilon>0$,   there is a large $n$, and  $f_n^{\ell_k}$-hyperbolic sets $X_k\subset \partial_0 U_{f_n, v_k}$ for $1\leq k\leq m_{\mathcal H}$, so that 
	$${\rm H.dim}(X_k)\geq 2-\epsilon, \  \forall 1\leq k\leq m_{\mathcal H}.$$
	Note that these hyperbolic sets move holomorphically in a neighborhood $\mathcal N(f_n)$ of $f_n$, so that each $g\in \mathcal N(f_n)$ has exactly $m_{\mathcal H}$ attracting cycles,  and the holomorphic motion $h(g, X_k)$ of $X_k$ satisfies $h(g, X_k)\subset \partial_0 U_{g, v_k}$ (see \eqref{hyp-0-u}). Choose $r>0$ small so that $\mathcal N_{r}(f_n)\subset \mathcal N(f_n)$, and for each $g\in \mathcal N_{r}(f_n)$,  
	$${\rm H.dim}(h(g, X_k))\geq {\rm H.dim}(X_k)-\epsilon \geq 2-2\epsilon, \  \forall 1\leq k\leq m_{\mathcal H}.$$
	
	This implies  $\sup_{g\in \mathcal{H}\cap  \mathcal N_{r}(f_n)}{\rm H.dim}_B(g)\geq 2-2\epsilon$.    It  follows that $\sup_{g\in \mathcal{H}}{\rm H.dim}_B(g)=2$.

For each
$1\leq l\leq d-1-m_\mathcal{H}$, 	by Theorem \ref{local-perturbation1}, there is an  $\mathcal H$-admissible map $g_n\in  \mathcal N_{r}(f_n)\cap \partial_{\mathcal A}\mathcal H\cap \mathfrak{M}_l$.
It satisfies that 	${\rm H.dim}_B(g_n)\geq 2-2\epsilon$.  Since $\epsilon>0$ is arbitrary, we also get $\sup_{g\in   \partial_{\mathcal A}\mathcal H\cap \mathfrak{M}_l}{\rm H.dim}_B(g)=2$.
	\end{proof}


\end{document}